\preto\subequations{\ifhmode\unskip\fi}
\let\oldindex\index
\renewcommand{\index}[1]{\oldindex{#1}}
\newcommand{\idx}[1]{#1\oldindex{#1}}
\newcommand{\idxb}[1]{#1\oldindex{#1}}
\newcommand{\idxalso}[2]{\oldindex{#1}\oldindex{#1|seealso{#2}}}
\let\cleardoublepage\clearemptydoublepage
\renewcommand*{\@pnumwidth}{2.1em}
\renewcommand*{\@dotsep}{2.3}
\newcommand{\chapternonumber}[1]{%
  \chapter*{#1}
  \phantomsection
  \addcontentsline{toc}{chapter}{\protect\numberline{}#1}
  \setcounter{footnote}{0}
}
\setlist[enumerate]{leftmargin=4.5ex}
\setlist[itemize]{leftmargin=3ex}
\def\figpath{}
\DeclareFontShape{OMS}{cmsy}{m}{n}{<-> s * [0.920] cmsy9}{}
\DeclareMathAlphabet{\mathcal}{OMS}{cmsy}{m}{n}
\def\resetMathstrut@{%
  \setbox\z@\hbox{%
    \mathchardef\@tempa\mathcode`\(\relax
    \def\@tempb##1"##2##3{\the\textfont"##3\char"}%
    \expandafter\@tempb\meaning\@tempa \relax
  }%
  \ht\Mathstrutbox@1.2\ht\z@ \dp\Mathstrutbox@1.2\dp\z@
}
\DeclareMathSymbol{\widehatsym}{\mathord}{largesymbols}{"62}
\DeclareMathSymbol{\widetildesym}{\mathord}{largesymbols}{"65}
\renewcommand{\bar}[1]{\accentset{\rule{.42em}{.6pt}}{#1}}
\newcommand{\BC}{C_b}           
\newcommand{\BUC}{C_{b,u}}      
\newcommand{\Christoffel}{\Gamma} 
\newcommand{\partrans}{\Pi}     
\newcommand{\geodflow}{\Upsilon}
\newcommand{\contr}{q}          
\newcommand{\Xsize}{\beta}      
\newcommand{\Ysize}{\eta}       
\newcommand{\Vsize}{\zeta}      
\newcommand{\Ynonlin}{\tilde{f}}
\newcommand{\fracdiff}{r}       
\newcommand{\DF}{\smash{\underset{\widetilde{}}{\D}}}
\newcommand{\TF}{\smash{\underset{\widetilde{}}{\T}}}
\newcommand{\sx}{{\scriptscriptstyle X}}
\newcommand{\sy}{{\scriptscriptstyle Y}}
\newcommand{\sz}{{\scriptscriptstyle Z}}
\newcommand{\vx}{\tilde{v}_\sx} 
\newcommand{\BX}{\mathcal{B}^\rho_\Xsize(I;X)}
\newcommand{\BY}{B^\rho_\Ysize(I;Y)}
\newcommand{\BYc}{\bar{B}^\rho_\Ysize(I;Y)}
\newcommand{\dx}{\delta\!\!x}
\newcommand{\dy}{\delta\!\!y}
\newcommand{\rinj}[1]{r_\text{inj}(#1)}
\renewcommand{\th}{\ndash th }
\newcommand{\embedto}{\hookrightarrow}
\newcommand{\diffto}{\xrightarrow{\raisebox{-0.2 em}[0pt][0pt]{\smash{\ensuremath{\sim}}}}}
\let\epsilon\varepsilon
\let\phi\varphi
\DeclareMathOperator{\supp}{supp}
\DeclareMathOperator{\Hor}{Hor}
\DeclareMathOperator{\Ver}{Vert}
\newcommand{\thesistitle}%
{Persistence of noncompact\\ normally hyperbolic invariant manifolds\\
  in bounded geometry}
\newcommand{\nativetitle}%
{Persistentie van niet-compacte\\ normaalhyperbolische invariante
  vari\"eteiten\\ onder aanname van begrensde meetkunde}
\newcommand{\thesisauthor}{J.\ Eldering}
\title{\thesistitle}
\author{\thesisauthor}
\begin{document}

\selectlanguage{english}
\input{ushyphex}
\hyphenation{boun-da-ry boun-ded ma-ni-fold ma-ni-folds non-emp-ty Lip-schitz}
\hyphenation{as-t\'er-is-que co-do-main co-do-mains to-po-lo-gy}

\frontmatter
\pagestyle{plain}

\maketitle

\thispagestyle{empty}
\clearpage

\hypersetup{linkbordercolor={1 0.5 0.4},pdfborder={0 0 1}}

\chapternonumber{Preface}

In this thesis we prove persistence of normally hyperbolic invariant
manifolds. This result is well-known when the invariant manifold is
compact; we extend this to a setting where the invariant manifold as
well as the ambient space are allowed to be noncompact manifolds. The
ambient space is assumed to be a Riemannian manifold of bounded geometry.

Normally hyperbolic invariant manifolds (NHIMs) are a generalization
of hyperbolic fixed points. Many of the concepts, results and proofs
for hyperbolic fixed points carry over to NHIMs. Two important
properties that generalize to NHIMs are persistence of the invariant
manifold and existence of stable and unstable manifolds.

We shall focus on the first property. Persistence of a hyperbolic
fixed point follows as a straightforward application of the implicit
function theorem. For a NHIM the situation is significantly more
subtle, although the basic idea is the same.
In the case of a hyperbolic fixed point we only have stable and
unstable directions. When we consider a NHIM, there is a third
direction, tangent to the manifold itself. The dynamics in the
tangential directions is assumed to be dominated by the stable and
unstable directions in terms of the respective Lyapunov exponents.
Thus the dynamics on the invariant manifold is approximately neutral and the dynamics
in the normal directions is hyperbolic; hence the name \emph{normally
hyperbolic}. The system is called $\fracdiff$\ndash normally
hyperbolic if the \emph{spectral gap condition} holds that the
tangential dynamics is dominated by a factor $\fracdiff \ge 1$.
An $\fracdiff$\ndash NHIM persists under $C^1$ small perturbations of
the system. The persistent manifold will be $C^\fracdiff$ if the
system is, but it may not be more smooth, even if the system is
$C^\infty$ or analytic. This can also be formulated as follows: $\fracdiff$\ndash
normal hyperbolicity is an `open property' in the space of
$C^\fracdiff$ systems under the $C^1$ topology. The description above
shows that the spectral properties of NHIMs and center manifolds are similar.
The difference is that NHIMs are globally uniquely defined, while
center manifolds are not.

There are two basic methods of proof for hyperbolic fixed points and
center manifolds: Hadamard's graph
transform and Perron's variation of constants integral method. Both
can be extended to prove persistence of NHIMs, as well as existence of
its stable and unstable manifolds. We employ the Perron method.

Both methods of proof construct a contraction scheme to find the
persistent NHIM (and a similar contraction scheme can be
used to find its stable and unstable manifolds). \emph{Heuristically},
we can construct the implicit function $F(M,v) = \Phi^t(M) - M = 0$,
where $M$ is the NHIM and $\Phi^t$ is the flow of the vector field $v$
after some fixed time $t$. Normal
hyperbolicity of $M$ implies that $\D_1 F$ is invertible. Hence, there
is a function $\tilde{M} = G(\tilde{v})$ that maps perturbed vector
fields $\tilde{v}$ to persistent manifolds $\tilde{M}$, at least in a
neighborhood of $v$. This idea does not work directly for higher
derivatives. An inductive scheme can be set up that typically uses
some form of the fiber contraction theorem. This scheme will break
down after $\fracdiff$ iterations, hence the limited smoothness.
Example~\ref{exa:opt-smooth} shows that this is an intrinsic problem.

To tackle the noncompact case, we replace compactness by uniformity
conditions. These include uniform continuity and global boundedness of
the vector field and the invariant manifold and their derivatives up
to order $r$. We require additional uniformity conditions on the
ambient manifold, namely `bounded geometry'. This means that the
Riemannian curvature is globally bounded, and as a result we have a
uniform atlas which allows us to retain uniform estimates throughout
all constructions in the proof.

\hspace{0.5cm}\rule[0.4em]{\textwidth-1cm}{1pt}\hspace{-5cm}

This thesis is organized as follows. In the introduction, we give a
broad overview of the theory of NHIMs with references to more details
in the later chapters. We start by describing how NHIMs are related to
hyperbolic fixed points and center manifolds. Then we give some basic
examples and motivation for studying the noncompact case. We give a
brief overview of the history and literature and compare the two
methods of proof in the basic setting of a hyperbolic fixed point.
Then we continue to introduce the concept of bounded geometry and a
precise statement of the main result of this thesis and discuss its
relation to the literature. We describe a few extensions and details
of the results and conclude the chapter with notation used throughout
this thesis.

Chapter~\ref{chap:boundgeom} treats Riemannian manifolds of bounded
geometry. We first introduce the definition of bounded geometry
and some basic implications. We explicitly work out the relation
between curvature and holonomy in Section~\ref{sec:BG-curv-holonomy}.
This we use in Section~\ref{sec:NHIM-smoothness} to prove smoothness of the
persistent manifold. In the subsequent sections we develop the
theory required to prove persistence of noncompact NHIMs in general
ambient manifolds of bounded geometry. We extend results for
submanifolds to uniform versions in bounded geometry, to finally show
how to reduce the main theorem to a setting in a trivial bundle. A
number of these results are new and may be of independent interest, namely
the uniform tubular neighborhood theorem, the uniform smooth approximation
of a submanifold, and a uniform embedding into a trivial bundle.

In Chapter~\ref{chap:persist} we finally prove the main result in the
trivial bundle setting. We first state both this and the general
version of the main theorem and discuss these in full detail. We
include a precise comparison with results in the literature, followed
by an outline of the proof. Section~\ref{sec:cpt-unif} contains a
discussion of the differences to the compact case and
presents detailed examples to illustrate these. Then we start the actual
proof. We first prepare the system: we put it in a suitable form and
obtain estimates for the perturbed system. Then we prove that there
exists a unique persistent invariant manifold and that it is
Lipschitz. Secondly, we set up an elaborate scheme in
Section~\ref{sec:NHIM-smoothness} to prove that this
manifold is $C^\fracdiff$ smooth by induction over the smoothness
degree.

In Chapter~\ref{chap:extension-results} we discuss how the main result
can be extended in a number of different ways that may specifically be
useful for applications. We show how time and parameter dependence can
be added and we present a slightly more general
definition of overflow invariance that might be applicable to
systems that are not overflowing invariant under the standard
definition.

Finally, the appendices contain technical and reference material.
These are referenced from the main text where appropriate.
Appendix~\ref{chap:smoothflows} shows an important idea that permeates
this work: the implicit function theorem allows for explicit estimates
in terms of the input, hence it `preserves uniformity estimates'. This
can then directly be applied to dependence of a flow on the vector
field. In Appendix~\ref{chap:nemytskii}, the Nemytskii operator is
introduced as a technique to prove continuity of post-composition with
a function. This is an essential basic part in the smoothness proof,
together with the results on the exponential growth behavior of
higher derivatives of flows in Appendix~\ref{chap:exp-growth}. Here,
we also develop a framework to work with higher derivatives on
Riemannian manifolds. The last appendices include the fiber
contraction theorem of Hirsch and Pugh that is used in the smoothness
proof, Alekseev's nonlinear variation of constants integral defined on
manifolds, and a brief overview of those parts of Riemannian geometry
that we use.


\cleardoublepage

\vspace*{4cm}
\begin{center}
  \textit{In memory of Hans Duistermaat}
\end{center}

\begingroup
\hypersetup{pdfborder=0}
\let\cleardoublepage\clearpage
\cleartoevenpage[\pagestyle{plain}]
\cleardoublepage
\tableofcontents
\cleartooddpage[\pagestyle{empty}]
\endgroup

\mainmatter
\pagestyle{main}


\chapter{Introduction}\label{chap:introduction}

The basics of the theory of hyperbolic dynamics date back to the
beginning of the $20$th century, and the general formulation of the theory of
normally hyperbolic systems was stated around 1970. Since then,
many people have extended the theory, and even more people have
applied it to problems in all kinds of areas.

Normally hyperbolic invariant manifolds are important fundamental
objects in dynamical systems theory. They are useful in understanding
global structures and can also be used to simplify the description of
the dynamics in, for example, slow-fast or singularly perturbed systems.

In this thesis, we are specifically interested in noncompact normally
hyperbolic invariant manifolds. We extend classical results that
were previously only formulated for compact manifolds. However, in many applications the
manifold is not compact, so an extension of the theory to the general
noncompact case allows one to attack these problems in their natural
context. The main result of this thesis is an extension of the theorem
on persistence of normally hyperbolic invariant manifolds to a general
noncompact setting in Riemannian manifolds of bounded geometry type.



\section{Normally hyperbolic invariant manifolds}
\index{normally hyperbolic invariant manifold}

We should first point out that the theory of (normally) hyperbolic
systems can be applied to both discrete and continuous dynamical
\index{dynamical system!continuous}
\index{dynamical system!discrete}
systems. That is, if we have a dynamical system $(T,\,X,\,\Phi)$ with
$X$ a smooth manifold and $\Phi\colon T \times X \to X$ the evolution
function, then the system\footnote{%
  For simplicity of presentation we ignore the facts that $\Phi$ may
  have a smaller domain of definition, or that it is a semi-flow or
  semi-cascade, only defined on $T \ge 0$.%
} is called discrete if $T = \Z$ and continuous if $T = \R$. In the
discrete case, one typically has a diffeomorphism $\phi\colon X \to X$
and the full evolution function is defined as $\Phi(n,x) = \phi^n(x)$,
i.e.\ws iterated application of $\phi$. In the continuous case, the
map $\Phi$ is called a flow. It is generated by a vector field
$v \in \vf(X)$ and in that case the map $\Phi^t\colon X \to X$ is
again a diffeomorphism for any $t \in \R$.

The two cases can be related by fixing a $t \in \R$ in the continuous
case and then view $\phi = \Phi^t$ as generating a discrete system.
The statements of definitions and results are (almost) identical if
formulated in terms of the evolution function $\Phi$. The methods of
proof share this similarity and can be translated into each other. We
shall adopt the continuous formulation in this work, and refer to the
evolution parameter $t \in T = \R$ as time. Even though our system is
defined in terms of a vector field $v$, we call $x \in X$ a \idx{fixed
point} of the system when $\Phi^t(x) = x$ for any $t \in \R$. This is
equivalent to saying that $v(x) = 0$, i.e.\ws that it is a critical
point of $v$; we adhere to the former terminology to better preserve
the analogy with discrete systems.

Before we proceed to explaining normally hyperbolic invariant
manifolds, it should be pointed out that these are a generalization of
\idx{hyperbolic fixed point}s. Many of the characteristic properties
generalize as well, so we first sketch the basic picture for
hyperbolic fixed points. Let $x$ be a fixed point of a vector field,
$v(x) = 0$; it is called hyperbolic if the derivative $\D v(x)$ has no
eigenvalues with zero real part. This means that the eigenvalue
spectrum splits into parts left and right of the imaginary axis, that
is, the stable and unstable eigenvalues, but no neutral ones. The
corresponding stable and unstable eigenspaces $E_\pm$ are both
invariant under the linear flow of $\D v(x)$ and these spaces are
characterized by the fact that solution curves on them converge exponentially
fast towards the fixed point under forward or backward time
evolution respectively. It is a well-known result that there are
corresponding stable and unstable (local) manifolds, denoted $W_\text{loc}^S$
\index{stable (unstable) manifold}
\index{unstable manifold|see{stable manifold}}
and $W_\text{loc}^U$ respectively, which are the nonlinear versions
of these, see Figure~\ref{fig:hyperb-fp}. This situation can be
generalized to a normally hyperbolic invariant manifold by replacing
the single fixed point by a `fixed set of points', that is, a
manifold which is, as a whole, invariant.
\begin{figure}[htb]
  \centering
  \input{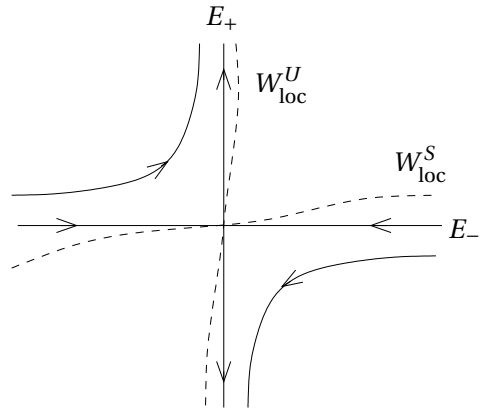}
  \caption{A hyperbolic fixed point with (un)stable manifolds
    $W_\text{loc}^S,W_\text{loc}^U$.}
  \label{fig:hyperb-fp}
\end{figure}

\idxalso{NHIM}{normally hyperbolic invariant manifold}
Let us start with a somewhat informal explanation of the concept of a
normally hyperbolic invariant manifold, which we shall from now on
often abbreviate as a \emph{NHIM}, as is common in the literature. If
we have a dynamical system $(T, Q,\Phi)$ with phase space $Q$ (which
we shall often refer to as the `\idx{ambient manifold}') and evolution map
$\Phi$, then a manifold $M \subset Q$ is called invariant under the
system if it is mapped to itself under evolution. In the continuous
case this means that $\Phi^t(M) = M$ for all times $t \in \R$, that
is, any point $x \in M$ stays in $M$, so its complete orbit is
contained in $M$.

\begin{figure}[tb]
  \centering
  \parbox{9.5cm}{%
    \centering
    \input{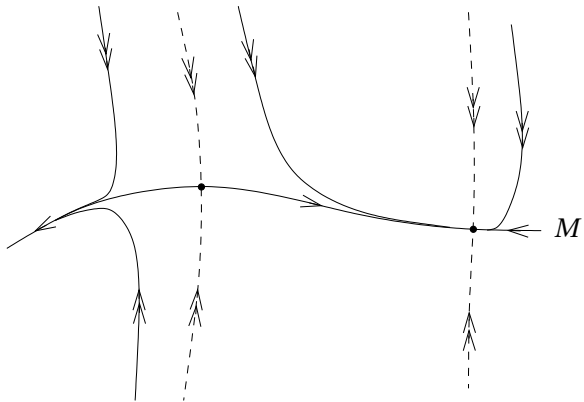}
    \caption{A normally hyperbolic invariant manifold. The single and
      double arrows indicate slow and fast flow respectively.}
    \label{fig:hyperb-NHIM}
  }
\end{figure}
An invariant manifold $M$ is then called normally hyperbolic if in
the normal directions, transverse to $M$, the linearization of the
flow $\Phi^t$ has a spectrum separate from the
imaginary axis again. Although the precise definition is a bit more
technical than in the case of a hyperbolic fixed point, the geometric
idea is the same. The normal directions must separate into directions
along which the linearized flow exponentially converges towards $M$
and directions along which it exponentially expands; no neutral
directions are allowed. Finally, the flow on $M$ itself
may expand or contract, but only at rates that are dominated by the
expansion and contraction in the normal directions.
Figure~\ref{fig:hyperb-NHIM} shows part of a normally hyperbolic
invariant manifold $M$ that has only stable normal directions. Note
that the dynamics on $M$ itself can be very complex; it can have fixed
points or even be chaotic. The only restriction is that the vertical
contraction rate is stronger than horizontal ones (and similarly for
expansion), as is indicated by the double and single arrows and
visible from the convergence of solution curves to the rightmost fixed
point on $M$.

\subsection{Persistence and (un)stable manifolds}
\index{normally hyperbolic invariant manifold!persistence}
\index{persistence|see{normally hyperbolic invariant manifold}}

There are two important properties that generalize from hyperbolic
fixed points to normally hyperbolic invariant manifolds. These are
persistence of the fixed point and the existence of stable and
unstable manifolds. The generalization of these properties is not a
trivial statement nor easily proven in the generalized case of NHIMs,
however.

Let us first focus on persistence. In case of a hyperbolic fixed
point, this is trivially stated and proven. If the fixed point $x$ is
hyperbolic, then it will persist as a nearby fixed point under small perturbations of the
vector field $v$ and stay hyperbolic. The proof is a direct
application of the implicit function theorem. If $\D v(x)$ has no
eigenvalues on the imaginary axis, then certainly it has no zero
eigenvalue, and therefore is a bijective linear map. So a
slightly perturbed vector field $\tilde{v}$ will again have a fixed
point $\tilde{x}$ nearby $x$ and the eigenvalues of
$\D \tilde{v}(\tilde{x})$ will be close to those of $\D v(x)$ if
$\tilde{v} - v$ is small in $C^1$\ndash norm. Hence the eigenvalues
are still separated by the imaginary axis. For a NHIM the situation is
similar but technically much more involved due to the fact that there
is no control on the behavior of solution curves in the
invariant manifold. A normally hyperbolic manifold $M$ does persist under $C^1$ small
perturbations and the perturbed manifold $\tilde{M}$ is again normally
hyperbolic and close to $M$ in a precise way. The most important
difference, however, is that $\tilde{M}$ generally has only limited
smoothness, even if $M$ and the system were smooth or analytic\footnote{%
  I do not know whether loss of smoothness is generic for NHIMs.
  See~\cite{Hasselblatt1994:reg-anosov-split-fol,%
    Hasselblatt1999:nonlipsch-anosov-fol} for the case of Anosov
  systems.%
}. This \idx{smoothness} is
dictated by the \emph{\idx{spectral gap condition}}, which is roughly the
ratio between the normal exponential expansion/contraction and the
exponential expansion/contraction tangential to $M$. This fact already
indicates that the proof of persistence of a NHIM cannot be a
straightforward application of the implicit function theorem.

The stable and unstable manifolds generalize as well. That is, a
normally hyperbolic invariant manifold $M$ has stable and unstable
manifolds $W^S(M)$ and $W^U(M)$ such that solution curves on these
converge exponentially fast towards $M$ in forward or backward time,
respectively. Their intersection is precisely $M$. But there is
actually more structure: these manifolds---we consider $W^S(M)$ but
everything is equivalent for $W^U(M)$---are fibrations of families of
stable and unstable fibers to each point $m \in M$,
\index{stable (unstable) fibration}
\begin{equation*}
  W^S(M) = \bigcup_{m \in M} W^S(m).
\end{equation*}
We should be a bit careful with this last statement, as points
$m \in M$ are generally not fixed points. These fibers $W^S(m)$ are
invariant in the sense that the flow commutes with the fiber
projection $\pi^S$:
\begin{equation*}
  \forall\; t \in \R,\, m \in M,\, x \in W^S(m)\colon
  \pi^S\circ\Phi^t(x) = \Phi^t\circ\pi^S(x).
\end{equation*}
In other words, each fiber is mapped into another single fiber under
the flow, namely the fiber over the flow-out of the base point $m$.
This important fact means that if we use the fibration for local
coordinates, then in these coordinates the horizontal, base flow
decouples from the vertical, fiber flow. This is sometimes also called
an \emph{isochronous} fibration~\cite{Guckenheimer1974:isochrons} as
all points in a fiber have the same
long-term behavior. Each single fiber is as
smooth as the system, but the dependence on the base point $m$, and
thus the smoothness of the fibrations as a whole, is generally not
better than continuous, see
Fenichel~\cite[Sec.~I.G]{Fenichel1973:asymstab1}. We do not
investigate these invariant fibrations in the present thesis, although
the mentioned results should hold for noncompact NHIMs as well.

\subsection{The relation to center manifolds}

Normally hyperbolic invariant manifolds bear a close resemblance to
center manifolds. Their spectral properties are roughly equivalent;
they differ in the fact that NHIMs have an intrinsically global
definition, while center manifolds are defined in local terms.

A \idx{center manifold} $W_\text{loc}^C(x)$ of a fixed point $x$ is a local
invariant manifold such that its tangent space at the fixed point is
the (generalized) eigenspace $E_0$ of the eigenvalues with real part
zero, that is,
\begin{equation}\label{eq:center-mfld}
  \T_x W_\text{loc}^C(x) = E_0.
\end{equation}
We can extend the definition of center manifold a bit by including all
eigenvalues $\lambda$ with real part bounded by
$\abs{\Re(\lambda)} \le \rho_0$. An associated generalized center
manifold consists of solutions that converge or diverge
from $x$ at an exponential rate bounded by $\rho_0$. Curves in the
strongly\footnote{%
  We remove the eigenvalues associated to $E_0$ from $E_\pm$ so that
  $E_-,\,E_0,\,E_+$ together disjointly span the total tangent space
  at $x$.%
} stable or unstable manifold converge or diverge at exponential rates
larger than $\pm \rho_0$, respectively. These conditions can directly
be compared to the description of NHIMs above, or
Definition~\ref{def:NHIM} (with $\rho_0 = \rho_M$).

If we take a look at Figure~\ref{fig:hyperb-NHIM} again, then we see
that both fixed points (indicated with a dot) on $M$ have
(generalized) center manifolds; $M$ itself is a center manifold for
these, but for the rightmost fixed point we can actually construct
the center manifold from any two solution curves converging to that
fixed point from the left and right. For example, the union of the two
curves drawn in the figure that converge to it could be taken as
alternative center manifold. This reflects the well-known fact that
center manifolds are generally not unique. This is the main difference
with the case of NHIMs: center manifolds are only defined in terms of growth rates
of solution curves locally with respect to one fixed point, while
NHIMs are globally invariant objects, where the spectral splitting
must hold everywhere along the invariant manifold. This difference is
effectively the reason that center manifolds are not uniquely defined,
while the perturbations of NHIMs are, see below. If we perturb the
system in Figure~\ref{fig:hyperb-NHIM} a bit, then the persistent NHIM
must everywhere be close to the original invariant manifold $M$. This
enforces uniqueness; in Figure~\ref{fig:hyperb-NHIM} this is clearly
visible: the alternative choice of center manifold to the rightmost
fixed point diverges far from $M$. See also the example in
Section~\ref{sec:spectral-gap}.

There is a subtle question of smoothness both for center manifolds and
NHIMs, related to the \emph{spectral gap condition}~\ref{eq:spectral-gap}.
Center manifolds are arbitrarily smooth in a sufficiently small
neighborhood of the fixed point $x$, but they are generally \emph{not}
$C^\infty$, even though they satisfy an infinite spectral gap. See Van
Strien's short note~\cite{VanStrien1979:centermfld-not-cinfty}. The
reason is that the size of the neighborhood may depend on the degree
of differentiability $C^k$. Persistent NHIMs generally have bounded
smoothness due a finite spectral gap; but even if they have an
infinite spectral gap, the smoothness of a persistent NHIM is
(generally) not $C^\infty$ for the same reasons. See
Remark~\ref{rem:gap-smoothness} and Example~\ref{exa:NHIM-non-Cinfty}.
\index{smoothness!non-$C^\infty$}

\section{Examples}

We present a few examples. The first detailed example serves to
show explicitly that smoothness of a persistent manifold depends
crucially on the spectral gap condition. The next examples motivate
the usefulness of a noncompact version of the theory of normal
hyperbolicity.

\subsection{The spectral gap condition}\label{sec:spectral-gap}

An invariant manifold is called an $\fracdiff$\ndash NHIM if the flow
contracts or expands at exponential rates along the normal directions,
and if these rates dominate any contraction or expansion
along tangential directions at least by a factor~$\fracdiff$. This
separation between growth rates along directions tangential and normal
to the NHIM is encoded in equations~\ref{eq:NHIM-rates}
and~\ref{eq:spectral-gap}.
\index{spectral gap condition}
\index{exponential growth rate}

Here we introduce a simple example
where the growth rates can be identified with eigenvalues $\lambda$ of
the linearization of the vector field at stationary points.
Furthermore, we consider the simplified case where only a stable
normal direction is present. That is, we consider a flow that
contracts in the normal direction at an exponential rate of at least
$\rho_\sy < 0$ and along the invariant manifold it contracts at most
at the rate $\rho_\sx$ with the simplified spectral gap condition
\begin{equation}\label{eq:spectral-gap-single}
  \rho_\sy < \fracdiff\,\rho_\sx
  \qquad\text{with}\quad \rho_\sx \le 0,\;\fracdiff \ge 1.
\end{equation}

The spectral gap
is fundamental to persistence of invariant manifolds: the compact
invariant manifolds that are persistent under any small perturbation
are precisely those that are normally hyperbolic\footnote{%
  The definition of normal hyperbolicity
  in~\cite{Mane1978:persistmflds} is a bit more general than the
  definition in this paper. That definition only requires a growth
  ratio $\fracdiff \ge 1$ along solution curves in the invariant
  manifold, and not as a ratio of global growth rates
  $\rho_\sx, \rho_\sy$, see also Remark~\ref{rem:NHIM-ratios}.%
}~\cite{Mane1978:persistmflds}. Ma\~n\'e only proved this inverse
implication for $1$\ndash normal hyperbolicity, the question is still
open for $\fracdiff$\ndash normal hyperbolicity with $\fracdiff > 1$.
A further property of normally hyperbolic invariant manifolds is that
the differentiability of
a slightly perturbed manifold depends not only on the smoothness of
the original manifold and the perturbed vector fields, but also on the
spectral gap. The spectral gap determines an upper bound
$1 \le \fracdiff < \infty$ on the smoothness of the perturbed system,
as $\fracdiff$ has to satisfy\footnote{%
  The case $\fracdiff = \infty$ would require $\rho_\sx > 0$; when
  $\rho_\sx = 0$, any finite order $\fracdiff$ can be obtained, but
  only for perturbations sufficiently small depending on $\fracdiff$.%
}~\ref{eq:spectral-gap}. This condition stems from the fact that when
the flow has exponential growth behavior $e^{\rho\,t}$, then higher
order derivatives will generally have growth behavior $e^{k\,\rho\,t}$
and the interval inclusion
$\intvCC{k\,\rho}{\rho} \subset \intvOO{\rho_\sy}{\rho_\sx}$ is required
to show existence and uniqueness of the $k$\th derivatives via a
contraction. The optimal differentiability degree $\fracdiff$ can be
extended to a real number by viewing $\alpha$\ndash \idxb{H\"older
continuity} as a fractional differentiability degree. That means that
the perturbed manifold can be shown to be $C^{k,\alpha}$ when
$\fracdiff = k + \alpha$ satisfies the spectral gap condition and the
system is $C^{k,\alpha}$ to start with.

The following example shows that this result is sharp. We construct a
very simple compact, normally hyperbolic invariant manifold, and then
show that an arbitrarily small perturbation yields a unique perturbed
$C^{k,\alpha}$ invariant manifold, where $\fracdiff = k + \alpha$ satisfies
$\rho_\sy = \fracdiff\,\rho_\sx$. This in fact precisely violates the
spectral gap condition, since that requires a strict inequality. The
example could be adapted to obtain a perturbed manifold with
smoothness no better than $C^{\fracdiff'}$ for some
$\fracdiff' < \fracdiff$, cf.\ws Example~\ref{exa:NHIM-non-Cinfty}.
A more qualitative exposition of this example
can also be found in~\cite[p.~198--200]{Fenichel1971:invarmflds}
and~\cite[p.~239,~251]{Hale1969:ODEs}.

\begin{example}[Optimal $C^{k,\alpha}$ \idx{smoothness} of persistent manifolds]
  \label{exa:opt-smooth}
Let the horizontal space $X = S^1$ be the circle and the vertical
space $Y = \R$. Take two points
$x_- = 0$ and $x_+ = \pi$ in $X$ and set the vector field $v$ to zero
at $(x_-,0), (x_+,0)$. We turn these stationary points into hyperbolic
fixed points, with $v$ linear in neighborhoods around them and
$\D v(x_-,0), \D v(x_+,0)$ having eigenvalues
$\lambda_- < 0 < \lambda_+$ along $X$, respectively, and one global
eigenvalue $\lambda_\sy < \lambda_-$ in the vertical direction along
$Y$, i.e.\ws $\dot{y} = v_y(x,y) = \lambda_\sy\,y$, see
Figure~\ref{fig:NHIM-opt-smoothness}. We extend the horizontal
component $v_x$ of the vector field $v$ to the whole space $X \times
Y$ in such a way that it is $C^\infty$, independent of $y$, and has no
critical points except for $x_-,\,x_+$. Hence, $M$ is an invariant
manifold for the flow $\Phi^t$ of~$v$.
\begin{figure}[b]
  \centering
  \input{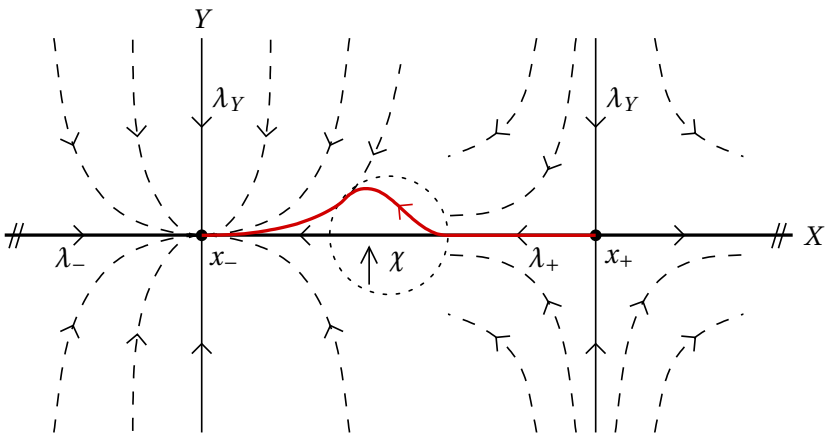}
  \caption{an example invariant manifold exhibiting $C^{k,\alpha}$
    smoothness under perturbation.}
  \label{fig:NHIM-opt-smoothness}
\end{figure}

First, we check that $M$ is normally hyperbolic. The long time
behavior of any point $m \in M$ is governed by its approach of the
stable fixed point $(x_-,0)$, except for $m = (x_+,0)$. For
$m = (x_+,0)$ we have $\D \Phi^t(m) = e^{\D v(m)\,t}$, hence
\begin{equation*}
  \begin{aligned}
    \D \Phi^t(m)|_{\T_m X} &= e^{\lambda_+\,t}, \\
    \D \Phi^t(m)|_{\T_m Y} &= e^{\lambda_\sy\,t}.
  \end{aligned}
\end{equation*}
More generally, consider a point $m \in M$ in the neighborhood of
either $(x_\pm,0)$ where $v$ is linear. Then $\D \Phi^t(m)$ is given
by
\begin{equation*}
  \D \Phi^t(m) =
  \begin{pmatrix}
    e^{\lambda_\pm\,t} & 0 \\
    0 & e^{\lambda_\sy\,t}
  \end{pmatrix}
\end{equation*}
for as long as $\Phi^t(m)$ stays in that neighborhood of $(x_\pm,0)$
where the vector field is linear. The transition time between these
two neighborhoods is finite as $v$ does not have zeros and the
transition map preserves vertical lines $\{ x \} \times Y$. The latter
fact is because $v_x$ is
independent of $y$, that is, we have also found the invariant, foliated
stable manifold of $M$. Gluing together these $\D \Phi^t$ maps on the
different domains, we see that the resulting tangent flow splits again
into independent horizontal and vertical parts, which can be estimated by
\begin{equation*}
  \begin{aligned}
    \forall t \le 0\colon \norm{\D \Phi^t(m)|_{\T_m X}} &\le C_\sx\,e^{\lambda_-\,t}, \\
    \forall t \ge 0\colon \norm{\D \Phi^t(m)|_{\T_m Y}} &\le C_\sy\,e^{\lambda_\sy\,t},
  \end{aligned}
\end{equation*}
where the constants $C_\sx,\,C_\sy$ are determined by the flow
$\Phi^t$ in the domain where $v$ is nonlinear. For any point $m$
close to $(x_-,0)$ this estimate is sharp, hence we expect maximal
smoothness $\fracdiff = \lambda_\sy / \lambda_-$ for a generic perturbation.

Next, we add a perturbation term $\epsilon\,\chi$ to the vector field
$v$, so we have a perturbed vector field
$\tilde{v} = v + \epsilon\,\chi$, where $\chi \in C^\infty_0$ is chosen with
support on a small ball intersecting $M$ away from the fixed points
and pointing upward. This will `lift' the invariant manifold as
indicated in Figure~\ref{fig:NHIM-opt-smoothness} for any
$\epsilon > 0$. Let $\tilde{M}$ denote this lifted manifold, that is,
$\tilde{M}$ is the image of the two heteroclinic solution curves that
run from $(x_+,0)$ to $(x_-,0)$ together with these fixed points. The
solution curve that runs to the left is lifted up from the $x$\ndash
axis after entering the region $\supp \chi$.

We first investigate two claims: that $\tilde{M}$ is invariant and that it is
the unique invariant manifold that is close to $M$. The invariance is
obvious; to the right of $x_+$ nothing has changed, so there $\tilde{M} = M$. To
the left of $x_+$ we follow the original unstable manifold, get
pushed up within the domain of support of $\chi$ and after leaving
that domain and entering the linear flow around $(x_-,0)$ we follow a
standard curve ending at $(x_-,0)$. This is a solution curve of
$\tilde{v} \in C^\infty$, hence invariant and even smooth. Now assume
there exists another invariant manifold $M'$ nearby and let
$(x',y') \in M' \setminus \tilde{M}$. The backward orbit
of the point $(x',y')$ must diverge to $\abs{y} \gg 1$. If $x' = x_-$,
then $y' \neq 0$ and this is clear. If $x' \neq x_-$, then the
backward orbit will end up at a point $(x,y)$ with $x$ close to $x_+$
and $y \neq 0$; since we are in the linear domain of $(x_+,0)$, this
orbit will then diverge (in reverse time) along the stable manifold
towards $\abs{y} \gg 1$. Hence, $M'$ is not close to $M$.

Next, we show that (for any $\epsilon > 0$) the perturbed manifold
$\tilde{M}$ is not more than $C^{k,\alpha}$ with $k+\alpha = \lambda_\sy / \lambda_-$,
even though the original and perturbed systems are $C^\infty$\ndash
smooth. To the left of $(x_-,0)$, $\tilde{M}$ is given by the graph of the zero
function from $X$ to $Y$ (as the continuation from $(x_+,0)$ to the right
along $X = S^1$). To the right of $(x_-,0)$, the solution curve is given by
$(x,y)(t) = (x_0\,e^{\lambda_-\,t}, y_0\,e^{\lambda_\sy\,t})$, hence
$y = C\,x^{\lambda_\sy/\lambda_-}$ where $C$ depends on $x_0, y_0$ only.
So we can write $\tilde{M}$ as the graph of the function
\begin{equation*}
  \tilde{h}\colon X \to Y\colon x \mapsto
  \begin{cases}
    0                            & \text{if}\;\; x \le 0, \\
    C\,x^{\lambda_\sy/\lambda_-} & \text{if}\;\; x > 0.
  \end{cases}
\end{equation*}
This function is exactly $C^{k,\alpha}$ for $k+\alpha = \fracdiff$ in
$x = 0$. Note that the loss of smoothness appears at a different place
than the perturbation of the vector field. The relevant fact is that
the different solution curves approaching the stable limit point have
finite differentiability with respect to each other, and this depends
on the horizontal and vertical rates of attraction at~$(x_-,0)$.
\end{example}

If we had assumed that $\rho_\sy = \rho_\sx$, that is, $\fracdiff=1$,
but with a non-strict inequality $\rho_\sy \le \fracdiff\,\rho_\sx$,
then normal hyperbolicity precisely fails and the invariant manifold
indeed need not persist. By the arguments above it can already be seen
that the persistent manifold can lose differentiability: when
$\fracdiff = 1$, the graph of the manifold will be given by
\begin{equation*}
  \tilde{h}(x) =
  \begin{cases}
    0    & \text{if}\;\; x \le 0, \\
    C\,x & \text{if}\;\; x > 0,
  \end{cases}
\end{equation*}
which is clearly non-differentiable at $x = x_- = 0$. We can extend
the example above to show that even more serious problems can occur.
\begin{example}[Non-persistence of non-NHIMs]
  \label{exa:break-nonNHIM}
  \index{persistence!non-}
  We consider Example~\ref{exa:opt-smooth} with $\rho_\sy = \rho_\sx$.
  If we perturb the system with a small circular vector field around
  $x_- = (0,0)$, then $\D v(0,0)$ will have two eigenvalues
  $\lambda_\sy \pm i\,\omega$ with $\lambda_\sy < 0$ and
  $\omega \in \R$ small. Thus, the solution curves that should make up
  the invariant manifold around $(0,0)$ will spiral in, which leads to
  the picture in Figure~\ref{fig:NHIM-breakdown}. Note that the curves
  wind around the origin infinitely often. At the origin this is not a
  manifold anymore, and cannot be described by a function
  $\tilde{h}\colon X \to Y$.
\end{example}
\begin{figure}[htb]
  \centering
  \input{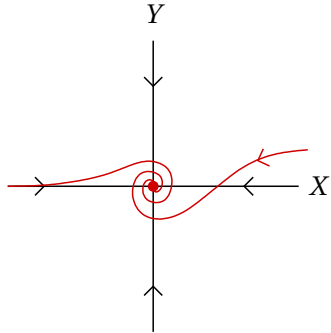}
  \caption{breakdown of a non-NHIM under a circular perturbation.}
  \label{fig:NHIM-breakdown}
\end{figure}

The idea to perturb around the stable fixed point $x_-$ also leads to
the following example.
\begin{example}[Non-$C^\infty$ persistence for $\fracdiff = \infty$ NHIMs]
  \label{exa:NHIM-non-Cinfty}
  \index{smoothness!non-$C^\infty$}
  We consider again Example~\ref{exa:opt-smooth}, but now with
  $\lambda_- = 0$. Then we have $\rho_\sx = 0$ and spectral gap
  $\fracdiff = \infty$. If we let $\lambda_- = \epsilon$ depend on the
  perturbation parameter $\epsilon > 0$, then this decreases the
  spectral gap condition\footnote{%
    The ratio $\fracdiff$ in the spectral gap is defined by a strict
    inequality, which we ignore here for simplicity of presentation.%
  } to a finite number $\fracdiff = \lambda_\sy/\lambda_-$. Even
  though $\fracdiff \to \infty$ as the perturbation size $\epsilon$
  goes to zero, we still have a finite spectral gap for any fixed
  perturbation. We conclude that the corresponding perturbed manifolds
  are not $C^\infty$, but have smoothness $C^\fracdiff$ where
  $\fracdiff$ can be made arbitrarily large by decreasing the
  perturbation size.
\end{example}

\subsection{Motivation for noncompact NHIMs}\label{sec:noncpt-motivation}
\index{noncompactness}

Most of the literature on normal hyperbolicity and its applications
treat compact NHIMs only. This excludes possibly interesting
applications. Settings where a noncompact, general geometric version
of normal hyperbolicity may be useful include chemical reaction
dynamics~\cite{Uzer2002:geom-reactdyn} and problems in classical and
celestial mechanics~\cite{Delshams2006:orbits-unbounded}.

We describe a two examples where noncompactness
naturally comes into play. The first example, a normally attracting
cylinder, is set in Euclidean space. This example could be complicated
a bit more by adding normal expanding directions to get a fully
normally hyperbolic system. Such situations show up in Hamiltonian or
reversible systems with invariant
tori~\cite{Broer2009:quasiper-stab-tori}. The second example is
set in ambient manifolds with nontrivial topology, thus motivating the
need for a theory of noncompact NHIMs in such a geometric setting.

Let us first treat a simple example.
\begin{example}[A normally attractive cylinder]
  \label{exa:cylinder}
  Let us consider the infinite cylinder $y^2 + z^2 = 1$ in $\R^3$.
  If we define a very simple dynamics by
  \begin{equation*}
    (\dot{x},\,\dot{r},\,\dot{\theta}) = (0,\,r(1-r),\,1)
  \end{equation*}
  in cylindrical coordinates, then the cylinder is normally attractive
  and the motion on the cylinder consists of only periodic orbits, see
  Figure~\ref{fig:NA-cylinder}.

  The dynamics on the cylinder is completely neutral, while it
  attracts in the normal direction with rate $-1$. Hence, there exists
  a unique persistent manifold diffeomorphic and close to the original
  cylinder. For any $k \ge 1$, the persistent manifold has $C^k$
  smoothness if the perturbation is chosen sufficiently small. The
  perturbed manifold must be uniformly close to the original cylinder;
  this rules out Example~\ref{exa:zero-inj-radius} of a cylinder with
  exponentially shrinking radius.

  The dynamics on the persistent manifold can be perturbed in
  arbitrary ways. It could slowly spiral towards $x$\ndash infinity,
  or develop attracting and repelling periodic orbits on the cylinder.
  If the cylinder were higher dimensional, it could even become
  chaotic.
\end{example}
\begin{figure}[htb]
  \centering
  \input{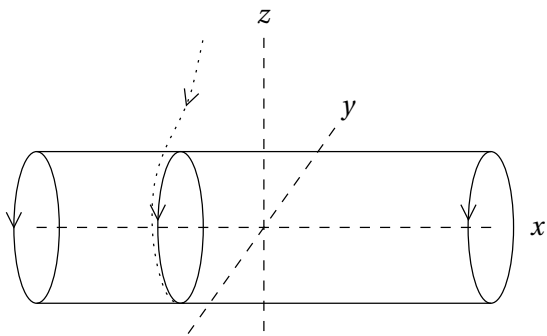}
  \caption{A normally attracting cylinder.}
  \label{fig:NA-cylinder}
\end{figure}

The second example actually motivated this work.
\index{nonholonomic system}
\begin{example}[Nonholonomic systems as singular perturbation limit]
  \label{exa:nonhol-reduction}
  Let a classical mechanical system be given by a smooth Riemannian
  manifold $(Q,g)$ as configuration space and a Lagrangian
  $L\colon \T Q \to \R$. The vector field $v$ on $\T Q$ is determined
  by the Lagrange equations of motion, given in local coordinates by
  \begin{equation}\label{eq:lagrange-EoM}
    \big[L\big]^i = \der{}{t}\pder{L}{\dot{x}_i} - \pder{L}{x_i} = 0.
  \end{equation}

  A nonholonomic constraint can be placed on such a system by
  specifying a distribution\footnote{%
    Here, a distribution is meant in the sense of differential
    geometry as a subbundle of the tangent bundle, not a generalized
    function (nor a probability distribution).%
  } $\mathcal{D} \subset \T Q$ and adding reaction forces to $[L]$
  according to the Lagrange--d'Alembert principle, that is, we require
  that a solution curve $\gamma$ satisfies
  \begin{equation}\label{eq:nonhol-EoM}
    \big[L\big](\gamma)(t) \in \mathcal{D}^0
    \quad\text{and}\quad
    \dot{\gamma}(t) \in \mathcal{D}
    \quad\text{for all $t \in \R$}
  \end{equation}
  where $\mathcal{D}^0 \subset \T^* Q$ denotes the annihilator of
  $\mathcal{D}$. This means that we
  restrict the velocities---but not the positions---of the system and
  adapt the vector field such that it preserves $\mathcal{D}$. Such
  constraints are called `nonholonomic' if the distribution
  $\mathcal{D}$ is not integrable. This means that some small
  positional changes can only be obtained through long orbits due
  to the constraints. The prototypical example is that parallel
  parking a car a small distance sideways requires repeated turning
  and moving forward and backward.

  As a concrete example of a nonholonomic system, let us consider a
  ball rolling on a flat surface. The possible positions of the ball
  are specified by $Q = SO(3) \times \R^2$, i.e.\ws orientation and
  position in the plane. If we enforce the constraint that the ball
  can only roll and not slip, then its linear velocity is determined
  by its angular velocity $\omega \in \mathfrak{so}(3)$, thus we have
  \begin{equation*}
    \mathfrak{so}(3) \times SO(3) \times \R^2
    \cong \mathcal{D} \subset \T\big(SO(3) \times \R^2\big).
  \end{equation*}

  The addition of the nonholonomic reaction forces specified by the
  Lagrange--d'Alembert principle can be argued for on physical
  grounds, and some experimental verification has been done by Lewis and
  Murray~\cite{Lewis1995:var-constrsys-th-exp} to check its
  correctness against the alternative vakonomic principle. Still, it
  would be nice to rigorously derive these forces from fundamental
  principles; this would complement~\cite{Rubin1957:motion-constr,%
    Takens1980:motion-constr,Kozlov1990:real-hol-constr} which
  showed this for holonomic constraints. The nonholonomically
  constrained system can be obtained from the unconstrained system by
  adding friction forces, see~\cite{Karapetian1981:realnonhol,%
    Brendelev1981:realnonhol,Kozlov1992:realconstr}. Heuristically,
  one could say that if a rolling ball feels a strong contact friction
  force, then if this force is taken to infinity, it suppresses all
  slipping. This can be viewed as a singular perturbation limit, where
  $\mathcal{D}$ precisely is the invariant manifold, and it is
  normally attracting due to the dissipative friction force.

  The cited works prove this result, but only asymptotically on finite
  time intervals. The extension of the theory of NHIMs to noncompact
  manifolds as developed in this thesis can be applied here. It allows
  one to improve upon this result and make it exact on infinite time
  intervals and general noncompact configuration spaces $Q$, as long
  as these satisfy the `bounded geometry' condition. One could think,
  for example, of a gently sloping surface and a ball that is not
  perfectly round, or even a time-dependent perturbation, as long as
  it is uniformly bounded in time.
\end{example}

\section{Historical overview}

As already mentioned, the theory of normally hyperbolic invariant
manifolds is a generalization of the theory of hyperbolic fixed points. The study
of these dates back to the beginning of the $20$th century, or even
the end of the $19$th century. From 1892 onwards, Poincar\'e published
his works ``Les m\'ethodes nouvelles de la m\'ecanique c\'eleste''%
~\cite{Poincare1892:mech-celeste1}, in
which he founded the theory of dynamical systems and famously studied
the three-body problem. This triggered further research in nonlinear
dynamical systems and persistence questions. Another important work
published in the same year is ``The general problem of the stability
of motion'' by Lyapunov; the original is in Russian, but translations
in French~\cite{Lyapunov1892:stab-motion-french1907} and
English~\cite{Lyapunov1892:stab-motion-english1992} are available. In
this work, he introduced the concept of characteristic numbers,
nowadays called `Lyapunov exponents', to study `conditional stability'
\idxalso{Lyapunov exponent}{exponential growth rate}
of nonlinear differential equations at a fixed point. Conditional
stability corresponds to the existence of stable (and unstable)
linearized directions and Lyapunov proves the existence of a stable manifold
by means of a series expansion under the assumption that the system is
analytic.

In the beginning of the $20$th century, the problem of stable
manifolds was studied, without assuming analyticity, by
Hadamard~\cite{Hadamard1901:iterasymp} and
Cotton~\cite{Cotton1911:asympeqdiff}. Both Frenchmen applied
different methods to obtain the stable and unstable manifolds of a
hyperbolic fixed point. Later, the German mathematician Perron
extended the ideas of Cotton to allow for generic complex eigenvalues,
possibly of higher multiplicity, as long as the real parts of the
eigenvalues are separated by zero (or even a number $r \neq 0$),
see~\cite{Perron1929:stabasympdiff,Perron1930:stabdiffeq}. Hadamard's
method is now named after him, and also known as the `graph transform'.
The other method was first formulated by Cotton, although the idea of
exponential growth of solution curves can be traced to Lyapunov. This
method is commonly referred to as the Perron or Lyapunov--Perron
\index{Perron method}\index{Lyapunov--Perron method|see{Perron method}}
method in the literature. This seems to pay too little credit to
Cotton, even though Perron himself~\cite{Perron1929:stabasympdiff}
does attribute the method to Cotton\footnote{%
  These facts were pointed out to me by Duistermaat.%
}.

From around 1960, renewed activity in the area of hyperbolic dynamics led to the
generalization of the theory of (un)stable manifolds for hyperbolic
fixed points to persistence and (un)stable fibrations for normally
hyperbolic invariant manifolds. Many authors have contributed to this
subject, culminating in the seventies in the works by
Fenichel~\cite{Fenichel1971:invarmflds} and Hirsch, Pugh, and
Shub~\cite{Hirsch1977:invarmflds}. These two works formulate the
theory slightly differently, but in broad generality and can be viewed
as the basic references nowadays; references to earlier works can be
found in both. Both Fenichel and Hirsch, Pugh, and Shub use Hadamard's
graph transform as their fundamental tool. In these works, compactness
of the invariant manifold is a basic assumption. Noncompact, immersed
manifolds are considered in~\cite[Section~6]{Hirsch1977:invarmflds},
albeit under the assumption that the immersion image is compact again.

The theory of normal hyperbolicity has seen some interesting
developments since these foundational works, and the applications
have slowly started to flourish, see~\cite{Wiggins1994:invarmflds} for
a list of subjects. A major development was the generalization to
semi-flows in Banach spaces. This situation can arise when one wants
to study partial differential equations as ordinary differential
equations on appropriate function spaces. This technique has been
applied to PDEs such as the Navier--Stokes or reaction-diffusion
equations.

\index{comparison!of results}
In his book on parabolic PDEs, Henry extended the Perron method to
apply to semi-flows with a NHIM given as the horizontal
submanifold\footnote{%
  Henry actually has reversed notation where the `vertical' manifold
  $Y \times \{0\}$ is the NHIM.} %
$X \times \{0\}$ in a product $X \times Y$ of Banach
spaces~\cite[Chap.~9]{Henry1981:geom-semilin-parab-PDEs}. Henry's idea is
to linearize only the normal directions, but keep the horizontal flow
along $M$ in its general, nonlinear form, while at the same time
splitting the Perron contraction map into a two-stage contraction
map on horizontal and vertical curves separately. Henry obtains
$C^{1,\alpha}$ smoothness only. In the series of
papers~\cite{Bates1998:invarmflds-semiflows,Bates1999:persist-overflow,Bates2008:approx-invarmfld},
Bates, Lu, and Zeng study more general NHIMs of semi-flows in
Banach spaces. They employ Hadamard's graph transform and allow
so-called `overflowing invariant manifolds', as in~\cite{Fenichel1971:invarmflds}.
They also allow the NHIM to be noncompact and an immersed instead of
an embedded submanifold. In~\cite{Bates1999:persist-overflow} the
unperturbed NHIM is assumed to be $C^2$ to obtain $C^1$ persistence
results, for the technical reason of constructing $C^1$ normal bundle
coordinates. In their later paper~\cite{Bates2008:approx-invarmfld},
this technicality is overcome\footnote{%
  Their Hypothesis (H2) that a certain approximate splitting
  like~\ref{eq:NHIM-split} ``does not twist too much'', can be
  obtained from uniform Lipschitz continuity of the tangent spaces of
  the invariant manifold. I am not sure if this is a significantly
  weaker hypothesis. See also the discussion in
  Remark~\ref{rem:loss-smoothness}.%
}, and existence of a NHIM is even proven when sufficiently close,
approximately normally hyperbolic invariant manifolds exist; the
persistence result is then obtained for compact NHIMs only, though.

Vanderbauwhede and Van
Gils~\cite{Vanderbauwhede1987:centermflds,Vanderbauwhede1989:centremflds-normal}
introduced the technique of considering a scale (family) of Banach spaces of
curves with exponential growth, and using the fiber contraction theorem (see
Appendix~\ref{chap:fibercontr}), proved smoothness of center
manifolds with the Perron method. Although not the same, center
manifolds have many properties in common with NHIMs and
Sakamoto~\cite{Sakamoto1990:invarmlfds-singpert} has built upon the
works of Henry and Vanderbauwhede and Van Gils to prove persistence
and $C^{k-1}$ smoothness for singularly perturbed systems in a
finite-dimensional $\R^m \times \R^n$ product space setting. The loss
of one degree of smoothness is again due to the construction of
normal bundle coordinates, although this fact is obscured by the
explicit $\R^m \times \R^n$ setting.

Singularly perturbed, or, slow-fast systems are another important
class of applications. These describe systems where the dynamics
is governed by multiple, separate time scales, or when a system can be
viewed as an approximation of an idealized, restricted system.
Singularly perturbed systems can be studied using the theory of normal
hyperbolicity by turning them into a regular perturbation problem via a
rescaling of time, see foundational work by
Fenichel~\cite{Fenichel1979:singpertODE} or the more introductory
expositions~\cite{Jones1995:geomsingpert,Kaper1999:intro-singpert,%
  Verhulst2005:methappl-singpert}.

\section{Comparison of methods}\label{sec:comparison}
\index{comparison!of methods}

There are two well-known methods for proving the existence and
smoothness of invariant manifolds in hyperbolic-type dynamical systems. The
Hadamard graph transform and the variation of constants method, also
known as the (Lyapunov--)Perron method. Variations of both have been
applied in many situations with some form of hyperbolic dynamics. This
ranges from the relatively simple problem of finding the stable and
unstable manifolds of a hyperbolic fixed point, to center manifolds,
partially hyperbolic systems, and normally hyperbolic systems. The
quote of Anosov~\cite[p.~23]{Anosov1969:geodflow} that ``every five
years or so, if not more often, someone `discovers' the theorem of
Hadamard and Perron, proving it either by Hadamard's method of proof
or by Perron's'' is nowadays probably familiar to many researchers in
these areas; it illustrates the pervasiveness of these methods.

In this section, I describe the ideas that are common to both
methods, as well as their differences. I hope to elucidate the merits
and weak points of both methods, especially when applied to normally
hyperbolic systems. Basically they seem to be able to produce the same
conclusions, but each method takes a different viewpoint to the
problem.

Let us first identify some basic common ideas. As a sample problem, we
consider finding the invariant unstable manifold $W^U$ of a
\index{stable (unstable) manifold}\idx{hyperbolic fixed point},
positioned at the origin of $\R^n$. The system is defined
by either a diffeomorphism $\Phi$ in the discrete case, or a flow
$\Phi^t$ in the continuous case. Both methods use the splitting of the
tangent space into stable and unstable directions:
\begin{equation*}
  \T_0 \R^n \cong \R^n = U \oplus S.
\end{equation*}
Let $(x^+, x^-)$ denote coordinates in $U \oplus S$ according to
projections $\pi^+,\,\pi^-$ from $\R^n$ onto the unstable and stable
directions $U$ and $S$, respectively. We shall use the notation
$\Phi_\pm = \pi^\pm \circ \Phi$.

\subsection{Hadamard's graph transform}

The \idx{graph transform} is due to Hadamard. His
paper~\cite{Hadamard1901:iterasymp} (in French, 4~pages) can be used
as a concise and basic introduction to the graph transform, applied to
the stable and unstable manifolds of a hyperbolic fixed point. He
does not prove smoothness or even continuity of these invariant
manifolds, although continuity could easily be concluded by
introducing the Banach space of bounded continuous functions with
supremum norm.

The basic idea of the graph transform is to view the unstable manifold
$W^U$ as the graph of a function $g\colon U \to S$. The graph, as a
set, is invariant under $\Phi$ (or e.g.\ws $\Phi^1$ in the continuous
case). The diffeomorphism $\Phi$ can also be interpreted as a map
acting on functions $g$ through its action on their graphs. This
induces a mapping
\begin{equation}\label{eq:graph-transform}
  T\colon g \mapsto \tilde{g}
  \qquad\text{implicitly defined by}\qquad
  \tilde{g}\big(\Phi_+(x,g(x))\big) = \Phi_-(x,g(x)).
\end{equation}
Thus, by definition, any point $(x,g(x))$ on the graph of $g$ gets
mapped to a point $(x',\tilde{g}(x'))$ on $\Graph(\tilde{g})$.
The map $T$ turns out to be well-defined and a contraction on
functions $U \to S$ that are sufficiently small in Lipschitz norm. The
graph of the unique fixed point $g^\star$ of $T$ must correspond to
the unstable manifold, that is, $W^U = \Graph(g^\star)$.

By considering the invariant sets, this method focuses on the geometry
of the problem. The method uses a diffeomorphism map $\Phi$; the
continuous case can be studied by considering the flow map $\Phi^t$
for a fixed time $t$.
The diffeomorphism can easily be studied locally in charts on a
manifold. Therefore this method lends itself well to the generalized
setting of normally hyperbolic invariant manifolds, where the
invariant manifold is intrinsically a global object. Even if this
global object is nontrivial, it can still be studied in local charts.

\subsection{Perron's variation of constants method}\label{sec:perron-method}
\index{Perron method}

This method is commonly referred to as the Perron or Lyapunov--Perron
method. Although in the literature this is attributed to
Perron~\cite{Perron1929:stabasympdiff}, he in turn cites
Cotton~\cite{Cotton1911:asympeqdiff} for the main idea.

This method focuses on the behavior of solution curves. The solutions
on the unstable manifold are precisely characterized by the fact that
they stay bounded under backward evolution. In the following, we
explain the Perron method for the continuous case\footnote{%
  Contrary to the graph transform (which is only intrinsically defined
  for mappings), the Perron method can be formulated both for flows
  and discrete mappings. For the discrete case, the integral must be
  replaced by a sum, the
  mapping $\Phi$ must be split into a linear and nonlinear part, and
  the linearized flow must be replaced by iterates of the linearized
  mapping. See for example~\cite{Aulbach2002:smooth-invarfib,
    Poetzsche2004:smooth-invarfib}.%
}. We adopt the notation from the graph transform setting. A
\idx{contraction operator} \idx{$T$} is constructed via a variation of
constants integral. The nonlinear part of the vector field is viewed
as a perturbation of the linear part. The integral equation is split
into the components along the stable and unstable directions. Then the
integration of the unstable component is switched from the interval
$\intvCC{0}{t}$ to $\intvCC{-\infty}{t}$, and only bounded functions
are considered. Writing the vector field
$v(x) = \D v(0)\cdot x + f(x)$ in linearized form with nonlinearity
$f$, this leads to the following contraction operator on curves
$x = (x^+,x^-) \in C^0(\intvCC{-\infty}{0};\R^n)$:
\begin{equation}\label{eq:perron-map}
  \begin{aligned}
  T\colon \big(x^+(t),x^-(t)\big) \mapsto
    \Big(x_0^+
      - &\int_t^0         \D\Phi_+^{t-\tau}(0)\;f_+\big(x^-(\tau),x^+(\tau)\big) \d\tau\;,\\
  &\!\!\!\int_{-\infty}^t \D\Phi_-^{t-\tau}(0)\;f_-\big(x^-(\tau),x^+(\tau)\big) \d\tau\,
    \Big).
  \end{aligned}
\end{equation}
This mapping $T$ is well-defined and a contraction on curves
$x \in C^0(\intvCC{-\infty}{0};\R^n)$ whose stable component $x^-$
is bounded and sufficiently small. Note that $T$ does not depend on
the stable component $x_0^-$ of the initial conditions anymore. The
fixed point of $T$ is a solution curve on $W^U$ with $x_0^+$ given as
a parameter. The unstable manifold is described, finally, by
evaluating the stable component at zero, leading to a graph
\begin{equation*}
  g\colon U \to S\colon x_0^+ \mapsto x^-(0).
\end{equation*}

First of all, it must be noted that this method requires $f$ to be
small in $C^1$\ndash norm. We can make $f$ small by restricting to a
sufficiently small neighborhood of the origin and cutting off $f$
outside of it. This cut-off does not influence the results: due to the
boundedness condition, curves $x$ stay in the neighborhood. The
method can be generalized to a separation of stable and unstable
spectra (i.e.\ws a dichotomy) away from the imaginary axis\footnote{%
  This is for the continuous case. The imaginary axis of the spectrum
  of a vector field corresponds (via the exponential map) to the unit
  circle for the spectrum of a diffeomorphism in the discrete case.%
}, and for example be applied to show existence of center
manifolds. In that case, uniqueness is lost as solutions will
generally run out of small neighborhoods. This makes the Perron method
not directly applicable to normally hyperbolic invariant manifolds.
The center direction corresponds to the invariant manifold, but
solution curves are global objects that cannot be treated locally.

The Perron method can be extended to overcome this problem.
Henry~\cite[Chap.~9]{Henry1981:geom-semilin-parab-PDEs}
linearizes the vector field only in the normal directions of the
invariant manifold. Henry uses a two-step contraction scheme, but this
can be reduced to a single contraction $T = T_- \circ T_+$ that is a
composition of two maps. The maps $T_\pm$ are essentially the
components of~\ref{eq:perron-map}. Still, the results obtained are not
quite as general as those obtained with the graph transform. For the
graph transform, the condition of normal hyperbolicity can be
formulated in terms of the ratio of the normal and tangential growth
rates of the flow along orbits, while for the Perron method it must be
formulated in terms of the ratio of global growth rates. This less
general assumption is required because the contraction
operator~\ref{eq:perron-map} is studied on spaces of solution curves
with a fixed exponential growth behavior, see
Definition~\ref{def:rho-norm}.

Explicit time dependence can be added to the Perron method with only
trivial modifications. This allows one to study hyperbolic fixed
points in non-autonomous systems\footnote{%
  The term `fixed point' in the context of a non-autonomous system is
  not definable in a coordinate-free way: any orbit of the
  system can be made into a fixed point under a suitable
  time-dependent coordinate transformation. However, there may be a
  preferred ``time-independent'' coordinate system. Moreover, the
  hyperbolicity of an orbit with respect an intrinsic metric is
  independent of a choice of coordinates.%
}. An application is the study of
invariant fibrations of, for example, normally hyperbolic invariant
manifolds. These have fibered stable and unstable manifolds. Points in
a single fiber are characterized by the unique orbit on the normally
hyperbolic invariant manifold they are exponentially attracted to
under forward or backward evolution, respectively. Finding these
fibers is turned into a non-autonomous hyperbolic fixed point problem
by following a point on the invariant manifold.

\subsection{Smoothness}\label{sec:comp-smooth}
\index{smoothness}

In the truly hyperbolic case---when the stable and unstable spectra
are separated by a neighborhood of the imaginary axis---the Perron
method allows for a direct proof of smoothness of the manifolds $W^U$
and $W^S$, see~\cite{Irwin1970:stabmfld-thm,Irwin1972:smooth-comp-map}
where this is formulated for discrete systems.
One first verifies that the contraction operator $T$ is as
smooth as the system, still acting on continuous curves $x$. Then, by
an implicit function theorem argument, the fixed point depends
smoothly on the (partial) initial value parameter $x_0^+$. To the
best of my knowledge, there is no similarly simple approach
for the graph transform. The contraction map acts directly on graphs
$g$, so to obtain smoothness, one must consider the maps
$g \in C^k(U;S)$. A direct estimate of contractivity in $C^k$\ndash
norm requires higher than $k$\th order Lipschitz estimates on the
system.

When the spectra are not separated by the imaginary axis---this occurs
for example in normally hyperbolic systems---things become more
complicated. The spectral gap condition defines an intrinsic upper
bound for the smoothness that one can generically expect for a system, as
was seen in Example~\ref{exa:opt-smooth}. Both methods apply induction
over the smoothness degree in their proof. Formal derivatives of the
contraction map $T$ are constructed. These are again contractions, but
now on higher derivatives of the fixed point mapping, while fixing the
derivatives below. Finally, the fiber contraction theorem (see
Appendix~\ref{chap:fibercontr}) can be used to conclude that these
higher order derivatives converge to a fixed point, jointly with all
lower orders.
\index{higher order derivative}

Explicit calculation of higher derivatives of $T$ is very tedious; one
should focus on their form as dictated by
Proposition~\ref{prop:compfunc-deriv}.
For the graph transform, the relevant terms that one obtains
from~\ref{eq:graph-transform} are, ignoring arguments,
\begin{equation*}
    \D^k \tilde{g} \cdot \big(\D_1\Phi_+ + \D_2\Phi_+\,\D g\big)^k + \ldots
  = \D_2\Phi_- \cdot \D^k g + \ldots
\end{equation*}
This leads to a contraction when
$\norm{\D_2\Phi_-}\!\cdot\!\norm{\D_1\Phi_+^{-1}}^k < 1$. The limit on
$k$ precisely corresponds to the spectral gap condition, at least when
we replace $\Phi$ by a sufficiently high iterate $\Phi^N$ of itself,
or in the continuous case, if we take the flow map $\Phi^t$ at a
sufficiently large time $t$.

For the Perron method, the essential form of the derivatives of $T$ is
\begin{equation}\label{eq:deriv-perron-map}
  \D^k T(x)\big(\dx_1,\ldots,\dx_k\big)(t)
  = \int \D\Phi^{t-\tau}(0)\cdot
         \D^k f(x(\tau))\big(\dx_1(\tau),\ldots,\dx_k(\tau)\big) \d\tau.
\end{equation}
The solution curve $x$ as well as its variations $\delta x_i$ are of
growth order $e^{\rho\,t}$, so the variation of $f$ in the integrand
is of growth order $e^{k\,\rho\,t}$, even if $\D^k f$ itself is
bounded. This means that $k$\th order variations must be considered in
spaces of growth order $e^{k\,\rho\,t}$ and $\D^k T$ is only
contractive on such spaces if both $\rho$ and $k\,\rho$ are contained
in the spectral gap.

\section{Bounded geometry}

The main results of this thesis are formulated in a geometric context
on differentiable manifolds. Already
in~\cite{Fenichel1971:invarmflds,Hirsch1977:invarmflds} the results
are formulated in such a context. This allows for more general
situations than choosing $\R^n$ as ambient space. In the compact case,
it does not require a change in the basic proofs (as can be seen from
the approach taken in~\cite{Fenichel1971:invarmflds}), but it does
bring in some additional formalism. It turns out that if one switches
to a noncompact setting in manifolds, then a fundamental new idea must
be added. First, a choice of Riemannian metric (or possibly a weaker
form: a Finsler structure) is required since not all metrics are
equivalent anymore on a noncompact manifold, see Example~\ref{exa:non-equiv-metrics}.
As an extension, Example~\ref{exa:pertsize-metric-dep} shows that one
cannot reduce the noncompact to a compact case by compactification.
\index{noncompactness} Secondly, the \idxb{ambient manifold} and
functions on it should satisfy uniformity criteria that can be
captured in terms of `\idx{bounded geometry}'\footnote{%
  We do not claim that bounded geometry is a necessary condition to
  generalize the theory of normal hyperbolicity to noncompact ambient
  spaces, only that it is sufficient. Section~\ref{sec:cpt-unif} does
  contain some examples, though, that indicate that some form of
  bounded geometry is necessary.%
}. For full details see Section~\ref{sec:cpt-unif} on compactness and
uniformity and Chapter~\ref{chap:boundgeom} on bounded geometry. Let
us just give a quick overview here.

A Riemannian manifold has bounded geometry, loosely speaking, if it is
globally, uniformly well-behaved. More precisely, its curvature must
be bounded and the injectivity radius must be bounded away from zero,
see Definition~\ref{def:bound-geom}.
Then there exists a preferred set of so-called normal coordinate charts
for which coordinate transition maps are uniformly continuous and
bounded, smooth functions. That is, in $k$\th order bounded geometry
we have a $C^k$ uniform atlas. As a consequence, uniformly continuous and
bounded submanifolds, vector fields, and other objects can be defined
and manipulated in a natural way in terms of these coordinates. Note
that $\R^n$ and compact manifolds have bounded geometry, see
Example~\ref{exa:BG-manifolds}. Together with
corollaries~\ref{cor:persistNHIMcpt} and~\ref{cor:persistNHIM-Rn} of
the main theorem, this shows that bounded geometry provides a natural
generalization to the known settings of compact and Euclidean spaces.

We use bounded geometry to obtain boundedness estimates on
holonomy, see Section~\ref{sec:BG-curv-holonomy}.
This is a fundamental ingredient in our proof of smoothness of
the perturbed manifold. Finally, we present more technical results in
bounded geometry: a uniform tubular neighborhood, uniform smoothing of submanifolds, and a
trivializing embedding of the normal bundle. We use these to reduce
the full problem of persistence of a normally hyperbolic submanifold
$M$ in an ambient manifold $Q$ to the trivialized situation
$X \times Y$, where $M$ is represented by the graph of a small
function $h\colon X \to Y$ and $Y$ is a vector space. Uniformity
permeates all these constructions in order to obtain uniform estimates
required for the persistence proof in the trivialized setting.

\section{Problem statement and results}\label{sec:prob-statement}

The main problem in this thesis is the persistence of
normally hyperbolic invariant manifolds under small perturbations of
the dynamical system. That is, given a flow $\Phi^t$ defined by some
vector field $v$ and a normally hyperbolic invariant submanifold $M$,
we want to show that for any vector field $\tilde{v}$ sufficiently
close to $v$, there exists a unique manifold $\tilde{M}$ close to $M$
that is invariant under the flow of $\tilde{v}$; moreover we'd like to
show that $\tilde{M}$ is normally hyperbolic again. To
make this statement precise, we need to define a lot of things: first
of all, we need to rigorously define normal hyperbolicity. Secondly,
the statements about vector fields and manifolds being `close' need to
be formalized and finally, we need to specify the ambient space $Q$ on
which the system is defined.

We start with a Riemannian manifold $(Q,g)$ as ambient space and a
submanifold $M$. For technical reasons this manifold is assumed to be
complete and of bounded geometry (or at least in a $\delta > 0$
neighborhood of $M$, since the whole analysis can be restricted to
such a neighborhood). Basically, these conditions impose uniformity of
the space, and fit in the principle of replacing compactness by
uniform estimates, see Section~\ref{sec:cpt-unif} and
Chapter~\ref{chap:boundgeom} for more details. Note that $Q = \R^n$
with the standard Euclidean metric is an easy (and typical) special
case.

Let $v \in \vf(Q)$ be a vector field on $Q$ with
$v \in \BUC^{k,\alpha}$, that is, $v$ up to its $k$\th derivative is
uniformly continuous and bounded, and $\alpha$\ndash H\"older
continuous if $\alpha \neq 0$. On $\R^n$ these statements make
immediate sense; on general manifolds $Q$, results from
Chapter~\ref{chap:boundgeom} are required, in particular
Definition~\ref{def:unif-bounded-map}, to make sense of uniform
boundedness and continuity by means of normal coordinates. Let
$\tilde{v}$ be another such vector field. The closeness of $v$ and
$\tilde{v}$ will be measured using supremum norms. The $C^1$\ndash
norm is required to be small for the persistence result. Thus, even
though we consider the space of $C^{k,\alpha}$ bounded vector fields,
we endow this space with a $C^1$ topology. See
Section~\ref{sec:induced-topo} for some more remarks on this topology
and a comparison with standard topologies on noncompact function
spaces. If we assume that $\tilde{v} - v$ is small in
$C^{k,\alpha}$\ndash norm as well, then $\tilde{M}$ will be
$C^{k,\alpha}$\ndash close\footnote{%
  We actually only obtain $C^k$ closeness for integer
  $k \le \fracdiff - 1$ where $\fracdiff$ is the ratio in the spectral
  gap condition~\ref{def:r-NHIM}. This is probably an artifact of the
  techniques we used, while $C^{k,\alpha}$ closeness with
  $k + \alpha = \fracdiff$ should be obtainable.%
} to $M$. These $C^1$ and $C^k$ norm requirements and results are
direct analogues of those in the implicit function theorem.

Finally, we define normal hyperbolicity of a submanifold $M$ with
respect to a continuous dynamical system $(\R,\,Q,\,\Phi)$. The flow
$\Phi^t$ should have a domain of definition containing at least a
neighborhood of the invariant manifold $M$. This definition is easily
adapted to the discrete case of a diffeomorphism $\Phi\colon Q \to Q$;
simply replace $t \in \R$ by $t \in \Z$ as iterated powers of $\Phi$.
\index{dynamical system!discrete}
\begin{definition}[Normally hyperbolic invariant manifold]
  \label{def:NHIM}
  \index{normally hyperbolic invariant manifold!definition}
  \index{normal hyperbolicity}
  \index{exponential growth rate}
  Let $(Q,g)$ be a smooth Riemannian manifold,
  $\Phi^t \in C^{\fracdiff \ge 1}$ a flow on $Q$, and let
  $M \in C^{\fracdiff \ge 1}$ be a submanifold of\/ $Q$. Then $M$ is
  called a normally hyperbolic invariant manifold of the system
  $(Q,\Phi^t)$ if all of the following conditions hold true:
  \begin{enumerate}
  \item $M$ is invariant, i.e.\ws $\forall\; t \in \R\colon \Phi^t(M) = M$;
  \item there exists a continuous splitting
    \begin{equation}\label{eq:NHIM-split}
      \T_M Q = \T M \oplus E^+ \oplus E^-
    \end{equation}
    of the tangent bundle\/ $\T Q$ over $M$ with globally bounded,
    continuous projections $\pi_M,\,\pi_+,\,\pi_-$ and this splitting
    is invariant under the tangent flow
    $\D\Phi^t = \D\Phi_M^t \oplus \D\Phi_+^t \oplus \D\Phi_-^t$;
  \item there exist real numbers
    $\rho_- < -\rho_M \le 0 \le \rho_M < \rho_+$ and $C_M,C_+,C_- > 0$ such that
    the following exponential growth conditions hold on the various
    subbundles:
    \begin{equation}\label{eq:NHIM-rates}
      \begin{alignedat}{2}
      &\forall\; t\in \R,\,(m,x) \in \T M&&\colon\quad
      \norm{\D\Phi_M^t(m)\,x} \le C_M\,e^{\rho_M\,\abs{t}}\,\norm{x},\\[5pt]
      &\forall\; t\le 0 ,\,(m,x) \in E^+ &&\colon\quad
      \norm{\D\Phi_+^t(m)\,x} \le C_+\,e^{\rho_+\,t}\,\norm{x},\\[5pt]
      &\forall\; t\ge 0 ,\,(m,x) \in E^- &&\colon\quad
      \norm{\D\Phi_-^t(m)\,x} \le C_-\,e^{\rho_-\,t}\,\norm{x}.
      \end{alignedat}
    \end{equation}
\end{enumerate}
\end{definition}
These exponential estimates imply that the tangent flow $\D\Phi^t$
must contract at a rate of at least $\rho_-$ along the stable
complementary bundle $E^-$, expand\footnote{%
  Note that expansion along $E^+$ could also be formulated as
  $\norm{\D\Phi^t(m)\,x} \ge C_+\,e^{\rho_+\,t}\,\norm{x}$ for $t \ge 0$
  and $(m,x) \in E^+$. This is equivalent to the condition as stated,
  which says that there is contraction for $t \le 0$, that is, in
  backward time. This latter formulation is preferable because it is
  the form required in estimates.} %
as $e^{\rho_+\,t}$ along the unstable bundle $E^+$, and may not
expand or contract at a rate faster than $\pm\,\rho_M$,
respectively, tangent along~$\T M$.

\begin{remark}\NoEndMark
  We added the condition that the projections $\pi_M,\,\pi_+,\,\pi_-$
  are globally bounded. This is a natural extension to the noncompact
  case, and is automatically satisfied in case $M$ is compact.
\end{remark}

\begin{remark}\label{rem:NHIM-ratios}
  This definition of normal hyperbolicity is not as general as could
  be. Fenichel~\cite[p.~200--204]{Fenichel1971:invarmflds} defines
  normal hyperbolicity in terms of `generalized Lyapunov type
  numbers'. It follows from his uniformity lemma that these are
  essentially exponentiated versions of our \idx{Lyapunov exponent}s $\rho$.
  \idxalso{$\rho$}{exponential growth rate}
  For example, his $\nu$ is equivalent to our $e^{-\rho_+}$. But
  Fenichel defines $\sigma$ in terms of the ratio $\rho_M/\rho_+$
  along orbits in $M$. His definition allows the expansion rate along
  $\T M$ to be large, for example, as long as the expansion rate along $E^+$ is
  large enough to keep the ratio $\sigma(m)$ bounded, \emph{along the
    orbit through $m$}. The definitions in~\cite{Hirsch1977:invarmflds,%
    Mane1978:persistmflds,Bates2008:approx-invarmfld} are equivalent in the compact context
  to the one in~\cite{Fenichel1971:invarmflds}. Ma\~n\'e's work shows
  that this definition is as general as possible, see below.
\end{remark}

When $M$ is compact, normal hyperbolicity is a sufficient condition
for the existence of a persistent manifold $\tilde{M}$ for a system
generated by $\tilde{v}$ if $\norm{\tilde{v} - v}_1$ is sufficiently
small. Conversely, Ma\~n\'e~\cite{Mane1978:persistmflds} has proved
that normal hyperbolicity (in the sense of e.g.\ws Fenichel's definition)
is also necessary: if a compact invariant manifold $M$
is persistent under any $C^1$ small perturbation, then $M$ is normally
hyperbolic (see also Example~\ref{exa:break-nonNHIM} and the clear
exposition in the introduction of~\cite{Fenichel1971:invarmflds}).
Definition~\ref{def:NHIM}, however, only guarantees $C^1$
smoothness for the perturbed manifold $\tilde{M}$. To obtain higher
order smoothness, a more stringent condition of $\fracdiff$\ndash
normal hyperbolicity must be satisfied.
\begin{definition}[$\fracdiff$\ndash normally hyperbolic invariant manifold]
  \label{def:r-NHIM}
  A manifold $M$ is called $\fracdiff$\ndash normally hyperbolic with
  $\fracdiff \ge 1$ a real number, if it satisfies $M \in C^\fracdiff$ and the
  conditions in Definition~\ref{def:NHIM}, but with the stronger
  inequalities
  \begin{equation}\label{eq:spectral-gap}
    \rho_- < -\fracdiff\,\rho_M \le 0 \le \fracdiff\,\rho_M < \rho_+.
  \end{equation}
\end{definition}
\idxalso{$r$}{$r$-normal hyperbolicity}%
\index{normal hyperbolicity@$r$-normal hyperbolicity}%
This means that the normal expansion and contraction must not just
dominate the tangential ones, but do so by a factor $\fracdiff$. For
$\fracdiff = 1$ we recover the original definition, while the
generalized inequality~\ref{eq:spectral-gap} is called the \idx{spectral gap
condition}. If $M$ is $\fracdiff$\ndash normally hyperbolic and
$v$ and the perturbation $\tilde{v}$ are $C^\fracdiff$ as well, then
the persistent manifold $\tilde{M}$ is $C^\fracdiff$ smooth again. The
example in Section~\ref{sec:spectral-gap} shows that
this spectral gap condition is sharp: even when everything is
$C^\infty$, the perturbed manifold $\tilde{M}$ in that example is only
$C^\fracdiff$ when no more than $\fracdiff$\ndash normal hyperbolicity
holds. Note that $\fracdiff$ can be interpreted as a `fractional
differentiability degree' when writing $\fracdiff = k + \alpha$ with
integer $k \ge 1$ the normal degree of differentiability and
$0 \le \alpha \le 1$ an additional \idxb{H\"older continuity} exponent.

\begin{remark}
  \label{rem:gap-smoothness}
  \index{smoothness!non-$C^\infty$}
  We explicitly exclude the case $\fracdiff = \infty$ from
  Definition~\ref{def:r-NHIM}, even though the spectral gap
  condition~\ref{eq:spectral-gap} could hold for $\fracdiff = \infty$,
  if $\rho_M = 0$. The reason is that one can generally not expect to
  obtain a persistent manifold $\tilde{M} \in C^\infty$ in this case.
  Even though for any order $\fracdiff < \infty$ there exist
  persistent manifolds $\tilde{M} \in C^\fracdiff$ for sufficiently
  small perturbations, the maximum perturbation size generally depends
  on $\fracdiff$ and may shrink to zero when $\fracdiff \to \infty$.
  See Example~\ref{exa:NHIM-non-Cinfty} and the example
  in~\cite{VanStrien1979:centermfld-not-cinfty} for the closely
  related case of center manifolds.

  On the other hand, it is shown in~\cite{Hirsch1977:invarmflds} that
  there is forced smoothness. If $M \in C^1$ is an $\fracdiff$\ndash
  NHIM, then $M$ must be $C^r$. We do not show that this also holds in
  our noncompact setting, but this is likely to be true.
\end{remark}

With these preliminary definitions in place, we are now ready state
the main theorem of this thesis; it is restated in Chapter~\ref{chap:persist}.
We should point out that $M$ is not required to be
an embedded submanifold; immersions are allowed as well, see
Section~\ref{sec:immersed}. For the details of the smoothness notation
$\BUC^{k,\alpha}$ on manifolds we refer to
definitions~\ref{def:unif-bounded-map} and~\ref{def:unif-imm-submfld}.
\declarerepeattheorem{persistNHIMgen}%
[Persistence of noncompact NHIMs in bounded geometry]{
  Let $k \ge 2,\, \alpha \in \intvCC{0}{1}$ and
  $\fracdiff = k+\alpha$. Let\/ $(Q,g)$ be a smooth Riemannian manifold
  of bounded geometry and $v \in \BUC^{k,\alpha}$ a vector field on
  $Q$. Let $M \in \BUC^{k,\alpha}$ be a connected, complete
  submanifold of\/ $Q$ that is $\fracdiff$\ndash normally hyperbolic
  for the flow defined by $v$, with empty unstable bundle, i.e.\ws
  $\rank(E^+) = 0$.

  Then for each sufficiently small $\Ysize > 0$ there exists a
  $\delta > 0$ such that for any vector
  field $\tilde{v} \in \BUC^{k,\alpha}$ with
  $\norm{\tilde{v} - v}_1 < \delta$, there is a unique submanifold
  $\tilde{M}$ in the $\Ysize$\ndash neighborhood of $M$, such that
  $\tilde{M}$ is diffeomorphic to $M$ and invariant under the flow
  defined by $\tilde{v}$. Moreover, $\tilde{M}$ is $\BUC^{k,\alpha}$
  and the distance between $\tilde{M}$ and $M$ can be made arbitrarily
  small in $C^{k-1}$\ndash norm by choosing
  $\norm{\tilde{v} - v}_{k-1}$ sufficiently small.
}
\vspace{0.3cm}
\reptheorempersistNHIMgen

This result generalizes the well-known results
in~\cite{Fenichel1971:invarmflds,Hirsch1977:invarmflds} to the case of
noncompact submanifolds of Riemannian manifolds. Again,
our definition of normal hyperbolicity is slightly less
general than the definitions used in these works. We also
assumed that only the stable bundle $E^-$ is present, see also
Section~\ref{sec:ext-full-NHIM}; note that we thus only have the spectral
gap condition $\rho_- < -\fracdiff\,\rho_M$ with $\rho_M \ge 0$.
See also the restatement of this theorem on
page~\pageref{thm:persistNHIMgen} and the list of
remarks~\ref{rem:persistNHIM} for more details.

We borrow the idea to generalize the Perron method to NHIMs from
Henry~\cite{Henry1981:geom-semilin-parab-PDEs}, and use the
techniques of Vanderbauwhede and Van Gils~\cite{Vanderbauwhede1987:centermflds}
(see~\cite{Vanderbauwhede1989:centremflds-normal} for a clear
presentation) for proving higher order smoothness. This is similar,
but developed independently from Sakamoto's
work~\cite{Sakamoto1990:invarmlfds-singpert} in which he used the same
ideas to study singular perturbation problems. We improve these
results in a couple of ways. First of all, we simplify the
basics of the proof by reducing the two-step contraction argument to a
single contraction mapping, still written as a composition of two
separate maps acting on horizontal curves in $M$ and vertical curves
in the normal bundle fiber,
respectively. More importantly, we remove the restriction of a trivial
product structure $X \times Y$. Thus, we neither require $M$ to have a
global chart in a Banach space $X$, so $M$ need not be topologically
trivial, nor do we require a global product, so the normal bundle of
$M$ need not be trivial either. On the other hand, the results by
Bates, Lu, and Zeng also allow $M$ to be a general submanifold, but
still assume the ambient space to be a Banach space. Our results are
for finite dimensional, but not necessarily linear, Riemannian ambient
spaces. In their paper~\cite{Bates2008:approx-invarmfld}, they only
require an approximate NHIM for finding a persistent
invariant manifold. We use this idea as well (see the setup of $h$
small in the formulation of Theorem~\ref{thm:persistNHIMtriv}), but
we do not expand this idea any further. Finally, this work was
initiated from the (unfortunately never published) preprint by
Duistermaat on stable manifolds~\cite{Duistermaat1976:stabmflds}.

\index{noncompactness}
It seems to be a well-known belief by many experts that the theory of
normal hyperbolicity can be extended to a general noncompact
setting~\cite[p.~165]{Delshams2006:orbits-unbounded}. The idea is to
replace compactness by uniform estimates. An important conclusion to
be drawn from the present work is that indeed this principle holds, but probably
in a more strict way than one would naively realize. Uniform estimates
are not only required for the vector field defining the system, but
for the underlying ambient space as well, in terms of bounded
geometry. This becomes clear only when one leaves the context of
Euclidean ambient spaces, which trivially have bounded geometry. On a
Riemannian manifold, already the very definition of uniform continuity
of a vector field $v$ and its derivatives requires some aspects of
bounded geometry. It should be noted though, that we do not prove that
bounded geometry is a strictly necessary condition for persistence of
NHIMs; nonetheless, the results do suggest that persistence of NHIMs
may break down in `unbounded geometry', see Section~\ref{sec:cpt-unif}.

In Section~\ref{sec:proof-outline} we present an outline of the proof
and how it is reduced to a more basic setting $M' \times Y$ of a
trivial normal bundle. Here $M'$ is a smoothed version of $M$ to
rectify an artificial loss of smoothness, as occurs e.g.\ws
in~\cite{Sakamoto1990:invarmlfds-singpert}. Below we present some
extensions to the main Theorem~\ref{thm:persistNHIMgen} above.

\subsection{Non-autonomous systems}\label{sec:non-autonomous}
\index{non-autonomous system}

Our main theorem can be trivially extended to the non\hyp{}autonomous,
time\hyp{}dependent case. First, extend the configuration space with
time $t$ as additional variable, i.e.\ws $\hat{Q} = Q \times \R$, and add the
equation $\dot{t} = 1$. If the original system was time\hyp{}independent,
then $\hat{M} = M \times \R$ is a NHIM for the extended system, and all
uniform assumptions still hold, since the flow along the time
direction is neutral and trivial. Note that this argument does not work
in the classical theory as $\hat{M}$ is not compact\footnote{%
  If the perturbation is time-dependent, but in an (almost) periodic
  way, then this can still be treated in the compact setting. One can
  extend the configuration space with the circle $S^1$ (or an
  $n$\ndash torus in the almost periodic case).%
}. Now we can make any
$C^1$ small perturbation, and obtain a persistent manifold
$\tilde{M}$ in the extended configuration space. The perturbation is
allowed to be generally time\hyp{}dependent, as long as it is uniform in
time, including derivatives. The resulting manifold $\tilde{M}$ will
still be invariant and close to the original $M$, although it will
depend on time. That is, if we assume local coordinates
$(x,y) \in \R^n \times \R^m$ for $Q$ such that
$M = \R^n \times \{ 0 \}$ locally, then we can write
$\tilde{M} = \Graph(h)$ for a function
\begin{equation*}
  h\colon \R^n \times \R \to \R^m,\quad y = h(x,t).
\end{equation*}
In other words, $\tilde{M}$ can be viewed as a graph over $M$ (i.e.\ws a
section of the normal bundle), but this graph now additionally depends
on time. The manifold $\tilde{M}$ itself is again normally hyperbolic
when viewed in the extended space $Q \times \R$, see also
Section~\ref{sec:ext-non-autonomous}.

Such time\hyp{}dependent invariant manifolds are called `integral
manifolds'. These have been studied as non\hyp{}autonomous
generalizations of stable and unstable manifolds of hyperbolic fixed
points~\cite{Palmer1975:lin-intgmfld}, but also as generalizations of
compact NHIMs~\cite{Hale1961:intgmflds-pertODE,Yi1993:gen-integrmfld}.
The theory of noncompact NHIMs allows one to treat all such integral
manifolds in the same way as the autonomous case. One can, for
example, also start with an integral manifold that is normally
hyperbolic: it will persist just as well.

\subsection{Immersed submanifolds}\label{sec:immersed}
\index{immersed submanifold}

In the main Theorem~\ref{thm:persistNHIMgen}, we intentionally do
not precisely state in what sense $M$ is a submanifold of $Q$. The
implicit assumption that $M$ is an embedded submanifold can be
weakened to $M$ being an immersion, see
also~\cite[Section~6]{Hirsch1977:invarmflds}
and~\cite{Bates1999:persist-overflow}. That is, $M$ can be viewed as an
abstract manifold together with an immersion map $\iota\colon M \to Q$
that need not be injective. This does not affect the theory as long as
$\iota$ is still locally injective: $\iota(M)$ including a
neighborhood modeled on its normal bundle $N$ can be pulled back via
the immersion $\iota$ to the abstract $M$. All local properties are
preserved, so we can study the system via this `covering'. We may not
always make a clear distinction between the abstract manifold $M$
and its immersed image $\iota(M) \subset Q$; the discussion below
shows that this distinction is not really necessary, as long as we do
not consider perturbations.

\begin{figure}[htb]
  \centering
  \parbox[b]{5.1cm}{%
    \centering
    \input{\figpath gen-immersion.pspdftex}
    \caption{An immersion with a transverse intersection.}
    \label{fig:gen-immersion}
  }
  \hspace{1cm}
  \parbox[b]{5.1cm}{%
    \centering
    \includegraphics{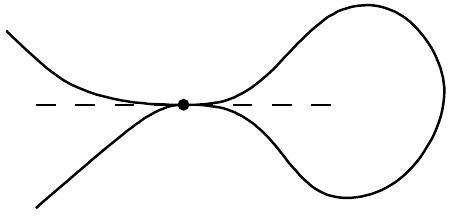}
    \caption{An allowed immersion with tangential intersection.}
    \label{fig:tang-immersion}
  }
\end{figure}
For a generic immersion one could expect a picture as in
Figure~\ref{fig:gen-immersion}, where the immersed manifold intersects
itself transversely. Such situations cannot occur if $M$ is a NHIM.
This follows from the exponential growth rates along tangent and
normal bundles of $M$. Let $m \in \iota(M)$ be an intersection point
of two preimages $m_1, m_2 \in M$. If the tangent spaces along $M$ at
$m_1$ and $m_2$ are embedded differently into $\T_m Q$, then one could find
$x \in \text{Im}\big(\D\iota(m_1)\big) \setminus \text{Im}\big(\D\iota(m_2)\big)$.
This would imply that $x$ has a component in $N_{m_2}$ and give
contradictory growth rates for $\D\Phi^t(m)\cdot x$ depending on
whether we view $m$ as image of $m_1$ or $m_2$, as the orbit of
$m \in \iota(M)$ is uniquely defined. Hence, at each point
$m \in \iota(M)$ the tangent spaces $\D\iota(m_i)$ of all preimages
$m_i \in \iota^{-1}(m)$ must coincide, see
Figure~\ref{fig:tang-immersion}. Stated more abstractly, $M$
must have contact of order one with itself. More generally it holds
that an immersed $k$\ndash NHIM has contact of order $k$ with
itself\footnote{%
  The order of contact is defined as the degree \emph{up to and
    including} which the Taylor expansions of the objects agree.%
}, see~\cite[p.~68]{Hirsch1977:invarmflds}.

Next, each maximal set of $\iota(M)$ with constant number of
preimages\footnote{%
  The number of preimages must be countable if $M$ is assumed to be
  second-countable.%
} $p \in \N \cup \{\infty\}$,
\begin{equation}\label{eq:immersion-invarsubset}
  M_p = \set[\big]{m \in \iota(M)}{\#\,\iota^{-1}(m) = p},
\end{equation}
is an invariant subset of $\iota(M)$. This is again due to uniqueness
of the flow. If an orbit would cross into a set of different preimage
number, then a least one of the `lifts' of this orbit from $\iota(M)$
to the `cover' $M$ would have to enter or leave $M$. This cannot
happen as $M$ itself is invariant. Hence, the conclusion is that
self-intersections of $\iota(M)$ must be invariant.

Immersed NHIMs may occur on themselves, or appear as a persistent
manifold under perturbation from an embedded manifold. An example of
an embedded noncompact NHIM that collapses under a small perturbation
into an immersed manifold can be found in Section~\ref{sec:cpt-unif}.
The same can happen with an immersed manifold with compact image. The
following example is taken from~\cite[p.~130]{Hirsch1977:invarmflds}
and shows that the injection map is relevant for how the NHIM
persists.
\begin{example}[Perturbation of a compact non-injectively immersed NHIM]
  \label{exa:cpt-immersed-NHIM}
  We consider on $\R^3$ the vector field
  \begin{equation*}
    \begin{aligned}
      \dot{x} &= \arctan(x^2) + \epsilon,\\
      \dot{y} &= y,\\
      \dot{z} &= -z
    \end{aligned}
  \end{equation*}
  and smoothly modify it outside the cylinder $y^2 + z^2 = 1$ such
  that it flows in the negative $x$\ndash direction and connects the
  basin of repulsion of the origin intersected with $x > 0$ to the
  basin of attraction intersected with $x < 0$. The perturbation
  parameter $\epsilon$ is initially set to zero.

  Note that the $x$\ndash axis is a NHIM (the arctangent is there to
  keep the vector field and tangential growth rate bounded). Due to
  the modification, the two loops in
  Figure~\ref{fig:NHIM-cpt-immersion} are also NHIMs of this system,
  both separately and their union. They start from the origin along
  the positive $x$\ndash axis, then diverge from it in opposite
  directions in the $xy$\ndash plane; once outside the cylinder
  $y^2 + z^2 = 1$ they start moving into the negative $x$ direction
  and finally return to the origin approximately along the $xz$\ndash
  plane.

  We can parametrize
  their joint image with an injection $\iota_1$ mapping
  $M = \{0,1\} \times S^1$ separately onto the two loops, but we can
  also parametrize with $\iota_2$ that maps $M = S^1$ onto the full
  figure eight image. If we perturb to $\epsilon > 0$, then $\iota_1$
  will result in Figure~\ref{fig:NHIM-cpt-immersion1} where the two
  loops are separated, while $\iota_2$ will result in
  Figure~\ref{fig:NHIM-cpt-immersion2} which has one loop, but the
  middle of the figure eight does not intersect anymore.
  Figure~\ref{fig:NHIM-cpt-immersion-sect} shows how the two orbits
  from the separate loops closely pass the $x$\ndash axis along
  hyperbolic trajectories. The single orbit of $\iota_2$ follows
  hyperbolic trajectories through the other two quadrants.
\end{example}

\begin{remark}
  Note that these different persistent NHIMs do not contradict the
  uniqueness property of persistence, since the (abstract) manifolds
  $M$ were different to begin with. Formulated differently, if we
  consider the universal cover of the tubular neighborhood of
  $\iota_1(M)$ (deduplicating the origin as image point), then
  Figure~\ref{fig:NHIM-cpt-immersion1} shows the unique invariant
  manifold that stays in this tubular neighborhood cover. We obtain a
  different persistent NHIM for any prescribed (possibly infinite)
  sequence of concatenating the two loops of the original figure eight
  into an immersion from $S^1$ (or $\R$ if the sequence is infinite).
\end{remark}

\begin{figure}[tbp]
  \newlength{\figwidth}
  \setlength{\figwidth}{6cm}
  \centering
  \parbox[b]{\figwidth}{%
    \centering
    \input{\figpath NHIM-cpt-immersion.pspdftex}
    \caption{a non-injectively immersed manifold with compact image.}
    \label{fig:NHIM-cpt-immersion}
  }
  \hspace{0.7cm}
  \parbox[b]{\figwidth}{%
    \centering
    \includegraphics[width=5.8cm]{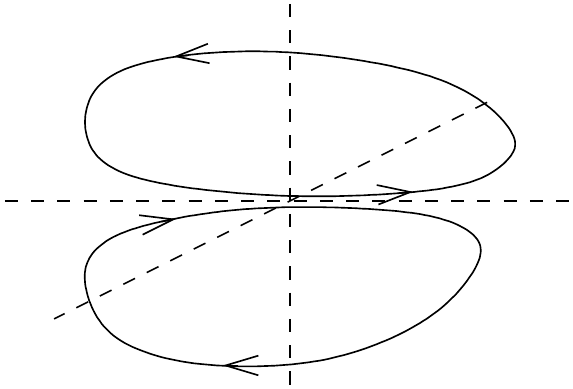}
    \caption{the persistent manifold of $\iota_1$ consisting of two
      separate loops.}
    \label{fig:NHIM-cpt-immersion1}
  }
  \\[0.5cm]
  \parbox[b]{\figwidth}{%
    \centering
    \includegraphics[width=5.8cm]{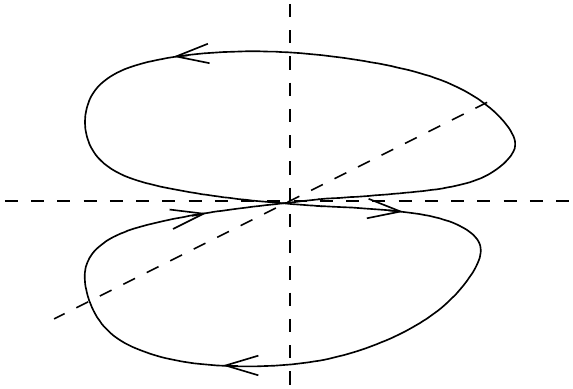}
    \caption{the persistent manifold of $\iota_2$ consisting of one
      figure eight loop without self-intersection.}
    \label{fig:NHIM-cpt-immersion2}
  }
  \hspace{0.7cm}
  \parbox[b]{\figwidth}{%
    \centering
    \input{\figpath NHIM-cpt-immersion-sect.pspdftex}
    \caption{projection onto the $yz$-plane showing the orbits
      of the persistent manifold $\iota_1$ while passing the origin.}
    \label{fig:NHIM-cpt-immersion-sect}
  }
\end{figure}

Finally, we present an example of an injectively immersed (but not
embedded) NHIM, see~\cite[p.~68]{Hirsch1977:invarmflds}. The mapping
below is known as Arnold's cat map.
\begin{example}[Injectively immersed dense line in the torus]
  \label{exa:immersed-dense-line}
  The matrix
  \begin{equation*}
  A = \left(\begin{array}{cc}
        2 & 1 \\
        1 & 1
      \end{array}\right)
  \end{equation*}
  acting on the two-torus $\mathbb{T}^2$ is an Anosov diffeomorphism.
  The line through $0$ with slope $\mfrac{1}{2}(1 - \sqrt{5})$ is
  densely immersed in the torus and it is a NHIM for this discrete
  system. If we take its suspension, then we have a flow with a NHIM
  that is densely immersed into the mapping torus
  $\big(\intvCC{0}{1} \times \mathbb{T}^2\big) /\!\sim$ with
  identification $(1,x) \sim (0,A\,x)$.
\end{example}

\subsection{Overflowing invariant manifolds}\label{sec:overflow-invar}
\index{overflow invariance}

In many applications of normally hyperbolic systems, the manifold $M$
has a boundary $\partial M$. A~typical reason is that the system
ceases to be normally hyperbolic across the
boundary. This happens, for example, when studying a singularly
perturbed, or slow-fast system and in the fast limit there are points
on $M$ with zero eigenvalues in the normal direction. At such points,
$M$ is not normally hyperbolic anymore, so one must restrict $M$ such
that these points are outside of $M$. Another, somewhat artificial but practical example
would be if the invariant manifold is noncompact and one would try to
use the classical theorems that are only applicable to compact
manifolds by cutting off $M$ to a compact manifold with boundary. One
can try to attack this latter case with our more general theory for
noncompact manifolds. The additional uniformity assumptions should be
checked then.

If $M$ is a manifold with boundary, some persistence results can still
be retained. This idea was introduced by
Fenichel~\cite{Fenichel1971:invarmflds} in studying so-called
overflowing invariant manifolds. These are normally hyperbolic
manifolds that are invariant under backward time flow, or in other
words, only under the forward flow, orbits can leave, i.e.\ws
`overflow' the manifold. The condition of overflowing invariant is
slightly stronger: the vector field must strictly point outward at the
boundary. This weakened version that the manifold is negatively
invariant does come at the additional cost that only stable normal
directions are allowed. The time-reversed situation of an inflowing
invariant manifold with only unstable normal directions is equivalent.
In Section~\ref{sec:ext-overflow-invar} we discuss how this idea can
be incorporated into the Perron method proof.

The attention of the reader is also drawn to the following remark made
in~\cite[p.~214]{Fenichel1971:invarmflds}. If an open submanifold
$N \subset M$ is overflowing invariant, and the spectral gap condition
is satisfied on $N$ with a higher ratio $\fracdiff_N$ than on the
whole of $M$, then the persistent manifold $\tilde{N}$ over $N$
retains $C^{\fracdiff_N}$ smoothness, even if smoothness of
$\tilde{M}$ will generally be lower.

\section{Induced topology}\label{sec:induced-topo}
\index{comparison!of topologies}

In this work the topologies for spaces of vector fields, submanifold
embeddings, et cetera, are (implicitly) defined by norms and distance
functions. The norms we use are uniform $C^k$\ndash norms for bounded
functions, and families with additional exponential growth rates.
Let us call the topologies induced by these norms $\BC^k$\ndash
topologies and consider how they compare to
two common topologies: the weak and strong Whitney topologies for maps
between manifolds, alternatively known as the compact-open and fine
topology, see~\cite{Hirsch1976:difftopo}.
\index{$C^k_b$!topology}

The weak topology has a subbasis generated by the set of functions $g$
that are close to some function $f$ in $C^k$\ndash norm on compact
subsets in local coordinate charts. This means that for example the
function family
\begin{equation*}
  f_\delta\colon \R \to \R\colon x \mapsto \delta\,\exp(x^2)
\end{equation*}
converges to zero for $\delta \to 0$ in this topology. On any compact
set $f_\delta$ will become arbitrarily small when
$\delta \to 0$ while it does not converge in uniform norm (nor with
additional exponential growth rate). Hence the weak topology is
weaker than our induced $\BC^k$\ndash topologies.

The strong topology has as basis all sets of functions $g$ that are
close to some function $f$ on a locally finite cover by compact sets
$K_i$, where $g$ must approximate $f$ in $C^k$\ndash norm on each
$K_i$ in local coordinates up to a given chart-dependent size
$\epsilon_i$. For any function without compact
support, a collection $\epsilon_i > 0$ can be found that converges
faster to zero on each larger $K_i$ than the function to zero when
$x \to \infty$. Hence the only sequences of functions $\R \to \R$
that converge to the zero function in the strong topology are
those with (eventually) compact support.
A~family $f_\delta$ of functions with noncompact
support cannot converge to the zero function, as can be seen by using
a diagonal argument. The family $f_\delta(x) = \delta\,\exp(-x^2)$,
for example, does not converge to the zero function in the strong
topology. Given a locally finite cover of $\R$ by compact sets $K_i$, we
choose $x_i \in K_i$ and corresponding $\epsilon_i = \exp(-x_i^2)/i$.
Then for any given $\delta > 0$, we will have
$\abs{f_\delta(x_i)} > \epsilon_i$ for some large $i$. On the other hand,
this family $f_\delta$ obviously converges under the uniform norm with
any exponential growth rate. Thus, the strong topology is stronger
than our induced $\BC^k$\ndash topologies, see also the remark
in~\cite[p.~43]{Golubitsky1973:stable-mappings} for noncompact
manifolds.

We conclude that the $\BC^k$\ndash topologies induced by our uniform norms are not
equivalent to either the weak or strong Whitney topology, because
the weak topology allows arbitrary behavior of functions outside
compact sets, while the strong topology completely restricts that
behavior. Our norms allow moderate variations at infinity. In
general, `moderate behavior' is not well-defined on a general
noncompact manifold, as it depends on the choice of charts. In the
setting of bounded geometry, though, the uniform, metric structure
makes this behavior unambiguous; we can restrict to normal coordinate
charts and consider `moderate behavior' with respect to these.
Note that these topologies are equivalent on compact domains.

\section{Notation}\label{sec:notation}

Here, we will establish some notation and conventions to be used
throughout this work. See the index for more specific symbols.
\begin{itemize}
\item
  The letters \idx{$I$} and \idx{$J$} will denote intervals in $\R$;
  $I$ will typically represent an interval that is unbounded on one
  side, while $J$ will be bounded.
\item
  $\epsilon, \delta > 0$ will denote (small) bounds for
  continuity-like estimates; $C > 0$ will denote arbitrary bounds.
  The specific meaning of these symbols will vary depending on
  context. $\epsilon_f(\delta)$ will denote a uniform continuity
  modulus of the  function $f$, that is,
  $\epsilon_f\colon \R_{\ge 0} \to \R_{\ge 0}$ satisfies
  \begin{equation}\label{eq:cont-modulus}
    d(f(x_2),f(x_1)) < \epsilon_f(d(x_2,x_1))
    \qquad\text{and}\qquad
    \lim_{\delta \to 0}\; \epsilon_f(\delta) = 0.
  \end{equation}
  Without subscript $f$ this will denote an arbitrary \idx{continuity
  modulus}. \idxalso{$\epsilon_f$}{continuity modulus}
\item
  The $\D$ denotes a total derivative, while $\D_i$ with index $i \in \N$
  denotes a partial derivative with respect to the $i$\th argument,
  or, when a subscript symbol is appended, say $\D_x$, then this
  denotes a partial derivative with respect to the argument commonly
  referred to by that symbol.
\item
  We use the following symbols to denote classes of function spaces:

  \begin{tabular}{l>{\raggedright}p{\textwidth-2.5cm}}
    $\BC$    & bounded, continuous functions;\tabularnewline
    $\BUC$   & bounded, uniformly continuous functions;\tabularnewline
    $C^k$    & $k$ times continuously differentiable functions;\tabularnewline
    $C^{k,\alpha}$ & $C^k$ functions with $\alpha$\ndash
    H\"older continuous $k$\th derivative. We will
    conventionally write $\fracdiff = k + \alpha \in \R_{\ge 1}$; the H\"older
    estimates are assumed to be uniform in $\BUC^{k,\alpha}$ spaces.\tabularnewline
    $\CLin$  & continuous, i.e.\ws bounded, (multi)linear operators;\tabularnewline
    $\vf$    & vector fields;\tabularnewline
    $\Gamma$ & sections of a fiber bundle.\tabularnewline
  \end{tabular}
  \index{$C^k_b$!function space}
  \index{$C^k_{b,u}$}
  \index{$X$@$\vf$}
  \index{$\Gamma$!fiber bundle section}

  Unless otherwise specified, $\BC^k$ and $\BC^{k,\alpha}$ spaces
  will be endowed with the canonical norms that turn these into Banach
  spaces, that is,
  \begin{equation}\label{eq:C-k-alpha-norm}
    \norm{f}_{k,\alpha}
    = \sum_{0 \le n \le k} \sup_x\; \norm{\D^n f(x)}
     +\sup_{x_2 \neq x_1} \frac{\norm{\D^k f(x_2) - \D^k f(x_1)}}{d(x_2,x_1)^\alpha}.
  \end{equation}

  We define the operator norm on a multilinear operator
  $A \in \CLin^k(V_1 \times \ldots \times V_k;W)$ as
  \begin{equation}\label{eq:operator-norm}
    \norm{A} = \sup_{\substack{v_i \in V_i\\ \norm{v_i}=1}} \norm{A(v_1,\ldots,v_k)}.
  \end{equation}
  This multilinear operator norm can be extended to sections $s$ of
  real-valued tensor bundles by taking the operator norm pointwise of
  $s(x)$ as a multilinear operator into $\R$.
\item
  On a Riemannian manifold, $\Christoffel$ will denote the Christoffel
  symbols, while $\partrans$ will be used for parallel transport along
  a curve given as argument, for example, $\partrans(\gamma|_a^b)$
  will denote parallel transport along the curve $\gamma$ restricted
  to the interval $\intvCC{a}{b}$. We shall denote induced parallel
  transport on products of the tangent bundle by
  $\partrans(\gamma|_a^b)^{\otimes k}$.
  \index{$\Gamma$!Christoffel symbols}
  \idxalso{$\Pi$}{parallel transport}
\item
  We shall often work with maps that are defined on the tangent space
  over a point $x \in M$ and denote this dependence on $x$ by a
  subscript, for example $h_x\colon \T_x M \to \T_x M$.
  If we want to refer to the whole family of such maps for all $x \in
  M$, then we denote this by
  \begin{equation*}
    h_\bullet\colon \T_\bullet M \to \T_\bullet M,
  \end{equation*}
  particularly if we want to stress that this family satisfies some
  properties uniformly in $x$.
  \index{${}_\bullet$}

\item
  We use the notation $B(x;\delta)$ not only to indicate open balls of
  radius $\delta$ around a single point $x$, but also $B(M;\delta)$ to
  indicate a (tubular) neighborhood of some set or submanifold $M$,
  that is,
  \begin{equation*}
    B(M;\delta) = \set{x}{d(M,x) < \delta}.
  \end{equation*}
\end{itemize}

The following definition of a scale of Banach spaces
(cf.~\cite{Vanderbauwhede1987:centermflds}) is
fundamental to the rest of this work.
\begin{definition}
  \label{def:rho-norm}
  \index{${}\norm{\slot}_\rho,\,d_\rho$}
  \index{$B^\rho$}
  Let $X$ be a normed linear space and $\mathcal{F} = C(I;X)$ the
  space of continuous functions from an interval $I \subset \R$ to
  $X$. We define a family of exponential growth norms with parameter
  $\rho \in \R$ by
  \begin{equation}\label{eq:rho-norm}
    \norm{f}_\rho = \sup_{t \in I}\; \norm{f(t)}\,e^{-\rho\,t}
    \qquad\text{for}\quad f \in \mathcal{F}.
  \end{equation}
  We define $B^\rho(I;X)$ to be the normed space consisting of all
  functions $f \in \mathcal{F}$ with $\norm{f}_\rho < \infty$. If $X$ is
  a Banach space, then $B^\rho(I;X)$ is a Banach space as well.
\end{definition}

\begin{remark}\label{rem:sign-rho}
  When the interval $I$ is bounded from below, then the embedding
  $B^{\rho_1}(I;X) \hookrightarrow B^{\rho_2}(I;X)$ is continuous for
  $\rho_1 \le \rho_2$. The time reversed version when $I$ is bounded
  above and $\rho_2 \le \rho_1$ holds, will frequently recur
  throughout this work. See also Remark~\ref{rem:nemytskii-time-rev}
  and the note on integrals of exponentials~\ref{eq:exp-int} below. In
  Chapter~\ref{chap:persist} we shall use $I = \R_{\le 0}$ and
  negative rates $\rho$, while in the appendices~\ref{chap:nemytskii}
  and~\ref{chap:exp-growth} we use (the somewhat more natural)
  $I = \R_{\ge 0}$; though $\rho$'s can take both signs there.
\end{remark}

The definition of an exponential growth norm can be generalized to
curves mapping into a metric space. Let $(X,d)$ be a metric space,
then analogously to~\ref{def:rho-norm}, we define a family of
exponential growth distance functions on $\mathcal{F}$ by
\begin{equation}\label{eq:rho-dist}
  d_\rho(f_1,f_2) = \sup_{t \in I}\; d\big(f_1(t),f_2(t)\big)\,e^{-\rho\,t}.
\end{equation}
Note that this distance function might be infinite for some
$x_1, x_2 \in \mathcal{F}$.

We will be working with exponential growth estimates of the form
$C\,e^{\rho\,t}$ throughout this paper. The pair of numbers
$C > 0, \rho \in \R$ that determine such a growth estimate will be
referred to as \emph{\idx{exponential growth numbers}}, and $\rho$
\index{$\rho$} as an \emph{\idx{exponential growth rate}}.

We will frequently encounter integrals over a time interval, where the
integrand obeys an exponential estimate. As long as the
interval $\intvCC{a}{b}$ is bounded in the direction of exponential
growth and $\rho \neq 0$, these can be estimated as
\begin{equation}\label{eq:exp-int}
      \int_a^b e^{\rho\,t} \d t
  \le \frac{1}{\abs{\rho}}\,\exp\big(\sup_{t \in \intvCC{a}{b}} \rho\,t\;\big).
\end{equation}

We also state here some basic facts about uniformly H\"older
continuous functions.
\begin{lemma}[Product rule for \idx{H\"older continuity}]
  \label{lem:holder-product}
  Let $f,\,g \in \BUC^\alpha$ be defined on spaces such that the
  product $f \cdot g$ is well-defined. Then also
  $f \cdot g \in \BUC^\alpha$.
\end{lemma}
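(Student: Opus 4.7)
The plan is to verify the two defining properties of $\BUC^\alpha$ separately: boundedness and uniform $\alpha$-Hölder continuity of $f \cdot g$. Boundedness is immediate from the submultiplicativity of the supremum norm under the pointwise product: $\norm{f \cdot g}_\infty \le \norm{f}_\infty \cdot \norm{g}_\infty < \infty$, where the product here denotes whatever bilinear operation makes $f \cdot g$ well-defined (scalar multiplication, inner product, composition of operators, tensor product, etc.), for which submultiplicativity of operator norm holds.

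For the Hölder estimate, I would use the standard add-and-subtract trick. Writing
\begin{equation*}
  f(x_2)\cdot g(x_2) - f(x_1)\cdot g(x_1)
  = f(x_2)\cdot\bigl(g(x_2) - g(x_1)\bigr) + \bigl(f(x_2) - f(x_1)\bigr)\cdot g(x_1),
\end{equation*}
and taking norms yields
\begin{equation*}
  \norm{(f\!\cdot\!g)(x_2) - (f\!\cdot\!g)(x_1)}
  \le \norm{f}_\infty\,[g]_\alpha\,d(x_2,x_1)^\alpha
    + \norm{g}_\infty\,[f]_\alpha\,d(x_2,x_1)^\alpha,
\end{equation*}
where $[\,\cdot\,]_\alpha$ denotes the Hölder seminorm. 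Hence $[f\!\cdot\!g]_\alpha \le \norm{f}_\infty\,[g]_\alpha + \norm{g}_\infty\,[f]_\alpha$, which together with the boundedness estimate gives $f \cdot g \in \BUC^\alpha$ with an explicit estimate of its $C^{0,\alpha}$-norm.

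There is no real obstacle here: the argument is completely routine once one observes that the Hölder condition plays the role of a ``fractional Lipschitz'' condition, so the same proof as for the Lipschitz product rule goes through verbatim. The only point to be slightly careful about is that the bilinear pairing $(a,b) \mapsto a \cdot b$ appearing in the statement must itself be continuous and submultiplicative in the relevant norms, which is implicit in the phrase ``the product $f \cdot g$ is well-defined''; this is true in all the concrete situations of interest (scalar, vector-valued, or operator-valued functions).
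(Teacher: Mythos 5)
Your proof is correct and follows essentially the same add-and-subtract decomposition as the paper, with the same resulting Hölder coefficient $\norm{f}_\infty\,[g]_\alpha + \norm{g}_\infty\,[f]_\alpha$ and boundedness bound. Your remark that the bilinear pairing must be submultiplicative is a helpful clarification that the paper leaves implicit.
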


\begin{proof}
  \index{$C_{f,\alpha}$}
  Let $\norm{f}_0,\,\norm{g}_0 \le M$ and let
  $C_{f,\alpha},\, C_{g,\alpha}$ be the respective H\"older
  coefficients of $f,\,g$. Then we have for all $x_1 \neq x_2$
  \begin{equation*}
    \begin{aligned}
         \norm{f(x_2)\,g(x_2) - f(x_1)\,g(x_1)}
    &\le \norm{f(x_2)}\,\norm{g(x_2) - g(x_1)}
        +\norm{f(x_2) - f(x_2)}\,\norm{g(x_2)}\\
    &\le M\,(C_{f,\alpha} + C_{g,\alpha})\,\norm{x - y}^\alpha,
    \end{aligned}
  \end{equation*}
  which exhibits the H\"older coefficient
  $M\,(C_{f,\alpha} + C_{g,\alpha})$ for the product, and $f \cdot g$
  is clearly bounded by $M^2$.
\end{proof}

\begin{lemma}
  \label{lem:holder-lower}
  Let $f \in \BUC^\alpha$. Then it also holds that
  $f \in \BUC^\beta\,$ for any\/ $0 < \beta < \alpha$.
\end{lemma}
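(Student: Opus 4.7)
The plan is to prove the lemma by a standard case split on whether two points are at distance at most $1$ or more than $1$, exploiting both pieces of the $\BUC^\alpha$ data: the uniform Hölder estimate at the small scale and the uniform boundedness of $f$ at the large scale.

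First, let $C_{f,\alpha}$ denote the uniform $\alpha$\ndash Hölder coefficient of $f$ and let $M = \norm{f}_0 < \infty$, both finite by hypothesis. Given any two points $x_1 \neq x_2$ in the domain, I would distinguish the two regimes. When $d(x_2,x_1) \le 1$, the inequality $d(x_2,x_1)^\alpha \le d(x_2,x_1)^\beta$ holds because $\alpha > \beta$ and the base is at most $1$; substituting this into the Hölder estimate for $\alpha$ gives $\norm{f(x_2)-f(x_1)} \le C_{f,\alpha}\, d(x_2,x_1)^\beta$. When $d(x_2,x_1) > 1$, the Hölder comparison goes the wrong way, so I would instead use the trivial bound $\norm{f(x_2) - f(x_1)} \le 2M \le 2M\,d(x_2,x_1)^\beta$, where the second inequality uses $d(x_2,x_1)^\beta > 1$.

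Combining the two cases yields the uniform estimate
\begin{equation*}
  \norm{f(x_2) - f(x_1)} \le \max\bigl(C_{f,\alpha},\,2M\bigr)\, d(x_2,x_1)^\beta
\end{equation*}
for all $x_1 \neq x_2$, which exhibits a finite uniform $\beta$\ndash Hölder coefficient. Since $f$ remains bounded by $M$ by hypothesis, this shows $f \in \BUC^\beta$ as required.

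There is no real obstacle here: the argument is the standard observation that boundedness lets one cheaply trade a larger Hölder exponent for a smaller one. The only subtlety worth flagging is that, without the boundedness half of the $\BUC^\alpha$ assumption, the statement would fail (e.g.\ the identity on $\R$ is $1$\ndash Hölder but not $\beta$\ndash Hölder for $\beta < 1$), so the use of $\norm{f}_0$ in the large-distance case is essential rather than cosmetic.
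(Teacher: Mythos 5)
Your proof is correct and follows essentially the same route as the paper's: the same split at distance $1$, the same use of the Hölder estimate in the small-distance regime and of boundedness in the large-distance regime, and the same final coefficient $\max(C_{f,\alpha},2M)$. The closing remark on why boundedness is essential is a nice addition but not part of the argument itself.
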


\begin{proof}
  Let $M$ be the bound on $f$, and $C_\alpha$ its $\alpha$\ndash
  H\"older coefficient. For $\norm{x_2 - x_1} \le 1$ the estimate for
  $\beta$ follows automatically from that of $\alpha$. For
  $\norm{x_2 - x_1} > 1$ we use boundedness to obtain
  \begin{equation*}
    \norm{f(x_2) - f(x_1)} \le 2\,M \le 2\,M\,\norm{x_2 - x_1}^\beta.
  \end{equation*}
  Hence, $C_\beta = \max(C_\alpha,2\,M)$ suffices as $\beta$\ndash
  H\"older coefficient.
\end{proof}

\subsection*{Typographical conventions}

As usual we close proofs with the symbol $\;\proofSymbol$, while we
shall use $\,\remarkSymbol\,$ and $\,\exampleSymbol\,$ to denote the
end of (a series of) remarks or examples, respectively.



\chapter{Manifolds of bounded geometry}\label{chap:boundgeom}
\index{bounded geometry}

For noncompact normally hyperbolic systems, uniformity assumptions
that were implicit in the compact case must be made explicit. Not only
assumptions on the vector field, but on the underlying space as well.
For this we need the concept of bounded geometry;
Section~\ref{sec:cpt-unif} contains a discussion and examples for why
we require this concept.

The class of manifolds of bounded geometry allows
us to uniformly apply constructions that are well-known for compact
manifolds. We single out the atlas of normal coordinate charts and
derive from the very definition of bounded geometry that all constructions and estimates
are uniform over all such charts. For completeness, we present here
all results that we need later on. Some of these results are
already present in the literature: the construction of a uniformly
locally finite cover and a subordinate $C^k$ uniformly bounded
partition of unity, and bounded coordinate transformations can be
found
in~\cite{Shubin1992:spec-elloper-noncpt,Schick2001:mlfd-bndry-boundgeom},
for example, while~\cite{Roe1988:index-openmflds} includes the result
on finite coloring of the connectedness graph of a uniformly locally
finite cover. I have not been able to find in the literature the
results about the existence of a uniform tubular neighborhood, the
approximation of a submanifold by a smoothed manifold, and the
construction of a trivial bundle embedding. Submanifolds are allowed
to be non-injectively immersed.

This chapter is organized as follows. First, the material is presented
that is already required for the global coordinate setting of
Theorem~\ref{thm:persistNHIMtriv}. These include the basic definitions of
bounded geometry, related results on bounded coordinate transition
maps, uniform covers and partitions of unity, and an explicit relation
between holonomy and curvature. Then we
continue to work towards the final goal of this chapter: to reduce a
noncompact normally hyperbolic system from a setting in general
manifolds to a trivial bundle setting, in order to generalize the
persistence theorem to the former setting. To this end, we need some
more technical results: a uniform tubular neighborhood, smooth
approximation of a submanifold, and embedding into a trivial bundle.

This chapter relies heavily on some more advanced concepts from
differential and specifically Riemannian geometry. On the other hand,
the results are used as tools in solving a dynamical systems
problem. Appendix~\ref{chap:riemgeom} provides a quick review for
non-experts of the most relevant geometric concepts used here. It also
provides further references to the literature. We shall assume the
contents of this appendix known from here on.

I suggest the reader to at least take a glance at the first two
sections of this chapter to familiarize himself with the basic
definitions and results of bounded geometry, without the need to go
through the details of the proofs. Then, depending on his interest, he
can choose to delve into the more technical geometric details or skip
to Chapter~\ref{chap:persist} for the more analytical side of the
proof of Theorem~\ref{thm:persistNHIMtriv}, and possibly return later
to read how Theorem~\ref{thm:persistNHIMgen} is reduced to the
former.

\section{Bounded geometry}
\label{sec:BG-basic}

We follow the definition in~\cite{Eichhorn1991:mfld-metrics-noncpt} to
introduce bounded geometry. Recall that the \idx{injectivity radius}
$\rinj{x}$ at a point $x \in M$ is the maximum radius for which the
exponential map at $x$ is a diffeomorphism, see also
Appendix~\ref{chap:riemgeom}.
\begin{definition}[Bounded geometry]
  \label{def:bound-geom}
  \index{bounded geometry!definition for manifold}
  \idxalso{$r_\textrm{inj}$}{injectivity radius}
  We say that a complete, finite-dimensional Riemannian manifold
  $(M,g)$ has $k$\th order bounded geometry when the following
  conditions are satisfied:
  \begin{description}[labelindent=1ex,labelwidth=4ex,leftmargin=!]
  \item[(I)] the global injectivity radius $\rinj{M} = \inf\limits_{x \in M}\; \rinj{x}$
    is positive, $\rinj{M} > 0$;
  \item[(B$_k$)] the Riemannian \idx{curvature} $R$ and its covariant
    derivatives up to $k$\th order are uniformly bounded,
    \begin{equation*}
      \forall\; 0\le i \le k\colon \sup_{x \in M}\; \norm{\nabla^i R(x)} < \infty,
    \end{equation*}
    with operator norm of\/ $\nabla^i R(x)$ as an element of the
    tensor bundle over $x \in M$.
  \end{description}
\end{definition}

\begin{remark}
  \label{rem:indep-BG-conditions}
  The conditions (I) and (B$_k$) are independent. We present a simple
  example which exhibits zero infimum for the injectivity radius while
  all derivatives of the curvature are globally bounded. Indeed, let
  $M = \R \times S^1$ be a cylinder with metric
  $g = {\rm d} x^2 + e^{-2x}\,{\rm d}\theta^2$ in coordinates $(x,\theta)$, see
  also Figure\footnote{%
    This is a noncompact surface with constant negative curvature,
    hence it cannot be isometrically embedded into $\R^3$,
    see~\cite{Hilbert1901:surfaces-const-curv}. The embedding is
    nearly isometric for $x \gg 0$ though, so the figure
    is still a good representation there.%
  }~\ref{fig:exp-cylinder} on page~\pageref{fig:exp-cylinder}. The
  injectivity radius $\rinj{M}$ is zero since the cylinder
  circumference shrinks to zero with $x \to \infty$. Global
  boundedness of the curvature and all of its derivatives follows from
  a symmetry argument. The family
  \begin{equation*}
    \phi_{\xi,\alpha}\colon (x,\theta) \mapsto (x+\xi,e^\xi\,\theta + \alpha)
    \qquad\text{with}\quad \xi \in \R,\; \alpha \in \intvCO{0}{2\pi}
  \end{equation*}
  is a set of local isomorphisms that acts transitively on $M$. That
  is, for any two points $(x_1,\theta_1),(x_2,\theta_2) \in M$ there
  exist $\xi,\alpha$ and a neighborhood $U \ni (x_1,\theta_1)$ such
  that $\phi_{\xi,\alpha}\colon U \to \phi_{\xi,\alpha}(U)$ is an
  isomorphism and $\phi_{\xi,\alpha}(x_1,\theta_1) = (x_2,\theta_2)$.
  For any $(x,\theta) \in U$ and $v,w \in \T_{(x,\theta)} M$ we have
  \begin{equation*}
    \begin{aligned}
      (\phi_{\xi,\alpha}^* g)_{(x,\theta)}(v,w)
      &= g_{(x+\xi,e^\xi\,\theta+\alpha)}
         \big(\D \phi_{\xi,\alpha}(x,\theta)\,v,
              \D \phi_{\xi,\alpha}(x,\theta)\,w\big)\\
      &= {\rm d} x(v){\rm d} x(w) + e^{-2(x+\xi)}\,e^\xi{\rm d}\theta(v)\,e^\xi{\rm d}\theta(w)\\
      &= g_{(x,\theta)}(v,w)
    \end{aligned}
  \end{equation*}
  so $\phi_{\xi,\alpha}^* g = g$ on $U$. Since the curvature and its
  derivatives are locally determined, this implies that these are
  constant across $M$, hence uniformly bounded (actually all
  derivatives of $R$ vanish). Note that these local isometries do not
  imply a finite global injectivity radius since the size of the
  neighborhood $U$ does depend on the points
  $(x_1,\theta_1),(x_2,\theta_2) \in M$.
\end{remark}

\begin{example}[Manifolds of bounded geometry]
  \label{exa:BG-manifolds}
  The following are examples of manifolds with bounded geometry of any
  (i.e.\ws infinite) order.
  \begin{itemize}
  \item Euclidean space with the standard metric trivially has bounded
    geometry.
  \item A smooth, compact Riemannian manifold $M$ has bounded
    geometry as well; both the injectivity radius and the curvature
    including derivatives are continuous functions, so these attain
    their finite minimum and maxima, respectively, on $M$.
    If $M \in C^{k+2}$, then it has bounded geometry of order $k$.
  \item Noncompact, smooth Riemannian manifolds that possess a
    transitive group of isomorphisms (such as hyperbolic space) have
    bounded geometry since the finite injectivity radius and curvature
    estimates at any single point translate to a uniform estimate for
    all points under isomorphisms. Note that the example in
    Remark~\ref{rem:indep-BG-conditions} above shows that it is not
    sufficient to have local isometries.
  \end{itemize}

  More manifolds of bounded geometry can be constructed with these
  basic building blocks in the following ways.
  \begin{itemize}
  \item The product of a finite number of manifolds of bounded
    geometry again has bounded geometry, since the direct sum
    structure of the metric is inherited by the exponential map and
    curvature. We give an outline of the proof. In a product
    coordinate chart
    \begin{equation*}
      (\phi_1,\phi_2) \colon U_1 \times U_2 \to \R^{n_1} \times \R^{n_2}
    \end{equation*}
    with coordinates $(x_1,x_2)$, the metric has diagonal form
    \begin{equation*}
      g(x_1,x_2) = (g_1 \oplus g_2)(x_1,x_2) =
      \begin{pmatrix}
        g_1(x_1) & 0 \\
        0 & g_2(x_2)
      \end{pmatrix}.
    \end{equation*}
    The coordinate dependence on $x_1,\,x_2$ is non-mixed and this is
    preserved under taking derivatives and index contractions, so $R$
    will split into a direct sum of $R_1$ and $R_2$ again. This can be
    extended to derivatives of $R$.

    A geodesic in $M_1 \times M_2$ is precisely given by
    $\gamma = (\gamma_1,\gamma_2)$ where $\gamma_1,\,\gamma_2$ are
    geodesics parametrized with constant speed in $M_1,\,M_2$,
    respectively. This follows easily since minimization of length is
    equivalent to minimization of the energy functional
    \begin{equation*}
      2\,E(\gamma) = \int_a^b g(\dot{\gamma},\dot{\gamma}) \d t
                   = \int_a^b g_1(\dot{\gamma}_1,\dot{\gamma}_1)
                             +g_2(\dot{\gamma}_2,\dot{\gamma}_2) \d t
    \end{equation*}
    and this splits nicely into independent minimization problems for
    $\gamma_1$ and $\gamma_2$. With a little effort one sees that
    $\rinj{M} \ge \min\big(\rinj{M_1},\rinj{M_2}\big)$.
  \item If we take a finite connected sum of manifolds with bounded
    geometry such that the gluing modifications are
    smooth and contained in a compact set, then the resulting manifold
    has bounded geometry again.
  \item We can endow the tangent bundle $\T M$ of a Riemannian
    manifold $(M,g)$ with the natural Sasaki
    metric~\cite{Sasaki1958:diffgeom-tangent}. Let $x^i$ denote
    coordinates on an open neighborhood $U \subset M$. These
    coordinate functions can be pulled back to $\T U$ and the
    one-forms ${\rm d} x^i$ can be viewed as additional coordinates
    $v^i$ such that the $x^i,\,v^j$ together form a complete set of
    induced coordinates on $\T U$. With respect to these coordinates
    the Sasaki metric is given by
    \begin{equation}\label{eq:TM-metric}
      \hat{g}(x,v) = g_{ij}(x)\big({\rm d} x^i\,{\rm d} x^j + \D v^i\,\D v^j\big)
      \qquad\text{where}\quad
      \D v^i = {\rm d} v^i + \Christoffel^i_{jk} v^j\,{\rm d} x^k
    \end{equation}
    and $\Christoffel^i_{jk}$ denote the Christoffel symbols on $M$,
    while the ${\rm d} v^i$ are one-forms on the manifold $\T U$.

    Bounded geometry of $(M,g)$ is not inherited by $(\T M,\hat{g})$
    since the extended Riemannian curvature $\hat{R}$ contains
    unbounded terms $v$ when expressed in terms of $R$,
    see~\cite[Prop.~7.5]{Gudmundsson2002:geom-tangent}. These
    expressions do readily show that the restriction
    $\T^r M = \set{(x,v) \in \T M}{g_x(v,v) \le r^2}$ satisfies
    curvature bounds of order $k-1$ if $(M,g)$ has $k$\ndash bounded
    geometry. The geodesic flow equation is given in induced
    coordinates by~\cite[eq.~(7.7)]{Sasaki1958:diffgeom-tangent}. By
    application of Theorem~\ref{thm:unif-smooth-flow}, one can then
    show that the injectivity radius is bounded.

    Note that $\T^r M$ is a manifold with boundary, but this is not
    problematic in our setting as long as the invariant submanifold
    stays away from the boundary. Alternatively one could try to use
    results from~\cite{Schick2001:mlfd-bndry-boundgeom}.
  \end{itemize}
\end{example}

When we say that a manifold has bounded
geometry without specifying the order $k$, then it is assumed that the
order is infinite, $k = \infty$, or sufficiently large. When $k \ge 1$
we have the following result, see~\cite[Thm~2.4 and
Cor.~2.5]{Eichhorn1991:mfld-metrics-noncpt}. In case $k = \infty$ the
converse also holds~\cite[lem.~2.2]{Roe1988:index-openmflds}.
\begin{theorem}[Boundedness of the metric]
  \label{thm:bound-metric}
  Let $(M,g)$ be a Riemannian manifold of\/ $k$\ndash
  bounded geometry. Then there exists a $\delta > 0$ such that the
  metric up to its $k$\th order derivatives and the Christoffel
  symbols up to its $(k \tm 1)$\th order derivatives are bounded in
  normal coordinates of radius $\delta$ around each $x \in M$, and the
  bounds are uniform in $x$.
\end{theorem}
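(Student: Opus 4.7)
The plan is to work in normal coordinates and reduce everything to Jacobi field estimates. By Definition~\ref{def:bound-geom}(I), for any $\delta < \rinj{M}$ the exponential map $\exp_x : B(0,\delta) \subset \T_x M \to M$ is a diffeomorphism for every $x \in M$; fixing an orthonormal frame of $\T_x M$ identifies it with $\R^n$ and yields normal coordinates on $B(x,\delta)$. The goal is to show that all coordinate derivatives of $g$ up to order $k$ are bounded uniformly in $x$; the bound on the Christoffel symbols then follows because each $\Christoffel^i_{jk}$ is a polynomial in $g^{ij}$ and $\partial g$.

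First, I would express the metric coefficients via Jacobi fields. For $v \in \T_x M$ with $\norm{v} = 1$ and a basis vector $e_i$, the differential $\D\exp_x(tv)\,e_i$ equals $J_i(t)$, the value at time $t$ of the Jacobi field along $\gamma(s) = \exp_x(sv)$ with $J_i(0) = 0$ and $\nabla_s J_i(0) = e_i$. After parallel transport back to $\T_x M$ this gives
\begin{equation*}
  g_{ij}(tv) = \bigl\langle J_i(t),\, J_j(t) \bigr\rangle_{\T_x M}.
\end{equation*}
The Jacobi equation $\nabla_s^2 J_i + R(J_i,\dot\gamma)\dot\gamma = 0$ is a linear second-order ODE whose coefficient operator is bounded in norm by $\sup_M \norm{R}$, so Gronwall's inequality yields a uniform $C^0$ bound on $J_i$ and $\nabla_s J_i$ on $[0,\delta]$, and hence on $g_{ij}$. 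Choosing $\delta$ still smaller ensures $\norm{g_{ij} - \delta_{ij}} \le 1/2$, so $\det g$ is bounded away from zero uniformly and the inverse $g^{ij}$ is uniformly bounded as well.

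Next, I would treat the higher-order derivatives by differentiating the Jacobi field with respect to the initial data $v$. Each such differentiation produces a new linear inhomogeneous ODE whose inhomogeneity involves one additional covariant derivative of $R$, evaluated at lower-order Jacobi-field variations that have already been controlled. An induction on the order of differentiation, combined with condition (B$_k$) and iterated Gronwall estimates, produces uniform bounds on all partial derivatives $\partial^\alpha g_{ij}$ with $|\alpha| \le k$. Since the Christoffel symbols
\begin{equation*}
  \Christoffel^i_{jk}
  = \tfrac{1}{2}\, g^{i\ell}\bigl(\partial_j g_{k\ell} + \partial_k g_{j\ell} - \partial_\ell g_{jk}\bigr)
\end{equation*}
and their derivatives up to order $k-1$ are polynomials in $g^{ij}$ and in derivatives of $g_{ij}$ of order at most $k$, their uniform bounds follow at once from the bounds on $g$ together with the lower bound on $\det g$.

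The hard part will be the bookkeeping in the inductive step: one must verify that differentiating the Jacobi field $m$ times with respect to the initial data $(v,e_i)$ introduces covariant derivatives of $R$ of order at most $m$, with no hidden loss of a derivative, and that the resulting cascade of linear ODEs is controlled exclusively in terms of the curvature bounds from (B$_k$) and $\delta$. Making this quantitative and uniform in the basepoint $x$ — so that a single $\delta > 0$ works simultaneously for all $x \in M$ — is the technical core of the argument; everything else is a manifestation of smooth dependence of ODE solutions on parameters applied inside the uniform atlas provided by the exponential charts.
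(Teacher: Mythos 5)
The paper itself gives no proof of Theorem~\ref{thm:bound-metric}: it is cited from Eichhorn~\cite[Thm~2.4, Cor.~2.5]{Eichhorn1991:mfld-metrics-noncpt}, and Eichhorn's argument is the same Jacobi-field strategy you describe, so your overall plan matches the intended source.

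There is, however, a concrete error in your key formula. The vector $\D\exp_x(tv)\,e_i$ is \emph{not} the Jacobi field with $J_i(0)=0$, $\nabla_s J_i(0)=e_i$. The geodesic variation $\alpha(s,u)=\exp_x\big(s(v+u\,e_i)\big)$ produces the Jacobi field
\begin{equation*}
   J_i(s) \;=\; \partial_u\alpha(s,0) \;=\; s\,\D\exp_x(sv)\,e_i,
\end{equation*}
and it is this $J_i$ that satisfies those initial conditions. Consequently the metric coefficients in normal coordinates are
\begin{equation*}
   g_{ij}(tv) \;=\; \frac{1}{t^2}\,\bigl\langle J_i(t),\,J_j(t)\bigr\rangle_{\T_x M},
\end{equation*}
which correctly gives $g_{ij}(0)=\delta_{ij}$ in the $t\to 0$ limit; your version would give $g_{ij}(0)=0$. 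The factor $t^{-2}$ does not change the Gronwall strategy, but it must be tracked as soon as you start differentiating with respect to the direction $v$ and reading off Taylor coefficients near the origin.

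With that correction, the outline is sound and you are right that the genuine content lies in the inductive cascade. Two things must be verified there. First, that differentiating the Jacobi equation $m$ times in the initial data $(v,w)$ introduces $\nabla^m R$ and nothing higher, so that condition (B$_k$) is exactly what closes the induction; this is fine but needs to be checked term by term. Second, that you stay covariant --- working with the pullback connection on $\exp_x^*\T M$ and with $\nabla_s$-variations of $J_i$ --- until the very last translation into coordinate derivatives $\partial^\alpha g_{ij}$. Done naively, that translation invokes Christoffel symbols of the same order you are trying to bound; the induction has to be arranged so that order-$m$ estimates for $g$ call only on order-$(m\!-\!1)$ estimates for $\Christoffel$, which your final paragraph then supplies. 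This reorganisation is the part Eichhorn handles carefully, and where a careless computation silently loses a derivative.
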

This basic fact can be used to make the properties of all kinds of
constructions uniform over a noncompact manifold.
Note that here and in the following, all uniformity estimates
are assumed globally valid, that is, independent of the point $x \in M$. To
stress this, we shall use notation $f_\bullet$, for example as in
Definition~\ref{def:unif-bounded-map}, to indicate that the family of
maps $\{f_x\}_{x \in M}$ satisfies continuity estimates independent
of~$x$.
\index{${}_\bullet$}

With Theorem~\ref{thm:bound-metric} at hand, we shall exclusively use
\idx{normal coordinates} for local coordinate calculations. To establish
notation, we say that\index{$exp$@$\exp$}
\begin{equation}\label{eq:norm-coords}
  \phi = \exp_x^{-1}\colon B(x;\delta) \subset M
                       \to B(0;\delta) \subset \T_x M
\end{equation}
is a normal coordinate chart at $x \in M$. The radius $\delta$ will
always be chosen smaller than the injectivity radius $\rinj{M}$, so
$\phi$ is a diffeomorphism. Each tangent space $\T_x M$ carries the
inner product $g_x$, hence is isometric to Euclidean space $\R^n$ (but
identification requires a choice of basis).

\begin{proposition}
  \label{prop:unif-coord-chart}
  Let $(M,g)$ be a Riemannian manifold of\/ $k \ge 1$ bounded geometry.
  For every $C > 1$ there exists a\/ $\delta > 0$ such that the normal
  coordinate charts $\phi_x$ in~\ref{eq:norm-coords} are defined on
  $B(x;\delta)$ for each $x \in M$ and the Euclidean distance $d_E$ on
  the normal coordinates is uniformly $C$\ndash equivalent to the
  metric distance $d$ induced by $M$, that is,
  \begin{equation*}
    \forall x_1,x_2 \in B(x;\delta)\colon\quad
       C^{-1}\,d(x_1,x_2) \le d_E(\phi_x(x_1),\phi_x(x_2)) \le C\,d(x_1,x_2).
  \end{equation*}
\end{proposition}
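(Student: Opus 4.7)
The plan is to reduce everything to a local, uniform comparison between $g$ and the Euclidean metric in normal coordinates, and then to control geodesics. Fix $x \in M$ and let $\phi_x = \exp_x^{-1}$ be the normal chart on $B(x;\delta)$ with $\delta < \rinj{M}$, so $\phi_x$ is a diffeomorphism onto $B(0;\delta) \subset \T_x M$. By the very definition of normal coordinates, the metric components satisfy $g_{ij}(0) = \delta_{ij}$ and $\partial_k g_{ij}(0) = 0$ (since the Christoffel symbols vanish at the origin of a normal chart). By Theorem~\ref{thm:bound-metric}, the second derivatives $\partial_k \partial_l g_{ij}$ are uniformly bounded on normal coordinate balls of some radius $\delta_0 > 0$, with bounds independent of $x$. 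Taylor expansion therefore gives a uniform constant $K > 0$ with
\begin{equation*}
  \norm{g_{ij}(v) - \delta_{ij}} \le K\,\abs{v}_E^2
  \qquad\text{for all $v \in B(0;\delta_0)$ and all base points $x \in M$.}
\end{equation*}

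Given $C > 1$, choose $\delta \le \delta_0/2$ so small that $\sqrt{1 - K\delta^2} \ge C^{-1}$ and $\sqrt{1 + K\delta^2} \le C$. Then for every tangent vector $w$ at a point with coordinate $v \in B(0;2\delta)$,
\begin{equation*}
  C^{-2}\,\abs{w}_E^2 \;\le\; g_{ij}(v)\,w^i\,w^j \;\le\; C^2\,\abs{w}_E^2,
\end{equation*}
and this integrates along any $C^1$ curve $\gamma \subset B(0;2\delta)$ to
\begin{equation*}
  C^{-1}\, L_E(\gamma) \;\le\; L_g(\gamma) \;\le\; C\, L_E(\gamma).
\end{equation*}

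For the upper bound $d(x_1,x_2) \le C\,d_E(\phi_x(x_1),\phi_x(x_2))$ I take $\gamma$ to be the Euclidean straight segment between $\phi_x(x_1)$ and $\phi_x(x_2)$, which stays inside $B(0;\delta)$ and has Riemannian length at most $C\,\abs{v_2-v_1}_E$. For the lower bound, I need a Riemannian curve from $x_1$ to $x_2$ which stays inside $B(0;2\delta)$, so that the inequality above applies and gives $L_g(\gamma) \ge C^{-1} L_E(\gamma) \ge C^{-1} d_E(v_1,v_2)$. To ensure such a curve exists, I shrink $\delta$ further so that $2C\delta$ is strictly less than $\rinj{M}$ and than the uniform convexity radius (which is bounded away from zero by bounded geometry, via Whitehead-type arguments using bounds on sectional curvature). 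Any minimizing geodesic from $x_1$ to $x_2$ in $M$ has length at most $d(x_1,x_2) \le C\,d_E(v_1,v_2) < 2C\delta$, so by the triangle inequality it remains in $B(x;4C\delta) \subset B(x;\delta_0)$, i.e.\ inside the normal chart where the comparison is valid.

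The main obstacle is the step of keeping the comparison curve inside the coordinate patch: one really needs a uniform lower bound on the convexity radius (not just the injectivity radius) to ensure that the Riemannian minimizer between $x_1$ and $x_2$ stays where the Taylor estimate on $g_{ij}$ applies. Everything else — $g_{ij}(0) = I$, vanishing of first derivatives, and the $O(\abs{v}^2)$ bound on $g_{ij} - I$ — is a direct consequence of bounded geometry and is uniform in~$x$, so the final choice of $\delta$ depends only on~$C$ and on the bounded geometry constants of~$M$.
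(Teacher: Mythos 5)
Your approach is essentially the paper's: compare $g$ to the Euclidean metric pointwise on a normal coordinate ball, use the uniformity furnished by Theorem~\ref{thm:bound-metric}, get length inequalities for curves staying inside the chart, and handle the lower bound by keeping a minimizing geodesic inside the chart. Two points deserve attention.

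\emph{Regularity mismatch.} Your Taylor estimate $\norm{g_{ij}(v)-\delta_{ij}} \le K\abs{v}^2$ rests on the vanishing of $\partial_k g_{ij}(0)$ together with a uniform bound on $\partial_k\partial_l g_{ij}$ over the whole ball. Theorem~\ref{thm:bound-metric} supplies derivatives of $g$ only up to order $k$, so for $k$\ndash bounded geometry with $k \ge 1$ you get bounded first derivatives, not second. The quadratic Taylor estimate therefore implicitly needs $k \ge 2$, which is stronger than the proposition's stated $k \ge 1$. The paper avoids this by using a first-order (mean value theorem) estimate: $\D(\exp_x^* g)$ is uniformly bounded by some $C_1$ on $B(0;2\delta)$, whence $1-2\delta C_1 \le \norm{(\exp_x^* g)(\xi)} \le 1+2\delta C_1$. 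Replacing your $K\delta^2$ by $2C_1\delta$ recovers the proposition under the stated hypothesis; the quadratic decay is nice but neither needed nor available at this regularity.

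\emph{The convexity-radius detour is unnecessary.} You flag "keeping the comparison curve inside the patch" as the main obstacle and appeal to a uniform lower bound on the convexity radius via Whitehead-type arguments. But the triangle-inequality argument you yourself give immediately afterwards already suffices, and is exactly what the paper uses: with $x_1,x_2 \in B(x;\delta)$ one has $d(x_1,x_2) \le d(x_1,x)+d(x,x_2) < 2\delta$ directly (no factor of $C$ needed), and a minimizing geodesic cannot leave and reenter $B(x;2\delta)$ without accruing length at least $2\delta$, a contradiction. Existence of the minimizer is Hopf--Rinow, which is already guaranteed by the completeness built into the definition of bounded geometry. Uniform bounds on the convexity radius would only be needed if you wanted uniqueness or geodesic convexity of the ball, neither of which is required here.
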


\begin{proof}
  Let $\delta < \frac{1}{2}\rinj{M}$ and $x \in M$. We consider a
  normal coordinate chart $\phi_x$ on $B(x;2\delta)$. According to
  Theorem~\ref{thm:bound-metric}, the metric $g$ and its derivatives
  are bounded in normal coordinates. We have $(\exp_x^* g)(0) = g_x$,
  the Euclidean inner product on $\T_x M$, while the total derivative
  $\D(\exp_x^* g)(\xi)$ is bounded on $B(0;2\delta) \ni \xi$, say by
  $\norm{\D(\exp_x^* g)(\xi)} \le C_1$, independent of $x \in M$. By
  the mean value theorem this induces the uniform bounds
  \begin{equation*}
    1 - 2\delta\,C_1 \le \norm{(\exp_x^* g)(\xi)} \le 1 + 2\delta\,C_1.
  \end{equation*}

  Let $x_1,x_2 \in B(x;\delta)$ and let $\gamma_E$ be the straight
  curve between $\phi_x(x_1)$ and $\phi_x(x_2)$ in $\T_x M$
  parametrized by arc length. This curve $\gamma_E$ attains the
  Euclidean distance $l_E(\gamma_E) = d_E(\phi_x(x_1),\phi_x(x_2))$.
  On the other hand, it gives an upper bound on the metric distance
  \begin{equation*}
    \begin{aligned}
      d(x_1,x_2) = \inf_\gamma\; l(\gamma)
      &\le \int_0^{l_E(\gamma_E)} \sqrt{(\exp_x^* g)_{\gamma_E(t)}
                          \big(\gamma_E'(t),\gamma_E'(t)\big)} \d t\\
      &\le \sqrt{1 + 2\delta\,C_1}\;d_E\big(\phi_x(x_1),\phi_x(x_2)\big).
    \end{aligned}
  \end{equation*}

  Let $\gamma$ be a geodesic minimizing the distance $d(x_1,x_2)$.
  Then $\gamma$ is contained in $B(x;2\delta)$: the distance from each
  $x_i$ to the boundary of $B(x;2\delta)$ is at least $\delta$, so if
  $\gamma$ would leave and reenter $B(x;2\delta)$ then its length
  would be at least $2\delta$. On the other hand, $x_1$ and $x_2$ can
  be connected via $x$ with a curve of length less than $2\delta$. Let
  us write $\eta = \phi_x\circ\gamma$ and assume that $\eta$ is
  parametrized by arc length with respect to the Euclidean metric
  $g_x$. Then we obtain an inverse estimate to the one above:
  \begin{equation*}
    \begin{aligned}
      d_E(\phi_x(x_1),\phi_x(x_2)) \le \int_0^{l_E(\eta)} 1 \d t
      &\le \int_0^{l_E(\eta)}
         (1 - 2\delta\,C_1)^{-\frac{1}{2}}\,\sqrt{(\exp_x^* g)_{\eta(t)}
                                    \big(\eta'(t),\eta'(t)\big)} \d t\\
      &= (1 - 2\delta\,C_1)^{-\frac{1}{2}}\,\int_0^{l_E(\eta)}
             \sqrt{g_{\gamma(t)}\big(\gamma'(t),\gamma'(t)\big)} \d t\\
      &\le (1 - 2\delta\,C_1)^{-\frac{1}{2}}\;d(x_1,x_2).
    \end{aligned}
  \end{equation*}

  Finally, we complete the proof by choosing $\delta > 0$ small enough
  that
  \begin{equation*}
    \max\big((1 + 2\delta\,C_1)^{ \frac{1}{2}},
             (1 - 2\delta\,C_1)^{-\frac{1}{2}}\big) \le C.
  \end{equation*}
\end{proof}

From here on, we shall frequently represent objects living in
$B(x;\delta) \subset M$ on normal coordinate neighborhoods
$B(0;\delta) \subset \T_x M$ via the normal coordinate chart $\phi_x$.
We will mostly use $B(x;\delta)$ to
clearly indicate the base point, or $B(0_x;\delta) \subset \T_x M$ to
stress the tangent space domain of the coordinates as well.
In spaces of bounded geometry, normal coordinate charts are the
natural charts to works in and coordinate transition maps are not just
smooth, but uniformly bounded, as stated in the following lemma.
\begin{lemma}[Boundedness of transition maps]
  \label{lem:bound-coordtrans}
  Let $(M,g)$ be a Riemannian manifold of\/ $k$\ndash bounded geometry
  with $k \ge 2$. There exists a $\delta$ with $0 < \delta < \rinj{M}$
  and constants $C,\,L > 0$ such that for all $x_1,x_2 \in M$ with
  $d(x_1,x_2) < \delta$ the following holds.
  \begin{enumerate}
  \item The \idx{coordinate transition map}
    \begin{equation}\label{eq:normal-coord-trans}
      \phi_{2,1} = \phi_2\circ\phi_1^{-1}\colon U \to \T_{x_2} M
      \qquad\text{with}\quad
      U = \phi_1(B(x_1;\delta) \cap B(x_2;\delta)) \subset \T_{x_1} M
    \end{equation}
    is $C^{k-1}$ bounded with $\norm{\phi_{2,1}}_{k-1} \le C$.

  \item Let\/ $\gamma_{2,1}\colon \intvCC{0}{1} \to B(x_1;\delta)$ be
    the unique shortest geodesic connecting $x_1$ and $x_2$ and let\/
    $\partrans(\gamma_{2,1})$ be the associated parallel transport.
    Then the map
    \begin{equation*}
      \phi_{2,1} - \partrans(\gamma_{2,1})\colon U \to \T_{x_2} M
    \end{equation*}
    has $C^{k-2}$\ndash norm bounded by the Lipschitz estimate
    \begin{equation}\label{eq:normal-coord-lipschitz}
      \norm{\phi_{2,1} - \partrans(\gamma_{2,1})}_{k-2} \le L\,d(x_1,x_2).
    \end{equation}
  \end{enumerate}
\end{lemma}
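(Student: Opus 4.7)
The plan is to work in normal coordinates around $x_1$, where Theorem~\ref{thm:bound-metric} supplies uniform bounds on the metric $g$ and its derivatives up to order $k$ (hence on $\Christoffel$ up to order $k-1$, and on $R$ and its covariant derivatives as well). I would fix $\delta > 0$ less than $\frac{1}{2}\rinj{M}$ and small enough that Proposition~\ref{prop:unif-coord-chart} applies on $B(x_1;2\delta)$ for every $x_1$; the constants $C$ and $L$ ultimately depend only on $k$, $\delta$, and the bounded-geometry data.

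For part (i), identify $B(x_1;\delta) \cap B(x_2;\delta)$ with its image in $\T_{x_1} M$ via $\phi_1$, in which coordinates $\phi_1^{-1} = \exp_{x_1}$ is the identity; thus $\phi_{2,1}(\xi) = \exp_{x_2}^{-1}(\xi)$ with the right-hand side computed in $\phi_1$-coordinates. The map $\exp_{x_2}$ is the time-$1$ map of the geodesic ODE
\[\ddot{\eta}^i + \Christoffel^i_{jk}(\eta)\,\dot{\eta}^j \dot{\eta}^k = 0,\]
whose coefficients are uniformly $C^{k-1}$-bounded. Smooth dependence of ODE flows on base point and initial velocity (cf.\ Theorem~\ref{thm:unif-smooth-flow}) then gives uniform $C^{k-1}$ bounds on $\exp_{x_2}$, and since $\D\exp_{x_2}(0) = \id$, a uniform inverse-function-theorem argument gives the same bounds on $\exp_{x_2}^{-1}$. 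After identifying $\T_{x_2} M$ with $\R^n$ through an orthonormal frame, this yields $\norm{\phi_{2,1}}_{k-1} \le C$.

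For part (ii), I would interpolate along $\gamma_{2,1}$. Set $c(s) = \gamma_{2,1}(s)$ and $\partrans_s = \partrans(\gamma_{2,1}|_0^s)$, and define
\[\Psi(s,\xi) = \partrans_s^{-1}\bigl(\exp_{c(s)}^{-1}(\exp_{x_1}(\xi))\bigr) - \xi \;\in\; \T_{x_1} M.\]
Then $\Psi(0,\xi) = 0$ identically, and $\partrans_1 \Psi(1,\xi) = \phi_{2,1}(\xi) - \partrans(\gamma_{2,1})\,\xi$, so
\[\phi_{2,1}(\xi) - \partrans(\gamma_{2,1})\,\xi = \partrans_1 \int_0^1 \partial_s \Psi(s,\xi)\,\d s.\]
Because $\partrans_1$ is a linear isometry, the claim~\ref{eq:normal-coord-lipschitz} reduces to the uniform estimate $\norm{\partial_s \Psi(s,\cdot)}_{k-2} \le L\,d(x_1,x_2)$ for $s \in \intvCC{0}{1}$.

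The factor $d(x_1,x_2)$ appears because $\partial_s \Psi$ is linear in $\dot c(s)$, whose norm equals the constant geodesic speed $d(x_1,x_2)$. Concretely, $\partial_s \exp_{c(s)}^{-1}(p)$ is a Jacobi-field variation at the base point, and $\partial_s \partrans_s^{-1}$ is given by the parallel-transport ODE; higher $\xi$-derivatives are obtained by iteratively differentiating the Jacobi equation $\nabla^2 J + R(J,\dot\gamma)\dot\gamma = 0$ together with the geodesic equation along the geodesic from $c(s)$ to $\exp_{x_1}(\xi)$. All resulting expressions are universal polynomials in $\nabla^i R$ for $0 \le i \le k-2$, which explains the two-derivative loss: one is lost passing from $g$ to $\Christoffel$, another from the second-order Jacobi equation. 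Under hypothesis (B$_k$), every such term is uniformly bounded, and the claim follows. The main obstacle I expect is precisely this bookkeeping---writing the iterated mixed derivatives of $\Psi$ as bounded polynomial combinations of covariant derivatives of $R$ with the correct scaling in $\lvert\dot c\rvert$; once that is verified for one pair $(x_1,x_2)$, uniformity over $M$ is automatic from Definition~\ref{def:bound-geom}.
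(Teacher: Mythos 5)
Your part (i) is essentially the paper's own argument: view $\exp_{x_2}$ as the time-one geodesic flow, use the uniform bounds on the Christoffel symbols from Theorem~\ref{thm:bound-metric} together with the uniform flow dependence from Theorem~\ref{thm:unif-smooth-flow}, and invert. (The paper works in $\phi_2$-coordinates and studies $\exp_{x_1}$, you work in $\phi_1$-coordinates and study $\exp_{x_2}^{-1}$; this is a cosmetic difference.)

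Your part (ii) takes a genuinely different route, and it works. The paper does not interpolate: it separately shows that $\norm{\phi_{2,1} - \Id}_{k-2} \le L'\,d(x_1,x_2)$ (from Lipschitz dependence of the geodesic flow on the base point, via Theorem~\ref{thm:unif-smooth-flow}), then shows the analogous Lipschitz estimate for $\partrans(\gamma_{2,1}) - \Id$ by solving the parallel-transport ODE, and finishes with the triangle inequality. You instead build a one-parameter family $\Psi(s,\xi) = \partrans_s^{-1}\bigl(\exp_{c(s)}^{-1}(\exp_{x_1}(\xi))\bigr) - \xi$ joining $0$ at $s=0$ to $\partrans_1^{-1}\phi_{2,1} - \Id$ at $s=1$ and integrate $\partial_s\Psi$, so the factor $d(x_1,x_2)$ drops out structurally from $\lvert\dot c\rvert$ rather than from a perturbation argument on the base point. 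Your decomposition is more intrinsic and shows directly why the estimate is linear in $d(x_1,x_2)$; it also avoids needing the separate intermediate comparison with $\Id$. The price is heavier bookkeeping: you must differentiate $\partial_s\Psi$ up to order $k-2$ in $\xi$, which pushes you through the Jacobi equation and iterated derivatives of $R$. The paper's triangle-inequality route is less elegant but lets it reuse the already-packaged flow estimates as a black box and never touch the Jacobi equation explicitly. Both arguments correctly account for the two-derivative loss (one from $g\to\Christoffel$, one from the extra derivative needed either for Lipschitz dependence on the base point or, in your version, because the Jacobi equation involves $R\sim\D\Christoffel$).
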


\begin{remark}\label{rem:coordtrans-loss-smooth}
  One degree of smoothness is lost because the exponential map is
  defined in terms of the geodesic flow. This flow in turn is defined
  in terms of the Christoffel symbols, which depend on derivatives of
  the metric, so these are only $C^{k-1}$ bounded. We lose another
  degree of smoothness in estimating
  $\phi_{2,1} - \partrans(\gamma_{2,1})$ since the Lipschitz estimate
  follows from a uniform bound on one higher derivative of these.
\end{remark}

We shall first compare both $\phi_{2,1}$ and $\partrans(\gamma_{2,1})$
to the identity in normal coordinates and finally conclude with the
triangle inequality that their difference must be small. We compare
$\phi_{2,1}$ to the parallel transport $\partrans(\gamma_{2,1})$ since
this is the most natural way to identify the tangent spaces
$\T_{x_1} M$ and $\T_{x_2} M$.

\begin{proof}
  Let $B(x_1;\delta),\,B(x_2;\delta)$ be two normal coordinate
  neighborhoods with nonempty intersection. The coordinate transition
  map $\phi_{2,1} = \phi_2\circ\phi_1^{-1} = \exp_{x_2}^{-1}\circ\exp_{x_1}$
  can be studied as the exponential map
  $\exp_{x_1}\colon \T_{x_1} M \to M$ in normal coordinates on
  $B(x_2;\delta)$, since $\phi_2 = \exp_{x_2}^{-1}$. From here on, we
  will implicitly be working in normal coordinates around $x_2$, using
  some choice of basis to isometrically identify $\T_{x_2} M \cong \R^n$.

  Let $x \in B(x_1;\delta) \cap B(x_2;\delta)$, hence
  $x_1 \in B(x_2;2\,\delta)$. We choose $\delta \le 1$, and small
  enough so that the results of Theorem~\ref{thm:bound-metric} and
  Proposition~\ref{prop:unif-coord-chart} (with $C = 2$) hold for
  $2\,\delta$. The exponential map is given by the time-one \idx{geodesic
  flow} projected on the base manifold. For the base point $x_2$, this
  is the identity map, while for the base point $x_1$ we will show
  that it is a small perturbation thereof. The geodesic flow on $\T M$
  is given in local coordinates by
  \begin{equation}\label{eq:geodflow-local}
    \begin{aligned}
      \dot{x}^i &= v^i,\\
      \dot{v}^i &= -\Christoffel^i_{jk}(x)\,v^j\,v^k,
    \end{aligned}
  \end{equation}
  \index{$\Gamma$!Christoffel symbols}%
  where $\Christoffel^i_{jk}$ denote the \idx{Christoffel symbols} with
  respect to the coordinates $x^i$ on $M$ and the $v^j$ are induced
  additional coordinates on $\T M$, see the explanation
  above~\ref{eq:TM-metric}. The Christoffel symbols are
  $C^{k-1}$ bounded due to Theorem~\ref{thm:bound-metric}.
  Let $\geodflow^t$ denote the geodesic flow of~\ref{eq:geodflow-local}
  on $\T M$ restricted to $B(x_2;2\,\delta)$. We denote by
  $(x(t),v(t))$ a solution curve of $\geodflow^t$. The geodesic flow
  preserves the length of tangent vectors with respect to the metric
  $g$, so we have $\norm{v(t)} \le 2\,\norm{v(0)} \le 2\,\delta$ with
  respect to the Euclidean distance in the normal coordinates. This
  implies that the vector field~\ref{eq:geodflow-local} is bounded in
  these induced coordinates. Hence, by Theorem~\ref{thm:unif-smooth-flow},
  $\geodflow^t \in \BC^{k-1}$ is bounded as well on the interval
  $\intvCC{0}{1}$. Moreover, $\D\geodflow^t \in \BC^{k-2}$ exhibits a
  Lipschitz estimate for the base point dependence
  $\norm{\phi_2(x_1)}_E$. By Proposition~\ref{prop:unif-coord-chart}
  the local Euclidean distance is equivalent to the distance on $M$,
  so $\norm{\phi_2(x_1)}_E \le 2\,d(x_1,x_2)$. These conclusions
  directly translate to $\exp_x(\slot) = \pi\circ\geodflow^1(x,\slot)$
  and we conclude that
  $\phi_{2,1} = \exp_{x_2}^{-1}\circ\exp_{x_1} \in \BC^{k-1}$ with
  bound $C > 0$ uniform in $x_1,x_2 \in M$ and
  $\norm{\phi_{2,1} - \Id}_{k-2} \le L'\,d(x_1,x_2)$ for some $L' > 0$.

  The parallel transport $\partrans(\gamma_{2,1})$ is given by
  integrating the pullback of the connection along $\gamma_{2,1}$.
  This yields a differential equation similar
  to~\ref{eq:geodflow-local} and similarly leads to $C^{k-1}$
  boundedness estimates in normal coordinates and Lipschitz estimates
  for the $C^{k-2}$\ndash norm. Thus, the difference
  $\phi_{2,1} - \partrans(\gamma_{2,1})$ is $C^{k-1}$ bounded, and
  has $C^{k-2}$\ndash norm that satisfies the Lipschitz
  estimate~\ref{eq:normal-coord-lipschitz} for some $L > 0$.
\end{proof}

\begin{definition}[$M$\ndash small coordinate radius]
  \label{def:M-small}
  \index{$\delta_M$}
  Let $(M,g)$ be a Riemannian manifold of bounded geometry. We define
  $\delta > 0$ to be \emph{$M$\ndash small} if
  Theorem~\ref{thm:bound-metric} and Lemma~\ref{lem:bound-coordtrans}
  hold on all normal coordinate charts of radius $\delta$.
\end{definition}
Note that such a $\delta > 0$ always exists. From now on, we shall
always assume to have selected such a $\delta$ for any given manifold
of bounded geometry and restrict its atlas to include these normal
coordinate charts only.

Lemma~\ref{lem:bound-coordtrans} shows that normal coordinate
transformations respect $C^k$ boundedness of functions in coordinate
representations. Thus, it is natural to consider manifolds of bounded
geometry as the class of $C^k$ bounded manifolds with respect to this
restricted atlas. This also makes the following definition natural.
\begin{definition}[$C^k$ bounded maps]
  \label{def:unif-bounded-map}
  \index{$C^k_b$!definition in bounded geometry}
  Let $X,Y$ be Riemannian manifolds of\/ $k \tp 1$\ndash bounded
  geometry and $f \in C^k(X;Y)$. We say that $f$ is of class $\BC^k$
  when there exist $X,Y$\ndash small $\delta_X,\,\delta_Y > 0$ such
  that for each $x \in X$ we have
  $f(B(x;\delta_X)) \subset B(f(x);\delta_Y)$ and the representation
  \begin{equation}\label{eq:func-coord-repn}
    \tilde{f}_x = \exp_{f(x)}^{-1} \circ f \circ \exp_x
    \colon B(0;\delta_X) \subset \T_x X \to \T_y Y
  \end{equation}
  in normal coordinates is of class $\BC^k$ and the
  associated $C^k$\ndash norms of\/ $\tilde{f}_\bullet$ are bounded
  uniformly in $x \in X$. We define the classes of\/ $\BUC^k(X;Y)$ and
  $\BUC^{k,\alpha}(X;Y)$ functions analogously when $X,Y$ are of
  $k \tp 2$\ndash bounded geometry.
\end{definition}

\begin{remark}\label{rem:unif-bounded-VF}\NoEndMark
  We shall say that a vector field $v \in \vf(X)$ is of class $\BC^k$,
  also denoted by $v \in \vf^k_b(X)$, when $v \in \BC^k$ with respect
  to coordinates on $\T M$ induced by normal coordinates on $M$. This
  is slightly different from normal coordinates on $\T M$ induced by
  the metric~\ref{eq:TM-metric}. Note that since $\norm{v} \le r$ is
  assumed bounded, we could restrict to the submanifold $\T^r M$ of
  bounded geometry and consider $v \in \BC^k(M;\T^r M)$, but this is
  less practical.
\end{remark}

\begin{remark}\NoEndMark
  The manifolds $X,Y$ need to have bounded geometry of one or two
  degrees higher than the smoothness of the maps to preserve
  boundedness and uniform continuity estimates under normal coordinate
  transformations. This shall from now on always be an implicit
  assumption.
\end{remark}

\begin{remark}[Locally/globally defined continuity modulus]
  \label{rem:loc-cont-modulus}
  \index{continuity modulus!in bounded geometry}
  The continuity modulus $\epsilon_f$ of a function
  $f \in \BUC^k(X;Y)$ is only defined on the interval
  $\intvCO{0}{\delta_X} \subset \R$. On the other hand,
  $\norm{\D^k f(x)}$ is globally well-defined in terms local charts
  and assumed to be bounded. We shall want to compare $\D^k f$ at
  points $x_1,\,x_2$ far apart. If we have isometric isomorphisms
  \begin{equation*}
    \phi\colon \T_{x_1} X    \diffto \T_{x_2} X
    \quad\text{and}\quad
    \psi\colon \T_{f(x_1)} Y \diffto \T_{f(x_2)} Y,
  \end{equation*}
  then this allows us to compare
  \begin{equation}\label{eq:global-cont-mod-estimate}
    \norm{\D^k f(x_2)\circ\phi^{\otimes k} - \psi\circ\D^k f(x_1)}
    \le \norm{\D^k f(x_2)} + \norm{\D^k f(x_1)}.
  \end{equation}
  Note that the right-hand expression does not depend on the
  choice\footnote{%
    In practice, we shall use isomorphisms defined by parallel
    transport on $X = Y$, cf.\ws Proposition~\ref{prop:unif-partrans}.
    This is a non-canonical choice, since it depends on the path
    connecting $x_1,\,x_2$. A canonical choice that depends
    continuously on $x_1,\,x_2$ cannot be made in general, since it
    would imply that the tangent bundle is trivializable.%
  } of isomorphisms.

  Thus, with such isomorphisms at hand, we can
  use~\ref{eq:global-cont-mod-estimate} to \emph{heuristically} extend
  the local to a global continuity modulus. That is, for nearby points
  $x_1,\,x_2$ we use an estimate in terms of local charts; if this is
  not possible, then the points must be separated by a distance larger
  than a $\delta$ as in Definition~\ref{def:M-small}. Since the
  functions we consider are globally bounded, we then use some
  (non-canonical) choice to identify the vector bundle fibers over
  $x_1,\,x_2$ that the function lives in and estimate by the
  right-hand side of~\ref{eq:global-cont-mod-estimate}. This estimate
  is crude but independent of the choice of identification and will
  always satisfy our needs. For example, if
  $f \in \BUC^{k,\alpha}(X;Y)$, with H\"older coefficient $C_\alpha$
  locally for $d(x_1,x_2) \le \delta$ then we have
  \begin{equation*}
    \norm{\D^k f(x_2) - \D^k f(x_1)} \le
    \begin{cases}
      C_\alpha\,d(x_1,x_2)^\alpha & \text{if}\; d(x_1,x_2) < \delta,\\
      \frac{2\,\norm{f}_k}{\delta^\alpha}\,
      d(x_1,x_2)^\alpha           & \text{else.}
    \end{cases}
  \end{equation*}
  This shows that we can heuristically consider
  $\max\big(C_\alpha,\frac{2\,\norm{f}_k}{\delta^\alpha}\big)$ as
  a global H\"older coefficient.
\end{remark}

The following proposition shows that we may measure continuity of the
derivatives of  a function $f$ using local parallel transport. With
the remark above we see how it can be extended to a global continuity
modulus if a (non-unique) choice is made for how to connect non-close
points $x_1,\,x_2$ by a path; this idea will be developed in
Section~\ref{sec:formal-tangent}.
\begin{proposition}[Equivalence of continuity moduli]
  \label{prop:unif-partrans}
  \index{continuity modulus!in bounded geometry}
  Let $X,\,Y$ be Riemannian manifolds of bounded geometry and
  $f \in \BC^k(X;Y)$. Then the following statements are equivalent:
  \begin{enumerate}
  \item $f \in \BUC^{k,\alpha}(X;Y)$ according to
    Definition~\ref{def:unif-bounded-map};
  \item we have the continuity estimate
  \begin{equation}\label{eq:unif-cont-partrans}
    \begin{aligned}
\fst\exists\;\epsilon_{f,\partrans} \in C^\alpha(\R_+;\R_+),\,\delta_0 > 0\colon
    \forall\;x_1,x_2 \in X,\,d(x_1,x_2) \le \delta_0\colon\\
\con\norm[\big]{\D^k \tilde{f}_{x_2}(0)\cdot\partrans(\gamma_{2,1})^{\otimes k}
               -\partrans(\eta_{2,1})\cdot\D^k \tilde{f}_{x_1}(0)}
    \le \epsilon_{f,\partrans}(d(x_1,x_2)),
    \end{aligned}
  \end{equation}
  where\/ $\partrans(\eta_{2,1})$ and\/
  $\partrans(\gamma_{2,1})^{\otimes k}$ denote parallel transport
  along the unique shortest geodesic between $f(x_1),\,f(x_2)$ and
  $x_1,\,x_2$, respectively, and $\epsilon_{f,\partrans}$ denotes a
  uniform or $\alpha$\ndash H\"older continuity modulus.
  \end{enumerate}
\end{proposition}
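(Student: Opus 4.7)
The plan is to exploit Lemma~\ref{lem:bound-coordtrans}: under $k \tp 2$\ndash bounded geometry (the hypothesis Definition~\ref{def:unif-bounded-map} demands for $\BUC^{k,\alpha}$), the Lipschitz estimate $\norm{\phi_{2,1} - \partrans(\gamma_{2,1})}_{k} \le L\,d(x_1,x_2)$ is available in $C^{k}$\ndash norm. Since parallel transport is fiber-linear, $\D^{j}\partrans = 0$ for $j \ge 2$, so this estimate yields both $\D\phi_{2,1}(\xi) = \partrans(\gamma_{2,1}) + O(d(x_1,x_2))$ and $\D^{j}\phi_{2,1}(\xi) = O(d(x_1,x_2))$ for $2 \le j \le k$, uniformly in $\xi$ and in $x_1, x_2$; the analogue holds for $\phi^Y_{2,1}$.

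For (i) $\Rightarrow$ (ii), pick $x_1, x_2$ with $d(x_1,x_2) \le \delta_0$ sufficiently small and use, on the chart overlap, the composition identity
\begin{equation*}
  \tilde{f}_{x_2} = \phi^Y_{2,1} \circ \tilde{f}_{x_1} \circ \phi^X_{1,2}.
\end{equation*}
Differentiating $k$ times at $0 \in \T_{x_2} X$ by Fa\`a di Bruno, the principal term is $\D\phi^Y_{2,1}(p) \cdot \D^k \tilde{f}_{x_1}(\phi^X_{1,2}(0)) \cdot \big(\D\phi^X_{1,2}(0)\big)^{\otimes k}$ with $p = \tilde{f}_{x_1}(\phi^X_{1,2}(0))$. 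Substituting the near-parallel-transport identities above, invoking (i) to shift the base point of $\D^k \tilde{f}_{x_1}$ from $\phi^X_{1,2}(0)$ to $0$ (with error governed by the H\"older or uniform continuity modulus of $\D^k \tilde{f}_{x_1}$ applied to $O(d(x_1,x_2))$), and using $\D\phi^X_{1,2}(0) \circ \partrans(\gamma_{2,1}) = \mathrm{Id} + O(d(x_1,x_2))$ together with multilinearity, the principal term reduces to $\partrans(\eta_{2,1}) \circ \D^k \tilde{f}_{x_1}(0) \circ \partrans(\gamma_{2,1})^{-\otimes k}$ up to a quantity of the form $\epsilon_f(d(x_1,x_2)) + O(d(x_1,x_2))$. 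Every non-principal Fa\`a di Bruno term carries at least one factor $\D^{j}\phi^X_{1,2}(0)$ with $j \ge 2$, so each is $O(d(x_1,x_2))$. Composing with $\partrans(\gamma_{2,1})^{\otimes k}$ gives (ii) with a continuity modulus $\epsilon_{f,\partrans}$ which is $\alpha$\ndash H\"older when $\epsilon_f$ is.

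The reverse implication (ii) $\Rightarrow$ (i) follows by a symmetric chain-rule computation. For $\xi_1, \xi_2 \in B(0;\delta_X) \subset \T_x X$ set $x_i = \exp_x(\xi_i)$; by Proposition~\ref{prop:unif-coord-chart} the Euclidean distance $d_E(\xi_1,\xi_2)$ is comparable to $d(x_1,x_2)$ uniformly in $x$. The same composition identity, rewritten as $\tilde{f}_x = \phi^Y_{x,f(x_i)} \circ \tilde{f}_{x_i} \circ \phi^X_{x_i,x}$, lets one express $\D^k \tilde{f}_x(\xi_i)$ as $\partrans(\cdots) \circ \D^k \tilde{f}_{x_i}(0) \circ \partrans(\cdots)^{-\otimes k}$ plus $O(d(x,x_i))$ by the same Fa\`a di Bruno bookkeeping. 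Subtracting the two $(i = 1,2)$ and applying (ii) to compare $\D^k \tilde{f}_{x_1}(0)$ with $\D^k \tilde{f}_{x_2}(0)$, either directly or by detouring through $x$, then yields the required H\"older estimate for $\tilde{f}_x$ uniformly in $x$.

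The main technical obstacle is not any individual estimate but the careful Fa\`a di Bruno bookkeeping: one must verify term by term that every correction beyond the parallel-transport principal term is either $O(d(x_1,x_2))$ or $\epsilon_f$ of it, and check that the single first-order terms conspire to produce exactly the parallel-transport comparison. All of this rests on a single analytic input, namely the Lipschitz bound of Lemma~\ref{lem:bound-coordtrans} at derivative order $k$, which is precisely why Definition~\ref{def:unif-bounded-map} requires $k \tp 2$\ndash bounded geometry.
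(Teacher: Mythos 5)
Your proposal is correct and follows essentially the same route as the paper: both rest on the Lipschitz estimate from Lemma~\ref{lem:bound-coordtrans}, the higher-order chain rule of Proposition~\ref{prop:compfunc-deriv}, and the observation that every non-principal term carries a factor $\D^j\phi_{2,1}(0)$ with $j \ge 2$, which is $O(d(x_1,x_2))$ since parallel transport has vanishing higher derivatives. The only organizational difference is that you handle the domain and codomain transition maps simultaneously via the single composition identity $\tilde{f}_{x_2} = \phi^Y_{2,1} \circ \tilde{f}_{x_1} \circ \phi^X_{1,2}$, whereas the paper first proves the case of linear $Y$ (so that $\phi^Y_{2,1}$ and $\partrans(\eta_{2,1})$ drop out) and then remarks that the general codomain case follows by applying the same estimates there.
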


\begin{proof}
  We first prove the statement in case $Y$ is a normed linear space,
  hence no parallel transport term $\partrans(\eta_{2,1})$ appears.

  Let $\delta_0 \le \delta_X$ as in Definition~\ref{def:unif-bounded-map}
  (thus, in particular $\delta_0$ is $X$\ndash small), and let
  $d(x_1,x_2) \le \delta_0$. Then we have the Lipschitz estimate
  $\norm{\phi_{2,1} - \partrans(\gamma_{2,1})} \le L\,d(x_1,x_2)$
  while the normal coordinate representations~\ref{eq:func-coord-repn}
  of $f$ at $x_1,\,x_2$ are related by
  $\tilde{f}_{x_1} = \tilde{f}_{x_2} \circ \phi_{2,1}$. This leads to
  {\jot=5pt
  \begin{align*}
    \fst \norm[\big]{\D^k \tilde{f}_{x_2}(0)\cdot\partrans(\gamma_{2,1})^{\otimes k}
                    -\D^k \tilde{f}_{x_1}(0)}\\
    &=   \norm[\big]{\D^k \tilde{f}_{x_2}(0)\cdot\partrans(\gamma_{2,1})^{\otimes k}
                    -\D^k[\tilde{f}_{x_2}\circ\phi_{2,1}](0)}\\
    &\le \norm[\big]{\D^k \tilde{f}_{x_2}(0)\cdot\partrans(\gamma_{2,1})^{\otimes k}
                     -\D^k \tilde{f}_{x_2}(\phi_{2,1}(0))\cdot\big(\D\phi_{2,1}\big)^{\otimes k}}\\
    \con+\sum_{l=1}^{k-1} \norm{\D^l\tilde{f}_{x_2}(\phi_{2,1}(0))\cdot
                                P_{l,k}\big(\D^\bullet\phi_{2,1}(0)\big)}\displaybreak[1]\\
    &\le \norm[\big]{\D^k \tilde{f}_{x_2}(0)-\D^k \tilde{f}_{x_2}(\phi_2(x_1))}
        +\norm{\D^k \tilde{f}_{x_2}(\phi_2(x_1))}\,\norm{\partrans(\gamma_{2,1}) - \D\phi_{2,1}}^k\\
    \con+\sum_{l=1}^{k-1} \norm{\D^l\tilde{f}_{x_2}(\phi_2(x_1))}\,
                          \norm{P_{l,k}\big(\D^\bullet\phi_{2,1}(0)\big)}\\[-3pt]
    &\le \epsilon_f(d(x_2,x_1)) + \norm{f}_k\,\big(L\,d(x_1,x_2)\big)^k
        +\sum_{l=1}^{k-1} \norm{f}_l\,
                          \norm{P_{l,k}\big(\D^\bullet\phi_{2,1}(0)\big)},
  \end{align*}}
  where $\epsilon_f$ denotes the continuity modulus of $f$ and its
  derivatives according to Definition~\ref{def:unif-bounded-map}, and
  the $P_{l,k}$ denote $(l,k)$\ndash linear maps according to
  Proposition~\ref{prop:compfunc-deriv}. We used the fact that both
  $\partrans(\gamma_{2,1})$ and $\D\phi_{2,1}(0)$ act on the $k$\ndash
  tensor bundle as a $k$\ndash tuple of copies. By assumption
  $\norm{f}_k$ is bounded, and to estimate the $P_{l,k}$ terms, we
  note that $l < k$, so each of the $P_{l,k}$ contains at least a
  factor $\D^i\phi_{2,1}(0)$ with $i \ge 2$. Since $\phi_{2,1}$ is
  close to $\partrans(\gamma_{2,1})$ and
  $\D^i\partrans(\gamma_{2,1}) = 0$ for $i \ge 2$, it follows that
  \begin{equation*}
    \norm{P_{l,k}\big(\D^\bullet\phi_{2,1}(0)\big)}
    \le C\,L\,d(x_1,x_2)
  \end{equation*}
  for some constant $C$ independent of $x_1,x_2$. This shows that the
  continuity modulus $\epsilon_{f,\partrans}$
  of~\ref{eq:unif-cont-partrans} can be estimated by the continuity
  modulus $\epsilon_f$ plus additional Lipschitz terms. We can reverse
  the estimates above to arrive at the same conclusion when expressing
  $\epsilon_f$ in terms of $\epsilon_{f,\partrans}$. Hence, the
  continuity statements are equivalent for any $\alpha \le 1$.

  \enlargethispage*{0.3cm}
  If $Y$ is a Riemannian manifold of bounded geometry, we just apply
  the same estimates in the codomain. To this end, we must have
  $d(f(x_2),f(x_1)) \le \delta_Y$, so we choose $\delta_0$ small
  enough that
  \begin{equation*}
    d(f(x_2),f(x_1))
    \le \norm{\D f}\,d(x_2,x_1)
    \le \norm{f}_1\,\delta_0 \le \delta_Y
  \end{equation*}
  holds with $\delta_Y$ as in Definition~\ref{def:unif-bounded-map}.
\end{proof}

The definition of bounded geometry can be extended to vector bundles,
see also~\cite[p.~65]{Shubin1992:spec-elloper-noncpt}.
\begin{definition}[Vector bundle of bounded geometry]
  \label{def:BG-bundle}
  \index{bounded geometry!definition for vector bundle}
  Let $(M,g)$ be a manifold of bounded geometry and $\delta$ be
  $M$\ndash small as in Definition~\ref{def:M-small}. We say that a
  vector bundle $\pi\colon E \to M$ with fiber $F$ has $k$\th order
  bounded geometry when there exist preferred trivializations
  \begin{equation}\label{eq:BG-bundle-triv}
    \tau\colon \pi^{-1}\big(B(m;\delta)\big) \to B(m;\delta) \times F
    \qquad\text{for each } m \in M
  \end{equation}
  such that if we have a transition function
  $\phi_{2,1} = \tau_2\circ\tau_1^{-1}$ between two trivializations on
  $B(m_1;\delta)$ and $B(m_2;\delta)$, then the function
  $g\colon B(m_1;\delta) \cap B(m_2;\delta) \to \CLin(F)$ defined by
  $\phi_{2,1}(m,f) = g(m)\,f$ satisfies $g \in \BC^k$ independent of
  the points $m_1,m_2 \in M$.
\end{definition}

\begin{remark}
  Note that we could have replaced $B(m;\delta)$ by arbitrary
  (preferred) coordinate charts. The relevant property is that we
  express uniformity of the transition functions in terms of
  uniformity of the function $g$ with respect to the underlying
  coordinate charts of $M$, which are normal coordinates in our case.
\end{remark}

It follows from Lemma~\ref{lem:bound-coordtrans} that the tangent
bundle $\T M$ has bounded geometry of order $k-2$ if $(M,g)$ has
bounded geometry of order $k \ge 2$. One order of smoothness is lost
(beyond the one expected) as noted in
Remark~\ref{rem:coordtrans-loss-smooth}.

We introduce the concept of a uniformly locally finite cover of a
manifold of bounded geometry. This is a natural extension of a locally
finite cover. Uniformity means that we require a global bound $K$ on
the number of sets in the cover that intersect any small open ball.
\begin{lemma}[Uniformly locally finite cover]
  \label{lem:unif-loc-cover}
  \index{uniformly locally finite cover}
  Let $(M,g)$ be a Riemannian manifold of bounded geometry.

  Then for $\delta_2 > 0$ small enough and any\/
  $0 < \delta_1 \le \delta_2$, $M$ has a countable cover\/
  $\big\{B(x_i;\delta_1)\big\}_{i \ge 1}$ such that
  \begin{enumerate}
  \item $\forall\; i \neq j\colon d(x_i,x_j) \ge \delta_1$;
  \item there exists an explicit global bound \idx{$K$} such that for each
    $x \in M$ the ball $B(x;\delta_2)$ intersects at most $K$ of the
    $B(x_i;\delta_2)$.
  \end{enumerate}
\end{lemma}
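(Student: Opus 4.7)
\emph{Proof plan.} The standard approach is to construct $\{x_i\}$ as a maximal $\delta_1$-separated subset of $M$ via Zorn's lemma, and to verify both properties using volume comparison in bounded geometry. Choose $\delta_2$ small enough to be $M$-small in the sense of Definition~\ref{def:M-small} and so that Proposition~\ref{prop:unif-coord-chart} applies to balls of radius $3\delta_2$ (in particular $3\delta_2 < \rinj{M}$); fix any $0 < \delta_1 \le \delta_2$. Let $S = \{x_i\} \subset M$ be a maximal subset with $d(x_i,x_j) \ge \delta_1$ for $i \neq j$. Property (i) is immediate, and maximality forces the cover property: if $x \in M$ lay outside every $B(x_i;\delta_1)$, then $S \cup \{x\}$ would still be $\delta_1$-separated, contradicting maximality. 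Countability of $S$ follows from second-countability of $M$ combined with the discreteness of $S$.

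For property (ii), suppose $B(x;\delta_2) \cap B(x_i;\delta_2) \neq \emptyset$; then $d(x,x_i) < 2\delta_2$, so every such $x_i$ lies in $B(x;2\delta_2)$. Because the $x_i$ are pairwise $\delta_1$-separated, the smaller balls $B(x_i;\delta_1/2)$ are pairwise disjoint and, since $\delta_1/2 \le \delta_2$, all contained in $B(x;3\delta_2)$. It therefore suffices to bound, uniformly in $x$, the number of disjoint $(\delta_1/2)$-balls that can be packed inside a $3\delta_2$-ball. By Proposition~\ref{prop:unif-coord-chart} and Theorem~\ref{thm:bound-metric}, in normal coordinates on $B(y;3\delta_2)$ the Riemannian metric is uniformly equivalent to the Euclidean one and has uniformly bounded volume density, so the Riemannian volume of a ball of radius $r \in (0,3\delta_2]$ satisfies
\begin{equation*}
  c_1\,r^n \;\le\; \mathrm{Vol}(B(y;r)) \;\le\; c_2\,r^n
\end{equation*}
with constants $0 < c_1 \le c_2$ independent of $y \in M$, where $n = \dim M$. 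Summing disjoint ball volumes inside $B(x;3\delta_2)$ then yields
\begin{equation*}
  K \;\le\; \frac{c_2\,(3\delta_2)^n}{c_1\,(\delta_1/2)^n}
        \;=\; \frac{c_2}{c_1}\Bigl(\tfrac{6\,\delta_2}{\delta_1}\Bigr)^{\!n},
\end{equation*}
an explicit bound depending only on the bounded-geometry constants, $\dim M$, and the ratio $\delta_2/\delta_1$.

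The only nontrivial ingredient is the uniform volume comparison: it is precisely the bounded-geometry hypothesis---entering through the uniform normal-coordinate quasi-isometry of Proposition~\ref{prop:unif-coord-chart} together with the uniform metric bounds of Theorem~\ref{thm:bound-metric}---that makes the constants $c_1, c_2$ independent of the basepoint, and hence makes $K$ independent of $x$. Without such uniformity a locally finite cover still exists by standard manifold theory, but the multiplicity bound would degenerate at infinity, which is exactly the pathology bounded geometry is designed to rule out.
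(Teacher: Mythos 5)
Your proof is correct and follows essentially the same strategy as the paper's: construct a maximal $\delta_1$-separated set (you via Zorn's lemma, the paper via a greedy extension along a compact exhaustion — both are standard mechanisms for the same thing) and bound the multiplicity by packing disjoint $(\delta_1/2)$-balls inside a $3\delta_2$-ball, with bounded geometry supplying uniform volume estimates through Proposition~\ref{prop:unif-coord-chart}. The only cosmetic difference is that the paper computes volumes Euclideanly in a single normal-coordinate chart centered at $x$, arriving at the explicit constant $(24\,\delta_2/\delta_1)^n$, whereas you phrase the same comparison via two-sided Riemannian volume bounds $c_1 r^n \le \mathrm{Vol}(B(y;r)) \le c_2 r^n$; both are immediate consequences of the same uniform quasi-isometry.
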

Note that the second result implies both that the cover is locally
finite with fixed neighborhood size, and that each set in the cover
overlaps with at most $K$ others, cf. Lebesgue covering dimension.

\begin{proof}
  Using Proposition~\ref{prop:unif-coord-chart}, choose $\delta > 0$
  such that Euclidean distance in normal coordinates on each
  $B(x;\delta)$ is $C = 2$ equivalent to the metric distance and set
  $\delta_2 \le \delta/3$.

  Let $\{M_k\}_{k \in \N}$ be a compact exhaustion of $M$. Cover $M_k$
  with a sequence of balls $B(x_i;\delta_1)$, where
  $d(x_i,x_j) \ge \delta_1$. This sequence is finite, because an
  infinite sequence $\{x_i\}_{i \ge 0}$ must have an accumulation
  point in $M_k$, which contradicts $d(x_i,x_j) \ge \delta_1$.
  Choosing the first $x_i$'s in $M_{k+1}$ to coincide with those of
  $M_k$, it follows that the union of all balls
  $B(x_i,\delta_1)$ is a countable cover of $M$ such that
  $\forall\; i \neq j\colon d(x_i,x_j) \ge \delta_1$.

  Let $x \in M$ arbitrary. Any ball $B(x_i;\delta_2)$ that intersects
  $B(x;\delta_2)$ must be completely contained in $B(x;3\,\delta_2)$.
  Each of these balls has an exclusive subset $B(x_i;\delta_1/2)$, so
  in normal coordinates around $x$, each has an exclusive volume of at
  least $\textrm{Vol}\big(B(0;\delta_1/(2C))\big)$, while
  $B(x;\delta)$ has volume of at most
  $\textrm{Vol}\big(B(0;C\,3\,\delta_2)\big)$. With $n = \dim(M)$,
  this leads to the explicit upper bound
  \begin{equation}\label{eq:unif-cover-bound}
    K \le \frac{(3\,C\,\delta_2)^n}{(\delta_1/(2\,C))^n}
      =   \Big(24\,\frac{\delta_2}{\delta_1}\Big)^n.
  \end{equation}
  Thus, only finitely many can intersect $B(x;\delta_2)$. These
  estimates are uniform and do not depend on $x \in M$ so the bound
  $K$ is global.
\end{proof}

\begin{lemma}[Uniform partition of unity]
  \label{lem:part-unity}
  \index{partition of unity (uniform)}
  Let $M$ be a manifold with a uniformly locally finite cover with
  $\delta_1 < \delta_2$ and $\delta_2$ sufficiently small, as per
  Lemma~\ref{lem:unif-loc-cover}.

  Then there exists a partition of unity by functions
  $\chi_\bullet \in \BUC^k(B(x_i;\delta_2);\intvCC{0}{1})$ subordinate
  to this cover.
\end{lemma}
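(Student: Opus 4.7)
The plan is the standard construction of a partition of unity by bump functions pulled back through normal coordinates, the point being to verify that uniformity of the cover together with bounded geometry yields uniform $\BUC^k$ estimates for the resulting functions.

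First, fix once and for all a smooth radial cut-off $\phi\colon \R_{\ge 0} \to \intvCC{0}{1}$ with $\phi \equiv 1$ on $\intvCC{0}{\delta_1}$, $\supp\phi \subset \intvCO{0}{\delta_2}$, and all derivatives bounded. For each $i$, define $\tilde\chi_i$ on $B(x_i;\delta_2)$ by
\begin{equation*}
  \tilde\chi_i(x) = \phi\bigl(\norm{\exp_{x_i}^{-1}(x)}\bigr),
\end{equation*}
extended by zero outside. Because $\delta_2$ is $M$-small, Theorem~\ref{thm:bound-metric} and Proposition~\ref{prop:unif-coord-chart} tell us that $\exp_{x_i}^{-1}$ is a normal coordinate chart with uniform $\BC^k$ bounds independent of $i$, and hence $\tilde\chi_\bullet \in \BUC^k$ with $\norm{\tilde\chi_i}_k \le C$ for some constant $C$ independent of $i$.

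Next, since $\{B(x_i;\delta_1)\}_{i\ge 1}$ covers $M$ and $\tilde\chi_i \equiv 1$ on $B(x_i;\delta_1)$, the sum
\begin{equation*}
  S(x) = \sum_{i \ge 1} \tilde\chi_i(x)
\end{equation*}
satisfies $S(x) \ge 1$ everywhere. By the uniform local finiteness property of Lemma~\ref{lem:unif-loc-cover}, at most $K$ of the terms $\tilde\chi_i$ are nonzero at any given point $x$, so the sum is really a sum of at most $K$ terms, giving $1 \le S(x) \le K$ and $\norm{S}_k \le K\,C$. Now set
\begin{equation*}
  \chi_i(x) = \tilde\chi_i(x) / S(x).
\end{equation*}
Clearly $\sum_i \chi_i \equiv 1$ and $\supp \chi_i \subset B(x_i;\delta_2)$.

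The only thing left to check is that $\chi_\bullet \in \BUC^k$ uniformly in $i$. This is the step where the uniform estimates actually need to be used. The function $s \mapsto 1/s$ is smooth on $\intvCC{1}{K}$ with bounded derivatives, so Fa\`a di Bruno's formula (cf.\ Proposition~\ref{prop:compfunc-deriv}) together with the uniform bounds $\norm{S}_k \le K\,C$ gives $\norm{1/S}_k \le C'$ for a constant $C'$ depending only on $K$ and $C$. The product rule then yields $\norm{\chi_i}_k \le C\,C'$ uniformly in $i$, as desired. This is the only genuine obstacle in the argument, and it is handled precisely by the global bound $K$ from Lemma~\ref{lem:unif-loc-cover} plus the uniform $\BC^k$ estimates on the normal coordinate charts coming from bounded geometry.
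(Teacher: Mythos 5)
Your proof is correct and takes essentially the same route as the paper: pull back a fixed radial bump through normal coordinates, sum, and divide, using the uniform bound $K$ on the overlap count and bounded geometry to control the $\BUC^k$ norms. The one small place where the paper is slightly more careful is that it cites Lemma~\ref{lem:bound-coordtrans} (boundedness of coordinate \emph{transition} maps) to ensure $\tilde\chi_\bullet \in \BUC^k$ uniformly when viewed in \emph{any} normal chart, not just the chart centered at $x_i$; you invoke Theorem~\ref{thm:bound-metric} and Proposition~\ref{prop:unif-coord-chart}, which is the right circle of ideas but does not quite pin down that the $\BUC^k$ property is chart-independent. Conversely, your explicit Fa\`a di Bruno argument for $1/S$ spells out a step the paper leaves implicit.
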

We shall also apply this lemma to submanifolds which have a uniformly
locally finite cover due to Corollary~\ref{cor:submfld-cover} on
page~\pageref{cor:submfld-cover}.

{\hfuzz=2.1pt
\begin{proof}
  Let $\delta_2$ be small enough that by Lemma~\ref{lem:bound-coordtrans}
  coordinate transition maps are $\BUC^k$.
  Define a standard radially symmetric smooth bump function
  $\phi \in C^\infty(\R^n;\intvCC{0}{1})$ that is identically one on
  $B(0;\delta_1)$ and has compact support in $B(0;\delta_2)$, hence
  $\phi \in \BUC^k$. We set $\phi_i = \phi \circ \exp_{x_i}^{-1}$ by
  isometric identification $\T_{x_i} M \cong \R^n$ and zero outside
  $B(x_i;\delta_2)$. We have $\phi_\bullet \in \BUC^k$ in any
  coordinate patch. Define in the usual way
  \begin{equation}\label{eq:part-unity-func}
    \chi_i = \phi_i \;\big/\; \sum_{n \ge 1} \phi_n.
  \end{equation}
  The sum is finite as at most $K$ of the $B(x_n;\delta_2)$ overlap
  any $B(x_i;\delta_2)$. The balls $B(x_i;\delta_1)$ already cover
  $M$, so the denominator is at least one, from which it follows that
  $\chi_\bullet \in \BUC^k$.
\end{proof}
}

\begin{corollary}
  \label{cor:part-squared-unity}
  Similar to a uniform partition of unity, we can construct a
  partition by functions $\chi_\bullet \in
  \BUC^k(B(x_i;\delta_2);\intvCC{0}{1})$ whose squares sum to one.
\end{corollary}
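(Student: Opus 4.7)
The plan is to mimic the construction in Lemma~\ref{lem:part-unity} verbatim, except that I normalize by the square root of a sum of squares instead of by a sum. Concretely, take the same bump functions $\phi_i = \phi \circ \exp_{x_i}^{-1}$ from~\ref{eq:part-unity-func}, and define
\begin{equation*}
  \chi_i = \phi_i \;\big/\; \sqrt{\sum_{n \ge 1} \phi_n^2}.
\end{equation*}
Then $\sum_i \chi_i^2 = 1$ pointwise by construction, and the support of each $\chi_i$ is still contained in $B(x_i;\delta_2)$.

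The verification then reduces to three routine checks. First, at each point $x \in M$ the sum $\sum_n \phi_n^2(x)$ is actually finite, since by Lemma~\ref{lem:unif-loc-cover} at most $K$ of the $\phi_n$ are nonzero near $x$. Second, the denominator is bounded below: the balls $B(x_i;\delta_1)$ cover $M$ and $\phi_i \equiv 1$ on $B(x_i;\delta_1)$, so $\sum_n \phi_n^2 \ge 1$ everywhere, meaning the square root stays in $\intvCC{1}{\sqrt{K}}$. Third, I need $\chi_\bullet \in \BUC^k$ uniformly across the cover.

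For the $\BUC^k$ regularity, I would argue exactly as in the proof of Lemma~\ref{lem:part-unity}: in any coordinate patch $B(x_i;\delta_2)$ there are only at most $K$ nonzero contributors $\phi_n$, and each of these is $\BUC^k$ with bounds independent of the base point (transition maps between the relevant normal coordinate charts are $\BUC^k$ by Lemma~\ref{lem:bound-coordtrans}). Thus the sum $S = \sum_n \phi_n^2$ is $\BUC^k$ with uniform bounds. Since $S$ takes values in $\intvCC{1}{K}$, a compact interval bounded away from zero, the map $s \mapsto s^{-1/2}$ is smooth with all derivatives uniformly bounded on this range, and composition with $S$ preserves $\BUC^k$ bounds. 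Multiplying by the numerator $\phi_i \in \BUC^k$ then yields $\chi_i \in \BUC^k$ with $C^k$-norm uniform in~$i$.

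There is no real obstacle here; the only point requiring slight care is that the square root could in principle destroy smoothness, but the uniform lower bound $\sum_n \phi_n^2 \ge 1$ makes this a non-issue. The rest is the same argument as in Lemma~\ref{lem:part-unity}.
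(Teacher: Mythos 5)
Your proposal is essentially the paper's own proof: the paper simply replaces the normalization in~\ref{eq:part-unity-func} by the square-root-of-sum-of-squares as in~\ref{eq:part-squared-unity-func}, and you do exactly this. You additionally spell out the (correct but routine) verification that the denominator is bounded below by $1$ and that $s\mapsto s^{-1/2}$ is smooth with bounded derivatives on a compact interval away from zero, which the paper leaves implicit.
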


In the proof of Lemma~\ref{lem:part-unity} we simply
replace~\ref{eq:part-unity-func} by
\begin{equation}\label{eq:part-squared-unity-func}
  \chi_i = \phi_i \;\big/\; \sqrt{\,\rule[-1.1em]{0pt}{2.2em}\smash{\sum_{n \ge 1} \phi_n^2}}.
\end{equation}

\section{Curvature and holonomy}
\label{sec:BG-curv-holonomy}

To prove smoothness of the persistent manifold in
Section~\ref{sec:NHIM-smoothness}, we shall want to estimate the
holonomy along closed loops to be close to the identity, that is, if
$c$ is a closed loop, then we want $\partrans(c) - \Id$ to
be small. To this end, we relate the holonomy to the curvature and
finally obtain an estimate in terms of a global bound on the curvature
and the area of a surface enclosed by $c$.

The result that \idx{curvature} is the generator of \idx{holonomy}
dates back at least to
Ambrose and Singer~\cite{Ambrose1953:holonomy} who formulated this in
differential form in the 1950's; they cite an even older statement
(without proof) by \'Elie Cartan~\cite{Cartan1926:groupes-holonomie}.
More recent work by Reckziegel and
Wilhelmus~\cite{Reckziegel2006:curv-gen-hol} shows explicit integral
formulas for this relation, formulated on fiber bundles, a context
far more general than is required here. We shall present a formulation
for Riemannian manifolds $(M,g)$.

Let $\partrans$ denote the parallel transport functional, which takes
$C^1$ curves to orthogonal maps between the tangent spaces at their
endpoints, see~\ref{eq:partrans-RG}. If $c$ is a closed loop,
then $\partrans(c)$ is a linear endomorphism on $\T_{c(0)} M$ and we
can measure $\norm{\partrans(c) - \Id}$. Our goal is to bound
this quantity by the integral of the curvature form $R$ over a surface
with boundary precisely $c$. This result can be viewed as a
generalization of Stokes' theorem where the curvature is the exterior
derivative of the connection form $\omega$, while the connection on
the other hand generates parallel transport along the boundary of the
surface $A$ that the curvature is integrated over. Note though, that
we actually have $R = \d \omega + \omega \wedge \omega$, so there is
an additional term due to the noncommutativity of the connection form.

{
\newcommand{\Pf}{{}_f\partrans}
\newcommand{\ds}{\mfrac{\partial}{\partial s}}
\newcommand{\dt}{\mfrac{\partial}{\partial t}}

Let
\begin{equation}\label{eq:path-homotopy}
  \gamma\colon D = \intvCC{0}{\smash{\bar{t}}} \times \intvCC{0}{\smash{\bar{s}}} \to M
        \colon (t,s) \mapsto \gamma(t,s)
\end{equation}
parametrize the surface $A = \gamma(D) \subset M$. The idea is that
$\gamma$ is the homotopy of a (closed) curve $c$.
We shall only consider parallel transport along
horizontal or vertical lines in $D$; let us
denote by $\partrans^{s_2,s_1}_t$ \idx{parallel transport} along
$s \mapsto \gamma(s,t)$ with $s \in \intvCC{s_1}{s_2}$ and by
$\partrans^s_{t_2,t_1}$ parallel transport along
$t \mapsto \gamma(s,t)$ with $t \in \intvCC{t_1}{t_2}$.

We shall calculate the holonomy along $\partial A$ with respect to a
chosen frame on the pullback bundle $\gamma^*(\T M)$. The final result
will turn out to be independent of this choice, hence it is covariantly
defined. Let $f$ be an orthonormal frame on $\gamma^*(\T M)$, that is,
$f_{t,s}\colon \T_{\gamma(t,s)} M \to \R^n$ is an isometry of inner
product spaces. The
Levi-Civita connection $\nabla$ on $M$ can be pulled back to the
connection $\gamma^*(\nabla)$ on $\gamma^*(\T M)$ and it can be
expressed in terms of the connection form
$\omega \in \Omega^1\big(D;\text{End}(\R^n)\big)$ with respect to the
frame $f$. The curvature of $\gamma^*(\nabla)$ is equal to the
curvature $R$ of $\nabla$ pulled back to $D$, so we have
${\rm d}\omega + \omega \wedge \omega = \gamma^*(R)_f$, where the
subscript $f$ indicates that everything is expressed with respect to
the chosen frame. In the same notation, parallel transport along a
curve $s \mapsto c(s)$ satisfies
the linear, homogeneous differential equation\footnote{%
  If the frame $f$ is induced by local coordinates, then $\omega$ will
  precisely be given by the Christoffel symbols and we recover
  equation~\ref{eq:partrans-local-RG}.%
}
\index{parallel transport!differential equation}
\begin{equation}\label{eq:partrans-ODE-frame}
  \der{}{s}\Pf(c|_0^s) = -\omega(\dot{c}(s)) \circ \Pf(c|_0^s),
  \qquad \Pf(c|_0^0) = \Id_{\R^n},
\end{equation}
which has a unique solution $s \mapsto \Pf^{s,0} = \Pf(c|_0^s)$. This
can be viewed as time-dependent flow in $\text{End}(\R^n)$.

\begin{figure}[htb]
  \centering
  \input{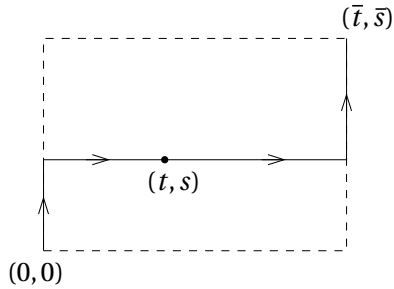}
  \caption{the path of the parallel transport term $P(s)$ in $D$.}
  \label{fig:partrans-P}
\end{figure}
Let us define the parallel transport term
\begin{equation}
  P(s) = \partrans^{\bar{s},s}_{\bar{t}}\circ
         \partrans^s_{0,\bar{t}}        \circ
         \partrans^{s,0}_0
         \colon \T_{\gamma(0,0)} M \to \T_{\gamma(\bar{t},\bar{s})} M,
\end{equation}
see Figure~\ref{fig:partrans-P}. The holonomy defect can be expressed
as
\begin{equation*}
    \Id - \partrans(\partial A)
  = \Id - \partrans(\partial D)
  = \Id - P(\bar{s})^{-1} \circ P(0)
  = P(\bar{s})^{-1} \circ \big(P(\bar{s}) - P(0)\big),
\end{equation*}
where $\partrans(\partial D)$ is defined using the pullback
connection. We use the fundamental theorem of calculus to write
\begin{equation}\label{eq:holonomy-int}
  P(\bar{s}) - P(0) = \int_0^{\bar{s}} \der{P(s)}{s} \d s.
\end{equation}

Expressing everything with respect to the frame $f$, we see that the
first and last factor of $P(s)$ are easily differentiated
using~\ref{eq:partrans-ODE-frame}:
\begin{equation}\label{eq:var-partrans-endpoints}
  \der{}{s}\Pf^{s,0}_0               = -\omega(\ds) \circ \Pf^{s,0}_0
  \quad\text{and}\quad
  \der{}{s}\Pf^{\bar{s},s}_{\bar{t}} = \Pf^{\bar{s},s}_{\bar{t}} \circ \omega(\ds).
\end{equation}
The middle term $\Pf^s_{\bar{t},0}$ can be differentiated by viewing
$s$ as parameter in the differential
equation~\ref{eq:partrans-ODE-frame}. Variation of constants yields
(see e.g.~\cite[App.~B]{Duistermaat2000:liegroups} for a proof of the
differentiable dependence of a flow on parameters)
{\hfuzz=31pt \jot=8pt
\begin{align*}
     \der{}{s}\Pf^s_{\bar{t},t}
  &= \int_0^{\bar{t}}
       \Pf^s_{\bar{t},t}\circ\der{}{s}\big[-\omega(\dt)\big]\circ\Pf^s_{t,0} \;\d t\\
  &= \int_0^{\bar{t}}
      -\Pf^s_{\bar{t},t}\circ\Big(
       {\rm d}\omega(\ds,\dt) + \der{}{t}\big[\omega(\ds)\big] + \omega(\lie[\big]{\ds}{\dt})
       \Big)\circ\Pf^s_{t,0} \;\d t\\
\intertext{%
  using standard rules for exterior derivatives. Next we note that
  $\lie[\big]{\ds}{\dt} = 0$, and integrate by parts the term
  $\der{}{t}\big[\omega(\ds)\big]$
}%
  &= \begin{aligned}[t]
   & \int_0^{\bar{t}} \begin{aligned}[t]
      &-\Pf^s_{\bar{t},t}\circ\Big(
         -\omega(\dt)\circ\omega(\ds)
         +{\rm d}\omega(\ds,\dt)
         +\omega(\ds)\circ\omega(\dt)
        \Big)\circ\Pf^s_{t,0} \;\d t \end{aligned}\\
   &-\Big[ \Pf^s_{\bar{t},t}\circ\omega(\dt)\circ\Pf^s_{t,0} \Big]_{t=0}^{\bar{t}}
     \end{aligned}\displaybreak[1]\\
  &= \int_0^{\bar{t}}
        \Pf^s_{\bar{t},t}\circ({\rm d}\omega + \omega\wedge\omega)\big(\dt,\ds\big)
        \circ\Pf^s_{t,0} \;\d t \;{}
    -\omega(\ds)\circ\Pf^s_{\bar{t},0} + \Pf^s_{\bar{t},0}\circ\omega(\ds).
     \eqnumber\label{eq:var-partrans}
\end{align*}}%
We see that this variation depends on the curvature form
$\gamma^*(R)_f = {\rm d}\omega + \omega\wedge\omega$ along the path and
two additional boundary terms. If we view $\gamma$ as a homotopy of
paths with homotopy parameter $s$ and we keep the path endpoints
$\gamma(0,s)$ and $\gamma(\bar{t},s)$ fixed for all
$s \in \intvCC{0}{\bar{s}}$, then these boundary terms vanish and the
result~\ref{eq:var-partrans} agrees with~\cite[Cor.~3]{Reckziegel2006:curv-gen-hol}.

Instead, we insert~\ref{eq:var-partrans-endpoints} and~\ref{eq:var-partrans}
into~\ref{eq:holonomy-int}. Then these boundary terms cancel against
the terms from~\ref{eq:var-partrans-endpoints} and we finally obtain
\begin{equation}\label{eq:holonomy-curv}
  \begin{aligned}
  P(\bar{s})_f - P(0)_f
  &= \int_0^{\bar{s}} \int_0^{\bar{t}}
       \Pf^{\bar{s},s}_{\bar{t}}\circ\Pf^s_{\bar{t},t}
       \circ\gamma^*(R)_f(\dt,\ds)\circ
       \Pf^s_{t,0}\circ\Pf^{s,0}_0 \d t \d s\\
  &= \Bigg(\int_D
       \partrans^{\bar{s},s}_{\bar{t}}\circ\partrans^s_{\bar{t},t}
       \circ\gamma^*(R)\circ
       \partrans^s_{t,0}\circ\partrans^{s,0}_0\Bigg)_f.
  \end{aligned}
\end{equation}
The integrand on the last line is a two-form on $D$ with
values in $\CLin(\T_{\gamma(0,0)}M;\T_{\gamma(\bar{t},\bar{s})} M)$.
This final expression is clearly independent of a choice of frame, so
we have recovered an explicit integral formula relating holonomy along
a null-homotopic loop to the curvature.
} 

We conclude from~\ref{eq:holonomy-curv} that if $c$ is a closed,
null-homotopic loop, and the curvature globally bounded, then
$\norm{\partrans(c) - \Id}$ can be estimated by $\norm{R}_{\sup}$
times the surface area of any null-homotopy $\gamma$ of $c$. Note that
we do not require $\gamma$ to be an embedding; the integral is
intrinsically defined on $D$ by pullback. Furthermore, $\gamma$ is
required to be $C^1$ only. This follows from the fact that both
sides of the equation are continuous with respect to $\gamma$ in
$C^1$\ndash norm; alternatively, an explicit calculation requires
that the mixed partial derivative
$\frac{\partial^2\,\gamma}{\partial s\,\partial t}$ is continuous to
perform integration by parts. Both lead to the to the following result.
\begin{lemma}[Exponential growth bound on holonomy]
  \label{lem:bound-holonomy}
  Let $(M,g)$ be a manifold of bounded geometry with normal coordinate
  radius $\delta$ that is $M$\ndash small as in Definition~\ref{def:M-small}. Fix
  $T > 0$ and $\rho > 0$ and let $x_1, x_2$ be two $C^1$ curves on $M$
  with derivatives bounded by $N$ such that
  $d_\rho(x_1,x_2)\,e^{\rho\,T} \le \delta < \rinj{M}$. Denote by
  $\gamma_t$ the unique shortest geodesic connecting $x_1(t)$ to
  $x_2(t)$ for any $t \in \intvCC{0}{T}$.

  If $\delta$ is sufficiently small, then the closed loop
  $\eta = x_2|_T^0 \circ \gamma_T \circ x_1|_0^T \circ \gamma_0^{-1}$
  satisfies the holonomy bound
  \begin{equation}\label{eq:bound-holonomy}
    \norm{\partrans(\eta) - \Id}
    \le \tilde{C}\,\norm{R}_0\,N\,d_\rho(x_1,x_2)\,\frac{e^{\rho\,T}}{\rho}
  \end{equation}
  where $\tilde{C}$ depends on the geometry of $M$ only.
\end{lemma}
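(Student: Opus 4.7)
\medskip
\noindent\textbf{Proof proposal.} The plan is to apply the holonomy--curvature formula~\ref{eq:holonomy-curv} to a suitable homotopy whose boundary is the loop $\eta$, and then to bound the resulting surface integral by the curvature times a weighted area. Because $d(x_1(t),x_2(t)) \le d_\rho(x_1,x_2)\,e^{\rho\,t} \le \delta < \rinj{M}$ for every $t \in \intvCC{0}{T}$, the shortest geodesic $\gamma_t$ between $x_1(t)$ and $x_2(t)$ is uniquely defined and depends smoothly on $t$. Parametrizing each $\gamma_t$ by $s \in \intvCC{0}{1}$ with constant speed, we obtain a homotopy
\begin{equation*}
  \gamma\colon \intvCC{0}{T} \times \intvCC{0}{1} \to M,
  \qquad \gamma(t,s) = \gamma_t(s),
\end{equation*}
of class $C^1$ whose boundary traces out exactly $\eta$. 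First I would check that $\gamma \in C^1$ (so that the identity~\ref{eq:holonomy-curv} applies); smoothness of the exponential map on the $M$\ndash small neighborhoods together with smoothness of $x_1,x_2$ suffices.

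Next I would insert $\gamma$ into~\ref{eq:holonomy-curv}. Since parallel transport is an isometry on each fiber, the integrand is bounded in operator norm by
\begin{equation*}
  \norm{\gamma^*(R)(\tfrac{\partial}{\partial t},\tfrac{\partial}{\partial s})}
  \le \norm{R}_0\,\norm[\big]{\tfrac{\partial\gamma}{\partial t}}\,
                 \norm[\big]{\tfrac{\partial\gamma}{\partial s}}.
\end{equation*}
The $s$\ndash derivative is easy to control: by the constant-speed parametrization,
$\norm{\partial\gamma/\partial s}(t,s) = d(x_1(t),x_2(t)) \le d_\rho(x_1,x_2)\,e^{\rho\,t}$.
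The $t$\ndash derivative needs a bit more care, since $s \mapsto \partial\gamma/\partial t(t,s)$ is a Jacobi-type field along $\gamma_t$ with boundary values $\dot{x}_1(t)$ and $\dot{x}_2(t)$. On an $M$\ndash small coordinate chart around $\gamma_t(0)$, standard ODE estimates for the geodesic equation (with $\Christoffel$ uniformly bounded by Theorem~\ref{thm:bound-metric}) yield $\norm{\partial\gamma/\partial t}(t,s) \le C\,N$ for some constant $C$ depending only on the geometry of~$M$, provided $\delta$ is small enough. This is the one step that carries most of the geometric content; if $\delta$ were not $M$\ndash small, the Jacobi field could blow up.

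Combining these estimates with~\ref{eq:holonomy-curv} and using that the outermost parallel transport factor $P(\bar s)^{-1}$ is an isometry, I get
\begin{equation*}
  \norm{\partrans(\eta) - \Id}
  \le C\,\norm{R}_0\,N\,\int_0^T d_\rho(x_1,x_2)\,e^{\rho\,t} \d t
  \le \tilde{C}\,\norm{R}_0\,N\,d_\rho(x_1,x_2)\,\frac{e^{\rho\,T}}{\rho},
\end{equation*}
where in the last step I use the elementary bound~\ref{eq:exp-int}. This yields~\ref{eq:bound-holonomy} with $\tilde{C}$ depending only on the geometry of~$M$. The main obstacle is the uniform Jacobi-field bound giving $\norm{\partial\gamma/\partial t} \le C\,N$; everything else is bookkeeping of the Stokes-type identity~\ref{eq:holonomy-curv} already derived above.
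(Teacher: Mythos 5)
Your proposal follows essentially the same approach as the paper's proof: the same homotopy $\gamma(t,s) = \gamma_t(s)$, the same application of the curvature--holonomy formula~\ref{eq:holonomy-curv}, and the same strategy of bounding the surface integral by $\norm{R}_0$ times a weighted area. The only difference is presentational: where you invoke a Jacobi-field/ODE bound to control $\norm{\partial\gamma/\partial t} \le C\,N$, the paper carries out the equivalent step by explicitly differentiating $\gamma_t(s) = \exp_{x_1(t)}\big(s\,\exp_{x_1(t)}^{-1}(x_2(t))\big)$ via the chain rule and bounding $\D\exp_x$, $\D_x\exp_x$ and their inverses by a uniform constant $C$ from Theorem~\ref{thm:bound-metric} and Lemma~\ref{lem:bound-coordtrans}; both routes draw on the same bounded-geometry hypothesis, and either phrasing suffices.
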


\begin{proof}
  The two-parameter family $(s,t) \mapsto \gamma_t(s)$ defines a
  null-homotopy of the closed loop $\eta$. The map
  $s \mapsto \gamma_t(s)$ is defined through the exponential map as
  \begin{equation*}
    \gamma_t\colon \intvCC{0}{1} \to M
            \colon s \mapsto \exp_{x_1(t)}\big(s\,\exp_{x_1(t)}^{-1}(x_2(t))\big).
  \end{equation*}
  Since $\exp_x$ is a local diffeomorphism at
  least for $d(x_1(t),x_2(t)) < \delta\,e^{\rho\,t} < \rinj{M}$, that
  depends smoothly on $x$, it follows that $(s,t) \mapsto \gamma_t(s)$
  defines a homotopy between the curves $x_1,x_2$ restricted to the
  interval $\intvCC{0}{T}$. The map $\gamma_t(s)$ has continuous mixed
  derivatives with respect to $s,t$ (even though the double derivative
  with respect to $t$ does not exist since $x_1, x_2 \in C^1$ only),
  so integration by parts is allowed in~\ref{eq:var-partrans}.

  We estimate the surface area mapped by $\gamma_t(s)$. We use
  shorthand notation
  $\xi = s\,\exp_{x_1(t)}^{-1}(x_2(t)) \in \T_{x_1(t)} M$ and denote
  by $\D_x\exp_x$ the derivative of the exponential map with respect
  to the base point parameter $x$. Then
  \begin{align*}
    \der{}{s} \gamma_t(s)
    &= \D\exp_{x_1(t)}(\xi) \cdot \exp_{x_1(t)}^{-1}(x_2(t)),\\
    \der{}{t} \gamma_t(s)
    &= \D_x\exp_{x_1(t)}(\xi) \cdot \dot{x}_1(t)\\
    \con+\D\exp_{x_1(t)}(\xi) \cdot
       \big[s\,\D_x(\exp_{x_1(t)}^{-1})(x_2(t)) \cdot \dot{x}_1(t)
           +s\,\D   \exp_{x_1(t)}^{-1} (x_2(t)) \cdot \dot{x}_2(t)\big].
  \end{align*}
  Since $M$ has bounded geometry, $\D\exp_x$ and its inverse are
  bounded by Theorem~\ref{thm:bound-metric}, while $\D_x\exp_x$ and
  its inverse are bounded by Lemma~\ref{lem:bound-coordtrans}, say by
  $C > 1$. This leads to estimates
  \begin{align*}
    \norm[\Big]{\der{}{s} \gamma_t(s)}
    &\le C\,d(x_1(t),x_2(t)),\\
    \norm[\Big]{\der{}{t} \gamma_t(s)}
    &\le C\,\norm{\dot{x}_1(t)}
        +C\,s\big[C\,\norm{\dot{x}_1(t)} + C\,\norm{\dot{x}_2(t)}\big]
     \le 3\,C^2\,N,
  \end{align*}
  so the holonomy bound satisfies
  \begin{align*}
    \norm{\partrans(\eta) - \Id}
    &\le \norm{R}_0\,\int_0^1\int_0^T
                \norm[\Big]{\der{}{s} \gamma_t(s)}\,
                \norm[\Big]{\der{}{t} \gamma_t(s)} \d t \d s\\
    &\le \norm{R}_0\,\int_0^T 3\,C^3\,N\,d_\rho(x_1,x_2)\,e^{\rho\,t} \d t\\
    &\le 3\,C^3\,\norm{R}_0\,N\,d_\rho(x_1,x_2)\,\frac{e^{\rho\,T}}{\rho}.
  \end{align*}
\end{proof}

\begin{remark}
  It should be possible to obtain $\tilde{C} = 1$ if the curves $x_i$
  are generated by a flow $\Phi$ and we choose as homotopy
  $(s,t) \mapsto \Phi^t(\gamma(s))$, where $\gamma$ is the geodesic
  connecting $x_1(0)$ and $x_2(0)$. In our applications, though, the
  curves $x_1, x_2$ need not be solutions to exactly the same flow,
  while the current result is sufficient for our purposes.
\end{remark}

\section{Submanifolds and tubular neighborhoods}
\label{sec:BG-submanifolds}

From this section on, we shall prove results that---although they may
be of interest independently within bounded geometry---are building up
towards the final section of this chapter, where we prove how to
reduce Theorem~\ref{thm:persistNHIMgen} on persistence in
general manifolds of bounded geometry to the setting of a trivial
bundle. These results form the more technical part of this chapter and
are not required elsewhere.

In the following, we assume that $(Q,g)$ is an ambient manifold
that has bounded geometry of large or infinite order and
$M \in C^k$ will denote a submanifold of $Q$. Only a finite
order $l > k$ of bounded geometry is required of $(Q,g)$, but for
simplicity we shall assume $l = \infty$.
Recovering the explicit additional order $l \tm k$ would
amount to tediously tracking the details throughout all the proofs; it
should be sufficient if $l$ is larger than $k$ by some number between
$2$ and $10$.

Let $\iota\colon M \to Q$ be a $C^1$ immersion. With abuse of notation
we denote by $\T_x M = \text{Im}(\D\iota(x))$ and
$N_x = \text{Im}(\D\iota(x))^\perp$ the tangent and normal spaces of
$M$ with respect to the immersion. Note that even if $\iota$ is not
injective, the original point $x \in M$ uniquely selects the tangent
and normal spaces in $\T_{\iota(x)} Q$.
\begin{definition}[Uniformly immersed submanifold]
  \label{def:unif-imm-submfld}
  \index{uniformly immersed submanifold}
  \index{$M_{x,\delta}$}
  \index{$h_x$}
  Let $\iota\colon M \to Q$ be a $C^{k \ge 1}$ immersion of $M$ into
  the Riemannian manifold $(Q,g)$ of bounded geometry.
  Denote by $M_{x,\delta}$ the image under $\iota$ of the connected component of $x$
  in $\iota^{-1}\big(B(\iota(x);\delta) \cap \iota(M)\big)$. We define $M$ to
  be a $\BUC^k$ immersed submanifold when there exists a $\delta > 0$
  such that for all $x \in M$, the connected component $M_{x,\delta}$
  is represented in normal coordinates on $B(\iota(x);\delta) \subset Q$ by
  the graph of a function $h_x\colon \T_x M \to N_x$ and the family of
  functions $h_\bullet \in \BUC^k(\T_\bullet M;N_\bullet)$ has uniform
  continuity and boundedness estimates independent of $x$. We define
  $\BC^{k \ge 1}$ immersions in a similar way.
\end{definition}

\begin{figure}[b]
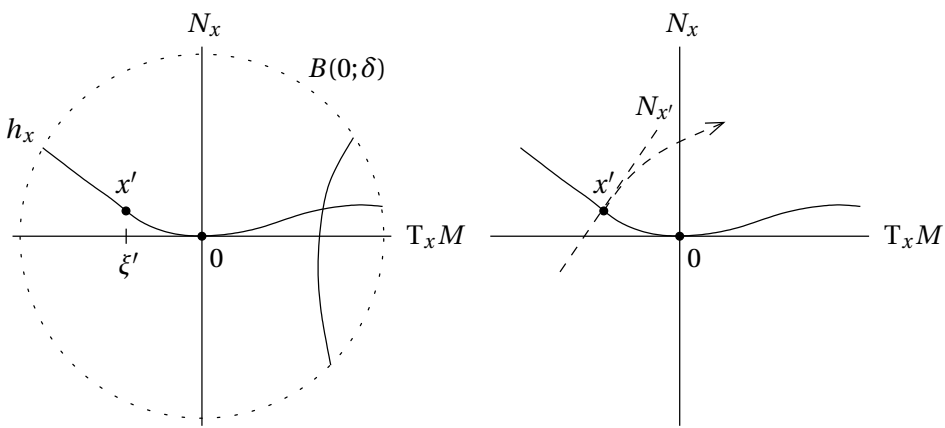

  \centering
  \hspace{-0.7cm}
  \input{\figpath normcoord-submfld.pspdftex}\hspace{1cm}
  \input{\figpath normcoord-normgeod.pspdftex}
  \caption{an immersed submanifold represented by the graph of $h_x$
    in normal coordinates. In the left figure, another part of $M$
    intersects transversely on the right; the right figure contains an
    orbit of the geodesic flow along a normal vector at $x' \cong
    \iota(x')$.}
  \label{fig:normcoord-submfld}
\end{figure}

\begin{remark}\label{rem:embedding}\NoEndMark
  By taking the connected component $M_{x,\delta}$ in $M$, we allow for
  immersed submanifolds that intersect, or nearly intersect themselves. See
  Figure~\ref{fig:normcoord-submfld} on the left: $M_{x,\delta}$ is described by
  the graph of $h_x$, while on the right side, a different part of $M$
  embeds into this same neighborhood $B(\iota(x);\delta)$. See
  Figure~\ref{fig:selfintersect} on page~\pageref{fig:selfintersect}
  for an example of a nearly self-intersecting submanifold. If we want
  to rule out such cases, we can assume that $M_{x,\delta}$ is the
  unique component of $M \cap B(\iota(x);\delta)$.
  This will turn $M$ into an embedded submanifold, but more strongly,
  the nearly self-intersecting case is also ruled out. We
  will refer to this as a \emph{\idx{uniformly embedded submanifold}}.
\end{remark}

\begin{remark}\label{rem:plaque}\NoEndMark
  The sets $M_{x,\delta}$ play a similar role as `plaques'
  in~\cite[p.~72--73]{Hirsch1977:invarmflds}.
\end{remark}

\begin{remark}\label{rem:imm-cont2bound}
  In case $k = 1$, boundedness is automatically implied by uniform
  continuity. This follows from the representation in normal
  coordinates. We have $\D h_x(0) = 0$, so by uniform continuity there
  exists a $\delta > 0$ such that $\norm{\D h_x(\xi)} < \epsilon = 1$
  when $\norm{\xi} < \delta$, hence $\D h_x$ is bounded. Put another
  way, there is no intrinsic measure for the `size of the derivative
  or tangent' of a submanifold.
\end{remark}

Note that the function $h_x$ is only defined on that part of the
domain $B(0;\delta) \subset \T_x M$ where its graph is
contained in $B(0;\delta) \subset \T_{\iota(x)} Q$, as can be seen in
Figure~\ref{fig:normcoord-submfld}. In the splitting
$\T_{\iota(x)} Q = \T_x M \oplus N_x$, we denote with $p_1, p_2$
orthogonal projections onto the $\T_x M$ and $N_x$ subspaces,
respectively.

From now on we shall continually assume that
$M \in \BUC^{k \ge 1}$ is a uniformly immersed submanifold of $Q$.
We will often identify $M$ with its image $\iota(M) \subset Q$, as
well as identify points $x \in M$ with $\iota(x)$,
keeping in mind the definition of $M_{x,\delta}$ to track local
injectivity. Furthermore, denote by $d_M$ the distance on $M$
induced by the pulled back Riemannian metric $\iota^*(g)$. This
distance function measures whether points are close when viewed along
the domain of the immersion, disallowing `shortcuts' through $Q$. It
also distinguishes different points with the same immersion image.
Note that it is different from the distance $d$ on $Q$ pulled back to
$M$. This we denote by $d_Q = \iota^*(d)$ but it is not a
distance on $M$ when $\iota$ is not injective. Still, we have the
following local result, which will be useful for later estimates.
\begin{lemma}[Local equivalence of distance]
  \label{lem:loc-equiv-Mdist}
  Let $M \in \BUC^1$ be a uniformly immersed submanifold of the bounded
  geometry manifold $(Q,g)$. Then $d_Q$ and $d_M$ are locally
  equivalent in the following sense:
  \begin{enumerate}
  \item $\forall x_1,x_2 \in M\colon d_Q(x_1,x_2) \le d_M(x_1,x_2)$;
  \item for any $C' > 1$ there exists a $\delta > 0$ such that for all
    $d_M(x_1,x_2) < \delta$, we have the local converse
    $d_M(x_1,x_2) \le C'\,d_Q(x_1,x_2)$.
  \end{enumerate}
\end{lemma}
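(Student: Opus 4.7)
For (i), the plan is simply to use that the length of a curve in $M$ equipped with the pullback metric $\iota^*g$ equals, by definition, the length of its image in $Q$: $L_Q(\iota\circ\gamma)=L_M(\gamma)$ for every $C^1$ curve $\gamma$ in $M$. Any such $\gamma$ from $x_1$ to $x_2$ yields an admissible competitor $\iota\circ\gamma$ for the infimum defining $d_Q(x_1,x_2)$, so infimizing over $\gamma$ at once produces $d_Q(x_1,x_2)\le d_M(x_1,x_2)$.

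For (ii), the strategy is to build, for $x_1,x_2\in M$ close in $d_M$, an explicit short curve in $M$ by lifting a straight segment in $\T_{x_1}M$ through the graph $h_{x_1}$ that represents the plaque $M_{x_1,\delta_1}$. Given $C'>1$, I would first pick $C_1>1$ and $\epsilon>0$ so small that $C_1^2\sqrt{1+\epsilon^2}\le C'$. Proposition~\ref{prop:unif-coord-chart} then supplies a radius $\delta_1$ on which normal coordinates on $Q$ are $C_1$-equivalent in distance, while the $\BUC^1$ hypothesis combined with $\D h_x(0)=0$ (uniformly in $x$) gives, by uniform continuity, an $\eta>0$ with $\|\D h_x(\xi)\|\le\epsilon$ for all $\|\xi\|\le\eta$ and all $x\in M$.

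The key localization step comes next. For $\delta>0$ sufficiently small, any curve in $M$ emanating from $x_1$ of length less than $\delta$ has image inside $B(\iota(x_1),\delta)\cap\iota(M)$ by (i); by continuity this image sits in the single connected component picking out the plaque $M_{x_1,\delta}$, so $x_2\in M_{x_1,\delta}$ admits the graph representation $\iota(x_2)=\exp_{x_1}(\xi_2,h_{x_1}(\xi_2))$ with $\|\xi_2\|\le C_1\,d_Q(x_1,x_2)$. I would then lift the straight segment $s\mapsto(s\xi_2,h_{x_1}(s\xi_2))$, $s\in[0,1]$, to a curve $\beta$ in $M$; its coordinate velocity has Euclidean norm at most $\sqrt{1+\epsilon^2}\,\|\xi_2\|$ by the slope bound on $\D h_{x_1}$, whence
$$d_M(x_1,x_2)\le L_M(\beta)=L_Q(\iota\circ\beta)\le C_1\sqrt{1+\epsilon^2}\,\|\xi_2\|\le C'\,d_Q(x_1,x_2).$$

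The main obstacle I anticipate is precisely the localization: the inequality $d_M(x_1,x_2)<\delta$ does \emph{not} automatically place $x_2$ in a single plaque around $x_1$ when $\iota$ is merely immersed and possibly nearly self-intersecting, so I must work with an actual length-minimizing (or near-minimizing) sequence of curves in $M$ and use connectedness together with (i) to trap everything in one component of the chart. After this is settled, the calculation is routine, but one has to choose $\delta$ small enough that simultaneously $\|\xi_2\|\le\eta$ (so the slope bound applies) and the entire lifted segment remains in the coordinate ball of radius~$\delta_1$.
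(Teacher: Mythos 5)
Your proposal matches the paper's proof in essence: part (i) is the same pullback-of-lengths observation, and for part (ii) you localize $x_2$ into the plaque $M_{x_1,\delta}$ (the paper does this by a short contradiction with the length bound from (i), you do it directly — same force), then construct the same explicit competitor curve $s\mapsto(s\xi_2,h_{x_1}(s\xi_2))$ and estimate its length using the uniform slope bound $\|\D h_\bullet\|\le\epsilon$ and the uniform equivalence of normal coordinates. The only cosmetic difference is the bookkeeping of the equivalence constant (you land on $C_1^2\sqrt{1+\epsilon^2}$ where the paper gets $C^{3/2}\sqrt{1+\epsilon^2}$ by using the metric-coefficient bound $\sqrt{\|g\|}\le\sqrt{C}$ in the length integral); both are then absorbed by choosing $C$ (resp.\ $C_1$) and $\epsilon$ small.
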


\begin{proof}
  The first assertion follows directly from the fact that any path in
  $M$ induces a path of equal length in $Q$ via the immersion $\iota$.

  For the second part, we first note that if $\delta$ is small enough
  and $d_M(x_1,x_2) < \delta$, then we must have
  $x_2 \in M_{x_1,\delta}$. If this would not be the case, then any
  path $\gamma$ connecting $x_1,x_2$ through $M$ cannot be contained
  in $M_{x_1,\delta}$. But this implies that the path runs out of
  $B(x_1;\delta)$, so its length is greater than $\delta$. This
  contradicts the assumption that $d_M(x_1,x_2) < \delta$. Hence,
  $x_2$ can be represented as a point on the graph of $h_{x_1}$ in
  $B(x_1;\delta)$.

  Let $C > 1,\,\epsilon > 0$ be constants to be fixed later and let
  $\delta$ be small enough such that the metric coefficients are
  bounded by $C$ in normal coordinate charts, that
  Proposition~\ref{prop:unif-coord-chart} holds with $C$, and we have
  $\norm{h_\bullet}_1 \le \epsilon$ as in
  Remark~\ref{rem:imm-cont2bound}. We consider the normal coordinate
  chart on $B(x_1;\delta)$ and construct a path in $M$ to find an
  upper bound for $d_M(x_1,x_2)$. Let $x_2 = (\xi,h_{x_1}(\xi))$ and
  define $\gamma(t) = (t\,\xi,h_{x_1}(t\,\xi))$ for
  $t \in \intvCC{0}{1}$. We estimate the length of $\gamma$ as
  \begin{equation*}
    l(\gamma) \le \int_0^1 \sqrt{\norm{g}}\,\sqrt{1+\norm{h_\bullet}_1^2}\,\norm{\xi} \d t
              \le \sqrt{C}\,\sqrt{1 + \epsilon^2}\,\norm{\xi},
  \end{equation*}
  while the Euclidean norm can be estimated by the distance in $Q$ as
  \begin{equation*}
    \norm{\xi} \le \norm{(\xi,h_{x_1}(\xi))} \le C\,d(x_1,x_2).
  \end{equation*}
  We conclude that
  \begin{equation*}
    d_M(x_1,x_2) \le l(\gamma) \le C^{3/2}\,\sqrt{1+\epsilon^2}\,d_Q(x_1,x_2)
  \end{equation*}
  and for any $C' > 1$ we can find $C > 1,\,\epsilon > 0$ such that
  $C^{3/2}\,\sqrt{1+\epsilon^2} < C'$.
\end{proof}

A uniform submanifold of a bounded geometry manifold can be shown to
possess a uniformly locally finite cover as a corollary of
Lemma~\ref{lem:unif-loc-cover}, without the need to show that the
submanifold itself has bounded geometry. As a consequence, it also has
(square-sum) partitions of unity.
\begin{corollary}[Uniform cover of a submanifold]
  \label{cor:submfld-cover}
  \index{uniformly locally finite cover!of a submanifold}
  Let $M \in \BUC^1$ be a uniformly immersed submanifold of the bounded
  geometry manifold $(Q,g)$.

  Then for $\delta_2 > 0$ small enough and any
  $\delta_1 \in \intvOC{0}{\delta_2}$, $M$ has a uniformly locally finite
  cover by balls of radius $\delta_2$ in terms of the distance $d_Q$,
  such that the balls of radius $\delta_1$ already cover $M$. That is,
  there exist $\{x_i\}_{i \ge 1}$ such that\/
  $\bigcup_{i \ge 1} M_{x_i,\delta_1}$ covers $M$ with a uniform bound
  $K$ on the maximum number of sets $M_{x_i,\delta_2}$ covering any
  set $M_{x,\delta_2}$ with $x \in M$.
\end{corollary}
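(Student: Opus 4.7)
The plan is to emulate the proof of Lemma~\ref{lem:unif-loc-cover} at the level of the abstract manifold $M$, using the intrinsic distance $d_M$ induced by $\iota^{*}g$ and the uniform graph representations $h_\bullet$ from Definition~\ref{def:unif-imm-submfld}. I would interpret the overlap statement ``$M_{x_i,\delta_2}$ meets $M_{x,\delta_2}$'' at the level of the abstract connected components appearing in that definition, so that two distinct plaques merely passing near each other in $Q$ at a near-self-intersection of $\iota(M)$ are not double-counted; this is compatible with the plaque viewpoint alluded to in Remark~\ref{rem:embedding} and avoids pathologies such as a slowly spiraling curve with unbounded multiplicity of sheets in $Q$.

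First, using second countability of $M$, I would exhaust it by compact sets $M_k$ and inductively extract a countable family $\{x_i\}_{i\ge 1}\subset M$ that is maximal with $d_M(x_i,x_j)\ge \delta_1$ for all $i\neq j$. For the covering claim, maximality yields, for any $y\in M$, an index $i$ with $d_M(x_i,y)<\delta_1$; a $d_M$-minimizing path $\gamma$ from $x_i$ to $y$ maps under $\iota$ to a path of the same length in $Q$, hence lying in $B(\iota(x_i);\delta_1)$. Since $\gamma$ is connected and contains $x_i$, the whole of $\gamma$ lies in the connected component of $x_i$ in $\iota^{-1}\bigl(B(\iota(x_i);\delta_1)\cap\iota(M)\bigr)$; in particular $\iota(y)\in M_{x_i,\delta_1}$.

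For the finiteness claim, shrink $\delta_2$ so that Proposition~\ref{prop:unif-coord-chart} applies with a fixed constant, Lemma~\ref{lem:loc-equiv-Mdist} holds, and $\norm{h_\bullet}_1$ is uniformly small (see Remark~\ref{rem:imm-cont2bound}). Suppose a point $y\in M$ lies in both of the two abstract components, at $x_i$ and at $x$. Representing $\iota(y)$ in normal coordinates at $\iota(x_i)$ as $(\xi,h_{x_i}(\xi))$, the radial parametrization $t\mapsto (t\xi,h_{x_i}(t\xi))$ of the graph produces a path in $M$ from $x_i$ to $y$ of $d_M$-length at most $C\delta_2$, with $C$ depending only on the uniform slope bound of $h_\bullet$ and the metric-equivalence constant of Proposition~\ref{prop:unif-coord-chart}. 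A symmetric estimate for the path from $x$ to $y$ yields $d_M(x_i,x)\le 2C\delta_2$.

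Finally, impose the single extra smallness condition $2C\delta_2+\delta_1/2<\delta_0$, where $\delta_0$ is the uniform graph radius of Definition~\ref{def:unif-imm-submfld}. The intrinsic balls of radius $\delta_1/2$ around the $x_i$ in $d_M$ are pairwise disjoint by $\delta_1$-separation, and all of them lie inside the component over $x$ of radius $\delta_0$, which is parametrized as the graph of $h_x$ over $B(0;\delta_0)\subset\T_x M$ with uniformly bounded metric distortion. Pulling everything back to $\T_x M$ through this parametrization and comparing Lebesgue volumes exactly as in Lemma~\ref{lem:unif-loc-cover} produces the explicit bound $K\le C'(\delta_2/\delta_1)^{\dim M}$, of the same form as~\ref{eq:unif-cover-bound}. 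The only real obstacle is the bookkeeping: the several smallness requirements on $\delta_2$ (from the chart radius on $Q$, from Lemma~\ref{lem:loc-equiv-Mdist}, from the uniform slope bound, and from $\delta_0$) must be imposed consistently, but each follows directly from the uniform estimates already available in bounded geometry.
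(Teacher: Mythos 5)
Your proof is correct and follows the approach the paper sketches: extract a maximal $d_M$-separated family by compact exhaustion, use the graph representation together with Lemma~\ref{lem:loc-equiv-Mdist} to turn plaque overlap into a $d_M$ bound on $d_M(x_i,x)$, and then run the volume count of Lemma~\ref{lem:unif-loc-cover} pulled back to $\T_x M$ through $h_x$. You have also made explicit the two points the paper's terse proof leaves implicit—that the separation must be measured in $d_M$ rather than the pseudo-distance $d_Q$, and that overlap of $M_{x_i,\delta_2}$ with $M_{x,\delta_2}$ should be read at the level of the abstract connected components (which is exactly what rules out the spiraling-sheets pathology you mention)—and this is the correct reading of the corollary.
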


\begin{proof}
  The proof follows the ideas of Lemma~\ref{lem:unif-loc-cover}. As an
  additional requirement, let $\delta > 0$ be sufficiently small such
  that each $M_{x,\delta}$ is represented in normal coordinates by the
  graph of $h_x$. Under this assumption, the open sets $M_{x,\delta}$
  are induced by $d_Q$ and correspond to the connected component of
  $x$ of the preimage of $B(\iota(x);\delta)$. Consequently, we can
  locally push the argument to $\iota(M) \subset Q$ to conclude that
  there is an upper bound $K$ on the number of sets $M_{x_i,\delta_2}$
  that intersect any set $M_{x,\delta_2}$.
\end{proof}

Even though we do not require submanifolds to have bounded geometry
for the results in this section, the lemma below will be needed in
the final reduction to a trivial bundle. The essential idea of the
proof is to use Gau\ss' second fundamental form to relate curvature of
the submanifold to second derivatives of its immersion map.
\begin{lemma}[Submanifold of bounded geometry]
  \label{lem:boundgeom-submfld}
  \index{bounded geometry!submanifold of}
  Let $M \in \BC^{k \ge 2}$ be a uniformly immersed submanifold of the bounded
  geometry manifold $(Q,g)$. Then $(M,\iota^*(g))$ is a Riemannian
  manifold with bounded geometry of order $k - 2$.
\end{lemma}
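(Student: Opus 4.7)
I would verify the two conditions of Definition~\ref{def:bound-geom} for $(M,\iota^{*}g)$: first (I), a uniform positive lower bound on the injectivity radius, and second $(\mathrm{B}_{k-2})$, uniform bounds on the Riemann tensor $R_M$ and its covariant derivatives up to order $k-2$. Completeness of $(M,\iota^{*}g)$ is a preliminary observation: any $d_M$--Cauchy sequence is $d_Q$--Cauchy by Lemma~\ref{lem:loc-equiv-Mdist}, so its image in $Q$ converges; the tail is eventually trapped in a single $M_{x,\delta}$, on which the graph parametrization by $h_x$ provides a uniform bi-Lipschitz chart, so the $Q$-limit lifts back to a $d_M$-limit in $M$.

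For condition $(\mathrm{B}_{k-2})$ I would use Gauss's second fundamental form. In normal coordinates on $B(\iota(x);\delta)\subset Q$ the component $M_{x,\delta}$ is the graph of $h_x$, and the second fundamental form $II$ together with its intrinsic covariant derivatives $\nabla^{j}II$ are universal polynomial expressions in the derivatives of $h_x$ up to order $j+2$ and in the Christoffel symbols of $g$ up to order $j$. By Theorem~\ref{thm:bound-metric} and the $\BC^{k}$ assumption on $h_\bullet$, it follows that $\|\nabla^{j}II\|$ is uniformly bounded for $0 \le j \le k-2$. Then Gauss's equation
\begin{equation*}
\langle R_M(X,Y)Z, W\rangle = \langle R_Q(X,Y)Z, W\rangle + \langle II(X,W), II(Y,Z)\rangle - \langle II(X,Z), II(Y,W)\rangle,
\end{equation*}
together with its iterated covariant derivatives (Codazzi--Mainardi), expresses $\nabla^{j}R_M$ for $j \le k-2$ as a polynomial in $\nabla^{i}R_Q$ and $\nabla^{i}II$ with $i\le j$. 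Bounded geometry of $Q$ controls the former, yielding the desired uniform bounds on $\nabla^{j}R_M$.

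For condition (I) I would use the graph itself as a uniform chart: define
\begin{equation*}
\phi_x \colon B(0;r_0) \subset \T_x M \longrightarrow M_{x,\delta}, \qquad \phi_x(\xi) = \iota^{-1}\bigl(\xi,\,h_x(\xi)\bigr),
\end{equation*}
where $\iota^{-1}$ denotes the local inverse on the component $M_{x,\delta}$. Because $h_x(0)=0$, $\D h_x(0)=0$, and $\|h_\bullet\|_{C^{k}}$ is uniformly bounded, the pulled-back metric $\phi_x^{*}(\iota^{*}g)$ is a uniformly non-degenerate, $\BC^{k-1}$--bounded perturbation of the Euclidean inner product on $\T_x M$, so its Christoffel symbols are uniformly $\BC^{k-2}$--bounded. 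Applying Theorem~\ref{thm:unif-smooth-flow} to the geodesic flow on $\T M$ then produces a radius $r_1 > 0$, independent of $x$, on which $\exp^{M}_{x}$ is defined, is a local diffeomorphism, and has image contained in $\phi_x(B(0;r_0))$. Since $\phi_x$ is an injective chart onto $M_{x,\delta}$, any two geodesics from $x$ of length at most $r_1$ that met would correspond to a closed loop in the graph $\phi_x(B(0;r_0))$, contradicting the fact that $\phi_x$ is a diffeomorphism onto its image. Hence $\rinj{x} \ge r_1$ uniformly in $x$.

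The main obstacle is the injectivity step: a priori, self-approach of $M$ in $Q$ (explicitly permitted in Remark~\ref{rem:embedding}) is not controlled by the ambient distance $d_Q$, and one might worry about short geodesic loops created by different sheets of $M$ passing near each other. What rescues the argument is precisely that Definition~\ref{def:unif-imm-submfld} isolates a single component $M_{x,\delta}$, so intrinsic geodesics of length $\le r_1$ cannot jump to another sheet; the uniform graph structure then suffices to rule out loops. The curvature bound is, by contrast, a mechanical bookkeeping of the Gauss--Codazzi relations once $II$ is seen to be $\BC^{k-2}$.
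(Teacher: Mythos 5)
Your overall route is the same as the paper's: the curvature condition $(\mathrm{B}_{k-2})$ via the Gauss equation and the identity $S_x(X,Y)=\D^2 h_x(0)(X,Y)$, and the injectivity radius via the graph chart $\kappa_x$ (your $\phi_x$), bounded Christoffel symbols $\Christoffel^M(\xi) = p_1\circ\Christoffel(\xi,h_x(\xi))(\tilde X,\tilde Y)$, and Theorem~\ref{thm:unif-smooth-flow}. The curvature half is fine; you add Codazzi bookkeeping that the paper leaves implicit, but the content is the same.

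The injectivity step, however, has a genuine gap. You obtain from Theorem~\ref{thm:unif-smooth-flow} only that $\exp^M_x$ is a \emph{local} diffeomorphism on a uniform ball and lands in the single chart $\phi_x(B(0;r_0))$, and then try to rule out global non-injectivity by saying that two geodesics from $x$ meeting at a point would give ``a closed loop in the graph, contradicting the fact that $\phi_x$ is a diffeomorphism onto its image.'' That inference is not valid: a metric on a coordinate ball (which is what $\phi_x$ gives you) can perfectly well have geodesic loops, and $\phi_x$ being a diffeomorphism says nothing about the injectivity of the \emph{different} map $\exp^M_x$. What actually closes the argument --- and what the paper does --- is to observe that on a uniform ball $B(0;\delta')$ the geodesic generator $-\Christoffel^M(\xi)(v,v)$ is $C^1$-small (it vanishes to second order in $v$, and $\Christoffel$ vanishes at the origin in normal coordinates of $Q$), so the time-one flow $\exp^M_x$ is a uniformly $C^1$-small perturbation of the \emph{Euclidean} exponential map $\mathrm{id}_{\T_x M}$; a $C^1$-small perturbation of a diffeomorphism on a ball is still a diffeomorphism, which gives injectivity directly and simultaneously. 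You should replace the closed-loop reasoning with this perturbation-of-identity argument.
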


\begin{remark}
  We lose two orders of smoothness in the bounded geometry
  definition. This is due to bounded geometry being defined in terms
  of the curvature, which depends on second order derivatives of the
  metric, and in this case also on second order derivatives of the
  embedding through Gau\ss' second fundamental form.
\end{remark}

\begin{proof}
  Let $\delta$ be sufficiently small such that for each $x \in M$ we
  have the representation $M_{x,\delta} = \Graph(h_x)$ with
  $\norm{\D h_x} \le 1$.

  The Riemann curvature tensor $R^M$ of $M$ can be expressed as a sum
  of the curvature $R$ on $Q$ and the second fundamental form of the
  (local) embedding, see
  e.g.~\cite[Thm~3.6.2]{Jost2008:riem-geom-anal}:
  \begin{equation}\label{eq:submfld-curvature}
    \begin{aligned}
    g(R^M(X,Y)\,Z,W) = g(R(X,Y)\,Z,W) &+ g(S(Y,Z),S(X,W))\\
                                      &- g(S(Y,W),S(X,Z)),
    \end{aligned}
  \end{equation}
  where
  \begin{equation}\label{eq:sec-fund-form}
    S\colon \T M \times \T M \to N
     \colon X, Y \mapsto (\nabla_X Y)^\perp
   \end{equation}
   is the second fundamental form, and it is indeed pointwise defined.
   In normal coordinates we find
   \begin{equation}\label{eq:sec-fund-form-coord}
     S_x(X,Y) = \D^2 h_x(0)(X,Y).
   \end{equation}
   Since $h \in \BC^k$ and $g,\,g^{-1} \in \BC^k$ as well, it
   follows that $S \in \BC^{k-2}$ and by~\ref{eq:submfld-curvature}
   then that $R^M \in \BC^{k-2}$, so condition (B$_{k-2}$) of
   Definition~\ref{def:bound-geom} is satisfied.

   Condition (I) on the injectivity radius follows from an implicit
   function argument applied to the geodesic flow using
   Theorem~\ref{thm:unif-smooth-flow}. We consider local coordinates
   around $x \in M$ by projecting the representation
   $M \cap B(x;\delta)$ onto $\T_x M$ in normal coordinates in $Q$.
   That is, we have the coordinate chart map
   \begin{equation*}
     \kappa_x\colon B(0;\delta/2) \subset \T_x M \to M
             \colon \xi \mapsto \exp_x(\xi,h_x(\xi))
   \end{equation*}
   and the corresponding embedding into normal coordinates
   $\T_x M \embedto \T_x Q\colon \xi \mapsto (\xi,h_x(\xi))$ of $Q$. We
   calculate explicit estimates for the exponential map $\exp^M_x$
   using Christoffel symbols of the connection $\nabla^M$ on $M$ in
   the coordinates in chart $\kappa_x$.

   Let $X,Y$ be vector fields on $M$. Their representation in
   $\kappa_x$ is mapped to normal coordinates $B(x;\delta)$ on $Q$ as
   \begin{equation*}
     X(\xi) \mapsto \tilde{X}(\xi) = \big(\Id, \D h_x(\xi)\big)^T \cdot X(\xi).
   \end{equation*}
   Hence, from the covariant derivative on $M$ in normal coordinates
   $B(x;\delta) \subset Q$ we can recover the Christoffel symbols in
   local coordinates $\kappa_x$ as
   \begin{equation*}
     \nabla^M_X Y = p_1 \circ \Big[ X^i(\xi) \pder{}{\xi^i} Y(\xi)
     + \Christoffel(\xi,h_x(\xi))\big(\tilde{X}(\xi),\tilde{Y}(\xi)\big)\Big],
   \end{equation*}
   where the first term has reduced to derivatives with respect to
   $\xi \in \T_x M \subset \T_x Q$ only, and
   $\Christoffel\colon B(x;\delta) \to \CLin^2(\T_x Q;\T_x Q)$ are the
   Christoffel symbols in normal coordinates at $x \in Q$. Thus, the
   Christoffel symbols
   \begin{equation}\label{eq:christoffel-submfld}
     \Christoffel^M(\xi)(X,Y) =
     p_1\circ \Christoffel(\xi,h_x(\xi))\big(\tilde{X},\tilde{Y}\big)
   \end{equation}
   of $M$ in $\kappa_x$ coordinates are uniformly bounded on
   sufficiently small balls $B(0;\delta') \subset \T_x M$. The
   Euclidean geodesic flow at time one defines the (trivial) Euclidean
   exponential map, which is an isomorphic diffeomorphism (with
   infinite injectivity radius actually). Since we study a small
   perturbation of this flow in local coordinates, given by the
   additional term~\ref{eq:christoffel-submfld}, and the perturbation
   is at least $\BC^{k-1}$ and $C^1$ small, the perturbed geodesic
   flow of $M$ can be made close enough that $\exp^M_x$ is still a
   diffeomorphism on $B(0;\delta')$ for some $\delta' > 0$. Hence,
   $\rinj{x} \ge \delta'$, but these estimates depend only on the
   perturbation size, so they hold uniformly for all $x \in M$.
\end{proof}

To obtain the final result of this section, the tubular neighborhood
theorem~\ref{thm:tubular-neighbhd}, we first need to work out some
details on local coordinates. If $M$ is a submanifold of $Q$,
it is natural to consider a specific
splitting on the normal coordinates at points $x \in M$, namely
$\T_x Q = \T_x M \oplus N_x$, where $N$ is the normal bundle over $M$.
We shall require bounds, not just on coordinate transformations, but
more specifically bounds on how well this splitting is preserved. The
lemmas are formulated in a more general context of splittings of
tangent spaces at any two nearby points, while the results for
coordinates along $M$ follow as an easy corollary.
\begin{lemma}[Coordinate transformations of splittings]
  \label{lem:split-coordtrans}
  Let $(Q,g)$ be a smooth Riemannian manifold of bounded geometry, let
  $C$ be sufficiently large and let $\delta,\zeta > 0$ be sufficiently small. Let
  $x_1, x_2 \in Q$ and let $\T_{x_i} Q = H_i \oplus V_i,\,i=1,2$ be
  splittings along `horizontal' and `vertical' perpendicular subspaces with
  $\dim(H_1) = \dim(H_2)$. Assume that $d(x_1,x_2) < \delta$ and that,
  for $i \neq j$, $H_i$ is represented in tangent normal coordinates
  at $x_j$ by the graph of\/ $L_i \in \CLin(H_j;V_j)$ with
  $\norm{L_i} \le \zeta$.

  Then the coordinate
  transformation $\phi_{2,1}$ in Lemma~\ref{lem:bound-coordtrans} is
  of the form
  \begin{equation}\label{eq:split-coordtrans}
    \phi_{2,1} = O_H \oplus O_V + \tilde{\phi}_{2,1}
    \quad\text{with}\quad
    \norm{\tilde{\phi}_{2,1}}_k \le C\big(\zeta+d(x_1,x_2)\big),
  \end{equation}
  where $O_H,O_V$ are orthogonal transformations between the $H_i$ and
  $V_i$ with $i=1,2$, respectively.
\end{lemma}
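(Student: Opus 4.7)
The plan is to approximate $\phi_{2,1}$ by parallel transport and then diagonalise parallel transport with respect to the two splittings. First I would invoke Lemma~\ref{lem:bound-coordtrans} (assuming sufficiently high order of bounded geometry of $Q$) to write $\phi_{2,1} = P + E_1$, where $P = \partrans(\gamma_{2,1})\colon \T_{x_1} Q \to \T_{x_2} Q$ is the orthogonal parallel-transport map along the shortest geodesic, and $\norm{E_1}_k \le L\,d(x_1,x_2)$. Since $E_1$ is already absorbed in the target bound, it suffices to find orthogonal transformations $O_H, O_V$ between the horizontal and vertical spaces respectively such that the constant linear map $P - (O_H \oplus O_V)$ has operator norm at most $C'(\zeta + d(x_1,x_2))$; its $C^k$\ndash norm equals its operator norm, since it is constant.

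Next I would write $P$ in block form with respect to $H_1 \oplus V_1$ (source) and $H_2 \oplus V_2$ (target) with diagonal blocks $P_{HH}\colon H_1 \to H_2$, $P_{VV}\colon V_1 \to V_2$ and off-diagonal blocks $P_{VH}, P_{HV}$. The hypothesis asserts $\D\phi_{2,1}(0)(H_1) = \Graph(L_1)$, and from the Lipschitz estimate of Lemma~\ref{lem:bound-coordtrans} we have $\norm{\D\phi_{2,1}(0) - P} \le L\,d(x_1,x_2)$. Hence for any unit $v \in H_1$, writing $\D\phi_{2,1}(0)(v) = (a, L_1 a)$ with $a \in H_2$ and matching components with $P(v) = P_{HH}(v) + P_{VH}(v)$ gives
\begin{equation*}
  P_{VH}(v) = L_1\bigl(P_{HH}(v)\bigr) + O\bigl(d(x_1,x_2)\bigr),
\end{equation*}
so $\norm{P_{VH}} \le \zeta + O(d(x_1,x_2))$. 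A symmetric argument using the analogous hypothesis on $H_2$ at $x_1$ (or, equivalently, applying the same reasoning to $P^{-1} = P^*$) yields $\norm{P_{HV}} \le \zeta + O(d(x_1,x_2))$. From orthogonality $P^*P = \Id$ it follows that $P_{HH}^* P_{HH} = \Id_{H_1} - P_{VH}^* P_{VH}$ is a positive operator within $(\zeta + O(d))^2$ of the identity; its polar decomposition produces a unique orthogonal $O_H\colon H_1 \to H_2$ with $\norm{P_{HH} - O_H} \le C(\zeta + d(x_1,x_2))$, and analogously $O_V\colon V_1 \to V_2$ from $P_{VV}$. Combining gives
\begin{equation*}
  \tilde\phi_{2,1} \;=\; \phi_{2,1} - (O_H \oplus O_V)
                   \;=\; E_1 + \bigl(P - (O_H \oplus O_V)\bigr),
\end{equation*}
where the first summand is $C^k$\ndash bounded by $L\,d(x_1,x_2)$ and the second, being constant, is bounded in every $C^k$\ndash norm by its operator norm $C(\zeta + d(x_1,x_2))$.

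The main obstacle is the bookkeeping of step two: translating the geometric hypothesis --- that $H_i$ appears as a graph of slope $\le \zeta$ in normal coordinates at $x_j$ --- into a clean operator-norm bound on the off-diagonal blocks of $P$. This depends crucially on the Lipschitz estimate $\norm{\phi_{2,1} - \partrans(\gamma_{2,1})}_{k} \le L\,d(x_1,x_2)$ of Lemma~\ref{lem:bound-coordtrans} to pass from $\D\phi_{2,1}(0)$ to $P$, and on using orthogonality of $P$ so that a bound on one off-diagonal block propagates to the other. Once these ingredients are in place, the polar decomposition and the final combination are essentially formal.
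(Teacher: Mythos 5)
Your proof is correct and follows essentially the same route as the paper's: decompose $\phi_{2,1}$ via Lemma~\ref{lem:bound-coordtrans} into the parallel-transport isometry plus a Lipschitz-small error, put the isometry in block form with respect to $H_i \oplus V_i$, bound the off-diagonal blocks from the graph hypothesis together with the Lipschitz estimate, and use orthogonality to approximate the diagonal blocks by orthogonal maps. The only substantive difference is the mechanism for that last step: you use polar decomposition of $P_{HH}$ (and $P_{VV}$), whereas the paper invokes its Lemma~\ref{lem:orthog-approx} --- a tubular-neighborhood argument around $O(V)$ inside $GL(V)$ --- and these are interchangeable here.
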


We first prove the following result and use it to prove
Lemma~\ref{lem:split-coordtrans}.
\begin{lemma}[Approximation of orthogonal maps]
  \label{lem:orthog-approx}
  Let $V$ be a finite-dimensional inner product space and define the
  map
  \begin{equation}\label{eq:map-ker-OV}
    f\colon \CLin(V) \to \text{Sym}(V)
    \colon A \mapsto A^T A - \Id.
  \end{equation}
  There exists an $\epsilon > 0$ and a tubular neighborhood $B(O(V);\eta)
  \subset GL(V)$ with fiber projection $\pi$, such that on
  $\set{A \in GL(V)}{\norm{f(A)} < \epsilon, \norm{A} \le 2}$, the
  map $\phi\colon A \mapsto (\pi(A),f(A))$ is a smooth diffeomorphism.
  As a direct corollary, if\/ $\norm{f(A)} < \epsilon$ and
  $\norm{A} \le 2$ then $U = \pi(A) \in O(V)$ is an orthogonal
  approximation of $A$ in the sense that
  $\norm{U - A} \le \norm{f(A)}$.
\end{lemma}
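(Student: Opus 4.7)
My plan is to construct everything explicitly from the polar decomposition. For $A \in GL(V)$ the operator $A^T A$ is symmetric positive definite, so $P(A) := (A^T A)^{1/2}$ is well defined via the functional calculus and $U(A) := A\,P(A)^{-1}$ is orthogonal. I take $\pi(A) := U(A)$. With $S := f(A) = A^T A - \Id$, one has $P(A) = (\Id + S)^{1/2}$, and the binomial series
\[
  (\Id + S)^{\pm 1/2} = \sum_{n \ge 0} \binom{\pm 1/2}{n}\, S^n
\]
converges on $\{S \in \text{Sym}(V) : \norm{S} < 1\}$ and displays the smooth dependence of $P(A)$ and $P(A)^{-1}$ on $A$. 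Hence $\pi$ is smooth on $\{A \in GL(V) : \norm{f(A)} < 1\}$, maps into $O(V)$, and fixes $O(V)$ pointwise.

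To show that $\phi$ is a diffeomorphism onto its image, I will exhibit the inverse
\[
  \psi \colon O(V) \times \{ S \in \text{Sym}(V) : \norm{S} < 1 \} \to GL(V),
  \qquad \psi(U,S) := U\,(\Id + S)^{1/2}.
\]
Using $U^T U = \Id$ a short computation gives $f(\psi(U,S)) = (\Id+S)^{1/2}\, U^T U\, (\Id+S)^{1/2} - \Id = S$ and $\pi(\psi(U,S)) = U$, so $\phi \circ \psi = \Id$; conversely $\psi \circ \phi = \Id$ by uniqueness of the polar decomposition. Both maps are smooth on the indicated domains by the previous paragraph. Choosing $\epsilon$ small enough that $\sqrt{1+\epsilon} \le 2$ ensures $\psi(O(V) \times \{\norm{S} < \epsilon\}) \subset \{\norm{A} \le 2\}$, so the constraint $\norm{A} \le 2$ is automatic; I then take the tubular neighborhood to be $B(O(V);\eta) := \psi\big(O(V) \times \{\norm{S} < \epsilon\}\big)$ with $\eta$ determined by the Lipschitz constant of $\psi$ restricted to the second factor.

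The corollary follows by direct estimate. Using $A = UP$ and the fact that left multiplication by the orthogonal $U$ preserves the operator norm,
\[
  \norm{U - A} = \norm{U(\Id - P)} = \norm{\Id - (\Id + S)^{1/2}}.
\]
Since $S$ is symmetric, the spectral theorem reduces this to bounding $|1 - \sqrt{1 + \lambda}|$ over the eigenvalues $\lambda$ of $S$, and for $\lambda > -1$,
\[
  |1 - \sqrt{1 + \lambda}| = \frac{|\lambda|}{1 + \sqrt{1 + \lambda}} \le |\lambda|,
\]
so $\norm{U - A} \le \norm{S} = \norm{f(A)}$.

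The argument is essentially mechanical once polar decomposition is adopted as the guiding principle, and I do not anticipate a real obstacle. The one point that deserves a sanity check is that the explicit $\pi$ coincides, up to a diffeomorphism of the second factor, with the tubular projection one would obtain abstractly from the implicit function theorem applied to $\D f(U) \cdot H = H^T U + U^T H$ (whose surjectivity onto $\text{Sym}(V)$ is immediate via $H = \tfrac{1}{2} U S$); the explicit construction above sidesteps this verification entirely while also delivering the sharp bound in the corollary.
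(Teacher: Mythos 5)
Your proof is correct, and it takes a genuinely different route from the paper's. The paper works abstractly: it uses the left trivialization $\T GL(V) \cong GL(V) \times gl(V)$, splits $gl(V) = o(V) \oplus \text{Sym}(V)$, restricts the Lie-group exponential to $O(V) \times \text{Sym}(V)$ to manufacture the tubular neighborhood, checks that $\D f(\Id)$ restricted to $\text{Sym}(V)$ is multiplication by $2$, invokes $O(V)$-invariance to globalize $\norm{\D f^{-1}}\le 1$, and then obtains the final estimate by a compactness argument (to guarantee $\norm{f(A)}<\epsilon$ forces $A$ into the tube) plus the mean value theorem along a fiber. You instead build $\pi$ and $\psi$ explicitly from the polar decomposition $A = UP$, $P=(A^TA)^{1/2}$, get smoothness from the binomial series for $(\Id+S)^{\pm 1/2}$ on $\norm{S}<1$, and verify $\phi$ and $\psi$ are mutually inverse directly (with bijectivity coming for free from uniqueness of the polar decomposition). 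This buys you two things over the paper's version: the constraint $\norm{f(A)}<\epsilon<1$ by itself already places $A$ in the domain of $\psi$ (so no compactness argument is needed to rule out far-away $A$ — indeed the hypothesis $\norm{A}\le 2$ becomes redundant once $\epsilon\le 3$), and the spectral calculation $|1-\sqrt{1+\lambda}| = |\lambda|/(1+\sqrt{1+\lambda}) \le |\lambda|$ delivers the bound $\norm{U-A}\le\norm{f(A)}$ exactly rather than via a Lipschitz estimate on $f^{-1}$. The paper's route is more intrinsic and generalizes to situations without a canonical factorization, but in this concrete case yours is both shorter and sharper. The one loose end in your writeup — that your set-theoretic tube $\psi(O(V)\times\{\norm{S}<\epsilon\})$ is only ``a tubular neighborhood,'' not literally a metric ball of some radius $\eta$ — is present in the paper's proof in exactly the same form, so it is not a defect relative to what the lemma actually asserts.
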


\begin{proof}
  The map $f$ is smooth and invariant under the left action of the
  orthogonal maps $O(V)$, while $O(V) = \ker(f)$. Since $GL(V)$ is a
  Lie group, we have the
  canonical trivialization $\T GL(V) = GL(V) \times gl(V)$ by left
  multiplication. The similar trivialization
  $\T O(V) = O(V) \times o(V)$ can be viewed as a subbundle of
  \begin{equation*}
    O(V) \times o(V) \oplus \text{Sym}(V) = \T GL(V)|_{O(V)},
  \end{equation*}
  where $o(V)$ is identified with the skew-symmetric linear maps. We
  restrict the exponential map $\exp\colon \T GL(V) \to GL(V)$ to
  $O(V) \times \text{Sym}(V)$. At $\Id \in O(V)$ this restriction has
  bijective derivative, hence it is a local diffeomorphism. Since
  $\exp$ is $O(V)$\ndash invariant, it defines a diffeomorphism onto a
  tubular neighborhood $B(O(V);\eta) \subset GL(V)$ of $O(V)$ of size
  $\eta > 0$ and a corresponding smooth fiber projection map
  $\pi\colon B(O(V);\eta) \to O(V)$.

  Now $\D f(\Id)\colon a \mapsto a^T + a$ has image precisely
  $\text{Sym}(V)$. Thus, if we restrict $f$ to the fiber over
  $\Id \in O(V)$ in the tubular neighborhood, then
  $\D f(\Id)|_{\text{Sym}(V)} = 2$ and $f$ is a diffeomorphism with
  $\norm{\D f^{-1}} \le 1$ in some neighborhood of
  $0 \in \pi^{-1}(\Id)$; if necessary, we reduce $\eta > 0$ for
  $\norm{\D f^{-1}} \le 1$ to hold on $B(O(V);\eta) \cap \pi^{-1}(\Id)$.
  By $O(V)$ invariance of $f$, this holds
  globally on all (fibers) of the tubular neighborhood. Since, $\D\pi$
  and $\D f$ have complementary image at $O(V)$, $\phi = (\pi,f)$ is a
  diffeomorphism on $B(O(V);\eta)$.

  The set $\overline{B(0;2) \setminus B(O(V);\eta)} \subset \CLin(V)$ is
  compact, so $\norm{f(\slot)}$ attains its nonzero minimum on it. Let
  $\epsilon$ be smaller than this minimum. Then, if
  $\norm{f(A)} < \epsilon$, we must have $A \in B(O(V);\eta)$ and hence
  $A = \exp(U,a)$ for a unique $(U,a) \in O(V) \times \text{Sym}(V)$.
  By $O(V)$\ndash invariance, we can assume w.l.o.g.\ws that $U = \Id$
  and use the mean value theorem to estimate
  \begin{equation*}
    \norm{A - \Id}
    \le \norm{\D f^{-1}}\,\norm{f(A) - f(\Id)}
    \le \norm{f(A)} < \epsilon.
  \end{equation*}
  In other words, when $A$ is sufficiently close to being orthogonal,
  measured according to $f$, then it is close to an orthogonal map $U$
  in operator norm.
\end{proof}

\begin{proof}[Proof of Lemma~\ref{lem:split-coordtrans}]
  Extending the results of Lemma~\ref{lem:bound-coordtrans}, let
  \begin{equation*}
    O = \partrans(\gamma_{2,1})\colon \T_{x_1} Q \to \T_{x_2} Q
  \end{equation*}
  denote the orthogonal linear map induced by parallel transport. We
  decompose $\phi_{2,1} = O + \hat{\phi}_{2,1}$, where
  $\hat{\phi}_{2,1}$ can be made arbitrarily small. Moreover, we write
  \begin{equation*}
    O = \begin{pmatrix} A & B \\ C & D \end{pmatrix}
    \in \CLin(H_1 \oplus V_1;H_2 \oplus V_2),
  \end{equation*}
  with the idea that $B,C$ should be small and $A,D$ should
  approximate orthogonal maps $O_H,O_V$, respectively.
  Orthogonality of $O$ implies
  \begin{equation*}
    \Id = O^T O =
    \begin{pmatrix} A^T & C^T \\
                    B^T & D^T
    \end{pmatrix} \cdot
    \begin{pmatrix} A & B \\
                    C & D
    \end{pmatrix} =
    \begin{pmatrix} A^T A + C^T C & A^T B + C^T D \\
                    B^T A + D^T C & B^T B + D^T D
    \end{pmatrix}.
  \end{equation*}
  For the operator norm we have
  $\norm{A},\norm{B},\norm{C},\norm{D} \le \norm{O} = 1$, so if we
  assume for the moment that $B,C$ can be made sufficiently small,
  then, by writing $A^T A - \Id = -C^T C$ and $D^T D - \Id = -B^T B$,
  Lemma~\ref{lem:orthog-approx} implies that we can find
  $O_H,O_V$ such that
  \begin{equation}\label{eq:orthog-approx-err}
    \begin{aligned}
         \norm{O - O_H \oplus O_V}
    &\le \norm[\Big]{O - \begin{pmatrix} A & 0 \\ 0 & D \end{pmatrix}}
        +\norm[\Big]{\begin{pmatrix} A   & 0 \\ 0 & D \end{pmatrix} -
                     \begin{pmatrix} O_H & 0 \\ 0 & O_V \end{pmatrix}}\\
    &\le \norm{B} + \norm{C} + \norm{A - O_H} + \norm{D - O_V}\\
    &\le \norm{B} + \norm{C} + \norm{C^T C} + \norm{B^T B}.
    \end{aligned}
  \end{equation}

  In normal coordinates around $x_2$ we have $H_1 = \Graph(L)$,
  so $C\colon H_1 \to V_2$ is represented by $L$ in these coordinates.
  The metric $g$ is close to the identity in these coordinates, so
  $C \cong L$ can be assumed bounded by $4\,\zeta \le 1$, as measured
  in the metric on $Q$. The same argument can be made for $B^T$ by
  considering $\phi_{1,2} = \phi_{2,1}^{-1}$, since
  \begin{equation*}
    O^{-1} = O^T = \begin{pmatrix} A^T & C^T \\ B^T & D^T \end{pmatrix}.
  \end{equation*}
  We conclude that both $\norm{B},\norm{C} \le 4\,\zeta$ when $\delta$
  is chosen small, hence $O$ can be approximated by $O_H \oplus O_V$,
  and the error from~\ref{eq:orthog-approx-err} can be absorbed into
  $\tilde{\phi}_{2,1}$:
  \begin{equation*}
    \tilde{\phi}_{2,1} = \hat{\phi}_{2,1} + (O - O_H \oplus O_V).
  \end{equation*}
  The errors introduced in $\tilde{\phi}_{2,1}$ from
  lemmas~\ref{lem:bound-coordtrans} and~\ref{lem:orthog-approx} are
  Lipschitz small in terms of $d(x_1,x_2)$ and $\zeta$, respectively,
  so these add up to the estimate in~\ref{eq:split-coordtrans}.
\end{proof}

\begin{corollary}
  \label{cor:split-coordtrans}
  Let $M \in \BUC^{k \ge 1}$ be a uniformly immersed submanifold of a
  smooth Riemannian manifold $(Q,g)$ of bounded geometry. Let
  $x_1, x_2 \in M$ and let\/
  $\T_{x_i} Q = \T_{x_i} M \oplus N_{x_i},\,i=1,2$, be the respective
  splittings in horizontal and vertical directions. Then the results
  of Lemma~\ref{lem:split-coordtrans} hold for $d_M(x_1,x_2) <\delta$.
  If moreover $M \in \BC^2$, then we have a Lipschitz estimate
  $\norm{\D\tilde{\phi}_{2,1}(0)} \le C\,d(x_1,x_2)$.
\end{corollary}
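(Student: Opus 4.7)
The plan is to verify the hypotheses of Lemma~\ref{lem:split-coordtrans} applied to the horizontal/vertical splittings $H_i = \T_{x_i} M$, $V_i = N_{x_i}$, and then to extract the Lipschitz refinement by inspecting the explicit form of the correction term produced in that proof.

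First I would apply Lemma~\ref{lem:loc-equiv-Mdist} (with, say, $C' = 2$) to pass from $d_M(x_1,x_2) < \delta$ to the ambient estimate $d(x_1,x_2) \le d_M(x_1,x_2) < \delta$, so that each $x_i$ lies in the normal coordinate chart centered at $x_j$, and in fact inside $M_{x_j,\delta}$: a path in $M$ between $x_1$ and $x_2$ of length less than $\delta$ stays in $B(x_j;\delta)$, hence in the connected component of $x_j$ in $\iota^{-1}(B(x_j;\delta))$. Uniform immersion then represents $M_{x_j,\delta}$ as $\Graph(h_{x_j})$, and writing $\exp_{x_j}^{-1}(x_i) = (\xi_i, h_{x_j}(\xi_i))$, the image of $\T_{x_i} M$ under the chart differential is the tangent plane of that graph at $\xi_i$, i.e.\ws the graph of
\begin{equation*}
  L_i := \D h_{x_j}(\xi_i) \in \CLin(\T_{x_j} M;\,N_{x_j}).
\end{equation*}
Since $h_\bullet \in \BUC^1$ with $\D h_{x_j}(0) = 0$ uniformly in $x_j$, the uniform continuity modulus of $\D h_\bullet$ makes $\norm{L_i} \le \zeta$ once $\delta$ is shrunk. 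Lemma~\ref{lem:split-coordtrans} then delivers the decomposition $\phi_{2,1} = O_H \oplus O_V + \tilde{\phi}_{2,1}$ with the stated $C^k$ bound, yielding the first assertion.

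For the Lipschitz estimate under $M \in \BC^2$, the key strengthening is quantitative: the uniform bound $\norm{\D^2 h_\bullet} \le \norm{h}_2$ combined with $\D h_{x_j}(0) = 0$ yields $\norm{L_i} \le \norm{h}_2\,\norm{\xi_i}$, and Proposition~\ref{prop:unif-coord-chart} gives $\norm{\xi_i} \le 2\,d(x_1,x_2)$. Hence $\zeta \le C'\,d(x_1,x_2)$. Inspecting the proof of Lemma~\ref{lem:split-coordtrans}, we have $\tilde{\phi}_{2,1} = \hat{\phi}_{2,1} + (O - O_H \oplus O_V)$, where $\hat{\phi}_{2,1} = \phi_{2,1} - \partrans(\gamma_{2,1})$ and $O = \partrans(\gamma_{2,1})$. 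The second summand is a constant linear map between tangent spaces, so it equals its own derivative at $0$; by the orthogonal approximation bound~\ref{eq:orthog-approx-err} its operator norm is at most $\norm{B} + \norm{C} + O(\zeta^2) \le C''\,\zeta \le C'''\,d(x_1,x_2)$. The first summand is handled by Lemma~\ref{lem:bound-coordtrans}: exploiting the high order of bounded geometry of $Q$, the estimate $\norm{\hat{\phi}_{2,1}}_{k-2} \le L\,d(x_1,x_2)$ for $k-2 \ge 1$ controls $\norm{\D\hat{\phi}_{2,1}(0)} \le L\,d(x_1,x_2)$. Adding the two pieces gives $\norm{\D\tilde{\phi}_{2,1}(0)} \le C\,d(x_1,x_2)$.

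The main delicate point is really one of geometric bookkeeping in the first step: one must check that the subspace $\T_{x_i} M \subset \T_{x_i} Q$, when transported into the normal chart at $x_j$, is literally $\Graph(\D h_{x_j}(\xi_i))$, so that the smallness (respectively Lipschitz dependence) of $L_i$ reduces transparently to the uniform continuity (respectively Lipschitzness) of $\D h_\bullet$ at the origin. Once this identification is in hand, Lemma~\ref{lem:split-coordtrans} does all the genuine geometric work and the two estimates combine without further complication.
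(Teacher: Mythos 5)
Your argument is correct and takes essentially the same route as the paper: identify the graph representation $L_i = \D h_{x_j}(\xi_i)$ of $\T_{x_i}M$ in the chart at $x_j$, get smallness (resp.\ Lipschitz dependence on $d(x_1,x_2)$) of $\norm{L_i}$ from $\D h_\bullet(0) = 0$ together with uniform continuity of $\D h_\bullet$ (resp.\ a mean-value estimate using $\norm{\D^2 h_\bullet}$), and feed $\zeta$ into Lemma~\ref{lem:split-coordtrans}. The one place where you do more work than needed is the final paragraph: once $\zeta \le C'\,d(x_1,x_2)$, the conclusion $\norm{\tilde{\phi}_{2,1}}_k \le C(\zeta + d(x_1,x_2))$ of Lemma~\ref{lem:split-coordtrans} already yields $\norm{\D\tilde{\phi}_{2,1}(0)} \le C''\,d(x_1,x_2)$ directly, so re-opening the decomposition $\tilde{\phi}_{2,1} = \hat{\phi}_{2,1} + (O - O_H\oplus O_V)$ and re-estimating each piece is unnecessary (though not incorrect).
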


\begin{proof}
  This follows immediately from the local representation
  $M_{x_2,\delta} = \Graph(h_2)$ as $\T_{x_1} M$ is represented
  in tangent normal coordinates at $x_2$ by $L = \D h_2(\xi)$, where
  $x_1 = (\xi,h_2(\xi))$. And $ \D h_2(\xi)$ becomes small when
  $\delta$ is small. The same holds with $x_1, x_2$ interchanged.

  If $M \in \BC^2$, then we can estimate
  $\norm{\D h_\bullet(\xi)} \le \norm{\D^2 h_\bullet}\,\norm{\xi} \le C\,d(x_1,x_2)$.
  Hence, the Lipschitz result in Lemma~\ref{lem:split-coordtrans}
  transforms into a Lipschitz estimate in $d(x_1,x_2)$ only.
\end{proof}

Below we define when a mapping is approximately isometric, see for
example also~\cite[p.~505]{Attie1994:quasi-iso-class-boundgeom}. The
Lyapunov exponents of a dynamical system are preserved under these
quasi-isometries since the exponential growth dominates any
bounded factors when measuring sizes. This property is required when
we transfer a noncompact normally hyperbolic system to a different
space and want normal hyperbolicity to be preserved.
\begin{definition}[\idx{quasi-isometry}]
  \label{def:quasi-isometry}
  Let $M,N$ be manifolds with distance metrics $d_M, d_N$ and let
  $\phi\colon M \to N$ be a diffeomorphism. we call $\phi$ a $C$\ndash
  quasi-isometry with $C > 1$, if
  \begin{equation}\label{eq:equiv-metrics}
    \forall x,y \in M\colon
    C^{-1}\,d_M(x,y) \le d_N(\phi(x),\phi(y)) \le C\,d_M(x,y).
  \end{equation}
  We simply call $\phi$ a quasi-isometry if there exists an
  unspecified $C > 1$.
\end{definition}

We conclude this section with a version of the tubular neighborhood
theorem that is appropriate in the bounded geometry setting.
\begin{theorem}[Uniform tubular neighborhood]
  \label{thm:tubular-neighbhd}
  \index{uniform tubular neighborhood}
  \index{$\eta$ (tubular neighborhood size)}
  Let $M \in \BC^{k \ge 2}$ be a uniformly immersed submanifold of the bounded
  geometry manifold $(Q,g)$. Then for $\eta > 0$ sufficiently small
  (but depending explicitly on $M$ and $Q$), the $\eta$\ndash sized tubular
  neighborhood $B(M;\eta) = \set{y \in Q}{d(y,M) \le \eta}$ can be
  represented on the $\eta$\ndash sized normal bundle $N_{\le\eta}$ of\/
  $M$ by a diffeomorphism $\phi$, locally on each
  $N_{\le\eta}|_{M_{x,\delta}}$ and we have
  $\phi,\phi^{-1} \in \BUC^{k-1}$ (hence $\phi$ is a quasi-isometry).

  When moreover $M$ is uniformly embedded, i.e.\ws
  $M_{x,\delta} = M \cap B(x;\delta)$ for each $x \in M$ as in
  Remark~\ref{rem:embedding}, then $\phi$ is a global diffeomorphism.
\end{theorem}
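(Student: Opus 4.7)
The natural candidate for the diffeomorphism is the ``normal exponential map''
\[
  \phi\colon N_{\le\eta} \to Q,\qquad (x,v) \mapsto \exp_x(v),
\]
and the whole task is to turn the classical (pointwise) tubular neighborhood construction into one with quantitatively uniform estimates. I would first localize: fix $x\in M$, work in normal coordinates on $B(x;\delta_Q)\subset Q$, in which $M_{x,\delta}=\Graph(h_x)$ with $h_\bullet\in\BUC^k$, and use Corollary~\ref{cor:split-coordtrans} to trivialize the normal bundle on $M_{x,\delta}$ by a $\BC^{k-1}$ orthonormal framing built from the splitting $\T Q=\T M\oplus N$ together with parallel transport along the short geodesics of Lemma~\ref{lem:bound-coordtrans}. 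In these coordinates $\phi$ is written as a composition of (i) the trivialization of $N$, (ii) the inclusion of $\T_x M\oplus N_x$ into $\T_x Q$ via the graph of $h_x$, and (iii) the exponential map $\exp_x$ of $Q$. Each piece is $\BUC^{k-1}$ by bounded geometry of $Q$ (via Theorem~\ref{thm:bound-metric} and Lemma~\ref{lem:bound-coordtrans}; the loss from $k$ to $k-1$ is exactly Remark~\ref{rem:coordtrans-loss-smooth}) together with the hypothesis $h_\bullet\in\BC^k$, so $\phi\in\BUC^{k-1}$ with norms independent of $x$.

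Next I would verify the local diffeomorphism property uniformly in $x$. At the zero section $(y,0)$ with $y\in M_{x,\delta}$ one has $\D\phi(y,0)=\Id_{\T_y M}\oplus \Id_{N_y}$ modulo the splitting; in the chosen coordinates this derivative is a uniformly small perturbation of the identity on $\T_x Q$. A quantitative inverse function theorem (in the form used throughout Appendix~\ref{chap:smoothflows}) therefore yields a uniform $\eta_0>0$ such that $\phi$ is a $\BUC^{k-1}$ diffeomorphism from some neighborhood $N_{\le\eta_0}|_{M_{x,\delta'}}$ onto its image, and the inverse again inherits uniform $\BUC^{k-1}$ bounds because the implicit function theorem preserves uniformity estimates (Appendix~\ref{chap:smoothflows}). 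Taking $\eta<\eta_0$ and shrinking $\delta$ to match gives the first (``local on each $M_{x,\delta}$'') statement of the theorem.

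For the second claim assume $M$ is uniformly embedded, so that $M_{x,\delta}=M\cap B(x;\delta)$ for all $x$. Global injectivity is then a short triangle-inequality argument: if $\phi(x_1,v_1)=\phi(x_2,v_2)=y$ with $\norm{v_i}\le\eta$, then $d_Q(x_i,y)=d_Q(x_i,\exp_{x_i}v_i)\le\norm{v_i}\le\eta$, hence $d_Q(x_1,x_2)\le 2\eta$. If $\eta$ is chosen smaller than $\delta/2$ (with $\delta$ the uniform neighborhood size from the local step, refined if necessary to satisfy the uniform-embedding condition), the uniform embedding hypothesis forces $x_2\in M\cap B(x_1;\delta)=M_{x_1,\delta}$, so both $(x_i,v_i)$ lie in $N_{\le\eta}|_{M_{x_1,\delta}}$, where local injectivity from the previous paragraph applies and forces $(x_1,v_1)=(x_2,v_2)$. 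Surjectivity onto $B(M;\eta)$ is immediate from the definition of $B(M;\eta)$ as the $\eta$-tubular neighborhood and the fact that the minimizing geodesic realizing $d(y,M)$ meets $M$ orthogonally at the base foot point.

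The only genuinely delicate step is the uniform inverse function theorem in the second paragraph: one must show that the neighborhood on which $\phi$ is invertible can be chosen with a size that depends only on the bounded-geometry data of $Q$ and the $\BC^k$-norm of $h_\bullet$, and not on $x\in M$. This is precisely the place where bounded geometry enters essentially—without it the lower bound for $\eta$ could degenerate along $M$, as illustrated by Example~\ref{exa:zero-inj-radius}. Everything else is routine once the uniform coordinate framework of Section~\ref{sec:BG-basic} and the splitting estimates of Lemma~\ref{lem:split-coordtrans} are in place.
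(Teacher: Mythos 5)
Your proposal picks the same map $\phi = \exp|_N$, the same localization into uniform normal coordinate charts, the same triangle-inequality reduction of global to local injectivity under the uniform-embedding hypothesis, and the same implicit-function-theorem mechanism for pushing $\BUC^{k-1}$ regularity through to $\phi^{-1}$, so the scaffolding matches the paper closely. The one step you handle genuinely differently is local injectivity of $\phi$ on a tube $N_{\le\eta}|_{M_{x,\delta}}$: you invoke a quantitative inverse function theorem after observing that $\D\phi$ equals the identity at the zero section, whereas the paper does not use the IFT here at all—it writes the geodesic flow in local coordinates, treats the normal geodesic from a nearby foot point $x'$ as a perturbation of the one from $x$, and runs a Gronwall-type integral estimate to show the horizontal coordinate of the perturbed geodesic cannot return to zero at time one unless $x' = x$. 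Both routes are valid, but the IFT route is only as good as the bound you can produce on $\norm{\D\phi - \Id}$ and on the Lipschitz constant of $\D\phi$ \emph{uniformly over the whole tube}, not just at the zero section; this is exactly where $h_\bullet \in \BC^2$ and bounded geometry of $Q$ do real work, and the Gronwall computation is the paper's way of producing those bounds. You flag this explicitly as ``the only genuinely delicate step'' and then stop, so the proposal is correct in outline but leaves the central quantitative estimate undone—precisely the estimate the theorem is about.

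Two smaller remarks. First, the surjectivity claim: you quote the standard fact that a minimizing geodesic from $y$ to $M$ meets $M$ orthogonally at the foot point and declare surjectivity immediate. The paper verifies this fact in its $\BC^2$ setting by an explicit computation using $\D h_x(0)=0$ and the $o(t)$ growth of $h_x$ along rays, together with the comparison of Euclidean and metric distances from Proposition~\ref{prop:unif-coord-chart}; citing the classical result is defensible, but note the paper is careful here because $M$ is only $\BC^2$, not smooth, and the uniform validity of the conclusion over all of $M$ is part of what needs checking. Second, before speaking of ``the'' foot point realizing $d(y,M)$ you should observe, as the paper does, that $\eta < \rinj{Q}$ is needed so that $M \cap B(y;\rinj{Q})$ is a nonempty set on which the infimum is attained; without this the foot point may fail to exist.
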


In case $M$ is compact, the standard proof uses the fact that the
exponential map has bijective differential at the zero section, and then by
compactness it must be a diffeomorphism on a uniform neighborhood $N_{\le\eta}$ of the
zero section. Here, to get a uniform neighborhood $N_{\le\eta}$ on
which $\phi = \exp|_{N_{\le\eta}}$ is a diffeomorphism, we require
bounds on second order derivatives (that is curvature, cf.\ws
Lemma~\ref{lem:boundgeom-submfld}) of $M$ so that it has curvature radius
$r$ bounded from below, hence cut locus points can only occur at least at
distance $r$ away from $M$, making $\phi$ injective for $\eta < r$.
See Figure~\ref{fig:normcoord-submfld} for a representation of a
submanifold $M$ in normal coordinates around $x$ and a ray of the
normal bundle at a nearby point $x'$.

Note that for an immersed submanifold, we define the normal bundle as
\index{$N$ (normal bundle)}
\begin{equation}\label{eq:normal-bundle-imm}
  N = \set[\big]{ (x,\nu) \in M \times \T Q }%
                { \iota(x) = \pi(\nu),\,
                  \nu \perp \text{Im}\big(\D \iota(x)\big) }.
\end{equation}
This can again be viewed as immersed into $\T Q$.

\begin{proof}
  We set $\phi = \exp|_N$ and in the following we will implicitly
  apply Theorem~\ref{thm:bound-metric} and
  Proposition~\ref{prop:unif-coord-chart} to choose $0 < \delta \le 1$
  small enough such that the metric $g$ up to its second order
  derivatives is bounded, as well as that the Christoffel symbols are
  bounded. Also, we choose $\delta$ such that $M$ is uniformly locally
  representable by graphs according to Definition~\ref{def:unif-imm-submfld}.
  We will in sequence prove local and global injectivity, surjectivity
  of $\phi$ and finally that $\phi,\phi^{-1} \in \BUC^{k-1}$.

  We claim that for some $\eta > 0$, $\phi$ is locally injective on
  $N_{\le\eta}$, the normal bundle restricted to size $\eta$. Let
  $\nu,\, \nu' \in N$ be such that $\phi(\nu) = \phi(\nu')$ and denote
  by $x = \pi(\nu),\, x' = \pi(\nu')$ their base points in $M$. We
  consider normal coordinates at $x$, hence we have
  $\nu = (0,\sigma_2)$ for some $\sigma_2 \in N_x$, while $\nu'$ is
  given by $(\sigma_1',\sigma_2') \in \T_x M \oplus N_x$. From
  Corollary~\ref{cor:split-coordtrans} it follows for small $\delta$
  that $\nu'$ is nearly mapped onto $N_x$ in normal coordinates at
  $x$. Since $M \in \BC^2$, the deviation from mapping onto $N_x$ is
  Lipschitz small in $d(x',x) \cong \norm{\xi}$, so we have
  \begin{equation}\label{eq:norm-comp-ineq}
    \norm{\sigma_1'} \le C\,\norm{\xi}\,\norm{\sigma_2'}.
  \end{equation}

  Now, $\phi(\nu') = \phi(\nu)$ can only hold if the respective
  horizontal coordinates along $\T_x M$ are equal. By definition of
  normal coordinates around $x$, we have $\exp(\nu) = (0,\sigma_2)$.
  Therefore it is sufficient to prove that some $\eta > 0$ exists as a
  lower bound for
  \begin{equation*}
    \set{\norm{\nu'} \in \R}{\phi(\nu')_1 = 0, \pi(\nu') \neq x}.
  \end{equation*}
  We view the exponential map as the time-one geodesic flow, which is
  given in local coordinates by~\ref{eq:geodflow-local}.
  The geodesic flow along $\nu' = (\sigma_1',\sigma_2')$ starting at
  $(\xi',h(\xi'))$ is a small perturbation of the flow along
  $(0,\sigma_2')$ starting at $(0,0)$. The latter has solution curve
  $t \mapsto (0,t\,\sigma_2') \in \T_x M \oplus N_x$.

  By Theorem~\ref{thm:bound-metric} we arrange for
  $\norm{g - \Id},\norm{\Christoffel} \le 2$ and
  $\norm{\D\Christoffel} \le C$ in local coordinates. We have
  estimates
  \begin{gather*}
        \norm{\Christoffel(x'(t)) - \Christoffel(x(t))}
    \le \norm{\D\Christoffel}\,\norm{x'(t) - x(t)}
    \le C\,\norm{x'(t) - x(t)},\\
        \norm{\sigma'(t)}
    \le \sqrt{g(\sigma'(t),\sigma'(t))}
    =   \sqrt{g(\sigma'(0),\sigma'(0))}
    \le \sqrt{2}\,\norm{\sigma'(0)}.
  \end{gather*}
  With these, we obtain the Gronwall-like estimates
  \begin{equation*}
    \begin{aligned}
         \der{}{t} \norm{x'(t) - (0,t\,\sigma_2')}
    &\le \norm{\sigma'(t) - (0,\sigma_2')},\\
         \der{}{t} \norm{\sigma'(t) - (0,\sigma_2')}
    &\le \begin{aligned}[t]
       & \norm{\D\Christoffel}\,\norm{x'(t) - (0,t\,\sigma_2')}\,\norm{\sigma'(t)}^2\\
       &+\norm{\Christoffel}\,\big(\norm{\sigma'(t)} + \norm{(0,\sigma_2')}\big)\,
         \norm{\sigma'(t) - (0,\sigma_2')}
         \end{aligned}\\
    &\le \big(4\,C\,\eta^2+4\,\sqrt{2}\,\eta\big)\,
         \norm{\sigma'(t) - (0,\sigma_2')},
    \end{aligned}
  \end{equation*}
  for which Gronwall's inequality yields
  \begin{equation*}
        \norm{\sigma'(t) - (0,\sigma_2')}
    \le \norm{(\sigma_1'(0),0)}\,e^{\tilde{C}\,\eta\,t}.
  \end{equation*}
  Now, if $\eta$ is chosen sufficiently small, then
  using~\ref{eq:norm-comp-ineq}, we have for all $0 \le t \le 1$ that
  \begin{equation*}
        \norm{x_1'(t) - x_1(0)}
    \le \int_0^t C\,\norm{\xi'}\,\eta\,e^{\tilde{C}\,\eta\,t} \d\tau
    \le \frac{1}{2}\,\norm{\xi'}.
  \end{equation*}
  This shows that there exists an explicit $\eta > 0$ such that $\phi$
  is injective on $N_{\le\eta}$ restricted to a neighborhood
  $M_{x,\delta}$, and by construction this $\eta$ is uniform over $M$.
  Later modifications to choose $\eta$ smaller will only depend on the
  global geometry of $Q$, but not on any details of $M$.

  If moreover $M_{x,\delta} = M \cap B(x;\delta)$ is the unique
  connected component of $M$ in each normal coordinate chart, then
  $\phi$ is injective globally on $N_{\le\eta}$. This follows easily
  by taking $\eta < \frac{\delta}{2}$. Then, any $\nu'$ and $\nu$ that
  have the same image, must have base points $x',x$ separated by a
  distance less than $\delta$, as $\phi = \exp$ will only map onto
  points at most $\eta$ away from the base point. Therefore, $x'$ must
  lie in $B(x;\delta)$ and in $M$, hence on
  $\Graph(h) = M_{x,\delta}$. This case was already treated.

  Finally, we will show that $\phi$ is surjective onto $B(M;\eta)$
  when $\eta < \rinj{Q}$. Take $y \in B(M;\eta)$,
  then $M \cap B(y;\rinj{Q})$ contains a nonempty compact set, so
  there exists an $x \in M$ such that $d(y,M) = d(y,x)$. This distance
  must be realized by a (unique) geodesic $\gamma$. We will derive a
  contradiction if $\gamma'(0) \not\in N_x$, by showing that then the
  minimum distance is not attained at $x$. Let $(\xi_1, \xi_2) \in
  \T_x M \oplus N_x$ be the normalized tangent vector of $\gamma'(0)$.
  By assumption we have $\xi_1 \neq 0$, so $\norm{\xi_2} < 1$. We
  parametrize
  \begin{equation*}
    \gamma\colon \intvCC{0}{d(y,x)} \to \T_x Q
          \colon t \mapsto t\,(\xi_1,\xi_2)
  \end{equation*}
  by arc length in normal coordinates, thus $d(x,\gamma(t)) = t$.
  Consider the Euclidean distance in normal coordinates at $x$ of
  $\gamma(t)$ to its vertical projection onto $M = \Graph(h)$.
  This shows that
  \begin{equation*}
    d_E(\gamma(t),M) \le \norm{t\,\xi_2 - h(t\,\xi_1)}
                     \le t\,\norm{\xi_2} + o(t\,\norm{\xi_1})
  \end{equation*}
  as $\D h(0) = 0$ and $h \in \BC^2$. The Euclidean distance is
  $C$\ndash equivalent to the $g$\ndash induced distance, so we have
  \begin{equation*}
        \lim_{t \downarrow 0} \frac{d(\gamma(t),M)}{d(x,\gamma(t))}
    \le \lim_{t \downarrow 0} C\,\norm{\xi_2} + o(t)/t = C\,\norm{\xi_2}.
  \end{equation*}
  By assumption $\norm{\xi_2} < 1$, so we can restrict to a small
  enough neighborhood $B(x;\delta)$ such that
  $1 < C < \norm{\xi_2}^{-1}$ and conclude that
  $d(\gamma(t),M) < d(x,\gamma(t))$ for some $t > 0$, which shows that
  a shorter (broken) geodesic from $y$ to $M$ exists. This completes
  the contradiction and proves that $\phi$ is surjective.

  Finally, $\phi \in \BUC^{k-1}$ follows directly from the fact that
  it is the restriction of the exponential map to
  $N_{\le\eta} \in C^{k-1}$ and in induced normal coordinate charts,
  we have $\exp \in \BUC^{k-1}$. For $\phi^{-1}$ we use a formula and
  arguments similar to~\ref{eq:impl-func-der}, showing that if
  $\D \phi^{-1}$ is uniformly bounded, then $\phi^{-1} \in \BUC^{k-1}$
  holds as well. Now $\D\phi = \Id$ in induced normal coordinates at
  $M$, so by uniform continuity, there exists some $\eta > 0$ such
  that $\D\phi$ stays away from non-invertibility on $N_{\le\eta}$,
  hence $\D\phi^{-1}$ stays bounded. This automatically implies that
  $\phi$ is a quasi-isometry with
  $C = \max(\norm{\D\phi},\norm{\D\phi^{-1}})$.
\end{proof}

\section{Smoothing of submanifolds}
\label{sec:BG-smooth-submflds}

\index{normal hyperbolicity@$r$-normal hyperbolicity}
It is well-known, at least in the compact case, that $\fracdiff$\ndash
normal hyperbolicity is a persistent property under $C^1$ small
perturbations of class $C^\fracdiff$, that is, the persisting manifold
is again $\fracdiff$\ndash normally hyperbolic and specifically $C^\fracdiff$,
see~\cite[Thm~4.1]{Hirsch1977:invarmflds}
or~\cite[Thm~2]{Fenichel1971:invarmflds}. In other words, $\fracdiff$\ndash
normal hyperbolicity is an `open property' in the space of $C^\fracdiff$
systems with $C^1$ topology. Therefore, it is natural to only
assume that the original manifold is $C^\fracdiff$, but not smoother.
Even if we start out with an $\fracdiff$\ndash NHIM $M \in C^\infty$,
then after a perturbation we will generally only have a manifold
$M_\epsilon \in C^\fracdiff$. We could, however, also have tried to
obtain this manifold $M_\epsilon$ by first perturbing $M$ to an
intermediate manifold $M_{\epsilon/2}$ and then perturb that manifold
to $M_\epsilon$. When applying a persistence theorem in the second
step, we can only assume the initial manifold to be $C^\fracdiff$.

This restricted $C^\fracdiff$ smoothness assumption forces us
to be careful about the precise smoothness of each and every object.
For example, a vector field on a $C^\fracdiff$ manifold can only be $C^{r-1}$.
This could probably be overcome by considering discrete-time mappings
instead of flows, but we need other smoothness improvements as well.
For example, we want to model the persisting manifold as a section
of the normal bundle $N \in C^{\fracdiff-1}$ of the original manifold,
which is not smooth enough. So here we need a smoothing argument
as well, cf.~\cite[p.~205]{Fenichel1971:invarmflds}.
\index{smoothness!loss of}

We solve these problems by constructing an approximate, smoothed manifold
$M_\sigma \in C^\infty$. This allows $M$ to be modeled as a small
section of the normal bundle $N_\sigma$ of $M_\sigma$, so the system
in a neighborhood of $M$ can be transferred to $N_\sigma$ while
preserving smoothness and normal hyperbolicity properties. With this
construction we need not worry about smoothness in the proof, while
the conclusions are preserved up to $C^\fracdiff$ smoothness. Uniform
estimates must be preserved though, so standard methods for
constructing $M_\sigma$ and $N_\sigma$ do not readily apply or need
a careful analysis.

\index{$\sigma$ (approximation parameter)}
First, we construct $\sigma > 0$ close approximations
$M_\sigma \in C^\infty$ to $M$ by globalizing a local chart
construction of smoothing by convolution with a mollifier. Then we use
the fact that $M_\sigma$ has uniformly bounded `second-order
derivatives' to show that for a sufficiently small $\sigma > 0$,
$M_\sigma$ has a normal bundle diffeomorphic to a
neighborhood $B(M;\delta) \subset Q$ of uniform size.

\index{$\nu$ (mollifier size)}
We recall some standard techniques on $\R^n$, see for
example~\cite[p.~25]{Hormander2003:anal-lin-PDoper}. Let
$\phi_\nu \in C^\infty_0(\R^n;\R_{\ge 0})$ be a \idx{mollifier function},
with support in $B(0;\nu)$ and integral normalized to one for any
$\nu > 0$. We also define a generic cut-off function
$\chi_{\alpha,\beta} \in C^\infty(\R;\intvCC{0}{1})$ such that
\begin{equation}\label{eq:cutoff}
  \chi_{\alpha,\beta}(x) = \begin{cases}
    1 & \text{if~} x \le \alpha,\\
    0 & \text{if~} x \ge \beta.
    \end{cases}
\end{equation}
Note that $\phi_\nu,\,\chi_{\alpha,\beta} \in \BUC^k$ for any
$k \ge 0$, as they are constant outside compact sets.

\begin{lemma}[Smoothing by convolution]
  \label{lem:conv-smoothing}
  \index{convolution smoothing}
  Let $r,\,\delta\!r > 0$,
  $f \in \BUC^{k \ge 0}\big(B(0;r+2\,\delta\!r) \subset \R^m;\R^n\big)$,
  and fix $l > k$ and $\epsilon > 0$. If the mollifier support radius
  $\nu > 0$ is chosen sufficiently small, then $f$ can be approximated
  by a function $\tilde{f}$ such that
  \begin{enumerate}
  \item $\tilde{f} = f$ outside $B(0;r+\delta\!r)$;
  \item $\tilde{f} \in \BUC^l \cap C^\infty$ on $B(0;r)$ and wherever
    $f \in \BUC^l \cap C^\infty$;
  \item $\norm{\tilde{f} - f}_k \le \epsilon$;
  \item $\norm{\tilde{f}}_l \le C(\nu,l)\,\norm{f}_0$ on $B(0;r)$, for
    some\/ $C(\nu,l) > 0$.
  \end{enumerate}
  Note that\/ $C(\nu,l)$ may grow unboundedly as $\nu \to 0$ or
  $l \to \infty$.
\end{lemma}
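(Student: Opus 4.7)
The plan is to construct $\tilde{f}$ as a smooth interpolation between the mollified $f \ast \phi_\nu$ (on the inner ball, where we want smoothness) and $f$ itself (outside, where we want $\tilde{f}=f$). Concretely, let $\chi(x) = \chi_{r,\,r+\delta\!r}(\|x\|)$ with $\chi_{\alpha,\beta}$ the cutoff from~\ref{eq:cutoff}, and set
\begin{equation*}
  \tilde{f}(x) = \chi(x)\,(f \ast \phi_\nu)(x) + \bigl(1 - \chi(x)\bigr)\,f(x).
\end{equation*}
Choose $\nu < \delta\!r$ so that $(f \ast \phi_\nu)(x) = \int \phi_\nu(y)\,f(x-y)\,\d y$ is defined on all of $B(0;\,r+\delta\!r)$. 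Properties (i), (ii), and (iv) are then essentially immediate: outside $B(0;\,r+\delta\!r)$ we have $\chi \equiv 0$ and so $\tilde{f} = f$; on $B(0;r)$ we have $\chi \equiv 1$ and so $\tilde{f} = f \ast \phi_\nu$, which is $C^\infty$ because $\phi_\nu$ is, and satisfies
\begin{equation*}
  \|\D^\alpha \tilde{f}\|_0 = \|f \ast \D^\alpha \phi_\nu\|_0 \le \|f\|_0\,\|\D^\alpha \phi_\nu\|_{L^1},
\end{equation*}
giving the bound in (iv) with $C(\nu,l) = \max_{|\alpha| \le l} \|\D^\alpha \phi_\nu\|_{L^1} \lesssim \nu^{-l}$. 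On the transition annulus $B(0;r+\delta\!r)\setminus B(0;r)$, both $(f \ast \phi_\nu)$ and $f$ inherit whatever $\BUC^l \cap C^\infty$ regularity $f$ has there, and $\chi$ is $C^\infty$, so the Leibniz rule preserves this.

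The only real work is property (iii), the $C^k$ approximation. First one verifies the standard mollifier estimate: for any $0 \le j \le k$,
\begin{equation*}
  \|\D^j(f \ast \phi_\nu) - \D^j f\|_0
  = \|(\D^j f) \ast \phi_\nu - \D^j f\|_0
  \le \epsilon_{\D^j f}(\nu) \xrightarrow[\nu \to 0]{} 0,
\end{equation*}
using that $\phi_\nu$ has unit mass and support in $B(0;\nu)$, together with the uniform continuity modulus of $\D^j f$ (which exists because $f \in \BUC^k$). Writing $\tilde{f} - f = \chi\,(f \ast \phi_\nu - f)$ and applying Leibniz,
\begin{equation*}
  \D^j(\tilde{f} - f) = \sum_{i=0}^j \binom{j}{i}\,\D^i\chi \otimes \D^{j-i}(f \ast \phi_\nu - f),
\end{equation*}
each term is bounded by $\|\D^i \chi\|_0\cdot\epsilon_{\D^{j-i} f}(\nu)$. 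Since the derivatives of $\chi$ depend only on $\delta\!r$ (not on $\nu$), the whole expression can be made smaller than $\epsilon$ by choosing $\nu$ sufficiently small.

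The main (minor) obstacle is handling the interpolation annulus cleanly: one must check that the cutoff $\chi$ does not destroy either smoothness from the mollified side or convergence from the unmollified side. Both are resolved by the observation above that $\chi$'s derivatives are $\nu$-independent, so they only contribute a fixed multiplicative constant, while all $f$-dependent factors vanish with $\nu \to 0$ thanks to $f \in \BUC^k$. No conflict arises with (iv) because (iv) is only asserted on $B(0;r)$, where $\chi \equiv 1$ and the pointwise bound on $\|\D^\alpha \phi_\nu\|_{L^1}$ alone suffices.
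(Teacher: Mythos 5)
Your proposal is correct and uses essentially the same construction as the paper: the same cutoff-weighted interpolation between $f$ and $f*\phi_\nu$, the same mollifier estimate $\|\D^j(f*\phi_\nu) - \D^j f\|_0 \le \epsilon_{\D^j f}(\nu)$, and the same Leibniz-rule bookkeeping for property (iii). The only cosmetic difference is that the paper makes explicit the observation that $\BC^{l+1}$ bounds imply $\BUC^l$ (so one may shift $l$ by one and drop the separate uniform-continuity check), whereas you leave this implicit in the $C(\nu,l)$-bound of (iv); both are fine.
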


\begin{proof}
  A function that is $\BC^{l+1}$ is automatically $\BUC^l$, that is,
  uniformly continuous up to one degree less, so we only need to prove
  $\tilde{f} \in \BC^l \cap C^\infty$ for $l$ shifted by one.

  We construct $\tilde{f}$ by a combination of convolution and
  cut-off. Let $\hat\chi(x) = \chi_{r,r+\delta\!r}(\norm{x})$ for
  $x \in \R^m$ and define
  \begin{equation}\label{eq:f-smoothed}
    \tilde{f}(x) = (1-\hat\chi(x)) f(x)
                    + \hat\chi(x) \int_{\R^m} f(x-y)\,\phi_\nu(y) \d y.
  \end{equation}
  When $\nu < \delta\!r/2$, this $\tilde{f}$ is smooth on $B(0;r)$ and
  equal to $f$ outside $B(0;r+\delta\!r)$.

  The convolution approximates $f$ in $C^k$\ndash norm, as for any
  $0 \le j \le k$ and $x \in B(0;r+\delta\!r)$
  \begin{equation*}
    \norm{\D^j (\phi_\nu * f)(x) - \D^j f(x)}
    \le \int_{B(0;\nu)} \norm{\D^j f(x-y) - \D^j f(x)}\,\phi_\nu(y) \d y
    \le \epsilon_{\D^j f}(\nu),
  \end{equation*}
  so by uniform continuity of $f$ up to $k$\th derivatives,
  $\nu$ can be chosen small enough such that
  $\norm{(\phi_\nu * f) - f}_k \le \epsilon$ on $B(0;r+\delta\!r)$.
  The map $x \mapsto \hat\chi(x)$ is $\BUC^k$, so we can estimate for
  $j \le k$
  \begin{equation*}
    \norm{\D^j \tilde{f}(x) - \D^j f(x)}
    \le \sum_{i=0}^j \binom{j}{i}  \norm{\D^i \hat\chi(x)} \cdot
                                   \norm{\D^{j-i}\big(\phi_\nu * f - f\big)(x)}
    \le C_j\,\epsilon(\nu).
  \end{equation*}
  Hence, we can construct $\tilde{f}$ close enough to $f$ in $C^k$\ndash
  norm by choosing $\nu$ small enough.

  Uniform continuity of $\tilde{f}$ follows from uniform continuity of
  $\phi_\nu * f$ as $\hat\chi \in \BUC^k$ on its compact
  support. We find for $0 \le j \le k$
  \begin{equation*}
    \begin{aligned}
\fst     \norm{\D^j(\phi_\nu * f)(x_2) - \D^j(\phi_\nu * f)(x_1)}\\
    &\le \int_{B(0;\nu)} \norm{\D^j f(x_2 - y) - \D^j f(x_1 - y)}\,\phi(y) \d y
     \le \epsilon_{\D^j f}(\norm{x_2 - x_1}).
    \end{aligned}
  \end{equation*}

  To estimate bounds for higher derivatives of $\tilde{f}$ within
  $B(0;r)$, we note that $\hat\chi = 1$ and let the derivatives act on
  $\phi_\nu$ in the convolution: these are bounded on the compact
  domain of support, but bounds will depend on the size $\nu$ and
  degree $l$, while $\norm{f}_0$ can be factored out.
\end{proof}

The smoothing technique in Lemma~\ref{lem:conv-smoothing} is
formulated for Euclidean space. To
adapt it to manifolds in a uniform setting, we need to have uniformly
sized coordinate charts, as well as uniform behavior of the function
under these smoothing operations. We cannot simply use local
coordinates and a partition of unity, because the images on different
charts cannot be glued together on the target manifold. Instead, we
will apply this smoothing operation sequentially on each coordinate
chart in a cover. We require a cover that is locally finite with a
global upper bound $K$ on the number of charts covering a point, so that
each point undergoes only a bounded number of smoothing operations and
hence the final smoothed manifold $M_\sigma$ differs by a controllable
amount from the original $M$.

When the graph representation of $M$ in one chart is modified, we need
control on how much the graph is modified in overlapping charts. To
this end, we extend Lemma~\ref{lem:split-coordtrans} and
Corollary~\ref{cor:split-coordtrans}.
\begin{lemma}[Graph difference under coordinate transformations]
  \label{lem:graphdiff-coordtrans}
  \index{coordinate transition map!graph change under}
  Let $(Q,g)$ be a smooth Riemannian manifold of bounded geometry. Let
  $x_1, x_2 \in Q$ and let\/ $\T_{x_i} Q = H_i \oplus V_i,\,i=1,2$ be
  splittings along horizontal and vertical perpendicular
  subspaces with $\dim(H_1) = \dim(H_2)$. Assume that
  $d(x_1,x_2) < \delta$ and that, for $i \neq j$, $H_i$ is represented
  in normal coordinates at $x_j$ by the graph of\/
  $L_i \in \CLin(H_j;V_j)$ with $\norm{L_i} \le \zeta$.

  Let $f_1, g_1 \in \BUC^{k \ge 1}\big(B(0;\delta) \subset H_1;V_1\big)$
  with $\norm{f_1}_1,\norm{g_1}_1 \le \epsilon$ and\/
  $\norm{f_1}_k,\norm{g_1}_k \le C$.

  When $\delta,\zeta,\epsilon > 0$ are sufficiently small, then there
  exists a constant $\tilde{C}$ such that the graphs of $f_1, g_1$ are
  (partially) represented by functions $f_2, g_2 \in \BUC^k(H_2;V_2)$
  and $\norm{f_2 - g_2}_k \le \tilde{C}\,\norm{f_1 - g_1}_k$. This
  result is uniform for all $x_1,x_2 \in Q$.
\end{lemma}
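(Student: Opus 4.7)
The plan is to construct $f_2$ explicitly as the image graph of $f_1$ under $\phi_{2,1}$, and then to verify that the map $f_1 \mapsto f_2$ is Lipschitz in $C^k$ with a constant depending only on the geometry of $Q$. By Corollary~\ref{cor:split-coordtrans} combined with Lemma~\ref{lem:bound-coordtrans}, when $\delta, \zeta$ are small enough we may write
\[
  \phi_{2,1} = O_H \oplus O_V + \tilde\phi_{2,1}, \qquad
  \norm{\tilde\phi_{2,1}}_{k-1} \le C, \qquad \norm{\D\tilde\phi_{2,1}(0)}\text{ small},
\]
with $O_H, O_V$ orthogonal between the respective splittings. For $f \in \{f_1, g_1\}$ I would introduce
\[
  \psi_f\colon B(0;\delta)\cap H_1 \to H_2, \qquad \psi_f(\xi) = \pi_{H_2}\!\circ\phi_{2,1}\bigl(\xi, f(\xi)\bigr),
\]
so that the image graph is represented as the graph of
\[
  f_2 = \pi_{V_2}\!\circ\phi_{2,1}\!\circ(\Id, f)\circ \psi_f^{-1}.
\]

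Next I would invoke a uniform inverse function theorem to show that $\psi_f$ is a $C^k$ diffeomorphism onto its image. A direct computation gives $\D\psi_f(\xi) = O_H + R(\xi)$, where $R(\xi)$ is controlled by $\norm{\tilde\phi_{2,1}}_1 + \norm{\D f(\xi)} \le C(\zeta+\delta) + \epsilon$, hence small. Therefore $\D\psi_f$ is uniformly invertible with $\norm{\D\psi_f^{-1}}$ uniformly bounded, and $\psi_f$ is a diffeomorphism onto an open subset of $H_2$ of uniform size. Both $\psi_{f_1}$ and $\psi_{g_1}$ cover a common open subset of $H_2$ of uniform size, on which the difference $f_2 - g_2$ is defined.

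For the $C^k$ Lipschitz estimate I would split
\begin{align*}
  f_2 - g_2 ={}& \bigl[\pi_{V_2}\!\circ\phi_{2,1}\!\circ(\Id, f_1) - \pi_{V_2}\!\circ\phi_{2,1}\!\circ(\Id, g_1)\bigr]\circ\psi_{f_1}^{-1} \\
              &+\pi_{V_2}\!\circ\phi_{2,1}\!\circ(\Id, g_1)\circ\bigl[\psi_{f_1}^{-1} - \psi_{g_1}^{-1}\bigr].
\end{align*}
For the first line, Fa\`a di Bruno (Proposition~\ref{prop:compfunc-deriv}) applied with the uniform $C^k$ bound on $\phi_{2,1}$ and on $\psi_{f_1}^{-1}$, together with the $C^k$ bound on $g_1$, yields a bound by a constant times $\norm{f_1 - g_1}_k$. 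The second line reduces, by the same composition formula, to a bound by $\norm{\psi_{f_1}^{-1} - \psi_{g_1}^{-1}}_k$, and this in turn is controlled by $\norm{\psi_{f_1} - \psi_{g_1}}_k = \norm{\pi_{H_2}\circ\phi_{2,1}\circ(\Id, f_1) - \pi_{H_2}\circ\phi_{2,1}\circ(\Id, g_1)}_k \le C\,\norm{f_1 - g_1}_k$. Combining gives the required $\norm{f_2 - g_2}_k \le \tilde C\,\norm{f_1 - g_1}_k$.

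The hard part is the $C^k$ control of $\psi_f^{-1}$ and of its dependence on $f$: since $\psi_f^{-1}$ is only implicitly defined, I would differentiate the identity $\psi_f \circ \psi_f^{-1} = \Id$ and solve iteratively for $\D^j \psi_f^{-1}$ in terms of the $\D^i \psi_f$ with $i \le j$, which is exactly the quantitative implicit function theorem developed in Appendix~\ref{chap:smoothflows}. Keeping every constant uniform in $x_1, x_2$ requires tracking that $\norm{\phi_{2,1}}_k$ is uniformly bounded via Lemma~\ref{lem:bound-coordtrans}, that $\D\psi_f$ stays uniformly away from non-invertibility thanks to the smallness of $\zeta + \delta + \epsilon$, and that the higher-derivative bounds on $f_1, g_1$ are uniform; together these yield a global constant $\tilde C$ as required.
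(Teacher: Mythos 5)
Your proposal is correct and rests on the same underlying mechanics as the paper's proof, but packages them differently. The paper sets up a single implicit function theorem application, with
\[
  F\colon (H_1\times V_2)\times\big(H_2\times\BUC^k(H_1;V_1)\big)\to H_2\times V_2,\qquad
  F\big((\xi_1,\eta_2),(\xi_2,f_1)\big)=\phi_{2,1}\big(\xi_1,f_1(\xi_1)\big)-(\xi_2,\eta_2),
\]
treating the incoming graph function $f_1$ itself as a Banach-space parameter; the $\BUC^k$ regularity and the Lipschitz dependence on $f_1$ are then extracted from Corollary~\ref{cor:unif-impl-func} together with the omega lemma. You instead unpack this machinery: you isolate the horizontal projection $\psi_f(\xi)=\pi_{H_2}\circ\phi_{2,1}(\xi,f(\xi))$, invert it by a uniform inverse function theorem on the finite-dimensional spaces $H_1,H_2$, and write the output graph $f_2=\pi_{V_2}\circ\phi_{2,1}\circ(\Id,f)\circ\psi_f^{-1}$ explicitly, controlling each factor by Fa\`a di Bruno. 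These two routes are equivalent --- the top-left block of the paper's $\D_1 F$ is exactly your $\D\psi_f$, and uniform invertibility in both cases reduces to the same smallness of $\zeta+\delta+\epsilon$. The paper's framing is more compact and lets the abstract corollary of the implicit function theorem carry the bookkeeping; your explicit form makes the graph-transform structure and the role of $\psi_f^{-1}$ visible, at the cost of managing the two-term splitting of $f_2-g_2$ by hand. Two minor slips worth fixing: the bound on $R(\xi)$ should be a product $\norm{\tilde\phi_{2,1}}_1\,(1+\norm{\D f})$ rather than a sum (the smallness conclusion is unaffected), and the expression $\pi_{V_2}\phi_{2,1}(\Id,g_1)\circ[\psi_{f_1}^{-1}-\psi_{g_1}^{-1}]$ is an abuse of notation since composition does not distribute over differences; you mean the difference of the two compositions, which one then controls by a mean-value-type estimate using the $C^{k+1}$ bound on $\phi_{2,1}$ coming from bounded geometry.
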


\begin{remark}
  The functions $f_i,g_i$ may only be defined on parts of
  $B(x_i;\delta)$; all claims should thus be read as only for those
  points where the respective functions are defined.
\end{remark}

\begin{proof}
  Let $(\xi_i,\eta_i) \in H_i \oplus V_i,\,i=1,2$ denote
  normal coordinates, decomposed in the split directions
  at $x_i \in Q$. By Lemma~\ref{lem:split-coordtrans}, transformations
  between these coordinates are of the form~\ref{eq:split-coordtrans},
  where $\norm{\tilde{\phi}_{2,1}}_{k+1}$ can be made uniformly small
  as $\delta,\zeta \to 0$.

  We aim to apply the implicit function theorem to find a function
  $f_2$ on $B(x_2;\delta)$ whose graph corresponds to that of a
  function $f_1$ on $B(x_1;\delta)$. We define
  \begin{gather*}
    X = H_1 \times V_2,            \quad
    Y = H_2 \times \BUC^k(H_1;V_1),\quad
    Z = H_2 \times V_2,            \quad\text{and}\\
    F\colon X \times Y \to Z
     \colon (\xi_1,\eta_2),(\xi_2,f_1) \mapsto \phi_{2,1}(\xi_1,f_1(\xi_1)) - (\xi_2,\eta_2).
    \eqnumber\label{eq:graphtrans-implfunc}
  \end{gather*}
  Note that $X$ and $Z$ are isomorphic vector spaces, so we can apply
  the implicit function theorem with $Y$ as parameter space. Moreover,
  if we have two functions $f_1, f_2$ whose graphs represent the same
  manifold on the intersection $B(x_1;\delta) \cap B(x_2;\delta)$,
  then we have
  \begin{equation*}
    F\big(p_1\circ\phi_{2,1}^{-1}\big(\xi_2,f_2(\xi_2)\big),f_2(\xi_2),\xi_2,f_1\big) = 0
  \end{equation*}
  for all $\xi_2 \in H_2$ where this is defined, so the
  implicit function
  \begin{equation*}
    G(\xi_2,f_1) = \big(p_1\circ\phi_{2,1}^{-1}(\xi_2,f_2(\xi_2)),f_2(\xi_2)\big)
  \end{equation*}
  encodes the representation $f_2$. We verify the conditions of the
  implicit function theorem:
  \begin{equation*}
    \D_1 F\big((\xi_1,\eta_2),(\xi_2,f_1)\big) =
    \begin{pmatrix}
        O_H + p_1\cdot\D\tilde{\phi}_{2,1}\cdot(\Id + \D f_1) & 0   \\
        O_V\cdot\D f_1                                        & \Id
    \end{pmatrix}
  \end{equation*}
  is unitary when $\tilde{\phi}_{2,1} = f_1 = 0$. When
  $\delta,\,\epsilon$ are sufficiently small, then these functions are
  still small enough such that $\D_1 F$ is invertible with uniformly
  bounded inverse, using Lemma~\ref{lem:invert-lin}. Furthermore,
  $F \in \BUC^k$, as the dependence on $\xi_1,\eta_2,\xi_2$ is clearly
  $\BUC^k$, while the omega
  Lemma~\cite[p.~101]{Abraham1988:mflds-tensor-appl2nd} guarantees
  joint $\BUC^k$\ndash dependence on $f_1$ as well. Note that
  compactness of the domain of $f_1$ is not required, as we assume
  these functions to be uniformly bounded and thus have compact image.

  The implicit function theorem has a corresponding formulation as a
  uniform contraction principle. The latter formulation shows that the
  implicit function $G$ must be unique, while existence holds if
  $\tilde{\phi}_{2,1}, f_1, \xi_2$ are sufficiently close to zero, due
  to a priori estimates.
  We apply Corollary~\ref{cor:unif-impl-func} as an extension of the
  implicit function theorem to conclude that $G \in \BUC^k$. This
  means that $f_2 = p_2 \circ G(\slot,f_1) \in \BUC^k$ on suitable
  neighborhoods. Using formula~\ref{eq:impl-func-der} for $\D G$, we
  can moreover conclude that $f_2$ depends Lipschitz on $f_1$. This
  follows from explicit control on the boundedness and continuity
  estimates, while variation with respect to
  $f_1$ only introduces additional $k \tp 1$\ndash order derivatives
  of $\tilde{\phi}_{2,1}$, which can be assumed uniformly bounded.
  Hence, there exists some constant $\tilde{C}$ such that
  $\norm{g_2 - f_2}_k \le \tilde{C}\,\norm{g_1 - f_1}_k$ and all
  estimates are uniform.
\end{proof}

\begin{corollary}[Graph size under coordinate transformations]
  \label{cor:graphsize-coordtrans}
  \index{coordinate transition map!graph change under}
  Under the assumptions of Lemma~\ref{lem:graphdiff-coordtrans}, there
  exist constants $A,B$ such that we have the estimate
  \begin{equation}\label{eq:graphsize}
    \norm{f_2}_k \le A\,\norm{f_1}_k + B
  \end{equation}
  on amplification of the size of a graph under coordinate
  transformations.
\end{corollary}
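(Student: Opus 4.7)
The plan is to deduce the bound from Lemma~\ref{lem:graphdiff-coordtrans} by comparing $f_1$ against the zero function on $H_1$. First, I would set $g_1 \equiv 0$ on $B(0;\delta) \subset H_1$; since $\norm{g_1}_1 = \norm{g_1}_k = 0$, the hypotheses of Lemma~\ref{lem:graphdiff-coordtrans} are trivially satisfied. That lemma then produces a $\BUC^k$ function $g_2\colon H_2 \to V_2$ whose graph represents, in normal coordinates at $x_2$, the image under $\phi_{2,1}$ of the flat disk $H_1 \cap B(0;\delta) \subset \T_{x_1}Q$, together with the Lipschitz estimate
\[
\norm{f_2 - g_2}_k \le \tilde{C}\,\norm{f_1 - g_1}_k = \tilde{C}\,\norm{f_1}_k.
\]
By the triangle inequality, $\norm{f_2}_k \le \tilde{C}\,\norm{f_1}_k + \norm{g_2}_k$, so it suffices to bound $\norm{g_2}_k$ by a constant $B$ that depends only on $Q$ and the small parameters $\delta,\zeta$, but not on $f_1$. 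Setting $A := \tilde{C}$ then yields the desired estimate.

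For the bound on $\norm{g_2}_k$, I would use Lemma~\ref{lem:split-coordtrans} to decompose $\phi_{2,1} = O_H \oplus O_V + \tilde{\phi}_{2,1}$, where $O_H \oplus O_V$ is orthogonal and $\norm{\tilde{\phi}_{2,1}}_k$ is controlled uniformly by $\zeta + d(x_1,x_2)$. The orthogonal part maps $H_1$ linearly into $\T_{x_2}Q$, and by the standing hypothesis on the splittings this image sits as a graph over $H_2$ with slope at most a constant multiple of $\zeta$. The perturbation $\tilde{\phi}_{2,1}$ is $\BUC^k$ with uniformly bounded norm, so (after possibly shrinking $\delta$) the image $\phi_{2,1}(H_1 \cap B(0;\delta))$ is still representable as a $\BUC^k$ graph. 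Concretely, one runs the implicit-function construction of Lemma~\ref{lem:graphdiff-coordtrans} with input $f_1 = 0$: all continuity and boundedness data entering the construction are then determined by $\phi_{2,1}$ alone.

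The only mildly subtle point is to ensure that the resulting bound on $\norm{g_2}_k$ is genuinely uniform in $x_1,x_2 \in Q$. This follows from the uniformity built into Lemma~\ref{lem:bound-coordtrans} together with the standing smallness assumption $\norm{L_1} \le \zeta$, $d(x_1,x_2) \le \delta$; the implicit function theorem (Corollary~\ref{cor:unif-impl-func}) then produces $g_2 \in \BUC^k$ with norm bounded by a constant $B = B(Q,\delta,\zeta)$. Combining this with the Lipschitz estimate from Lemma~\ref{lem:graphdiff-coordtrans} gives
\[
\norm{f_2}_k \le A\,\norm{f_1}_k + B
\]
uniformly for all admissible $x_1,x_2 \in Q$, as claimed.
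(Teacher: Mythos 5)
Your proof is correct and follows the same route as the paper: set $g_1 \equiv 0$ in Lemma~\ref{lem:graphdiff-coordtrans}, take $A = \tilde{C}$, and supply a uniform bound $B$ on $\norm{g_2}_k$ by noting that $g_2$ depends only on $\phi_{2,1}$ (equivalently, on $Q$ and the small parameters $\delta,\zeta$) via the implicit-function construction, then conclude with the triangle inequality. The extra detail you give on the provenance of $B$ (via Lemma~\ref{lem:split-coordtrans} and Corollary~\ref{cor:unif-impl-func}) is just unpacking what the paper states more tersely.
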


\begin{proof}
  We choose $g_1 = 0$ in Lemma~\ref{lem:graphdiff-coordtrans} and set
  $A = \tilde{C}$. There exists a uniform bound $B$ such that
  $\norm{g_2}_k \le B$, and when $\zeta,\epsilon$ are sufficiently
  small, then for $\delta' = \frac{9}{10}\,\delta$ we have
  $B(0;\delta') \subset \text{Dom}(g_2)$. Hence, we easily deduce
  \begin{equation*}
    \norm{f_2}_k
    \le \norm{f_2 - g_2}_k + \norm{g_2}_k
    \le \tilde{C}\,\norm{f_1}_k + B
  \end{equation*}
  for all $x \in B(0;\delta')$ where $f_1, f_2$ are defined.
\end{proof}

\begin{theorem}[Uniform smooth approximation of a submanifold]
  \label{thm:unif-smooth-submfld}
  Let $M \in \BUC^{k \ge 1}$ be a uniformly immersed submanifold of a
  smooth Riemannian manifold $(Q,g)$ of bounded geometry.

  Then for each $\sigma > 0$ and integer $l \ge k$, there exists a
  uniformly immersed submanifold $M_\sigma \in \BUC^l \cap C^\infty$ and
  $\delta > 0$ such that $\norm{M_\sigma - M}_k \le \sigma$ with
  respect to normal coordinate charts of radius $\delta$ along both
  $M$ and $M_\sigma$. If\/ $M$ is a uniformly embedded submanifold, then
  so is $M_\sigma$.
\end{theorem}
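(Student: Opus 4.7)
The plan is to build $M_\sigma$ by iteratively replacing the graph representation of $M$ in each chart of a uniformly locally finite cover with a convolution-smoothed version, and then to show that the perturbation accumulated over all charts stays uniformly small.

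First I would fix $0 < \delta_1 < \delta_2$ small enough that Corollary~\ref{cor:submfld-cover}, Lemma~\ref{lem:bound-coordtrans}, Lemma~\ref{lem:graphdiff-coordtrans} and Corollary~\ref{cor:graphsize-coordtrans} all apply, and pick a countable sequence $\{x_i\}_{i\ge 1} \subset M$ from Corollary~\ref{cor:submfld-cover} so that the balls $B(x_i;\delta_1)$ cover $M$ and at most $K$ of the $B(x_j;\delta_2)$ meet any single $B(x;\delta_2)$. Set $M^{(0)} = M$. Inductively, represent $M^{(i-1)}$ in the normal chart at $x_i$ by the graph of a function $h^{(i-1)}_i \colon B(0;\delta_2) \subset \T_{x_i} M \to N_{x_i}$, which by construction is $\BUC^k$ with uniform bounds, and apply Lemma~\ref{lem:conv-smoothing} with $r = \delta_1$, transition width $\delta\!r = (\delta_2-\delta_1)/2$, target smoothness $l$, and error tolerance $\epsilon \le \sigma_0$ in $C^k$ norm. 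The resulting $h^{(i)}_i$ equals $h^{(i-1)}_i$ outside $B(0;\delta_1 + \delta\!r)$ and is $\BUC^l \cap C^\infty$ on $B(0;\delta_1)$; it therefore defines $M^{(i)}$ as a new uniformly immersed submanifold agreeing with $M^{(i-1)}$ outside a compact subset of $B(x_i;\delta_2)$.

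The main obstacle is controlling the $C^k$ perturbation cumulatively. A change of size $\sigma_0$ in chart $i$ transfers, by Lemma~\ref{lem:graphdiff-coordtrans}, to a change of size at most $\tilde{C}\,\sigma_0$ in any overlapping chart, and later modifications may be pushed back and forth through a chain of coordinate changes. However, any fixed point $p \in M$ belongs to at most $K$ of the $B(x_i;\delta_2)$, so the graph representations near $p$ are modified at most $K$ times and then stabilize; in particular the pointwise limit $M_\sigma = \lim_i M^{(i)}$ is well-defined. Choosing $\sigma_0 \le \sigma/(K\,\tilde{C}^K)$, uniformly in $i$, then bounds the total $C^k$ perturbation at every point by $\sigma$, measured in normal coordinates at either $M$ or $M_\sigma$ (the symmetric version of Lemma~\ref{lem:graphdiff-coordtrans} handles the charts at $M_\sigma$, whose tangent spaces differ from those of $M$ by at most a $C^{k-1}$-small amount). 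Corollary~\ref{cor:graphsize-coordtrans} then yields the uniform graph-size bounds needed to certify that $M_\sigma$ is a uniformly immersed submanifold.

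Finally, global smoothness $M_\sigma \in \BUC^l \cap C^\infty$ follows because each point $p \in M$ lies in some $B(x_i;\delta_1)$, where $h^{(i)}_i$ becomes $C^\infty$ at step $i$; the infinite-order bounded geometry of $(Q,g)$ ensures that $C^\infty$ smoothness transfers under coordinate changes, and further smoothing steps (convolution with a $C^\infty$ mollifier and multiplication by $C^\infty$ cut-offs) preserve $C^\infty$ when applied to an already smooth function. The uniform $\BUC^l$ bounds from Lemma~\ref{lem:conv-smoothing}, combined with the uniform covering constant $K$, yield the global $\BUC^l$ estimate. The uniformly embedded case reduces to observing that the property $M_{x,\delta} = M \cap B(x;\delta)$ is stable under sufficiently small $C^0$ perturbations; choosing $\sigma$ small compared to the embedding margin of $M$ preserves it for $M_\sigma$.
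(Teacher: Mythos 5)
Your overall plan is the same as the paper's: take a uniformly locally finite cover, smooth by convolution sequentially in each chart, and control the accumulated perturbation via the covering bound $K$ and Lemma~\ref{lem:graphdiff-coordtrans}. But there is a genuine gap in the bookkeeping: you set $r = \delta_1$ in Lemma~\ref{lem:conv-smoothing} and appeal to the fact that each $p \in M$ lies in some $B(x_i;\delta_1)$. This is not enough, because the point corresponding to $p$ on the intermediate manifolds $M^{(j)}$ drifts under the earlier smoothing steps. By the time chart $i$ is processed, the drifted image $p^{(i-1)}$ may no longer lie in $B(0;\delta_1) \subset \T_{x_i} M$, so the smoothing at step $i$ gives you no information about smoothness of $M_\sigma$ at the final image of $p$. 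Your statement that ``the graph representations near $p$ are modified at most $K$ times and then stabilize'' bounds how \emph{many} times $p$ can be touched, but does not ensure $p$ is ever touched by a smoothing operation, i.e.\ that it ends up in the interior region $B(0;r)$ of at least one chart when that chart is actually smoothed. The paper handles this by choosing $r > \delta_1$, explicitly tracking the sequence $x^j \in M^j$ of a point through the iterations, and picking the mollifier parameter $\nu$ so small that the cumulative drift is less than $r - \delta_1$, guaranteeing $x^{i-1} \in B(x_i;r)$ for the chart $i$ with $x \in B(x_i;\delta_1)$. Without some such margin between the covering radius and the smoothing radius, $C^\infty$ regularity of $M_\sigma$ at an arbitrary point does not follow.

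A secondary soft spot: you need to ensure throughout the iteration that each $h^{(i-1)}_i$ is still defined on a large enough domain and still satisfies the smallness hypotheses of Lemma~\ref{lem:conv-smoothing} and Lemma~\ref{lem:graphdiff-coordtrans} (the constant $\tilde C$ in the latter is only available under a priori $C^1$ and $C^k$ bounds on the graph). Invoking Corollary~\ref{cor:graphsize-coordtrans} at the end to ``certify'' the result does not retroactively justify the intermediate applications; a bootstrap of the form ``all $h^{(j)}_i$ satisfy $\norm{h^{(j)}_i}_1 \le \epsilon_\infty$ and $\norm{h^{(j)}_i}_k \le C_\infty$'' maintained inductively, as the paper does, is what is needed. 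Your constant $\tilde C^K$ in place of the paper's $K\,\tilde C$ is harmless over-caution (the propagation of each smoothing step to overlapping charts is additive, not compounding), but the missing drift and domain bookkeeping are not cosmetic.
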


The proof relies on finding a (uniformly locally finite) cover of $M$
and then in each chart make smooth the graph representation $h_\bullet$.
All estimates are uniform, independent of the point $x \in M$, hence
so is the final result. Smoothing is done sequentially in each chart,
so we must be careful to check how smoothing in one chart influences
the graph representation in other charts. This makes the technical
estimates quite involved, but the basic idea is that we have uniform
control on the size of changes in each $h_\bullet$ by the convolution
kernel parameter $\nu$ in Lemma~\ref{lem:conv-smoothing}, as well as
the size of this change in other charts.

\begin{proof}
  This proof contains a lot of interdependent size estimation parameters.
  Giving explicit choices and dependencies would clutter the proof
  needlessly, so we make a few remarks on beforehand. Any $\delta$'s
  denote sizes of normal coordinate balls and $\epsilon$'s are used
  for sizes of (changes in) functions in these coordinates. The
  parameter $\nu$ from Lemma~\ref{lem:conv-smoothing} depends on most
  of the foregoing, while only the $C^l$ bound (but not the
  $C^k$ bound) of $M_\sigma$ depends on the choice of $\nu$.
  The various $\epsilon$'s will be fixed later, and depend on $\sigma$
  and global properties of $M$ and $(Q,g)$, but not on $\delta$'s.
  Also note that everything is independent of points $x \in M,Q$.

  We fix $2\,\delta_1 = \delta_2 = \frac{1}{2}\,\delta_3$,
  $\epsilon_\infty = 2\,\epsilon_0$, and $C_\infty = C_0 + 1$ and
  choose $\delta_3,\,\zeta,\,\epsilon_\infty,\,\epsilon_\phi > 0$
  sufficiently small such that all the following statements hold true.
  \begin{enumerate}
  \item By Proposition~\ref{prop:unif-coord-chart} and
    Lemma~\ref{lem:loc-equiv-Mdist}, distances and metrics are
    $C = 2$ equivalent on balls $B(x;\delta_3)$, and $d_Q,\,d_M$ are
    locally equivalent, all up to order $l+1$.
  \item By assumption of $M \in \BUC^k$, we have for each $x \in M$
    the representation $M_{x,\delta_3} = \text{graph}(h_x)$ with
    $\norm{h_\bullet}_1 \le \epsilon_0$ and
    $\norm{h_\bullet}_k \le C_0$.
  \item By Corollary~\ref{cor:submfld-cover}, there exists a
    uniformly locally finite cover
    $\bigcup_{i \ge 1} M_{x_i,\delta_2}$ of $M$ with all $x_i$
    separated by at least $\delta_1$, the balls $M_{x_i,\delta_1}$
    already covering $M$, and bound $K$ on the maximum number of
    $M_{x_i,\delta_2}$ intersecting any $M_{x,\delta_2}$.
    Formula~\ref{eq:unif-cover-bound} shows that $K$ depends on the
    ratio $\delta_2/\delta_1$, but does not increase when $\delta_3 \to 0$.
  \item By Lemma~\ref{lem:split-coordtrans}, all coordinate transition
    maps $\phi_{2,1}$ between any $x_1,x_2 \in Q,\,d(x_1,x_2) < \delta_3$
    are $C^l$ bounded. When the graph representations
    $H_i = \Graph(L_i)$ are bounded by $\zeta > 0$, then these
    are of the form $\phi_{2,1} = O_H \oplus O_V +
    \tilde{\phi}_{2,1}$,
    with $\norm{\tilde{\phi}_{2,1}}_{l+1} \le \epsilon_\phi$. And by
    Corollary~\ref{cor:split-coordtrans}, this holds for the
    coordinate transformations between the $x_i$ chosen for the cover
    of $M$.
  \item By Lemma~\ref{lem:graphdiff-coordtrans}, there exists a
    constant $\tilde{C}$ that estimates the graph change under
    coordinate transformations from the previous point, when
    $\norm{f}_1,\norm{g}_1\le \epsilon_\infty$ and
    $\norm{f}_k,\norm{g}_k \le C_\infty$, while
    Corollary~\ref{cor:graphsize-coordtrans} holds on balls of size
    $\delta' = \frac{9}{10}\,\delta_3 > \delta_2$.
  \item If $M$ was a uniformly embedded submanifold, then let
    $M_{x,\delta_3}$ be the unique connected component of $M$ in
    $B(x;\delta_3)$ for each $x \in M$.
  \end{enumerate}

  Let $M^j$ denote a modification of $M$ after applying smoothing
  operations in the first $j$ charts, and let $h_i^j$ denote the graph
  representation of $M^j$ in chart $i$. So, initially we have
  $M^0 = M$ and $h_i^0 = h_i$. Note that the sequence
  $\{h_i^j\}_{j \ge 1}$ is constant after some finite index $j(i)$,
  since it is changed at most $K$ times by smoothing in overlapping
  charts. Thus, the final graphs are given by
  $h_i^\infty = h_i^{j(i)}$.

  Initially, we have $\norm{h_\bullet}_1 \le \epsilon_0$ and
  $\norm{h_\bullet}_k \le C_0$, and we assume that throughout the
  sequential smoothings it holds for all $i,j$ that
  $\norm{h_i^j}_1 \le \epsilon_\infty$ and
  $\norm{h_i^j}_k \le C_\infty$, and therefore the final $h_i^\infty$
  satisfy these estimates as well. Let $\epsilon_0$ be sufficiently
  small, such that by a mean value theorem estimate
  \begin{equation*}
        \norm{h_\bullet(\xi)}
    =   \norm{h_\bullet(\xi) - h_\bullet(0)}
    \le \norm{\D h_\bullet}\,\norm{\xi}
    \le \epsilon_0\,\norm{\xi},
  \end{equation*}
  we have $B(0;\delta_2) \subset \text{Dom}(h_\bullet)$.

  We apply the convolution smoothing Lemma~\ref{lem:conv-smoothing}
  with some choice $r > \delta_1$ and $r+2\,\delta r < \delta_2$ to
  sequentially make smooth $h_i^{i-1}$ in the coordinate chart
  $B(x_i;\delta_3)$ to obtain $h_i^i \in \BUC^l \cap C^\infty$ on
  $B(0;r)$. The $h_\bullet^j$ representations of the
  $M^j_{x_\bullet,\delta_3}$ overlap, so we must be careful that (at
  most) $K$ repeated smoothing operations keep the $h_\bullet^j$
  within the bounds required to apply this lemma, while at the same
  time we must ensure that each point on the sequence of manifolds
  $M^j$ is smoothed to $\BUC^l$ at some stage, even though $M^j$
  changes to $M_\sigma = M^\infty$ throughout the sequential
  smoothings.

  Let us first show that each point is smoothed. We can keep track of
  each original point $x \in M$ as a sequence of points $x^j \in M^j$
  throughout the smoothings, and once $M^j$ is smoothed around $x^j$,
  then the convolution lemma guarantees that smoothness is preserved
  around the sequence $x^j$ under further smoothing in other charts.
  Let $\Phi\colon M \diffto M_\sigma$ denote the diffeomorphism that
  assigns to $x \in M$ the final point $x^\infty \in M_\sigma$.
  Each point $x \in M$ is element of a graph $h_i$ in at least one
  ball $B(x_i;\delta_1)$, so if the corresponding sequence of points
  $x^j \in M^j$ moves less than $r - \delta_1$, then it is smoothed in
  $B(x_i;r)$. Therefore, we choose $\nu$ in
  Lemma~\ref{lem:conv-smoothing} small enough, such that
  \begin{equation*}
    \norm{\tilde{f} - f}_k \le \epsilon(\nu) < \frac{r - \delta_1}{2\,K}.
  \end{equation*}
  The factor $2\,K$ accounts for at most $K$ charts in which $x^j$ is
  moved and $C = 2$ to correct for equivalence of distance in charts.
  Hence, the manifold is smoothed to $\BUC^l \cap C^\infty$ at each
  point.

  Next, we show that each $h_i^j$ is defined at least on
  $B(0;r+2\,\delta r)$ and satisfies the bounds $\epsilon_\infty$ and
  $C_\infty$. Initially, we have $\norm{h_i}_1 \le \epsilon_0$,
  $\norm{h_i}_k \le C_0$, and $\Graph(h_i) = M_{x_i,\delta_3}$
  is well-defined in $B(x_i;\delta_3)$. So for
  $\epsilon_0 \le \frac{1}{10}$, say, we must have
  $\norm{h_i}_0 \le \frac{1}{10}\,\delta_3$ and
  $\text{Dom}(h_i) \supset B(0;\frac{11}{10}\delta_2)$. The only
  reason that the domain of some $h_i^j$ decreases is if either the
  graph moves outside of $B(0;\delta_3)$ or the modified manifold
  cannot be represented by a graph anymore. The latter cannot occur if
  $\D h_i^j$ stays bounded, while the former can be controlled by
  bounding $\norm{h_i^j}_0$. Both can be controlled by estimating the
  $C^1$ changes $h_i^j - h_i^{j-1}$. First, in coordinate chart
  $i = j$ we can directly use the convolution smoothing
  Lemma~\ref{lem:conv-smoothing} to conclude that
  $\norm{h_i^j - h_i^{j-1}}_1 \le \epsilon(\nu)$. In any other chart
  $i \neq j$, this change is amplified by a bounded factor
  $\tilde{C}$, as per Lemma~\ref{lem:graphdiff-coordtrans}, so we have
  \begin{equation*}
    \norm{h_i^j - h_i^{j-1}}_1 \le \tilde{C}\,\epsilon(\nu).
  \end{equation*}
  When we choose $\nu$ small enough that
  \begin{equation*}
    K\,\tilde{C}\,\epsilon(\nu) <
    \min\big(\epsilon_0\,,\mfrac{1}{10}\,\delta_3\,,1\big)
  \end{equation*}
  holds, then this leads to
  \begin{align*}
    \norm{h_i^j}_0 &\le
    \norm{h_i}_0 + K\,\tilde{C}\,\epsilon(\nu)
    \le \frac{2}{10}\,\delta_3,\\
    \norm{h_i^j}_1 &\le
    \epsilon_0 + \epsilon_0 = \epsilon_\infty,\\
    \norm{h_i^j}_k &\le
    \norm{h_i}_k + K\,\tilde{C}\,\epsilon(\nu)
    \le C_0 + 1 = C_\infty.
  \end{align*}
  This shows that indeed the assumed bounds $\epsilon_\infty$ and
  $C_\infty$ hold, and that $h_i^j$ is defined at least on the ball
  $B(0;r+2\,\delta r)$.

  The sequential smoothings create and preserve $\BUC^l \cap C^\infty$
  smoothness, while every `point' $x^j \in M^j$ is touched by these
  operations. Moreover, Lemma~\ref{lem:conv-smoothing} and
  Corollary~\ref{cor:graphsize-coordtrans} together guarantee that the
  smoothing in each chart keeps
  \begin{equation*}
    \norm{h_i^j}_l \le A\,\norm{h_j^{j-1}}_0 + B
                   \le A\,C(\nu,l)\,\norm{h_j^{j-1}}_0 + B
                   \le A\,C(\nu,l)\,\epsilon_\infty +B
  \end{equation*}
  bounded with a uniform estimate, at least on charts
  $B(x_i;\delta_2)$.

  Finally, we want to estimate the sizes and distance between the
  graphs $h_\bullet,\,h_\bullet^\infty$ in split coordinate charts of
  radius $\delta = \delta_2 - r$ along either $M$ or $M_\sigma$. If
  $x$ is a point either on $M$ or $M_\sigma$, then it is contained in
  at least one ball $B(x_i;r)$ and
  $B(x;\delta) \subset B(x_i;\delta_2)$. If we also set
  $\epsilon_\infty \le \zeta$ and consider the coordinate
  transformation $\phi$ from normal coordinates at $x_i$ to $x$, then
  Lemma~\ref{lem:graphdiff-coordtrans} and
  Corollary~\ref{cor:graphsize-coordtrans} hold and can be used to
  estimate
  \begin{equation*}
    \norm{h_x^\infty}_l \le A\,\norm{h_i^\infty}_l + B \qquad\text{and}\qquad
    \norm{h_x^\infty - h_x}_k
    \le \tilde{C}\,\norm{h_i^\infty - h_i}_k
    \le \tilde{C}^2\,K\,\epsilon(\nu)
  \end{equation*}
  for all points in the domains of $h_x$ and $h_x^\infty - h_x$ within
  $B(0;\delta)$. So if we set
  $\tilde{C}^2\,K\,\epsilon(\nu) < \sigma$, then $M_\sigma$ is
  $C^k$ close to $M$ in normal coordinate charts of radius
  $\delta$ along either $M$ or $M_\sigma$, while at the same time
  $M_\sigma \in \BUC^l \cap C^\infty$.

  If $M$ is a uniformly embedded submanifold, then $\delta_3$ was
  chosen small enough such that $M_{x,\delta_3}$ is the unique
  connected component of $M$ in $B(x;\delta_3)$ for any $x \in M$. We
  now show that the same holds for $M_\sigma$ with balls of radius
  $\delta$. Let $\tilde{x} \in M_\sigma$ be arbitrary and
  $x = \Phi^{-1}(\tilde{x}) \in M$. We take
  $\tilde{y} \in M_\sigma \cap B(\tilde{x};\delta)$ and want to prove
  that $\tilde{y} \in (M_\sigma)_{\tilde{x},\delta}$. By the uniform
  estimates made before, both $M_{x,\delta_2}$ and
  $(M_\sigma)_{\tilde{x},\delta_2}$ can be represented by graphs
  $h_x, h_x^\infty$ respectively in coordinates
  $B(0;\delta_2) \subset \T_x M \oplus N_x$. We have
  $y = \Phi^{-1}(\tilde{y}) \in B\big(\tilde{x};\delta+(r-\delta_1)\big)
  \subset B\big(x;\delta+2(r-\delta_1)\big)$,
  so $y \in B(x;\delta_2) \cap M = M_{x,\delta_2} = \Graph(h_x)$.

  By the construction of $M_\sigma$
  we have $x \in B(x_i;r)$ in some chart $i$, but also
  $(M_\sigma)_{x,\delta_2} = \Graph(h_x)$. Let
  $y \in B(x;\delta) \cap M_\sigma$; we want to prove that
  $y \in (M_\sigma)_{x,\delta} = \Graph(h_x^\infty)$. Since
  $y \in B(x;\delta)$, then for its original it must hold that
  \begin{equation*}
    y^0 \in B\big(x  ;  \delta+(r-\delta_1)\big) \subset
            B\big(x_i;r+\delta+(r-\delta_1)\big),
  \end{equation*}
  hence $y^0 \in M_{x_i,r+\delta+(r-\delta_1)} = \Graph(h_x)$.
  Following the change of $M$ to $M_\sigma$ in coordinates around $x$,
  we see that $y \in \Graph(h_x^\infty)$ must hold.
\end{proof}

\section{Embedding into a trivial bundle}
\label{sec:BG-triv-bundle}

\newcommand{\barN}{{\mkern 4mu\overline{\mkern-4mu N\mkern-2mu}\mkern 2mu}}

Let $\pi\colon N \to M$ be the normal bundle over $M$ immersed in
$(Q,g)$, a Riemannian manifold of bounded geometry. We are going to
construct a trivial bundle $\barN$ over $M$ that contains $N$ and
preserves uniform properties. As a second step, we extend a normally
hyperbolic vector field to this trivial bundle setting. This procedure
is also alluded to
in~\cite[p.~333--334]{Sakamoto1994:smoothlin-invarmfld}, but
especially in the case of bounded geometry requires a more careful
inspection.

\begin{theorem}[Uniform embedding of a normal bundle in a \idx{trivial bundle}]
  \label{thm:normalbundle-triv-emb}
  Let $M \in \BUC^{k \ge 1}$ be a uniformly immersed submanifold of the bounded
  geometry manifold $(Q,g)$. Then there exists an embedding
  $\lambda\colon N \embedto \barN$ of the (nontrivial) normal bundle
  $\pi\colon N \to M$ into a larger, trivial vector bundle
  $\barN = M \times \R^{\bar{n}}$. The embedding map
  $\lambda \in \BUC^{k-1}$ is a quasi-isometry when restricted to
  $N_{\le\eta}$ for any $\eta > 0$ and the splitting
  $\barN = \lambda(N) \oplus N^\perp$ is $\BUC^k$, where $N^\perp$ is chosen
  perpendicular to $\lambda(N)$ according to the standard Euclidean metric on\,
  $\R^{\bar{n}}$.
\end{theorem}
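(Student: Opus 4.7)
The plan is to mimic the classical construction of the Whitney embedding of a vector bundle, with the key adaptation that compactness is replaced by uniform local finiteness plus a finite chromatic bound on the overlap graph, so that only finitely many local trivializations need to be summed. Concretely, I would first fix a uniformly locally finite cover $\{M_{x_i,\delta}\}_{i\ge 1}$ of $M$ from Corollary~\ref{cor:submfld-cover}, with $\delta$ small enough that the normal bundle $N$ admits a $\BUC^{k-1}$ orthonormal frame $f_i=(f_i^1,\dots,f_i^n)$ on each $M_{x_i,\delta}$ (obtained, e.g., by parallel-transporting an orthonormal basis of $N_{x_i}$ inside $\T Q$ along the normal coordinate rays, which is $\BUC^{k-1}$ because $M\in\BUC^k$ and $Q$ has bounded geometry). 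Let $n=\mathrm{rank}(N)$ and $\{\chi_i\}$ the square partition of unity from Corollary~\ref{cor:part-squared-unity}, so $\sum_i\chi_i^2\equiv 1$.

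The next step is to remove the dependence of the target dimension on the (in general infinite) index $i$. Here I would invoke the finite-coloring result for the intersection graph of a uniformly locally finite cover (cited from \cite{Roe1988:index-openmflds} in the chapter introduction): the cover splits as a disjoint union $\{1,2,\dots\}=\bigsqcup_{c=1}^{K'}I_c$ with $K'\le K+1$ so that for each color class $c$ the balls $\{M_{x_i,\delta}\}_{i\in I_c}$ are pairwise disjoint. For $v\in N$ with $x=\pi(v)\in M$, set
\begin{equation*}
  \lambda_c(v)=\sum_{i\in I_c}\chi_i(x)\,\bigl(\langle v,f_i^1(x)\rangle,\dots,\langle v,f_i^n(x)\rangle\bigr)\in\R^n,
\end{equation*}
where the sum reduces to at most one nonzero term by disjointness within a color class. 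Finally define
\begin{equation*}
  \lambda\colon N\hookrightarrow\barN=M\times\R^{\bar n},\qquad
  \lambda(v)=\bigl(\pi(v),\,\lambda_1(v),\dots,\lambda_{K'}(v)\bigr),\quad\bar n=K'n.
\end{equation*}

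The verification then splits into three routine checks. Fiberwise isometry: for $v\in N_x$, since each $f_i(x)$ is orthonormal,
\begin{equation*}
  \|\lambda(v)\|_{\R^{\bar n}}^2=\sum_{c=1}^{K'}\sum_{i\in I_c}\chi_i(x)^2\|v\|^2=\Bigl(\sum_i\chi_i(x)^2\Bigr)\|v\|^2=\|v\|^2,
\end{equation*}
so $\lambda$ is injective and a fiberwise linear isometry. Uniform regularity: $\chi_i\in\BUC^k$ with uniform bounds by Lemma~\ref{lem:part-unity}, and $f_i\in\BUC^{k-1}$ uniformly by the bounded-geometry structure of $N$ together with Lemma~\ref{lem:bound-coordtrans}; hence $\lambda\in\BUC^{k-1}$ and, on $N_{\le\eta}$ where the fiber coordinate is bounded, the total derivative $\D\lambda$ is uniformly bounded and bounded away from degenerate, yielding the quasi-isometry. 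Splitting: $P(x)$, the orthogonal projection onto $\lambda(N_x)\subset\R^{\bar n}$, is determined pointwise by the Gram matrix of the vectors $\chi_i(x)\,e_j$ in the appropriate coordinates, which depends on $x$ only through $\chi_i$ and the subbundle data; chasing this dependence carefully (using that orthonormality makes the Gram matrix a sum of rank-one projections with $\BUC^k$ scalar weights $\chi_i^2$) gives the $\BUC^k$ splitting, and we take $N^\perp=\ker P$.

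The main obstacle I anticipate is the uniformity bookkeeping: ensuring that $\bar n$ is finite and independent of the point (handled by the overlap bound $K$ via finite coloring), that the frames $f_i$ can be chosen uniformly $\BUC^{k-1}$ across the noncompact cover (handled by using the preferred bounded-geometry trivializations of $N$ from Definition~\ref{def:BG-bundle}), and that the regularity of the splitting is genuinely one degree better than $\lambda$ itself, which requires tracking that the subspace $\lambda(N_x)$ is encoded by the $\BUC^k$ weights $\chi_i$ together with the pointwise frames, not by derivatives of $\lambda$. The immersed (non-embedded) case is handled automatically since both $\pi$ and the $\chi_i$ are defined on the abstract manifold $M$, so $\lambda$ lands in $M\times\R^{\bar n}$ where $M$ is the abstract domain of $\iota$.
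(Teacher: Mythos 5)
Your construction follows the same path as the paper's: a uniformly locally finite cover, a square-sum partition of unity, a finite coloring of the overlap graph to bound the target dimension, local trivializing frames of $N$, the formula $\lambda = \sum_i \chi_i\,(\text{frame coefficients in the $c(i)$\ndash th slot})$, and the key observation that the orthogonal projection onto $\lambda(N)$ depends only on the $\BUC^k$ cutoffs $\chi_i$ (and constant matrices), hence gains one degree of regularity over $\lambda \in \BUC^{k-1}$. The one small imprecision is the claim of \emph{exact} fiberwise isometry via parallel-transported orthonormal frames: parallel transport of $\T Q$ along curves in $M$ does not preserve the normal subbundle $N$, so the $f_i^j(x)$ obtained this way lie in $\T_x Q$ but generally not in $N_x$, and either one must project them back onto $N$ (as the paper does implicitly via $\tau_{x_i}$, cf.\ws Corollary~\ref{cor:split-coordtrans}) or accept that the frames are only approximately orthonormal in $N_x$; either way $\lambda$ is a quasi-isometry rather than an isometry on $N_{\le\eta}$, which is all the theorem requires.
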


Note that $\lambda$ only has smoothness $C^{k-1}$ since
$N \in C^{k-1}$ is the normal bundle of $M \in C^k$. The image bundle
$\lambda(N) \subset \barN$ has smoothness $C^k$, though, since its
construction only involves the immersion $M \to Q$. This increase of
smoothness is possible because we do not view $\lambda(N)$ as a normal
bundle with respect to the differentiable structure of $Q$ anymore.
This can be compared to the remark
in~\cite[p.~205]{Fenichel1971:invarmflds} and the reference
to~\cite[Lem.~23]{Whitney1936:diffmflds} therein.

The idea of the proof is to use normal coordinate charts of $Q$
covering $M$ to construct local
trivialization maps of $N$. In such charts $B(x;\delta)$, we have,
from Definition~\ref{def:unif-imm-submfld},
$M_{x,\delta} = \exp_x\big(\Graph(h_x)\big) \subset M$ and
trivialization maps $\tau_x$ for the vector bundle trivialization
diagram
\begin{equation}\label{eq:bundle-triv}
  \begin{aligned}
    \xymatrix@R=4em@C=1em{
      *+[l]{N \supset N|_{M_{x,\delta}}} \ar[d]_{\pi} \ar[r]^-{\tau_x} &
      *+[r]{M_{x,\delta} \times N_x}     \ar[dl]^{p_1} \\
      *+[l]{M \supset M_{x,\delta}}
    }
  \end{aligned}
\end{equation}
Then we take a uniformly locally finite cover of $M$ by sets
$M_{x_i,\delta}$. The trivializations on each $M_{x_i,\delta}$ induce
a spanning set of sections, i.e.\ws a frame. Using the uniformity of the
cover, we can globally glue these frames together to
obtain $\rank(\barN) = \bar{n} = (K \tp 1)\,\rank(N)$. Here, $K$
is the maximum number of overlapping charts in the cover. This
identifies $\lambda(N)$ as the subbundle of $\barN = M \times \R^{\bar{n}}$
spanned by these glued frames.

\begin{proof}
  Let $\delta$ be $Q$\ndash small as in Definition~\ref{def:M-small},
  as well as sufficiently small such that $M$ is given as the graph of
  $h_\bullet\colon \T_\bullet M \to N_\bullet$ in normal coordinates
  as in Definition~\ref{def:unif-imm-submfld}. For any $x \in M$ we
  have a trivialization map
  \begin{equation}\label{eq:loc-triv-N}
    \tau_x = (\exp_x,p_2)\circ\D\exp_x^{-1}
    \colon N|_{M_{x,\delta}} \diffto M_{x,\delta} \times N_x,
  \end{equation}
  where we canonically identified $\T(\T_x Q) \cong (\T_x Q)^2$ and
  apply $\exp_x$ only on the base $\T_x Q$ and $p_2$ on the fibers of
  $\T(\T_x Q)$. In a normal coordinate representation (see
  page~\pageref{fig:normcoord-submfld},
  Figure~\ref{fig:normcoord-submfld} on the right) this just means
  that we project the normal fiber $N_{x'}$ at any point
  $x' = \exp_x(\xi,h_x(\xi)) \in M_{x,\delta}$ onto $N_x$. By
  Corollary~\ref{cor:split-coordtrans} this projection is
  approximately orthogonal and bounded away from non-invertibility for
  small $\delta$, hence $\tau_\bullet \in \BUC^{k-1}$ and it is a
  quasi-isometry, but only on a finitely sized neighborhood
  $N_{\le\eta}|_{M_{x,\delta}}$ since it acts linearly on the fibers
  of $N|_{M_{x,\delta}}$. We then choose a uniformly locally finite
  cover $\bigcup_{i \ge 1} M_{x_i,\delta}$ of $M$ such that the sets
  $M_{x_i,\delta/2}$ already cover $M$.

  Next, we prove the existence of a finite set of $\BUC^{k-1}$
  sections that everywhere span $N$. Let $G = (V,E)$ be the (possibly
  infinite) graph whose vertices are sets in the cover,
  $V = \{M_{x_i,\delta}\}_{i \ge 1}$, and edges are added between
  overlapping sets, i.e.
  $E = \set{(A,B) \in V^2}{A \cap B \neq \emptyset}$. Each set in
  the cover overlaps at most $K$ other sets, so the maximal
  degree of $G$ is bounded by $K$. Therefore, we can `color' the
  vertices of $G$ with numbers $\{0,\ldots,K\}$ such that no two
  connected vertices have the same number. Sequentially for each
  $i \ge 1$, set the number of vertex $i$ to one of the numbers
  $\{0,\ldots,K\}$ that is not already taken by its neighbors. We thus
  obtain a map $c\colon \N \to \{0,\ldots,K\}$ such that each preimage
  $c^{-1}(k)$ labels a collection of mutually disjoint sets of the
  cover.

  Let $n$ denote the rank of $N$ and let
  $\barN = M \times \R^{\bar{n}}$ be a trivial bundle with rank
  $\bar{n} = n\,(K+1)$. On each $M_{x_i,\delta}$ we have an orthogonal
  frame $e_i$ of $n$ sections that span $N$, induced by the local
  trivialization, while on $\barN$ we have the global orthogonal frame
  $\bar{e}$ of standard unit sections. The latter can also be viewed
  as a $(K \tp 1)$\ndash tuple of $n$\ndash frames
  $\{\bar{e}_k\}_{0 \le k \le K}$ on $\R^{\bar{n}} = (\R^n)^{K+1}$.
  Since all spaces have (the standard Euclidean) inner products, the
  dual frames can be canonically identified as the inverse
  $e_i^* = e_i^{-1}\colon N_{x_i} \to \R^N$ and a projection
  $\bar{e}_k^* = p_k\colon \R^{\bar{n}} \to \R^n$ onto the $k$\th
  $n$\ndash tuple of all $\bar{n}$ coordinates respectively.
  Let the functions $\chi_i$ be a square-sum partition of unity
  subordinate to the cover according to
  Corollary~\ref{cor:part-squared-unity} and define the embedding
  \begin{equation}\label{eq:embed-triv-bundle}
    \lambda\colon N_{\le\eta} \embedto \barN_{\le\eta}
           \colon (m,\nu) \mapsto
           \sum_{i \ge 1} \chi_i(m)\,\bar{e}_{c(i)}\,e_i^*\,\tau_{x_i}(m,\nu).
  \end{equation}
  This mapping is $\BUC^{k-1}$ as a composition of such maps (and can
  be extended, albeit non-boundedly so, to a map $N \embedto \barN$).
  The $\tau_{x_i}$ are quasi-isometries, while
  $\bar{e}_{c(i)}\,e_i^*$ is isometric on each $M_{x_i,\delta}$. Each
  frame $\bar{e}_{c(i)}$ is orthogonal to the frame of any overlapping
  set $M_{x_{j},\delta}$ since $c(j) \neq c(i)$ and the $\chi_i$
  squared sum to one. Thus $\sum_i \chi_i\,\bar{e}_{c(i)}\,e_i^*$ is
  an isometry, and so $\lambda$ is a quasi-isometry.

  Let $p$ and $p^\perp = \Id - p$ denote the projections from
  $\barN$ onto $N$ and $N^\perp$, respectively. One can verify that
  \begin{equation}\label{eq:proj-N}
    p\, = \sum_{i,j \ge 1}
          \chi_i\,\chi_j\,\bar{e}_{c(i)}\,\bar{e}_{c(j)}^*
  \end{equation}
  is the projection onto $N$ by noting that $\lambda(N)$ equals the
  image of $\sum_{i \ge 1} \chi_i\,\bar{e}_{c(i)}$, while the
  identities
  \begin{equation*}
    \bar{e}_k^*\,\bar{e}_l = \delta_{kl}
    \qquad\text{and}\qquad
    \sum_{i \ge 1} \chi_i^2 = 1
  \end{equation*}
  can be used to show that $p^2 = p$. Formula~\ref{eq:proj-N} shows
  that both $p,p^\perp \in \BUC^k$, hence the splitting
  $\barN = \lambda(N) \oplus N^\perp$ is $\BUC^k$.
\end{proof}

Next, we must extend a vector field $v$ on $N$ to the larger bundle
$\barN$. There are additional directions along the fibers of
$N^\perp$ and the extended vector field $\bar{v}$ must be such that it
is normally hyperbolic in these directions as well. On the other hand,
the uniform boundedness of $v$ must be preserved. We do not assume
here that $M$ is the exact invariant manifold, since these results
shall be applied after application of
Theorem~\ref{thm:unif-smooth-submfld}, which has smoothed and slightly
altered $M$ such that it is not the original NHIM anymore. The
extension $\bar{v}$ will keep $N$ invariant and is identical to $v$ on
$N$, so in the end, we can conclude that the perturbed manifold is
contained in $N$ and restrict to the original setting again.

\begin{lemma}[Normally hyperbolic extension of a vector field]
  \label{lem:triv-bundle-vf-ext}
  Let $\lambda\colon N \embedto \barN = M \times \R^{\bar{n}}$ be a
  trivializing embedding of vector bundles as in
  Theorem~\ref{thm:normalbundle-triv-emb} and let
  $v \in \BUC^{l,\alpha}$ be a vector field on $N$ with
  $l+\alpha \le k-2$. Let $\barN|_{\le\eta}$ be the restriction to
  some radius $\eta > 0$. Then $v$ can be extended to a vector field
  $\bar{v}$ on $\barN$, such that $\bar{v}$ is $\BUC^{l,\alpha}$ on
  $\barN_{\le\eta}$, leaves $N$ invariant, and contracts at a given
  exponential rate $\rho < 0$ along the fiber direction of\/ $N^\perp$
  towards $N \subset \barN$.
\end{lemma}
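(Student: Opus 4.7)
The idea is to build $\bar{v}$ as the sum of two terms: one that is simply the pushforward of $v$ evaluated after projecting onto $\lambda(N)$, and a second term that is a linear vertical field in $N^\perp$ providing the desired rate of contraction. Using the projections $p$ and $p^\perp = \Id - p$ from Theorem~\ref{thm:normalbundle-triv-emb}, and the canonical trivial-bundle identification $T_{(m,w)}\bar{N} \cong T_m M \oplus \R^{\bar{n}}$ valid at every fiber point $w$, I would set
\begin{equation*}
  \bar{v}(m,w) \,:=\, (\lambda_* v)\big(m,\,p_m(w)\big) \,+\, \big(0,\,\rho\cdot p^\perp_m(w)\big),
\end{equation*}
where the first term lives in $T_m M \oplus \lambda(N_m)$ at the projected point and is transported to $(m,w)$ via the flat fiber structure, and the second term is the purely vertical Euclidean field in $N^\perp_m$.

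Next I would verify the three required properties. For invariance of $N$: on $\lambda(N)$ we have $p^\perp=0$ and $p_m(w)=w$, so $\bar{v}|_{\lambda(N)} = \lambda_* v$, which is tangent to $\lambda(N)$ because $v$ is a vector field on $N$; hence the flow of $\bar{v}$ preserves $\lambda(N)$. For the contraction rate: at any point of $\lambda(N)$, differentiating $\bar{v}$ in an $N^\perp$ direction $\delta w^\perp$ annihilates the first term (because $p_m(\delta w^\perp)=0$) and returns $\rho\cdot\delta w^\perp$ from the second term, so the linearization along $N^\perp$ at $N$ is precisely $\rho\cdot\Id_{N^\perp}$ and thus the flow in the $N^\perp$ fiber contracts at rate $e^{\rho t}$, as required.

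For the uniform smoothness on $\bar{N}_{\le\eta}$: Theorem~\ref{thm:normalbundle-triv-emb} gives $p,p^\perp\in\BUC^k$, while $\lambda\in\BUC^{k-1}$ and $\lambda|_{N_{\le\eta}}$ is a quasi-isometry, so $\lambda^{-1}$ restricted to $\lambda(N_{\le\eta})$ is $\BUC^{k-1}$ as well. The second term is linear in $w$ with coefficient $p^\perp\in\BUC^k$, hence $\BUC^\infty$ in $w$ and $\BUC^k$ in $m$ on $\bar{N}_{\le\eta}$. The first term is the composition of the $\BUC^k$ projection with the pushforward $\lambda_* v$, and since $D\lambda\in\BUC^{k-2}$ multiplies $v\circ\lambda^{-1}$, the pushforward has regularity $\min(k-2,\,l+\alpha)=l+\alpha$ by the hypothesis $l+\alpha\le k-2$. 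Combining these using the chain rule together with Lemma~\ref{lem:holder-product} to handle the $\alpha$-H\"older factor yields $\bar{v}\in\BUC^{l,\alpha}$ on $\bar{N}_{\le\eta}$ uniformly.

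The main technical obstacle is the smoothness accounting: because the normal bundle $N$ inherits one fewer derivative than $(Q,g)$, and the trivializing embedding $\lambda$ is only $\BUC^{k-1}$ rather than $\BUC^k$, the pushforward $\lambda_* v$ carries the regularity of $D\lambda$, which is exactly $\BUC^{k-2}$. The numerical assumption $l+\alpha\le k-2$ is precisely what is needed to ensure this loss does not propagate past the required smoothness of $v$ itself. Everything else, including the uniform continuity moduli and bounds, follows from the uniformity built into the projections and the quasi-isometry property of $\lambda$, so no further cut-off or globalization is needed provided one restricts attention to $\bar{N}_{\le\eta}$.
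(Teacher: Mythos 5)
The construction you propose is sound (and essentially equivalent to the one in the paper for what matters), but the verification of the exact contraction rate along $N^\perp$ has a genuine gap. You check only that $\D\bar{v}(y)\cdot\delta w^\perp = \rho\,\delta w^\perp$ for $y\in N$ and $\delta w^\perp\in N^\perp_{\pi(y)}$, and then assert that "thus the flow in the $N^\perp$ fiber contracts at rate $e^{\rho t}$." This does not follow directly, because $N^\perp_m$ (and $\lambda(N_m)$) are not flat subbundles of $\barN = M\times\R^{\bar{n}}$: both rotate as $m$ moves. In particular your phrase "the first term lives in $T_m M\oplus\lambda(N_m)$ at the projected point" is not correct; the vertical component of $\lambda_* v$ at $(m,y)\in\lambda(N)$ generically has a nonzero $N^\perp_m$-part (precisely $p^\perp_m\big((\lambda_*v)_Y\big)=-(\partial_m p^\perp)(\dot m)(y)$, forced by tangency to the rotating subbundle $\lambda(N)$). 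For the same reason, the linearized flow at the zero section does not preserve the $N^\perp$ subbundle: if you integrate $\dot\eta = \D\bar{v}(y_t)\cdot\eta$ starting with $\eta(0)\in N^\perp_{m_0}$, the vector drifts out of $N^\perp_{m_t}$ because $p^\perp_{m_t}$ changes with $t$. Knowing the linearization has "eigenvalue $\rho$" pointwise is therefore not the same as knowing $\norm{p^\perp_{m_t}w_t}$ decays like $e^{\rho t}$.

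What actually makes your construction work is a calculation you did not carry out. Writing $z=p^\perp_m(w)$ and $y=p_m(w)$, one finds along a $\bar{v}$-integral curve that $\dot z = (\partial_m p^\perp)(\dot m)(z) + \rho z$, where the first term arises from the rotation of the splitting. The crucial observation is that this term is skew on $N^\perp$: from $p^\perp p^\perp = p^\perp$ one gets $p^\perp(\partial p^\perp)p^\perp = 0$, hence $\langle z, (\partial_m p^\perp)(\dot m)(z)\rangle = 0$. This gives $\tfrac{1}{2}\tfrac{d}{dt}\norm{z}^2 = \rho\norm{z}^2$ exactly, which is the statement you want. The paper sidesteps this algebra by building $\hat{v}$ as the horizontal lift of $v$ with respect to the pullback of the orthogonal connection $\nabla^\perp = p^\perp\nabla$ on $N^\perp$; horizontality means the $\hat{v}$-flow moves fibers by $\nabla^\perp$-parallel transport, which is an isometry by construction, so $\hat{v}\cdot\norm{z}=0$ is manifest and adding the Euler field contributes precisely the rate $\rho$. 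Your direct definition is a perfectly good alternative (and arguably more concrete), but you need either that explicit skew-symmetry computation or the connection-theoretic reformulation to close the argument; the linearization check alone is not enough. The smoothness accounting in your third paragraph is essentially the same as the paper's and is fine.
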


To extend $v$ to a vector field $\bar{v}$ on $\barN$ with the
required properties, we must do two things. First of all, $v$ must be
extended from $N$ through $\lambda$ to the whole of
$\lambda(N) \oplus N^\perp$ and secondly, a
normal component along the fibers of $N^\perp$ must be added to make
$\bar{v}$ contracting, thus normally hyperbolic in that direction. The
idea can be expressed in local coordinates
$(m,y,z) \in \lambda(N) \oplus N^\perp$ as
\begin{equation*}
  \bar{v}(m,y,z)
  = \hat{v}(m,y,z) + v^\perp(m,y,z)
  = \big( v(m,y) , \rho\,z \big),
\end{equation*}
where $\hat{v}(m,y,z) = v(m,y)$ points `horizontally' along $\lambda(N)$ and
$v^\perp(m,y,z) = \rho\,z$ is the `vertical' component along the $N^\perp$
fibers. By construction, the latter has the required contraction
property in the $N^\perp$ direction, while it preserves
$\lambda(N) \oplus \{ 0 \}$ as an invariant manifold. We shall make this
intuitive idea rigorous by introducing an appropriate bounded
connection to lift $v$ to $\hat{v}$ for $z \neq 0$; the second term
$v^\perp$ is canonically defined.

\begin{proof}
  The embedding map $\lambda$ is a quasi-isometry and of class $\BUC^{k-1}$, hence
  the pushforward $\lambda_*(v) = \D\lambda \cdot v\circ\lambda^{-1}$
  is a $\BUC^{l,\alpha}$ vector field on $\lambda(N) \subset \barN$.
  From now on we identify $N$ with $\lambda(N)$ as well as $v$ with
  its pushforward.

  Let $g$ be the standard Euclidean metric on $\barN$ and $\nabla$
  the compatible, trivial, flat connection. The restricted metric
  $g^\perp = g|_{N^\perp}$ is preserved by the connection
  $\nabla^\perp = p^\perp\cdot\nabla$.
  We create the pullback bundle
  \begin{equation*}
    E = \pi_N^*(N^\perp)
      = \set[\big]{(y,z) \in N \times N^\perp}{\pi_N(y) = \pi_{N^\perp}(z)}
    \cong N \oplus N^\perp = \barN.
  \end{equation*}
  Note that we identify this pullback of $N^\perp$ along the
  projection $\pi_N\colon N \to M$ with the vector bundle
  $p\colon \barN \to N$, that is, we view $N$ as the base manifold
  and $\barN$ as bundle over $N$ with fibers
  $\pi_N^{-1}(y) = N^\perp_{\pi_N(y)}$. We naturally endow $E$ with
  the pullback connection $\hat{\nabla} = \pi_N^*(\nabla^\perp)$. With
  this connection, $v$ can be lifted to a unique vector field
  $\hat{v} \in \vf(\barN)$ that is horizontal along $N$ on
  $E \cong \barN$, and thus the flow of $\hat{v}$ preserves the norm
  along the fibers of $E$, that is, $\hat{v} \cdot \norm{\slot}_E = 0$.
  More heuristically, we can say that the pullback $\pi_N\colon N \to
  M$ introduces a trivial additional base coordinate $y \in N_m$ to
  the bundle $\pi_{N^\perp}\colon N^\perp \to M$.

  To prove that $\hat{v} \in \BUC^{l,\alpha}$, we first recover an
  explicit representation of the Christoffel symbols of $\hat{\nabla}$
  in terms of trivial coordinates on
  $E \cong \barN = M \times \R^{\bar{n}}$, and then a representation
  for the lifted vector field $\hat{v}$.
  Let $s \in \Gamma(\barN),\, s \equiv s_0 \in \R^{\bar{n}}$ be a
  constant section. Let $m \in B(m_0;\delta)$ denote normal
  coordinates in $M$. Define
  $s^\perp = p^\perp \cdot s \in \Gamma(N^\perp)$ and
  $\hat{s} = \pi_N^*(s^\perp) \in \Gamma(E)$. Let $\hat{X} \in \T N$ a
  tangent vector in the base of $E$ and
  $X = \D\pi_N(\hat{X}) \in \T M$. Then we find for the covariant
  derivative $\hat{\nabla}$ on $E$
  \begin{equation*}
    \hat{\nabla}_{\hat{X}}\, \hat{s}
    = \pi_N^*\big(p^\perp \cdot \nabla_X (p^\perp \cdot s)\big)
    = \pi_N^*\big(p^\perp \cdot X^i \Big[
         p^\perp\,\pder{s}{m^i} + \pder{p^\perp}{m^i}\,s \Big]\big).
  \end{equation*}
  We read off that the Christoffel symbols are given by
  $p^\perp\,\pder{p^\perp}{m^i}\,(\D\pi_N)^i$, so they are
  $\BUC^{k-1}$. The horizontal lift
  \begin{equation}\label{eq:ext-v-horizontal}
    \hat{v}(m,y,z) = v(m,y) - p^\perp \cdot (\pder{p^\perp}{m^i}\,z)(\D\pi_N\,v(m,y))^i,
  \end{equation}
  then, is also $\BUC^{l,\alpha}$ since all functions involved are at
  least $\BUC^{l,\alpha}$ in these coordinates, and $z$ is bounded on
  $\barN_{\le\eta}$.

  We define $v^\perp$ as the Euler vector field along the fibers of
  $\barN \cong E$, taking values in $\Ver(E)$. Each fiber
  $E_{(m,y)} = N^\perp_m$ is a linear space, so the tangent space at
  any point is canonically identified with the fiber itself, which
  allows us to canonically define
  \begin{equation}\label{eq:ext-v-vertical}
    v^\perp\colon \barN \to \T \barN
           \colon (m,y,z) \mapsto \rho\,z \in \T_z N^\perp_m = \Ver(E)_{(m,y,z)}.
  \end{equation}
  This vector field leaves $N \subset \barN$ invariant, while
  generating a flow that attracts towards $N$ at the exponential rate
  $\rho < 0$. It is clear that $v^\perp \in \BUC^k$ for any $k$ when
  $z$ is bounded on $\barN_{\le\eta}$.

  We conclude that the vector field $\bar{v} = \hat{v} + v^\perp$
  indeed leaves $N$ invariant. Since $\hat{v}$ is neutral in the fiber
  directions of $E$ and $v^\perp$ contracting at rate $\rho$, it
  follows that $\bar{v}$ is contracting with rate $\rho$ as well. The
  combined vector field $\bar{v}$ is defined in terms of $v$ and other
  functions that are all at least $\BUC^{l,\alpha}$, hence
  $\bar{v} \in \BUC^{l,\alpha}$.
\end{proof}

\section{Reduction of a NHIM to a trivial bundle}
\label{sec:BG-reduction}

Having set up the theory of bounded geometry spaces, we are finally in
a position to reduce a general normally hyperbolic system to the
setting of a trivial bundle. That setting is required to apply our
basic persistence Theorem~\ref{thm:persistNHIMtriv} for NHIMs. Let
$M \in \BUC^{k,\alpha}$ with $k \ge 2$ and $0 \le \alpha \le 1$ be a
uniformly immersed or embedded submanifold in $(Q,g)$ of bounded
geometry and furthermore assume that $M$ is an $\fracdiff$\ndash NHIM with
$\fracdiff = k+\alpha$ for the vector field $v \in \BUC^{k,\alpha}$ on $Q$.

\begin{remark}
  The bounded smoothness requirement $k \ge 2$ is dictated by
  Theorem~\ref{thm:tubular-neighbhd}. It is not present in the compact
  case where the normal bundle can be ``jiggled
  slightly''~\cite[Prop.~2]{Fenichel1971:invarmflds} to make it
  sufficiently smooth to model a $C^k$ flow for $k \ge 1$.
  Hypotheses~2 and~3 in~\cite[p.~987]{Bates1999:persist-overflow}
  require similar conditions in the noncompact setting in Banach
  spaces, see also the discussion after Corollary~\ref{cor:persistNHIM-Rn}
  and Remark~\ref{rem:loss-smoothness}.
  I have not investigated in detail whether the $\BC^2$
  requirement is necessary, or if $M \in \BUC^1$ could be modeled as
  a sufficiently small section of the normal bundle of a smooth
  approximate manifold.
\end{remark}

We reduce this system to a trivial bundle in the following steps:
\begin{enumerate}
\item approximate $M$ by a smoothed manifold $M_\sigma$;
\item construct a tubular neighborhood of $M$ in the normal
  bundle $N$ of $M_\sigma$;
\item embed $N$ into a trivial bundle
  $\barN = M_\sigma \times \R^{\bar{n}}$, and construct an extended,
  normally hyperbolic vector field $\bar{v}$;
\item after application of the basic persistence theorem in the
  enlarged bundle, push the results to the original setting and
  conclude that $M$ persists.
\end{enumerate}

\begin{proof}[Proof of Theorem~\ref{thm:persistNHIMgen}]
Assume the hypotheses of the theorem. First,
Theorem~\ref{thm:unif-smooth-submfld} gives a smooth, $C^k$\ndash close approximation
$M_\sigma \in \BUC^l$ of $M$, where the choice $l = k + 10$
suffices and $\sigma > 0$ will be fixed later. The $C^{k,\alpha}$
bounds of $M_\sigma$ are uniformly close to those of $M$ for all
$\sigma$ small. Then, Theorem~\ref{thm:tubular-neighbhd} says that
there exists a tubular neighborhood
$\phi\colon N_{\le\eta} \diffto B(M_\sigma;\eta)$ where the size
$\eta > 0$ depends only on the $C^2$ bounds of $M_\sigma$. These
bounds are of the same order as those of $M$, independent of $\sigma$.
Hence, we can choose $\sigma$ so small that
$\norm{M_\sigma - M}_k \le \sigma \le \frac{\eta}{2}$ and the
neighborhood $B(M;\eta/2)$ is fully within the tubular neighborhood
$N_{\le\eta}$ of $M_\sigma$. The map $\phi$ is a $C^{l-1}$ bounded
(local) diffeomorphism and a quasi-isometry, so by the
reasoning\footnote{%
  This is similar to Fenichel's argument
  in~\cite[p.~203]{Fenichel1971:invarmflds} that normal hyperbolicity
  is independent of a choice of metric when $M$ is compact.%
} before Definition~\ref{def:quasi-isometry}, a pullback by $\phi$
does not change the normal hyperbolicity growth rates of the vector
field $v$. The bounded continuous splitting~\ref{eq:NHIM-split} of
$\T_M Q$ is also preserved.

Next, as a result of Theorem~\ref{thm:normalbundle-triv-emb}, $N$ is
embedded into the trivial bundle
\begin{equation*}
  \lambda\colon N \embedto \barN = M_\sigma \times \R^{\bar{n}} = N \oplus N^\perp.
\end{equation*}
The embedding is a quasi-isometry and so preserves
hyperbolicity properties of $v$. The extended vector field
$\bar{v} \in \BUC^{k,\alpha}$ on $\barN_{\le\eta}$ is constructed in
Lemma~\ref{lem:triv-bundle-vf-ext} as a lift of $v$, so the flow
preserves $N_{\le\eta}$ and intertwines with the projection onto $N$,
while in the perpendicular direction along the fibers of $N^\perp$ it
has the same normal hyperbolicity properties as $v$. Boundedness of
the invariant splitting $\T_M Q = \T M \oplus N^+ \oplus N^-$ is also
preserved under these quasi-isometries. The additional
directions along $N^\perp$ are stable and invariant, and have bounded
projections by construction. Thus, $M$ is an $\fracdiff$\ndash NHIM for
$\bar{v}$ as well.

The invariant manifold $M$ is given by the graph of a section
$h \in \Gamma(N) \subset \Gamma(\barN)$, and from
Theorem~\ref{thm:unif-smooth-submfld} it follows that
$\norm{h}_k \le \sigma$ while $h \in \BUC^{k,\alpha}$.
By Lemma~\ref{lem:boundgeom-submfld}, $X = M_\sigma$ has bounded
geometry of order $l - 2 = k + 8$, which is sufficiently smooth for
the conditions of Theorem~\ref{thm:persistNHIMtriv}, while
$Y = \R^{\bar{n}}$ is clearly a Banach space. Hence, we are in the
trivial bundle setting, and all conditions are satisfied.

A small perturbation of $v$ in the original setting in $Q$ corresponds
to a small perturbation of $\bar{v} \in \vf(\barN)$, while
$N_{\le\eta}$ is preserved under the flow by construction. Therefore, after
application of Theorem~\ref{thm:persistNHIMtriv} we recover a unique
persistent invariant manifold $\tilde{M}$ and by construction
$\tilde{M} = \Graph(\tilde{h}) \subset N$, so we can restrict
the system to $N$. Then it can be transferred back to $Q$ under the
quasi-isometries of the embedding
$\lambda\colon N \embedto \barN$ and the tubular neighborhood map
$\phi\colon N_{\le\eta} \to B(M;\eta) \subset Q$.

All size estimates can be transferred between the settings (with
bounded factors) due to the near isometry and uniform $C^{k,\alpha}$
boundedness of $\phi$ and $\lambda$. We conclude that $\tilde{M}$ is a
$\BUC^{k,\alpha}$ submanifold of $Q$ for appropriate estimates
$\sigma, \epsilon$, and $\delta$, where $\sigma$ must be chosen
sufficiently small as well to make $\norm{\tilde{M} - M}_{k-1}$
small.


This completes the reduction from the general setting in bounded
geometry to that of a trivial bundle and proves
Theorem~\ref{thm:persistNHIMgen}.
\end{proof}


\chapter{Persistence of noncompact NHIMs}\label{chap:persist}

This chapter contains the main proof of persistence of noncompact
normally hyperbolic invariant manifolds, formulated in
Theorem~\ref{thm:persistNHIMtriv}. This theorem is formulated
in a specific setting: we assume that the invariant manifold $M$
is (nearly) the zero section of a trivial vector bundle. This is a
slightly more general formulation than
in~\cite{Henry1981:geom-semilin-parab-PDEs,Sakamoto1990:invarmlfds-singpert}.
There, it is assumed that in a product $X \times Y$ of Euclidean (or
Banach) spaces, the invariant manifold $M$ is given as the graph of a function
$h\colon X \to Y$. We shall also assume that $Y$ is a vector space,
but we let $X$ instead be a Riemannian manifold that is
finite-dimensional and has bounded geometry.
In Chapter~\ref{chap:boundgeom} on bounded geometry, we extended
the result obtained here to a setting where $M$ is assumed to
be a general submanifold of a Riemannian manifold $(Q,g)$ that is
again finite-dimensional and of bounded geometry. We assume the basic
statements from Section~\ref{sec:BG-basic} to be known.

This chapter is organized as follows. First we state the two main
theorems; both the general version with $M$ a submanifold of $Q$ and
the $X \times Y$ trivial bundle version to be proved in this chapter.
We provide detailed remarks on these theorems and compare them to the
literature. Then we present an outline of the proof of
Theorem~\ref{thm:persistNHIMtriv}.
Section~\ref{sec:cpt-unif} presents some thoughts on replacing the
classical compactness by uniformity conditions, and presents examples
that indicate the necessity of various assumptions we impose.

In Section~\ref{sec:preparation} we transform the (still somewhat
geometrical) formulation of Theorem~\ref{thm:persistNHIMtriv} into a
more explicit setup suitable for analysis. In the subsequent section, we
prove (with \emph{relatively} little work) the existence and uniqueness of
the persistent manifold $\tilde{M} = \Graph(\tilde{h})$. It
automatically follows that $\tilde{h}$ is bounded and uniformly
Lipschitz.

The last sections are devoted to the tougher job of proving
$C^{k,\alpha}$ smoothness, exhausting the spectral gap. A formal
scheme is set up, and we work out the details for $C^1$ smoothness.
Higher, $C^k$ smoothness follows along the same lines by induction.
The addition of H\"older continuity to obtain $C^{k,\alpha}$
smoothness is included as a natural extension to (uniform) continuity
that slightly simplifies the spectral gap estimates. See the proof
outline and the introduction of Section~\ref{sec:NHIM-smoothness} for
more details.

\section{Statement of the main theorems}\label{sec:main-thms}

The main theorem on persistence was already formulated in the
introduction. We state it again to directly compare it to the
trivialized bundle version of Theorem~\ref{thm:persistNHIMtriv}.
The main theorem is reduced to this trivialized version in
Section~\ref{sec:BG-reduction}; in this chapter we shall prove the
latter version. Then we formulate corollaries of these theorems, both
to present simpler versions and to compare our result to well-known
results from the literature.

\index{normally hyperbolic invariant manifold!persistence}
\maintheorempersistNHIMgen

\index{trivial bundle}
\index{$\eta$ (tubular neighborhood size)}
\index{$r$}
\index{$v_\sigma,\,h_\sigma$}
\index{$\sigma$ (approximation parameter)}
\begin{theorem}[Persistence of noncompact NHIMs in a trivial bundle]
  \label{thm:persistNHIMtriv}
  Let $k \ge 2$, $\alpha \in \intvCC{0}{1}$ and $\fracdiff = k+\alpha$.
  Let $(X,g)$ be a smooth, complete, connected Riemannian manifold of
  bounded geometry and\/ $Y$ a Banach space. Let
  $v_\sigma \in \BUC^{k,\alpha}$ be a family of vector fields defined
  on a uniformly sized neighborhood of the zero-section in
  $X \times Y$ with family parameter $\sigma \in \intvOC{0}{\sigma_0}$.
  Let the submanifold $M_\sigma = \Graph(h_\sigma)$ be given as
  the graph of a function $h_\sigma \in \BUC^{k,\alpha}(X;Y)$ and let
  $M_\sigma$ be an $\fracdiff$\ndash NHIM with $\rank(E^+) = 0$
  for the flow defined by $v_\sigma$ where all estimates are uniform
  in $\sigma$ and additionally $\norm{h_\sigma}_2 \le \sigma$ holds.

  Then for each sufficiently small $\Ysize > 0$ there exist
  $\sigma_1,\,\delta > 0$ such that for any $\sigma \in \intvOC{0}{\sigma_1}$
  and any vector field $\tilde{v} \in \BUC^{k,\alpha}$ with
  $\norm{\tilde{v} - v_\sigma}_1 < \delta$, there is a unique
  submanifold $\tilde{M} = \Graph(\tilde{h})$,
  $\tilde{h}\colon X \to Y$, $\norm{\tilde{h}}_0 \le \Ysize$ such that
  $\tilde{M}$ is invariant under the flow defined by $\tilde{v}$.
  Moreover, $\tilde{h} \in \BUC^{k,\alpha}$ and
  $\norm{\tilde{h}}_{k-1}$ can be made arbitrary small by
  choosing $\norm{h}_k$ and $\norm{\tilde{v} - v_\sigma}_{k-1}$
  sufficiently small.
\end{theorem}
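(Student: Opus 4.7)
The plan is to implement the Perron variation-of-constants method in an appropriate scale of exponentially weighted Banach spaces of curves. First I would prepare the system. Since $\|h_\sigma\|_2 \le \sigma$, the bounded fiber translation $(x,y) \mapsto (x,y-h_\sigma(x))$ straightens $M_\sigma$ to the zero section $X \times \{0\}$ at a uniform $C^{k,\alpha}$ cost, and for $\sigma$ small it distorts the $C^1$ distance between $v_\sigma$ and $\tilde v$ by only a bounded factor. In these coordinates I would split $v = v_{\sx}(x,y)\,\partial_x + v_{\sy}(x,y)\,\partial_y$; invariance of the zero section under $v_\sigma$ gives $v_{\sigma,\sy}(x,0) = 0$, so I can write the vertical component as $v_{\sy}(x,y) = A(x)\,y + f(x,y)$ with $A(x) = D_y v_{\sigma,\sy}(x,0)$ (which generates $D\Phi_-^t$ along the base flow and therefore satisfies the stable growth bound with rate $\rho_-$), and nonlinearity $f$ with $f(x,0), D_y f(x,0)$ arbitrarily small in sup norm provided $\sigma$ and $\delta$ are small.

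Next I would set up the Perron fixed-point problem on the half-line $I = \R_{\le 0}$. For each base point $x_0 \in X$ I seek the unique pair of curves $(x(\cdot),y(\cdot))$ with $x(0)=x_0$ that is $\rho$-bounded in $B^\rho(I;\T X)\times B^\rho(I;Y)$ for some rate $\rho_M < \rho < -\rho_-/r$ chosen in the spectral gap, and define $\tilde h(x_0) = y(0)$. Following Henry's idea I write the contraction as a composition $T = T_- \circ T_+$: given a vertical curve $y(\cdot)$, the map $T_+$ produces the horizontal curve $x(\cdot)$ by integrating the (nonlinear, bounded) perturbed horizontal ODE backward from $x_0$, and $T_-$ returns the new vertical curve via Alekseev's nonlinear variation-of-constants formula (Appendix~\ref{chap:alekseev}),
\begin{equation*}
  t \;\mapsto\; \int_{-\infty}^{t} U^{t,\tau}\!\bigl(x(\cdot)\bigr)\;
      f\bigl(x(\tau),y(\tau)\bigr)\,d\tau,
\end{equation*}
where $U^{t,\tau}$ is the stable fiber propagator generated by $A$ along $x(\cdot)$. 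The integral converges in $B^\rho$ because $\rho > \rho_-$, and $T$ is a uniform contraction on a ball in $B^\rho(I;\T X)\times B^\rho(I;Y)$ of radius $\Ysize$ thanks to the $C^1$-smallness of $f$ and of $\tilde v - v_\sigma$. The fixed point defines $\tilde h$ uniquely; invariance of $\Graph(\tilde h)$ is built into the construction, and uniqueness among $\Ysize$-close graphs follows from the a~priori bound inherent in the contraction. Boundedness of $\tilde h$ is automatic, and uniform Lipschitz continuity (on manifold scales via Lemma~\ref{lem:loc-equiv-Mdist}) comes from the uniform-contraction dependence on the parameter $x_0$.

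For smoothness I would adapt the Vanderbauwhede--Van~Gils scale-of-spaces technique. Formally differentiating the fixed-point equation $j$ times, as in Appendix~\ref{chap:exp-growth} and equation~\ref{eq:deriv-perron-map}, one sees that the $j$th derivative of the fixed point curve lies in $B^{j\rho}$, and the spectral gap $\rho_- < -r\,\rho_M$ is exactly what makes the formal derivative of $T$ a contraction on each $B^{j\rho}\otimes$-space for $1 \le j \le k$. Joint convergence of $\tilde h$ together with all its formal derivatives is then obtained by induction on $j$ via the fiber contraction theorem of Hirsch--Pugh (Appendix~\ref{chap:fibercontr}), where continuity of the fiber contractions with respect to the base parameter is supplied by the Nemytskii operator estimates of Appendix~\ref{chap:nemytskii}. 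The H\"older refinement to $\BUC^{k,\alpha}$ uses Lemma~\ref{lem:holder-product} and a Nemytskii argument in $\BUC^{0,\alpha}$ applied to the top derivative only.

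The principal obstacle is carrying this smoothness induction through when the base $X$ is a Riemannian manifold rather than a linear space: higher derivatives of $\tilde h$ take values in tensor bundles over $X$ whose fibers at different points cannot be compared without a choice of transport, and the formal derivative of $T$ picks up curvature/holonomy defects when one differentiates the base flow. I would control these defects uniformly using the parallel-transport formulation of continuity from Proposition~\ref{prop:unif-partrans} together with the explicit holonomy-by-curvature estimate of Lemma~\ref{lem:bound-holonomy} and the bounded transition maps of Lemma~\ref{lem:bound-coordtrans}; only here is bounded geometry of $X$ (rather than just a uniform atlas) genuinely used. The last claim, that $\|\tilde h\|_{k-1}$ is small when $\|h_\sigma\|_k$ and $\|\tilde v - v_\sigma\|_{k-1}$ are, follows by reading off the a~priori bound produced by the same induction, together with the fact that for $\sigma = 0$ and $\tilde v = v_\sigma$ the fixed point is identically $h_\sigma$.
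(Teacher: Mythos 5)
Your overall architecture --- Perron method on $I=\R_{\le 0}$ in exponentially weighted curve spaces, a composed contraction $T=T_-\circ T_+$ in the spirit of Henry, the Vanderbauwhede--Van~Gils scale of spaces with the Hirsch--Pugh fiber contraction theorem, Nemytskii estimates for continuity, and parallel-transport/holonomy control of the formal tangent bundles over a bounded-geometry base --- is exactly the paper's. But your first preparatory step introduces a loss of one degree of smoothness that the paper goes to considerable lengths to avoid, and under that preparation you would obtain $\tilde h\in\BUC^{k-1,\alpha}$ rather than the claimed $\BUC^{k,\alpha}$.

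Specifically, the fiber translation $(x,y)\mapsto(x,y-h_\sigma(x))$ with $h_\sigma\in\BUC^{k,\alpha}$ is a $C^{k,\alpha}$ diffeomorphism, so the pushforward of $v_\sigma$ (which involves $\D h_\sigma$) is only $\BUC^{k-1,\alpha}$; likewise, your choice $A(x)=\D_y v_{\sigma,\sy}(x,0)$ is only $\BUC^{k-1,\alpha}$ since $\D_y v$ costs one derivative. Feeding a $\BUC^{k-1,\alpha}$ system into the Perron/fiber-contraction machinery can only yield a $\BUC^{k-1,\alpha}$ invariant graph, and this cannot be recovered afterward by adding back $h_\sigma$. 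This is precisely the trap flagged in Remark~\ref{rem:loss-smoothness} (and the cause of the loss in Sakamoto's work). The paper's Section~\ref{sec:preparation} circumvents it by \emph{not} straightening the manifold --- $M_\sigma$ is kept as the graph of the small $h_\sigma$, with $\norm{h_\sigma}_2\le\sigma$ used only to control the size of the nonlinearity $f(x,0)$ --- and by replacing the naive linearization coefficient with a convolution-smoothed $A\in\BC^k$ approximating $\D_y v_\sy\circ g$ (Lemma~\ref{lem:VB-section-smoothing}), so that both $A$ and $f=v_\sy-A\cdot y$ retain $\BUC^{k,\alpha}$ regularity. A secondary gloss in your proposal: the assertion that your $A$ \emph{generates} $\D\Phi^t_-$ and hence inherits the rate $\rho_-$ directly is not correct, since the stable bundle $E^-$ is not the vertical subbundle in general; the paper bridges this via a chain of small perturbations (from $\hat A=\D v_\sy\cdot\pi_{E^-\!,N}$, to $\D_y v_\sy\circ g$, to the smoothed $A$) and establishes the slightly degraded rates through Lemma~\ref{lem:pert-lin} and Propositions~\ref{prop:pert-x},~\ref{prop:pert-y}. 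Without that bridge the required uniform growth estimates for $\Psi_x$ are asserted rather than proved.
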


\begin{remark}\label{rem:persistNHIM}
Let us make some remarks on these theorems.
\begin{enumerate}
\item The spectral gap condition contained in
  Definition~\ref{def:r-NHIM} of $\fracdiff$\ndash normal
  hyperbolicity is essential to the proof. The $C^{k,\alpha}$\ndash
  smoothness result is optimal, see Section~\ref{sec:spectral-gap}.

\item In Theorem~\ref{thm:persistNHIMgen}, both $M$ and $\tilde{M}$
  are assumed to be (non-injectively) immersed according to
  Definition~\ref{def:unif-imm-submfld}. If $M$ is assumed uniformly
  embedded according to Remark~\ref{rem:embedding}, then $\tilde{M}$
  will be uniformly embedded again when $\delta$ is sufficiently
  small.

\item The additional family parameter $\sigma$ in
  Theorem~\ref{thm:persistNHIMtriv} is required to reduce
  Theorem~\ref{thm:persistNHIMgen} to this case. If the unperturbed
  manifold is given as $M = X \times \{ 0 \}$, i.e.\ws the zero
  section, then the family $v_\sigma$ can simply be taken constant and
  all $\sigma$ dependence can be dropped from the formulation.

\item We only obtain a $C^{k-1}$\ndash norm estimate for the
  perturbation distance of $\tilde{M}$ away from $M$, even though
  $\tilde{M} \in C^{k,\alpha}$ is preserved. This is due to a
  linearization along $Y$ and the smoothing convolution used to
  restore $C^{k,\alpha}$ smoothness after linearization. See
  Section~\ref{sec:preparation}, in particular
  remarks~\ref{rem:loss-smoothness} and~\ref{rem:Ck-small-optimal},
  for more details. I fully expect it to hold that $\tilde{M}$ and $M$
  are $C^{k,\alpha}$ close when $\norm{\tilde{v} - v}_{k,\alpha}$ is
  small.\index{smoothness!loss of}

\item\label{enum:persist-k-two}
  The minimum smoothness requirement $k \ge 2$ is a stronger
  assumption than $k \ge 1$ in the well-known compact case. This seems
  to be intrinsic to the noncompact case. If the spectral gap
  condition only holds for some $1 \le \fracdiff < 2$, then we can
  still obtain a perturbed manifold $\tilde{M}$. This manifold
  $\tilde{M}$ will generally not have better than $C^\fracdiff$
  smoothness, though.

\item We allow both values $\alpha = 0$ and $\alpha = 1$, where
  $\alpha = 0$ is considered an empty condition (besides the
  boundedness and uniform continuity). Thus, if $r = k+1$ satisfies
  the spectral gap condition~\ref{eq:spectral-gap}, then we can choose
  both $C^{k,1}$ or $C^{k+1,0}$ as resulting smoothness for
  $\tilde{M}$, if $M$ had the same smoothness. Thus, if $M$ was
  sufficiently smooth, then the choice $\tilde{M} \in C^{k+1,0}$
  yields the best result. Note, though, that by Rademacher's theorem,
  Lipschitz functions are differentiable almost everywhere, so the
  difference is not that big.

  Finally, it should also be noted that
  the spectral gap condition is a strict inequality on $r$, so if we
  can choose $r = k$ integer, then we can also find an $\alpha > 0$
  such that $r' = k + \alpha$ satisfies the spectral gap as well. This
  shows that in this context $\BUC^k$ `integer' smoothness really is a
  special case of $\BUC^{k,\alpha}$ `fractional' smoothness.

\item Both Riemannian manifolds $Q$ in
  Theorem~\ref{thm:persistNHIMgen} and $X$ in
  Theorem~\ref{thm:persistNHIMtriv} are assumed to be
  finite-dimensional; multiple results on bounded geometry crucially
  depend on this fact. On the other hand, we allow $Y$ to be an
  infinite-dimensional Banach space simply because everything
  naturally generalizes to that setting. Note that we do not allow
  semi-flows as
  in~\cite{Henry1981:geom-semilin-parab-PDEs,Bates2008:approx-invarmfld},
  so the case that $Y$ is infinite-dimensional may not be that useful.

\item\label{enum:persist-gen-NHIM}
  These results are weaker than those in the well-known compact
  case in a few aspects. First of all, we use a stricter notion of
  normal hyperbolicity, see Remark~\ref{rem:NHIM-ratios}. This seems
  to be a fundamental restriction of the Perron method; the more
  general definition of normal hyperbolicity is successfully applied
  to noncompact manifolds in~\cite{Bates2008:approx-invarmfld}. Secondly, we
  only include the stable normal bundle $E^-$. Adding the unstable
  bundle $E^+$ as well should be possible, see
  Section~\ref{sec:ext-full-NHIM} for more details.

\item\label{enum:persist-split}
  While we do prove that the NHIM persists into a new invariant
  manifold $\tilde{M}$, we do not prove that $\tilde{M}$ is again
  normally hyperbolic. I fully expect this to be true though: the
  perturbed flow satisfies slightly perturbed exponential growth
  conditions and the spectral gap is an open condition, so should be
  preserved under sufficiently small perturbations. The difficulty
  lies in proving that $\tilde{M}$ again has a continuous invariant
  splitting~\ref{eq:NHIM-split} with bounded projections. This is one
  possible reason for breakdown of normal
  hyperbolicity~\cite{Haro2006:hyperb-breakdown}.
\end{enumerate}
\end{remark}

These two theorems reduce to the corollaries formulated below, when
$M$ is compact or when $Q = \R^{m+n}$ with standard Euclidean metric
and $M = \R^m \times \{ 0 \}$. The statements then significantly
reduce in complexity, and are comparable to well-known results.

Firstly, the case that $M$ is compact. Then we can take a (pre)compact
neighborhood $B(M;\epsilon) \subset Q$ of $M$ and thus conclude that
bounded geometry holds on $\overline{B(M;\epsilon)}$, ignoring
irrelevant boundary problems. Any
$C^{k,\alpha}$ function on $\overline{B(M;\epsilon)}$ is automatically
$\BUC^{k,\alpha}$, so Theorem~\ref{thm:persistNHIMgen} reduces to the
following corollary. For simplicity we leave out $\alpha$\ndash
H\"older continuity and the $C^k$ distance estimate between $M$ and
$\tilde{M}$.
\begin{corollary}[Persistence of compact NHIMs]
  \label{cor:persistNHIMcpt}
  Let $k \in \Z_{\ge 2}$, let\/ $(Q,g)$ be a smooth Riemannian manifold
  and $v \in C^k$ be a vector field on $Q$. Let $M \in C^k$ be a
  connected, compact submanifold of\/ $Q$ that is $k$\ndash normally
  hyperbolic for the flow defined by $v$, with empty unstable bundle,
  i.e.\ws $\rank(E^+) = 0$.

  Then for each sufficiently small $\Ysize > 0$ there exists a
  $\delta > 0$ such that for any vector field $\tilde{v} \in C^k$ with
  $\norm{\tilde{v} - v}_1 < \delta$, there is a unique submanifold
  $\tilde{M}$ in the $\Ysize$\ndash neighborhood of\/ $M$, such that
  $\tilde{M}$ is diffeomorphic to $M$ and invariant under the flow
  defined by $\tilde{v}$. Moreover, $\tilde{M}$ is $C^k$.
\end{corollary}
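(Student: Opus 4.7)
The plan is to derive this corollary directly from Theorem~\ref{thm:persistNHIMgen} by observing that the compactness of $M$ makes all uniformity and bounded-geometry hypotheses automatic in a neighborhood where the analysis takes place. First, I would pick a precompact tubular neighborhood $U = B(M;\epsilon_0)$ of $M$ whose closure $\bar{U}$ is compact, using that $M$ is compact and contained in the (smooth, hence locally Euclidean) ambient $Q$. All the dynamics relevant for the persistence statement takes place in such a neighborhood: the persistent manifold $\tilde{M}$ will be sought within the $\Ysize$-neighborhood of $M$, so by shrinking $\Ysize$ if necessary we may assume $B(M;\Ysize) \subset U$.

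Next I would verify the hypotheses of Theorem~\ref{thm:persistNHIMgen} (with $\alpha = 0$ and $r = k$) restricted to $\bar{U}$. Since $\bar{U}$ is compact, the injectivity radius of the Riemannian manifold attains a positive minimum on it and the curvature together with its covariant derivatives attain finite maxima, so bounded geometry of any order holds on $\bar{U}$ (compare Example~\ref{exa:BG-manifolds}). Similarly, a $C^k$ vector field on $Q$ restricts to a $\BUC^k = \BUC^{k,0}$ vector field on $\bar{U}$ because continuous functions on a compact set are automatically bounded and uniformly continuous, and the derivatives up to order $k$ inherit this. The compact submanifold $M \in C^k$ likewise qualifies as a $\BUC^{k,0}$ uniformly embedded submanifold in the sense of Definition~\ref{def:unif-imm-submfld}: choose a finite cover of $M$ by normal coordinate patches in which $M$ is represented as a graph, and extract the uniform $C^k$ bounds on these graph functions from the finite cover.

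With these verifications in place, Theorem~\ref{thm:persistNHIMgen} applied on (a slight thickening of) $\bar{U}$, with $k \ge 2$ an integer and $\alpha = 0$ so that $\fracdiff = k$, yields the $\delta > 0$ and the unique $\BUC^{k,0}$ persistent submanifold $\tilde{M}$ in the $\Ysize$-neighborhood of $M$ for every $\tilde{v}$ with $\norm{\tilde{v} - v}_1 < \delta$. Because the conclusion is local and the $\Ysize$-neighborhood sits inside $U$, the behavior of $\tilde{v}$ outside $U$ plays no role in constructing $\tilde{M}$; only a $C^1$-small perturbation of $v$ in a neighborhood of $M$ is required, which is implied by $\norm{\tilde{v} - v}_1 < \delta$ on $Q$. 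Since $\tilde{M}$ lies uniformly close to $M$ and is the graph of a $\BUC^{k,0} = C^k$ section of the normal bundle, compactness of $M$ together with the closeness forces $\tilde{M}$ to be compact and diffeomorphic to $M$ (the section defines an explicit diffeomorphism via the tubular neighborhood map of Theorem~\ref{thm:tubular-neighbhd}).

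The only point needing a moment of care is translating ``$\BUC^{k,0}$'' back to plain ``$C^k$'' in the statement: on the compact neighborhood these classes coincide, and the uniform smallness of $\norm{\tilde{h}}_{k-1}$ guaranteed by Theorem~\ref{thm:persistNHIMgen} implies in particular that $\tilde{M}$ is $C^{k-1}$-close to $M$, a refinement we do not need to record in the simplified corollary statement. There is no genuine obstacle here; the entire content of the corollary is that the compact setting is a special case in which the bounded-geometry and uniform-continuity hypotheses of the main theorem are free.
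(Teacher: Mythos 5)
Your proposal is correct and follows essentially the same reduction the paper uses: restrict attention to a precompact tubular neighborhood of $M$, observe that compactness automatically supplies bounded geometry, $\BUC^{k,0}$ regularity of the vector field, and uniform immersion/embeddedness of $M$, and then invoke Theorem~\ref{thm:persistNHIMgen} with $\alpha = 0$ (and $\fracdiff = k$). The paper compresses this into one sentence (``take a precompact neighborhood $B(M;\epsilon)$, bounded geometry and $\BUC^{k,\alpha}$ hold automatically, ignoring irrelevant boundary problems''); you merely expand those verifications, and like the paper you wave away the completeness/boundary issue of working on $\bar{U}$ rather than a genuinely complete ambient manifold.
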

\index{comparison!of results}%
This corollary closely
resembles~\cite[Thm.~1]{Fenichel1971:invarmflds} in the absence of a
boundary $\partial M$ (a boundary is allowed when $M$ is overflowing invariant, see also
sections~\ref{sec:overflow-invar} and~\ref{sec:ext-overflow-invar}).
Note that our definition of normal hyperbolicity is less general (see
Remark~\ref{rem:NHIM-ratios}), and that we exclude unstable normal
directions and the case\footnote{%
  This was for technical reasons in the noncompact setting, see
  Remark~\ref{rem:persistNHIM},~\ref{enum:persist-k-two}, and could
  be repaired in the compact setting.%
} $k = 1$, while we do allow $M$ to be an immersed submanifold. The
persistence result of Hirsch, Pugh, and
Shub~\cite[Thm.~4.1~(f)]{Hirsch1977:invarmflds} is similar to that in
Fenichel's work; it additionally includes H\"older smoothness and
allows immersed submanifolds as well.

Secondly, the case of $M = \R^m \times \{ 0 \} \subset Q = \R^{m+n}$.
Again, $\R^{m+n}$ has bounded geometry with one trivial, global chart.
Thus, any object is $\BUC^k$ if it can (locally) be described by
$\BUC^k$ functions, but with common global bound and continuity
modulus. Then Theorem~\ref{thm:persistNHIMtriv} reduces to the
following corollary. We again suppress H\"older continuity and drop
the $\sigma$ parameter dependence, which was only relevant for the
reduction of Theorem~\ref{thm:persistNHIMgen}.
\begin{corollary}[Persistence of a trivial NHIM in Euclidean space]
  \label{cor:persistNHIM-Rn}
  Let $k \in \Z_{\ge 2}$. Let $v \in \BUC^k$ be a vector field on\,
  $\R^{m+n}$ and let $M = \R^m \times \{ 0 \}$ be a $k$\ndash NHIM
  for the flow defined by $v$, with empty unstable normal bundle.

  Then for each sufficiently small $\Ysize > 0$ there exists a
  $\delta > 0$ such that for any vector field $\tilde{v} \in \BUC^k$
  with $\norm{\tilde{v} - v}_1 < \delta$, there is a unique
  submanifold $\tilde{M} = \Graph(\tilde{h})$,
  $\tilde{h}\colon \R^m \to \R^n$, $\norm{\tilde{h}}_0 \le \Ysize$
  such that $\tilde{M}$ is invariant under the flow defined by
  $\tilde{v}$. Moreover, $\tilde{h} \in \BUC^k$ and
  $\norm{\tilde{h}}_{k-1}$ can be made arbitrary small by choosing
  $\norm{\tilde{v} - v}_k$ sufficiently small.
\end{corollary}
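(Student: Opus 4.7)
The plan is to derive this as an essentially immediate specialization of Theorem~\ref{thm:persistNHIMtriv}, so the task is really to verify that all hypotheses of the theorem hold trivially in the Euclidean setting. I would first observe that $\R^{m+n}$ with the standard Euclidean metric has bounded geometry of every order (Example~\ref{exa:BG-manifolds}), and that global Cartesian coordinates serve as normal coordinate charts. Under this identification, the intrinsic definition of $\BUC^k$ from Definition~\ref{def:unif-bounded-map} collapses to the familiar notion of globally uniformly $C^k$-bounded maps, matching the hypothesis $v \in \BUC^k$ in the statement. Likewise $X = \R^m$ is a smooth, complete, connected Riemannian manifold of bounded geometry and $Y = \R^n$ is a Banach space, so the ambient framework required by Theorem~\ref{thm:persistNHIMtriv} is in place.

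Next I would choose the data feeding the trivial-bundle theorem. Since the unperturbed manifold $M$ is literally the zero section, I would take the constant family $v_\sigma \equiv v$ and $h_\sigma \equiv 0$ for all $\sigma \in \intvOC{0}{\sigma_0}$ with any fixed $\sigma_0 > 0$. Then $M_\sigma = \Graph(h_\sigma) = M$ is independent of $\sigma$, so all uniformity-in-$\sigma$ assumptions are automatic, and the condition $\norm{h_\sigma}_2 \le \sigma$ holds trivially because $h_\sigma \equiv 0$. The hypothesis that $M_\sigma$ is an $\fracdiff$-NHIM with empty unstable bundle and $\fracdiff = k$ comes directly from the corollary's assumption. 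I would apply the theorem with $\alpha = 0$ (allowed by the theorem statement and Remark~\ref{rem:persistNHIM}.6), so that $\fracdiff = k + 0 = k$.

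Invoking Theorem~\ref{thm:persistNHIMtriv} with these choices then produces, for each sufficiently small $\Ysize > 0$, numbers $\sigma_1, \delta > 0$ and, for each $\tilde{v} \in \BUC^k$ with $\norm{\tilde{v} - v}_1 < \delta$, a unique invariant submanifold $\tilde{M} = \Graph(\tilde{h})$ with $\tilde{h} \colon \R^m \to \R^n$, $\tilde{h} \in \BUC^k$, and $\norm{\tilde{h}}_0 \le \Ysize$. Because the data is independent of $\sigma$, the role of $\sigma_1$ disappears. The final $C^{k-1}$-smallness statement transfers verbatim: the theorem asserts $\norm{\tilde{h}}_{k-1}$ can be made arbitrarily small by taking $\norm{h}_k$ and $\norm{\tilde{v} - v_\sigma}_{k-1}$ small, and in our application $\norm{h}_k = \norm{0}_k = 0$, so the smallness condition reduces to $\norm{\tilde{v} - v}_{k-1}$ (and hence $\norm{\tilde{v} - v}_k$) being sufficiently small, as claimed.

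There is essentially no obstacle here; the only thing to be alert to is the translation between the intrinsic bounded-geometry formalism of Chapter~\ref{chap:boundgeom} and the classical $\BUC^k$ notation on $\R^n$, and this translation is immediate because the identity chart is a global normal coordinate chart. In this sense the corollary is the minimal test case of the main theorem, and its purpose is largely illustrative, showing that the familiar Euclidean statement is subsumed.
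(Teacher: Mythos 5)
Your proposal is correct and matches the paper's own (brief) justification almost exactly: the paper notes that $\R^{m+n}$ has bounded geometry with a single global chart under which the intrinsic $\BUC^k$ notion becomes the classical one, sets $\alpha = 0$ to drop Hölder continuity, and invokes Remark~\ref{rem:persistNHIM} (item~iii) that for the zero section the family $v_\sigma$ may be taken constant, eliminating the $\sigma$ dependence — precisely the reductions you carry out. Your final observation that $\norm{h}_k = 0$ trivializes one half of the smallness hypothesis, and that the corollary's $\norm{\tilde v - v}_k$ condition dominates the theorem's $\norm{\tilde v - v}_{k-1}$, is also the correct reading of the slight over-statement in the corollary.
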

This theorem can be compared, for example,
to~\cite[Thm.~2.1]{Sakamoto1990:invarmlfds-singpert}. Sakamoto's
theorem is specifically targeted to singular perturbation problems.
His conditions are more specific and concrete: the invariant manifold
$M$ is assumed to consist of stationary points and normal
hyperbolicity is formulated in terms of the eigenvalues of normal
derivatives of the vector field at $M$. He starts with an invariant
manifold that is the graph of a nonzero function $h \in \BC^k$; this
he reduces to the zero graph case $M = \R^m \times \{ 0 \}$, while he
incurs a loss of one degree of smoothness, obtaining a $\BC^{k-1}$
persistent manifold and he requires $k \ge 3$,
see~\cite[p.~50]{Sakamoto1990:invarmlfds-singpert}. He does allow both
stable and unstable normal bundles.

In their series of papers~\cite{Bates1998:invarmflds-semiflows,%
  Bates1999:persist-overflow,Bates2008:approx-invarmfld}, Bates,
Lu, and Zeng obtained multiple results on noncompact NHIMs, including
a persistence result similar to mine. These results are in some senses
complementary, however. Most importantly, they work in Banach spaces
with semi-flows, which adds some nontrivial problems. On the other
hand, my setting allows the ambient space to be a manifold, albeit
finite-dimensional. They use the graph transform instead of the Perron
method. This allows for the more general definition of relative normal
hyperbolicity as in Remark~\ref{rem:NHIM-ratios}. They include both
stable and unstable normal directions, while they do not prove
H\"older regularity. Finally, in~\cite{Bates2008:approx-invarmfld} the
interesting idea is developed to start with an approximate NHIM only.

If we ignore these differences, then their results fit in between the formulations of
Theorem~\ref{thm:persistNHIMgen} and Corollary~\ref{cor:persistNHIM-Rn}.
Their invariant manifold $M$ is immersed in a Banach space, but not
necessarily described by the graph of a function $h$. Their
hypothesis~\cite[p.~363]{Bates2008:approx-invarmfld} that the
splitting does not twist too much is a bounded Lipschitz condition on
the (approximate) splitting of the (un)stable and tangent bundles over
$M$. This condition is similar, but slightly weaker than our condition
$M \in \BUC^2$, see also remarks~\ref{rem:persistNHIM},%
~\ref{enum:persist-k-two} and~\ref{rem:loss-smoothness}.

Although the results of Bates, Lu, and Zeng are more general and
complete in many aspects, I think that these cannot easily be
generalized to prove a version of Theorem~\ref{thm:persistNHIMgen},
set in an ambient manifold of bounded geometry. One could hope to use
the Nash embedding theorem to obtain the ambient manifold $(Q,g)$ as
an isometrically embedded
subspace of some $\R^n$. Then the dynamical system must be extended
from $Q$ to $\R^n$, such that $M$ is still normally hyperbolic as a
submanifold of $\R^n$; this procedure can be compared to the reduction
in Section~\ref{sec:BG-reduction}. The problem that arises is that the
Nash embedding theorem provides no control on the extrinsic curvature
of the embedding\footnote{%
  This can be seen from the result that the Nash embedding can be
  obtained into an arbitrarily small ball. As an explicit example,
  take $Q = \R$ with standard metric and embed it into $\R^2$ via the
  map $r(\theta) = \arctan(\theta)/\pi + \frac{1}{2}$ in polar
  coordinates. Since the integral of $r(\theta)$ diverges both when
  $\theta \to \pm\infty$, we obtain (after arc length reparametrization)
  an isometric embedding of $\R$ `curled up' into $B(0;1)$, while the
  extrinsic curvature grows unbounded for $\theta \to -\infty$.%
}, so $M$ need not be $\BC^2$ in $\R^n$. It might be possible to work around
this by proving a `bounded geometry version' of the Nash embedding
theorem.

We should also mention the paper~\cite{Jones1999:persist-invmlfds-PDE}
by Jones and Shkoller. They generalize Fenichel's results on
persistence of overflowing invariant manifolds to semi-flows on
infinite-dimensional Riemannian manifolds. They do assume the
invariant manifold itself to be compact.

\section{Outline of the proof}\label{sec:proof-outline}

The proof of Theorem~\ref{thm:persistNHIMtriv} is lengthy and
involves a lot of details. We therefore first present an overview of
the separate steps involved in the proof.

First, in Section~\ref{sec:preparation} we bring the system into a
form that is suitable for application of further analytical
techniques. That is, we decompose the vector fields along $X$ and $Y$
directions and linearize the vertical direction, leading to equations
\begin{equation*}
  \begin{aligned}
    \dot{x} &= v_\sx(x,y),\\
    \dot{y} &= v_\sy(x,y) = A(x)\,y + f(x,y)
  \end{aligned}
\end{equation*}
with a $C^1$\ndash small term $f$. This is a generalization
of the classical Perron method for hyperbolic fixed points (see
Section~\ref{sec:perron-method} for a quick overview) to NHIMs, first
presented by Henry~\cite[Chap.~9]{Henry1981:geom-semilin-parab-PDEs}.
In the case of a hyperbolic fixed point, we could fully linearize the
system; here, we can only linearize the normal directions, while we
keep the full nonlinear form in the directions along $X$. These cannot
be linearized because we have no control to localize the dynamics in
the directions along $X$.

In the theorem, the invariant manifold $M$ is given as a (small) graph
$h\colon X \to Y$. A coordinate change to represent the invariant manifold as
$M = X \times \{ 0 \}$ would (re)introduce a loss of smoothness that
we carefully worked around in Chapter~\ref{chap:boundgeom} by means of
a uniformly smoothed submanifold. The graph $h$ can be chosen
arbitrarily close to the zero section, and together with the small
perturbation $\tilde{v} - v$, this influences the exponential growth
rates~\ref{eq:NHIM-rates} only slightly. We recover
equations~\ref{eq:ODE-XY-mod} for the perturbed system that satisfy
slightly perturbed exponential estimates~\ref{eq:exp-est-XY-mod}, even
when we decouple the equations for $x$ and $y$ by inserting curves
$y(t)$ and $x(t)$, respectively, that are `close' to solution curves
of the original system. We directly include the perturbation
$\tilde{v} - v$ into the horizontal component of the vector field;
for the vertical component we include the perturbation in the
nonlinear term $\Ynonlin$. This gives rise to a nonlinear, horizontal
flow $\Phi_y(t,t_0,x_0)$ and a linear, vertical flow
$\Psi_x(t,t_0)\,y_0$ that depend on a curve in the other space and
satisfy estimates
\begin{equation*}
  \begin{alignedat}{2}
    &\forall\;t \le t_0\colon&\quad
    \norm{\D\Phi_y(t,t_0,x_0)} &\le C_\sx\,e^{\rho_\sx\,(t-t_0)},\\
    &\forall\;t \ge t_0\colon&
    \norm{\Psi_x(t,t_0)}       &\le C_\sy\,e^{\rho_\sy\,(t-t_0)},
  \end{alignedat}
\end{equation*}
where $\rho_\sx,\,\rho_\sy$ are close to the original exponential
rates $-\rho_M,\,\rho_-$.

The next step in Section~\ref{sec:NHIM-lipschitz} is to define a pair
of maps~\ref{eq:Txy} between curves in $X$ and $Y$ in terms of these
flows and the decomposed vector fields~\ref{eq:ODE-XY-mod}. The
composition $T = T_\sy \circ (T_\sx, \text{pr}_1)$ of these maps will
be a contraction on bounded curves in $Y$ depending on a parameter
$x_0 \in X$, but we measure these curves with $\norm{\slot}_\rho$
norms for some exponent $\rho$ with $\rho_\sy < \rho < \rho_\sx$.
Lemma~\ref{lem:perron-equiv} shows that the fixed points
of $T$ are precisely the vertical parts
$y(t)$ of solution curves of the perturbed vector field $\tilde{v}$
that stay in the tubular neighborhood $\norm{y} \le \Ysize$ of $M$ and
have initial value $x_0$ for their horizontal part $x(t)$. The maps
$T_\sx$ and $T_\sy$ generalize the center-unstable and stable
components, respectively, of the Perron integral in the fixed point
case, cf.\ws Section~\ref{sec:perron-method}.
The nonlinear flow along the invariant manifold is used
in $T_\sx$, but now depends on the vertical component $y(t)$ too,
while in the vertical, normal directions we use a variation of
constants integral to separate the nonlinear terms from the linearized
flow, just as in the classical Perron method.

This setup leads to a fixed point map $\Theta\colon X \to \BY$
that maps an initial value $x_0 \in X$ to the unique
bounded curve in $Y$ that corresponds to a full solution curve $(x,y)$
such that $x(0) = x_0$. If we now evaluate the
vertical solution curve $y = \Theta(x_0)$ at $t = 0$, then we obtain
the vertical component $y_0 = y(0) \in Y$ of the initial value
corresponding to $x_0 \in X$. All these solution curves stay close to
$M$ and form an invariant manifold, so the graph of
\begin{equation*}
  \tilde{h}\colon X \to Y
           \colon x_0 \mapsto \Theta(x_0)(0)
\end{equation*}
must describe the unique perturbed invariant manifold $\tilde{M}$.
Application of the contraction principle immediately implies that
$\tilde{h}$, and therefore $\tilde{M}$, is Lipschitz continuous.

In Section~\ref{sec:NHIM-smoothness} we continue to prove that
$\tilde{M}$ is $C^{k,\alpha}$. We start that section with a
more detailed overview of this smoothness part of the proof and in
Section~\ref{sec:scheme-deriv} we present a scheme to obtain the first
derivative in a number of steps. Higher smoothness then follows along
the same lines, just with more complex expressions, see
Section~\ref{sec:higher-derivs}. Let us focus here on the basic ideas.

Smoothness of $M$ follows directly from smoothness of $\Theta$. We
study the derivatives of $\Theta$ by formal differentiation of the
fixed point equation; this leads to Equation~\ref{eq:der-fixedpoint}.
But let us consider for a moment a simpler heuristic formulation,
similar to equation~\ref{eq:deriv-perron-map} for the hyperbolic fixed
point in Section~\ref{sec:comp-smooth}. Then the derivatives of the
Perron fixed point map are
\begin{equation*}
  \D^k T(y)\big(\dy_1,\ldots,\dy_k\big)(t)
  = \int_{-\infty}^t \Psi(t,\tau)\cdot
      \D^k \Ynonlin(y(\tau))\big(\dy_1(\tau),\ldots,\dy_k(\tau)\big) \d\tau.
\end{equation*}
Even if $\D^k \Ynonlin$ is bounded, it acts as a multilinear map on a
$k$\ndash tuple of variations $\dy_i$, each having exponential growth
of order $\rho$, so the result has exponential growth of order
$k\,\rho$. This is canceled by the exponential growth of
$\Psi(t,\tau)$ if $\rho_\sy < k\,\rho$. Then $\D^k T$ can be viewed as
a contraction on $\D^k \Theta$, but only when $\D^k \Theta$ is viewed
as a map into $B^{k\,\rho}(I;Y)$. To obtain continuity of the maps
$\D^k T$, we have to add another arbitrarily small term $\mu < 0$ to
the exponent, i.e.\ws $k\,\rho + \mu$; in case of $\alpha$\ndash
H\"older continuity we need $\mu = \alpha\,\rho$. These key facts show
how the spectral gap condition limits smoothness; see
Section~\ref{sec:spectral-gap} for a detailed discussion and an
example that shows that our smoothness result is in fact sharp.

The technique of using a scale of Banach spaces as developed by
Vanderbauwhede and Van~Gils~\cite{Vanderbauwhede1987:centermflds}, and
the fiber contraction theorem of Hirsch and
Pugh~\cite{Hirsch1970:stabmflds-hyperb} can be applied, and we obtain
each $\D^k \Theta$ as a fixed point in the appropriate space. The
final conclusion $\tilde{h} \in C^k$ follows by evaluating
$\D^k \Theta$ at $t = 0$.

We have to be very careful however: higher derivatives of maps between
manifolds are difficult to define (at least in a practical way), so we
develop some theory to describe higher derivatives using normal
coordinates in Appendix~\ref{chap:exp-growth} and generalize results
from $\R^n$ to this setting. Secondly, the derivatives of
$T_\sx,\,T_\sy$ only exist as `formal derivatives' on `formal tangent
bundles'. We endow these formal tangent bundles with a topology
induced by parallel transport. This allows us to study continuity of
the formal derivatives $\DF T_\sx,\,\DF T_\sy$ at the cost of
introducing additional holonomy terms. Finally, we do obtain the
$\D^k \Theta$ maps as derivatives of $\Theta$.

\section{Compactness and uniformity}\label{sec:cpt-unif}
\index{noncompactness}

The classical results on normally hyperbolic invariant
manifolds~\cite{Fenichel1971:invarmflds,Hirsch1977:invarmflds} assume
the invariant manifold to be compact. This is used to obtain
uniform boundedness and continuity of the vector field and other
objects. Here, instead, we assume these objects to have the required
uniformity directly, replacing the compactness requirement. In this
section, we expose some of the issues that need to be dealt with and
we present accompanying examples. We focus here on those issues that
are not (clearly) present in the literature and only show up when
considering general manifolds. See Section~\ref{sec:noncpt-motivation}
for motivation and examples of noncompact NHIMs.

The primary requirement in the noncompact case---well-known to
experts in the field---is that the vector field defining the system
must be uniformly bounded, including all its spatial derivatives up to
the order of the smoothness result requested.
Secondly, the vector field should be uniformly continuous, and
uniformly $\alpha$\ndash H\"older continuous when $\alpha \neq 0$.
The case $\alpha = 0$ really is a special case, whose proof
needs more care. H\"older continuity provides an explicit continuity
estimate, which is tailored to the problem; `plain' uniform continuity
does not provide this, forcing us to use an (arbitrarily) small amount
of the spectral gap to compensate.

In the compact case, Fenichel~\cite[p.~200]{Fenichel1971:invarmflds}
argues that persistence of the invariant manifold should be
independent of the choice of a Riemannian metric. Indeed, he proves
that the exponential growth rates are independent of such a choice, as
all metrics are equivalent. In a noncompact setting, however,
non-equivalent metrics do exist and we do expect persistence to depend
on the choice of metric, since it determines which perturbations are
globally $C^1$ small. Moreover, we make a technical uniformity
assumption of \emph{bounded geometry} (see
Chapter~\ref{chap:boundgeom}, also for example spaces of bounded
geometry) on both the underlying space and
the invariant manifold. These assumptions are automatically satisfied
in the compact case. It is not clear though to what extend they are
essential in the noncompact case.

The remainder of this section is devoted to examples that show
multiple aspects that should be treated carefully in the noncompact
setting, while being trivially fulfilled in the compact case.
Some interesting examples can also be found in the early
work~\cite{Hoppensteadt1966:singpert-inftime} by Hoppensteadt. He
presents counterexamples to uniform stability of solutions in a
time\hyp{}dependent singular perturbation setting when the stability
criteria do not have sufficient uniformity.

\subsection{Non-equivalent metrics}

As a simple example of two metrics leading to different results in the
noncompact case, let us consider the following.
\begin{example}[Non-equivalent metrics]
  \label{exa:non-equiv-metrics}
Let $X = \R^2,\, Y = \R$ with on the one
hand the usual Euclidean metric $g_e$ and on the other hand a metric
$g_s$ induced by a diffeomorphism similar to stereographic projection
from the sphere (with North Pole removed) onto $X = \R^2$.

Let the vector field be given by
\begin{equation}\label{eq:v-nonequiv-metric}
  v(x,y) = \big(\arctan(\norm{x})\,\frac{x}{\norm{x}},\;\lambda\,y\big)
\end{equation}
with $\lambda < 0$. This makes the vertical, $y$ direction uniformly
attracting with exponent $\lambda$, while in the plane
$X \times \{0\}$ the origin is an expanding fixed point and the
exponential growth rate is everywhere non-negative. Thus,
$M = X \times \{0\}$ is an $\fracdiff$\ndash NHIM for arbitrarily
large $\fracdiff$ and $v \in \BUC^k$ for any $k \ge 1$ on a tubular
neighborhood of $M$ of size $\Ysize = 1$, say. See
Figure~\ref{fig:nonequiv-metric} on the left side.

In polar coordinates $(s,\theta)$ on $X$ we have
\begin{equation*}
  v(s,\theta,y) = \big(\arctan(s),\,0,\,\lambda\,y\big).
\end{equation*}
Now, instead of the usual stereographic projection map
$(\phi,\theta) \mapsto (s = \tan(\phi/2),\theta)$ with $\phi = 0$ at
the South Pole, we take
\begin{equation}\label{eq:diffeo-nonequiv-metric}
  f\colon \intvCO{0}{\pi} \to \R_{\ge 0}
   \colon \phi \mapsto -\log(1 - \phi/\pi)
\end{equation}
and the corresponding diffeomorphism $\Phi$ that acts trivially along
the directions of $\theta$ and $y$ coordinates.
This diffeomorphism induces a metric $g_s$ on $X$ by pushforward of
the standard metric on the sphere. The system with metric $g_s$ is
most easily studied by pullback to the sphere with North Pole removed,
see Figure~\ref{fig:nonequiv-metric} on the right side.
This is an equivalent formulation since $\Phi$ is an isometry by
construction. The vector field is then represented on $S^2 \times \R$
by
\begin{equation}\label{eq:v-pullback-sphere}
  (\Phi^* v)(\phi,\theta,y) =
  \big(\pi(1-\phi/\pi)\arctan\big(-\log(1-\phi/\pi)\big),\;0,\;\lambda\,y\big).
\end{equation}
This shows that the vector field is still $\BC^1$, although not
$\BUC^1$ anymore in this metric. More importantly, the system can be
extended to include the North Pole as an attracting fixed point. The
rate of attraction along the perpendicular $y$ direction has not
changed from $\lambda$, but along the horizontal directions of the
sphere, the attraction rate now is
\begin{equation}\label{eq:exprate-stereo-metric}
  \lim_{\phi \to \pi} \D_1(\Phi^* v)(\phi,0,0) = -\frac{\pi}{2}.
\end{equation}
\begin{figure}[tb]
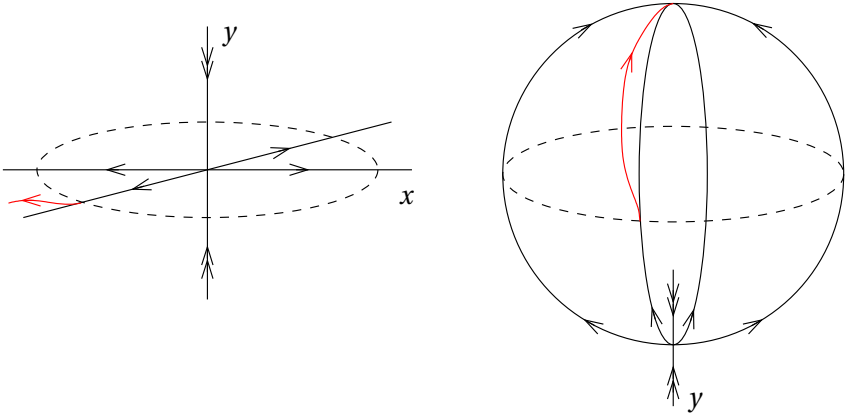

  \centering
  \raisebox{1.5cm}{%
  \input{\figpath nonequiv-metric-plane.pspdftex}\hspace{1cm}
  }
  \input{\figpath nonequiv-metric-sphere.pspdftex}
  \caption{a normally hyperbolic flow with respect to Euclidean and
    sphere induced metrics.}
  \label{fig:nonequiv-metric}
\end{figure}

In both metrics the normal exponential attraction rate is
$\rho_\sy = \lambda < 0$ since the linear flow on $Y$ is decoupled from
$X$. The exponential growth rate on $X$ does depend on the choice of
metric. For the Euclidean metric $g_e$, we have $\rho_\sx = 0$. This
follows from an analysis of the radial component
$\dot{s} = \arctan(s)$ of the system. At $s = 0$ this has unstable
exponent $1$, while away from $s = 0$ the tangent flow is uniformly
bounded away from both zero and infinity, so there the Lyapunov
exponent is zero. With respect to the metric $g_s$, it follows
from~\ref{eq:exprate-stereo-metric} that the Lyapunov exponent is
$\rho_\sx = -\frac{\pi}{2}$ for solutions approaching planar infinity.
Thus, we see that if $\lambda \ge -\frac{\pi}{2}$, then the system is
not normally hyperbolic with respect to the metric $g_s$, while
$X \times \{ 0 \}$ is an $\fracdiff$\ndash NHIM for any
$\fracdiff \ge 1$ under the metric~$g_e$. On the other hand, if
$\lambda < -\frac{\pi}{2}$, then the system is still only
$\fracdiff$\ndash normally hyperbolic with respect to~$g_s$ for
$\fracdiff < \pi/(2\lambda)$. Again, we can construct an explicit
perturbation similar to that in Example~\ref{exa:opt-smooth}. If we add a
small vertical perturbation $\epsilon\,\chi$ with support away from
the poles (compare with Figure~\ref{fig:NHIM-opt-smoothness}),
then along meridians passing through $\supp \chi$, the invariant
manifold is lifted and approaches the North Pole $\phi = \pi$
approximately along a graph $y = C\,(\pi-\phi)^{-2\lambda/\pi}$, while on
meridians not passing through $\supp \chi$, the invariant manifold
stays at $y = 0$. See the perturbed flow lines in Figure~\ref{fig:nonequiv-metric}.
This results in unbounded $C^\fracdiff$ derivatives
of the perturbed invariant manifold at the North Pole for
$\fracdiff > -2\lambda/\pi$.
\end{example}

We conclude that in the noncompact setting, normal hyperbolicity
explicitly depends on the choice of metric since metrics need not be
equivalent. Moreover, the allowed size of the perturbations depends on
the metric.
\begin{example}[Perturbation sizes depend on the choice of metric]
  \label{exa:pertsize-metric-dep}
We extend Example~\ref{exa:non-equiv-metrics} above. Let
$\lambda < -\frac{\pi}{2}$ so that the system is normally hyperbolic
with respect to both metrics, and set
\begin{equation}\label{eq:pert-nonequiv-metric}
  w(s,\theta,y) = \big(0,\;0,\;\arctan(s)\,\sin(\theta)\big).
\end{equation}
Then the vector field $v+\epsilon\,w$ is a $C^1$ small perturbation
of~\ref{eq:v-nonequiv-metric} with respect to~$g_e$ and perturbs the
original manifold $M$ smoothly to a manifold $\tilde{M}$ that has
height converging to $y = -\epsilon\,\sin(2\,\theta)\,\pi/(2\lambda)$
along radials when $s \to \infty$. The pullback of $\tilde{M}$ to the
sphere, however, has a discontinuity at the North Pole, since
$s \to \infty$ corresponds to $\phi \to \pi$, and so the North Pole is
approached at different constant heights along these radials. This
apparent contradiction that $\Phi^*(\tilde{M})$ is not a $C^1$ small
perturbation with respect to~$g_s$ stems from the fact that $w$ is not
$C^1$ small with respect to this metric. The vector field $w$ has
unbounded derivatives since $g_s$ `squeezes' distances when
approaching planar infinity, that is, the North Pole.
\end{example}

Thus, non-equivalent metrics also lead to different classes of $C^1$
small perturbations under which the invariant manifold persists.
Moreover we see that one cannot simply get rid of noncompactness by a
compactification argument. The metric $g_s$ is induced by a one-point
compactification to the sphere, but leads to different normal
hyperbolicity properties than the noncompact case with metric~$g_e$.
Any other choice of the diffeomorphism $\Phi$ would lead to the same
problems, since pullback of the metric~$g_e$ must introduce a
singularity at the North Pole. This cannot be equivalent to a metric
that extends regularly there.

\subsection{Non-persistence of embedded NHIMs}

\index{persistence!non-}
Let us give another example which shows that an embedded invariant
manifold need not persist. This example clarifies the remarks already
made in the introduction in Section~\ref{sec:immersed}: noncompact
embedded NHIMs can perturb into immersed manifolds. On the one hand,
this example shows that it is natural to consider immersed NHIMs. It
also shows that a noncompact NHIM must have a uniformly sized tubular
neighborhood that does not self-intersect, in order to guarantee
perturbation as an embedded manifold. Further details can be found in
Section~\ref{sec:BG-submanifolds} where the concept of a \idx{uniformly
embedded submanifold} is defined.

\begin{figure}[htb]
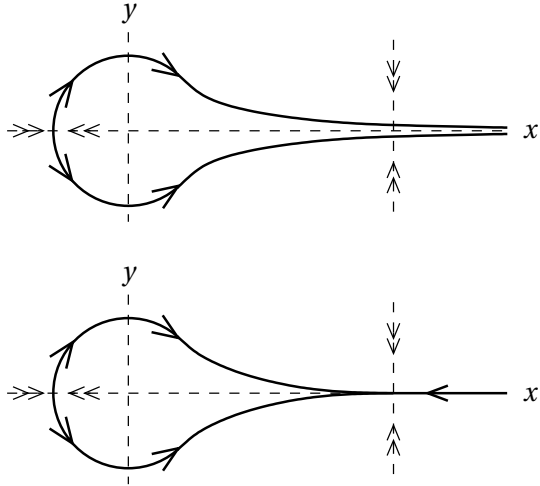

  \centering
  \input{\figpath selfintersect-orig.pspdftex}\\[10pt]
  \input{\figpath selfintersect-mod.pspdftex}
  \caption{collapse of a nearly self-intersecting invariant manifold.}
  \label{fig:selfintersect}
\end{figure}
In the example presented here, the unperturbed manifold is normally
hyperbolic but noncompact and `touches' itself in the limit to infinity, see
Figure~\ref{fig:selfintersect}, the top image. In this case, we can
find arbitrarily small perturbations that will let the two persisting
branches collapse into one at a finite point.

\begin{example}[A non-uniformly embedded NHIM]
  \label{exa:nonunif-embed-NHIM}
Let $(x,y) \in \R^2 = Q$. For $x \le 0$ we define the vector field $v$
of the system in polar coordinates, and for $x \ge \mfrac{1}{2}$ in
Cartesian coordinates as
\begin{equation}\label{eq:v-self-intersect}
  \begin{alignedat}{2}
    (\dot{r},\dot{\theta}) &= v(r,\theta) = ( 1-r \,,\, -\sin(\theta/2) )\qquad
    &&\text{if}\quad x = r\,\cos(\theta) \le 0,\\
    (\dot{x},\dot{y})      &= v(x,y) = ( e^{-x} \,,\, -y )
    &&\text{if}\quad x \ge \mfrac{1}{2}.\\
  \end{alignedat}
\end{equation}
We glue these vector fields together in a smooth way somewhere between
$x = 0$ and $x = \mfrac{1}{2}$. Then the manifold as shown at the top
in Figure~\ref{fig:selfintersect} is a NHIM. The flow attracts
uniformly in the normal direction with rate $-1$ (except that the rate
may deviate slightly around the glued area), while along the
manifold, the flow has an expanding fixed point at $(-1,0)$ and the
contraction in the direction of $x \to \infty$ is weaker than
exponential. Explicitly solving the flow for $x \ge \mfrac{1}{2}$
yields
\begin{equation*}
  \Phi^t(x,y) = ( \log(e^x + t) \,,\, y\,e^{-t} ),
\end{equation*}
which exhibits the rates of contraction in the normal and tangential
directions by considering either projection in
\begin{equation*}
  \lim_{t \to \infty}\; \frac{1}{t}\,\log\big(\pi_{x,y} \circ \D\Phi^t(x,y)\big).
\end{equation*}

Let us now introduce the very simple perturbation vector field
$w(x,y) = ( -\epsilon \,,\, 0 )$ for $x \ge 1$ and smoothly cut off to
zero left of $x = 1$. When this perturbation is added to the vector
field $v$, the vertical line $x = -\log(\epsilon)$ becomes a stable,
invariant set, see Figure~\ref{fig:selfintersect} the bottom image.
The upper and lower branch of the original
NHIM will both converge to the newly created fixed point
$( -\log(\epsilon) \,,\, 0)$. On the right side of this point the
manifold is given by the single line $y = 0$.

Each branch separately persists as a $C^\infty$ manifold, as could
(naively) be expected. The problem is that we have no control on the
distance between the two branches, so for any $\epsilon > 0$, these
branches will collapse at some point where the persisting object
ceases to be an embedded manifold. As already remarked, there are two
ways to address this issue. One can abandon the implicit assumption
that the NHIM is an embedded submanifold and replace this by immersed
submanifolds; this idea was introduced already
in~\cite{Hirsch1977:invarmflds}. If one insists on having
embedded submanifolds, even under perturbations, then one must
eliminate the possibility of these `collapses' occurring. A sufficient
condition is the existence of a uniformly sized tubular neighborhood
of the invariant manifold that does not intersect itself. Global
control on the perturbation distance of the invariant manifold will
imply that the perturbed manifold stays inside this tubular
neighborhood and thus will not self-intersect.
\end{example}

\subsection{Non-uniform geometry of the ambient space}
\index{ambient manifold}

The previous examples were set in Euclidean space. The next two
examples show that additional uniformity conditions must be imposed on
a nontrivial ambient space. It is not
enough to assume uniform continuity and boundedness for the dynamical
system. The first example is an extension to the previous one and
shows that the ambient space must have a uniformly finite injectivity
radius. The second example indicates that even if the ambient space
has finite injectivity radius and trivial topology, persistence might
be lost due to non-bounded curvature of the ambient space.

\begin{example}[Zero \idx{injectivity radius}]
  \label{exa:zero-inj-radius}
We construct as ambient space $Q$ a cylinder whose
radius shrinks exponentially. That is, we take $Q = \R \times S^1$
with metric $g(x,\theta) = {\rm d}x^2 + e^{-2x} {\rm d}\theta^2$. See
Figure~\ref{fig:exp-cylinder} for an impression, but note that the
metric induced by the embedding in $\R^3$ is not (and cannot be made)
the same as $g$. The vector field
\begin{equation}\label{eq:v-exp-cylinder}
  v(x,\theta) = ( 1,\, 0 )
\end{equation}
generates a simple flow along the cylinder, and \emph{each} solution
curve is a NHIM purely due to the fact that all curves flow into an
exponentially shrinking tube, while there is no contraction along the
curve.
\begin{figure}[htb]
  \centering
  \includegraphics[width=8cm]{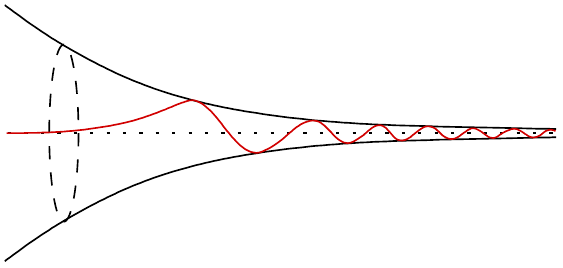}
  \vspace{-0.3cm}
  \caption{a non-uniform cylinder with a winding curve.}
  \label{fig:exp-cylinder}
\end{figure}

Let us consider the invariant manifold $M = \{\,\theta = 0\,\}$. We
add a perturbation to the vector field that is given by
$\dot{\theta} = \epsilon$ for $x \ge 0$ and is smoothly cut off to
zero left of $x = 0$. This perturbation is smooth and $C^1$ small with
respect to the metric\footnote{%
  Measuring the $C^1$ size with respect to $g$ requires taking
  covariant derivatives and may introduce results not directly
  apparent in coordinates $(x,\theta)$. A perturbation
  term $\dot{\theta} = \epsilon\,\exp(x)$ would still be globally
  small in this metric.%
} $g$. When the original curve $M$ enters the region $x \ge 0$, it is
modified to a curve $\tilde{M}$ that starts winding around the
cylinder, as indicated in Figure~\ref{fig:exp-cylinder}.

This clearly cannot be represented in a tubular neighborhood of $M$ in
$Q$ since the curve would leave the neighborhood `above' and reenter
`from below'. On the other hand, the normal bundle of $M$ can be
viewed as a covering of $Q$, and on that covering, $\tilde{M}$ is
represented by the function $\theta(x) = \epsilon\,x$, which is still
a bounded graph with norm $\norm{\theta(x)} = \epsilon\,x\,e^{-x}$,
but which winds around since $\theta \in \intvCO{0}{2\,\pi}$. Thus, a
globally finite injectivity radius seems a necessary requirement if we
want the perturbed manifold to be represented in a diffeomorphic
tubular neighborhood of $M$.
\end{example}

The second example indicates that a finite injectivity radius is not
enough; unbounded curvature of the ambient manifold might lead to loss
of persistence of the NHIM. It should be pointed out that this example
satisfies all properties of normal hyperbolicity with uniform
estimates up to $C^1$ smoothness, except that the vector field
has no uniformly continuous derivative. I~have not been able to add
this final property to create a complete counterexample where
persistence fails in the absence of the curvature property of bounded
geometry only.

\begin{example}[Unbounded curvature]
  \label{exa:unbounded-curv}
Let $Q = \R^3$ with metric
\begin{equation}\label{eq:metric-exp-geom}
  g(x,y,z) = \d x^2 + \exp\big(-2\,\abs{x}\arctan(x\,z)\big)\,{\rm d} y^2
                    + \exp\big(-2\,\abs{x}\big)\,{\rm d} z^2,
\end{equation}
but with component functions symmetrically smoothed around $x = 0$.
This Riemannian manifold is invariant under translations in $y$ and
has a mirror symmetry involution in any plane of fixed $y$. Hence,
each submanifold $\{\, y = y_0 \,\}$ is geodesically invariant.

Let the vector field be
\begin{equation}\label{eq:v-exp-geom}
  v(x,y,z) = \begin{cases}
    \;\big(x,\, 0,\, -\arctan(z)   \big) & \text{for}\quad \abs{x} \le 1,\\
    \;\big(\text{sign}(x),\, 0,\, 0\big) & \text{for}\quad \abs{x} \ge 1,\\
  \end{cases}
\end{equation}
and smoothly glued together in a neighborhood of the boundary $\abs{x} = 1$. Thus, the
whole system is invariant under translations in $y$, and within any
plane $\{\, y = y_0 \,\}$, the point $(x,z) = (0,0)$ is a hyperbolic
fixed point with eigenvalues $1$ and $-1$ in the $x$ and $z$
direction, respectively. The system also has a mirror symmetry around
$x = 0$; from now on we only consider $x \ge 0$.
\begin{figure}[htb]
  \centering
  \input{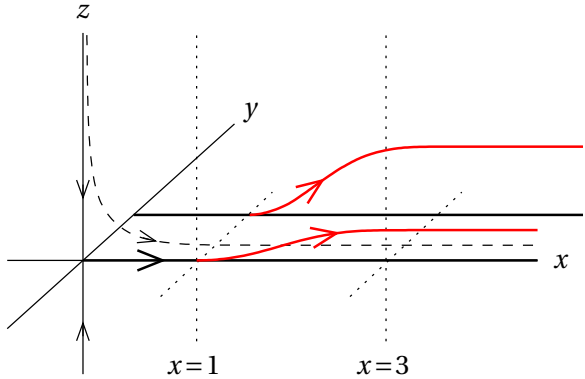}
  \caption{perturbation of a normally hyperbolic system in unbounded geometry.}
  \label{fig:exp-geom-R3}
\end{figure}

The plane $M = \{\, z = 0 \,\}$ is a NHIM; it is clearly invariant
under the flow, and similar to the exponentially shrinking cylinder,
the metric contracts in the $z$ direction along solution curves
$x \to \infty$, while no contraction occurs along the manifold. On
$\T M$ the metric reduces to $g|_{\T M} = {\rm d} x^2 + {\rm d} y^2$ while the
flow is linear in time. On a neighborhood of the $y$\ndash axis,
finally, normal hyperbolicity follows from the attraction along the
$z$ directions due to the term $-\arctan(z)$ in~\ref{eq:v-exp-geom}.

The vector field $v$ and its covariant derivative are uniformly
bounded with respect to the metric. For $x \le 2$, this follows from
the fact that $g$ including its inverse and derivatives, as well as
$v$ and its derivatives are bounded. For $x \ge 2$, explicit
calculations in local coordinates show that $\norm{v} = 1$, while for
$x \ge 1$ we have
\begin{equation*}
  \nabla\,v =
    \begin{pmatrix}
      0 & 0 & 0\\
      0 & -\frac{x\,z}{1 + (x\,z)^2} - \arctan(x\,z) & 0\\
      0 & 0 & -1
    \end{pmatrix},
\end{equation*}
expressed in an orthonormal frame, which is bounded as well. The
second covariant derivative $\nabla^2 v$ is unbounded, though. This
indicates that $\nabla v$ is probably not uniformly continuous for a
reasonable definition of uniform continuity, cf.\ws
Definition~\ref{def:unif-bounded-map}, although I have not completely
investigated this question.

The Ricci scalar curvature of $Q$ is unbounded and on
$M = \{\, z = 0 \,\}$ it is given by
\begin{equation*}
  S = -2\,\big(1 + x^4\,e^{2 x}\big).
\end{equation*}
Clearly, this implies that the Riemannian curvature is unbounded too.

\begin{remark}
  In hindsight, it should probably not come as a complete surprise
  that $\nabla^2 v$ is unbounded. The Riemannian curvature is
  unbounded, and since it is the generator of holonomy (see
  Section~\ref{sec:BG-curv-holonomy}), it can thus generally be
  expected that holonomies along infinitesimal loops act as an
  unbounded family of operators on $v$. These are expressed in local
  coordinates by second covariant derivatives of $v$:
  \begin{equation*}
    R(\partial_i,\partial_j)\,v
    = \nabla_{\partial_i} \nabla_{\partial_j} v
     -\nabla_{\partial_j} \nabla_{\partial_i} v.
  \end{equation*}
\end{remark}

We proceed with checking that $\exp\colon N \to Q$ has finite
injectivity radius on the normal bundle\footnote{
  We should actually show that the injectivity radius of $Q$ is
  finite, i.e.\ws $\rinj{Q} > 0$, at least in a neighborhood of $M$. I
  have not been able to do this. Finite injectivity radius of the
  normal bundle does allow us to construct a tubular neighborhood to
  model persistent manifolds close to $M$, though.%
} $N$ of $M$.
Then all assumptions for persistence are fulfilled, except for bounded
curvature (and uniform continuity of $\nabla v$). As each submanifold
$\{\, y = y_0 \,\}$ is invariant, we can restrict our investigation to
$y = 0$, such that $x$ denotes the coordinate along the base manifold
of the normal bundle; let $t$ denote the normalized coordinate in the
vertical direction. The exponential map of $(x,t)$ is generated by the
geodesic flow as follows: start at $x$ with vertical unit vector and
then follow a geodesic for time $t$. For $x = 2$, say, this flow is
well-defined and stays inside the region $x \ge 1$ for some bounded
time $\abs{t} \le r$. The diffeomorphism group
\begin{equation*}
  \phi(x,z) = (x+\xi,z\,e^\xi) \qquad\text{with}\quad \xi \in \R
\end{equation*}
translates along $x$ while simultaneously scaling $z$, see also
Remark~\ref{rem:indep-BG-conditions}.
In the region $x \ge 1$ this is an isometry, so the exponential
mapping defined for $x = 2,\, \abs{t} \le r$ can be isometrically
mapped onto the whole region $x \ge 2,\, \abs{t} \le r$. For $x$ on
the compact interval $\intvCC{0}{2}$ the exponential map must have a
finite injectivity radius too, so there exists a global $r > 0$ such
that $\exp\colon N_{\le r} \to Q$ is diffeomorphic onto its image.

Now we add a perturbation in a similar spirit to that in
Section~\ref{sec:spectral-gap}: we lift $M$ by a local, vertical
perturbation of the vector field, varying along~$y$. In a neighborhood
of the plane $x = 2$ we add a small vertical component
\begin{equation*}
  \dot{z} = \epsilon\,(2-\cos(y))\,\exp\Big(\frac{-1}{1-(x-2)^2}\Big)
  \qquad\text{if}\quad \abs{x-2} \le 1.
\end{equation*}
In the region $x \le 1$ the flow is unmodified, so there the perturbed
manifold $\tilde{M}$ must coincide with the original $M$; otherwise it
would not stay in a bounded neighborhood of $M$ under the backward
flow. Around $x = 2$, the flow lifts $\tilde{M}$ to at least a height
\begin{equation*}
  z \ge \epsilon\,\int_{1}^3 \exp\Big(\frac{-1}{1-(x-2)^2}\Big) \d x
    \ge \epsilon/4
\end{equation*}
and the height $z$ depends on $y$, see
Figure~\ref{fig:exp-geom-xslice}. Then in the region $x \ge 3$ the
manifold $\tilde{M}$ continues along $\dot{x} = 1$ at the same $y,z$
coordinates. Now we have $\epsilon/4 \le z \le 6\,\epsilon$, so for
all small $\epsilon > 0$ the $y$ component of the metric along the
flow on the invariant manifold eventually shrinks at an exponential
rate that is stronger than in the $z$ 
direction, while $\tilde{M}$ has variable height $z(y)$ independent of
$x$. Hence the Lipschitz norm of $\tilde{M}$ can be estimated by
measuring $z'(y)$ with respect to the metric $g$ along the manifold.
But $z'(y)$ is nonzero and constant along $x$ in coordinates, while
horizontal distances along $y$ shrink faster than vertical distances.
This means that the Lipschitz norm of $\tilde{M}$ grows unbounded for
$x \to \infty$.
Moreover, the normal exponential growth rate does not dominate the
tangential rate anymore, so the perturbed manifold is not normally
hyperbolic anymore.
\end{example}
\begin{figure}[htb]
  \centering
  \begin{overpic}[width=10cm]{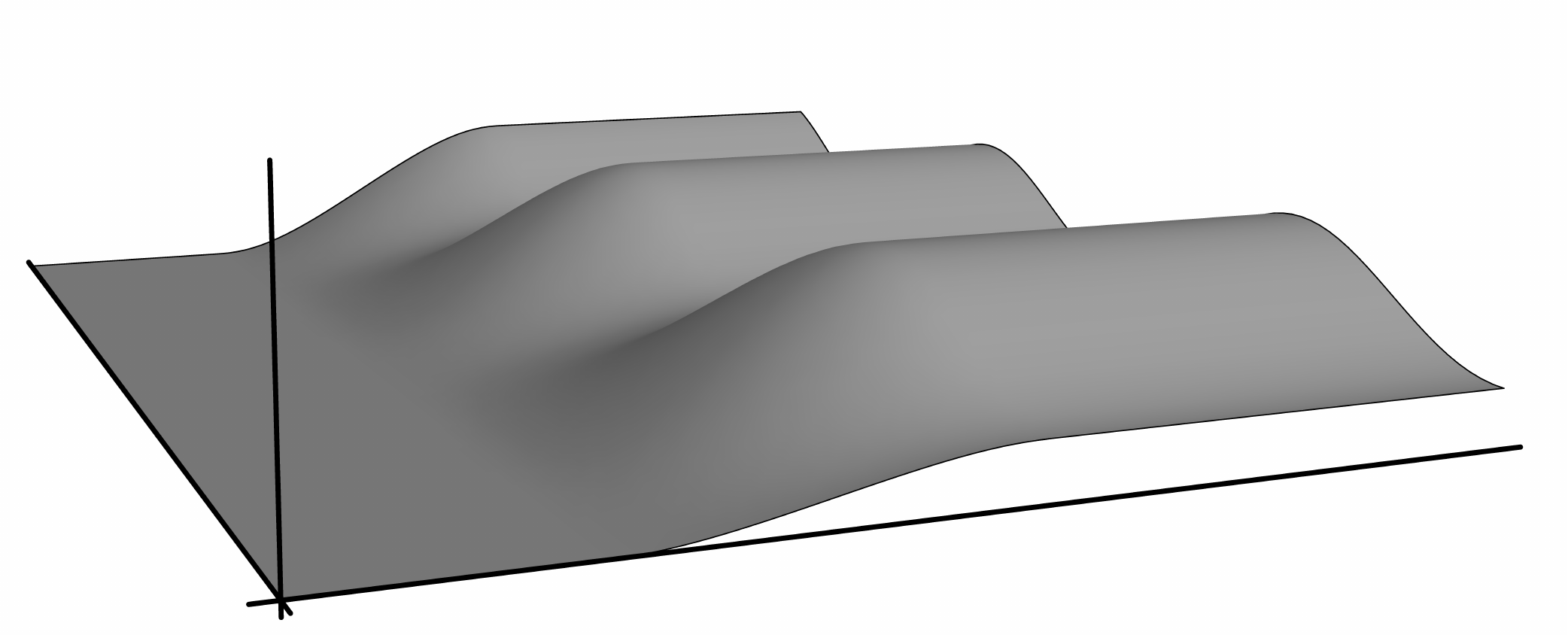}
    \put(98,08){$x$}
    \put(00,27){$y$}
    \put(16,33){$z$}
  \end{overpic}
  \begin{overpic}[width=10cm,clip,trim=0cm 0cm 3cm 0cm]{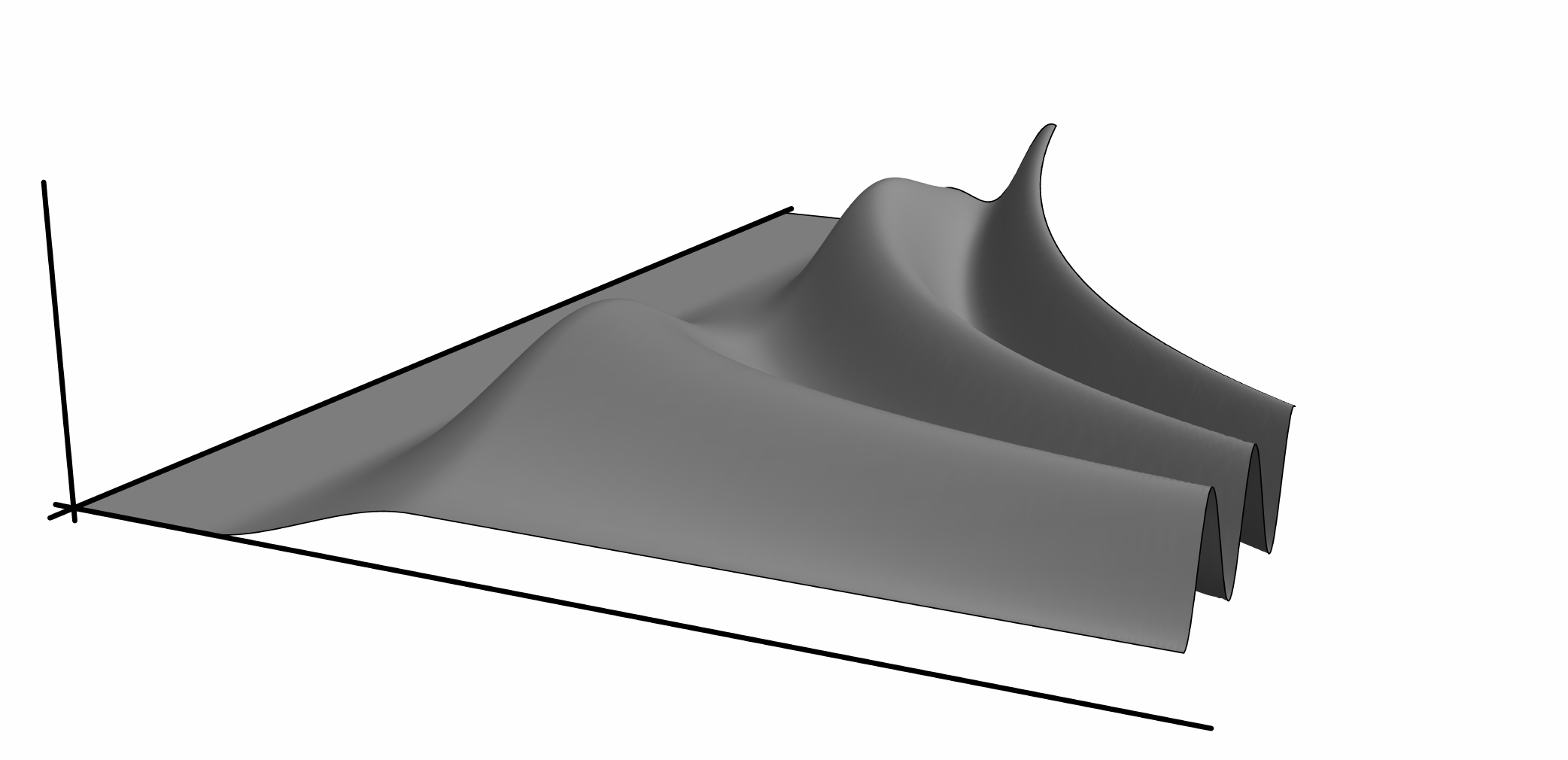}
    \put(93,00){$x$}
    \put(54,44){$y$}
    \put(02,46){$z$}
  \end{overpic}
  \caption{the graph of the perturbed manifold with respect to the
    Euclidean metric (top) and an approximate image of the same graph
    with respect to the metric $g$ (bottom).}
  \label{fig:exp-geom-xslice}
\end{figure}


\clearpage

\section{Preparation of the system}\label{sec:preparation}

As a first step towards proving Theorem~\ref{thm:persistNHIMtriv} we
shall bring the system in a form suitable to apply analytical tools to
it. Let
\begin{equation}\label{eq:ODE-XY-orig}
  \begin{aligned}
    \dot{x} &= v_\sx(x,y) \in \T_x X,\\
    \dot{y} &= v_\sy(x,y) \in Y,
  \end{aligned}
\end{equation}
be the decomposition of the vector field $v$ along $X$ and $Y$. The
invariant manifold is given as the graph $M = \{ y = h(x) \}$. We
dropped the explicit dependence on $\sigma$ from the notation. The
full flow of $v$ will be denoted by $\Upsilon^t$, while $\Phi,\Psi$
are reserved for flows defined in terms of the horizontal and vertical
components of $v$, respectively. To shorten notation we write
$g(x) = (x,h(x))$. We shall always assume that $\norm{y} \le \Ysize$.

Our goal is to establish a linearized form
\begin{equation}\label{eq:ODE-Y-lin}
  v_\sy(x,y) = A(x)\,y + f(x,y)
\end{equation}
for the vertical part of~\ref{eq:ODE-XY-orig} such that $f$
is small and $A,f \in \BUC^{k,\alpha}$, while the flows
$\Phi^t$ and $\Psi^t$ generated by
\begin{equation}\label{eq:ODE-XY-restr}
  \begin{aligned}
    \dot{x} &= v_\sx(g(x)),\\
    \dot{y} &= A(x(t))\,y \qquad
    \text{with $g(x(t))$ a solution curve on $M$},
  \end{aligned}
\end{equation}
should satisfy exponential growth estimates~\ref{eq:NHIM-rates} as in
Definition~\ref{def:NHIM} of normal hyperbolicity with exponents
$\rho_\sx,\,\rho_\sy$ close to the original $-\rho_M,\,\rho_-$,
respectively; the corresponding constants $\tilde{C}_M,\,\tilde{C}_-$
may differ arbitrarily from the original $C_M,\,C_-$.

\begin{figure}[tbh]
  \centering
  \input{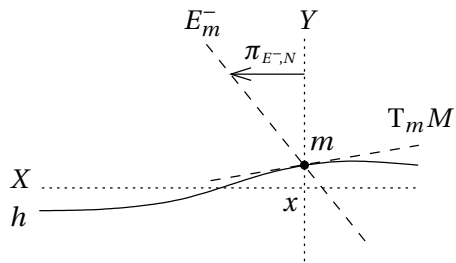}
  \caption{The splitting $\T_m (X \times Y) = \T_m M \oplus E_m^-$
    with $m = g(x)$.}
  \label{fig:triv-bundle-split}
\end{figure}
We first identify the invariant splitting and associated flows on
$\T_M (X \times Y)$ to be able to relate these exponential growth
rates, see also Figure~\ref{fig:triv-bundle-split}. By definition of
normal hyperbolicity (without an unstable bundle) we have
\begin{equation*}
  \T_M (X \times Y) = \T M \oplus E^-,
  \quad
  \Id = \pi_{\T M} + \pi_{E^-},
  \quad
  \D\Upsilon^t = \D\Upsilon_M^t \oplus \D\Upsilon_-^t
\end{equation*}
with associated exponential growth rates~\ref{eq:NHIM-rates}. On the
other hand we have the splitting
\begin{equation*}
  \T (X \times Y) = \pi_\sy^*(\T X) \oplus \pi_\sx^*(\T Y)
  \cong \T X \times (Y \times Y)
\end{equation*}
that is naturally induced by the trivial bundle structure. The
identification $\D g = \Id_{\T X} + \D h\colon \T X \to \T M$ is
bounded linear with bounded inverse, so the associated vector field
$g^*(v) = v_\sx \circ g$ on $X$ generates a flow $\Phi^t$ such that
$\D\Phi^t$ has the same exponential growth rate as $\D\Upsilon_M^t$,
up to a bounded factor $\norm{\D g^{-1}}\cdot\norm{\D g}$ due to the
norms on the different tangent spaces. Recall that $v_\sigma$ depends
on a parameter $\sigma \in \intvOC{0}{\sigma_0}$. We choose the bound
$\sigma_1$ small enough such that for all $\sigma \le \sigma_1$ we have
\begin{equation*}
  \forall\;t \le 0\colon \norm{\D\Phi^t} \le \tilde{C}_M\,e^{-\rho_M\,t}
  \quad\text{with}\quad \tilde{C}_M = 2\,C_M.
\end{equation*}

Let $N = \pi_\sx^*(\T Y)|_M$ denote the vertical bundle over $M$,
whose fibers can be canonically identified with $Y$. Just as above, we
want to project the flow $\D\Upsilon_-^t$ onto $N$ while preserving
the exponential growth rate. The projection $\pi_{E^-}$ along $\T M$
is uniformly bounded for all $\sigma$. This means that the angle
between $\T M$ and $E^-$ is bounded away from zero. Since $\T M$ can
be chosen arbitrarily close to the horizontal $\T X$ by choosing
$\sigma$ sufficiently small, it follows that the projection
$\D\pi_\sy|_{E^-}\colon E^- \to N$ and its inverse $\pi_{E^-\!\!,N}$
are bounded for all $\sigma \le \sigma_1$ when $\sigma_1$ is
sufficiently small, see also Figure~\ref{fig:triv-bundle-split}. To
this end, let $(0,\phi) \in \T_{g(x)} (X\times Y)$ and consider
the identity
\begin{equation*}
  \phi = \D\pi_\sy\cdot(0,\phi)
       = \D\pi_\sy\cdot(\pi_{\T M} + \pi_{E^-})\cdot(0,\phi).
\end{equation*}
We have $\pi_{\T M}\cdot(0,\phi) \in \T M$ so
$\pi_{\T M}\cdot(0,\phi) = (\xi,\D h(x)\,\xi)$ where
$\xi = \D\pi_\sx \cdot \pi_{\T M}\cdot(0,\phi) \in \T_x X$
Now we have estimates
\begin{equation*}
  \begin{aligned}
    \norm{\xi}
    &=  \norm{\D\pi_\sx\,\pi_{\T M}\,(0,\phi)}
    \le \norm{\pi_{\T M}}\norm{\phi},\\
    \norm{\pi_{\T M}\cdot(0,\phi)}
    &=  \norm{\D h(x)\,\xi}
    \le \sigma\,\norm{\pi_{\T M}}\norm{\phi},\\
    \norm{\D\pi_\sy\cdot\pi_{E^-}\cdot(0,\phi)}
    &=  \norm{\phi - \D\pi_\sy\cdot\pi_{\T M}\cdot(0,\phi)}
    \ge (1 - \sigma\,\norm{\pi_{\T M}})\norm{\phi}
\end{aligned}
\end{equation*}
from which it follows that $\D\pi_\sy|_{E^-}$ has an inverse
$\pi_{E^-\!\!,N}\colon N \to E^-$ for which we have the bound
$\norm{\pi_{E^-\!\!,N}} \le (1 - \sigma_1\,\norm{\pi_{\T M}})^{-1}\,\norm{\pi_{E^-}} \le 2\,\norm{\pi_{E^-}}$
if we choose $\sigma_1 \le \frac{1}{2\norm{\pi_{\T M}}}$.

Consider the flow
\begin{equation}\label{eq:ODE-Y-flow}
  \hat{\Psi}^t = \D\pi_\sy \circ \D\Upsilon^t \circ \pi_{E^-\!\!,N}\colon N \to N,
\end{equation}
generated by $\D v_\sy \circ \pi_{E^-\!\!,N}$ along solution curves
$g(x(t))$. Both $\D\pi_\sy|_{E^-}$ and $\pi_{E^-\!\!,N}$ are uniformly
bounded, so the exponential estimates of $\D\Upsilon_-^t$ carry over
to $\hat{\Psi}^t$ up to a constant factor:
\begin{equation}\label{eq:ODE-Y-est}
  \forall\;t \ge 0\colon \norm{\hat{\Psi}^t} \le \tilde{C}_-\,e^{\rho_-\,t}
  \quad\text{with}\quad \tilde{C}_- = 2\,\norm{\pi_{E^-}}\,C_-.
\end{equation}

We have thus constructed flows $\Phi^t$ and $\hat{\Psi}^t$ on $X$ and
$N$, respectively, that are generated by
\begin{equation*}
  v_\sx \circ g \quad\text{and}\quad
  \hat{A}(x) = \D v_\sy(g(x)) \cdot \pi_{E^-\!\!,N}(g(x))
\end{equation*}
with a solution curve $x(t)$ of the vector field $v_\sx \circ g$
inserted. These flows are of the form~\ref{eq:ODE-XY-restr} and
satisfy exponential estimates~\ref{eq:NHIM-rates} inherited from the
invariant bundle splitting.

The vector field $v_\sx \circ g$ already has sufficient
smoothness\footnote{%
  It may seem impossible to define a $C^k$ vector field $v$ on the
  tangent bundle $\T M$ of a $C^k$ manifold $M$ since $\T M \in C^{k-1}$.
  See~\cite[App.~1]{Palis1977:topequiv-normhyp}
  or~\cite[p.~398]{Palis1983:stab-gradfields} for a method to endow an
  invariant submanifold $M \in C^k$ with a compatible topology that
  makes $v|_M \in C^k$. We effectively used this in our definition of
  $v_\sx \circ g \in C^k$.%
}, but $\hat{A}$ is not smooth enough since the projection
$\pi_{E^-\!\!,N}$ is only continuous. We construct
$A \in \BUC^{k,\alpha}$ as a smoothed approximation of
$\D_y v_\sy \circ g$. This term is $C^0$\ndash close to $\hat{A}$,
since
\begin{equation}\label{eq:Ahat-DyVy-est}
      \norm{\hat{A} - \D_y v_\sy \circ g}_0
  =   \norm{(\D_x v_\sy \circ g) \cdot \D\pi_\sx \cdot \pi_{E^-\!\!,N}}_0
  \le \norm{\D_x v_\sy \circ g}_0\,\norm{\pi_{E^-\!\!,N}}_0
\end{equation}
because $\D\pi_\sy \cdot \pi_{E^-\!\!,N} = \Id_N$ and
$\norm{\D_x v_\sy \circ g}$ is small. Lemma~\ref{lem:pert-lin} will
imply that the flow $\Psi^t$ of this approximation has exponential
growth estimates close to those of $\hat{\Psi}^t$. The following lemma
will be used to obtain \idx{$A$} from $\D_y v_\sy \circ g$. We apply it with
$l = k-1$ to obtain $A \in \BC^k(X;\CLin(Y))$ such that
$\norm{A - \D_y v_\sy\circ g}_{k-1} \le \epsilon(\nu)$. This lemma is
a (strongly) simplified version of
Theorem~\ref{thm:unif-smooth-submfld}; the notation of $l,\,k$ is
reversed to match the context here.
\begin{lemma}[Uniform smoothing of a vector bundle section]
  \label{lem:VB-section-smoothing}
  Let\/ $(X,g)$ be a Riemannian manifold of bounded geometry and\/ $V$ a
  Banach space. Let $f \in \BUC^l(X;V)$ be a section of the trivial
  vector bundle $\pi\colon X \times V \to X$.

  Then for any $k > l$ and $\epsilon > 0$ there exists a smoothed
  function $\tilde{f} \in \BUC^k(X;V)$ such that
  $\norm{\tilde{f} - f}_l \le \epsilon$. (The bounds on higher than
  $l$\th order derivatives will generally depend on $\epsilon$.)
\end{lemma}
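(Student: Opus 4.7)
The plan is to reduce to the Euclidean convolution smoothing of Lemma~\ref{lem:conv-smoothing} by means of a uniformly locally finite cover together with a subordinate $\BUC^k$ partition of unity. Since the bundle is trivial with a fixed Banach space fiber $V$, there is no gluing obstruction across trivializations and a straightforward weighted sum over the cover will produce $\tilde{f}$.

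Concretely, first I would invoke Lemma~\ref{lem:unif-loc-cover} to obtain a cover $\{B(x_i;\delta_2)\}_{i\ge 1}$ of $X$ by normal coordinate balls with $\delta_1 < \delta_2$ small enough that the cover has uniform overlap bound $K$, and then Lemma~\ref{lem:part-unity} to produce a $\BUC^k$ partition of unity $\{\chi_i\}$ subordinate to $\{B(x_i;\delta_2)\}$ with the $B(x_i;\delta_1)$ already covering $X$. In each chart I would pull back via the normal chart map to obtain $f_i = f\circ\exp_{x_i}\colon B(0;\delta_2)\subset \T_{x_i}X \to V$. By Lemma~\ref{lem:bound-coordtrans} and Definition~\ref{def:unif-bounded-map}, $f_i \in \BUC^l$ with $\BUC^l$\ndash norms uniform in $i$.

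Next I would apply Lemma~\ref{lem:conv-smoothing} to each $f_i$ with radius parameters $\delta_1 < r < r+2\,\delta\!r < \delta_2$ and a common mollifier radius $\nu > 0$, producing $\tilde{f}_i \in \BUC^k \cap C^\infty$ on $B(0;r)$ with $\tilde{f}_i = f_i$ outside $B(0;r+\delta\!r)$ and $\norm{\tilde{f}_i - f_i}_l \le \epsilon'$, where $\epsilon' > 0$ will be fixed at the end. Uniformity of the $\BUC^l$\ndash bounds of the $f_i$ means the required mollifier radius $\nu$ can be taken independent of $i$. Then I would define
\begin{equation*}
  \tilde{f}(x) = \sum_{i \ge 1} \chi_i(x)\,\big(\tilde{f}_i \circ \exp_{x_i}^{-1}\big)(x),
\end{equation*}
where each term is understood to be zero outside the support of $\chi_i$. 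Using $\sum_i \chi_i \equiv 1$ we rewrite the error as $\tilde{f} - f = \sum_i \chi_i \cdot \big((\tilde{f}_i - f_i)\circ\exp_{x_i}^{-1}\big)$; at every point at most $K$ terms contribute. The product rule for $\BUC^l$, combined with the uniform $\BUC^k$\ndash bounds on $\chi_i$, the uniform $\BUC^k$\ndash bounds on normal coordinate transition maps (Lemma~\ref{lem:bound-coordtrans}), and the per-chart estimate $\norm{\tilde{f}_i - f_i}_l \le \epsilon'$, yields $\norm{\tilde{f} - f}_l \le C_0\,K\,\epsilon'$ for a constant $C_0$ depending only on these uniform bounds. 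Choosing $\epsilon' = \epsilon/(C_0 K)$ closes the estimate. Smoothness $\tilde{f} \in \BUC^k$ follows because in each ball $B(x;\delta_2)$ the sum has at most $K$ nonzero terms, each a product of a $\BUC^k$ cut-off $\chi_i$ and a $\BUC^k$ function $\tilde{f}_i\circ\exp_{x_i}^{-1}$ (here the $\BUC^k$\ndash norm of the latter uses that $\exp_{x_i}$ is smooth and that coordinate transitions are uniformly $\BUC^k$).

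The only point requiring attention is the uniformity (rather than merely the local existence) of all estimates; this is precisely what bounded geometry buys. Once the cover, the partition of unity, and the chart transitions all carry bounds independent of the base point, convolution smoothing performed simultaneously in every chart with a single choice of $\nu$ yields a global $\tilde{f}$ whose $\BUC^k$ and error bounds are uniform in $x \in X$, as required. No sequential smoothing argument as in Theorem~\ref{thm:unif-smooth-submfld} is needed, because the linearity of $V$ makes the weighted sum intrinsically well-defined.
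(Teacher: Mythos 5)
Your overall strategy matches the paper's proof exactly: a uniformly locally finite cover from Lemma~\ref{lem:unif-loc-cover}, a $\BUC^k$ partition of unity from Lemma~\ref{lem:part-unity}, per-chart convolution smoothing via Lemma~\ref{lem:conv-smoothing}, and gluing by the weighted sum $\tilde f = \sum_i \chi_i \tilde f_i$ with the overlap bound $K$ controlling all estimates. Your closing observation about why no sequential correction is needed (linearity of $V$) is also the correct reason this lemma is much simpler than Theorem~\ref{thm:unif-smooth-submfld}.

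However, there is a genuine gap in your choice of radii. You take the smoothing radius $r$ with $\delta_1 < r < r + 2\,\delta\!r < \delta_2$, while $\supp\chi_i \subset B(x_i;\delta_2)$. Lemma~\ref{lem:conv-smoothing} only guarantees $\tilde f_i \in \BUC^k \cap C^\infty$ on $B(0;r)$; outside $B(0;r+\delta\!r)$ it equals $f_i$, and in the annulus it is a mixture that is only $\BUC^l$ wherever $f_i$ is. Consequently, at a point $x$ where $f$ is only $\BUC^l$ and some $\chi_j(x) \neq 0$ with $d(x,x_j) > r$, the term $\chi_j \tilde f_j$ is merely $\BUC^l$ near $x$, and so is $\tilde f$. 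Concretely, if all charts except the one at $x_{i_0}$ (with $d(x_{i_0},x) < \delta_1$) lie at distance $> r + \delta\!r$, then $\tilde f = \chi_{i_0}\tilde f_{i_0} + (1-\chi_{i_0})f$ near $x$, which inherits the limited smoothness of $f$. The remedy is to make the smoothing region contain the full support of the cut-off: take $r = \delta_2$ and introduce a third radius $\delta_3 \ge \delta_2 + 2\,\delta\!r$ so that $f$ is defined on the larger chart $B(x_i;\delta_3)$ and $\tilde f_i$ is $\BUC^k \cap C^\infty$ on all of $B(x_i;\delta_2) \supset \supp\chi_i$. This is exactly the nesting the paper uses ($\delta_1 < \delta_2 < \delta_3$, $r = \delta_2$). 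With that adjustment the rest of your argument goes through unchanged.
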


\begin{proof}
  We apply convolution smoothing of Lemma~\ref{lem:conv-smoothing} in
  each chart of a cover of $X$ and glue these together.

  Let $0 < \delta_1 < \delta_2 < \delta_3$ and let
  $\{B(x_i;\delta_2)\}_{i \ge 1}$ be a uniformly locally finite cover
  of $X$ obtained from Lemma~\ref{lem:unif-loc-cover}, such that the
  $\delta_1$\ndash sized sets already cover $X$, and the
  $\delta_3$\ndash sized sets still have normal coordinate charts.
  Lemma~\ref{lem:part-unity} yields a uniform partition of unity
  $\sum_{i \ge 1} \chi_i$ subordinate to this cover.

  In each chart $B(x_i;\delta_2)$ we apply
  Lemma~\ref{lem:conv-smoothing} to $f$ with $r = \delta_2$ and
  $2\,\delta\!r \le \delta_3 - \delta_2$. We obtain
  $\tilde{f}_i \in \BUC^k$ on each chart with uniformly bounded
  $C^k$\ndash norms and $\norm{\tilde{f}_i - f}_l$ can be made as
  small as required by choosing the parameter $\nu$ small. We glue
  these together to one function
  \begin{equation*}
    \tilde{f} = \sum_{i \ge 1} \chi_i\,\tilde{f}_i
  \end{equation*}
  defined globally on $X$ with the functions $\chi_i \in \BUC^k$.
  Together with the uniform bound on the number of charts in the cover
  that intersect any one point, this guarantees that $\tilde{f}$
  satisfies estimates equivalent to those of the $\tilde{f}_i$. Note
  that $\norm{\tilde{f}}_l$ does not depend on the smoothing parameter
  $\nu$, but the higher derivative norms do.
\end{proof}

\begin{remark}[On loss of smoothness]
\label{rem:loss-smoothness}
\index{smoothness!loss of}
We must carefully construct the system~\ref{eq:ODE-Y-lin} in order not
to lose one degree of smoothness, while at the same time retaining
exponential growth rates and proximity estimates.

The invariant complementary bundle $E^-$ is only continuous, while the
normal bundle of $M$ is only $C^{k-1}$, even if disguised in
coordinate expressions. We use the linearization at $y = 0$, but not
directly, since $\D_y v_y(\slot,0) \in \BUC^{k-1,\alpha}$ artificially
decreases the smoothness as well. The loss of smoothness
in~\cite{Sakamoto1990:invarmlfds-singpert} occurs for these reasons.
Note that even though we retain $C^{k,\alpha}$ smoothness by a
convolution smoothing, this does not preserve higher than $C^{k-1}$
bounds. This seems to be an artifact of the proof, inherent to the
partial linearization along $Y$.

In the proof of Theorem~\ref{thm:persistNHIMgen} we construct a
smoother, approximate manifold $M_\sigma$ exactly to circumvent these
problems. In the trivial bundle setting of
Theorem~\ref{thm:persistNHIMtriv} then, we must be careful not to
pick a representation that reintroduces this loss of smoothness. On
the other hand, we do not seem to obtain optimal results in the sense
that we require $\norm{h}_2$ small, while the classical results in the
compact case only require $M = \Graph(h) \in C^1$. Similarly,
$h \in \BC^k$ with $k \ge 3$ is assumed
in~\cite[p.~50]{Sakamoto1990:invarmlfds-singpert}, while hypothesis~H2
in~\cite[p.~987]{Bates1999:persist-overflow} is imposed to bound
`twisting' of the invariant manifold. This requirement seems closely
related to our condition on $h$, and is necessary for the same reason
as in our Theorem~\ref{thm:persistNHIMgen}: to construct a tubular
neighborhood of uniform size. I do not know whether these stronger
assumptions can be weakened or removed.
\end{remark}
\clearpage

\section{Growth estimates for the perturbed system}
\label{sec:growth-perturbed}

We shall finally put all the ingredients together to obtain
exponential growth estimates for perturbed flows contained in the
tubular neighborhood $\norm{y} \le \Ysize$ of $X \times Y$.
We write the perturbed vector field $\tilde{v}$ on $X \times Y$ as
\begin{equation}\label{eq:ODE-XY-mod}
  \begin{aligned}
    \dot{x} &= \tilde{v}_\sx(x,y),\\
    \dot{y} &= \tilde{v}_\sy(x,y) = A(x)\,y + \Ynonlin(x,y),\\
\text{where}\quad
     \Ynonlin(x,y) &= f(x,y) + \big(\tilde{v}_\sy(x,y) - v_\sy(x,y)\big).
  \end{aligned}
\end{equation}
\oldindex{$v_x$@$\vx$}
\oldindex{$f$@$\Ynonlin$}

Let us assume that the conditions of Theorem~\ref{thm:persistNHIMtriv} hold true.
First, if $\rho_M = 0$, then for any fixed $\fracdiff$ we can always
slightly increase\footnote{%
  We have $\rho_M \ge 0$ from~\ref{eq:NHIM-rates}. Note that we are
  interested in $-\rho_M$ for the stable side of the spectrum. In the
  rest of this chapter, all exponential rates will be negative.%
} to $\rho_M > 0$, such that the growth
rates~\ref{eq:NHIM-rates} and spectral gap condition
$\rho_- < -\fracdiff\,\rho_M$ still hold true; this way, we get
rid of degenerate exponentials in integrals. We have some `spectral
space' $\Delta\rho = \fracdiff\,\rho_M - \rho_- > 0$ that we use to
define modified \idx{exponential growth numbers}
\index{$\rho_\sx,\,\rho_\sy$}
\begin{equation}\label{eq:exp-est-XY-mod}
  \begin{alignedat}{2}
    \rho_\sx &= -\rho_M - \frac{\Delta\rho}{4},&\qquad
    C_\sx    &= 2\,\tilde{C}_M, \\
    \rho_\sy &=  \rho_- + \frac{\Delta\rho}{4},&
    C_\sy    &=    \tilde{C}_-.
  \end{alignedat}
\end{equation}
This allows us to get all perturbed flows within these slightly
modified growth rates, while we reserve another $\Delta\rho/2$
spectral space for later use, such as proving (higher order)
differentiability. Note that both $\rho_\sy,\,\rho_\sx$ are negative
since we focus on the stable normal bundle.

We first fix some notation to be used throughout the proof:
\begin{itemize}
\item $C_v$ denotes the global $C^{k,\alpha}$ bound on $v$ and
  $\tilde{v}$.
\item $\epsilon(\nu)$ denotes the perturbation size in
  Lemma~\ref{lem:VB-section-smoothing} depending on the smoothing
  convolution parameter $\nu$ from Lemma~\ref{lem:conv-smoothing},
  while $C_v(\nu)$ denotes the $C^{k,\alpha}$ bound on $A,\,\Ynonlin$,
  which may grow due to smoothing when $\nu \to 0$. We also have
  $\norm{A}_{k-1},\norm{\Ynonlin}_{k-1} \le C_v$.
  \index{$\nu$ (mollifier size)}
\item $\Vsize$ denotes a small bound both on the derivative of
  $\Ynonlin$ and on perturbations of the horizontal vector field on
  $X$, that is, we impose bounds
  \begin{equation*}
    \sup_{\substack{x \in X\\\norm{y} \le \Ysize}}\;
      \norm{\D\Ynonlin(x,y)} \le \Vsize
    \qquad\text{and}\qquad
    \sup_{\substack{x \in X\\\norm{y} \le \Ysize}}\;
      \norm{\vx(x,y) - v_\sx(x,h(x))} \le \Vsize,
  \end{equation*}
  and the size of $\Vsize$ will be controlled by
  $\delta,\,\sigma_1,\,\nu$, and $\Ysize$.
  \index{$\zeta$ (vector field change)}
\end{itemize}
Let us point out here that multiple parameters must be chosen small,
some dependent on other small parameters. The following graph shows
all dependencies; an arrow indicates that the choice of a parameter
influences the choice of the object pointed to.
\begin{equation}\label{eq:small-param-dep}
  \begin{aligned}
  \xymatrix@R=3em@C=4em{
      \delta
    & \sigma_1
    & \nu
    \\
      \Ysize           \ar[u]  \ar[ur]\ar[urr]!<0em,.2em> \ar@{<-}[dr]!<-1em,.5em>
    & \Vsize     \ar[l]\ar[ul] \ar[u] \ar[ur]             \ar@{<-}[d]
    & (\Xsize,T) \ar[l]\ar[ull]!<0em,.2em>\ar[ul]\ar[u]   \ar@{<-}[dl]!<1em,.5em>
    \\
    & *-={\text{other (small) constants and bounds}}
    &
  }
  \end{aligned}\\[4pt]
\end{equation}
The constants and bounds include~\ref{eq:exp-est-XY-mod} and $C_v$,
and are all fixed. Note that there are no circular dependencies, so we
are free to choose any of these parameters smaller if necessary
without the risk of having unsatisfiable constraints.

\enlargethispage{0.1cm}
By invariance of $M = \{ y = h(x) \}$ we have
\begin{equation}\label{eq:Vy-invariance}
  v_\sy(x,h(x)) = \der{y}{t} = \D h(x)\cdot v_\sx(x,h(x)).
\end{equation}
This can be used to estimate $\norm{v_\sy(x,h(x))} \le \sigma_1\,C_v$
and derived estimates, such as (taking the derivative with respect to
$x$)
\begin{equation}\label{eq:DxVy-bound}
  \begin{aligned}
       \norm{\D_x v_\sy\circ g}
  &\le \norm{\D_y v_\sy}\norm{\D h}
      +\norm{\D^2 h}\norm{v_\sy\circ g}\\
\con  +\norm{\D h}\big(\norm{\D_x v_\sx\circ g}
                      +\norm{\D_y v_\sx\circ g}\norm{\D h}\big)
  \le 4\,C_v\,\sigma_1
  \end{aligned}
\end{equation}
where $\sigma_1 \le 1$ has been assumed. Together with previous
estimates, this leads to
\begin{align*}
   \norm{f(x,0)}
  &=   \norm{v_\sy(x,0) - A(x)\cdot 0}\\
  &\le \norm{\D h(x)}\norm{v_\sx \circ g}_0
   \le \sigma_1\,C_v,
   \displaybreak[1]\\
   \norm{\D_x f(x,y)}
  &=   \norm{\D_x v_\sy(x,y) - \D A(x)\,y}\\
  &\le \begin{aligned}[t]
     & \norm{\D_x v_\sy(x,y) - \D_x v_\sy(x,h(x))}
      +\norm{\D_x v_\sy \circ g}_0\\
     &+\Big(\norm{\D(A - \D_y v_\sy \circ g)}_0
           +\norm{\D_x \D_y v_\sy}_0\Big)\,\norm{y}
       \end{aligned}\\
  &\le \epsilon_{\D_x v_\sy}(\Ysize + \sigma_1) + 4\,C_v\,\sigma_1
      + \big(\epsilon(\nu)+C_v\big)\,\Ysize,
   \displaybreak[1]\\
   \norm{\D_y f(x,y)}
  &=   \norm{\D_y v_\sy(x,y) - A(x)}\\
  &\le \norm{\D_y v_\sy(x,y) - \D_y v_\sy(x,h(x))}
      +\norm{\D_y v_\sy\circ g - A}_0\\
  &\le \epsilon_{\D_y v_\sy}(\Ysize + \sigma_1) + \epsilon(\nu),
   \displaybreak[1]\\
   \norm{\vx(x,y) - v_\sx(x,h(x))}
  &\le \norm{\vx(x,y) - v_\sx(x,y)} + \norm{v_\sx(x,y) - v_\sx(x,h(x))}\\
  &\le \delta + \epsilon_{v_\sx}(\Ysize + \sigma_1).
\end{align*}
Hence, $\norm{f(\slot,0)}_0$ can be made small independently of $\Ysize$,
while $\norm{f}_1,\,\norm{\vx(\slot,y) - v_\sx\circ g}_0 \le\Vsize$
can be obtained for any $\Vsize > 0$ depending on
$\delta,\,\Ysize,\,\sigma_1,\,\epsilon(\nu)$, and the continuity
moduli $\epsilon_{\D v_\sy},\,\epsilon_{v_\sx}$. So if we set
\begin{equation}\label{eq:Vsize}
  \Vsize = 5\,C_v\,\sigma_1 + 2\,\epsilon_{\D v_\sy}(\Ysize+\sigma_1)
          + \big(\epsilon(\nu) + C_v\big)\,\Ysize + \epsilon(\nu)
          + \epsilon_{v_\sx}(\Ysize+\sigma_1) + \delta,
\end{equation}
then $\norm{f}_1,\,\norm{\vx(\slot,y) - v_\sx\circ g}_0 \le \Vsize$
hold and $\Vsize$ is small when $\delta,\,\sigma_1,\,\nu,\,\Ysize$ are.

We need the following result to control the $C^{k-1}$ distance of the
perturbed manifold $\tilde{M}$ to $M$.
\begin{proposition}
  \label{prop:Ck-small-pert}
  For any $\epsilon > 0$, the nonlinearity $\Ynonlin$ and its partial
  derivatives with respect to $x \in X$ can be bounded as
  \begin{equation}\label{eq:Ynonlin-Ck-small}
    \forall\; 0 \le i \le k-1\colon \norm{\D_x^i \Ynonlin} \le \epsilon
  \end{equation}
  by choosing $\Ysize$, $\nu$, $\sigma_1$, $\norm{h}_k$,
  and $\norm{\tilde{v}-v}_{k-1}$ small enough.
\end{proposition}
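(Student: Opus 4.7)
The plan is to split $\Ynonlin(x,y) = f(x,y) + \bigl(\tilde{v}_\sy(x,y) - v_\sy(x,y)\bigr)$ and estimate each piece in $C_x^{k-1}$ separately. The second piece is immediate: its $C^{k-1}$ norm is bounded by $\norm{\tilde{v}-v}_{k-1}$, which is one of the free parameters and can be chosen below any prescribed $\epsilon/2$.

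For $f(x,y) = v_\sy(x,y) - A(x)\,y$, I would not Taylor-expand in $y$ around $0$ (that would cost a second $y$-derivative and push us out of $\BUC^{k,\alpha}$); instead I would expand around $y = h(x)$, using invariance of $M$. Writing
\begin{equation*}
  v_\sy(x,y) - v_\sy(x,h(x)) = \int_0^1 \D_y v_\sy\bigl(x,(1-s)h(x)+sy\bigr)\cdot(y-h(x))\,\d s
\end{equation*}
and substituting $v_\sy(x,h(x)) = \D h(x)\cdot v_\sx(x,h(x))$ from~\ref{eq:Vy-invariance}, $f$ splits into three terms: (i) $\D h(x)\cdot v_\sx(x,h(x))$; (ii) the integral $\int_0^1 \bigl[\D_y v_\sy(x,z_s) - A(x)\bigr]\,y\,\d s$ with $z_s = (1-s)h(x)+sy$; and (iii) $-\int_0^1 \D_y v_\sy(x,z_s)\cdot h(x)\,\d s$.

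Each of these is handled by the Leibniz/Faà di Bruno rule, noting that $v \in \BUC^{k,\alpha}$ supplies up to $k$ bounded derivatives of $v_\sy$ and $v_\sx$, while $\D_y v_\sy$ supplies up to $k-1$ bounded derivatives, so $k-1$ partial derivatives in $x$ are always admissible. Term (i) and term (iii) each carry a factor of $h$ or $\D h$, so taking $\D_x^i$ with $i \le k-1$ only involves derivatives of $h$ up to order $k$; by the standard product estimate their $C^{k-1}$ norms are bounded by $C\,\norm{h}_k$ (with $C$ depending on $C_v$). In term (ii), the factor $[\D_y v_\sy(x,z_s) - A(x)]$ is $C^{k-1}$-small by the triangle inequality: $\norm{A - \D_y v_\sy\circ g}_{k-1} \le \epsilon(\nu)$ from Lemma~\ref{lem:VB-section-smoothing}, while $\norm{\D_y v_\sy(\slot,z_s) - \D_y v_\sy\circ g}_{k-1}$ is small by uniform continuity of $\D_y v_\sy$ together with $\norm{z_s - h} \le \Ysize$ and $\norm{h}_k$ small (the latter to control $\D_x^i z_s$). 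Multiplied by $y$, term (ii) is bounded by $\Ysize\cdot\bigl(\epsilon(\nu) + \epsilon_{\D_y v_\sy}(\Ysize) + C\,\norm{h}_k\bigr)$.

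Combining, $\norm{\D_x^i f}_0 \le C\bigl(\norm{h}_k + \Ysize\,\epsilon(\nu) + \Ysize\,\epsilon_{\D_y v_\sy}(\Ysize) + \sigma_1\bigr)$ for $0 \le i \le k-1$, which can be made smaller than $\epsilon/2$ by first fixing $\Ysize$ and $\sigma_1$ small, then choosing $\nu$ small, and finally requiring $\norm{h}_k$ small. The main obstacle, and the reason the estimate stops at order $k-1$ rather than $k$, is precisely the term involving $A - \D_y v_\sy\circ g$: the smoothing in Lemma~\ref{lem:VB-section-smoothing} only gives $C^{k-1}$ proximity with arbitrarily small error (its $C^k$ norm blows up as $\nu \to 0$). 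This is the manifestation, at this preparatory stage, of the loss of one derivative discussed in Remark~\ref{rem:loss-smoothness}; it is also what forces the final theorem to measure $\tilde{M} - M$ only in $C^{k-1}$.
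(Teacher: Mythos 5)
Your proof is correct, and the essential ingredients match the paper's exactly: the invariance relation~\ref{eq:Vy-invariance} together with Proposition~\ref{prop:compfunc-deriv} controls $\D_x^i v_\sy(x,h(x))$ by $\norm{h}_k$, the smoothing bound $\norm{A - \D_y v_\sy\circ g}_{k-1}\le\epsilon(\nu)$ handles the $A$-term, $\norm{y}\le\Ysize$ supplies the remaining smallness, and the $C^k$ blow-up of the mollified $A$ is what caps the estimate at order $k-1$. The organization differs: the paper stays with the three-term decomposition $\Ynonlin = v_\sy(x,y) - A(x)y + (\tilde{v}_\sy - v_\sy)$, bounding $\D_x^i v_\sy(x,y)$ by a continuity modulus $\epsilon_{\D_x^i v_\sy}(\Ysize+\sigma_1)$ relative to $\D_x^i v_\sy(x,h(x))$ and crudely bounding $\D_x^i(A y)$ by $\norm{A}_{k-1}\Ysize$, whereas your FTC expansion of $f$ around $y=h(x)$ regroups so that $A$ is aligned against $\D_y v_\sy$ before the $\Ysize$-factor is applied, making your term (ii) small in both $\Ysize$ and $\nu$ simultaneously. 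Both routes work; the paper's is leaner, yours slightly sharper, but the extra sharpness is not needed since every piece is already individually small. Your parenthetical about expanding around $y=0$ costing a second $y$-derivative is slightly off: an FTC expansion around $0$ needs only $\D_y v_\sy$, exactly as yours does; the genuine reason to anchor at $y=h(x)$ is that invariance makes the constant term vanish up to $\D h$-factors directly, whereas anchoring at $y=0$ would add one more comparison of $v_\sy(x,0)$ to $v_\sy(x,h(x))$.
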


The idea of the proof is the following. If $M$ is described exactly by
$h(x) \equiv 0$, then by invariance we have $v_\sy(x,0) \equiv 0$
(cf.~\ref{eq:Vy-invariance}), hence $\D_x^i v_\sy(x,0) \equiv 0$ as
well. We adapt the proof to incorporate small perturbations introduced
by the nonzero function $h$ and the convolution smoothing of~$A$.

\begin{proof}
  Note that $\Ynonlin$ is defined by~\ref{eq:ODE-XY-mod}
  and~\ref{eq:ODE-Y-lin} as
  \begin{equation}\label{eq:Ck-small-Ynonlin-def}
    \Ynonlin(x,y) = v_\sy(x,y) - A(x) \cdot y
                   +\big[\tilde{v}_\sy(x,y) - v_\sy(x,y)\big],
  \end{equation}
  where $A$ is defined as a convolution smoothing of
  $\D_y v_\sy \circ g$ such that
  $\norm{A - \D_y v_\sy \circ g}_{k-1} \le \epsilon(\nu)$. The term in
  brackets obviously becomes small when $\norm{\tilde{v}-v}_{k-1}$
  does. For the second term note that $\norm{y} \le \Ysize$, while
  $\norm{A}_{k-1}$ is bounded close to
  $\norm{\D_y v_\sy \circ g}_{k-1}$, which in turn can be estimated by
  $\norm{\D_y v_\sy}_{k-1} \le C_v$ and $\norm{h}_{k-1}$ after
  application of Proposition~\ref{prop:compfunc-deriv}.

  For the first term in~\ref{eq:Ck-small-Ynonlin-def} we use the
  continuity modulus of $\D_x^i v_\sy$ to estimate
  \begin{equation*}
    \norm{\D_x^i v_\sy(x,y)}
    \le \norm{\D_x^i v_\sy(x,h(x))}
        +\epsilon_{\D_x^i v_\sy}\big(\norm{y - h(x)}\big),
  \end{equation*}
  while $\norm{y - h(x)} \le \Ysize + \sigma_1$. We
  insert~\ref{eq:Vy-invariance} and apply
  Proposition~\ref{prop:compfunc-deriv} another time to obtain
  \begin{equation*}
      \D_x^i v_\sy(x,h(x))
    = \D_x^i\big[\D h(x)\cdot v_\sx(x,h(x))\big]
     -\!\!\sum_{\substack{l \ge 0, m \ge 1\\l+m \le i}}\!
         \D_x^l \D_y^m v_\sy(x,h(x)) \cdot P_{m,i-l}\big(\D^\bullet h(x)\big).
  \end{equation*}
  This expression can be made small since
  $\norm{\D_x^l \D_y^m v_\sy} \le C_v$ and each term contains at least
  one factor $\D^j h(x)$ for some $0 \le j \le k$.
\end{proof}

\begin{remark}\label{rem:Ck-small-optimal}
  Note that we cannot improve the result to a $C^k$ size estimate,
  since $\norm{A}_k \le C(\nu)$ may grow with $\nu \to 0$, while
  compensating this by choosing $\Ysize$ smaller would introduce a
  circular dependency in~\ref{eq:small-param-dep}.
\end{remark}

As the next step, we will derive exponential growth estimates for the
perturbed system~\ref{eq:ODE-XY-mod}. More generally, we consider the
horizontal flow $\Phi_y$ and vertical, linear flow $\Psi_x$ generated
by
\index{$\Phi_y$}
\index{$\Psi_x$}
\begin{subequations}\label{eq:ODE-XY-flowgen}
  \begin{align}
    \dot{x} &= \vx(x,y), \label{eq:ODE-X-flowgen}\\
    \dot{y} &= A(x)\,y,  \label{eq:ODE-Y-flowgen}
  \end{align}
\end{subequations}
with specific curves $y\colon I \to Y$ and $x\colon I \to X$
substituted, respectively. The following series of lemmas and
propositions show that these flows are small perturbations of the
flows of~\ref{eq:ODE-XY-restr} and satisfy exponential growth
rates~\ref{eq:exp-est-XY-mod}. We prove the nonlinear case on $X$
and the linear case on $Y$ separately, since we use $C^1$ smoothness
for the nonlinear case, while only continuity can be assumed for the
linear case.

\begin{lemma}[Growth estimates for a perturbed system]
  \label{lem:pert-exp-growth}
  Let $X$ be a Riemannian manifold and let the system
  $\dot{x}= v(t,x)$ with $v,\,\D_x v \in \BC^0$ have flow $\Phi$ with
  exponential growth estimate
  \begin{equation}\label{eq:exp-growth-X}
    \forall\; x_0 \in X,\,t \le t_0 \colon
    \norm*{\D\Phi(t,t_0,x_0)} \le C\,e^{\rho(t-t_0)}.
  \end{equation}

  Let $\tilde{v} = v + r$ be a perturbed system generating a flow\/
  $\tilde{\Phi}$. For each $\tilde{\rho} < \rho$ and $\tilde{C} > C$,
  there exists a $\delta > 0$, such that if
  $\norm{r}_0,\,\norm{\D_x r}_0 < \delta$, then $\tilde{\Phi}$
  satisfies the growth estimate~\ref{eq:exp-growth-X} with
  $\tilde{\rho}$ and $\tilde{C}$ inserted.
\end{lemma}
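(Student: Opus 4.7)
The plan is to exploit the spectral gap $\rho - \tilde\rho > 0$ by iterating a short-time perturbation estimate. Over a single long but fixed time step of length $T$, continuous dependence of the flow on the vector field makes $\D\tilde\Phi$ a uniformly small perturbation of $\D\Phi$; composing $n$ such steps converts this bounded-time $\delta$-perturbation into a genuine exponential bound with the slightly weaker rate $\tilde\rho$.

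First I would fix a time step $T > 0$ and invoke the continuous dependence of flows on their generating vector field (cf.\ Appendix~\ref{chap:smoothflows} and Theorem~\ref{thm:unif-smooth-flow}): for any prescribed $\epsilon > 0$ there is a $\delta > 0$ such that whenever $\norm{r}_0, \norm{\D_x r}_0 < \delta$, the variational flows of $v$ and $\tilde v$ differ by at most $\epsilon$, uniformly over $y \in X$ and $\tau \in \R$,
\begin{equation*}
  \norm{\D\tilde\Phi(\tau - T, \tau, y) - \D\Phi(\tau - T, \tau, y)} \le \epsilon.
\end{equation*}
Combined with the hypothesized $\norm{\D\Phi(\tau - T, \tau, y)} \le C\,e^{-\rho T}$, this yields the per-step estimate $\norm{\D\tilde\Phi(\tau - T, \tau, y)} \le C\,e^{-\rho T} + \epsilon$.

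Next I would choose $T$ large enough that $C\,e^{-\rho T} \le \tfrac{1}{2}\,e^{-\tilde\rho T}$, which is possible because $\tilde\rho < \rho$ (both negative in the application) forces $e^{(\rho - \tilde\rho)T} \to \infty$ as $T \to \infty$. With $T$ fixed, I would then shrink $\delta$ so that also $\epsilon \le \tfrac{1}{2}\,e^{-\tilde\rho T}$, yielding the clean per-step bound $\norm{\D\tilde\Phi(\tau - T, \tau, y)} \le e^{-\tilde\rho T}$, uniformly in $\tau$ and $y$. For any $t \le t_0$, writing $t_0 - t = nT + \theta$ with $\theta \in \intvCO{0}{T}$ and setting $x_i = \tilde\Phi(t_0 - iT, t_0, x_0)$, the chain rule factors $\D\tilde\Phi(t, t_0, x_0)$ as a composition of $n$ backward steps of length $T$ followed by a residual step of length $\theta$. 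The $n$ per-step bounds multiply to $e^{-n\tilde\rho T}$, and a trivial Gronwall estimate controls the residual step by $e^{(\norm{\D v}_0 + \delta)\,\theta}$; together these give $\norm{\D\tilde\Phi(t, t_0, x_0)} \le \tilde C\,e^{\tilde\rho(t - t_0)}$, where $\tilde C$ depends only on $T$ and $\norm{\D v}_0$ and absorbs the $\theta$-remainder via $\tilde\rho < 0$ and $\theta < T$.

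The main obstacle is securing the quantitative, globally uniform-in-$y$ version of continuous dependence used in the per-step estimate. On a bounded interval $[\tau - T, \tau]$ the orbits of $v$ and $\tilde v$ starting at the same point stay within $\delta\,(e^{\norm{\D v}_0 T} - 1)/\norm{\D v}_0$ of each other by Gronwall, and the variational equations along $\gamma$ and $\tilde\gamma$ can then be compared by another Gronwall argument using uniform boundedness of $\D_x v$ together with its (uniform) continuity modulus, the latter being available in the application where $v \in \BUC^{k,\alpha}$. The resulting constant $\tilde C$ is not arbitrarily close to $C$ since it inherits a factor $e^{O(T)}$ from the residual step, but this is sufficient in the uses of the lemma below, where constants such as $C_\sx = 2\,\tilde C_M$ from~\ref{eq:exp-est-XY-mod} are fixed with ample room to spare.
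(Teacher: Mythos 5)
Your overall strategy is the same as the paper's: extract a quantitative per-step bound on a fixed compact interval $[-T,0]$ from continuous dependence of the flow on the vector field, then iterate via the group property to convert a bounded-interval estimate into an exponential one with the slightly worse rate $\tilde\rho$. So the plan is sound and matches the intended argument.

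However, as you yourself flag in the last paragraph, your version does not prove the lemma as stated: the lemma asserts that \emph{for each} $\tilde\rho < \rho$ \emph{and} $\tilde C > C$ one can find a suitable $\delta$, whereas your residual-step Gronwall estimate forces $\tilde C$ up to something of size $e^{O(T)}$, and $T$ was already tied to the gap $\rho - \tilde\rho$. The reason you run into this is that you only use the per-step estimate \emph{at the endpoint} $t - t_0 = -T$. Continuous dependence of the flow on $v$ gives you more: it gives a uniform estimate over the whole compact window, namely
\[
\sup_{\substack{x_0 \in X\\ -T \le t - t_0 \le 0}} \norm{\D\tilde{\Phi}(t,t_0,x_0)}\, e^{-\rho(t-t_0)} \le \tilde{C}
\]
once $\delta$ is small (since the closeness of $\D\tilde\Phi$ to $\D\Phi$ is uniform on the bounded interval, and $e^{-\rho(t-t_0)}$ is bounded there). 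If you use \emph{that} as your per-step bound, the residual piece $\tau \in \intvCO{0}{T}$ is handled by the same inequality, no extra Gronwall factor appears, and, using $\rho(-\tau) \le \tilde\rho(-\tau)$ for $\tau \ge 0$ when $\tilde\rho < \rho$, the $n$-fold composition gives precisely
\[
\norm{\D\tilde\Phi(t,t_0,x_0)} \le \big(\tilde C\, e^{-\rho T}\big)^n\, \tilde C\, e^{-\rho\tau} \le \tilde C\, e^{\tilde\rho(t-t_0)},
\]
with the prescribed $\tilde C$, not an inflated one. That one modification closes the gap between your proposal and the actual statement.
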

Note that this lemma is formulated in backward time.

\begin{proof}
  Choose $T > 0$ sufficiently large such that
  $\tilde{C}\,e^{\rho(-T)} \le e^{\tilde{\rho}(-T)}$. By continuous
  dependence of the solutions of differential equations on parameters
  (see Theorem~\ref{thm:unif-smooth-flow} and
  Remark~\ref{rem:smooth-flow-timedep}), a $C^1$ small perturbation
  $r$ results in a $C^1$ small perturbed flow $\tilde{\Phi}$ on
  compact time intervals $-T \le t - t_0 \le 0$. This result is uniform
  in $t_0,t$ when $v,r \in \BC^1$, where differentiation is
  understood with respect to $x$ only. Hence we obtain
  \begin{equation*}
    \sup_{\substack{x_0 \in X\\-T \le t - t_0 \le 0}}\;
      \norm*{\D\tilde{\Phi}(t,t_0,x_0)}\,e^{-\rho(t-t_0)} \le \tilde{C}
  \end{equation*}
  if $\delta$ is chosen sufficiently small. Writing
  $t - t_0 = -(n\,T + \tau)$ with $n \in \N,\,\tau \in \intvCO{0}{T}$, we
  use the group property of the flow to obtain
  \begin{equation*}
        \norm*{\D\tilde{\Phi}(t,t_0,x_0)}
    \le {\left(\tilde{C}\,e^{\rho(-T)}\right)}^n\,\tilde{C}\,e^{\rho(-\tau)}
    \le e^{\tilde{\rho}\,n(-T)}\,\tilde{C}\,e^{\rho(-\tau)}
    \le \tilde{C}\,e^{\tilde{\rho}(t-t_0)}. \qedhere
  \end{equation*}
\end{proof}

\begin{lemma}[Perturbation of linear flow]
  \label{lem:pert-lin}
  Let\/ $Y$ be a Banach space and let $A \in \BC^0\big(\R;\CLin(Y)\big)$
  generate a flow $\Psi(t,t_0)$ with growth estimate
  \begin{equation}\label{eq:exp-growth-Y}
    \forall\; t \ge t_0 \colon
    \norm*{\Psi(t,t_0)} \le C\,e^{\rho(t-t_0)}.
  \end{equation}
  Let $\tilde{\rho} > \rho$ be given and set
  $\delta = \frac{\tilde{\rho} - \rho}{C} > 0$. If\/
  $B \in \BC^0\big(\R;\CLin(Y)\big)$ is globally bounded by $\delta$,
  then the flow $\tilde{\Psi}(t,t_0)$ of $\tilde{A}(t) = A(t) + B(t)$
  satisfies~\ref{eq:exp-growth-Y} with $\tilde{\rho}$ inserted.
\end{lemma}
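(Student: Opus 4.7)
The plan is to use the variation of constants formula to express $\tilde{\Psi}$ in terms of $\Psi$ and the perturbation $B$, and then apply Gronwall's inequality to absorb the perturbation into the exponential rate.

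First, I would write the Duhamel representation
\begin{equation*}
  \tilde{\Psi}(t,t_0) = \Psi(t,t_0) + \int_{t_0}^t \Psi(t,\tau)\,B(\tau)\,\tilde{\Psi}(\tau,t_0)\,\d\tau,
\end{equation*}
which is verified by differentiating both sides in $t$ and noting that both sides satisfy the initial value problem $\dot{Z} = (A+B)Z$, $Z(t_0,t_0) = \Id$; uniqueness of solutions to the linear ODE in $\CLin(Y)$ then gives equality.

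Next, I would insert the growth estimate~\ref{eq:exp-growth-Y} for $\Psi$ and the bound $\|B\|_0 \le \delta$ into this formula to obtain, for $t \ge t_0$,
\begin{equation*}
  \norm{\tilde{\Psi}(t,t_0)} \le C\,e^{\rho(t-t_0)} + \int_{t_0}^t C\,e^{\rho(t-\tau)}\,\delta\,\norm{\tilde{\Psi}(\tau,t_0)}\,\d\tau.
\end{equation*}
Setting $f(t) = \norm{\tilde{\Psi}(t,t_0)}\,e^{-\rho(t-t_0)}$ and multiplying through by $e^{-\rho(t-t_0)}$ turns this into the scalar inequality
\begin{equation*}
  f(t) \le C + C\,\delta \int_{t_0}^t f(\tau)\,\d\tau,
\end{equation*}
to which the classical Gronwall inequality applies, yielding $f(t) \le C\,e^{C\,\delta\,(t-t_0)}$.

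Finally, the choice $\delta = (\tilde{\rho}-\rho)/C$ gives $C\,\delta = \tilde{\rho} - \rho$, so that $f(t) \le C\,e^{(\tilde{\rho}-\rho)(t-t_0)}$, i.e.\ $\norm{\tilde{\Psi}(t,t_0)} \le C\,e^{\tilde{\rho}(t-t_0)}$, as claimed. No step presents a real obstacle here: the only mild subtlety is to note that the same constant $C$ (and not a larger one) is recovered, which comes out for free because $f(t_0) = 1 \le C$ and the Gronwall bound starts from the value $C$ at $t=t_0$.
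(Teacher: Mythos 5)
Your proof is correct, and it takes a cleaner route than the paper does. Both start from the same Duhamel (variation of constants) representation of $\tilde{\Psi}$, but you then normalize by $e^{-\rho(t-t_0)}$ and apply the classical integral Gronwall inequality with constant coefficients, which gives the result in one line. The paper, by contrast, expresses a worry that the "standard Gronwall context" does not apply because there is no bound on $A$; it then verifies by direct computation that $\tilde{\psi}(t,t_0) = C\,e^{\tilde{\rho}(t-t_0)}$ solves the analogous integral equation with $B$ replaced by $\delta$, and proves $\norm{\tilde{\Psi}(t,t_0)} \le \tilde{\psi}(t,t_0)$ by a contradiction argument at the first time the inequality could fail. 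That concern is unnecessary once one normalizes as you do: after dividing out $e^{\rho(t-t_0)}$, the kernel $C\,e^{\rho(t-\tau)}\,\delta$ in the integral collapses to the constant $C\delta$, and Gronwall applies directly without ever touching $\|A\|$. The only point worth making explicit (which the paper also does not dwell on) is that $f(t) = \norm{\tilde{\Psi}(t,t_0)}\,e^{-\rho(t-t_0)}$ is continuous — this follows from $\tilde{\Psi}$ being a flow of a linear ODE with continuous bounded coefficients — so that the integral inequality and Gronwall's lemma are legitimately applicable. In short, your approach and the paper's are logically equivalent; yours is the standard and more economical one.
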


\begin{proof}
  The variation of constants integral equation for $\tilde{\Psi}$ is
  \begin{equation}\label{eq:var-const-Psi}
    \tilde{\Psi}(t,t_0)
    = \Psi(t,t_0) + \int_{t_0}^t \Psi(t,\tau)\,B(\tau)\,\tilde{\Psi}(\tau,t_0) \d\tau.
  \end{equation}
  We shall prove the estimate for $\tilde{\Psi}$ with an approach
  inspired by Gronwall's lemma. Note that our variation of constants
  formula~\ref{eq:exp-growth-Y} is slightly different from the
  standard context of Gronwall's lemma, since we do not have a
  bound for $A$.

  We denote by $\psi(t,t_0) = C\,e^{\rho(t-t_0)}$ the bound on $\Psi$.
  Now $\tilde{\psi}(t,t_0) = C\,e^{\tilde{\rho}(t-t_0)}$ satisfies the
  integral equation
  \begin{equation}\label{eq:var-const-normPsi}
    \tilde{\psi}(t,t_0)
    = \psi(t,t_0) + \int_{t_0}^t \psi(t,\tau)\,\delta\,\tilde{\psi}(\tau,t_0) \d\tau
  \end{equation}
  when $\delta\,C = \tilde{\rho} - \rho$. We verify this by
  calculating the right-hand side:
  \begin{align*}
&\nop  C\,e^{\rho(t-t_0)} + \int_{t_0}^t C\,e^{\rho(t-\tau)}\,\delta\,
                                         C\,e^{\tilde{\rho}(\tau-t_0)} \d\tau\\
    &= C\,e^{\rho(t-t_0)}\Big[1 +
         \delta\,C\, \int_{t_0}^t e^{(\tilde{\rho}-\rho)(\tau-t_0)} \d\tau \Big]\\
    &= C\,e^{\rho(t-t_0)}\Big[1 +
         \frac{\delta\,C}{\tilde{\rho}-\rho}\,
         \big(e^{(\tilde{\rho}-\rho)(t-t_0)} - 1\big)\Big]\\
    &= C\,e^{\rho(t-t_0)}\,e^{(\tilde{\rho}-\rho)(t-t_0)}\\
    &= C\,e^{\tilde{\rho}(t-t_0)}.
  \end{align*}

  Next, we prove by contradiction that
  \begin{equation*}
    \norm[\big]{\tilde{\Psi}(t,t_0)} \le \tilde{\psi}(t,t_0).
  \end{equation*}
  Thus, let
  \begin{equation*}
    t_1 = \inf\; \set[\big]{t \in \R}{t \ge t_0 \;\text{ and }\;
                    \norm{\tilde{\Psi}(t,t_0)} > \tilde{\psi}(t,t_0)}.
  \end{equation*}
  Note that $\tilde{\Psi}$ is the solution of a differential equation,
  hence continuous. We write
  $\norm[\big]{\tilde{\Psi}(t,t_0)} = \tilde{\psi}(t,t_0) + f(t)$, so
  we may assume that $f(t) \le 0$ for $t \in \intvCC{t_0}{t_1}$, but
  there exist $t \in \intvOC{t_1}{t_2}$ arbitrary close to $t_1$ such that
  $f(t) > 0$. Let $f|_\intvCC{t_1}{t_2}$ attain its supremum at $t$,
  thus we have
  \begin{equation*}
    \sup_\intvCC{t_1}{t} f = f(t) > 0.
  \end{equation*}

  We insert these estimates into the integral
  equality~\ref{eq:var-const-Psi} and obtain
  \begin{align*}
    \norm[\big]{\tilde{\Psi}(t,t_0)} = \tilde{\psi}(t,t_0) + f(t)
    &\le \psi(t,t_0) + \int_{t_0}^t \psi(t,\tau)\,\delta\,\big(\tilde{\psi}(\tau,t_0) + f(\tau)\big) \d\tau\\
    &\le \tilde{\psi}(t,t_0) + \int_{t_1}^t \psi(t,\tau)\,\delta\,f(\tau) \d\tau\\
    &\le \tilde{\psi}(t,t_0)
        +(t_1 - t)\,\delta\sup_{\tau \in \intvCC{t_1}{t}} \psi(t,\tau)\;
         \sup_{\intvCC{t_1}{t}}\, f,
  \end{align*}
  where we used that $\tilde{\psi}(t,t_0)$
  satisfies~\ref{eq:var-const-normPsi} and that
  $f|_\intvCC{t_0}{t_1} \le 0$. Now we choose $t_2$ and therefore $t$
  sufficiently small that
  $(t - t_1)\,\delta\,\sup_{\tau \in \intvCC{t_1}{t}}\, \psi(t,\tau) \le q < 1$,
  which leads to the contradiction
  \begin{equation*}
    f(t) \le q\,\sup_{\intvCC{t_1}{t}}\, f < f(t).
  \end{equation*}
\end{proof}

\begin{proposition}[Perturbation of $X$\ndash flow estimate]
  \label{prop:pert-x}
  If\/ $\delta,\,\Ysize,\,\sigma_1$ are sufficiently small, then the
  flow of~\ref{eq:ODE-X-flowgen} satisfies the modified exponential
  growth estimates~\ref{eq:exp-est-XY-mod} for any
  $y \in \overline{B_\Ysize(\R;Y)}$ inserted.
\end{proposition}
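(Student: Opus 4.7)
The plan is to derive this as a direct application of Lemma~\ref{lem:pert-exp-growth}, using the reference flow $\Phi^t$ generated by $\dot{x} = v_\sx(g(x))$ on $X$, which satisfies $\norm{\D\Phi^t(x_0)} \le \tilde{C}_M\,e^{-\rho_M\,t}$ for $t \le 0$ by construction in Section~\ref{sec:preparation}. I want to realize the flow $\Phi_y$ generated by~\ref{eq:ODE-X-flowgen} as a $C^1$-small, time-dependent perturbation of this reference flow, and then read off the perturbed growth estimates from the lemma.

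First I would write the perturbation term explicitly as
\begin{equation*}
r(t,x) = \vx(x,y(t)) - v_\sx(g(x))
       = \bigl[\vx(x,y(t)) - v_\sx(x,y(t))\bigr]
       + \bigl[v_\sx(x,y(t)) - v_\sx(x,h(x))\bigr],
\end{equation*}
so that $\norm{r}_0$ splits into a piece bounded by $\norm{\tilde{v}-v}_0 \le \delta$ and a piece bounded by the uniform continuity modulus $\epsilon_{v_\sx}(\Ysize+\sigma_1)$, since $\norm{y(t)-h(x)} \le \Ysize+\sigma_1$. Both pieces can be made arbitrarily small by shrinking $\delta,\,\Ysize,\,\sigma_1$, and in fact this has already been absorbed into the bound $\Vsize$ defined in~\ref{eq:Vsize}.

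Next I would compute the spatial derivative $\D_x r(t,x) = \D_x\vx(x,y(t)) - \D_x v_\sx(x,h(x)) - \D_y v_\sx(x,h(x))\cdot\D h(x)$ and split it in the analogous way: the $\D_x$ difference of $\vx$ and $v_\sx$ is controlled by $\norm{\tilde{v}-v}_1 \le \delta$; the $x$-continuity modulus $\epsilon_{\D_x v_\sx}(\Ysize+\sigma_1)$ controls the evaluation jump from $y(t)$ to $h(x)$; and the leftover chain-rule contribution is bounded by $C_v\,\sigma_1$ since $\norm{\D h} \le \sigma_1$. All three pieces are uniformly small in $(t,x)$ and depend only on $\delta,\Ysize,\sigma_1$, not on the particular choice of $y \in \overline{B_\Ysize(\R;Y)}$.

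With $\rho = -\rho_M$ and $C = \tilde{C}_M$ for the reference flow, the target exponents $\tilde{\rho} = \rho_\sx = -\rho_M - \Delta\rho/4$ and $\tilde{C} = C_\sx = 2\tilde{C}_M$ from~\ref{eq:exp-est-XY-mod} satisfy $\tilde{\rho} < \rho$ and $\tilde{C} > C$, so Lemma~\ref{lem:pert-exp-growth} applies once $\norm{r}_0$ and $\norm{\D_x r}_0$ are made smaller than the $\delta$ produced by that lemma. The only genuine point to check is that this $C^1$-smallness of $r$ is uniform over all admissible $y$, but this is automatic because every bound above depends on $y$ only through its supremum $\Ysize$. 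The main obstacle is therefore purely bookkeeping: organizing the chain of dependencies in~\ref{eq:small-param-dep} so that after fixing the target $(\rho_\sx,C_\sx)$ (equivalently the $\delta$ from Lemma~\ref{lem:pert-exp-growth}), we still have the freedom to shrink $\Ysize,\sigma_1$ and $\delta$ to meet the $C^1$-perturbation threshold, which is possible precisely because these parameters are strictly upstream of $\Vsize$.
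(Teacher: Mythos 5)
Your proof follows the same route as the paper: realize $\Phi_y$ as the flow of a time-dependent $C^1$-small perturbation $r(t,x)=\vx(x,y(t))-v_\sx(g(x))$ of the reference system $v_\sx\circ g$, split $r$ and $\D_x r$ into a $\norm{\tilde{v}-v}_1\le\delta$ piece and a continuity-modulus piece, and then invoke Lemma~\ref{lem:pert-exp-growth} with $\rho=-\rho_M$, $\tilde{\rho}=\rho_\sx$, $C=\tilde{C}_M$, $\tilde{C}=C_\sx=2\tilde{C}_M$. In fact you are slightly more careful than the paper's own proof: you explicitly retain the chain-rule term $\D_y v_\sx(x,h(x))\cdot\D h(x)$ coming from $\D(v_\sx\circ g)$ and bound it by $C_v\,\sigma_1$, whereas the paper's displayed estimate for $\norm{\D_x v(t,x)-\D(v_\sx\circ g)(x)}$ silently drops it; either way it is small, but your accounting is the cleaner one.
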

Here $\overline{B_\Ysize(\R;Y)}$ denotes the closed ball of radius
$\Ysize$ in the space of bounded continuous functions $\R \to Y$.

\begin{proof}
  Define the non-autonomous system $v(t,x) = \vx(t,x,y(t))$. This
  system is a $C^1$ small perturbation of $(v_\sx \circ g)(x)$,
  uniformly in $t$:
  \begin{align*}
         \norm{v(t,x) - (v_\sx \circ g)(x)}
    &\le \norm{\vx(x,y(t)) - v_\sx(x,y(t))}
        +\norm{v_\sx(x,(t)) - v_\sx(x,h(x))}\\
    &\le \delta + \epsilon_{\D v}(\Ysize + \sigma_1),\\[8pt]
         \norm{\D_x v(t,x) - \D (v_\sx \circ g)(x)}
    &\le \norm{\D_x \vx(x,y(t)) - \D_x v_\sx(x,y(t))}\\
\con    +\norm{\D_x v_\sx(x,y(t)) - \D_x v_\sx(x,h(x))}\\
    &\le \delta + \epsilon_{\D v}(\Ysize + \sigma_1),
  \end{align*}
  where $\epsilon_{\D v}$ denotes the uniform continuity modulus of
  $v$ and its first derivative, which can be made small by choice of
  $\Ysize,\,\sigma_1$. We apply Lemma~\ref{lem:pert-exp-growth} to
  obtain exponential growth numbers $C_\sx,\,\rho_\sx$
  for~\ref{eq:ODE-X-flowgen} by choosing
  $\delta + \epsilon_{\D v}(\Ysize+\sigma_1)$ sufficiently small.
\end{proof}

The following definition and lemma for flows on $X$ are again
formulated in backward time, similar to
Lemma~\ref{lem:pert-exp-growth}.
\begin{definition}[Approximate solution]
  \label{def:approx-sol}
  \index{approximate solution}
  \index{$\beta$ (approximate solution distance)}
  Let\/ $X$ be a Riemannian manifold and $v(t,x)$ a time-dependent
  vector field on $X$. We call a continuous curve $x\colon \R \to X$ a
  $(\Xsize,T)$\ndash approximate solution of $v$ if for each interval
  $\intvCC{t_2}{t_1} \subset \R$ with $t_1 - t_2 \le T$ and associated
  exact solution curve $\xi$ of $v$ with initial condition
  $\xi(t_1) = x(t_1)$, it holds that
  \begin{equation}\label{eq:approx-sol}
    \sup\limits_{t_2 \le t \le t_1} d(x(t),\xi(t)) < \Xsize.
  \end{equation}
\end{definition}
It would have been easier to define approximate solutions as $C^1$
curves $x$ such that $\norm{\dot{x}(t) - v(t,x(t))} < \Xsize$. We
shall want to work with $C^0$\ndash norms, though, and $C^1$ curves do
not form a complete space under such norms. We use this continuous
curve definition to avoid any complications associated with
non-completeness. We still have the following result, as a discretized
variant on variation by constants estimates.
\begin{lemma}[Growth of approximate solutions]
  \label{lem:approx-growth}
  Let\/ $X$ be a Riemannian manifold, $v(t,x)$ a time-dependent vector
  field on $X$, and $x$ a $(\Xsize,T)$\ndash approximate solution of
  $v$. Assume that $v$ generates a flow $\Phi^{t,t_0}$ that satisfies
  the exponential growth estimate~\ref{eq:exp-growth-X} with
  $C \ge 1,\,\rho < 0$. Let $\xi_0$ denote the exact solution of $v$
  with initial condition $\xi_0(0) = x(0)$.

  Then the distance $d_\rho(x,\xi_0)$ is finite on the interval\/
  $\intvOC{-\infty}{0}$, and explicitly bounded by
  \begin{equation}\label{eq:approx-growth}
    d_\rho(x,\xi_0) \le \Xsize\Big(1 + \frac{C}{1-e^{\rho\,T}}\Big).
  \end{equation}
\end{lemma}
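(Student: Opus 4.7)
The strategy is to partition $\intvOC{-\infty}{0}$ into intervals $I_n = \intvCC{-(n+1)T}{-nT}$ for $n = 0, 1, 2, \ldots$, introduce a family of exact solution curves adapted to the approximate solution $x$, and telescope. Specifically, for each $n \ge 0$ I would let $\xi_n$ denote the exact solution of $v$ with initial condition $\xi_n(-nT) = x(-nT)$, so that the $\xi_0$ appearing in the statement agrees with this definition. On $I_n$, the approximate solution property (applied with $t_1 = -nT$, $t_2 = -(n+1)T$) yields directly that $d(x(t),\xi_n(t)) < \Xsize$.

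The key intermediate step is to bound $d(\xi_n(t),\xi_0(t))$ by a geometric sum. For this I would compare neighboring exact solutions $\xi_k$ and $\xi_{k-1}$. Applying the approximate solution property on $I_{k-1}$ at the left endpoint $-kT$ gives $d(x(-kT),\xi_{k-1}(-kT)) < \Xsize$, and since $\xi_k(-kT) = x(-kT)$, we obtain $d(\xi_k(-kT),\xi_{k-1}(-kT)) < \Xsize$. Both $\xi_k$ and $\xi_{k-1}$ are exact solutions, so for $t \le -kT$ the exponential growth estimate~\ref{eq:exp-growth-X} yields (by a standard mean value argument along a short geodesic connecting the two starting points and integrating the tangent flow, valid provided $\Xsize$ is small enough to stay within the domain where the bound holds)
\begin{equation*}
  d(\xi_k(t),\xi_{k-1}(t)) \le C\,e^{\rho(t+kT)}\,\Xsize.
\end{equation*}
Telescoping, for $t \in I_n$ and $k \le n$ we get
\begin{equation*}
  d(\xi_n(t),\xi_0(t)) \le \sum_{k=1}^{n} C\,e^{\rho(t+kT)}\,\Xsize
    = C\,\Xsize\,e^{\rho t}\sum_{k=1}^n e^{\rho kT}
    \le \frac{C\,\Xsize\,e^{\rho t}}{1-e^{\rho T}},
\end{equation*}
since $\rho < 0$ makes this a convergent geometric sum in $q = e^{\rho T} \in \intvOO{0}{1}$.

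Combining with $d(x(t),\xi_n(t)) < \Xsize$ via the triangle inequality and multiplying by the weight $e^{-\rho t}$, I obtain for all $t \in I_n$
\begin{equation*}
  d(x(t),\xi_0(t))\,e^{-\rho t}
  \;<\; \Xsize\,e^{-\rho t} + \frac{C\,\Xsize}{1-e^{\rho T}}.
\end{equation*}
Taking the supremum over $t \in \intvOC{-\infty}{0}$ (where $e^{-\rho t} \le 1$) and over $n$ gives the claimed bound $d_\rho(x,\xi_0) \le \Xsize\bigl(1 + C/(1 - e^{\rho T})\bigr)$.

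The only delicate point is the tangent flow estimate on the manifold $X$: passing from the pointwise bound on $\D\Phi$ to the distance bound on $d(\xi_k,\xi_{k-1})$ requires connecting $\xi_k(-kT)$ and $\xi_{k-1}(-kT)$ by a curve (e.g.\ a minimizing geodesic of length $<\Xsize$) and integrating $\norm{\D\Phi(t,-kT,\cdot)}$ along its push-forward. This is routine provided $\Xsize$ is smaller than the injectivity radius of $X$; otherwise one breaks the curve into short pieces and iterates. Every other step is essentially bookkeeping with the geometric series.
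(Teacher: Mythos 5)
Your proof is correct and follows essentially the same route as the paper: the same partition of $\intvOC{-\infty}{0}$ into length-$T$ intervals, the same family $\xi_n$ of exact solutions re-anchored at the points $x(-nT)$, the same telescoping via the triangle inequality, and the same geometric series in $q = e^{\rho T}$. The only cosmetic difference is that you run the estimate pointwise in $t$ and divide by $e^{\rho t}$ at the end, while the paper phrases the intermediate comparisons directly as $d_\rho$-bounds on half-lines $\intvOC{-\infty}{-(i+1)T}$ (which also lets it absorb a factor $e^{\rho i T} \le 1$ rather than $e^{-\rho t} \le 1$); the final constant is identical. Your closing caveat about the injectivity radius is unnecessary and you essentially answer it yourself: to pass from $\norm{\D\Phi} \le C e^{\rho(t-t_0)}$ to a distance bound one simply pushes forward an arbitrary connecting curve and takes the infimum of lengths, so no smallness of $\Xsize$ relative to the injectivity radius is needed.
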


\begin{proof}
  Let $\xi_i$ with $i \in \N$ be the associated exact solutions of $x$ that
  satisfy~\ref{eq:approx-sol} on the interval $\intvCC{-(i+1)T}{-i\,T}$.
  We have $d\big(\xi_i(-(i+1)T),\xi_{i+1}(-(i+1)T)\big) < \Xsize$.
  Hence, $d_\rho(\xi_i,\xi_{i+1}) < \Xsize\,C\,e^{\rho(i+1)T}$
  on the interval $\intvOC{-\infty}{(i+1)T}$ by the exponential growth
  estimate.

  Thus on each interval $\intvCC{-(i+1)T}{-i\,T}$ we can use the
  triangle inequality to estimate
  \begin{align*}
    d_\rho(x,\xi_0)
    &\le d_\rho(x,\xi_i) + \sum_{j=0}^{i-1} d_\rho(\xi_j,\xi_{j+1})\\
    &<   \Xsize\,e^{\rho\,i\,T}
        +\sum_{j=0}^{i-1} \Xsize\,C\,e^{\rho(j+1)T}\\
    &\le \Xsize\Big(1 + \frac{C}{1-e^{\rho\,T}}\Big).
  \end{align*}
  The union of all such intervals is $\intvOC{-\infty}{0}$
  hence~\ref{eq:approx-growth} follows.
\end{proof}

\begin{proposition}[Perturbation of $Y$\ndash flow estimate]
  \label{prop:pert-y}
  Let $x$ be a $(\Xsize,T)$\ndash approximate solution to $v_\sx \circ g$.
  If\/ $T$ is sufficiently large and $\sigma_1,\,\nu,\,\Xsize$ are
  sufficiently small, then the flow $\Psi_x$ of $A(x(t))$
  has exponentially bounded growth as specified
  in~\ref{eq:exp-est-XY-mod}, that is,
  $\norm{\Psi_x(t,t_0)} \le C_\sy\,e^{\rho_\sy(t-t_0)}$ for all
  $t \ge t_0$.
\end{proposition}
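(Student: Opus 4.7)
The plan is to compare $\Psi_x$ piecewise to the reference flow $\hat{\Psi}_{\xi_i}$ generated by $\hat{A}$ along an exact solution $\xi_i$ of $v_\sx\circ g$, apply Lemma~\ref{lem:pert-lin} on each subinterval of length at most $T$, and then chain the resulting local estimates via the cocycle property of $\Psi_x$, absorbing the accumulated geometric factors into a slightly worse exponent. The key observation motivating this strategy is that $x$ need not be close to any single exact solution over all of $\R$, but by the approximate-solution property it is close to some exact solution on every interval of length $\le T$.

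To begin, I would partition $[t_0,t]$ into $N=\lceil(t-t_0)/T\rceil$ consecutive subintervals $[s_i,s_{i+1}]$ of length at most $T$. For each $i$, Definition~\ref{def:approx-sol} supplies an exact solution $\xi_i$ of $v_\sx\circ g$ with $\xi_i(s_i)=x(s_i)$ and $\sup_{s\in[s_i,s_{i+1}]} d(x(s),\xi_i(s))<\Xsize$. On that subinterval the flow generated by $\hat{A}(\xi_i(\slot))$ is precisely $\hat{\Psi}_{\xi_i}$, which by \ref{eq:ODE-Y-est} satisfies $\norm{\hat{\Psi}_{\xi_i}(s,r)}\le\tilde{C}_-\,e^{\rho_-(s-r)}$. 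The generator difference on $[s_i,s_{i+1}]$ then splits as
\begin{equation*}
A(x(s)) - \hat{A}(\xi_i(s)) = \bigl[A(x(s)) - A(\xi_i(s))\bigr] + \bigl[A(\xi_i(s)) - \hat{A}(\xi_i(s))\bigr].
\end{equation*}
The first summand is bounded by $\epsilon_A(\Xsize)$ using uniform continuity of $A$ and the $\Xsize$\ndash closeness above, while the second is bounded by $\norm{A-\hat{A}}_0$, which by \ref{eq:Ahat-DyVy-est}, the estimate \ref{eq:DxVy-bound}, and the smoothing bound $\norm{A-\D_y v_\sy\circ g}_{k-1}\le\epsilon(\nu)$ does not exceed $\epsilon(\nu) + 4C_v\sigma_1\norm{\pi_{E^-\!,N}}_0$. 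Choosing $\Xsize,\,\nu,\,\sigma_1$ small, the total perturbation can be made less than $(\rho_\sy-\rho_-)/(2\tilde{C}_-)$, and Lemma~\ref{lem:pert-lin} applied on each subinterval yields
\begin{equation*}
\norm{\Psi_x(s_{i+1},s_i)} \le \tilde{C}_-\,e^{\rho'(s_{i+1}-s_i)}, \qquad
\rho' = \tfrac{1}{2}(\rho_- + \rho_\sy) < \rho_\sy.
\end{equation*}

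Composing these local estimates through the cocycle identity $\Psi_x(t,t_0) = \Psi_x(t,s_{N-1})\circ\cdots\circ\Psi_x(s_1,t_0)$ produces
\begin{equation*}
\norm{\Psi_x(t,t_0)} \le \tilde{C}_-^{N}\,e^{\rho'(t-t_0)}
\le \tilde{C}_-\,e^{(\rho' + (\log\tilde{C}_-)/T)(t-t_0)}.
\end{equation*}
The main obstacle, and the reason the statement requires $T$ to be large, is precisely to absorb the geometric factor $\tilde{C}_-^{N-1}$ into the exponential: this requires $\rho' + (\log\tilde{C}_-)/T \le \rho_\sy$, which is possible only because $\rho' < \rho_\sy$ strictly, and amounts to choosing
\begin{equation*}
T \;\ge\; \frac{2\,\log\max(\tilde{C}_-,1)}{\rho_\sy - \rho_-}.
\end{equation*}
With such a $T$ fixed first, and the smallness constraints on $\Xsize,\nu,\sigma_1$ chosen subsequently to satisfy the perturbation bound above, the estimate collapses to $\norm{\Psi_x(t,t_0)}\le C_\sy\,e^{\rho_\sy(t-t_0)}$ with $C_\sy=\tilde{C}_-$ exactly as in \ref{eq:exp-est-XY-mod}, in accordance with the parameter dependency diagram \ref{eq:small-param-dep}.
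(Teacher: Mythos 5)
Your proof is correct and follows essentially the same approach as the paper: on each length-$\le T$ subinterval you compare $\Psi_x$ to the reference flow $\hat\Psi$ along a local exact solution, split $A(x)-\hat A(x_i)$ into a continuity term and a smoothing/$\sigma_1$ term, invoke Lemma~\ref{lem:pert-lin} to gain the intermediate rate $\rho' = \tilde\rho = \tfrac12(\rho_-+\rho_\sy)$, and then chain via the cocycle property, using largeness of $T$ to absorb the accumulated constants into the remaining spectral gap $\rho_\sy - \tilde\rho$. The paper phrases the chaining as $t-t_0 = nT + \tau$ and bounds $(\tilde C_- e^{\tilde\rho T})^n$ directly, while you express the same thing via $N$ subintervals and $\tilde C_-^{N-1}\le e^{(\log\tilde C_-/T)(t-t_0)}$; these are equivalent and yield the identical threshold on $T$.
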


\begin{proof}
  Let $\tilde{\rho} = \mfrac{1}{2}(\rho_- + \rho_\sy) < \rho_\sy$ and
  choose $T > 0$ sufficiently large that
  $\tilde{C}_-\,e^{\tilde{\rho}\,T} \le e^{\rho_\sy\,T}$.
  Let $x_i$ be an exact solution to $v_\sx \circ g$ such that
  $\sup_{t \in \intvCC{t_i}{t_i+T}} d(x(t),x_i(t)) \le \Xsize$ per
  Definition~\ref{def:approx-sol}, hence the flow $\hat{\Psi}$ of
  $\hat{A}(x_i(t))$ satisfies~\ref{eq:ODE-Y-est}, that is,
  $\norm{\hat{\Psi}^t} \le \tilde{C}_-\,e^{\rho_-\,t}$.
  We decompose $A(x(t)) = \hat{A}(x_i(t)) + B(t)$ and estimate
  \begin{equation*}
    \begin{aligned}
      \norm{B(t)}
      &=     \norm[\big]{A(x(t)) - \hat{A}(x_i(t))}\\
      &\le   \norm[\big]{A(x(t)) - A(x_i(t))}
            +\norm[\big]{A(x_i(t)) - (\D_y v_\sy \circ g)(x_i(t))}\\
      \con  +\norm[\big]{(\D_y v_\sy \circ g)(x_i(t)) - \hat{A}(x_i(t))}\\
      &\le \norm{\D A}\,d(x(t),x_i(t))
            + \epsilon(\nu) + 4\,C_v\,\sigma_1\,2\,\norm{\pi_{E^-}}.
    \end{aligned}
  \end{equation*}
  Note that $\norm{A}_1$ is bounded close to
  $\norm{\D_y v_\sy}_1 \le C_v$, and~\ref{eq:Ahat-DyVy-est} 
  and~\ref{eq:DxVy-bound} 
  were used to estimate the third
  term. We thus have $\norm{B(t)} \le \delta$ for any $\delta > 0$
  when $\sigma_1,\,\nu,\,\Xsize$ are sufficiently small. Hence by
  Lemma~\ref{lem:pert-lin}, we have
  $\norm{\Psi_x(\tau,\tau_0)} \le \tilde{C}_-\,e^{\tilde{\rho}(\tau-\tau_0)}$
  for any $\tau,\tau_0 \in \intvCC{t_i}{t_i+T}$.

  Now we cover the interval $\intvCC{t_0}{t}$ by intervals
  $\intvCC{t_0+(i-1)T}{t_0+i\,T}$ with corresponding exact solutions
  $x_i$ that approximate $x$. As in the proof of Lemma~\ref{lem:pert-exp-growth}, we write
  $t-t_0 = n\,T + \tau$ and use the group property of the flow to
  obtain
  \begin{equation*}
        \norm*{\Psi_x(t,t_0)}
    \le {\left(\tilde{C}_-\,e^{\tilde{\rho}\,T}\right)}^n\,
               \tilde{C}_-\,e^{\tilde{\rho}\,\tau}
    \le e^{\rho_\sy\,n\,T}\,\tilde{C}_-\,e^{\rho_\sy\,\tau}
    =   \tilde{C}_-\,e^{\rho_\sy(t-t_0)}.
  \end{equation*}
  We note that $C_\sy = \tilde{C}_-$ to complete the proof.
\end{proof}

Using these results, we choose $T$ sufficiently large and
$\delta,\,\Ysize,\,\sigma_1,\,\Xsize,\,\nu$ sufficiently small that
the modified flows $\D\Phi_y,\,\Psi_x$ satisfy exponential growth
rates~\ref{eq:exp-est-XY-mod} when curves
$y \in \overline{B_\Ysize(\R;Y)}$ and $(\Xsize,T)$\ndash approximate
solutions $x \in C^0(\R;X)$ are inserted.

\begin{lemma}[Variation of linear flow]
  \label{lem:var-lin}
  Let\/ $X$ be a metric space and\/ $Y$ a Banach space and let
  $A \in \BUC^\alpha(X;\CLin(Y))$ be a family of linear operators on $Y$ that
  depends uniformly $\alpha$\ndash H\"older continuous on $x \in X$, with H\"older
  coefficient $C_\alpha$ and $0 < \alpha \le 1$. Let\/ $\Psi_x$ denote
  the flow of $A$ under a curve $x \in C(I;X)$, and assume that it
  satisfies the exponential growth condition~\ref{eq:exp-growth-Y}.

  Then the variation of the flow satisfies the H\"older-like
  estimate
  \begin{equation}\label{eq:psi-growth}
    \norm{(\Psi_1 - \Psi_2)(t,\tau)}
    \le \frac{C_\alpha\,C^2}{-\alpha\,\rho}\,e^{\rho(t-\tau)}\,
        d_\rho(x_1,x_2)^\alpha\,e^{\alpha\,\rho\,\tau}
  \end{equation}
  when $d_\rho(x_1,x_2)$ is finite.
\end{lemma}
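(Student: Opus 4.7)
The plan is to derive the estimate from the standard difference-of-flows identity, combined with H\"older continuity of $A$ and the definition of the $\rho$-weighted distance.

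First I would write down the variation-of-constants identity for the difference of two linear flows. Viewing $s \mapsto \Psi_1(t,s)\,\Psi_2(s,\tau)$ on $\intvCC{\tau}{t}$ and differentiating, the backward equation $\der{}{s}\Psi_1(t,s) = -\Psi_1(t,s)\,A(x_1(s))$ and the forward equation $\der{}{s}\Psi_2(s,\tau) = A(x_2(s))\,\Psi_2(s,\tau)$ combine to give
\begin{equation*}
  (\Psi_1 - \Psi_2)(t,\tau)
  = \int_\tau^t \Psi_1(t,s)\,\bigl[A(x_1(s)) - A(x_2(s))\bigr]\,\Psi_2(s,\tau) \d s.
\end{equation*}
Both $\Psi_1(t,s)$ and $\Psi_2(s,\tau)$ satisfy the exponential growth bound~\ref{eq:exp-growth-Y}, so each contributes a factor $C\,e^{\rho(t-s)}$ and $C\,e^{\rho(s-\tau)}$ respectively, whose product is $C^2\,e^{\rho(t-\tau)}$, independently of $s$.

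Next I would convert the integrand's middle factor into the $d_\rho$ language. Uniform $\alpha$-H\"older continuity of $A$ gives
\begin{equation*}
  \norm{A(x_1(s)) - A(x_2(s))} \le C_\alpha\,d(x_1(s),x_2(s))^\alpha
  \le C_\alpha\,d_\rho(x_1,x_2)^\alpha\,e^{\alpha\rho s},
\end{equation*}
by definition of $d_\rho$ in~\ref{eq:rho-dist}. Substituting into the integral, pulling constants out, yields
\begin{equation*}
  \norm{(\Psi_1-\Psi_2)(t,\tau)}
  \le C_\alpha\,C^2\,d_\rho(x_1,x_2)^\alpha\,e^{\rho(t-\tau)}
      \int_\tau^t e^{\alpha\rho s} \d s.
\end{equation*}

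Finally I would bound the remaining scalar integral. Since $\alpha\rho < 0$, one has $\int_\tau^t e^{\alpha\rho s} \d s = (e^{\alpha\rho t} - e^{\alpha\rho\tau})/(\alpha\rho) \le e^{\alpha\rho\tau}/(-\alpha\rho)$, which is exactly the remaining factor in~\ref{eq:psi-growth} and matches the general estimate~\ref{eq:exp-int}. The only subtlety — and the reason the hypothesis $d_\rho(x_1,x_2) < \infty$ is essential — is that the $d_\rho$-bound on $d(x_1(s),x_2(s))$ grows like $e^{\rho s}$, which for $\rho < 0$ decays as $s\to -\infty$ and so keeps the integrand integrable on the backward half-line as well; there is no real obstacle beyond chasing signs and verifying that the exponentials combine as claimed.
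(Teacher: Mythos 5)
Your proof is correct and follows essentially the same route as the paper's: write the difference of flows as a variation-of-constants integral, apply the H\"older modulus of $A$ together with the definition of $d_\rho$ to the middle factor, and bound the remaining exponential integral by $e^{\alpha\rho\tau}/(-\alpha\rho)$. One small slip in the closing remark: for $\rho<0$ the factor $e^{\rho s}$ \emph{grows} (not decays) as $s\to-\infty$, but since the integral here runs over the finite interval $\intvCC{\tau}{t}$ this has no bearing on the lemma itself.
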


\begin{proof}
  Let $d_\rho(x_1, x_2)$ be finite, let $\Psi_1, \Psi_2$ be the
  associated flows of $A$ and denote $\Upsilon = \Psi_1 - \Psi_2$. We
  have for $\Upsilon$ the differential equation
  \begin{equation*}
    \der{}{t} \Upsilon(t,\tau)
    = A(x_1(t))\,\Upsilon(t,\tau)
    +\big[A(x_1(t)) - A(x_2(t))\big]\,\Psi_2(t,\tau),
    \qquad \Upsilon(t,t) = 0.
  \end{equation*}
  By variation of constants we obtain
  \begin{align*}
    \norm{\Upsilon(t,\tau)}
    &\le \int_\tau^t \norm{\Psi_1(t,\sigma)}\,
                     C_\alpha\,d(x_1(\sigma),x_2(\sigma))^\alpha\,
                     \norm{\Psi_2(\sigma,\tau)} \d\sigma \\
    &\le C_\alpha\,C^2\,e^{\rho(t-\tau)}
         \int_\tau^t d_\rho(x_1,x_2)^\alpha\,e^{\alpha\,\rho\,\sigma} \d\sigma\\
    &\le \frac{C_\alpha\,C^2}{-\alpha\,\rho}\,e^{\rho(t-\tau)}\,
         d_\rho(x_1,x_2)^\alpha\,e^{\alpha\,\rho\,\tau}. \qedhere
  \end{align*}
\end{proof}

\section{Existence and Lipschitz regularity}\label{sec:NHIM-lipschitz}

We start with proving Lipschitz estimates for two mappings onto curves
in $X, Y$, respectively. These mappings will be combined to a
contraction mapping $T$. Its fixed points parametrized by $x_0 \in X$
will correspond to the unique solution curves of the modified
system~\ref{eq:ODE-XY-mod} that stay bounded.

\index{$B^rY$@$\BY$}
\index{$I$}
Let $B^\rho(I;Y)$ denote the Banach space of exponentially bounded,
continuous curves in $Y$ on the interval $I = \intvOC{-\infty}{0}$ and
recall that in this chapter we always assume $\rho < 0$. Additionally,
we denote by $\BY = B^\rho(I;Y) \cap B_\Ysize(I;Y)$ the subset of
curves $y(t)$ which are moreover globally smaller than $\Ysize$. The
closure of $\BY$ is given by
$\BYc = B^\rho(I;Y)\cap\overline{B_\Ysize(I;Y)}$.
\begin{proposition}
  \label{prop:B_Ysize-closed}
  The space $\BYc$ is a closed subspace of the Banach space
  $B^\rho(I;Y)$, hence a complete metric space.
\end{proposition}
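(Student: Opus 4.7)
The plan is to show that $\BYc$ is a closed subset of $B^\rho(I;Y)$; completeness then follows because a closed subset of a complete metric space is complete in the induced metric. Note that here ``subspace'' must be interpreted topologically rather than linearly, since the closed $\Ysize$-ball is not a linear subspace.

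First I would observe that $\BYc$ really sits inside $B^\rho(I;Y)$. Since $\rho < 0$ and $t \le 0$ on $I = \intvOC{-\infty}{0}$, we have $e^{-\rho t} \le 1$, so any $y$ with $\|y\|_0 \le \Ysize$ automatically satisfies $\|y\|_\rho \le \Ysize$. Thus the inclusion $\BYc \hookrightarrow B^\rho(I;Y)$ is well defined and continuous.

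Next, the key observation is that for every fixed $t \in I$, the evaluation map
\begin{equation*}
  \mathrm{ev}_t \colon B^\rho(I;Y) \to Y, \qquad f \mapsto f(t),
\end{equation*}
is continuous (in fact bounded linear), because $\|f(t)\| = \|f(t)\|\,e^{-\rho t}\,e^{\rho t} \le e^{\rho t}\,\|f\|_\rho$. Consequently, for each $t \in I$ the set $\{f \in B^\rho(I;Y) : \|f(t)\| \le \Ysize\} = \mathrm{ev}_t^{-1}\big(\overline{B(0;\Ysize)}\big)$ is closed in $B^\rho(I;Y)$, and
\begin{equation*}
  \BYc = \bigcap_{t \in I}\; \mathrm{ev}_t^{-1}\big(\overline{B(0;\Ysize)}\big)
\end{equation*}
is an intersection of closed sets, hence closed.

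Equivalently, one can argue directly by contradiction: if $y_n \in \BYc$ converges to $y$ in $\|\cdot\|_\rho$ but $\|y\|_0 > \Ysize$, pick $t_0 \in I$ with $\|y(t_0)\| > \Ysize$; then $\|y(t_0) - y_n(t_0)\| \ge \|y(t_0)\| - \Ysize > 0$, which forces $\|y - y_n\|_\rho \ge e^{-\rho t_0}\,(\|y(t_0)\| - \Ysize) > 0$ uniformly in $n$, contradicting convergence. Either way, $\BYc$ is closed in the Banach space $B^\rho(I;Y)$ and so is a complete metric space in the induced distance. I do not anticipate any real obstacle here; the only thing to be careful about is the sign convention for $\rho$ on the interval $I$, which in this chapter is negative and bounded above at $0$, so the pointwise evaluation bound goes through cleanly.
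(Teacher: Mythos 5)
Your proof is correct and uses the same argument as the paper: writing $\BYc$ as the intersection $\bigcap_{t \in I}\mathrm{ev}_t^{-1}(\overline{B(0;\Ysize)})$ of closed preimages under the continuous evaluation maps. The extra contradiction argument and the remark that $\overline{B_\Ysize(I;Y)} \subset B^\rho(I;Y)$ (because $\rho<0$ on $I=\R_{\le 0}$) are sound but not needed.
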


\begin{proof}
  Consider the evaluation mapping
  $\text{ev}_t \colon B^\rho(I;Y) \to Y \colon y \mapsto y(t)$. For
  each fixed $t \in I$ this is a continuous mapping as
  $\norm{y(t)} \le \norm{y}_\rho\,e^{\rho\,t}$ with $e^{\rho\,t}$ a
  finite number.

  Let $R = \overline{B(0;\Ysize)}$ be the closed ball in $Y$, then we
  have
  \begin{equation*}
    \BYc = \bigcap_{t \in I} \; \text{ev}_t^{-1}(R)
  \end{equation*}
  as an intersection of closed preimages under $\text{ev}_t$, hence
  closed.
\end{proof}

\index{$B^rX$@$\BX$}
For curves $x(t)$ in $X$, we cannot construct a similar space
$B^\rho(I;X)$ as $X$ is not a normed linear space. Instead we
construct a (not necessarily complete) metric space. Let
$\mathcal{B}^\rho(I;X) = \big(C^0(I;X),d_\rho\big)$ denote the space
of continuous curves equipped with the metric~\ref{eq:rho-dist} (which
is allowed to take the value $\infty$) and let $\BX$ be the subset of
curves $x \in C^0(I;X)$ that are $(\Xsize,T)$\ndash approximate
solutions to $v_\sx\circ g$ according to
Definition~\ref{def:approx-sol}. We suppress the dependence on $T$ from
the notation (note that $T$ in the equation below is a completely
different object); both $\Xsize$ and $T$ were fixed once and for all to
fulfill the requirements of Proposition~\ref{prop:pert-y}, we keep
just the subscript $\Xsize$ as a reminder and to distinguish from the
space $\mathcal{B}^\rho(I;X)$. Let $\rho < \rho_\sx$. Then exact
solutions of $v_\sx\circ g$ have finite $d_\rho$ distance on $I$, and
by Lemma~\ref{lem:approx-growth} the distance of any two curves
$x_1, x_2 \in \BX$ is finite, too.

\index{$T$}
\oldindex{$T_\sx,\,T_\sy$}
We write $T = T_\sy \circ \big(T_\sx\,,\,\text{pr}_1\big)$ with
\begin{equation}\label{eq:Txy}
  \begin{alignedat}{2}
  T    &\colon \BYc \times X   &&\to \BY,\\
  T_\sx&\colon \BYc \times X   &&\to \BX,\\
  T_\sy&\colon \BX  \times \BY &&\to \BY \subset \BYc,
  \end{alignedat}
\end{equation}
for any $\rho_\sy < \rho < \rho_\sx$. The map $T_\sx$ is defined by
the flow $\Phi_y$ of $\vx(\slot,y(t))$ with initial value $x_0 \in X$,
that is,
\begin{equation}\label{eq:Tx}
  T_\sx(y,x_0)(t) = \Phi_y(t,0,x_0).
\end{equation}
In~\cite{Henry1981:geom-semilin-parab-PDEs}, the $T_\sy$ part of the
\idx{contraction operator} is indirectly defined by another contraction.
Instead, here, we will set up $T_\sy$ as a direct mapping
\begin{equation}\label{eq:Ty}
  T_\sy(x,y)(t) =
      \int_{-\infty}^t \Psi_x(t,\tau)\,\Ynonlin(x(\tau),y(\tau)) \d\tau,
\end{equation}
where $\Psi_x$ is the flow of $A(x(t))$. This should ease proving
smoothness properties of $T$, which will subsequently imply smoothness
of the invariant manifold.

\begin{remark}
  Note that $\BX$ is not a Banach space or even a complete metric
  space. It will only appear as an intermediate space in the
  composition $T = T_\sy \circ \big(T_\sx\,,\,\text{pr}_1\big)$
  though, so this does not affect the Banach fixed point arguments, as
  long as the mappings $T_\sx, T_\sy$ compose to a contraction $T$ on
  $\BYc$ uniformly in the parameter $x_0 \in X$.
\end{remark}

The following two propositions show that the maps $T_\sx,\,T_\sy$ do
indeed map into their specified codomains, when parameters are chosen
sufficiently small.
\begin{proposition}
  \label{prop:bound-y}
  If\/ $\Vsize$ is chosen such that~\ref{eq:Vsize-bound-y} holds and
  $\delta,\,\sigma_1$ are sufficiently small, then $T_\sy$ maps into
  $B_\Ysize(I;Y)$.
\end{proposition}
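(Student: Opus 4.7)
The plan is to directly estimate $\norm{T_\sy(x,y)(t)}$ by plugging the exponential bound on $\Psi_x$ and a sup-norm bound on $\Ynonlin$ into the integral~\ref{eq:Ty}, then choosing parameters so the result stays below $\Ysize$.

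First, I would invoke Proposition~\ref{prop:pert-y}: since $x \in \BX$ is a $(\Xsize,T)$\ndash approximate solution of $v_\sx \circ g$ and the parameters $\sigma_1,\nu,\Xsize$ have been chosen small enough with $T$ large enough, we have $\norm{\Psi_x(t,\tau)} \le C_\sy\,e^{\rho_\sy(t-\tau)}$ for all $t \ge \tau$, with $\rho_\sy < 0$. Next I would bound the integrand pointwise: splitting $\Ynonlin(x(\tau),y(\tau))$ along the segment from $(x(\tau),0)$ to $(x(\tau),y(\tau))$ and using $\norm{\D\Ynonlin}_0 \le \Vsize$ (from $\norm{f}_1 \le \Vsize$ together with $\norm{\tilde{v}-v}_1 < \delta \le \Vsize$) gives
\begin{equation*}
   \norm{\Ynonlin(x(\tau),y(\tau))}
   \le \norm{\Ynonlin(x(\tau),0)} + \Vsize\,\norm{y(\tau)}.
\end{equation*}
The first term is estimated by $\norm{f(\slot,0)}_0 + \norm{\tilde{v}_\sy - v_\sy}_0 \le \sigma_1\,C_v + \delta$, as computed in the list of bounds preceding~\ref{eq:Vsize}. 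Since $y \in \BY$ satisfies $\norm{y(\tau)} \le \Ysize$, the integrand is uniformly bounded by $C_\sy\,e^{\rho_\sy(t-\tau)}\,(\sigma_1\,C_v + \delta + \Vsize\,\Ysize)$.

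Applying the exponential integral estimate~\ref{eq:exp-int} then yields
\begin{equation*}
   \norm{T_\sy(x,y)(t)}
   \le \frac{C_\sy}{\abs{\rho_\sy}}\,\big(\sigma_1\,C_v + \delta + \Vsize\,\Ysize\big),
\end{equation*}
uniformly in $t \le 0$, so $T_\sy(x,y) \in B_\Ysize(I;Y)$ provided the right-hand side does not exceed $\Ysize$. This is precisely the content of the bound referred to as~\ref{eq:Vsize-bound-y}: the presumed shape is that $\Vsize$ (which already absorbs $\sigma_1\,C_v$ and $\delta$ through~\ref{eq:Vsize}) is chosen small enough that $C_\sy\,\Vsize/\abs{\rho_\sy} \le \Ysize/2$ (say), while $\sigma_1,\delta$ are further reduced so that $C_\sy(\sigma_1\,C_v + \delta)/\abs{\rho_\sy} \le \Ysize/2$. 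No fixed-point argument or continuity issue appears at this stage, so the only obstacle is bookkeeping: confirming that the constants $\sigma_1$ and $\delta$ can indeed be chosen independently of $\Vsize$ within the dependency graph~\ref{eq:small-param-dep}, which they can since $\Vsize$ sits strictly above them in that hierarchy. Also note that $T_\sy(x,y)$ lies in $B^\rho(I;Y)$ automatically: the same computation with $e^{-\rho t}$ inserted gives $\norm{T_\sy(x,y)}_\rho$ finite for any $\rho \in \intvOO{\rho_\sy}{0}$, so the map lands in $\BY$ as claimed.
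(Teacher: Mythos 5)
Your proof is correct and follows essentially the same route as the paper: invoke Proposition~\ref{prop:pert-y} for the growth estimate on $\Psi_x$, split $\Ynonlin$ along the segment $(x(\tau),0)$ to $(x(\tau),y(\tau))$, bound $\norm{\Ynonlin(x(\tau),0)} \le C_v\sigma_1 + \delta$ and $\norm{\D_y\Ynonlin}\norm{y(\tau)} \le \Vsize\,\Ysize$, then integrate. The only discrepancy is in your conjectured form of~\ref{eq:Vsize-bound-y} (the paper's condition is $C_\sy\Vsize/(-\rho_\sy) \le \tfrac{1}{2}$, targeting exactly the $\Vsize\,\Ysize$ term, rather than $\le \Ysize/2$), but you explicitly flagged this as a guess and your version is merely more restrictive, so the logic is unaffected.
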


\begin{proof}
  The conditions of Proposition~\ref{prop:pert-y} are
  satisfied for any $x \in \BX$, so the flow $\Psi_x$ of
  system~\ref{eq:ODE-Y-flowgen} satisfies exponential growth estimates
  with numbers $\rho_\sy,\,C_\sy$.

  Now, $T_\sy$ maps into $B_\Ysize(I;Y)$ since
  \begin{equation}\label{eq:contr-y-bound}
    \begin{aligned}
         \norm*{T_\sy(x,y)(t)}
    &\le \int_{-\infty}^t \norm{\Psi(t,\tau)}\,
                     \big(\norm{\Ynonlin(x(\tau),0)}
                         +\norm{\D_y \Ynonlin}\,\norm{y(\tau)}\big) \d\tau\\
    &\le \frac{C_\sy}{-\rho_\sy}\,
         (C_v\,\sigma_1 + \delta + \Vsize\,\eta)
    \end{aligned}
  \end{equation}
  which can be made smaller than $\Ysize$ by choosing $\Vsize$ such
  that
  \begin{equation}\label{eq:Vsize-bound-y}
    \frac{C_\sy\,\Vsize}{-\rho_\sy} \le \frac{1}{2}
  \end{equation}
  holds, as well as $\delta,\,\sigma_1$ sufficiently small.
\end{proof}
This shows that $T_\sy$ is well-defined in~\ref{eq:Txy} after
choosing $\Vsize,\,\delta,\,\sigma_1$ possibly smaller. Note that the
choice of $\Vsize$ does not depend on any of the other small bounds.
Similarly, we verify that $T_\sx$ maps into $\BX$.
\begin{proposition}
  \label{prop:bound-x}
  If\/ $\delta,\,\sigma_1$, and $\Ysize$ are chosen sufficiently
  small, then $T_\sx$ maps into $\BX$.
\end{proposition}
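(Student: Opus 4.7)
The plan is to show that $x := T_\sx(y,x_0)$ satisfies the approximate-solution condition of Definition~\ref{def:approx-sol} by a Gronwall comparison, on any subinterval of length at most $T$, between $x$ and the exact solution $\xi$ of $v_\sx\circ g$ with $\xi(t_1)=x(t_1)$. Note that $T$ was already fixed in Proposition~\ref{prop:pert-y}, and the Lipschitz constant $L := \norm{v_\sx\circ g}_1 \le C_v(1+\sigma_1)$ of the reference vector field is a fixed quantity (not depending on the remaining small parameters), so it suffices to make the pointwise drift difference small enough to drive a Gronwall bound below $\Xsize$ on time intervals of length $T$.

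First I would estimate, along the two trajectories,
\begin{equation*}
  \norm{\vx(x(t),y(t)) - (v_\sx\circ g)(\xi(t))}
  \le \norm{\vx(x(t),y(t)) - (v_\sx\circ g)(x(t))}
     +\norm{(v_\sx\circ g)(x(t)) - (v_\sx\circ g)(\xi(t))}.
\end{equation*}
The first summand is controlled by the same decomposition already used to derive~\ref{eq:Vsize}: it is bounded by $\delta + \epsilon_{v_\sx}(\Ysize+\sigma_1) \le \Vsize$, using $\norm{\vx - v_\sx}_0 \le \delta$, uniform continuity of $v_\sx$, and $\norm{y(t)-h(x(t))} \le \Ysize + \sigma_1$. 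The second summand is bounded by $L\cdot d(x(t),\xi(t))$.

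Second, since $\xi(t_1)=x(t_1)$, I would apply Gronwall's inequality in backward time on $\intvCC{t_2}{t_1}$ with $t_1 - t_2 \le T$, using normal coordinates provided by bounded geometry of $X$ (Theorem~\ref{thm:bound-metric} and Proposition~\ref{prop:unif-coord-chart}) to legitimize differentiating $t \mapsto d(x(t),\xi(t))$ against the drift-difference norm. This yields
\begin{equation*}
  \sup_{t\in\intvCC{t_2}{t_1}} d(x(t),\xi(t))
  \le \Vsize\,\frac{e^{L\,T}-1}{L}.
\end{equation*}
Because $L$ and $T$ are already fixed, one can choose $\delta,\,\sigma_1,\,\Ysize$ small enough (through~\ref{eq:Vsize}) so that the right-hand side is strictly less than $\Xsize$, yielding $x \in \BX$.

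The main mild obstacle is keeping the Gronwall comparison on the manifold rigorous: differentiating $d(x(t),\xi(t))$ requires the two curves to remain within a common normal coordinate chart. This is handled by a standard bootstrap: so long as the running distance stays below the $X$-small radius of Definition~\ref{def:M-small}, the estimate is valid and produces a bound \emph{below} that same radius once $\Vsize$ is small, so the hypothesis sustains itself throughout $\intvCC{t_2}{t_1}$. Aside from this technicality, the argument is entirely parallel to the estimates already carried out in the proofs of Propositions~\ref{prop:pert-x} and~\ref{prop:bound-y}.
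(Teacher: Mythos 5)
Your argument is correct and reaches the stated conclusion, but by a genuinely different route from the paper. You decompose the instantaneous drift difference as a $\Vsize$-small term (comparing $\vx(\cdot,y(t))$ with $v_\sx\circ g$ \emph{at the same point}, exactly the first part that goes into~\ref{eq:Vsize}) plus a Lipschitz term $L\,d(x(t),\xi(t))$, and then close the estimate with classical Gronwall, obtaining $\sup_{[t_2,t_1]}d(x,\xi)\le\Vsize\,(e^{LT}-1)/L$. The paper instead applies Alekseev's nonlinear variation of constants, via the distance estimate~\ref{eq:nonlin-varconst-est}, which compares $x$ directly against the unperturbed flow $\Phi$ of $v_\sx\circ g$ and gives
\begin{equation*}
  d\big(x(t),\Phi^{t-t_2}(x(t_2))\big)
  \le \int_t^{t_2}\norm{\D\Phi^{t-\tau}(x(\tau))}\,\Vsize\,\d\tau
  \le \frac{C_\sx\,\Vsize}{-\rho_\sx}\,e^{-\rho_\sx T},
\end{equation*}
then requires this to be $<\Xsize$. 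Two differences are worth noting. First, the Alekseev bound uses the actual backward tangent-flow growth numbers $C_\sx,\rho_\sx$ rather than the cruder Lipschitz constant $L=\norm{v_\sx\circ g}_1$; the resulting constant is tighter, though for this proposition either constant suffices since both are fixed before $\Vsize$ is shrunk. Second, and more to the point of your own caveat, the Alekseev distance estimate is intrinsically global (it bounds distance by arc length of a curve on the manifold with a well-defined integrand in each tangent space), so it dispenses entirely with the normal-coordinate bootstrap you need in order to differentiate $t\mapsto d(x(t),\xi(t))$. Your bootstrap is sound (initial distance is zero, the bound stays below the $X$-small radius, so continuation keeps the curves in a common chart), but it is an extra step the paper's choice of tool avoids. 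Your observation that $T$ is already fixed and that $L$ is a fixed global bound, so that the required smallness of $\Vsize$ is achievable through $\delta,\sigma_1,\Ysize,\nu$ via~\ref{eq:Vsize}, matches the dependency structure laid out in~\ref{eq:small-param-dep}.
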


\begin{proof}
  Let $y \in \BY$ and $x_0 \in X$. The curve $x = T_\sx(y,x_0)$ is
  generated by the vector field $\vx(\slot,y(t))$ which is a small
  perturbation of $v_\sx \circ g$, since
  \begin{equation*}
    \norm{\vx(\slot,y(t)) - v_\sx \circ g}_0 \le \Vsize.
  \end{equation*}
  Let $t_2 - t_1 \le T$ and let $\Phi^t$ denote the flow of
  $v_\sx \circ g$. We apply the nonlinear variation of constants
  estimate~\ref{eq:nonlin-varconst-est} and obtain for
  $t \in \intvCC{t_1}{t_2}$
  \begin{align*}
    d\big(x(t),\Phi^{t-t_2}(x(t_2))\big)
    &\le \int_{t}^{t_2} \norm{\D\Phi^{t-\tau}(x(\tau))}\,\Vsize \d\tau\\
    &\le \int_{t}^{t_2} C_\sx\,e^{\rho_\sx\,(t-\tau)}\,\Vsize \d\tau\\
    &\le \frac{C_\sx\,\Vsize}{-\rho_\sx}\,e^{\rho_\sx\,(t-t_2)}.
  \end{align*}
  Thus, if we choose $\Vsize$ sufficiently small that
  \begin{equation}
    \frac{C_\sx\,\Vsize}{-\rho_\sx}\,e^{\rho_\sx\,T} < \Xsize,
  \end{equation}
  then $x$ is $(\Xsize,T)$\ndash approximated by the exact solution
  $\Phi^{t,t_2}(x(t_2))$ on the interval $\intvCC{t_1}{t_2}$ so
  $x \in \BX$.
\end{proof}

The basic argument for the \idx{Perron method} is encoded in the following
lemma: a \emph{$y$\ndash bounded} solution curve of the
system~\ref{eq:ODE-XY-mod} is equivalent to $y \in B_\Ysize(I;Y)$
being a fixed point of $T$, while this map $T$ will be shown to be a
contraction.
\begin{lemma}[Cotton--Perron]
  \label{lem:perron-equiv}
  Let $x \in C(I;X)$, $y \in B_\Ysize(I;Y)$ bounded, and $x_0 \in X$.
  Then the following statements are equivalent:
  \begin{enumerate}
  \item the pair $(x,y)$ is a solution curve for the modified
    system~\ref{eq:ODE-XY-mod} with partial initial condition
    $x(0) = x_0$; \label{enum:perron-ODE}
  \item $y \in B_\Ysize(I;Y)$ is a fixed point of\/ $T(\slot,x_0)$ and
    $x = T_\sx(y,x_0)$. \label{enum:perron-contr}
  \end{enumerate}
\end{lemma}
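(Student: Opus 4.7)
The plan is to prove the two implications directly using the variation of constants formula for the linear equation $\dot y = A(x(t))y + \Ynonlin(x(t),y(t))$ and the uniqueness of solutions to $\dot x = \vx(x,y(t))$, exploiting the fact that $y$ is uniformly bounded while $\Psi_x(t,t_0)$ decays exponentially as $t_0 \to -\infty$.

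For the direction $\ref{enum:perron-ODE} \Rightarrow \ref{enum:perron-contr}$, I would first note that $x$ satisfies $\dot x = \vx(x,y(t))$ with $x(0) = x_0$, which is exactly the defining ODE for the flow $\Phi_y$ used in~\ref{eq:Tx}; so $x = T_\sx(y,x_0)$ follows from uniqueness of solutions. For the $y$\ndash component, I would view $\dot y = A(x(t))\,y + \Ynonlin(x(t),y(t))$ as a linear inhomogeneous equation with forcing term $g(\tau) = \Ynonlin(x(\tau),y(\tau))$ (treating $y(\tau)$ as fixed) and write the classical variation of constants formula
\begin{equation*}
  y(t) = \Psi_x(t,t_0)\,y(t_0) + \int_{t_0}^t \Psi_x(t,\tau)\,\Ynonlin(x(\tau),y(\tau)) \d\tau
\end{equation*}
for any $t_0 \le t \le 0$. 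Now I would let $t_0 \to -\infty$: using Proposition~\ref{prop:pert-y}, $\norm{\Psi_x(t,t_0)} \le C_\sy\,e^{\rho_\sy(t-t_0)}$ with $\rho_\sy < 0$, and $\norm{y(t_0)} \le \Ysize$, so $\Psi_x(t,t_0)\,y(t_0) \to 0$. The same exponential bound, together with $\norm{\Ynonlin(x,y)} \le \Vsize(\Ysize + 1)$ on the tubular neighborhood, makes the improper integral absolutely convergent. This yields $y = T_\sy(x,y) = T(y,x_0)$.

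For the reverse direction $\ref{enum:perron-contr} \Rightarrow \ref{enum:perron-ODE}$, the condition $x = T_\sx(y,x_0)$ is by definition the same as saying $x$ solves $\dot x = \vx(x,y(t))$ with $x(0) = x_0$, so the horizontal equation of~\ref{eq:ODE-XY-mod} is immediate. For the vertical equation, I would differentiate the fixed-point identity
\begin{equation*}
  y(t) = \int_{-\infty}^t \Psi_x(t,\tau)\,\Ynonlin(x(\tau),y(\tau)) \d\tau
\end{equation*}
with respect to $t$ via the Leibniz rule, using $\Psi_x(t,t) = \Id$ and $\tfrac{\partial}{\partial t}\Psi_x(t,\tau) = A(x(t))\,\Psi_x(t,\tau)$. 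The boundary term produces $\Ynonlin(x(t),y(t))$, while the interior term factors $A(x(t))$ out of the integral and reproduces $A(x(t))\,y(t)$; summing gives exactly $\dot y = A(x(t))\,y + \Ynonlin(x(t),y(t))$. Absolute convergence of the differentiated integrand (again by the $\rho_\sy < 0$ bound on $\Psi_x$) justifies applying Leibniz. Finally, $y \in B_\Ysize(I;Y)$ is built into the hypothesis.

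The main points of care---not really obstacles---are the two convergence statements: that $\Psi_x(t,t_0)\,y(t_0) \to 0$ as $t_0 \to -\infty$ (which crucially uses \emph{both} the exponential decay of $\Psi_x$ and uniform boundedness of $y$), and that the improper integral and its $t$\ndash derivative converge absolutely, dominated by $C_\sy\,e^{\rho_\sy(t-\tau)}$ on $\intvOC{-\infty}{t}$. Neither direction requires more than the standard variation of constants machinery together with the growth estimates from Proposition~\ref{prop:pert-y}.
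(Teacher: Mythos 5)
Your proof is correct and follows essentially the same route as the paper: both use the variation of constants formula for the partially linearized vertical equation, then send $t_0 \to -\infty$ and observe that $\Psi_x(t,t_0)\,y(t_0) \to 0$ by combining the exponential decay of $\Psi_x$ with the uniform bound on $y$. The only cosmetic difference is that the paper inserts an intermediate statement (the pair being a fixed point of $(T_\sx,\hat{T}_\sy)$ for each finite $t_0$) before passing to the limit, whereas you go straight to the improper integral and verify absolute convergence and Leibniz differentiation directly; this is the same argument reorganized.
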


\begin{proof}
  The proof goes along the same lines as the classical Perron method
  for hyperbolic fixed points. As an intermediate step, we introduce
  the operator
  \begin{equation}\label{eq:Ty-tilde}
    \hat{T}_\sy(x,y,t_0)(t)
    = \Psi_x(t,t_0)\,y(t_0)
     +\int_{t_0}^t \Psi_x(t,\tau)\,\Ynonlin(x(\tau),y(\tau)) \d\tau
  \end{equation}
  and the following statement that is equivalent to those in the
  lemma:
  \textit{%
    \begin{enumerate}\setcounter{enumi}{2}
    \item the pair $(x,y)$ is a fixed point of\/ $(T_\sx,\,\hat{T}_\sy)$ for
      each $t_0 \in \intvOC{-\infty}{0}$.
      \label{enum:perron-varconst}
    \end{enumerate}
  }%
  Equivalence of~\ref{enum:perron-ODE}
  and~\ref{enum:perron-varconst} (with $t_0 = 0$) is a direct
  consequence of equivalence of differential and integral equations;
  the equation for $y$ has been rewritten as a variation of constants
  integral with respect to the nonlinear term $\Ynonlin$. If $(x,y)$
  is a fixed point of $(T_\sx,\,\hat{T}_\sy)$ for $t_0 = 0$, then this
  holds for any $t_0 \in \intvOC{-\infty}{0}$. Note that the initial
  value for $y$ is left unspecified in both statements.

  We finish by proving the implications
  \ref{enum:perron-varconst}~$\Rightarrow$
  \ref{enum:perron-contr}~$\Rightarrow$ \ref{enum:perron-ODE}. For the
  first we take the limit $t_0 \to -\infty$ in $\hat{T}_\sy(x,y,t_0)$.
  Since $\Psi_x$ decays exponentially and $y$ is bounded, it follows
  that this limit is well-defined:
  \begin{equation*}
    \forall\;t \in I\colon
    \lim_{t_0 \to -\infty} \hat{T}_\sy(x,y,t_0)(t) = T_\sy(x,y)(t).
  \end{equation*}
  Hence, a fixed point of $(T_\sx,\,\hat{T}_\sy)$ is a fixed point of
  $(T_\sx,\,T_\sy)$. The last implication can readily be verified by
  calculating the time derivatives of $x = T_\sx(y,x_0)$ and
  $y = T_\sy(x,y)$ to show that $(x,y)$ is a solution
  of~\ref{eq:ODE-XY-mod} with $x(0) = x_0$.
\end{proof}

Next, we prove that both $T_\sx,\,T_\sy$ are Lipschitz, while the
Lipschitz constant of $T_\sy$ can be made arbitrarily small.
\begin{lemma}
  \label{lem:Ty-contr}
  Let $\rho_\sy < \rho \le \rho_\sx$ and $0 < \contr_\sy < 1$. If\/
  $\Vsize$ is sufficiently small, then $\Lip(T_\sy) \le \contr_\sy$.
\end{lemma}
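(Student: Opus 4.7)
The plan is to control $\norm{T_\sy(x_2,y_2)-T_\sy(x_1,y_1)}_\rho$ for $(x_i,y_i)\in\BX\times\BYc$ by splitting it via telescoping into a pure $y$-variation (at $x=x_2$ fixed) and a pure $x$-variation (at $y=y_1$ fixed), each handled by a variation-of-constants estimate. All resulting Lipschitz contributions will turn out to be bounded by constants times either $\Vsize$ or $C_f := C_v\sigma_1+\delta+\Vsize\,\Ysize$; since~\ref{eq:Vsize} already absorbs $C_v\sigma_1$ and $\delta$ into $\Vsize$, the quantity $C_f$ is itself of order $\Vsize$, so taking $\Vsize$ small will suffice.

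For the $y$-variation, I would estimate pointwise
\begin{equation*}
  \norm{T_\sy(x_2,y_2)(t)-T_\sy(x_2,y_1)(t)}
  \le \int_{-\infty}^t C_\sy\,e^{\rho_\sy(t-\tau)}\,\Vsize\,\norm{y_2(\tau)-y_1(\tau)}\,\d\tau,
\end{equation*}
combining the exponential bound on $\Psi_{x_2}$ from Proposition~\ref{prop:pert-y} with $\norm{\D_y\Ynonlin}\le\Vsize$. Inserting $\norm{y_2(\tau)-y_1(\tau)}\le d_\rho(y_1,y_2)\,e^{\rho\tau}$ and using $\rho>\rho_\sy$ together with~\ref{eq:exp-int} produces an $e^{\rho t}$-factor and yields a Lipschitz contribution $\frac{C_\sy\Vsize}{\rho-\rho_\sy}$ in the $y$-argument.

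For the $x$-variation I would split once more according to which of $\Psi$ and $\Ynonlin$ carries the $x$-dependence:
\begin{equation*}
\begin{aligned}
 T_\sy(x_2,y_1)(t)-T_\sy(x_1,y_1)(t)
  &= \int_{-\infty}^t \Psi_{x_1}(t,\tau)\big[\Ynonlin(x_2(\tau),y_1(\tau))-\Ynonlin(x_1(\tau),y_1(\tau))\big]\,\d\tau\\
  &\quad +\int_{-\infty}^t \big[\Psi_{x_2}(t,\tau)-\Psi_{x_1}(t,\tau)\big]\,\Ynonlin(x_2(\tau),y_1(\tau))\,\d\tau.
\end{aligned}
\end{equation*}
The first term is identical in structure to the $y$-part, now with $\norm{\D_x\Ynonlin}\le\Vsize$, producing a Lipschitz constant of the same form $\frac{C_\sy\Vsize}{\rho-\rho_\sy}$. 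For the second I would apply Lemma~\ref{lem:var-lin} with $\alpha=1$ (legitimate since the smoothed $A\in\BC^1$ is globally Lipschitz with $\norm{A}_1\le C_v$) to bound the flow variation, and combine it with the uniform pointwise bound $\norm{\Ynonlin(x,y_1)}\le C_f$ already used in~\ref{eq:contr-y-bound}. The resulting exponential integral yields a constant of order $\frac{C_v\,C_\sy^2\,C_f}{-\rho_\sy(\rho-\rho_\sy)}$ times $d_\rho(x_1,x_2)$.

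Collecting via the triangle inequality, the Lipschitz constant of $T_\sy$ on $\BX\times\BYc$ (with, say, the sum $d_\rho$-metric) is bounded by a finite sum of terms, each of order $\Vsize$, and can therefore be made $\le\contr_\sy$ by choosing $\Vsize$ small enough—which is compatible with the dependency graph~\ref{eq:small-param-dep}. The only subtle point is the flow-variation term: Lemma~\ref{lem:var-lin} with $\alpha<1$ would only yield a Hölder-type estimate whose $e^{\alpha\rho\tau}$-factor fails to combine cleanly with the $d_\rho$-norm (it would blow up as $t\to-\infty$); it is precisely the $\BC^1$-regularity of $A$, obtained by the convolution smoothing of Section~\ref{sec:preparation}, that makes the clean case $\alpha=1$ available and closes the argument.
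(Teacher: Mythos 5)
Your proof is correct and follows essentially the same route as the paper: split the integrand into the linear-flow variation $\Psi_{x_2}-\Psi_{x_1}$ (handled by Lemma~\ref{lem:var-lin} with $\alpha=1$, justified by $A\in\BC^1$ with $\norm{A}_1\le C_v$) and the $\Ynonlin$ variation (bounded via $\norm{\D\Ynonlin}_0\le\Vsize$), then absorb the resulting constants into a $\Vsize$-small Lipschitz factor; the slightly different telescoping order is immaterial. One small correction to your closing parenthetical: for $\alpha<1$ the integral $\int_{-\infty}^t e^{\rho_\sy(t-\tau)}e^{\alpha\rho\tau}\,\d\tau$ still converges (since $\alpha\rho>\rho_\sy$ for any $\alpha\le 1$ when both rates are negative) and the resulting $e^{(\alpha-1)\rho\,t}$ factor decays rather than blows up as $t\to-\infty$ — the genuine obstruction is that one is left with $d_\rho(x_1,x_2)^\alpha$ instead of $d_\rho(x_1,x_2)$, i.e.\ only a Hölder estimate, which would not give the Lipschitz bound needed to make $T=T_\sy\circ(T_\sx,\mathrm{pr}_1)$ a Banach contraction.
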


\begin{proof}
Let $(x_i,y_i),\,i=1,2$ be curves from $\BX \times \BY$. Let $\Psi_i$
be the corresponding flows of $A$ along the curves $x_i$.
Then the application of Lemma~\ref{lem:var-lin} in the Lipschitz case
$\alpha = 1$ leads to a Lipschitz estimate on $T_\sy$ for any
$\rho_\sy < \rho \le \rho_\sx$:
\begin{align*}
  \fst \norm{T_\sy(x_1,y_1)(t) - T_\sy(x_2,y_2)(t)}\\
  &\le \int_{-\infty}^t \norm{\Psi_1(t,\tau)\,\Ynonlin(x_1(\tau),y_1(\tau))
                            - \Psi_2(t,\tau)\,\Ynonlin(x_2(\tau),y_2(\tau))} \d\tau
   \displaybreak[1]\\
  &\le \int_{-\infty}^t \norm{\Psi_1(t,\tau)-\Psi_2(t,\tau)}\,\norm{\Ynonlin(x_1(\tau),y_1(\tau))}\\
  &\hspace{1.2cm}   {} +\norm{\Psi_2(t,\tau)}\,\norm{\Ynonlin(x_1(\tau),y_1(\tau))
                                                    -\Ynonlin(x_2(\tau),y_2(\tau))}\d\tau
   \displaybreak[1]\\
  &\le \int_{-\infty}^t
         \frac{C_v\,C_\sy^2}{-\rho}\,e^{\rho_\sy(t-\tau)}\,
         d_\rho(x_1,x_2)\,e^{\rho\,\tau}\,\norm{\Ynonlin}_0\\
  &\hspace{1.2cm}
    {} + C_\sy\,e^{\rho_\sy(t-\tau)}\norm{\D \Ynonlin}_0\,
         \big(d_\rho(x_1,x_2) + \norm{y_1 - y_2}_\rho\big)\,e^{\rho\,\tau} \d\tau\\
  &\le \Vsize\,C\,\big(d_\rho(x_1,x_2) + \norm{y_1 - y_2}_\rho\big)\,e^{\rho\,t}.
\end{align*}
Here $C < \infty$ depends only on the constants and additional integration
factors ${(\rho - \rho_\sy)}^{-1}$ in the last integral, hence
$\Vsize\,C \le \contr_\sy$ when $\Vsize$ is small enough.
\end{proof}

\begin{lemma}
  \label{lem:Tx-contr}
  Let $\rho_\sy < \rho < \rho_\sx$. If\/ $\Vsize,\,\Ysize$ are
  sufficiently small, then $\Lip(T_\sx) \le \contr_\sx$ for some
  $\contr_\sx > 1$ independent of all small parameters.
\end{lemma}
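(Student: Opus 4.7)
The plan is to compare the two horizontal flows $\Phi_{y_1}$ and $\Phi_{y_2}$ using the nonlinear variation of constants formula (Alekseev's formula on manifolds, from the corresponding appendix). Setting $x_i := T_\sx(y_i, x_i^0) = \Phi_{y_i}(\slot, 0, x_i^0)$, I view $x_2$ as an integral curve of $\vx(\slot, y_1(t))$ perturbed by the residual $R(t) := \vx(x_2(t), y_2(t)) - \vx(x_2(t), y_1(t))$, whose norm is controlled by the uniform $C^1$ bound on $\vx$ via $\|R(\tau)\| \le C_v\,\|y_1(\tau) - y_2(\tau)\| \le C_v\,\|y_1-y_2\|_\rho\,e^{\rho\tau}$. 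This reduces the comparison of two nonlinear flows with different driving curves to the comparison of one nonlinear flow with its perturbation, for which the variation of constants estimate~\ref{eq:nonlin-varconst-est} applies in backward time.

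The key inputs are already in place. Proposition~\ref{prop:pert-x}, together with the inclusion $x_2 \in \BX$ from Proposition~\ref{prop:bound-x} (valid once $\Vsize, \Ysize, \sigma_1$ are small), gives the backward growth estimate $\|\D\Phi_{y_1}(t, \tau, x_2(\tau))\| \le C_\sx\,e^{\rho_\sx(t-\tau)}$ for $t \le \tau \le 0$, with $C_\sx$ and $\rho_\sx$ from~\ref{eq:exp-est-XY-mod}. Combining this with the bound on $R$ and the triangle inequality through the flow of $y_1$, I expect an estimate of the shape
\[d(x_1(t), x_2(t)) \le C_\sx\,e^{\rho_\sx t}\,d(x_1^0, x_2^0) + \int_t^0 C_\sx\,e^{\rho_\sx(t-\tau)}\,C_v\,\|y_1-y_2\|_\rho\,e^{\rho\tau}\,d\tau\]
for all $t \le 0$. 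Because $\rho < \rho_\sx$, the $\tau$-integral evaluates to a multiple of $e^{\rho t}/(\rho_\sx - \rho)$, and the strict spectral separation is used here only to keep this denominator bounded away from zero. Multiplying by $e^{-\rho t}$ and taking the supremum over $t \le 0$ (where $e^{(\rho_\sx - \rho)t} \le 1$) yields
\[d_\rho(x_1, x_2) \le C_\sx\,d(x_1^0, x_2^0) + \frac{C_\sx\,C_v}{\rho_\sx - \rho}\,\|y_1 - y_2\|_\rho \le \contr_\sx\bigl(d(x_1^0, x_2^0) + \|y_1 - y_2\|_\rho\bigr),\]
with $\contr_\sx := \max\bigl(C_\sx,\, C_\sx\,C_v/(\rho_\sx - \rho)\bigr)$, which depends only on the fixed data $C_\sx, C_v, \rho_\sx, \rho$ and \emph{not} on any of the small preparation parameters $\Vsize, \Ysize, \delta, \sigma_1, \nu$.

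The main conceptual point is that the constant $\contr_\sx$ cannot be made small: in backward time the tangent flow $\D\Phi_{y_1}$ is at best neutral and generally expansive, reflecting the fact that the horizontal dynamics along $M$ is only approximately neutral, not contractive. Consequently, all of the contractivity of the composition $T = T_\sy \circ (T_\sx, \mathrm{pr}_1)$ must come from $T_\sy$ via Lemma~\ref{lem:Ty-contr}, while $T_\sx$ is merely required to be Lipschitz with a fixed (possibly large) constant. The only real technical nuisance in carrying out the plan is verifying the manifold version of the variation of constants estimate in the bounded-geometry setting; this is handled by the appendix and by making sure $x_i \in \BX$, both of which are already available from Section~\ref{sec:growth-perturbed}.
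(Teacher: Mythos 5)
Your argument is essentially the paper's own: you compare $\Phi_{y_1}$ and $\Phi_{y_2}$ via the nonlinear variation of constants estimate~\ref{eq:nonlin-varconst-est}, bound the driving perturbation by $C_v\,\norm{y_1-y_2}_\rho\,e^{\rho\tau}$, use the backward exponential growth bound $C_\sx\,e^{\rho_\sx(t-\tau)}$ on $\D\Phi_{y_1}$ from Proposition~\ref{prop:pert-x}, and integrate to obtain a Lipschitz constant that depends only on the fixed data $C_\sx, C_v, \rho_\sx, \rho$. (One minor point: Proposition~\ref{prop:pert-x} alone gives the needed estimate on $\D\Phi_{y_1}$ --- the reference to $x_2 \in \BX$ and Proposition~\ref{prop:bound-x} is superfluous here, as membership in $\BX$ governs the vertical flow $\Psi_x$, not the horizontal one.)
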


\begin{proof}
  Let $\xi_1, \xi_2 \in X$ and $y_1, y_2 \in \BY$. For $i=1,2$, define
  $v_i(t,\slot) = \vx(\slot,y_i(t))$ and let $x_i \in C^1(I;X)$ be a
  solution of the system $v_i$ with initial condition $\xi_i$. We
  compare the systems $v_1,\,v_2$:
  \begin{equation*}
        \norm{v_1(t,\slot) - v_2(t,\slot)}
    \le \norm{\D_y \vx}\,\norm{y_1(t) - y_2(t)}
    \le C_v\,\norm{y_1(t) - y_2(t)}.
  \end{equation*}
  The flow $\Phi_{y_1}$ has exponential growth numbers
  $\rho_\sx,\,C_\sx$. We view $v_2$ as a small perturbation of $v_1$
  and apply the nonlinear variation of constants
  estimate~\ref{eq:nonlin-varconst-est} to obtain
  \begin{align*}
         d(x_1(t),x_2(t))
    &\le C_\sx\,e^{\rho_\sx\,t}\,d(\xi_1,\xi_2)
        +\int_t^0 C_\sx\,e^{\rho_\sx(t-\tau)}\,
                  C_v\,\norm{y_1(\tau) - y_2(\tau)} \d\tau\\
    &\le C_\sx\,e^{\rho_\sx\,t}\,d(\xi_1,\xi_2)
        +C_\sx\,C_v\,\norm{y_1 - y_2}_\rho\,
         \int_t^0 e^{\rho_\sx(t-\tau)}\,e^{\rho\,\tau} \d\tau\\
    &\le C_\sx\,e^{\rho_\sx\,t}\,d(x_1,x'_2)
        +\frac{C_\sx\,C_v}{\rho_\sx - \rho}\,\norm{y_1 - y_2}_\rho\,
         e^{\rho_\sx\,t}.
  \end{align*}
  Now
  \begin{equation*}
    \begin{aligned}
         d_\rho\big(T_\sx(y_1,\xi_1),T_\sx(y_2,\xi_2)\big)
    &\le \sup_{t \le 0}\;
         C_\sx\,d(\xi_1,\xi_2)\,e^{(\rho_\sx-\rho)\,t}
       + \frac{C_\sx\,C_v}{\rho_\sx - \rho}\,\norm{y_1 - y_2}_\rho\,
         e^{(\rho_\sx-\rho)\,t}\\
    &\le C_\sx\,d(\xi_1,\xi_2)
       + \frac{C_\sx\,C_v}{\rho_\sx - \rho}\,\norm{y_1 - y_2}_\rho
    \end{aligned}
  \end{equation*}
  exhibits a Lipschitz constant $\contr_\sx$ for $T_\sx$ that does not
  depend on any of the small parameters.
\end{proof}

\begin{proposition}[Extension of solution is bounded in $Y$]
  \label{prop:bound-extend}
  Let $(x,y)(t)$ be a solution of the perturbed
  system~\ref{eq:ODE-XY-mod} satisfying $y \in B_\Ysize(I;Y)$ for
  $t \le 0$. For\/ $\Vsize,\,\delta,\,\sigma_1$ sufficiently small, the
  forward extension to $t \ge 0$ has $y \in B_\Ysize(\R;Y)$.
\end{proposition}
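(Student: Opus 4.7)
The plan is to show that the forward extension of $y$ continues to satisfy the same variation-of-constants integral equation that characterises $y$ on $I$, and then close up with a continuity (bootstrap) argument using the exponential estimate on $\Psi_x$ together with the smallness of the nonlinearity.

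First I would record the key integral identity. By hypothesis $y\in B_\Ysize(I;Y)$ is bounded on $(-\infty,0]$, so Lemma~\ref{lem:perron-equiv} gives
\begin{equation*}
  y(0) = \int_{-\infty}^{0} \Psi_x(0,\tau)\,\Ynonlin(x(\tau),y(\tau)) \d\tau.
\end{equation*}
Standard forward variation of constants applied to~\ref{eq:ODE-XY-mod} on $\intvCC{0}{t}$ reads $y(t) = \Psi_x(t,0)\,y(0) + \int_0^t \Psi_x(t,\tau)\,\Ynonlin(x(\tau),y(\tau))\d\tau$, and combining the two via the semigroup identity $\Psi_x(t,0)\circ\Psi_x(0,\tau)=\Psi_x(t,\tau)$ yields the unified formula
\begin{equation*}
  y(t) = \int_{-\infty}^{t} \Psi_x(t,\tau)\,\Ynonlin(x(\tau),y(\tau)) \d\tau
  \qquad\text{for all $t$ in the domain of existence.}
\end{equation*}
This is exactly the same formula that underlies the bound in~\ref{eq:contr-y-bound}, the only difference being that the upper limit of integration is now positive.

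Next I would run a continuity argument. Set
\begin{equation*}
  t^{\ast} = \sup\set[\big]{t \ge 0}{\norm{y(s)} \le \Ysize \text{ for all } s \le t}.
\end{equation*}
Since $\norm{y(0)}\le\Ysize$ and the unified formula combined with the backward bound gives the a~priori strict estimate $\norm{y(0)}\le \Ysize/2 + O(\delta+\sigma_1)<\Ysize$ (once $\Vsize$ satisfies~\ref{eq:Vsize-bound-y}), continuity of $y$ forces $t^{\ast}>0$. Suppose for contradiction that $t^{\ast}<\infty$, so that $\norm{y(t^{\ast})}=\Ysize$. On the interval $\intvOC{-\infty}{t^{\ast}}$ the curve $y$ takes values in the closed ball of radius $\Ysize$, hence by the same computation as in Proposition~\ref{prop:bound-x} the curve $x$ is a $(\Xsize,T)$\ndash approximate solution on every subinterval of length $T$ contained in $\intvOC{-\infty}{t^{\ast}}$. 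Applying Proposition~\ref{prop:pert-y} locally then gives $\norm{\Psi_x(t,\tau)} \le C_\sy e^{\rho_\sy(t-\tau)}$ for all $\tau \le t \le t^{\ast}$.

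Finally, I would insert these ingredients into the unified integral formula to contradict $\norm{y(t^{\ast})}=\Ysize$. Using $\norm{\Ynonlin(x,y)}\le\norm{\Ynonlin(x,0)}+\Vsize\norm{y}\le C_v\sigma_1+\delta+\Vsize\Ysize$ and the flow bound, the computation mirrors that of~\ref{eq:contr-y-bound}:
\begin{equation*}
  \norm{y(t^{\ast})} \le \int_{-\infty}^{t^{\ast}} C_\sy\,e^{\rho_\sy(t^{\ast}-\tau)}\big(C_v\,\sigma_1 + \delta + \Vsize\,\Ysize\big)\d\tau
  \le \frac{C_\sy(C_v\,\sigma_1+\delta)}{-\rho_\sy} + \frac{C_\sy\,\Vsize}{-\rho_\sy}\,\Ysize.
\end{equation*}
The condition~\ref{eq:Vsize-bound-y} bounds the second summand by $\Ysize/2$, and smallness of $\delta,\sigma_1$ makes the first summand strictly smaller than $\Ysize/2$, so $\norm{y(t^{\ast})}<\Ysize$, a contradiction. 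Hence $t^{\ast}=\infty$ and $y\in B_\Ysize(\R;Y)$.

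The main subtlety (the only place I expect real care) is the logical order at step three: the exponential estimate for $\Psi_x$ that one needs in order to close the estimate on $\norm{y(t^{\ast})}$ itself relies on $\norm{y}\le\Ysize$ being valid on the interval over which $\Psi_x$ is being estimated. Introducing $t^{\ast}$ and invoking the estimate only on $\intvOC{-\infty}{t^{\ast}}$ is precisely what breaks the apparent circularity, and that is why the bootstrap argument is essential rather than a direct forward Gronwall computation from $t=0$ (which would lose a factor $C_\sy$ on the initial value and fail to return into $B_\Ysize$ when $C_\sy>1$).
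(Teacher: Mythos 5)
Your proof is correct and takes essentially the same approach as the paper: define the first exit time from the $\Ysize$-ball, observe that up to that time $x$ remains an $(\Xsize,T)$-approximate solution so the estimate behind Proposition~\ref{prop:bound-y} applies on the extended interval and yields a strictly smaller bound, then derive a contradiction by continuity. The paper's proof is terser (it does not spell out the unified Perron integral for $t>0$, simply quoting Proposition~\ref{prop:bound-y}) but runs on the identical bootstrap logic, and your closing remark about why the stopping-time argument is needed rather than a naive forward Gronwall correctly identifies the one genuine subtlety.
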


\begin{proof}
  First of all, choose $\Vsize,\,\delta,\,\sigma_1$ sufficiently small
  such that by~\ref{eq:contr-y-bound}, we have
  $y \in B_{\Ysize/2}(I;Y)$. Proceeding by contradiction, let $t_0$ be
  the first time after which $y(t)$ becomes larger than $\Ysize$, thus
  \begin{equation*}
    t_0 = \sup\;\set{t \in \R}{\forall\;\tau \le t\colon \norm{y(\tau)} \le \Ysize}.
  \end{equation*}
  The curve $x(t)$ is a $(\Xsize,T)$\ndash approximate solution to
  $v_\sx \circ g$ on the interval $\intvOC{-\infty}{t_0}$, so from
  Proposition~\ref{prop:bound-y} we conclude that
  $y \in B_{\Ysize/2}\big(\intvOC{-\infty}{t_0};Y\big)$. The
  continuity of $y$ contradicts the assumption that $t_0$ is the
  supremum.
\end{proof}

\subsubsection{Completing the proof of existence and Lipschitz regularity}

We finally put things together and prove that a unique persistent
manifold $\tilde{M}$ exists and that it is Lipschitz.

Since $T_\sx$ satisfies a fixed Lipschitz estimate, we can choose
$\Vsize$ small enough to obtain $\contr_\sx\cdot\contr_\sy < 1$. Thus,
$T$ is a contraction on $\BYc$ for each fixed
$\rho_\sy < \rho < \rho_\sx$; $\Vsize$ will depend on $\rho$ though.
According to Proposition~\ref{prop:B_Ysize-closed}, $\BYc$ is a
complete metric space, so the Banach fixed point theorem shows that
there is a unique $y \in \BYc$ fixed point of $T$; it holds moreover
that $y \in \BY$. This contraction also depends (uniformly) on the
parameter $x_0 \in X$, hence we obtain a fixed point map
\index{$\Theta$}
\begin{equation}\label{eq:FP-map}
  \Theta^\infty \colon X \to \BY,
\end{equation}
satisfying the relation
\begin{equation}\label{eq:FP-rel}
  \forall\;x_0 \in X\colon \Theta^\infty(x_0) = T(\Theta^\infty(x_0),x_0).
\end{equation}
The superscript $\infty$ indicates that this map is obtained as a
limit of applying the uniform contraction $T$. The parameter
dependence in $T$ is Lipschitz, so the map $\Theta^\infty$ will be
Lipschitz as well.

By Proposition~\ref{prop:bound-extend}, the fixed point
$y = \Theta^\infty(x_0)$ is bounded by $\Ysize$ for all time and
$B_\Ysize(I;Y) = \BY$ as sets, so $y$ is the unique $\Ysize$\ndash
bounded solution with partial initial data $x(0) = x_0$. In
combination with the evaluation map $y \mapsto y(0)$, we obtain the
mapping
\begin{equation}\label{eq:persist-graph}
  \tilde{h}\colon X \to B(0;\Ysize) \subset Y
           \colon x_0 \mapsto \Theta^\infty(x_0)(0).
\end{equation}
Its graph $\tilde{M} = \Graph(\tilde{h})$ is the unique invariant
manifold of the modified system~\ref{eq:ODE-XY-mod} and is Lipschitz
as well.

Since both $M$ and $\tilde{M}$ are described by graphs of small
functions $X \to Y$, it follows that they are homeomorphic and
$\norm{\tilde{h}}_0 \le \Ysize$. We can choose another, arbitrarily
small $\Ysize'$ instead. This requires us to choose smaller
$\delta',\,\sigma_1'$ parameters as well. But as can be seen
from~\ref{eq:small-param-dep}, $\Ysize$ does not depend on
$\delta,\,\sigma_1$, so the newly found
$\norm{\tilde{h}'}_0 \le \Ysize'$ will actually be unique in the
original $\Ysize$\ndash sized neighborhood as well.



\section{Smoothness}\label{sec:NHIM-smoothness}
\index{smoothness}

\newcommand{\ux}{{\bar{x}}}
\newcommand{\rx}{{\delta_\sx}}

\newcommand{\dxtilde}{\widetilde{\dx}}

\newcommand{\TFBXc}{\TF \BX\big|_{C^1}}

\newcommand{\BJXb}{\mathcal{B}^\rho_\Xsize(J;X)}
\newcommand{\BJX}{ \mathcal{B}^\rho(J;X)}
\newcommand{\BJY}{B^\rho_\Ysize(J;Y)}

\newcommand{\Th}{\Theta^\infty}
\newcommand{\DTh}{\DF\Theta^\infty}


To study smoothness of $\Theta^\infty$, we can formally differentiate
the fixed point relation~\ref{eq:FP-rel} with respect to $x_0$ to
obtain contractive mappings $T^{(k)}$ for the $k$\th order derivatives
of maps $\Theta$ and then apply the fiber contraction theorem, see
Appendix~\ref{chap:fibercontr}. If we assume that $\Theta$
satisfies~\ref{eq:FP-rel}, then Proposition~\ref{prop:compfunc-deriv}
shows that (at least formally)
\begin{equation}\label{eq:der-fixedpoint}
   \D^k \Theta(x_0)
   = \!\!\sum_{\substack{l,m \ge 0\\l + m \le k\\(l,m)\neq(0,0)}}\!\!
       \D_y^l \D_{x_0}^m T(\Theta(x_0),x_0) \cdot
       P_{l,k-m}\big(\D^\bullet \Theta(x_0)\big),
\end{equation}
which can be rewritten as a fiber contraction map on $\D^k \Theta$
by isolating that term ($l=1,\,m=0$) on the right-hand side as
\begin{equation*}
  \D^k \Theta(x_0)
  = \D_y T(\Theta(x_0),x_0) \cdot \D^k \Theta(x_0) + \ldots
\end{equation*}
All the remaining terms are expressions in the lower order
derivatives $\D^n \Theta(x_0)$ for $n<k$ only; these form the base
space in the fiber contraction theorem.

\index{$\mu$}
The derivatives $\D^k\Theta$ and $\D_y^l \D_{x_0}^m T$
in~\ref{eq:der-fixedpoint} do not exist on the space
$\BY$ as codomain, however. Indeed, if they did, we could have
applied the implicit function theorem right away. Instead, the
derivatives $\D^k\Theta$ are only well-defined on spaces\footnote{%
  Note that the spaces $B^{k\rho+\mu}(I;Y)$ are to be understood as
  the codomains of the maps $\Theta$, hence the $\D^k\Theta$ as
  multilinear operators into these. The spaces $Y_\eta$ play the same
  role in~\cite[Def.~3.10]{Vanderbauwhede1989:centremflds-normal}.%
} $B^{k\rho+\mu}(I;Y)$, where $\mu < 0$ is an arbitrarily small additional
exponential growth rate. Derivatives of the maps $T_\sx,\,T_\sy$ do
not exist at all. By using the fiber contraction theorem and
interpreting the $\D^k T_\sx,\,\D^k T_\sy$ as `formal derivatives' in some
appropriate way, we can still show, though, that the $\D^k\Theta$ are
higher derivatives of $\Theta$ that converge to the derivatives of
$\Theta^\infty$ under iteration of the fiber contraction
maps~\ref{eq:der-fixedpoint}. The gap condition
$\rho_\sy < \fracdiff\,\rho_\sx$ will show up in the requirement
that~\ref{eq:der-fixedpoint} is contractive for $k \le \fracdiff$ (and
finally $k + \alpha \le \fracdiff$ when considering H\"older
continuity). In case of uniform continuity (i.e.\ws when $\alpha = 0$)
we make use of the strict inequality to seize some of the spectral
space left for the terms $\mu < 0$.

The interpretation of $\D T$ and its constituents $\D T_\sx$
and $\D T_\sy$ as true derivatives is obstructed already by the fact
that neither $\BY$ nor $\BX$
are smooth Banach manifolds\footnote{%
  At least, they are not smooth Banach manifolds in a natural way, see
  the discussion in Section~\ref{sec:formal-tangent}.%
}, hence these can never be the (co)domain of differentiable maps.
Thus, the chain rule
\begin{equation*}
  \D\Theta^{n+1} = \D_y T \cdot \D\Theta^n + \D_{x_0} T
\end{equation*}
cannot be used to conclude the existence of $\D\Theta^{n+1}$ from
$\D\Theta^n$ by induction. On the other hand, we can find `formal
tangent bundles' of these spaces on which $\D T_\sx, \D T_\sy$ are
defined as `formal derivatives', and we even have explicit
formulas~\ref{eq:DT-set} for these maps. From here on we shall use
the notation $\DF f$ to indicate a formal derivative and $\D f$ to
indicate that a function $f$ is truly differentiable. We shall not
make precise the notion of `formal', but heuristically these formal
objects can be seen as limits of well-defined real smooth manifolds
and derivatives, see Section~\ref{sec:banach-mflds}.
\oldindex{$\DF$}
\oldindex{$\DF$|seealso{formal derivative}}

First, we outline the procedure of obtaining $\Theta^\infty$ as a
truly differentiable map by careful manipulation of these formal
derivatives. This is followed by the details of working out the
definitions and estimates. Finally, we show how everything generalizes
to higher derivatives. This last step adds more complexity, but
requires no fundamentally new ideas.

Higher derivatives of functions involving variables or values in $X$
need to be treated with some care, as these are not naturally defined.
In such expressions, the derivatives are with respect to normal
coordinates at the base point in domain and range, according to
Definition~\ref{def:deriv-normcoord}. I should point the reader to
Appendix~\ref{chap:exp-growth}: it establishes the essential basic
ingredient for this section on (higher) smoothness, namely how
exponential growth estimates carry over to continuity and higher
derivatives of the flow. Additionally, building on bounded geometry
and Definition~\ref{def:unif-bounded-map}, a framework is set up to
work with these notions on the manifold $X$.
\clearpage

\subsection{A scheme to obtain the first derivative}\label{sec:scheme-deriv}

The map $T$ is not differentiable. Instead, we shall use the scheme
below to obtain differentiability of $\Theta^\infty$. The sequence
$\{\Theta^n\}_{n \ge 0}$ of maps $X \to \BY$ is defined
by $\Theta^{n+1}(x_0) = T(\Theta^n(x_0),x_0)$ and
$\Theta^0 \equiv 0$. We prove the differentiability of the $\Theta^n$
by induction and finally conclude that $\Theta^\infty$ is
differentiable as well.
\begin{enumerate}
\item\label{enum:scheme-formal-deriv}%
  First, we propose candidate formal derivatives~\ref{eq:DT-set}
  of $T_\sx, T_\sy$. These are obtained naturally by standard
  differentiation and variational techniques, postponing for the
  moment the question of which spaces these maps are well-defined on.
  We define $\DF T$ in terms of the formal derivatives $\DF T_\sx$ and
  $\DF T_\sy$.

\item\label{enum:scheme-fibercontr}%
  The pair $(T,\DF T)$ acts as a uniform fiber contraction on
  pairs of maps
  \begin{equation*}
    (\Theta^n,\DF \Theta^n) \colon
    \T X \to \BY \times B^{\rho+\mu}(I;Y)
  \end{equation*}
  when $\rho_\sy < \rho+\mu \le \rho < \rho_\sx$ holds, both in case
  of $\mu = 0$ and $\mu < 0$ small.

\item\label{enum:scheme-formal-tangent}%
  There are appropriate formal tangent bundles of the spaces
  $\BY$, $\BX$ on which these formal derivatives are well-defined.
  Moreover, these formal tangent bundles
  can be endowed with a topology such that $\DF T_\sx$, $\DF T_\sy$,
  and $\DF T$ are uniformly continuous into bundles with slightly
  larger exponential growth rate $\rho+\mu$. Under appropriate
  assumptions (and with $\mu = \alpha\,\rho$) these formal derivatives
  are $\alpha$\ndash H\"older continuous.

\item\label{enum:scheme-appl-FCT}%
  The fiber contraction theorem~\ref{thm:fibercontr} can be applied.
  It follows from~\ref{enum:scheme-fibercontr} that $\DF T$ has a
  unique fixed point $\DF \Theta^\infty\colon \T X \to \TF \BY$, and
  from~\ref{enum:scheme-formal-tangent} that the map
  \begin{equation*}
    \Theta \mapsto \DF T(\Theta)\cdot\DF \Theta^\infty\colon
    C^0\big(X;\BY\big) \to \Gamma_b\big(\CLin\big(\T X;\TF B^{\rho+\mu}_\Ysize(I;Y)\big)\big)
  \end{equation*}
  into bounded sections of the bundle
  $\pi\colon \CLin\big(\T X;\TF B^{\rho+\mu}_\Ysize(I;Y)\big) \to X$
  is continuous. Thus we can conclude that $\DF \Theta^n$
  converges in $\Gamma_b\big(\CLin\big(\T X;B^{\rho+\mu}(I;Y)\big)\big)$
  to the unique fixed point $\DF \Theta^\infty$, simultaneously with
  $\Theta^n \to \Theta^\infty$. See~\ref{eq:fiber-contr-spaces}
  and~\ref{eq:TBY-formal} for precise definitions of these spaces.
  Moreover, $\DF\Theta^\infty$ is uniformly or H\"older continuous.

\item\label{enum:scheme-restr-deriv}%
  There is a family of maps, given by restricting the domain $I$ of
  curves,
  \index{$T^{b,a}$}
  \begin{equation*}
    T^{b,a}\colon B^\rho_\Ysize(\intvCC{a}{0};Y) \times X
              \to B^\rho_\Ysize(\intvCC{b}{0};Y)
  \end{equation*}
  that approximate $T$, and moreover these $T^{b,a}$ are
  differentiable maps between Banach manifolds whose derivatives
  $\D T^{b,a}$ approximate the formal derivative $\DF T$.

\item\label{enum:scheme-induc-deriv}%
  With the continuous embedding
  $B^\rho(I;Y) \embedto B^{\rho+\mu}(I;Y)$ and the previous point, we
  show that if $\Theta^n\colon X \to B^{\rho+\mu}(I;Y)$ is
  differentiable, then
  \begin{equation*}
    \DF \Theta^{n+1} = \DF_y T \cdot \D\Theta^n + \DF_{x_0} T
  \end{equation*}
  is the derivative of $\Theta^{n+1}\colon X \to B^{\rho+\mu}(I;Y)$.

\item\label{enum:scheme-limit-deriv}%
  Finally, we use Theorem~\ref{thm:difflimitfunc} to conclude that
  since the sequence $\Theta^n$ converges to $\Theta^\infty$ and its
  derivatives satisfy $\D\Theta^n \to \DF\Theta^\infty$, it must hold
  that $\D\Theta^\infty = \DF\Theta^\infty$ as a map into
  $B^{\rho+\mu}(I;Y)$.
\end{enumerate}
In the subsequent sections we shall work out the details of this
scheme. With some care, the same ideas generalize to higher
derivatives.

\subsection{Candidate formal derivatives}\label{sec:cand-derivs}

We first explicitly give the candidate mappings for the derivatives of
$T_\sx, T_\sy$. From now on, we will use shorthand notation
$x_y(t) = T_\sx(y,x_0)(t) = \Phi_y(t,0,x_0)$. The spaces that these
maps act on will be made more precise in the following sections;
$\dx,\,\dy$ denote variations of curves $x \in \BX$ and $y \in \BY$,
respectively, and $\dx_0 \in \T_{x_0} X$.
\index{$\dx,\,\dy,\,\dx_0$}
\begin{subequations}\label{eq:DT-set}\jot=5pt
\begin{align}
  \label{eq:DpTx}
  \big(\DF_{x_0} T_\sx(y,x_0)\,\dx_0\big)(t)
  &= \D\Phi_y(t,0,x_0) \cdot \dx_0\\
  \label{eq:DyTx}
  \big(\DF_y T_\sx(y,x_0)\,\dy\big)(t)
  &= \int_t^0 \D\Phi_y(t,\tau,x_y(\tau))\,
              \D_y \vx(x_y(\tau),y(\tau))\,\dy(\tau) \d\tau,\\
  \label{eq:DxTy}
  \big(\DF_x T_\sy(x,y)\,\dx\big)(t)
  &= \int_{-\infty}^t \Psi_x(t,\tau)\,\D_x \Ynonlin(x(\tau),y(\tau))\,\dx(\tau)\\
  &\hspace{1.1cm}    +\big(\DF_x\Psi_x\cdot\dx\big)(t,\tau)\,\Ynonlin(x(\tau),y(\tau))
                    \d\tau,\nonumber\\
  \label{eq:DyTy}
  \big(\DF_y T_\sy(x,y)\,\dy\big)(t)
  &= \int_{-\infty}^t \Psi_x(t,\tau)\,\D_y \Ynonlin(x(\tau),y(\tau))\,\dy(\tau) \d\tau,\\
  \label{eq:DxPsi}
  \big(\DF_x \Psi_x\cdot\dx\big)(t,\tau)
  &= \int_\tau^t\; \Psi_x(t,\sigma)\,\D A(x(\sigma))\,\dx(\sigma)\,
                   \Psi_x(\sigma,\tau) \d\sigma.
\end{align}
\end{subequations}
The correctness of these expressions pointwise in $t$ can be
checked by variation of constants and follows from
Theorem~\ref{thm:diff-flow}. Note also that the expressions above
are linear in the variations $\dx,\,\dy,\,\dx_0$. The
map~\ref{eq:DxPsi} is only included in the list for its occurrence
in~\ref{eq:DxTy}.

\subsection{Uniformly contractive fiber maps}\label{sec:unif-fiber-contr}

We establish uniform boundedness of the formal derivative
maps~\ref{eq:DT-set} as linear operators on $\dx,\,\dy$, and $\dx_0$.
The estimates are straightforward generalizations of those in
Section~\ref{sec:NHIM-lipschitz}. The operator norms are induced by
$\norm{\slot}_\rho$ norms. We have the following list of estimates:
{\jot=5pt
\begin{align*}
  \norm{\DF_{x_0} T_\sx(y,x_0)}
  &=   \sup_{\substack{t \in I\\\norm{\dx_0}=1}}\;
       \norm[\big]{\D\Phi_y(t,0,x_0)\dx_0}\,e^{-\rho\,t}
   \le \sup_{t \in I}\;
       C_\sx\,e^{\rho_\sx\,t}\,e^{-\rho\,t}
   \le C_\sx,\displaybreak[1]\\
  \norm{\DF_y T_\sx(y,x_0)}
  &\le \!\sup_{\substack{t \in I\\\norm{\dy}_\rho=1}}\;
       \int_t^0 \norm{\D\Phi_y(t,\tau,x_y(\tau))\,\D_y \vx(x_y(\tau),y(\tau))\,
                      \dy(\tau)} \d\tau \cdot e^{-\rho\,t}\\
  &\le \sup_{t \in I}\;
       \int_t^0 C_\sx\,e^{\rho_\sx(t-\tau)}\,C_v\,e^{\rho(\tau-t)} \d\tau
   \le \frac{C_\sx\,C_v}{\rho_\sx-\rho},\displaybreak[1]\\
  \norm{\DF_y T_\sy(x,y)}
  &\le \!\sup_{\substack{t \in I\\\norm{\dy}_\rho=1}}\;
       \int_{-\infty}^t \norm{\Psi_x(t,\tau)\,\D_y \Ynonlin(x(\tau),y(\tau))\,
                              \dy(\tau)} \d\tau \cdot e^{-\rho\,t}\\
  &\le \sup_{t \in I}\;
       \int_{-\infty}^t C_\sy\,e^{\rho_\sy(t-\tau)}\,
                        \Vsize\,e^{\rho(\tau-t)} \d\tau
   \le \frac{C_\sy\,\Vsize}{\rho-\rho_\sy},\displaybreak[1]\\
  \norm{\big(\DF_x \Psi_x\cdot\dx\big)(t,\tau)}
  &\le \int_\tau^t \norm{\Psi_x(t,\sigma)\,\D A(x(\sigma))\,\dx(\sigma)\,
                         \Psi_x(\sigma,\tau)} \d\sigma\\
  &\le \int_\tau^t C_\sy\,e^{\rho_\sy(t-\sigma)}\,C_v\,
                   \norm{\dx}_\rho\,e^{\rho\,\sigma}\,
                   C_\sy\,e^{\rho_\sy(\sigma-\tau)} \d\sigma\\
  &\le \frac{C_\sy^2\,C_v}{-\rho}\,\norm{\dx}_\rho\,
       e^{\rho_\sy(t-\tau)}\,e^{\rho\,t},\displaybreak[1]\\
  \norm{\DF_x T_\sy(x,y)}
  &\le \!\sup_{\substack{t \in I\\\norm{\dx}_\rho=1}}\;
       \int_{-\infty}^t \Big[
         \norm{\Psi_x(t,\tau)\,\D_x \Ynonlin(x(\tau),y(\tau))\,\dx(\tau)}\\[-10pt]
  &\hspace{2.3cm}
        +\norm{\big(\DF_x\Psi_x\cdot\dx\big)(t,\tau)\,\Ynonlin(x(\tau),y(\tau))}
       \Big] \d\tau \cdot e^{-\rho\,t}\\[5pt]
  &\le \sup_{t \in I}\;
       \int_{-\infty}^t C_\sy\,e^{\rho_\sy(t-\tau)}\,\Vsize\,e^{\rho\,\tau}
                       +\frac{C_\sy^2\,C_v\,\Vsize}{-\rho}\,e^{\rho_\sy(t-\tau)}\,e^{\rho\,t}
                      \d\tau \cdot e^{-\rho\,t}\\
  &\le \frac{C_\sy\,\Vsize}{\rho-\rho_\sy}
      +\frac{C_\sy^2\,C_v\,\Vsize}{\rho\cdot\rho_\sy}.
\end{align*}}

These estimates show that
\begin{equation}\label{eq:DT-chainrule}
  \begin{aligned}
    \DF_y     T &= \DF_x T_\sy \cdot \DF_y T_\sx  + \DF_y T_\sy,\\
    \DF_{x_0} T &= \DF_x T_\sy \cdot \DF_{x_0} T_\sx
  \end{aligned}
\end{equation}
are bounded linear maps when $\rho_\sy < \rho < \rho_\sx$. Since we
have some spectral elbow room, we can first choose a value for $\rho$ and
then choose $\mu < 0$ sufficiently close to zero, such that this
inequality holds both for $\rho$ and $\rho + \mu$. If $\Vsize$ is
sufficiently small, then
$\norm{\DF_y T} \le \contr < 1$ can be satisfied. This shows that
$(T,\DF T)$ is a uniform fiber contraction on
$\dy \in B^{\rho+\mu}(I;Y)$ over base curves $y \in \BY$, and with
additional parameter $x_0 \in X$. It can also be viewed as a fiber
mapping of maps $\DF \Theta$ over base maps $\Theta$. Let us define
\index{$S$@$\mathcal{S}$}
\begin{equation}\label{eq:fiber-contr-spaces}
  \mathcal{S}_0 = C^0\big(X;\BY\big) \quad\text{and}\quad
  \mathcal{S}_1^\mu = \Gamma_b\big(\CLin\big(\T X; B^{\rho+\mu}(I;Y)\big)\big),
\end{equation}
where $\mathcal{S}_0$ is equipped with the supremum norm and $\mathcal{S}_1^\mu$ is
interpreted as bounded sections of the bounded geometry bundle over
$X$ of linear maps between $\T X$ and the trivial bundle
$\pi\colon X \times B^{\rho+\mu}(I;Y) \to X$, equipped with the
(supremum/operator) norm
\begin{equation*}
  \norm{\DF\Theta} = \sup_{x_0 \in X}\;
  \norm{\DF\Theta(x_0)}_{\CLin(\T_{x_0} X;B^{\rho+\mu}(I;Y))}.
\end{equation*}
Then $(T,\DF T)$ can also be viewed as a fiber mapping
\begin{equation}\label{eq:fiber-map-DT}
  \begin{gathered}
  (T,\DF T)\colon \mathcal{S}_0 \times \mathcal{S}_1^\mu
              \to \mathcal{S}_0 \times \mathcal{S}_1^\mu,\\
  \begin{aligned}
  (\Theta,\DF \Theta) \mapsto
& \Big((x_0,\dx_0) \mapsto \big(T(\Theta(x_0),x_0),\\
&\quad\big[ \DF_y     T(\Theta(x_0),x_0)\cdot\DF \Theta(x_0)
           +\DF_{x_0} T(\Theta(x_0),x_0) \big]\cdot\dx_0 \big)\Big).
  \end{aligned}
  \end{gathered}
\end{equation}
As such, it is again a uniform fiber contraction since the contraction
was uniform in $y$ and $x_0$ to begin with, and the supremum norm does
not affect the contraction factor $\contr < 1$.

\subsection{Formal tangent bundles}\label{sec:formal-tangent}
\index{formal tangent bundle}

Derivatives of the maps $T_\sx,\,T_\sy$ should be defined between
tangent bundles of the spaces $\BY$ and $\BX$. This is problematic for
both spaces: $\BY$ is a subspace of the Banach space
$B^\rho(I;Y)$, but it has empty interior. The restriction to
$(\Xsize,T)$\ndash approximate solutions of $v_\sx \circ g$ creates a
similar problem for $\BX$, but here, construction of the
tangent bundle faces an additional obstruction. There is no clear way
to define local coordinates around a solution curve $x$. The obvious
method would be by constructing a tubular neighborhood of $x$ and
represent nearby curves $\tilde{x}$ in the tubular neighborhood. But
the metric on $\BX$ allows $\tilde{x}$ to
diverge exponentially from $x$ even if $d_\rho(x,\tilde{x})$ is
small. Thus the tubular neighborhood would need to be of infinite size
to contain $\tilde{x}(t)$ for all $t\in I$, which is generally not
possible. Since any finite-size tubular neighborhood does not contain
a full neighborhood of the curve $x$, we cannot use local coordinates
to define tangent spaces.

Instead, we shall construct formal tangent bundles. These are just
convenient spaces to model variations of curves on; they are natural
extensions of true Banach tangent spaces, see
Section~\ref{sec:banach-mflds}. The primary role of
these bundles is to introduce a topology that allows us to show that
$\DF T$ is uniformly or H\"older continuous.

A formal tangent bundle of $\BY$ can be constructed rather easily:
$B^\rho(I;Y)$ is a Banach space, so its tangent bundle is canonically
identified as $\T B^\rho(I;Y) = B^\rho(I;Y) \times B^\rho(I;Y)$. We
then define the formal tangent bundle of $\BY$ by restricting the
base:
\begin{equation}\label{eq:TBY-formal}
  \TF \BY = \T B^\rho(I;Y)|_{\BY} \cong \BY \times B^\rho(I;Y)
\end{equation}
with induced topology and norm.
\oldindex{$T_zBXY$@$\TF \BY,\,\TF \BX$}

To define a formal tangent bundle of $\BX$,
we consider variations $\dx$ of a curve $x$ as sections of a
pullback bundle: $\dx \in \Gamma(x^*(\T X))$. That is,
$\dx \in C(I;\T X)$ is such that $\dx(t) \in \T_{x(t)} X$ for
each $t \in I$. We equip this space with the norm that is natural for
our problem, namely
\begin{equation*}
  \norm{\dx}_\rho = \sup_{t \in I}\; \norm{\dx(t)}\,e^{-\rho\,t},
\end{equation*}
and denote it by
$B^\rho(I;x^*(\T X)) = \big(\Gamma(x^*(\T X)),\norm{\slot}_\rho\big)$.
The curves $\dx \in B^\rho(I;x^*(\T X))$ form the formal tangent space
over one curve $x \in \BX$. The complete formal tangent bundle is then
defined as the coproduct over all curves $x \in \BX$,
\begin{equation}\label{eq:TBX-formal}
  \TF \BX
  \, = \!\!\coprod_{x \in \BX}\! B^\rho(I;x^*(\T X)).
\end{equation}

\index{formal tangent bundle!topology}
A curve $\dx$ lives above a specific base curve $x$, so there is
no direct way of comparing two curves $\dx_1,\,\dx_2$ with
different base curves $x_1,\,x_2$; \ref{eq:TBX-formal} was constructed
as a coproduct without topological structure. We add a topology based
on parallel transport. This requires the base curves $x \in \BX$ to be
differentiable, so we consider the bundle
\begin{equation}\label{eq:TBXc-formal}
  \TFBXc
\end{equation}
restricted to differentiable\footnote{%
  This does not cause problems since $T_\sx$ actually maps into curves
  $x \in C^1$. The fiber contraction theorem only requires that the
  base space has a globally attractive fixed point. Since the fixed
  point is a $C^1$ curve, we can simply restrict to this subset of
  curves.%
} base curves $x \in \BX \cap C^1$. Variational curves
$\dx \in B^\rho(I;x^*(\T X))$ are isometrically mapped onto curves
$\dxtilde \in B^\rho(I;\T_{x(0)} X) = \big(C^0(I;\T_{x(0)} X),\norm{\slot}_\rho\big)$ by
\begin{equation}\label{eq:TBX-triv-partrans}
  \tilde{\partrans}_x\colon \dx \mapsto \dxtilde, \qquad
  \dxtilde(t) = \partrans(x|_0^t)^{-1}\,\dx(t).
\end{equation}
\oldindex{$\Pi$@$\tilde{\partrans}$}%
Let the normal coordinate radius $\rx$ be $X$\ndash small
as in Definition~\ref{def:M-small}. If we now restrict all
base curves $x$ under consideration to a small neighborhood
\begin{equation}\label{eq:TBX-neighborhood}
  U_\ux = \set[\big]{x \in \BX \cap C^1}{x(0) \in B(\ux;\rx)},
\end{equation}
\oldindex{$U_\ux$}%
i.e., the curves $x$ that start in the open ball
$B(\ux;\rx) \subset X$, then there exists a unique shortest
geodesic $\gamma_{\ux,x(0)}$ from $x(0)$ to $\ux$ for each
$x \in U_\ux$. Parallel transport along these geodesics induces a
local trivialization\footnote{%
  We make the specific choice to trivialize $\T B(\ux;\rx)$ by
  parallel transport along geodesics. Any other trivialization with
  uniformly bounded transition maps would also suffice for our
  purposes and induce a trivialization of $\TF \BX|_{U_\ux}$ (see also
  the alternative viewpoint on this trivialization below).
  This explicit choice is somewhat natural in
  this context, though, and it shows that a trivialization with these
  properties does exist.%
} of $\T B(\ux;\rx)$. This in turn induces a local trivialization of
$\TFBXc$:
\begin{equation}\label{eq:TBX-triv}
  \begin{aligned}
    \xymatrix@R=4em@C=0em{
      *+[l]{\TF \BX|_{U_\ux}} \ar[d]_\pi \ar[r]^-{\tau_\ux} &
      *+[r]{U_\ux \times B^\rho(I;\T_\ux X)}  \ar[dl]^{p_1} \\
      *+[l]{\BX|_{C^1} \supset U_\ux}
    }
  \end{aligned}
\end{equation}
The trivialization map is given by
$\tau_\ux(x,\dx)=\big(x,\partrans(\gamma_{\ux,x(0)})\circ\tilde{\partrans}_x(\dx)\big)$.
The transition maps between overlapping local trivializations
$U_{\ux_1} \cap U_{\ux_2} \neq \emptyset$ are induced by transition
functions
\begin{equation*}
  \phi_{2,1}
  \colon \big(B(\ux_2;\rx) \cap B(\ux_1;\rx)\big) \times \T_{\ux_2} X \to \T_{\ux_1} X
  \colon (\xi,\nu) \mapsto \partrans(\gamma_{\ux_2,\xi}\circ\gamma_{\xi,\ux_1}) \cdot \nu
\end{equation*}
between local trivializations of $\T B(\ux_i;\rx)$. The map
$\phi_{2,1}$ is uniformly Lipschitz by
Lemma~\ref{lem:bound-coordtrans} and linear in the fiber. This induces
a Lipschitz continuous transition function
$\tau_{\ux_2}^{}\circ\tau_{\ux_1}^{-1}$ that depends on the base curve
$x \in U_{\ux_1} \cap U_{\ux_2}$ only through $x(0) \in X$; this
dependence is uniform since $X$ has bounded geometry. Thus the bundle
satisfies Definition~\ref{def:BG-bundle}, and the order of bounded
geometry is actually equal to $k-2$ when $X$ has $k$\th order bounded
geometry.

We endow the bundle $\TFBXc$ with the topology induced by
these local trivializations. Note that this topology is induced by a
locally defined distance function, so we can express uniform and
H\"older continuity of maps on $\TFBXc$. That is, if
$(x_1,\dx_1)$ and $(x_2,\dx_2)$ are elements of $\TFBXc$ such
that $d_\rho(x_1,x_2) < \rx$, then the topology is induced by the
locally defined distance function
\begin{equation}\label{eq:TBX-loc-dist}
  d\big((x_1,\dx_1),(x_2,\dx_2)\big) =
  d_\rho(x_2,x_1) +
  \norm{\partrans(\gamma_{x_1(0),x_2(0)})\,
        \tilde{\partrans}_{x_2}(\dx_2)
       -\tilde{\partrans}_{x_1}(\dx_1)}_\rho.
\end{equation}
The transition functions $\tau_{\ux_2}^{}\circ\tau_{\ux_1}^{-1}$ are
uniformly Lipschitz, so they preserve uniform and H\"older continuity
moduli up to a constant. Therefore, overlapping trivializations define
the same topology on their intersection, with compatible local
distances. To summarize, we have
\begin{proposition}\label{prop:formal-bundles}
  The spaces $\TF \BY$ and\, $\TFBXc$ are well-defined normed
  vector bundles of bounded geometry, and they have a (local) distance
  structure.
\end{proposition}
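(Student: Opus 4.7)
The proof naturally splits according to the two bundles, and I expect the work to be almost entirely for $\TFBXc$.

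For $\TF\BY$, since $B^\rho(I;Y)$ is a Banach space, its tangent bundle is canonically trivial: $\T B^\rho(I;Y) \cong B^\rho(I;Y) \times B^\rho(I;Y)$. The space $\TF\BY$ in~\ref{eq:TBY-formal} is defined as the restriction of this trivial bundle to the subset $\BY$, so it is immediately a trivial normed vector bundle with global distance inherited from the product norm on $B^\rho(I;Y) \times B^\rho(I;Y)$; the bounded geometry condition of Definition~\ref{def:BG-bundle} is trivially satisfied.

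For $\TFBXc$, my plan is to verify the three things required by Definition~\ref{def:BG-bundle} and check compatibility of the local distance~\ref{eq:TBX-loc-dist}. First, I would fix $\rx > 0$ to be $X$-small in the sense of Definition~\ref{def:M-small}, so that Lemma~\ref{lem:bound-coordtrans} applies on balls $B(\ux;\rx)$; then Lemma~\ref{lem:unif-loc-cover} produces a uniformly locally finite cover $\{B(\ux_i;\rx)\}$ of $X$, which pulls back to a cover $\{U_{\ux_i}\}$ of $\BX \cap C^1$ via the evaluation $x \mapsto x(0)$. The trivialization $\tau_\ux$ of~\ref{eq:TBX-triv} is fiberwise an isometry since both $\tilde{\partrans}_x$ from~\ref{eq:TBX-triv-partrans} and the parallel transport $\partrans(\gamma_{\ux,x(0)})$ are pointwise isometries of inner product spaces, so each fiber $B^\rho(I; x^*(\T X))$ is isometrically identified with the model fiber $B^\rho(I;\T_\ux X)$.

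Next I would compute the transition function: on $U_{\ux_1} \cap U_{\ux_2}$,
\[
   \tau_{\ux_2} \circ \tau_{\ux_1}^{-1}(x,\nu)
   = \bigl(x,\, g(x(0))\,\nu\bigr),
   \qquad
   g(\xi) = \partrans(\gamma_{\ux_2,\xi})\circ \partrans(\gamma_{\ux_1,\xi})^{-1},
\]
so the transition acts linearly on fibers and depends on the base curve $x$ only through the point $x(0) \in B(\ux_1;\rx) \cap B(\ux_2;\rx)$. The map $g$ is exactly the composition of parallel transports along geodesics controlled by the exponential map, so Lemma~\ref{lem:bound-coordtrans} applies and yields $g \in \BC^{k-2}$ (the two lost degrees come from the geodesic flow and the comparison with the identity), uniformly in the choice $\ux_1,\ux_2$. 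This gives Definition~\ref{def:BG-bundle} with order $k-2$. For the distance~\ref{eq:TBX-loc-dist}, the same uniform Lipschitz bound on $g$ shows that on overlaps the local distance expressed in the $\tau_{\ux_1}$ trivialization is equivalent to the one expressed in $\tau_{\ux_2}$, up to a multiplicative constant independent of the base point; hence the distances patch into a well-defined local distance structure inducing the same topology.

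The main obstacle, and really the only nontrivial point, is verifying the uniform boundedness of the transition cocycle $g$ independent of $\ux_1,\ux_2 \in X$. This is exactly what the Lipschitz-type estimate~\ref{eq:normal-coord-lipschitz} in Lemma~\ref{lem:bound-coordtrans} provides: it gives $\partrans(\gamma_{\ux_i,\xi})$ (which is essentially the linearization of $\exp_{\ux_i}^{-1}$ along a geodesic) uniform bounds in normal coordinates, and in particular bounds on derivatives with respect to the base point $\xi$. Once this is in hand the verification is routine, and Proposition~\ref{prop:formal-bundles} follows.
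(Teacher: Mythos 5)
Your proposal follows essentially the same route the paper takes: the paper also treats $\TF\BY$ as a trivial restriction of $\T B^\rho(I;Y)$, trivializes $\TFBXc$ by parallel transport along base curves and along geodesics from $x(0)$ to a reference point $\ux$, reduces the transition cocycle to a map depending on $x$ only through $x(0)\in X$, and appeals to Lemma~\ref{lem:bound-coordtrans} for the uniform bounds that give Definition~\ref{def:BG-bundle}; the compatibility of the local distances~\ref{eq:TBX-loc-dist} on chart overlaps follows from the same Lipschitz control, exactly as you argue. The only cosmetic point worth noting is that the paper does not explicitly invoke Lemma~\ref{lem:unif-loc-cover} to build a cover (the $U_\ux$ are indexed by all $\ux\in X$, and uniformity is simply uniformity in $\ux$), and the justification you give for the two lost orders of smoothness in $g$ is phrased slightly imprecisely --- for the parallel-transport transition cocycle the relevant losses are the geodesic flow (Christoffel symbols) and the $\xi$-dependence of $\gamma_{\ux_i,\xi}$, not the comparison with the identity from part (ii) of Lemma~\ref{lem:bound-coordtrans} --- but you arrive at the same order $k-2$ that the paper states.
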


The topologies introduced above
allow us to express uniform and H\"older continuity of the
maps~\ref{eq:DT-set}. The topology on $\TF \BY$ is clear and explicit
from the topology on $B^\rho(I;Y)$. For $\TFBXc$ let
$x \in \BX \cap C^1$ be a curve and $\dx \in B^\rho(I;x^*(\T X))$ a
variational curve at $x$. The topology is induced by the isometric
representation
\begin{equation*}
  \dxtilde = \partrans(\gamma_{\ux,x(0)})\cdot\tilde{\partrans}_x\cdot\dx
  \in B^\rho(I;\T_\ux X)
\end{equation*}
of $\dx$. Uniform continuity of maps~\ref{eq:DT-set} that have
$\TFBXc$ as (co)domain can thus be checked by switching to a
local trivialization, that is, substitute
\begin{equation*}
  \dx(t) = \partrans(x|_0^t)\cdot\partrans(\gamma_{x(0),\ux})\cdot\dxtilde(t)
\end{equation*}
and then use the known topology on $U_\ux \times B^\rho(I;\T_\ux X)$.
In explicit calculations of continuity with respect to the base
$\BX \cap C^1$, we shall thus add parallel transport terms such as
those above to the maps~\ref{eq:DT-set} and let these act on
$\dxtilde \in B^\rho(I;\T_\ux X)$.

\subsubsection{Alternative viewpoints}

Instead of the immediate trivialization~\ref{eq:TBX-triv} of the
bundle $\TFBXc$, we can also introduce an intermediate viewpoint that
corresponds to only applying the parallel transport term
$\tilde{\partrans}_x$, but not $\partrans(\gamma_{\ux,x(0)})$ in the
local neighborhood $B(\ux;\rx)$. We view $\text{ev}_0\colon \BX \to X$
as a bundle; this identifies $\TFBXc$ as a bundle over $X$ as well,
via $\text{ev}_0 \circ \pi$. Let $B^\rho(I;\T X)_X$ denote the space
of (continuous, exponential growth) functions
$\dxtilde\colon I \to \T X$ such that $\pi \circ \dxtilde$ is constant
into $X$, viewed as a bundle over $X$.
\begin{equation}\label{eq:TBX-over-TX}
  \begin{aligned}
    \xymatrix@R=3em@C=-2em{
      *+[l]{\TFBXc} \ar[d]_\pi \ar[r]^-{\tilde{\partrans}} &
      *+[r]{\BX|_{C^1} \times_\sx B^\rho(I;\T X)_\sx} \ar[dl]^{p_1} &
      {\mathstrut\hspace{2.2cm}} \ar[dl]^{\text{ev}_t \circ p_2} \\
      \BX|_{C^1} \ar[d]_{\text{ev}_0} &
      \T X       \ar[dl]^{\pi} \\
      X
    }
  \end{aligned}
\end{equation}
This commutative diagram shows that $\TFBXc$ can be identified via
$\tilde{\partrans}$ with the fiber product bundle
$\BX|_{C^1} \times_\sx B^\rho(I;\T X)_\sx$ over $X$. This
identification is natural in the sense that no local trivialization of
$X$ or $\T X$ is used. The second component $B^\rho(I;\T X)_\sx$ of
this bundle contains the variational curves $\dxtilde$. This is a
(nontrivial) bundle over $X$, but its projection onto the base
$\pi \circ\text{ev}_t\colon B^\rho(I;\T X)_\sx \to X$ factors through
$\T X$. The fact that $\pi\circ\text{ev}_t$ is constant for $t \in I$
simply expresses that each $\dxtilde \in B^\rho(I;\T X)_\sx$ maps
into a fixed tangent space $\T_\xi X$. This shows that a local trivialization
$\sigma\colon \T X|_{B(\ux;\rx)} \to B(\ux;\rx) \times \R^n$ naturally
lifts to a local trivialization
\begin{equation*}
  \tilde{\sigma}\colon B^\rho(I;\T X)|_{B(\ux;\rx)}
                  \to  B(\ux;\rx) \times B^\rho(I;\R^n).
\end{equation*}
We have chosen local trivializations of $\T X$ by parallel transport
along geodesics, i.e.\ws $\partrans(\gamma_{\ux,\xi})$, since this
construction is compatible with the bounded geometry of $X$ in the
sense that trivialization chart transitions are $\BC^k$ maps by
Proposition~\ref{prop:unif-partrans}.

We also introduce a reformulation of the topology on $\TFBXc$ using
frames, as an alternative to
the explicit formulation in terms of parallel transport above. This
allows us to abstract away these ideas into a lighter notation in the
next section and only recall the full details when required.

Let $e_\ux\colon \R^n \to \T_\ux X$ be a choice\footnote{%
  The precise choice does not matter and will drop out in the final,
  relevant equations. The relative choice of frame along curves is
  what matters.%
} of orthonormal frame
at $\ux \in X$. We can extend this to an orthonormal frame $e$ on
$\T B(\ux;\rx)$ by parallel transport of the frame $e_\ux$
\index{parallel transport!of a frame}
along geodesics emanating from $\ux$. As a second step, we further
extend the frame $e$ along any curve $x \in U_\ux$, again by parallel
transport\footnote{%
  Note that $e$ does not define a (global) frame on $\T X$. The choice
  of frame at $x(t)$ depends not just on the point $x(t) \in X$, but
  on the whole curve $x \in \BX|_{C^1}$. Another curve $\tilde{x}$
  with $x(t) = \tilde{x}(t)$ will generally induce a different frame
  in $\T_{x(t)} X$.%
}.

We adopt the notation $v_f = f^{-1}\cdot v$ to express a vector
$v \in \T_x X$ with respect to a frame $f$ at $x$, and use this
notation more generally on the tensor bundle of $X$. Now let $v$ be a
vector field and $\omega$ a one-form on $X$, then the construction of
$e$ above leads to
\begin{equation}\label{eq:TBX-frame}
  \begin{aligned}
  v(x(t))_e      &= e_\ux^{-1}\cdot\partrans(\gamma_{\ux,x(0)})
                              \cdot\partrans(x|_t^0)\cdot v(x(t)),\\[5pt]
  \omega(x(t))_e &= \omega(x(t))\cdot\partrans(x|_0^t)
                                \cdot\partrans(\gamma_{x(0),\ux})\cdot e_\ux,
  \end{aligned}
\end{equation}
and naturally extends to the tensor bundle of $X$.

\subsection{Continuity of the fiber maps}\label{sec:cont-fiber-maps}

We prove the uniform and H\"older continuous dependence on $x,y$, and
$x_0$ of the maps~\ref{eq:DT-set} using a combination of techniques.
One is the variation of constants formula to get expressions for the
variation of flows when changing a parameter. Such variations require
us to compare the variational curves over different base curves; for
this, we use the topologies of the formal tangent bundles in
Section~\ref{sec:formal-tangent}, while we measure the variation of
vector fields with the formulation of continuity via
parallel transport in Proposition~\ref{prop:unif-partrans}. Together
these lead to \idx{holonomy} terms along the base paths (see
Figure~\ref{fig:BX-holonomy}), in addition to the variation of
constants terms that would simply occur in $\R^n$. These holonomy terms can
be estimated with Lemma~\ref{lem:bound-holonomy} and do not
essentially alter the estimates.
\begin{figure}[htb]
  \centering
  \input{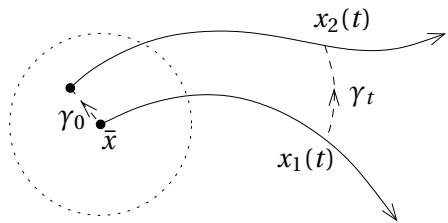}
  \caption{Paths involved in the holonomy term.}
  \label{fig:BX-holonomy}
\end{figure}

We use Nemytskii operator techniques as laid out in
Appendix~\ref{chap:nemytskii} to conclude that functions such as $A$
and $f$ can be interpreted as uniformly continuous maps onto curves
with some $\mu < 0$ exponential growth norm. Instead of uniform continuity, we
can also obtain H\"older continuity if the original maps are H\"older
continuous and if we view the Nemytskii operator as a mapping into a
space with norm $\norm{\slot}_{\alpha\rho}$. In other words, we replace the
uniform continuity modulus by the explicit $\alpha$\ndash H\"older
continuity modulus. H\"older continuity precisely fits the problem, so
in that case there is no need anymore to add a small $\mu < 0$ to the
exponential growth norms.

\subsubsection{One example in full detail}

As an example, let us consider continuity of the map~\ref{eq:DxTy}
with respect to $x \in \BX$, that is,
$x \mapsto \DF_x T_\sy(x,y)$. To be able to explicitly use the topology
on $\TFBXc$, we switch to a local trivialization neighborhood
$U_\ux \ni x_1, x_2$ as in~\ref{eq:TBX-neighborhood}. We choose
$\ux = x_1(0)$ to simplify expressions; any other choice for
$\ux$ can be obtained by a transition of trivialization charts. Let
$\dxtilde \in B^\rho(I;\T_\ux X)$ be the representation of an
arbitrary variational curve in the fiber of this trivialization.

Note that~\ref{eq:DxTy} is defined in terms of~\ref{eq:DxPsi}. We
estimate continuity of the separate components and build towards the
full expression. Let us first focus on the continuity of
$x \mapsto \Psi_x(t,\tau)$, which is a map $\BX \to \CLin(Y)$ for fixed
$t,\,\tau \in I$.
\begin{proposition}
  \label{prop:Psi-cont}
  For any $\mu < 0$, the variation
  \begin{equation}\label{eq:Psi-var}
    \Upsilon^{t,\tau} = \Psi_{x_2}(t,\tau) - \Psi_{x_1}(t,\tau)
  \end{equation}
  of the linear flow $\Psi_x$ on $Y$ satisfies continuity
  estimate~\ref{eq:Psi-var-est}.
\end{proposition}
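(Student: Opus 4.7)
The plan is to derive a variation-of-constants identity for $\Upsilon^{t,\tau}$, estimate the three factors inside the resulting integral, and then bound the integrated expression in two cases (H\"older and merely uniform continuity of $A$) using a small slack parameter $\mu < 0$.

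Each $\Psi_{x_i}(t,\tau)$ satisfies $\partial_t \Psi_{x_i} = A(x_i(t))\,\Psi_{x_i}$ with $\Psi_{x_i}(\tau,\tau) = \Id$. Subtracting the two equations and applying variation of constants (as in the proof of Lemma~\ref{lem:var-lin}, but now with the perturbation coming from moving the base curve rather than adding a bounded operator) yields
\begin{equation*}
\Upsilon^{t,\tau} = \int_\tau^t \Psi_{x_2}(t,\sigma)\,\bigl[A(x_2(\sigma)) - A(x_1(\sigma))\bigr]\,\Psi_{x_1}(\sigma,\tau)\,d\sigma.
\end{equation*}
By Proposition~\ref{prop:pert-y} we have $\norm{\Psi_{x_i}(a,b)} \le C_\sy\,e^{\rho_\sy(a-b)}$; and since $A \in \BUC^{k,\alpha}$ maps into the Banach space $\CLin(Y)$, no parallel transport is required in the codomain, and its continuity modulus combined with the defining inequality $d(x_1(\sigma),x_2(\sigma)) \le d_\rho(x_1,x_2)\,e^{\rho\sigma}$ gives $\norm{A(x_2(\sigma)) - A(x_1(\sigma))} \le \epsilon_A\bigl(d_\rho(x_1,x_2)\,e^{\rho\sigma}\bigr)$. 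Combining these three bounds produces
\begin{equation*}
\norm{\Upsilon^{t,\tau}} \le C_\sy^2\,e^{\rho_\sy(t-\tau)} \int_\tau^t \epsilon_A\bigl(d_\rho(x_1,x_2)\,e^{\rho\sigma}\bigr)\,d\sigma.
\end{equation*}

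In the H\"older case $\epsilon_A(s) = C_\alpha\,s^\alpha$, direct integration yields a bound proportional to $d_\rho(x_1,x_2)^\alpha\,e^{\alpha\rho\tau}$, so that~\ref{eq:Psi-var-est} holds with $\mu = \alpha\rho$ and no additional slack. For merely uniformly continuous $A$, I would invoke the Nemytskii-operator framework of Appendix~\ref{chap:nemytskii}: the map $\sigma \mapsto \epsilon_A(d_\rho(x_1,x_2)\,e^{\rho\sigma})$ is reinterpreted as a Nemytskii operator whose image lies in a weighted $\norm{\slot}_\mu$-space for any prescribed $\mu < 0$, with norm tending to zero as $d_\rho(x_1,x_2) \to 0$; integrating in $\sigma$ then delivers the claimed estimate at the cost of the rate loss $\mu$.

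The main obstacle is the uniform-continuity regime: without the slack $\mu$ the integrand has no mechanism to tame the exponentially growing argument $d_\rho(x_1,x_2)\,e^{\rho\sigma}$ as $\sigma \to -\infty$ in a fashion that is small with $d_\rho(x_1,x_2)$, and the Nemytskii reinterpretation is needed precisely to trade a tiny amount of exponential rate for this convergence. Both regimes must be carried in parallel since the higher-order smoothness scheme of Section~\ref{sec:NHIM-smoothness} will appeal to each, and keeping $\mu$ explicit is what makes the spectral-gap bookkeeping in later steps tight.
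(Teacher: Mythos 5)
Your proof is correct and follows essentially the same route as the paper: the same variation-of-constants identity for $\Upsilon^{t,\tau}$, the same factor-by-factor bound, and the same appeal to the Nemytskii framework (Corollary~\ref{cor:unifcont-nemytskii}, time-reversed per Remark~\ref{rem:nemytskii-time-rev}) to convert uniform continuity of $A$ into the decaying bound $\epsilon_{\tilde{A}}(d_\rho(x_2,x_1))\,e^{\mu\sigma}$ before integrating. The only cosmetic difference is that the paper states the proposition in the uniform-continuity form and relegates the H\"older sharpening $\mu = \alpha\rho$ to Remark~\ref{rem:Psi-var-Holder}, whereas you present both regimes in parallel, handling H\"older by direct integration rather than by citing Lemma~\ref{lem:cont-nemytskii}; the content is the same.
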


\begin{proof}
We extend the ideas from the proof of Lemma~\ref{lem:exp-growth-riem}.
The variation $\Upsilon^{t,\tau}$ satisfies the differential equation
\begin{align*}
  \der{}{t}\Upsilon^{t,\tau}
  &= A(x_2(t))\,\Psi_{x_2}(t,\tau)
    -A(x_1(t))\,\Psi_{x_1}(t,\tau)\\
  &= A(x_2(t))\,\Upsilon^{t,\tau}
    +\big[A(x_2(t))-A(x_1(t))\big]\Psi_{x_1}(t,\tau),
\end{align*}
which leads to a variation of constants integral that can be estimated
as
\begin{align*}
  \norm{\Upsilon^{t,\tau}}
  &\le \int_\tau^t \norm{\Psi_{x_2}(t,\sigma)}
                   \norm{A(x_2(\sigma))-A(x_1(\sigma))}
                   \norm{\Psi_{x_1}(\sigma,\tau)} \d\sigma\\
  &\le \int_\tau^t C_\sy\,e^{\rho_\sy(t-\sigma)}\,
                   \epsilon_{\tilde{A}}(d_\rho(x_2,x_1))\,e^{\mu\,\sigma}\,
                   C_\sy\,e^{\rho_\sy(\sigma-\tau)} \d\sigma\\
  &\le C_\sy^2\,e^{\rho_\sy(t-\tau)}\,\epsilon_{\tilde{A}}(d_\rho(x_2,x_1))
       \frac{e^{\mu\,\tau}}{-\mu}.
  \eqnumber\label{eq:Psi-var-est}
\end{align*}
Here we use ideas from Appendix~\ref{chap:nemytskii}; we applied
Corollary~\ref{cor:unifcont-nemytskii} to obtain $A$
as a uniformly continuous fiber mapping $\BX \to B^\mu(I;\CLin(Y))$
with continuity modulus~$\epsilon_{\tilde{A}}$ (that depends on $\mu$).
\end{proof}
Thus, the flow $\Psi_x^{t,\tau}$ depends uniformly continuously on
$x \in \BX$ when viewed as a flow with $\rho_\sy$\ndash exponential
growth \emph{and} measured with an additional exponential factor
$e^{\mu\,\tau}$.

\begin{remark}\label{rem:Psi-var-Holder}
  In the previous proposition, if $A$ is $\alpha$\ndash H\"older
  continuous, then we can replace $\mu$ by $\alpha\,\rho$ to obtain a
  similar, $\alpha$\ndash H\"older continuous result using
  Lemma~\ref{lem:cont-nemytskii}.
\end{remark}

To show that $x \mapsto \DF_x\Psi_x(t,\tau)$ is continuous as well, we
first write down the corresponding variation in the bundle
trivialization chart:
\begin{align*}
\fst\big(\DF_x\Psi_{x_2}\cdot\dxtilde
        -\DF_x\Psi_{x_1}\cdot\dxtilde\big)(t,\tau)\\
\con= \int_\tau^t \Psi_{x_2}(t,\sigma)\,
        \big[\D A(x_2(\sigma))\,\partrans(x_2|_0^\sigma)\,
             \partrans(\gamma_{x_2(0),\ux})\,\dxtilde(\sigma)\big]\,
                 \Psi_{x_2}(t,\sigma) - (2 \rightsquigarrow 1) \d\sigma\\
\con= \int_\tau^t \Psi_{x_2}(t,\sigma)\,
        \big[\D A(x_2(\sigma))_e\,\dxtilde(\sigma)\big]\,
                 \Psi_{x_2}(t,\sigma) - (2 \rightsquigarrow 1) \d\sigma,
  \eqnumber\label{eq:DxPsi-var}
\end{align*}
where the notation $(2 \rightsquigarrow 1)$ means that we take the
first expression and replace all $2$'s by $1$'s (note that
$\gamma_{x_2(0),\ux} = \gamma_0$ in the first term and
$\partrans(\gamma_{x_1(0),\ux}) = \Id$ in the second term). The last
line is just a rewrite in terms of the frame as in~\ref{eq:TBX-frame}
and suppresses all parallel transport terms. We separately estimate
continuity of the three factors in the integrand, and insert the
estimate of Proposition~\ref{prop:Psi-cont} for the variation
$\Psi_\bullet$ in the first and third factor. Note that $\dxtilde$ is
the same over both curves $x_1$ and $x_2$ in this trivialization.

For the middle factor $\D A(x(t))_e$, we again apply
Nemytskii operator techniques from Appendix~\ref{chap:nemytskii}. But
in this case we have to combine these with holonomy terms, due to the
fact that comparison of $\D A$ at nearby points $\xi_2,\,\xi_1 \in X$
only makes sense after identification of the tangent spaces
$\T_{\xi_2} X$ and $\T_{\xi_1} X$.
\begin{proposition}
  \label{prop:DA-cont}
  Let $A \in \BUC^1$ according to
  Definition~\ref{def:unif-bounded-map}. Then for any $\mu < 0$, the
  map
  \begin{equation}\label{eq:DA-nemytskii}
    x \mapsto \big(t \mapsto \D A(x(t))_e\big)\colon
    U_\ux \subset \BX \to B^\mu\big(I;\CLin(\T_\ux X;\CLin(Y))\big)
  \end{equation}
  is uniformly continuous. If moreover $A \in \BUC^{1,\alpha}$, then
  the map~\ref{eq:DA-nemytskii} is $\alpha$\ndash H\"older with $\mu$
  replaced by $\alpha\,\rho$.
\end{proposition}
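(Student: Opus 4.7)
The plan is to estimate the difference $\D A(x_2(t))_e - \D A(x_1(t))_e$ at each $t \le 0$ by inserting parallel transport along the unique shortest geodesic $\eta_t$ between $x_1(t)$ and $x_2(t)$, and then to convert the resulting pointwise estimate into the $B^\mu$ (resp.\ $B^{\alpha\rho}$) estimate via a Nemytskii-type argument.

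First, I would split the difference as
\begin{equation*}
  \D A(x_2(t))_e - \D A(x_1(t))_e
  = \big[\D A(x_2(t))_e - \partrans(\eta_t)_e \cdot \D A(x_1(t))_e\big]
   +\big[\partrans(\eta_t)_e \cdot \D A(x_1(t))_e - \D A(x_1(t))_e\big],
\end{equation*}
where the subscript $e$ indicates that all arguments and values are pulled back to $\T_\ux X$ and $\CLin(Y)$ using the frame $e$ constructed via parallel transport along $\gamma_{\ux,x_i(0)}$ and along $x_i|_0^t$. The first bracket is the ``pointwise $\D A$ defect'' compared via the shortest geodesic between $x_1(t)$ and $x_2(t)$; by Proposition~\ref{prop:unif-partrans} applied to $A$, it is bounded by $\epsilon_{\D A,\partrans}(d(x_1(t),x_2(t)))$ in the uniformly continuous case and by $C_\alpha\,d(x_1(t),x_2(t))^\alpha$ in the H\"older case. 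The second bracket measures the holonomy of the closed loop
\begin{equation*}
  c_t = \gamma_{\ux,x_1(0)} \circ x_1|_0^t \circ \eta_t \circ (x_2|_0^t)^{-1} \circ \gamma_{\ux,x_2(0)}^{-1}
\end{equation*}
(see Figure~\ref{fig:BX-holonomy}) acting on $\D A(x_1(t))$, and so is bounded by $\|\partrans(c_t)-\Id\|\cdot\|\D A\|_0$.

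Next, I would control both terms in the appropriate exponential norm. For the first term, since $d(x_1(t),x_2(t)) \le d_\rho(x_1,x_2)\,e^{\rho t}$ with $\rho < 0$, this distance is already of order $d_\rho(x_1,x_2)$ near $t=0$ and shrinks exponentially as $t \to -\infty$. In the H\"older case, inserting $\mu = \alpha\rho$ gives
\begin{equation*}
  \|\D A(x_2(t))_e - \partrans(\eta_t)_e \cdot \D A(x_1(t))_e\|\,e^{-\alpha\rho\,t}
  \le C_\alpha\,d_\rho(x_1,x_2)^\alpha,
\end{equation*}
which is exactly the required $B^{\alpha\rho}$ H\"older estimate. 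In the uniformly continuous case, I would split the supremum over $t$ into $t \in [T_0,0]$ (using uniform continuity of $\D A$ together with the explicit $e^{\rho t}$ decay of the arguments) and $t \le T_0$ (using global boundedness of $\D A$ dominated by $e^{-\mu t}$, which grows as $t \to -\infty$ since $\mu < 0$), optimizing $T_0 = T_0(d_\rho(x_1,x_2))$; this is precisely the mechanism used in Corollary~\ref{cor:unifcont-nemytskii}.

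For the holonomy term, I would apply Lemma~\ref{lem:bound-holonomy} to the loop $c_t$. The loop has two non-geodesic legs $x_1|_0^t$ and $x_2|_0^t$ of length bounded by $\|\dot x_i\|_0 \le N$ (with $N$ coming from $\|v_\sx\circ g\|_0$ plus the $\zeta$-allowance for $(\zeta,T)$-approximate solutions) and two geodesic ``caps'' $\gamma_{\ux,x_i(0)}$ and $\eta_t$ whose lengths are $O(d_\rho(x_1,x_2)\,e^{\rho t})$. Choosing a homotopy that sweeps the loop by translating the geodesic $\eta_t$ along the two curves, the enclosed area is bounded by $\|\dot x\|_0 \cdot d_\rho(x_1,x_2)\,e^{\rho t}/|\rho|$, so by~\ref{eq:bound-holonomy}
\begin{equation*}
  \|\partrans(c_t)-\Id\|
  \le \tilde C\,\|R\|_0\,N\,d_\rho(x_1,x_2)\,\frac{e^{\rho t}}{|\rho|},
\end{equation*}
which gives a clean Lipschitz estimate with an $e^{\rho t}$ decay, comfortably dominating the $e^{\mu t}$ or $e^{\alpha\rho t}$ weight.

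The main obstacle is the careful bookkeeping of the holonomy term: the frame $e$ depends on the whole curve $x$ through its endpoint $x(0)$ and the parallel transport $\partrans(x|_0^t)$, so one must verify that the path-dependence cancels into a single closed loop $c_t$ whose area can be estimated uniformly by bounded-geometry tools. Once the holonomy term is shown to be of order $d_\rho(x_1,x_2)\,e^{\rho t}$, and the pointwise defect is handled by Proposition~\ref{prop:unif-partrans}, combining the two via the $B^\mu$ (resp.\ $B^{\alpha\rho}$) supremum yields the proposition. Independence of the estimate from the base point $\ux$ (so that it extends from $U_\ux$ to the full bundle $\TFBXc$) is automatic, since all constants above depend only on $\|A\|_{1}$ (resp.\ $\|A\|_{1,\alpha}$), the global curvature bound $\|R\|_0$, and $N$, all of which are uniform by the bounded-geometry hypotheses.
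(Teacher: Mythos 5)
Your proposal is correct and follows essentially the same route as the paper: split the pointwise difference by inserting parallel transport along the geodesic $\eta_t$ between $x_1(t)$ and $x_2(t)$, control the first piece by Proposition~\ref{prop:unif-partrans} and the second by the holonomy bound of Lemma~\ref{lem:bound-holonomy} applied to the same closed loop (the paper implements this by introducing the auxiliary frame $f$, which is the paper's way of writing the same two-term split, in the opposite order), and pass to the $B^\mu$ resp.\ $B^{\alpha\rho}$ norm via the Nemytskii argument of Lemma~\ref{lem:cont-nemytskii}. One small point: since $\D A(x_1(t))_e$ is a $\CLin(Y)$-valued covector on $\T_\ux X$, the expression $\partrans(\eta_t)_e\cdot\D A(x_1(t))_e$ should be read as precomposition on the domain, i.e.\ $\D A(x_1(t))_e\circ(\partrans(\eta_t)_e)^{-1}$, which is what the paper writes as $\D A(\slot)\cdot\partrans(\gamma_t)$ in its displayed estimate; with that reading your bookkeeping matches.
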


\begin{proof}
Let $x_1,\,x_2 \in U_\ux$. We introduce another frame $f$ to directly
compare $\D A$ at points $x_1(t),\,x_2(t)$. Let
$f_{x_1(t)} = e_{x_1(t)}\colon \R^n \to \T_{x_1(t)} X$ and define
$f_{x_2(t)} = \partrans(\gamma_t) \cdot f_{x_1(t)}$. Thus, the frames
$e$ and $f$ at $x_2(t)$ are both defined in terms of the frame
$e_\ux$; $e_{x_2(t)}$ by parallel transport along $x_2 \circ \gamma_0$
and $f_{x_2(t)}$ by parallel transport along $\gamma_t \circ x_1$, see
Figure~\ref{fig:BX-holonomy}. Since $f_{x_1(t)} = e_{x_1(t)}$, we can
rewrite the difference of~\ref{eq:DA-nemytskii} at points on these
curves as
\begin{equation*}
         \D A(x_2(t))_e - \D A(x_1(t))_e
  = \big[\D A(x_2(t))_e - \D A(x_2(t))_f\big]
   +\big[\D A(x_2(t))_f - \D A(x_1(t))_f\big].
\end{equation*}
The first term can be estimated by the holonomy defect along the loop
\begin{equation*}
  \gamma_0^{-1} \circ x_2|_t^0 \circ \gamma_t \circ x_1|_0^t
\end{equation*}
using Lemma~\ref{lem:bound-holonomy} and the second term using the
continuity of $\D A$ and Proposition~\ref{prop:unif-partrans}.
Together, this leads to
\begin{align*}
\fst \norm{\D A(x_2(t))_e - \D A(x_1(t))_e}\\
  &\le \norm{\D A}\,
       \norm[\big]{\Id - \partrans\big(\gamma_0^{-1} \circ x_2|_t^0
                                 \circ \gamma_t      \circ x_1|_0^t\big)}
      +\epsilon_{\D A}\big(d(x_2(t),x_1(t))\big)\\
  &\le C_v\,C\,d_\rho(x_2,x_1)\,e^{\rho\,t}
      +\epsilon_{\D A}\big(d(x_2(t),x_1(t))\big).
\end{align*}
If $d_\rho(x_2,x_1)\,e^{\rho\,t} \ge \rx$, then we use the boundedness
estimate $\norm{\Id - \partrans(\gamma)} \le 2$ for any closed loop
$\gamma$ and Remark~\ref{rem:loc-cont-modulus} to effectively extend the local to
a global continuity modulus. We can recover any $\alpha$\ndash
H\"older continuity from the Lipschitz holonomy estimate, again by
using the fact that the holonomy is bounded by $2$ in combination with
Lemma~\ref{lem:holder-lower}.

With the same arguments as in Lemma~\ref{lem:cont-nemytskii}, it
follows that $x \mapsto \big(t \mapsto \D A(x(t))_e\big)$ is uniformly
or $\alpha$\ndash H\"older continuous, and we denote its continuity
modulus by $\epsilon_{\widetilde{\D A}}$. Note that
$\epsilon_{\widetilde{\D A}}$ does not depend on the trivialization
chart since all estimates are uniform with respect to these charts.
\end{proof}

\begin{proposition}
  \label{prop:DxPsi-cont}
  For any $\mu < 0$ and uniformly in $\ux \in X$, the map
  $x \mapsto \DF_x \Psi_x(t,\tau)$ satisfies continuity
  estimate~\ref{eq:DxPsi-var-est} in a trivialization neighborhood\/
  $U_\ux \subset \BX$.
\end{proposition}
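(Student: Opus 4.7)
The plan is to apply the standard three-term telescoping decomposition to the integrand of equation~\ref{eq:DxPsi-var}. Denoting the three $x$-dependent factors of the integrand by $A(x) = \Psi_x(t,\sigma)$, $B(x) = \D A(x(\sigma))_e$, and $C(x) = \Psi_x(\sigma,\tau)$, the common variational factor $\dxtilde(\sigma)$ is pulled through; the integrand difference then splits as
\begin{equation*}
  A_2 B_2 C_2 - A_1 B_1 C_1
  = (A_2 - A_1)\,B_2\,C_2 + A_1\,(B_2 - B_1)\,C_2 + A_1\,B_1\,(C_2 - C_1),
\end{equation*}
and I would bound each of the three resulting terms separately. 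For the bracketed differences I would invoke Proposition~\ref{prop:Psi-cont} — giving $\norm{A_2 - A_1} \le C_\sy^2\,\epsilon_{\tilde A}(d_\rho(x_2,x_1))\,e^{\rho_\sy(t-\sigma)}\,e^{\mu\sigma}/(-\mu)$ and analogously for $C_2 - C_1$ — and Proposition~\ref{prop:DA-cont}, which controls $\norm{B_2 - B_1} \le \epsilon_{\widetilde{\D A}}(d_\rho(x_2,x_1))\,e^{\mu\sigma}$.

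Then I would insert the remaining uniform bounds $\norm{\Psi_{x_i}(s_1,s_2)} \le C_\sy\,e^{\rho_\sy(s_1-s_2)}$, $\norm{\D A} \le C_v$, and $\norm{\dxtilde(\sigma)} \le \norm{\dxtilde}_\rho\,e^{\rho\sigma}$. Each of the three integrands then has the common exponential structure $e^{\rho_\sy(t-\tau)}\cdot e^{(\mu+\rho)\sigma}$ multiplied by a product of constants and continuity moduli. Since $\rho < 0$ and $|\mu|$ can be taken small enough that $\mu + \rho < 0$, the elementary estimate $\int_\tau^t e^{(\mu+\rho)\sigma}\,d\sigma \le \frac{1}{-(\mu+\rho)}\,e^{(\mu+\rho)\tau}$ applies, and combining the three contributions produces the continuity estimate~\ref{eq:DxPsi-var-est} in the form
\begin{equation*}
  \norm{\big(\DF_x \Psi_{x_2} - \DF_x \Psi_{x_1}\big)\cdot\dxtilde(t,\tau)}
  \le \tilde C\,\epsilon_\Psi(d_\rho(x_2,x_1))\,\norm{\dxtilde}_\rho\,
      e^{\rho_\sy(t-\tau)}\,e^{(\mu+\rho)\tau},
\end{equation*}
where $\epsilon_\Psi$ is a combination of $\epsilon_{\tilde A}$ and $\epsilon_{\widetilde{\D A}}$, and $\tilde C$ depends only on $C_\sy$, $C_v$, $\mu$, $\rho$, $\rho_\sy$.

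Uniformity in $\ux \in X$ requires no additional argument: every constant and every continuity modulus entering the estimates — the spectral bounds of Proposition~\ref{prop:pert-y}, the Nemytskii continuity moduli of Propositions~\ref{prop:Psi-cont} and~\ref{prop:DA-cont}, and the holonomy estimate from Lemma~\ref{lem:bound-holonomy} — is already uniform across local trivialization charts thanks to the bounded geometry of $X$. The main (purely routine) obstacle here is bookkeeping: one has to verify that the exponents contributed by the three factors $\Psi_x$, $\D A(x(\cdot))_e$, and $\dxtilde$ combine so that the $\sigma$-integral converges with the claimed $e^{(\mu+\rho)\tau}$ decay, which is precisely what the spectral-gap elbow room reserved in Section~\ref{sec:unif-fiber-contr} provides.
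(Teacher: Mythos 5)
Your proposal matches the paper's proof essentially verbatim: the same three-term telescoping decomposition of the integrand, the same two input estimates (Proposition~\ref{prop:Psi-cont} for the $\Psi$ factors, Proposition~\ref{prop:DA-cont} for the middle $\D A$ factor), the same insertion of uniform bounds, and the same $\sigma$-integration. One small inaccuracy: the third term does not share the exponential structure $e^{(\mu+\rho)\sigma}$, because Proposition~\ref{prop:Psi-cont} applied to $\Psi_{x_2}(\sigma,\tau)-\Psi_{x_1}(\sigma,\tau)$ attaches the factor $e^{\mu\tau}$ (the starting time of that flow) rather than $e^{\mu\sigma}$; after integrating $e^{\rho\sigma}$ over $\intvCC{\tau}{t}$ it nonetheless yields the same $e^{(\rho+\mu)\tau}$, so your final estimate is unaffected.
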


\begin{proof}
We combine the estimates from Propositions~\ref{prop:Psi-cont}
and~\ref{prop:DA-cont} and obtain for~\ref{eq:DxPsi-var}
\begin{align*}
\fst \norm*{\big(\DF_x \Psi_{x_2}\cdot\dxtilde
                -\DF_x \Psi_{x_1}\cdot\dxtilde\big)(t,\tau)}\\
  &\le \int_\tau^t \norm[\big]{
         \Psi_{x_2}(t,\sigma)\,
         \big[\D A(x_2(\sigma))_e\,\dxtilde(\sigma)\big]\,
         \Psi_{x_2}(\sigma,\tau)
        -\, (2 \rightsquigarrow 1) } \d\sigma\\
  &\le \int_\tau^t \Big(
                \norm[\big]{\Psi_{x_2}(t,\sigma) - \Psi_{x_1}(t,\sigma)}\,
                \norm{\D A(x_2(\sigma))_e}\,\norm{\Psi_{x_2}(\sigma,\tau)}\\
  &\hspace{1cm}+\norm{\Psi_{x_1}(t,\sigma)}\,
                \norm[\big]{\D A(x_2(\sigma))_e - \D A(x_1(\sigma))_e}\,
                \norm{\Psi_{x_2}(\sigma,\tau)}\\
  &\hspace{1cm}+\norm{\Psi_{x_1}(t,\sigma)}\,\norm{\D A(x_1(\sigma))_e}\,
                \norm[\big]{\Psi_{x_2}(\sigma,\tau) - \Psi_{x_1}(\sigma,\tau)}
                   \Big)\norm{\dxtilde}_\rho\,e^{\rho\,\sigma} \d\sigma\displaybreak[1]\\
  &\le \int_\tau^t \Big(
                C_\sy^2\,e^{\rho_\sy(t-\sigma)}\,\epsilon_A(d_\rho(x_2,x_1))\,
                \frac{e^{\mu\,\sigma}}{-\mu}\,C_v\,C_\sy\,e^{\rho_\sy(\sigma-\tau)}\\
  &\hspace{1cm}+C_\sy\,e^{\rho_\sy(t-\sigma)}\,
                \epsilon_{\widetilde{\D A}}(d_\rho(x_2,x_1))\,
                e^{\mu\,\sigma}\,C_\sy\,e^{\rho_\sy(\sigma-\tau)}\\
  &\hspace{1cm}+C_\sy\,e^{\rho_\sy(t-\sigma)}\,C_v\,
                C_\sy^2\,e^{\rho_\sy(\sigma-\tau)}\,
                \epsilon_A(d_\rho(x_2,x_1))\,\frac{e^{\mu\,\tau}}{-\mu}
                   \Big)\,\norm{\dxtilde}_\rho\,e^{\rho\,\sigma} \d\sigma\displaybreak[1]\\
  &\le C\,e^{\rho_\sy(t-\tau)}\,\epsilon(d_\rho(x_2,x_1))\,
          e^{(\rho+\mu)\tau}\,\norm{\dxtilde}_\rho.
   \eqnumber\label{eq:DxPsi-var-est}
\end{align*}
We absorbed all constants and integration factors such as
$\frac{1}{-\mu}$ into the general constant $C$ and combine the
continuity moduli into one; $C_v$ is a global bound on all vector
fields including $A$ and its derivatives.
\end{proof}

We finally plug estimate~\ref{eq:DxPsi-var-est} into
equation~\ref{eq:DxTy}. We repeat the Nemytskii and holonomy arguments
for $\Ynonlin$ and $\D_x \Ynonlin$ (just as for $A$ and $\D A$) to
obtain a uniform continuity estimate for
\begin{equation*}
  x \mapsto \DF_x T_\sy(x,y)\colon
  U_\ux \to \CLin\big(B^\rho(I;\T_\ux X);B^{\rho+\mu}(I;Y)\big).
\end{equation*}
both for any $\mu < 0$, or with $\mu = \alpha\,\rho$ when
$A,\,f \in \BUC^{1,\alpha}$. That is, $\DF_x \T_\sy(\slot,y)$ is a map
that given a curve $x \in \BX|_{C^1}$, linearly maps a variational
curve $\dx$ over $x$ to a variational curve $\dy$ in the trivial
bundle $\TF \BY$. We can formulate this more abstractly as
\begin{equation*}
  \DF_x \T_\sy(\slot,y) \in
  \Gamma_{b,u}^\alpha\big(\BX|_{C^1};\CLin\big(\TFBXc;B^{\rho+\mu}(I;Y)\big)\big),
\end{equation*}
that is, $\DF_x \T_\sy(\slot,y)$ is a uniformly $\alpha$\ndash
H\"older bounded section of the bounded geometry bundle
\begin{equation*}
  \pi\colon\CLin\big(\TFBXc;B^{\rho+\mu}(I;Y)\big) \to \BX|_{C^1}.
\end{equation*}

\subsubsection{Continuity in the other cases}

We treated the continuity for one of the maps~\ref{eq:DT-set} with
respect to a single variable. The continuity in all other cases can be
shown in a similar fashion. Many arguments can be repeated, but each
of these maps also has its own peculiar details which makes that I
have not been able to find one general, abstract way to prove
continuity of all of these maps at once. In this section we shall
focus on these specific details and not repeat the recurring elements.
Let me reiterate that the uniform continuity results hold for any
$\mu < 0$ sufficiently small, and these can be replaced by
$\alpha$\ndash H\"older continuity when $\mu$ is replaced by
$\alpha\,\rho$ and the spectral gap condition~\ref{eq:spectral-gap} is
satisfied for $r = 1 + \alpha$.

First of all, note that continuity with respect to the combined
variables follows directly from continuity with respect to each
separate variable since we have explicit uniform or H\"older
continuity moduli. If $f(x,y)$ has continuity moduli
$\epsilon_x,\,\epsilon_y$ with respect to $x,\,y$, respectively, then
\begin{equation*}
  \begin{aligned}
       \norm{f(x_2,y_2) - f(x_1,y_1)}
  &\le \norm{f(x_2,y_2) - f(x_1,y_2)}
      +\norm{f(x_1,y_2) - f(x_1,y_1)}\\
  &\le \epsilon_x(d(x_2,x_1)) + \epsilon_y(d(y_2,y_1))\\
  &\le (\epsilon_x+\epsilon_y)\big(d((x_2,y_2),(x_1,y_1))\big)
  \end{aligned}
\end{equation*}
shows that $\epsilon_x + \epsilon_y$ is a continuity modulus for $f$.
We assumed w.l.o.g.\ that $\epsilon_x,\,\epsilon_y$ are
non-decreasing, while all choices of distance on the product space are
equivalent, so we leave it unspecified.

Let us start with the easy cases. Continuity of the map~\ref{eq:DxTy}
as a function of $y$, that is,
\begin{equation*}
  y \mapsto \DF_x T_\sy(x,y)\colon
  \BY \to \CLin\big(B^\rho(I;x^*(\T X));B^{\rho+\mu}(I;Y)\big),
\end{equation*}
requires no additional details: only $f$ and $\D_x f$ depend on
$y \in Y$, and we can reapply the arguments above to show that
these depend continuously on~$y \in \BY$. No holonomy terms are
present since $\TF \BY$ is a trivial bundle. That is, we can directly
compare $\DF_x T_\sy$ at different $y_1,\,y_2 \in \BY$; keeping
$x \in \BX$ fixed means that everything is situated in the fixed fiber
$\TF_x \BX|_{C^1} = B^\rho(I;x^*(\T X))$ and no holonomy terms are
required.

Continuity of the map~\ref{eq:DyTy}, i.e.\ws $\DF_y T_\sy(x,y)$, both
with respect to $x$ and $y$
follows along the same lines. Neither case requires holonomy
arguments; we just apply the Nemytskii technique to $\D_y f$ and reuse
Proposition~\ref{prop:Psi-cont} to show continuity with respect to
$x$.

The formal derivatives~\ref{eq:DpTx} and~\ref{eq:DyTx} of $T_\sx$ map
into $\TFBXc$; here we have to apply holonomy arguments in the
codomain. Let us first focus on
\begin{equation*}
  x_0 \mapsto \DF_{x_0} T_\sx(y,x_0)\colon
  X \to \CLin\big(\T_{x_0} X;B^{\rho+\mu}(I;\T_\ux X)\big)
\end{equation*}
with a local trivialization $U_\ux \times B^{\rho+\mu}(I;\T_\ux X)$
within\footnote{%
  Embeddings $B^\rho \embedto B^{\rho+\mu}$ are continuous, so we can
  view $U_\ux \times B^{\rho+\mu}(I;\T_\ux X)$ as a local
  trivialization of a subset of $\TF \mathcal{B}^{\rho'}_\Xsize(I;X)$
  with $\rho' = \rho + \mu$.%
} the bundle $\TF \mathcal{B}^{\rho+\mu}_\Xsize(I;X)\big|_{C^1}$ with
additional $\mu$ in the exponential growth norm on the fibers. Note
that $\DF_{x_0} T_\sx(y,\slot)$ could actually be considered as a
bundle map on the vector bundle $\T X$ that is linear on each tangent
space $\T_{x_0} X$. We consider a local trivialization of
$\T B(\ux;\rx) \subset \T X$ by parallel transport along
geodesics: this is equivalent to trivialization by a normal coordinate
chart for the purpose of measuring continuity, while it matches the
trivialization of $\TF \BX|_{U_\ux}$. This will lead to a holonomy
term.

For any $x_0 \in B(\ux;\rx)$ we have $T_\sx(y,x_0) \in U_\ux$ by
construction, so let $e$ denote the frame introduced by the
trivialization of $\TF \BX|_{U_\ux}$, i.e.\ws by parallel transport
along solution curves $T_\sx(y,x_0)$. On the other hand, let $f$
denote a frame introduced by local parallel transport. We define
\begin{equation*}
  f_{x_1(t)} = e_{x_1(t)}
  \quad\text{and}\quad
  f_{x_2(t)} = \partrans(\gamma_t) \cdot f_{x_1(t)}.
\end{equation*}
It follows from Lemma~\ref{lem:exp-growth-holder} that
$\DF_{x_0} T_\sx(y,\slot) = \big(t \mapsto \D\Phi_y(t,0,\slot)\big)$
satisfies the correct type of continuity estimates, but with respect
to local charts (or equivalently, with respect to $f$ determined by
local parallel transport) instead of the choice of frame $e$, defined
by the topology of $\TFBXc$. To examine the difference, let
$x_{0,1},\,x_{0,2} \in B(\ux;\rx)$ denote two initial conditions
and $x_i = T_\sx(y,x_{0,i}),\,i=1,2$, their respective solution curves
for a fixed $y \in \BY$. We also fix $x_{0,1} = \ux$ for convenience.
Then we have
\begin{align*}
\fst      \D\Phi_y(t,0,x_{0,2})_e - \D\Phi_y(t,0,x_{0,1})_e\\
  &= \big[\D\Phi_y(t,0,x_{0,2})_e - \D\Phi_y(t,0,x_{0,2})_f\big]
    +\big[\D\Phi_y(t,0,x_{0,2})_f - \D\Phi_y(t,0,x_{0,1})_f\big]\\
  &= \big[\partrans(\gamma_0^{-1} \circ x_2|_t^0)
         -\partrans(x_1|_t^0 \circ \gamma_t^{-1})\big]\cdot
     \D\Phi_y(t,0,x_{0,2}) \cdot \partrans(\gamma_0)\\
\con+\big[\D\Phi_y(t,0,x_{0,2})_f - \D\Phi_y(t,0,x_{0,1})_f\big].
\end{align*}
This shows that uniform and H\"older continuity with respect to the
topology of $\TFBXc$ is equivalent to the same continuity with respect
to normal coordinate charts, since the additional holonomy term can be
estimated in the same way as in Proposition~\ref{prop:DA-cont}.
Continuity of
\begin{equation*}
  y \mapsto \DF_{x_0} T_\sx(y,x_0)\colon
  \BY \to \CLin\big(\T_{x_0} X;B^{\rho+\mu}(I;\T_\ux X)\big)
\end{equation*}
follows in the same way, if we first apply
Corollary~\ref{cor:exp-growth-holder-param} to obtain the continuity
estimates with respect to the frame $f$.

Finally, we consider continuity of the map~\ref{eq:DyTx},
\begin{equation*}
  \DF_y T_\sx(y,x_0) \in \CLin\big(B^\rho(I;Y);B^{\rho+\mu}(I;\T_\ux X)\big)
\end{equation*}
with respect to $y \in \BY$ and $x_0 \in X$. We apply
Corollary~\ref{cor:exp-growth-holder-param} and
Lemma~\ref{lem:Tx-contr} to conclude that $\D\Phi_y(t,\tau,x_y(\tau))$
depends $\alpha$\ndash H\"older or uniformly continuously on $y$.
Lemma~\ref{lem:Tx-contr} in combination with a Nemytskii operator
argument shows that $\D_y \vx$ induces a uniformly continuous map
\begin{equation*}
  y \mapsto \big( t \mapsto \D_y \vx(x_y(t),y(t)) \big)\colon
  \BY \to B^\mu\big(I;\CLin(Y;\T X)\big)
\end{equation*}
with $\mu$ replaced by $\alpha\,\rho$ in the H\"older case. For
continuity with respect to $x_0 \in X$ we need to replace application of
Corollary~\ref{cor:exp-growth-holder-param} by that of
Lemma~\ref{lem:exp-growth-holder} for dependence of
$x_y = T_\sx(y,x_0)$ on $x_0$. Again the continuity estimates obtained
are with respect to the frame $e$ and we use
Lemma~\ref{lem:bound-holonomy} to estimate the additional holonomy
term when switching to the frame $f$.

\subsection{Application of the fiber contraction theorem}\label{sec:appl-FCT}

In Section~\ref{sec:unif-fiber-contr} we already established that the
fiber mapping $(T,\,\DF T)$ in formula~\ref{eq:fiber-map-DT} is
uniformly contractive. With the results
of the previous sections on formal tangent bundles and continuous
formal derivatives, we can now apply the fiber contraction theorem,
see Appendix~\ref{chap:fibercontr}.

\begin{proposition}
  \label{prop:DTh-unif-attract}
  For any $\mu < 0$, the fiber mapping~\ref{eq:fiber-map-DT} has a
  unique, globally attractive fixed point\/
  $(\Theta^\infty,\,\DF\Theta^\infty) \in \mathcal{S}_0 \times \mathcal{S}_1^\mu$, while
  it also holds that $\DF\Theta^\infty \in \mathcal{S}_1^0$.
\end{proposition}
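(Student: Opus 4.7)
The plan is to invoke the fiber contraction theorem of Appendix~\ref{chap:fibercontr} with base space $\mathcal{S}_0$ and fiber space $\mathcal{S}_1^\mu$, and then bootstrap the resulting fixed point from $\mathcal{S}_1^\mu$ into the smaller space $\mathcal{S}_1^0$. First I would verify the three hypotheses of that theorem for the map in~\ref{eq:fiber-map-DT}. On the base $\mathcal{S}_0$, the map $\Theta \mapsto T(\Theta(\slot),\slot)$ is a Banach contraction with respect to the supremum norm, since $T$ is a uniform contraction in its first argument (with factor $\contr<1$ from lemmas~\ref{lem:Ty-contr}--\ref{lem:Tx-contr}) and $\Theta^\infty$ from~\ref{eq:FP-map} is its globally attractive fixed point. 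On each fiber, the linear map $\DF\Theta \mapsto \DF_y T(\Theta(\slot),\slot)\cdot\DF\Theta + \DF_{x_0}T(\Theta(\slot),\slot)$ has operator norm bounded by $\norm{\DF_y T}\le \contr < 1$ in $\mathcal{S}_1^\mu$ by the estimates of Section~\ref{sec:unif-fiber-contr}, uniformly in the base variable $\Theta$.

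The remaining hypothesis is joint continuity of the fiber map. Here I would invoke Section~\ref{sec:cont-fiber-maps}, which supplies a uniform continuity modulus (or H\"older estimate) for the formal derivatives $\DF_y T(y,x_0)$ and $\DF_{x_0} T(y,x_0)$ as functions of $(y,x_0)\in\BY\times X$, valued in $\CLin\bigl(B^\rho(I;\T_\ux X);\,B^{\rho+\mu}(I;Y)\bigr)$ and $\CLin(\T_{x_0}X;\,B^{\rho+\mu}(I;Y))$ respectively. Because these moduli are uniform in $x_0\in X$, composing with the $\mathcal{S}_0$-continuous evaluation $\Theta\mapsto(x_0\mapsto\Theta(x_0))$ produces a continuous map $\mathcal{S}_0\to \mathcal{S}_1^\mu$. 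Applying the fiber contraction theorem yields a unique globally attractive fixed point $(\Theta^\infty,\DF\Theta^\infty)\in\mathcal{S}_0\times\mathcal{S}_1^\mu$.

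It remains to promote $\DF\Theta^\infty$ from $\mathcal{S}_1^\mu$ to $\mathcal{S}_1^0$. The key observation (already exploited in Section~\ref{sec:unif-fiber-contr}) is that by choosing $\mu<0$ sufficiently small after fixing $\rho_\sy<\rho<\rho_\sx$, the contraction estimates for $\DF_y T$ hold both with growth rate $\rho$ and with $\rho+\mu$. Hence $\DF T$ also defines a uniform fiber contraction on $\mathcal{S}_0\times \mathcal{S}_1^0$ over the same base dynamics. Fixing $\Theta = \Theta^\infty$ and applying the ordinary Banach contraction principle on the complete space $\mathcal{S}_1^0$ produces a fixed element $\DF\Theta' \in \mathcal{S}_1^0$ satisfying the same fixed-point equation. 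Under the continuous embedding $\mathcal{S}_1^0\hookrightarrow \mathcal{S}_1^\mu$ from Remark~\ref{rem:sign-rho}, $\DF\Theta'$ is also a fixed point in $\mathcal{S}_1^\mu$; by the uniqueness already established, $\DF\Theta' = \DF\Theta^\infty$, hence $\DF\Theta^\infty\in \mathcal{S}_1^0$.

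The main subtlety I expect is the bookkeeping around the two exponential weights: the contraction must be verified simultaneously in $\mathcal{S}_1^\mu$ (where continuity of the fiber map is available) and in $\mathcal{S}_1^0$ (where the conclusion is desired). This is precisely why the spectral gap is exploited with some slack, so that $\rho$ and $\rho+\mu$ both lie strictly between $\rho_\sy$ and $\rho_\sx$ and both estimates of Section~\ref{sec:unif-fiber-contr} apply; once this slack is fixed at the outset, the remaining arguments are routine applications of the fiber contraction theorem and the Banach fixed-point theorem.
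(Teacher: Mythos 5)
Your proof follows the same overall strategy as the paper, and its ingredients are correct, but there is a subtle ordering issue in how you verify condition~\ref{enum:fibercontr-3} of Theorem~\ref{thm:fibercontr}. The continuity estimates from Section~\ref{sec:cont-fiber-maps} that you invoke are stated for $\DF_y T$ as a map into $\CLin\bigl(B^\rho(I;\cdot);B^{\rho+\mu}(I;Y)\bigr)$ — that is, they lose one factor of $e^{\mu t}$ precisely when fed inputs of growth $\rho$, not $\rho+\mu$. But condition~\ref{enum:fibercontr-3} asks for continuity of $\Theta\mapsto F_2(\Theta,\DF\Theta^\infty)$ with $\DF\Theta^\infty$ held fixed, and the term $\DF_y T(\Theta(x_0),x_0)\cdot\DF\Theta^\infty(x_0)$ only inherits the needed continuity modulus if $\DF\Theta^\infty(x_0)$ lies in $\CLin(\T_{x_0}X;B^\rho(I;Y))$, i.e.\ if $\DF\Theta^\infty\in\mathcal{S}_1^0$. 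You establish this fact only \emph{after} applying the fiber contraction theorem, which makes the earlier continuity verification circular.

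The fix is exactly what the paper does: observe that conditions~\ref{enum:fibercontr-1} and~\ref{enum:fibercontr-2} alone — no continuity required — already yield the unique fixed point $(\Theta^\infty,\DF\Theta^\infty)$. Since the uniform contraction estimate of Section~\ref{sec:unif-fiber-contr} holds simultaneously on $\mathcal{S}_1^0$ and on $\mathcal{S}_1^\mu$, one concludes at once that $\DF\Theta^\infty\in\mathcal{S}_1^0$. Only then does one verify condition~\ref{enum:fibercontr-3}, viewing $\DF T$ as a map $\mathcal{S}_0\times\mathcal{S}_1^0\to\mathcal{S}_1^\mu$ (so the domain carries the $\rho$-norm, the codomain the $\rho+\mu$-norm), and uses the continuity estimates with $\DF\Theta^\infty\in\mathcal{S}_1^0$ plugged into the fiber slot; the fiber contraction theorem then delivers global attractivity. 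Your Banach contraction argument for promoting the fixed point to $\mathcal{S}_1^0$ is correct in substance — it just needs to come before the continuity check rather than after.
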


\begin{proof}
In the notation of Theorem~\ref{thm:fibercontr} we take $X = \mathcal{S}_0$ and
$Y = \mathcal{S}_1^\mu$ as in~\ref{eq:fiber-contr-spaces} with $\rho,\,\mu$ such
that $\rho_\sy < \rho + \mu < \rho < \rho_\sx$ holds. The fiber mapping
is $F = (T,\,\DF T)$, as in~\ref{eq:fiber-map-DT}. The first two
conditions of Theorem~\ref{thm:fibercontr} are satisfied due to the arguments in
Section~\ref{sec:unif-fiber-contr}, while the third condition that
$\DF T$ is continuous can be obtained from the results in
Section~\ref{sec:cont-fiber-maps} as follows.

First, note that $(T,\,\DF T)$ is a well-defined, uniformly
contractive fiber mapping both when acting on $B^\rho(I;Y)$ and on
$B^{\rho+\mu}(I;Y)$ variational curves. Thus, for each $n \ge 0$ we have
$\DF\Theta^n \in \mathcal{S}_1^0 \embedto \mathcal{S}_1^\mu$, where the embedding is
continuous. The same conclusion holds for $\DF\Theta^\infty$ by a
simple uniform contraction argument. Next, we view $\DF T$ as a map
\begin{equation}\label{eq:DT-cont-embed}
  \DF T\colon \mathcal{S}_0 \times \mathcal{S}_1^0 \to \mathcal{S}_1^\mu.
\end{equation}
Note that we set $\mu = 0$ in the domain only.
To obtain continuity of~\ref{eq:DT-cont-embed} with respect to the
base variable $\Theta \in \mathcal{S}_0$, it is sufficient to check that the
maps
\begin{equation}\label{eq:DT-pder-cont}
  \begin{alignedat}{2}
    y &\mapsto \DF_y     T(y,x_0)&&\colon \BY \to \CLin\big(B^\rho(I;Y);B^{\rho+\mu}(I;Y)\big),\\
    y &\mapsto \DF_{x_0} T(y,x_0)&&\colon \BY \to \CLin\big(\T_{x_0} X ;B^{\rho+\mu}(I;Y)\big)
  \end{alignedat}
\end{equation}
are uniformly continuous, uniformly in $x_0 \in X$. Continuity
of~\ref{eq:DT-cont-embed} with respect to the base $\mathcal{S}_0$ (with fixed
fiber part $\DF\Theta \in \mathcal{S}_1^0 \embedto \mathcal{S}_1^\mu$) then follows from the interpretation
of~\ref{eq:DT-pder-cont} as acting on maps $(\Theta,\DF\Theta)$ with
the supremum norm on $\mathcal{S}_0$. The maps~\ref{eq:DT-pder-cont} are defined
by the chain rule formula~\ref{eq:DT-chainrule} in terms of the
derivative maps~\ref{eq:DT-set}. A variation of $y \in \BY$ can be
distributed over the product (we only estimate the variation of
$\DF_y T$ with respect to $y$, but the variation of $\DF_{x_0} T$ is
completely analogous),
\begin{equation}\label{eq:DT-cont-est}
\begin{aligned}\jot=8pt
\fst   \norm{\DF_y T(y_2,x_0) - \DF_y T(y_1,x_0)}_{\rho+\mu,\rho}\\
  &\le \norm[\big]{\DF_x T_\sy(x_{y_2},y_2)\cdot
                   \big[\DF_y T_\sx(y_2,x_0) - \DF_y T_\sx(y_1,x_0)\big]}_{\rho+\mu,\rho}\\
\con  +\norm[\big]{\big[\DF_x T_\sy(x_{y_2},y_2)-\DF_x T_\sy(x_{y_1},y_1)\big]
                   \cdot\DF_y T_\sx(y_1,x_0)}_{\rho+\mu,\rho}\\
  &\le \norm{\DF_x T_\sy(x_{y_2},y_2)}_{\rho+\mu,\rho+\mu}\cdot
       \norm{\DF_y T_\sx(y_2,x_0) - \DF_y T_\sx(y_1,x_0)}_{\rho+\mu,\rho}\\
\con  +\norm{\DF_x T_\sy(x_{y_2},y_2)-\DF_x T_\sy(x_{y_1},y_1)}_{\rho+\mu,\rho}\cdot
       \norm{\DF_y T_\sx(y_1,x_0)}_{\rho,\rho}.
\end{aligned}
\end{equation}
\index{${}\norm{\slot}_{\rho_2,\rho_1}$}%
The $\norm{\slot}_{\rho_2,\rho_1}$ denote operator norms on linear
(bundle) maps from $B^{\rho_1}$ to $B^{\rho_2}$ spaces. In the factor
that is not varied we can simply take the operator norm between
functions of either $\rho$ or $\rho+\mu$ exponential growth: in
Section~\ref{sec:unif-fiber-contr} we have seen that the fiber maps
are uniformly bounded linear in both cases. The factor that is
varied satisfies a uniform continuity estimate in
$\norm{\slot}_{\rho+\mu,\rho}$\ndash norm, a result from
Section~\ref{sec:cont-fiber-maps}. Note that we use the topology
defined in Section~\ref{sec:formal-tangent} on the intermediate space
$\TFBXc$, as well as a local trivialization to express the difference
$\DF_y T_\sx(y_2,x_0) - \DF_y T_\sx(y_1,x_0)$.

As a result of the fiber contraction theorem, we conclude that there
is a unique, globally attractive fixed point
$(\Theta^\infty,\,\DF\Theta^\infty)$ of the fiber
mapping~\ref{eq:fiber-map-DT}. Note that $\DF\Theta^\infty$ is already
well-defined as an element of $\mathcal{S}_1^0$, although it is only
proven to be attractive in~$\mathcal{S}_1^\mu$.
\end{proof}

As a next step, we show that the fixed point map $\DTh$ that we found
is actually continuous. This follows from a standard uniform
contraction argument.
\begin{proposition}
  \label{prop:DTh-cont}
  For any $\mu < 0$, the map $\DF\Theta^\infty \in \mathcal{S}_1^\mu$ is uniformly
  continuous. If we set $\mu \le \alpha\,\rho$ and the assumptions
  of\/ Theorem~\ref{thm:persistNHIMtriv} are satisfied with
  $\fracdiff \ge 1 + \alpha$, then it is $\alpha$\ndash H\"older
  continuous.
\end{proposition}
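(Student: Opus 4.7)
The plan is to run a standard parametrised uniform-contraction argument, with $x_0 \in X$ playing the role of the parameter. The fixed-point equation satisfied by $\DTh$ over the base point $x_0$ is
\begin{equation*}
  \DTh(x_0) = \DF_y T(\Theta^\infty(x_0),x_0)\circ \DTh(x_0) + \DF_{x_0} T(\Theta^\infty(x_0),x_0),
\end{equation*}
so $\DTh(x_0)$ is the unique fixed point of the affine map $F_{x_0}$ on $\CLin(\T_{x_0}X;B^{\rho+\mu}(I;Y))$ whose linear part $\DF_y T(\Theta^\infty(x_0),x_0)$ has operator norm $\le \contr < 1$ (by Section~\ref{sec:unif-fiber-contr}). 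Given two base points $x_{0,1},x_{0,2}$ with $d(x_{0,1},x_{0,2}) < \rx$, I will identify the fibres $\CLin(\T_{x_{0,i}}X;B^{\rho+\mu}(I;Y))$ using the local trivialization of $\T X$ by parallel transport along the connecting geodesic, exactly as was done for $\TFBXc$ in Section~\ref{sec:formal-tangent}, and then estimate in the standard way
\begin{equation*}
  \|\DTh(x_{0,2})-\DTh(x_{0,1})\| \le \frac{1}{1-\contr}\,\|F_{x_{0,2}}(\DTh(x_{0,1})) - F_{x_{0,1}}(\DTh(x_{0,1}))\|,
\end{equation*}
so it suffices to bound the right-hand side by a continuity modulus in $d(x_{0,1},x_{0,2})$.

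The estimate of $\|F_{x_{0,2}}-F_{x_{0,1}}\|$ on the fixed (bounded) element $\DTh(x_{0,1})$ reduces, via the chain-rule decomposition~\ref{eq:DT-chainrule} and a triangle inequality exactly like~\ref{eq:DT-cont-est}, to the four continuity results collected in Section~\ref{sec:cont-fiber-maps}: uniform (resp. $\alpha$-H\"older) continuity of $\DF_x T_\sy,\,\DF_y T_\sy,\,\DF_y T_\sx,\,\DF_{x_0} T_\sx$ with respect to their base arguments, measured in the $\norm{\slot}_{\rho+\mu,\rho}$ operator norms. The base point $x_0$ enters both directly (through the explicit $x_0$\ndash argument of $T_\sx$) and indirectly through $\Theta^\infty(x_0)$; since $\Theta^\infty$ was already shown Lipschitz in Section~\ref{sec:NHIM-lipschitz}, the dependence through the $y$\ndash slot contributes only a Lipschitz factor and is absorbed into the continuity modulus. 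In the H\"older case, replacing $\mu$ by $\alpha\rho$ and invoking the spectral gap $\fracdiff \ge 1+\alpha$ gives $\rho_\sy < \rho + \alpha\rho < \rho < \rho_\sx$, which is exactly what the H\"older versions of the estimates in Section~\ref{sec:cont-fiber-maps} require; composition of H\"older maps with the Lipschitz map $\Theta^\infty$ again yields an $\alpha$\ndash H\"older modulus.

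The one genuine subtlety, and the step I expect to require the most care, is the comparison across the two fibres: $\DTh(x_{0,1})$ lives over $\T_{x_{0,1}}X$ while $F_{x_{0,2}}$ naturally acts over $\T_{x_{0,2}}X$, so one must transport $\DTh(x_{0,1})$ by $\partrans(\gamma_{x_{0,1},x_{0,2}})$ before applying $F_{x_{0,2}}$. This introduces a holonomy defect between the parallel-transport identifications used in the codomain trivializations of $\DF_{x_0} T_\sx$ and $\DF_y T_\sx$ versus the one used to move $\DTh(x_{0,1})$ itself; however, this defect is exactly of the same form as the holonomy terms treated throughout Section~\ref{sec:cont-fiber-maps}, so Lemma~\ref{lem:bound-holonomy} provides the required Lipschitz bound, and the resulting additional contribution is again dominated by the continuity modulus (and, by Lemma~\ref{lem:holder-lower}, absorbed in the $\alpha$\ndash H\"older modulus when $\alpha < 1$). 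Taking the supremum over $x_0 \in X$ of the uniform estimate, together with Remark~\ref{rem:loc-cont-modulus} to extend the local modulus to a global one, then yields $\DTh \in \mathcal{S}_1^\mu$ uniformly (resp. $\alpha$\ndash H\"older) continuous as a section, completing the proof.
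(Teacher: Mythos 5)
Your proposal is correct and takes essentially the same approach as the paper: both subtract the fixed-point equations $\DTh(x_i) = \DF_y T(\Th(x_i),x_i)\cdot\DTh(x_i) + \DF_{x_0}T(\Th(x_i),x_i)$ at two nearby base points, isolate the contractive term $\DF_y T \cdot [\DTh(x_2)-\DTh(x_1)]$, move it to the left-hand side and divide by $1-\contr$, then invoke the continuity moduli of $\DF_y T,\,\DF_{x_0} T$ from Section~\ref{sec:cont-fiber-maps} together with the Lipschitz constant of $\Th$ to bound the remainder. Your more explicit discussion of the fibre identification by parallel transport and the resulting holonomy defect is a correct elaboration of what the paper defers to Section~\ref{sec:cont-fiber-maps} and Remark~\ref{rem:loc-cont-modulus} in a single sentence.
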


\begin{proof}
First note that it is sufficient to prove the statement for $\mu < 0$
sufficiently small, or $\mu = \alpha\,\rho$ in case of $\alpha$\ndash
H\"older continuity; by continuous embedding of exponential growth
spaces, it then automatically follows for any $\mu$ that is more negative. We use
local trivializations by parallel transport to express continuity
moduli of functions with domain $\T X$.

The assumptions of Theorem~\ref{thm:persistNHIMtriv} imply that the
spectral gap condition $\rho_\sy < \rho + \mu < \rho < \rho_\sx$ is
satisfied. Since $\DF\Theta^\infty$ is (the fiber part of) the fixed
point of the uniform contraction $(T,\,\DF T)$, we have for any two
$x_1,\,x_2 \in B(\ux;\rx) \subset X$ that
\begin{align*}
\fst \norm{\DTh(x_2) - \DTh(x_1)}_{\rho+\mu}\\
  &= \norm[\big]{\DF_y T(\Th(x_2),x_2)\cdot\DTh(x_2)+\DF_{x_0} T(\Th(x_2),x_2)
                -(2 \rightsquigarrow 1)}_{\rho+\mu}\\
 &\le\norm[\big]{\DF_y T(\Th(x_2),x_2)-\DF_y T(\Th(x_1),x_1)}_{\rho+\mu,\rho}
       \cdot\norm{\DTh(x_2)}_\rho\\
\con+\norm[\big]{\DF_y T(\Th(x_1),x_1)}_{\rho+\mu,\rho+\mu}\cdot
     \norm[\big]{\DTh(x_2)-\DTh(x_1)}_{\rho+\mu}\\
\con+\norm[\big]{\DF_{x_0} T(\Th(x_2),x_2) - \DF_{x_0} T(\Th(x_1),x_1)}_{\rho+\mu}\\
  &\le \epsilon_{\DF_y T}\big((L+1)d(x_2,x_1)\big)
      +\contr\,\norm{\DTh(x_2) - \DTh(x_2)}_{\rho+\mu}\\
\con  +\epsilon_{\DF_x T}\big((L+1)d(x_2,x_1)\big).
\end{align*}
Here $L = \Lip(\Th)$ denotes the Lipschitz constant of
$\Theta^\infty \in \mathcal{S}_0$, while $\contr < 1$ is the uniform contraction
factor of $\DF_y T$ on the fibers of $\BY \times B^{\rho+\mu}(I;Y)$.
We saw in Section~\ref{sec:cont-fiber-maps} that the maps
$\DF_y T,\,\DF_x T$ have appropriate continuity moduli into
$B^{\rho+\mu}(I;Y)$. Finally, we move the contraction term to the
left-hand side, divide by $1-\contr$, and obtain
\begin{equation*}
  \norm{\DTh(x_2) - \DTh(x_1)}_{\rho+\mu}
  \le \frac{1}{1-\contr}\Big[
         \epsilon_{\DF_y T}\big((L+1)d(x_2,x_1)\big)
        +\epsilon_{\DF_x T}\big((L+1)d(x_2,x_1)\big)\Big].
\end{equation*}
This shows that $\DTh \in \mathcal{S}_1^\mu$ has the same type of continuity
modulus as $\DF T$.
\end{proof}

\subsection{Derivatives on Banach manifolds}
\label{sec:banach-mflds}

We can recover the maps~\ref{eq:DT-set} as true derivatives on Banach
manifolds if we restrict to bounded time intervals $J \subset I = \R_{\le 0}$. The
maps $T_\sx,\,T_\sy$ naturally restrict to such intervals, either
exactly, or in a well-behaved approximate way. By restricting to
intervals $J = \intvCC{a}{0}$ with $a<0$, the spaces $\BJY$ and
$\BJXb$ become Banach manifolds and the restrictions of
$T_\sx,\,T_\sy$ become continuously differentiable maps on these.
\index{$J$}

\begin{lemma}
  \label{lem:BX-BY-restr-well-def}
  For any $-\infty<a<0$, the spaces $\BJY$ and $\BJXb$ with
  $J = \intvCC{a}{0}$ a bounded interval are well-defined Banach
  manifolds.
\end{lemma}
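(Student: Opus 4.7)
The plan is to prove the two statements separately. The key observation is that on a bounded interval $J = \intvCC{a}{0}$, the weight $e^{-\rho t}$ is bounded between $1$ and $e^{-\rho a}$, so the norm $\norm{\slot}_\rho$ on $B^\rho(J;Y)$ is equivalent to the supremum norm. In particular, $B^\rho(J;Y) = C^0(J;Y)$ as topological vector spaces, and this space is a Banach space. Similarly, on $\mathcal{B}^\rho(J;X)$ the distance $d_\rho$ is equivalent to the uniform distance $d_\infty$.

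First I would dispatch $\BJY$. It is cut out of the Banach space $B^\rho(J;Y)$ by the condition $\sup_{t\in J} \norm{y(t)} < \Ysize$. Since the evaluation map $B^\rho(J;Y)\to Y$ at any fixed $t$ is continuous and the sup is taken over a bounded interval on a continuous curve, the functional $y\mapsto \sup_{t\in J}\norm{y(t)}$ is continuous (one only needs that $J$ is compact and curves in $B^\rho(J;Y)$ are uniformly continuous on $J$). Hence $\BJY$ is an open subset of the Banach space $B^\rho(J;Y)$ and therefore trivially a Banach manifold modeled on this space.

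Next I would handle $\BJXb$ by building charts via the fiberwise exponential map. For a reference curve $x_0\in\mathcal{B}^\rho(J;X)$, consider the pullback bundle $x_0^*(\T X)$ over $J$, whose continuous sections $\Gamma(x_0^*(\T X))$ form a Banach space under the sup norm on fibers. Since $X$ has bounded geometry, its injectivity radius $\rinj{X}>0$ is globally positive, so for any $x\in\mathcal{B}^\rho(J;X)$ with $d_\infty(x,x_0)<\rinj{X}$ the map
\begin{equation*}
  \Psi_{x_0}(x)(t) = \exp_{x_0(t)}^{-1}\big(x(t)\big)
\end{equation*}
is well defined and is a homeomorphism from a uniform neighborhood $U_{x_0}\subset\mathcal{B}^\rho(J;X)$ onto an open neighborhood of $0\in\Gamma(x_0^*(\T X))$. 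Chart transitions between two such charts around $x_0,x_0'$ are the fiberwise compositions $\sigma\mapsto \big(t\mapsto \exp_{x_0'(t)}^{-1}\!\circ\exp_{x_0(t)}(\sigma(t))\big)$, i.e.\ws Nemytskii-type superposition operators built from the coordinate transition maps $\phi_{x_0'(t),x_0(t)}$ of Lemma~\ref{lem:bound-coordtrans}. Bounded geometry provides uniform $\BC^{k-1}$ bounds on these transition maps in $t$, and the omega lemma (Appendix~\ref{chap:nemytskii}, in the spirit of the Nemytskii arguments used earlier) then promotes this pointwise smoothness to smoothness of the transition as a map between open sets of Banach spaces of sections. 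This endows $\mathcal{B}^\rho(J;X)$ with a Banach manifold structure modeled locally on $\Gamma(x_0^*(\T X))$.

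It remains to check that $\BJXb$ is an open subset and thereby inherits this Banach manifold structure. The $(\Xsize,T)$-approximate solution condition~\ref{eq:approx-sol} requires that on each subinterval $\intvCC{t_2}{t_1}\subset J$ of length at most $T$, the distance between $x$ and the exact solution $\xi$ of $v_\sx\circ g$ with $\xi(t_1)=x(t_1)$ is strictly less than $\Xsize$. Since $J$ is compact, it suffices to consider finitely many such subintervals; on each, continuous dependence of the flow of the bounded $\BC^1$ field $v_\sx\circ g$ on initial conditions over the bounded time $T$ (Theorem~\ref{thm:unif-smooth-flow}) shows that $(x,t)\mapsto d(x(t),\xi_x(t))$ is continuous in $x\in\mathcal{B}^\rho(J;X)$ uniformly in $t$. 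The strict inequality therefore defines an open set, so $\BJXb$ is open in $\mathcal{B}^\rho(J;X)$ and is a Banach manifold. I expect the one genuine technical point to be the verification that the transition maps are smooth as maps between the section spaces (rather than merely pointwise), but this reduces to the standard omega-lemma argument that already appeared in the proof of Lemma~\ref{lem:graphdiff-coordtrans}.
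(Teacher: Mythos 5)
Your proposal follows essentially the same route as the paper: norm equivalence on the bounded interval $J$, openness of $\BJY$ as a ball in the sup norm, Banach-manifold charts for $\BJX$ via the fiberwise exponential map and bounded geometry of $X$, and openness of $\BJXb$ by a compactness-plus-perturbation argument. The only quibble is the phrase ``it suffices to consider finitely many such subintervals'' --- what compactness of the parameter set $\{(t_2,t): t_2-T\le t\le t_2,\ t,t_2\in J\}$ actually gives you is that the supremum of the approximation defect $d\big(x(t),\Phi(t,t_2,x(t_2))\big)$ is attained and strictly less than $\Xsize$, which is exactly the $\eta_1<\Xsize$ the paper extracts before running the triangle-inequality perturbation step.
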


\begin{proof}
  We first treat the easy case $\BJY$.
  For any $-\infty < a < 0$, the norms $\norm{\slot}_\rho$ and
  $\norm{\slot}_0$ are equivalent on $B^\rho(J;Y)$. The set $\BJY$ is
  an open ball of radius $\Ysize$ in the Banach space $B^0(J;Y)$, so
  it follows that $\BJY$ is a Banach manifold as an open subset of
  $B^\rho(J;Y)$.

  In the same way, the metrics $d_\rho$ and $d_0$ are equivalent on
  $\BJX$, but here we need to do a little more work to show the
  following.
  \begin{proposition}
    \label{prop:BJXb-open}
    The set\/ $\BJXb$ is open in $\BJX$.
  \end{proposition}

\begin{proof}
  Let $x \in \BJXb$, hence by Definition~\ref{def:approx-sol}, $x$ is
  approximated on each interval of length $\abs{\intvCC{t_1}{t_2}} \le T$ by
  $t \mapsto \Phi(t,t_2,x(t_2))$, where $\Phi$ denotes the flow of
  $v_\sx\circ g$, the horizontal part of the unperturbed vector
  field~\ref{eq:ODE-XY-orig}. The map
  \begin{equation*}
    (t,t_2) \mapsto d\big(x(t),\Phi(t,t_2,x(t_2))\big)
  \end{equation*}
  is continuous, and since it is defined on a compact subset of
  $J \times J$, it attains its supremum
  \begin{equation*}
    \eta_1 = \sup_{t_2 \in J}\; \sup_{t \in \intvCC{t_2-T}{t_2}}\;
               d\big(x(t),\Phi(t,t_2,x(t_2))\big),
  \end{equation*}
  so it must hold that $\eta_1 < \Xsize$. Let
  $\tilde{x} \in B(x;\eta_2) \subset \BJX$ with
  \begin{equation*}
    \eta_2 = (\Xsize-\eta_1)\,\frac{e^{-\rho\,a}}{1 + C_\sx\,e^{\rho_\sx\,T}}.
  \end{equation*}
  We apply the triangle inequality and obtain
  \begin{align*}
    d\big(\tilde{x}(t),\Phi(t,t_2,\tilde{x}(t_2))\big)
    &\le d\big(\tilde{x}(t),x(t)\big)
        +d\big(x(t),\Phi(t,t_2,x(t_2))\big)\\
\con    +d\big(\Phi(t,t_2,x(t_2)),\Phi(t,t_2,\tilde{x}(t_2))\big)\\
    &\le e^{\rho\,a}\,\eta_2 + \eta_1 + C_\sx\,e^{\rho_\sx\,T}\,e^{\rho\,a}\,\eta_2\\
    &\le \big(1 + C_\sx\,e^{\rho_\sx\,T}\big)e^{\rho\,a}\,\eta_2 + \eta_1
     < \Xsize.
  \end{align*}
  This shows that all functions in the ball $B(x;\eta_2) \subset \BJX$ are still
  $(\Xsize,T)$\ndash approximate solutions of $v_\sx\circ g$, and thus
  $\BJXb$ is open.
\end{proof} 
  From here on we shall not always precisely distinguish between
  $\BJXb$ and $\BJX$ anymore.

  We introduce a local coordinate chart $\kappa_x$ around a curve
  $x \in \BJX$ using the exponential map (see
  also~\cite[Sect.~2.3]{Klingenberg1995:riemgeom2nd}):
  \begin{equation}\label{eq:BX-chart}
    \kappa_x\colon U_x \subset \BJX \to B^\rho(J;x^*(\T X))
            \colon \xi \mapsto \big(t \mapsto \exp_{x(t)}^{-1}(\xi(t)) \big).
  \end{equation}
  The vector bundle $x^*(\T X)$ is trivial, so the space of sections
  $B^\rho(J;x^*(\T X))$ is isomorphic to $B^\rho(J;\R^n)$. An explicit
  trivialization of $x^*(\T X)$ (and thus isomorphism of sections) can
  be obtained, for example if $x \in C^1$, using parallel transport as
  in~\ref{eq:TBX-triv-partrans} and identification of
  $\T_{x(0)} X \cong \R^n$ by a choice frame, but we refrain from
  making such a choice here; one reason is that curves $x \in \BJX$
  are only assumed continuous. The chart $\kappa_x$ bijectively
  covers a full $\rinj{X}$ neighborhood of $x$ with respect to the
  metric $d_0$, hence a neighborhood of size
  $\rinj{X}\,e^{-\rho\,a} > 0$ with respect to $d_\rho$. Recall that
  $\rx$ is $X$\ndash small as in Definition~\ref{def:M-small}; let us
  choose a radius $\delta_a = \rx\,e^{-\rho\,a}$, such that all
  bounded geometry results also hold true in these induced charts
  $\kappa_x$. Then the coordinate transition map
  \index{coordinate transition map!on a Banach manifold}
  \begin{equation}\label{eq:BX-coordtrans}
    \kappa_{x_2} \circ \kappa_{x_1}^{-1}
    \colon B^\rho(J;x_1^*(\T X)) \to B^\rho(J;x_2^*(\T X))
  \end{equation}
  is a bijection between isomorphic Banach spaces that is as smooth as
  the exponential map of~$X$.
\end{proof}

\begin{remark}
  We could choose isomorphisms
  $\tau_x\colon B^\rho(J;x^*(\T X)) \to B^\rho(J;\R^n)$ to obtain one
  fixed Banach space $B^\rho(J;\R^n)$ as model for the manifold
  $\BJX$. The $\tau_x$ are linear isometries so they
  preserve norms and smoothness, hence there is no need to explicitly
  make this identification. Specifically, note that the construction
  of $B^\rho(J;x^*(\T X))$ as the pullback along a curve $x$ that is
  merely continuous, does not influence the smoothness of coordinate
  transformations on~$\BJX$.
\end{remark}

We shall again call charts in this atlas `normal coordinate charts',
since they are induced by \idx{normal coordinates} on $X$ along the curve
$x \in \BJX$. By construction all bounded geometry results carry over
to these induced charts. In particular, we can measure maps in
terms of their coordinate representations. We will use this fact
without always explicitly mentioning it.

The tangent space of $\BJX$ at a point $x$ can be canonically
identified as
\begin{equation}\label{eq:TBX}
  \T_x \BJX \cong B^\rho(J;x^*(\T X))
\end{equation}
as follows. Let
$s \mapsto x_s\colon \intvOO{-\epsilon}{\epsilon} \subset \R \to \BJX$
be a $C^1$ family of curves such that $x_0 = x$ and let
$\xi_s = \kappa_x(x_s)$ be their representation in the coordinate
chart $B^\rho(J;x^*(\T X))$. The chart $\kappa_x$ is induced by normal
coordinates, so
\begin{equation*}
  d_\rho(x_0,x_s)
  = \sup_{t \in J}\; d(x_0(t),x_s(t))\,e^{\rho\,t}
  = \sup_{t \in J}\; \norm{\exp_{x_0(t)}^{-1}(x_s(t))}\,e^{\rho\,t}
  = \norm{\xi_s}_\rho
\end{equation*}
shows that $\norm{\slot}_\rho$ is the canonical norm on the chart
$B^\rho(J;x^*(\T X))$. Then $v = \der{}{s} \xi_s \big|_{s=0}$
represents a tangent vector in $\T_x \BJX$, while
$v \in B^\rho(J;x^*(\T X))$ by construction.

This completes our exposition of the manifold structure of $\BJX$. We
shall again exclusively make use of induced normal coordinate
charts~\ref{eq:BX-chart}, in order to use results on bounded geometry.

The map $T_\sx$ can be restricted to curves on any subinterval
$J = \intvCC{a}{0} \subset I = \R_{\le 0}$. Let us introduce the
restriction operator on curves
\begin{equation}\label{eq:restrict-map}
  \rho_a\colon C(I;Z) \to C(J;Z) \colon z \mapsto z|_J.
\end{equation}
This operator acts naturally on $\BY$ and $\BX$ and there is a natural
family of restrictions $T_\sx^a$ of $T_\sx$ such that
\index{$\rho_a$ (restriction operator)}
\begin{equation}\label{eq:restrict-Tx}
  T_\sx^a \circ \rho_a = \rho_a \circ T_\sx
  \qquad\text{for any } -\infty < a < 0.
\end{equation}

\begin{proposition}
  \label{prop:Tx-diff}
  Let $J = \intvCC{a}{0}$ with $-\infty < a < 0$. Then
  \begin{equation}
    T_\sx^a\colon \BJY \times X \to \BJXb
  \end{equation}
  is a differentiable map between Banach manifolds with partial
  derivatives given by a natural restriction of the maps~\ref{eq:DpTx}
  and~\ref{eq:DyTx}.
\end{proposition}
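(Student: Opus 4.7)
The plan is to reduce the claim to the classical smooth dependence of an ODE flow on its initial data and on a Banach-space-valued parameter, applied over the compact interval $J$. Since $J$ is bounded, $\norm{\slot}_\rho$ and $\norm{\slot}_0$ are equivalent on $B^\rho(J;Y)$, so $\BJY$ is simply an open ball in $B^0(J;Y)$ and carries the trivial Banach-manifold structure. I would fix a reference pair $(y_*,x_{0,*}) \in \BJY \times X$, set $x_* = T_\sx^a(y_*,x_{0,*})$ (which lies in $\BJXb$ by Proposition~\ref{prop:bound-x}), and use the normal-coordinate chart $\kappa_{x_*}$ from~\ref{eq:BX-chart} on a neighbourhood of $x_*$ in $\BJXb$ together with the normal chart $\exp_{x_{0,*}}^{-1}$ around $x_{0,*}$ in $X$. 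In these charts $T_\sx^a$ is locally represented by
\[
(y,x_0) \;\longmapsto\; \Big(t \mapsto \exp_{x_*(t)}^{-1}\bigl(\Phi_y(t,0,x_0)\bigr)\Big),
\]
a map between open subsets of Banach spaces, where $\Phi_y$ is the flow of the non-autonomous vector field $(t,x) \mapsto \vx(x,y(t))$ on $J$.

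The inner flow map $(y,x_0) \mapsto \Phi_y(\slot,0,x_0)$ is $C^1$ by standard smooth dependence of ODE solutions on initial data and parameters over the compact interval $J$, which in our uniform setting is furnished by Theorem~\ref{thm:unif-smooth-flow} and Remark~\ref{rem:smooth-flow-timedep}. Handling the infinite-dimensional parameter $y \in B^0(J;Y)$ reduces to showing that the Nemytskii composition $y \mapsto (t \mapsto \vx(\slot,y(t)))$ is $C^1$ into a space of bounded $\BC^1$ time-dependent vector fields on $X$; this follows from $\vx \in \BUC^{k,\alpha}$ via the argument of Appendix~\ref{chap:nemytskii} (on a bounded interval no exponential-growth refinement is needed). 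The outer post-composition with $\exp_{x_*(t)}^{-1}$ is smooth because, for $(y,x_0)$ close enough to $(y_*,x_{0,*})$, continuous dependence on $J$ keeps $\Phi_y(t,0,x_0)$ uniformly in $t \in J$ inside a $\delta_a$-sized normal-coordinate ball around $x_*(t)$, on which $\exp^{-1}$ is smooth with uniform bounds by bounded geometry. Composing, the coordinate representation of $T_\sx^a$ is $C^1$.

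The partial derivatives are then read off from the variational equations. Differentiating $\Phi_y(t,0,x_0)$ in $x_0$ yields the fundamental solution $\D\Phi_y(t,0,x_0)$ of the first variational equation along $x_y = T_\sx^a(y,x_0)$, which is exactly the restriction of~\ref{eq:DpTx} to $J$. Differentiating in $y$ in the direction $\dy$ and applying variation of constants about $\Phi_y$ produces
\[
\int_t^0 \D\Phi_y(t,\tau,x_y(\tau))\,\D_y\vx(x_y(\tau),y(\tau))\,\dy(\tau)\,\d\tau,
\]
matching the restriction of~\ref{eq:DyTx} to $J$. Since $\exp_{x_*(t)}^{-1}\bigl(x_*(t)\bigr) = 0$ with derivative the identity on $\T_{x_*(t)}X$, the chain-rule contribution of the outer chart map is trivial at the reference point, so these expressions are genuine partial derivatives in Banach-manifold charts; continuity of each in $(y,x_0)$ follows once more from ODE continuous dependence on the compact interval $J$.

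The main technical obstacle is not conceptual but lies in packaging the infinite-dimensional parameter $y$ correctly: one must ensure that pointwise composition with $\vx$ really defines a $C^1$ map into a Banach space of time-dependent vector fields, and that the classical smooth-parameter theorem for ODEs applies in this parameter class. Both points are settled by the Nemytskii framework of Appendix~\ref{chap:nemytskii} combined with Appendix~\ref{chap:smoothflows}; with those in hand the proof reduces to the bookkeeping sketched above.
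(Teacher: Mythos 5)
Your approach is a genuine alternative to the paper's, not a reproduction of it. The paper avoids ever having to prove that the Nemytskii map $y \mapsto \bigl((t,x)\mapsto v_\sx(x,y(t))\bigr)$ is Fr\'echet differentiable: instead it fixes a direction $\dy$, differentiates the scalar one-parameter family $s \mapsto T_\sx^a(y+s\,\dy,x_0)$ pointwise in $t$ via Theorem~\ref{thm:diff-flow} (Alekseev's variation of constants), and then upgrades pointwise to uniform-in-$t$ linear approximation by the mean value theorem together with the continuity of $\DF_y T_\sx$ already established in Section~\ref{sec:cont-fiber-maps}. That decomposition is the whole point of the paper's scheme: it only ever needs the \emph{continuity} of the formal derivatives, which is exactly what Appendices~\ref{chap:nemytskii} and~\ref{chap:exp-growth} deliver. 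Your route instead invokes Theorem~\ref{thm:unif-smooth-flow} to get joint $C^1$ dependence of the flow on an infinite-dimensional parameter (the time-dependent vector field) and then precomposes with a Nemytskii map. This is conceptually clean and, in spirit, the kind of thing the paper's Appendix~\ref{chap:smoothflows} is set up to do, so the comparison is fair: your version is more modular, at the cost of needing a stronger input on the Nemytskii side.

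That stronger input is where the gap is. You write that the $C^1$-ness of $y \mapsto \bigl(t\mapsto v_\sx(\slot,y(t))\bigr)$ ``follows from $\vx \in \BUC^{k,\alpha}$ via the argument of Appendix~\ref{chap:nemytskii},'' but Appendix~\ref{chap:nemytskii} only establishes \emph{continuity} (and H\"older continuity) of Nemytskii operators --- Lemma~\ref{lem:cont-nemytskii} and Corollary~\ref{cor:unifcont-nemytskii} say nothing about differentiability. To close this you would have to supply a separate argument (e.g.\ the omega lemma, which the paper cites once in Lemma~\ref{lem:graphdiff-coordtrans} but never proves and never uses for this purpose) showing that for $v_\sx\in\BUC^{k,\alpha}$ with $k\ge 2$, the composition $y\mapsto v_\sx(\slot,y(\slot))$ is $C^1$ from $B^0(J;Y)$ into the space of vector fields over which Theorem~\ref{thm:unif-smooth-flow}/Remark~\ref{rem:smooth-flow-timedep} gives $C^1$ dependence, and that the image stays in a bounded subset. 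That is a true and provable statement under the paper's hypotheses, but it is not ``settled'' by the cited appendices as you claim. Similarly, your last sentence --- that continuity of the resulting partial derivatives in $(y,x_0)$ ``follows once more from ODE continuous dependence'' --- glosses over exactly what the paper spends Section~\ref{sec:cont-fiber-maps} doing, including the parallel transport/holonomy bookkeeping needed on a Riemannian $X$; over a compact $J$ those estimates do simplify, but the continuity claim still needs the uniform-continuity version of the Nemytskii lemma and the bounded-geometry estimates, which you should cite rather than wave at. Once you replace the appeal to Appendix~\ref{chap:nemytskii} by an actual $C^1$-Nemytskii argument and make the continuity reference precise, the proposal is sound.
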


\begin{proof}
  Let $s \mapsto y + s\;\dy \in \BJY$ be a one-parameter family of
  curves and let
  \begin{equation*}
    \kappa_x\colon B(x;\delta_a) \subset \BJXb \to B^\rho(J;x^*(\T X))
  \end{equation*}
  be an induced normal coordinate chart centered around the curve
  $x = T_\sx^a(y,x_0)$. The map $T_\sx$ is Lipschitz, so for $s$
  sufficiently small, $T_\sx^a(y + s\;\dy,x_0)$ maps into
  $B(x;\delta_a)$. The vector field $\vx(\slot,(y + s\;\dy)(t))$
  depends smoothly on the parameter $s$ and generates
  $x_s = T_\sx^a(y + s\;\dy,x_0)$. We apply
  Theorem~\ref{thm:diff-flow} with
  \begin{equation*}
    \der{}{s} \Big[\vx\big(x(t),(y + s\;\dy)(t)\big)\Big]_{s=0}
    = \D_y \vx(x(t),y(t))\cdot \dy(t)
  \end{equation*}
  to obtain~\ref{eq:DyTx} as the pointwise derivative of
  $\text{ev}_t\circ T_\sx^a$, for any $t \in J$.

  Now we only need to show that~\ref{eq:DyTx} viewed as derivative
  pointwise in $t$ satisfies linear approximation estimates, uniformly
  for all $t \in J$ with respect to $d_\rho$. We work in the local
  chart $\kappa_x$, so $x_s(t)$ is represented in the normal
  coordinate chart centered at $x(t)$, while the curve
  $s \mapsto y + s\;\dy$ is canonically represented in $B^\rho(I;Y)$
  with derivative $\dy$.

%
%
  Since $s \mapsto \big(\kappa_x \circ T_\sx^a(y+s\;\dy,x_0)\big)(t) \in C^1(\R;\T_{x(t)} X)$,
  we can apply the mean value theorem to estimate
  \begin{equation}\label{eq:DyTx-deriv-est}
    \begin{aligned}
\fst     \norm[\big]{\big[T_\sx^a(y+s\;\dy,x_0) - T_\sx^a(y,x_0)
                         -\DF_y T_\sx(y,x_0)\cdot s\;\dy\big](t)}\\
    &\le \norm[\big]{\big[\big(\DF_y T_\sx(y+\sigma_t\;\dy,x_0)
                              -\DF_y T_\sx(y,x_0)\big)\cdot s\;\dy\big](t)}\\
    &\le \norm[\big]{\DF_y T_\sx(y+\sigma_t\;\dy,x_0)
                    -\DF_y T_\sx(y,x_0)}\,
         \norm{\dy}_\rho\,e^{\rho\,t}\,\abs{s}
    \end{aligned}
  \end{equation}
  for some $\sigma_t \in \intvOO{0}{s}$. Note that $\sigma_t$ will in
  general depend on $t \in J$, so there is (a priori) not one curve
  $y + \sigma\;\dy$ such that~\ref{eq:DyTx-deriv-est} holds for all
  $t \in J$ at once. In Section~\ref{sec:cont-fiber-maps} we showed that
  $\DF_y T_\sx\colon \TF\BY \to \TF\mathcal{B}^{\rho+\mu}(I;X)|_{C^1}$
  is continuous; on the bounded interval $J$ the norms
  $\norm{\slot}_\rho$ and $\norm{\slot}_{\rho+\mu}$ are equivalent, so
  $\DF_y T_\sx$ is continuous into $B^\rho(J;x^*(\T X))$ as well.
  Using this fact, we plug the result above into the definition of
  (directional) derivative and verify
  \begin{align*}
&\nop    \lim_{s \to 0}\; \frac{1}{s}\,
         \norm[\big]{T_\sx^a(y+s\;\dy,x_0) - T_\sx^a(y,x_0)
                    -\DF_y T_\sx(y,x_0)\cdot s\;\dy}_\rho\\
    &\le \lim_{s \to 0}\; \sup_{t \in J}\; \frac{\abs{s}}{s}\,
         \norm[\big]{\DF_y T_\sx(y+\sigma_t\;\dy,x_0)
                    -\DF_y T_\sx(y,x_0)}\,\norm{\dy}_\rho
     = 0.
  \end{align*}
  Therefore, the derivative of $T_\sx^a$ at $(y,x_0)$ in the direction
  of $\dy$ is given by $\DF_y T_\sx(y,x_0)\cdot\dy$ restricted to the
  interval $J$. The limit is uniform on $\norm{\dy}_\rho = 1$ and this
  map is continuous and linear in $\dy$, so $T_\sx^a$ is continuously
  partially differentiable with respect to~$y$.

  If we use a local chart around $x_0 \in X$, then we find in the same
  way that $T_\sx^a$ is continuously partially differentiable with
  respect to $x_0$. Thus, $T_\sx^a$ is (continuously) differentiable.
\end{proof}

The map $T_\sy$ does not have a similarly natural restriction since it
depends on the complete `history' of the curves $x,y$ through the
integral from $-\infty$. The dependence on earlier times is
exponentially suppressed, though. Therefore, we construct a
family of restrictions that approach $T_\sy$ when the amount of
additional history in the input goes to infinity. Let
$-\infty < a \le b < 0$ and define the family $\T_\sy^{b,a}$ of
restrictions as
\begin{equation}\label{eq:restrict-Ty}
  \begin{gathered}
  \T_\sy^{b,a}\colon \mathcal{B}^\rho_\Xsize(\intvCC{a}{0};X) \times
                               B^\rho_\Ysize(\intvCC{a}{0};Y) \to
                               B^\rho_\Ysize(\intvCC{b}{0};Y),\\
  (x,y) \mapsto \Big( t \mapsto
    \int_a^t \Psi_x(t,\tau)\,\Ynonlin(x(\tau),y(\tau)) \d\tau \Big)
    \quad\text{for each } t \in \intvCC{b}{0}.
  \end{gathered}
\end{equation}

\begin{proposition}
  \label{prop:approx-Ty}
  The family $T_\sy^{b,a}$ approximates $T_\sy$ in the sense that for
  any fixed $b \in \intvOC{-\infty}{0}$, we have
  \begin{equation}
    T_\sy^{b,a} \circ \rho_a \to \rho_b \circ T_\sy
  \end{equation}
  when $a \to -\infty$, uniformly in $x,y \in \BX \times \BY$.
\end{proposition}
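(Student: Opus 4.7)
The plan is to directly estimate the difference
\[
\big[\rho_b \circ T_\sy(x,y) - T_\sy^{b,a}(\rho_a x,\rho_a y)\big](t)
= \int_{-\infty}^{a} \Psi_x(t,\tau)\,\Ynonlin(x(\tau),y(\tau)) \d\tau
\]
for $t \in \intvCC{b}{0}$, and show that its $\norm{\slot}_\rho$-norm tends to zero as $a \to -\infty$, with a bound independent of $(x,y) \in \BX \times \BY$.

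First I would note that $\Ynonlin$ is globally bounded on the relevant domain $X \times \overline{B(0;\Ysize)}$ by some constant $K$, since $\Ynonlin \in \BUC^{k,\alpha}$ by construction (cf.\ the estimates leading to~\ref{eq:Vsize}). Next, for any $x \in \BX$, Proposition~\ref{prop:pert-y} provides the uniform exponential estimate $\norm{\Psi_x(t,\tau)} \le C_\sy\,e^{\rho_\sy(t-\tau)}$ for all $t \ge \tau$, with constants independent of~$x$. Combining these and using the integral bound~\ref{eq:exp-int}, I compute
\[
\norm{[\rho_b \circ T_\sy(x,y) - T_\sy^{b,a}(\rho_a x,\rho_a y)](t)}
\le C_\sy\,K\,e^{\rho_\sy t}\int_{-\infty}^{a} e^{-\rho_\sy \tau}\d\tau
= \frac{C_\sy\,K}{-\rho_\sy}\,e^{\rho_\sy t}\,e^{-\rho_\sy a},
\]
where $-\rho_\sy > 0$ makes the improper integral convergent.

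Multiplying by $e^{-\rho t}$ and taking the supremum over $t \in \intvCC{b}{0}$, and using $\rho_\sy < \rho$ (so the factor $e^{(\rho_\sy-\rho)t}$ is maximized at $t = b$), I arrive at
\[
\norm{\rho_b \circ T_\sy(x,y) - T_\sy^{b,a}(\rho_a x,\rho_a y)}_\rho
\le \frac{C_\sy\,K}{-\rho_\sy}\,e^{(\rho_\sy - \rho)b}\,e^{-\rho_\sy a}.
\]
With $b$ fixed, the prefactor is a fixed constant, while $e^{-\rho_\sy a} \to 0$ as $a \to -\infty$ (again because $-\rho_\sy > 0$ and $a \to -\infty$). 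The bound is independent of $(x,y)$, yielding the claimed uniform convergence.

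There is no serious obstacle here: the proof is just a bookkeeping exercise with the standard exponential estimates already established in Section~\ref{sec:growth-perturbed}. The only point worth being careful about is the direction of the exponential factor $e^{-\rho_\sy a}$ (noting that $\rho_\sy$ is negative, so this factor indeed decays as $a$ becomes very negative), and the fact that $b$ must be fixed—otherwise the estimate $e^{(\rho_\sy-\rho)b}$ could blow up. The uniformity in $(x,y)$ follows automatically because all constants entering the estimate ($C_\sy$, $K$, $\rho_\sy$, $\rho$) are fixed once the system is prepared, and do not depend on the particular curves chosen.
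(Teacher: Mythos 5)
Your proof is correct and matches the paper's argument exactly: the same decomposition of the difference as the tail integral over $\intvOC{-\infty}{a}$, the same exponential estimate on $\Psi_x$ from Proposition~\ref{prop:pert-y}, the same uniform bound on $\Ynonlin$, and the same final bound $\tfrac{C_\sy\,K}{-\rho_\sy}\,e^{(\rho_\sy - \rho)b}\,e^{-\rho_\sy a}$, which agrees term-for-term with the paper's $\tfrac{C_\sy\,\Vsize}{-\rho_\sy}\,e^{\rho_\sy(b-a) - \rho\,b}$. The only cosmetic difference is that the paper writes the constant bounding $\Ynonlin$ as $\Vsize$ whereas you introduce a fresh $K$; your explicit remarks about the sign of $\rho_\sy$ and the role of fixed $b$ are correct and slightly more careful than the paper's terse presentation.
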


\begin{proof}
  This follows from straightforward estimates:
  \begin{align*}
    \norm[\big]{T_\sy^{b,a}\circ\rho_a(x,y) - \rho_b \circ T_\sy(x,y)}_\rho
    &\le \sup_{t \in \intvCC{b}{0}}\; e^{-\rho\,t}\,
         \int_{-\infty}^a \norm{\Psi_x(t,\tau)\,\Ynonlin(x(\tau),y(\tau))} \d\tau\\
    &\le \sup_{t \in \intvCC{b}{0}}\; e^{-\rho\,t}\,
         \int_{-\infty}^a C_\sy\,e^{\rho_\sy(t-\tau)}\,\Vsize \d\tau\\
    &\le \frac{C_\sy\,\Vsize}{-\rho_\sy}\,e^{\rho_\sy(b-a) - \rho\,b}.
  \end{align*}
\end{proof}

In a same way we define approximate families for~\ref{eq:DxTy}
and~\ref{eq:DyTy}, denoted by $\DF_x T_\sy^{b,a}$ and
$\DF_y T_\sy^{b,a}$, respectively.
\begin{corollary}
  \label{cor:approx-DTy}
  The families $\DF_x T_\sy^{b,a}$ and\/ $\DF_y T_\sy^{b,a}$
  approximate~\ref{eq:DxTy} and~\ref{eq:DyTy} in the same way as in
  Proposition~\ref{prop:approx-Ty}.
\end{corollary}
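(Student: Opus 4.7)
The plan is to imitate the tail-estimate argument of Proposition~\ref{prop:approx-Ty}, applied to the linear fiber maps $\DF_y T_\sy$ and $\DF_x T_\sy$. The key observation is that both maps have the form of an integral over $\intvOC{-\infty}{t}$ whose integrand already satisfies pointwise exponential bounds that were established in Section~\ref{sec:unif-fiber-contr}; the restriction $\DF_\bullet T_\sy^{b,a}$ simply truncates the lower limit from $-\infty$ to $a$. Since the maps are linear in the variation $\dx$ (respectively $\dy$), one works with the operator norms $\norm{\slot}_{\rho,\rho}$ and bounds
\[
  \norm[\big]{\big(\DF_y T_\sy^{b,a}\circ\rho_a - \rho_b\circ\DF_y T_\sy\big)(x,y)\cdot\dy}_{\rho,b}
\]
by the supremum over $t \in \intvCC{b}{0}$ of $e^{-\rho t}$ times the missing tail of the defining integral.

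For $\DF_y T_\sy$ the tail bound is immediate: from $\norm{\Psi_x(t,\tau)} \le C_\sy e^{\rho_\sy(t-\tau)}$, $\norm{\D_y \Ynonlin} \le C_v$, and $\norm{\dy(\tau)} \le \norm{\dy}_\rho\,e^{\rho\tau}$, one obtains exactly as in the proof of Proposition~\ref{prop:approx-Ty} that
\[
  \norm[\big]{\big(\DF_y T_\sy^{b,a}\circ\rho_a - \rho_b\circ\DF_y T_\sy\big)(x,y)}_{\rho,\rho}
  \le \frac{C_\sy\,C_v}{\rho-\rho_\sy}\,e^{\rho_\sy(b-a)-\rho b + (\rho-\rho_\sy)a - (\rho-\rho_\sy)a},
\]
which collapses to a constant times $e^{(\rho-\rho_\sy)(a-b)}\,e^{-\rho b}$ and tends to zero as $a\to-\infty$, uniformly in $(x,y)$.

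For $\DF_x T_\sy$ the integrand of~\ref{eq:DxTy} has two summands. The first, $\Psi_x(t,\tau)\,\D_x \Ynonlin\cdot\dx(\tau)$, is estimated exactly as above. The second, $(\DF_x\Psi_x\cdot\dx)(t,\tau)\,\Ynonlin(x(\tau),y(\tau))$, requires the pointwise bound
\[
  \norm[\big]{(\DF_x\Psi_x\cdot\dx)(t,\tau)} \le \frac{C_\sy^2\,C_v}{-\rho}\,\norm{\dx}_\rho\,e^{\rho_\sy(t-\tau)}\,e^{\rho\tau}
\]
already derived in Section~\ref{sec:unif-fiber-contr}, together with $\norm{\Ynonlin}_0 \le \Vsize$. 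Integrating the tail from $-\infty$ to $a$ uses convergence of $\int_{-\infty}^a e^{(\rho-\rho_\sy)\tau}\,d\tau$ (which holds precisely because $\rho_\sy < \rho$) and produces a bound of the same exponential form $e^{(\rho-\rho_\sy)(a-b)}\,e^{-\rho b}$ up to a constant. Summing the two contributions yields the analog of Proposition~\ref{prop:approx-Ty} for $\DF_x T_\sy^{b,a}$.

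There is no real obstacle here: all constants are independent of the base point $(x,y)$ by the uniform estimates established in Section~\ref{sec:unif-fiber-contr}, so the convergence is automatically uniform on $\BX\times\BY$; and the linearity in the variational argument means the estimates collapse to operator-norm statements with the same tail decay. The only small bookkeeping point worth noting in the write-up is that the restriction operator $\rho_a$ acts on the variational argument $\dy$ (or $\dx$) as well as on the base curve, which is automatic once one replaces the integrals from $\intvOC{-\infty}{t}$ by integrals from $\intvCC{a}{t}$ in the definitions of $\DF_\bullet T_\sy^{b,a}$.
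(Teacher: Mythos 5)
Your approach is exactly the one the paper intends: the corollary is stated without proof immediately after Proposition~\ref{prop:approx-Ty} precisely because the argument is the same tail estimate carried over to the linear fiber maps, using the uniform pointwise bounds from Section~\ref{sec:unif-fiber-contr} and the convergence of the tail integral coming from $\rho_\sy < \rho$. Your observation that linearity in $\dx,\dy$ reduces everything to operator-norm statements, and that the restriction operator must also be understood on the variational argument, are both correct and worth saying.

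One small arithmetic glitch: your first displayed exponent contains the redundant pair $(\rho-\rho_\sy)a - (\rho-\rho_\sy)a$, which cancels identically, and the asserted ``collapsed'' form $e^{(\rho-\rho_\sy)(a-b)}\,e^{-\rho b}$ carries a spurious $e^{-\rho b}$. A clean computation gives
\begin{equation*}
  \sup_{t\in\intvCC{b}{0}}\,e^{-\rho t}\int_{-\infty}^a C_\sy\,e^{\rho_\sy(t-\tau)}\,\norm{\D_y\Ynonlin}_0\,e^{\rho\tau}\,\d\tau
  \le \frac{C_\sy\,\Vsize}{\rho-\rho_\sy}\,e^{(\rho-\rho_\sy)(a-b)},
\end{equation*}
with no extra $e^{-\rho b}$; the difference in form relative to Proposition~\ref{prop:approx-Ty} is that the variational curve $\dy(\tau)$ contributes the growth factor $e^{\rho\tau}$, whereas $\Ynonlin$ in the base proposition is merely bounded. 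Since $b$ is fixed while $a\to-\infty$, this does not change the conclusion, so the corollary is proven as you intended.
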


\begin{proposition}
  Let $-\infty < a \le b < 0$. Then $T_\sy^{b,a}$ is a differentiable
  map between Banach manifolds.
\end{proposition}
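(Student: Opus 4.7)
The plan is to adapt the strategy used for Proposition~\ref{prop:Tx-diff}, exploiting the crucial simplification that on the bounded interval $J' = \intvCC{a}{0}$ with $a>-\infty$, all norms $\norm{\slot}_\rho$ are equivalent to $\norm{\slot}_0$, so $\mathcal{B}^\rho_\Xsize(J';X)$ and $B^\rho_\Ysize(J';Y)$ are genuine Banach manifolds (by Lemma~\ref{lem:BX-BY-restr-well-def}) and no scale-of-Banach-spaces gymnastics are needed. The candidates for the partial derivatives are the natural restrictions $\DF_y T_\sy^{b,a}$ and $\DF_x T_\sy^{b,a}$ obtained from~\ref{eq:DyTy} and~\ref{eq:DxTy} by replacing the lower limit $-\infty$ by $a$; these are well-defined bounded linear maps on the restricted variational spaces by the same integral estimates as in Section~\ref{sec:unif-fiber-contr}, with the integral from $a$ to $t \in \intvCC{b}{0}$ now being over a compact set.

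First I would treat the partial derivative with respect to $y$. Fix $x$ and a normal coordinate chart around $y$; since $\Ynonlin \in \BUC^{k,\alpha}$ with $k \ge 2$, the integrand depends $C^1$ on $y$ pointwise in $\tau$, and the mean value theorem applied pointwise in $t$ gives, for a one-parameter family $y + s\,\dy$,
\begin{equation*}
\norm[\big]{\big(T_\sy^{b,a}(x,y+s\dy) - T_\sy^{b,a}(x,y) - s\,\DF_y T_\sy^{b,a}(x,y)\dy\big)(t)}
\le \abs{s}\int_a^t \norm{\Psi_x(t,\tau)}\,\omega(\sigma_\tau,s) \d\tau,
\end{equation*}
where $\omega(\sigma_\tau,s)$ is the continuity modulus of $\D_y\Ynonlin$ applied at an intermediate point. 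Uniform continuity of $\D_y\Ynonlin$, together with the compactness of $J'$ and the uniform bound on $\Psi_x$, lets us take the supremum in $t$ and pass to the limit $s\to 0$, yielding Fr\'echet differentiability with derivative $\DF_y T_\sy^{b,a}(x,y)$.

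The partial derivative with respect to $x$ is the main obstacle, since $x$ enters both through $\Ynonlin(x(\tau),y(\tau))$ and, more delicately, through the flow $\Psi_x$ generated by $A(x(t))$. I would work in the induced normal coordinate chart $\kappa_x$ of~\ref{eq:BX-chart} on $\BJXb$, so that a variation $\dx \in \T_x\BJX \cong B^\rho(J';x^*(\T X))$ is represented linearly. The differentiability of $\tau \mapsto \Psi_x(t,\tau)$ with respect to $x$, with derivative given by the variation of constants formula~\ref{eq:DxPsi}, follows from standard smooth-dependence results for linear non-autonomous ODEs (Theorem~\ref{thm:diff-flow} applied to the time-dependent generator $A\circ x$, which in the chart $\kappa_x$ becomes a $C^1$ function of the parameter curve); this provides the $\DF_x\Psi_x\cdot\dx$ term in~\ref{eq:DxTy}. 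Combining this with the mean value estimate for $\D_x\Ynonlin$ and using that on $J'$ all integrals are finite yields the pointwise linear approximation
\begin{equation*}
\norm[\big]{\big(T_\sy^{b,a}(x+s\dx,y) - T_\sy^{b,a}(x,y) - s\,\DF_x T_\sy^{b,a}(x,y)\dx\big)(t)}
= o(s)
\end{equation*}
uniformly in $t \in \intvCC{b}{0}$, after taking the supremum and invoking uniform continuity of $\DF_x T_\sy^{b,a}$ in $x$ (a finite-interval analogue of the continuity established in Section~\ref{sec:cont-fiber-maps}, where the $\mu<0$ loss is irrelevant because $J'$ is bounded).

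Finally, the continuity of both partial derivatives as functions of $(x,y)$ into the appropriate operator spaces follows by the same arguments as in Propositions~\ref{prop:Psi-cont} and~\ref{prop:DxPsi-cont}, once again simplified by the equivalence of exponential-growth norms on bounded intervals; since both partials are continuous, $T_\sy^{b,a}$ is continuously (Fr\'echet) differentiable between the Banach manifolds in question, with total derivative the direct sum of the two partials.
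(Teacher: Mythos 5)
Your proposal is correct and follows essentially the same route as the paper: work in the induced normal coordinate charts on $\BJXb$, obtain the candidate derivatives from the natural restrictions of~\ref{eq:DxTy}, \ref{eq:DyTy}, and \ref{eq:DxPsi}, invoke Theorem~\ref{thm:diff-flow} for the $\Psi_x$-dependence, and close via mean-value estimates that are uniform because the interval is compact and all exponential norms are equivalent there. The only cosmetic slip is the phrase ``normal coordinate chart around $y$'': since $Y$ is a Banach space, $B^\rho_\Ysize(J;Y)$ is simply an open set in a Banach space and no chart is needed, but this does not affect the argument.
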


\begin{proof}
  We shall only show that $T_\sy^{b,a}$ is continuously partially
  differentiable respect to $x$. Continuous partial differentiability
  with respect to $y$ follows along the same lines and total
  differentiability then is a direct consequence of these (also in the
  Banach manifold setting, see~\cite[Prop.~3.5]{Lang1995:diff-riem-mflds}).

  Let $x \in \BJXb$ and $y \in \BJY$ with $J = \intvCC{a}{0}$. Let
  $\kappa_x$ be an induced normal coordinate chart around $x$ and let
  \begin{equation*}
    x_s = x + s\;\dx \in B^\rho(J;x^*(\T X))
  \end{equation*}
  be a one-parameter family of curves in $\BJXb$, represented in the
  chart $\kappa_x$ (for $s$ sufficiently small). Then
  $\dx \in B^\rho(J;x^*(\T X))$ is naturally identified as the
  derivative $\der{}{s}x_s\big|_{s=0}$.

  We shall show that the partial derivative $\D_x T_\sy^{b,a}$ is given by
  the formal derivative~\ref{eq:DxTy}, but with $J$ as domain of
  integration and interpreted as a mapping into $B^\rho(\intvCC{b}{0};Y)$.
  Again, we split the full expression into manageable pieces and apply
  the mean value theorem.
  \savenotes
  \begin{align*}
\fst     \norm[\big]{\big[T_\sy^{b,a}(x_s,y) - T_\sy^{b,a}(x,y)
                    -\D_x T_\sy^{b,a}(x,y)\cdot s\;\dx\big](t)}\\
    &\le \int_a^t \big\lVert
       \Psi_{x_s}(t,\tau)\,\Ynonlin(x_s(\tau),y(\tau))
      -\Psi_x    (t,\tau)\,\Ynonlin(x  (\tau),y(\tau))\\
\con \hspace{0.7cm}
      -\Psi_x(t,\tau)\, \D_x \Ynonlin(x    (\tau),y(\tau))\,s\;\dx(\tau)
      -(\DF_x\Psi_x\cdot s\;\dx)(t,\tau)\,\Ynonlin(x(\tau),y(\tau))
                  \big\rVert \d\tau \displaybreak[1]\\
    &\le \int_a^t \begin{aligned}[t]
      & \norm[\big]{\Psi_{x_s}(t,\tau) - \Psi_x(t,\tau)
                   -\big(\DF_x \Psi_x\cdot s\;\dx\big)(t,\tau)}\,
        \norm{\Ynonlin(x(\tau),y(\tau))}\\
      &+\norm{\Psi_x(t,\tau)}\,
        \norm[\big]{\Ynonlin(x_s(\tau),y(\tau)) - \Ynonlin(x(\tau),y(\tau))
                   -\D_x \Ynonlin(x(\tau),y(\tau))\,s\;\dx(\tau)}\\
      &+\norm[\big]{\Psi_{x_s}(t,\tau) - \Psi_x(t,\tau)}\,
        \norm[\big]{\Ynonlin(x_s(\tau),y(\tau)) - \Ynonlin(x(\tau),y(\tau))}
           \d\tau \end{aligned} \displaybreak[1]\\
 \intertext{%
   and application of Theorem~\ref{thm:diff-flow} shows that
   formula~\ref{eq:DxPsi} for $\DF_x \Psi_{x_s}\cdot\dx$ is the
   derivative of $\Psi_{x_s}$. We use this for a mean value theorem
   estimate\footnote{%
     The intermediate point $\sigma$ in the mean value theorem
     implicitly depends on both $t$ and $\tau$ and will be different
     in each term. This does not affect the uniform estimates, so we
     suppress this dependence in the notation.%
   } in the first and third\footnote{%
     We applied the intermediate value theorem to both factors in the
     third term. This is not strictly necessary: we could also have
     applied it to only one of these, and apply a uniform continuity
     estimate to the other term. That would still have yielded a size
     estimate $\epsilon(\abs{s})\,\abs{s} = o(\abs{s})$. When we
     generalize to higher derivatives, we shall make use of this fact:
     at least one of the factors will be differentiable and yield a
     factor $\abs{s}$, while the other term(s) can be estimated by a
     continuity modulus $\epsilon(\abs{s})$.%
   } term to arrive at
 }%
    &\le \int_a^t \begin{aligned}[t]
      & \norm[\big]{\big(\DF_x \Psi_{x_\sigma}\cdot s\;\dx\big)(t,\tau)
                   -\big(\DF_x \Psi_x         \cdot s\;\dx\big)(t,\tau)}\,\Vsize\\
      &+C_\sy\,e^{\rho_\sy(t-\tau)}\,
        \norm[\big]{\D_x \Ynonlin(x_\sigma(\tau),y(\tau)) - \D_x \Ynonlin(x(\tau),y(\tau))}\,
        \abs{s}\,\norm{\dx}_\rho\,e^{\rho\,\tau}\\
      &+\norm[\big]{\big(\DF_x \Psi_{x_\sigma}\cdot s\;\dx\big)(t,\tau)}\,
        \norm{\D_x \Ynonlin(x_\sigma(\tau),y(\tau))}\,\abs{s}\,\norm{\dx}_\rho\,e^{\rho\,\tau}
           \d\tau \end{aligned} \displaybreak[1]\\
    &\le \int_a^t \begin{aligned}[t]
      & C\,e^{\rho_\sy(t-\tau)}\,\epsilon(\abs{\sigma}\,\norm{\dx}_\rho)\,
          e^{(\rho+\mu)\tau}\,\abs{s}\,\norm{\dx}_\rho\,\Vsize\\
      &+C_\sy\,e^{\rho_\sy(t-\tau)}\,
        \epsilon_{\D_x f}\big(\abs{\sigma}\,\norm{\dx}_\rho\,e^{\rho\,\tau}\big)\,
        \abs{s}\,\norm{\dx}_\rho\,e^{\rho\,\tau}\\
      &+\frac{C_\sy^2\,C_v}{-\rho}\,\norm{\dx}_\rho\,e^{\rho_\sy(t-\tau)}\,e^{\rho\,t}\,
        \Vsize\,\abs{s}\,\norm{\dx}_\rho\,e^{\rho\,\tau}
           \d\tau. \end{aligned}
  \end{align*}\spewnotes
  We applied Proposition~\ref{prop:DxPsi-cont} to estimate the
  variation of $\DF_x \Psi$; the induced normal coordinate charts and
  Proposition~\ref{prop:unif-partrans} allow us to freely switch
  between parallel transport and normal coordinates for estimating
  differences. All exponential norms are equivalent on the compact
  interval $J$, so with the usual estimates we see that this
  expression is $o(\abs{s})$, uniformly for all $\norm{\dx}_\rho = 1$.
\end{proof}

We have thus converted the map
$T = \T_\sy \circ (T_\sx,\,\text{pr}_1)$ to a Banach manifold setting
by defining it on curves restricted to compact time intervals. Although all estimates
were already in place, this technicality allows us to draw the
conclusions of the final points~\ref{enum:scheme-induc-deriv}
and~\ref{enum:scheme-limit-deriv} in the scheme in
Section~\ref{sec:scheme-deriv}.
\begin{lemma}[The $\Theta^n$ have true derivatives]
  \label{lem:ind-DTheta}
  Fix $\mu < 0$ and let\/ $\Theta^n\colon X \to \BY$ be
  differentiable into $B^{\rho+\mu}(I;Y)$. Recursively define
  $\Theta^{n+1}(x_0) = T(\Theta^n(x_0),x_0)$. Then $\Theta^{n+1}$ is
  again differentiable into $B^{\rho+\mu}(I;Y)$.
\end{lemma}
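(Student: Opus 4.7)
The plan is to first establish differentiability of the restricted map $\rho_b \circ \Theta^{n+1}$ on each bounded interval $J = \intvCC{b}{0}$ by approximation by the Banach-manifold-differentiable restrictions $T^{b,a}$, and then to upgrade to differentiability of $\Theta^{n+1}$ itself as a map into $B^{\rho+\mu}(I;Y)$ by exploiting the fact that the exponential weight $e^{-(\rho+\mu)t}$ decays on the tail $t \to -\infty$ when $\mu<0$.

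Concretely, fix $b<0$ and $x_0\in X$, and for $a \le b$ define the finite-horizon approximant
\[
   \Theta^{n+1,b,a}(x_0) := T_\sy^{b,a}\!\Big(T_\sx^a(\rho_a\Theta^n(x_0),x_0),\;\rho_a\Theta^n(x_0)\Big).
\]
The restriction map $\rho_a\colon B^{\rho+\mu}(I;Y)\to B^{\rho+\mu}(J_a;Y)$ is bounded linear, so the hypothesis that $\Theta^n$ is differentiable into $B^{\rho+\mu}(I;Y)$ gives that $\rho_a\Theta^n$ is differentiable into the Banach space $B^{\rho+\mu}(J_a;Y)$. Since $T_\sx^a$ and $T_\sy^{b,a}$ are differentiable maps between the Banach manifolds constructed in Section~\ref{sec:banach-mflds}, the Banach-manifold chain rule applies, and $\Theta^{n+1,b,a}$ is differentiable at $x_0$ with derivative given by the (restricted) candidate formula
\[
  \D\Theta^{n+1,b,a}(x_0) = \DF_y T^{b,a}\cdot\rho_a\D\Theta^n(x_0) + \DF_{x_0}T^{b,a},
\]
where the right-hand side is read off from~\ref{eq:DT-chainrule} with $T^{b,a}$ substituted for $T$.

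As $a\to -\infty$, Proposition~\ref{prop:approx-Ty} and Corollary~\ref{cor:approx-DTy} imply that $\Theta^{n+1,b,a}\to \rho_b\Theta^{n+1}$ in $B^{\rho+\mu}(J;Y)$ uniformly in $x_0$ on bounded sets of $X$, and the derivatives $\D\Theta^{n+1,b,a}$ converge uniformly in the operator norm on $\CLin(\T_{x_0}X;B^{\rho+\mu}(J;Y))$ to
\[
  \tilde{\Theta}^{n+1}_b(x_0) := \rho_b\!\left[\DF_y T(\Theta^n(x_0),x_0)\cdot\D\Theta^n(x_0) + \DF_{x_0} T(\Theta^n(x_0),x_0)\right].
\]
The limit map $\tilde{\Theta}^{n+1}_b$ is continuous in $x_0$ by the continuity statements of Section~\ref{sec:cont-fiber-maps} together with the continuity of $x_0\mapsto \Theta^n(x_0)$. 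Hence Theorem~\ref{thm:difflimitfunc} (differentiability of a uniform limit whose derivatives converge uniformly) applies and yields that $\rho_b\Theta^{n+1}\colon X\to B^{\rho+\mu}(J;Y)$ is differentiable at $x_0$ with derivative $\tilde{\Theta}^{n+1}_b(x_0)$.

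It remains to glue these bounded-interval derivatives into a derivative of $\Theta^{n+1}$ on the full interval $I$. Set
\[
  \tilde{\Theta}^{n+1}(x_0) := \DF_y T(\Theta^n(x_0),x_0)\cdot\D\Theta^n(x_0) + \DF_{x_0}T(\Theta^n(x_0),x_0);
\]
by the operator-norm bounds in Section~\ref{sec:unif-fiber-contr}, $\tilde{\Theta}^{n+1}(x_0)\in\CLin(\T_{x_0}X;B^{\rho+\mu}(I;Y))$ is bounded. Define the error
\[
  R(x_0,\dx_0) := \Theta^{n+1}(x_0+\dx_0) - \Theta^{n+1}(x_0) - \tilde{\Theta}^{n+1}(x_0)\,\dx_0.
\]
Because $\Theta^{n+1}$ and $\tilde{\Theta}^{n+1}(x_0)\,\dx_0$ both lie in the Lipschitz image of a Lipschitz map (the Lipschitz estimate for $T$ from Section~\ref{sec:NHIM-lipschitz} together with boundedness of $\tilde{\Theta}^{n+1}$), we have the a priori $\rho$-growth bound $\norm{R(x_0,\dx_0)}_\rho \le C\,\norm{\dx_0}$, independent of the point $x_0$. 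Splitting the supremum defining $\norm{R}_{\rho+\mu}$ into $t\in[b,0]$ and $t<b$, and using that $t\le b <0$ implies
\[
  \norm{R(t)}\,e^{-(\rho+\mu)t} \le \norm{R}_\rho\, e^{-\mu t} \le \norm{R}_\rho\, e^{-\mu b}
\]
with $e^{-\mu b}\to 0$ as $b\to-\infty$ (since $-\mu>0$ and $b\to-\infty$), the tail contribution is at most $C\norm{\dx_0}\,e^{-\mu b}$, which is bounded by $\tfrac{\varepsilon}{2}\norm{\dx_0}$ once $b$ is chosen sufficiently negative. For that fixed $b$, the previously established differentiability of $\rho_b\Theta^{n+1}$ gives the remaining bound $\norm{\rho_b R}_{\rho+\mu,J}\le \tfrac{\varepsilon}{2}\norm{\dx_0}$ for $\norm{\dx_0}$ small. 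Combining these proves $\norm{R(x_0,\dx_0)}_{\rho+\mu}=o(\norm{\dx_0})$, so $\Theta^{n+1}$ is differentiable into $B^{\rho+\mu}(I;Y)$ at $x_0$ with derivative $\tilde{\Theta}^{n+1}(x_0)$.

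The main obstacle is the final gluing step: the order of quantifiers matters, since the differentiability estimate on $\rho_b\Theta^{n+1}$ depends on $b$, while $\norm{R}_{\rho+\mu}$ is a supremum over all $t\in I$. The decisive observation that makes this work is precisely the presence of the strictly negative slack $\mu<0$, which produces the decay factor $e^{-\mu b}$ controlling the tail and thereby allowing $b$ to be chosen first as a function of $\varepsilon$ and only then $\norm{\dx_0}$; this is also the place where uniformity of all the preceding estimates across the base $\mathcal{S}_0$ is crucial.
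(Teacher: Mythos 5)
Your proof is correct and follows the same overall strategy as the paper's: split the $\norm{\slot}_{\rho+\mu}$ supremum into the tail $t\le b$ (killed by the $e^{-\mu b}$ decay coming from the slack $\mu<0$ and the a priori Lipschitz bound $\norm{R}_\rho \le C\norm{\dx_0}$) and the bounded interval $[b,0]$ (handled via the Banach-manifold restrictions $T^{b,a}$ together with Proposition~\ref{prop:approx-Ty} and Corollary~\ref{cor:approx-DTy}). The one genuine organizational difference is in how the limit $a\to-\infty$ is passed. The paper keeps everything in a single triangle-inequality chain and then proves, by an explicit mean value theorem estimate, that the $o(\norm{h})$ modulus for $\Theta^{n+1}_{b,a}$ is uniform in $a$, so that the three approximation errors can be sent to zero after the modulus has been controlled. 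You instead invoke Theorem~\ref{thm:difflimitfunc} (together with the $C^1$ regularity of the $\Theta^{n+1,b,a}$ and continuity of the limit derivative from Section~\ref{sec:cont-fiber-maps}) to establish differentiability of $\rho_b\Theta^{n+1}$ on the bounded interval as a self-contained intermediate step, after which the gluing is clean. This packages the same mean-value-theorem content inside an already-established theorem and avoids having to track the uniformity in $a$ explicitly; it is a small but legitimate simplification. Note that the paper applies Theorem~\ref{thm:difflimitfunc} only afterwards, at the $n\to\infty$ level, so your proof ends up invoking it twice — once here and once there — which is harmless.
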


\begin{proof}
  We define $\D\Theta^{n+1} \in \mathcal{S}_1^0$ using~\ref{eq:fiber-map-DT} and
  proceed to show that it is the derivative of $\Theta^{n+1}$ as a
  function $\D\Theta^{n+1} \in \mathcal{S}_1^\mu$ by a direct estimate
  \begin{equation*}
    \norm{\Theta^{n+1}(x_0+h) - \Theta^{n+1}(x_0) - \D\Theta^{n+1}(x_0)\cdot h}_{\rho+\mu}
    \le \epsilon\,\norm{h},
  \end{equation*}
  with $x_0,\,x_0+h \in X$ represented in normal coordinate charts.

  First, we use the Nemytskii operator technique to get rid of the
  infinite tail $t \to -\infty$. For any given $\epsilon > 0$, we have
  on $\intvOC{-\infty}{b}$ the crude estimate
  \begin{align*}
    \fst  \sup_{t \le b}\;
         \norm[\big]{\big[\Theta^{n+1}(x_0+h) - \Theta^{n+1}(x_0)
                      - \D\Theta^{n+1}(x_0) \cdot h\big](t)}\,e^{-(\rho+\mu)\,t}\\
    &\le  \Big(\norm{\Theta^{n+1}(x_0+h) - \Theta^{n+1}(x_0)}_\rho
             + \norm{\D\Theta^{n+1}(x_0) \cdot h}_\rho\Big)\,e^{-\mu\,b}\\
    &\le  \big(\Lip(\Theta^{n+1}) + \norm{\D\Theta^{n+1}}\big)\,\norm{h}\,e^{-\mu\,b}
     \le  \epsilon\,\norm{h}
  \end{align*}
  for some $b(\epsilon)$ that is sufficiently negative. We use the
  differentiability of $T^{b,a} = T_\sy^{b,a}\circ(T_\sx^a,\,\text{pr}_1)$
  on the finite interval $\intvCC{b}{0}$ that is left. We define
  $\Theta^{n+1}_{b,a} = T^{b,a} \circ \rho_a \circ \Theta^n$ and
  estimate
  \index{$T^{b,a}$}
  \begin{align*}
    \fst  \sup_{t \in \intvCC{b}{0}}\;
          \norm[\big]{\big[\Theta^{n+1}(x_0+h) - \Theta^{n+1}(x_0)
                       - \D\Theta^{n+1}(x_0) \cdot h\big](t)}\,e^{-(\rho+\mu)\,t}\\
    &\le  \norm{\rho_b\circ\Theta^{n+1}(x_0+h) - \Theta^{n+1}_{b,a}(x_0+h)}_{\rho+\mu}
         +\norm{\rho_b\circ\Theta^{n+1}(x_0)   - \Theta^{n+1}_{b,a}(x_0)  }_{\rho+\mu}\\
    \con +\norm[\big]{\big[\rho_b\circ\D\Theta^{n+1}(x_0)
                                    - \D\Theta^{n+1}_{b,a}(x_0)\big] \cdot h}_{\rho+\mu}\\
    \con +\norm{\Theta^{n+1}_{b,a}(x_0+h) - \Theta^{n+1}_{b,a}(x_0)
            - \D\Theta^{n+1}_{b,a}(x_0) \cdot h}_{\rho+\mu}.
  \end{align*}
  This holds for all $a \le b$ and the first three terms can be made
  arbitrarily small when $a \to -\infty$ due to
  Proposition~\ref{prop:approx-Ty} and Corollary~\ref{cor:approx-DTy},
  while the last term is $o(\norm{h})$ since $\Theta^{n+1}_{b,a}$ is
  differentiable by the chain rule. If the estimate $o(\norm{h})$ is
  independent of $a$, then we can finally, for any $\epsilon > 0$ and
  $\norm{h} \le \delta$ sufficiently small, estimate this by
  $\epsilon\,\norm{h}$.

  That the term $o(\norm{h})$ is independent of $a$ follows from
  another application of the mean value theorem:
  \begin{align*}
\fst    \norm[\big]{\Theta^{n+1}_{b,a}(x_0+h) - \Theta^{n+1}_{b,a}(x_0)
                - \D\Theta^{n+1}_{b,a}(x_0) \cdot h}_{\rho+\mu}\\
   &\le \norm[\big]{\D\Theta^{n+1}_{b,a}(\xi) - \D\Theta^{n+1}_{b,a}(x_0)}_{\rho+\mu}\,
        \norm{h} \hspace{3.2cm}\text{where } d(\xi,x_0) \le \norm{h}\\
   &=   \norm[\big]{\D T^{b,a}(\Theta^n(\xi),\xi) \cdot \rho_a\circ\D\Theta^n(\xi)
                   -\D T^{b,a}(\Theta^n(x_0),x_0) \cdot \rho_a\circ\D\Theta^n(x_0)}_{\rho+\mu}\,
        \norm{h}\\
   &\le \Big(\norm[\big]{\D T^{b,a}(\Theta^n(\xi),\xi)
                        -\D T^{b,a}(\Theta^n(x_0),x_0)}_{\rho+\mu,\rho}\,
             \norm{\D\Theta^n(x_0)}_\rho\\
\con\quad +  \norm[\big]{\D T^{b,a}(\Theta^n(x_0),x_0)}_{\rho+\mu,\rho+\mu}\,
             \norm{\D\Theta^n(\xi) - \D\Theta^n(x_0)}_{\rho+\mu}
        \Big)\,\norm{h}\\
   &\le \epsilon(d(\xi,x_0))\,\norm{h}
  \end{align*}
  since the continuity estimates for the formal derivatives $\DF T$
  directly translate into the same estimates for the true derivative
  counterparts $\D T^{b,a}$ on restricted intervals.
\end{proof}

Thus, we can now conclude by induction, starting at
$\Theta^0 \equiv 0$, that for each $n \ge 0$ the
map $\Theta^n \in \mathcal{S}_0$ is differentiable when viewed as map into
$B^{\rho+\mu}(I;Y)$, while the results in
Section~\ref{sec:cont-fiber-maps} show that we actually have
\begin{equation}\label{eq:Theta-BUC}
  \Theta^n \in \BUC^{1,\alpha}\big(X;B^{\rho+\mu}(I;Y)\big).
\end{equation}
Finally, we have uniformly convergent sequences
\begin{equation}\label{eq:DTheta-conv-sequence}
  \Theta^n \to \Theta^\infty \in \mathcal{S}_0
  \qquad\text{and}\qquad
  \D\Theta^n \to \DF\Theta^\infty \in \mathcal{S}_1^\mu
\end{equation}
by the fiber contraction theorem, so now we apply
Theorem~\ref{thm:difflimitfunc} (taking into account
Remark~\ref{rem:difflimit-mflds}) to conclude that $\DF\Theta^\infty$
is the derivative of $\Theta^\infty$ as a map
$X \to B^{\rho+\mu}(I;Y)$. It was already shown in
Proposition~\ref{prop:DTh-cont} that $\DF\Theta^\infty$ is bounded and
continuous, just as the $\D\Theta^n$ in~\ref{eq:Theta-BUC}.

\begin{remark}[on topologies used]
  \label{rem:Theta-conv-topology}
  \index{comparison!of topologies}
  The convergence in~\ref{eq:DTheta-conv-sequence} is with respect to
  uniform supremum norms as in Definition~\ref{def:unif-bounded-map}.
  These induce a topology that is stronger than the weak Whitney (or
  compact-open) topology, cf.\ws Section~\ref{sec:induced-topo}. The
  convergence in Theorem~\ref{thm:difflimitfunc} is with respect to
  the weak Whitney topology, both the assumption and result. This is
  sufficient, since we are primarily interested in the result that
  $\Theta^\infty$ is differentiable, not in what sense $\Theta^n$ and
  its derivatives converge to $\Theta^\infty$. On the other hand, we
  did already have convergence of
  $\D\Theta^n \to \DF\Theta^\infty = \D\Theta^\infty$ with respect to
  these stronger uniform norms, so clearly
  $\Theta^n \to \Theta^\infty$ in uniform $C^1$\ndash norm as well.
\end{remark}

\subsection{Conclusion for the first derivative}
\label{sec:conclude-deriv}

The evaluation map $\text{ev}_0\colon B^{\rho+\mu}(I;Y) \to Y$ is
bounded linear so the graph~\ref{eq:persist-graph} of the persistent
invariant manifold also satisfies
\begin{equation*}
  \tilde{h} = \text{ev}_0 \circ \Theta^\infty \in \BUC^{1,\alpha}(X;Y).
\end{equation*}
The size of $\D \tilde{h}$ can be estimated using the fixed point equation
for $\DF \Theta^\infty$. This yields
\begin{equation*}
      \norm{\DF\Theta^\infty}_\rho
  \le \frac{\contr}{1 - \contr}\,\norm{\DF_{x_0} T_\sx},
\end{equation*}
and the contraction factor $\contr < 1$ can be made arbitrarily small
by choosing $\Vsize$ small. As indicated in~\ref{eq:small-param-dep},
$\Vsize$ is in turn controlled by $\delta,\,\sigma_1$ from Theorem~\ref{thm:persistNHIMtriv},
and $\nu$ from Lemma~\ref{lem:VB-section-smoothing}, which can be
chosen arbitrarily small. This completes the proof of all statements in
Theorem~\ref{thm:persistNHIMtriv} for $\fracdiff = 1 + \alpha$ with
$\alpha \in \intvCC{0}{1}$. Note that this is the case $k = 1$ as in
Remark~\ref{rem:persistNHIM},~\ref{enum:persist-k-two}.

\subsection{Higher order derivatives}
\label{sec:higher-derivs}
\index{higher order derivative}

To obtain higher order smoothness of the perturbed invariant manifold,
we consider equation~\ref{eq:der-fixedpoint} for $k > 1$. The
principal term governing the contraction is still
$\DF_y T(\Theta(x_0),x_0)$, now acting on multilinear maps
$\DF^k \Theta(x_0) \in \CLin^k\big(\T X;B^{k\rho+\mu_k}(I;Y)\big)$.
The remaining terms only depend on lower order derivatives of
$\Theta(x_0)$, hence they do not influence the contractivity estimate
in the fiber contraction theorem. It~must be verified, though, that
these terms depend continuously on the lower order derivatives as
mappings into $B^{k\rho+\mu_k}(I;Y)$. Note again that we set
$\mu_k = \alpha\,\rho$ in case of $\alpha$\ndash H\"older continuity;
in case of uniform continuity (denoted by $\alpha = 0$) we choose a
sequence $\{\mu_j\}_{1\le j\le k}$ such that the following hold true:
\index{$\mu$}
\begin{enumerate}
\item $\mu_j < 0$ for each $j$;
\item the spectral gap condition
  $\rho_\sy < k\,\rho + \mu_k < \rho < \rho_\sx$ still holds;
\item there exists a $\tilde{\rho} < \rho$ such that
  \begin{equation}\label{eq:mu-conditions}
    k\,\rho + \mu_k < k\,\tilde{\rho}
    \qquad\text{and}\qquad
    j\,\tilde{\rho} \le j\,\rho + \mu_j
    \quad\text{for any }\; j < k.
  \end{equation}
\end{enumerate}
It follows that the sequence $\mu_j$'s is strictly decreasing (i.e.\ws
increasing in absolute value), and that we have continuous embeddings
$B^{k\tilde{\rho}} \embedto B^{k\rho+\mu_k}$ and
$B^{j\rho+\mu_j} \embedto B^{j\tilde{\rho}}$; in the first embedding
we reserved some spectral space to apply
Corollary~\ref{cor:unifcont-nemytskii}. These choices---as well as
more ideas in this section---are inspired
by~\cite[Sec.~3]{Vanderbauwhede1989:centremflds-normal}, which is an
interesting read for comparison in a simpler setting.

We reuse the scheme already defined in Section~\ref{sec:scheme-deriv}
for the first order derivatives~\ref{eq:DT-set}. Let us walk through
these items step by step and indicate the changes that need to be
made.
\begin{enumerate}
\item\label{enum:sch-high-formal-deriv}%
  Candidate functions for the higher order derivatives can be found by
  formal differentiation and application of
  Theorem~\ref{thm:diff-flow}. This is a straightforward procedure,
  although tedious and quite unenlightening to perform. Let us show
  just one example\footnote{%
    Even though it is not obvious from~\ref{eq:DDyTx}, this expression
    is in fact symmetric in $\dy_1,\,\dy_2$. To verify this for the terms
    containing $\D_x \D_y \vx$, one should change the order of
    integration of $\tau,\,\sigma$ and expand the expression
    $\big(\DF_y T_\sx(y,x_0)\dy_2\big)(\tau)$ using~\ref{eq:DyTx}.%
  }:
  {\newcommand{\vv}[1]{\vx(x_y(#1),y(#1))}
  \begin{align*}
&\hspace{-0.9cm} \DF_y^2 T_\sx(y,x_0)\big(\dy_1,\dy_2\big)(t) =\\
&\hspace{-0.5cm}
      \int_t^0 \Bigg(\begin{aligned}[t]
        \D\Phi_y(t,\tau,x_y(\tau))\cdot\Big[
        & \D_y^2 \vv{\tau}\big(\dy_1(\tau),\dy_2(\tau)\big)\\
        &+\D_x\D_y \vv{\tau}\big(\dy_1(\tau),
            \big(\DF_y T_\sx(y,x_0)\dy_2\big)(\tau)\big)\Big]
                     \end{aligned}\\
&\hspace{0.2cm}\begin{aligned}[t]
      +&\Big[\D^2\Phi_y(t,\tau,x_y(\tau))\cdot\big(\DF_y T_\sx(y,x_0)\dy_2\big)(\tau)\\
       &\;\;+\int_t^\tau
               \D\Phi_y(t,\sigma,x_y(\sigma))\cdot
               \big[\D_y\D_x \vv{\sigma}\cdot\dy_2(\sigma)\big]\cdot
               \D\Phi_y(\sigma,\tau,x_y(\tau))
             \d\sigma\Big]
               \end{aligned}\\
&\hspace{0.5cm}\cdot\D_y \vv{\tau}\cdot\dy_1(\tau)
      \Bigg) \d\tau.
    \label{eq:DDyTx}\eqnumber
  \end{align*}}\vspace{-0.3cm}
\item\label{enum:sch-high-fibercontr}%
  For contractivity in the fibers we still only need to consider the
  map $\DF_y T$ as in~\ref{eq:DT-chainrule}, since that is the
  principal term in~\ref{eq:der-fixedpoint}. This map is contractive
  for any $\rho' \in \intvOO{\rho_\sy}{\rho_\sx}$, hence also for
  $\rho' = k\,\rho + \mu$ for any $\mu \le 0$ sufficiently small, when
  $\rho \in \intvOO{\rho_\sy}{\rho_\sx}$ is chosen appropriately. The
  other terms in~\ref{eq:der-fixedpoint} are bounded maps as well, and
  linear in the $\DF^j \Theta(x_0)$. It follows from
  Proposition~\ref{prop:compfunc-deriv} that each of these terms has
  weighted degree
  \begin{equation*}
    \sum_{i=1}^{k-1} i \cdot p_i = k - m
  \end{equation*}
  with respect to the $\DF^j \Theta(x_0)$, while they incur an
  additional exponential factor $e^{m\,\rho\,t}$ from taking $m$
  derivatives with respect to $x_0 \in X$, due to
  Lemma~\ref{lem:exp-growth-basic}. Thus the combined exponential
  growth rates sum to $k\,\rho$, and $\rho_\sy < k\,\rho$ implies
  that the variation of constants integrals still converge, so these
  terms are bounded maps into $B^{k\rho}$ spaces. This still holds if
  we add $\mu_j$'s that satisfy the conditions set out above.

  In the notation of Appendix~\ref{chap:exp-growth} we define spaces
  of higher order derivatives,
  \index{$S$@$\mathcal{S}$}
  \begin{equation}\label{eq:fiber-contr-spaces-higher}
    \mathcal{S}_k^\mu = \Gamma_b\big(\CLin^k\big(\T X; B^{k\rho+\mu}(I;Y)\big)\big)
  \end{equation}
  with norms
  \begin{equation*}
    \norm{\DF^k\Theta} = \sup_{x_0 \in X}\;
    \norm{\DF^k\Theta(x_0)}_{\CLin^k(\T_{x_0} X;B^{k\rho+\mu}(I;Y))}
  \end{equation*}
  extending~\ref{eq:fiber-contr-spaces}. Similarly, we define as
  extensions of~\ref{eq:fiber-map-DT}, higher order fiber mappings
  \begin{equation}\label{eq:fiber-map-higher}
    F^{(k)} = (T,\,\DF^1 T,\,\ldots,\,\DF^k T)
    \quad\text{on}\quad
    \mathcal{S}_0 \times \mathcal{S}_1^{\mu_1} \times \cdots \times \mathcal{S}_k^{\mu_k}.
  \end{equation}
  These are again uniform fiber contractions with respect to the final
  factor $\mathcal{S}_k^{\mu_k}$ as fiber, for any choice of $\mu_j \le 0$
  sufficiently small.

\item\label{enum:sch-high-formal-tangent}%
  Instead of trying to construct higher order formal tangent bundles,
  we represent the higher derivatives on `formal tensor bundles'
  \begin{equation}\label{eq:TBX-tensor}
    \TF \BX^k
    \, = \!\!\coprod_{x \in \BX}\! B^\rho(I;x^*(\T X))^{\otimes k}.
  \end{equation}
  Note that this choice of representation along base curves $x$
  matches our choice to represent higher derivatives as in
  Definition~\ref{def:deriv-normcoord} when the former is evaluated at
  a fixed $t$. The trivializations, then, are defined by tensor
  products of parallel transport terms $\partrans(x|_0^t)^{\otimes k}$,
  again when restricted to curves $x \in C^1$. The resulting holonomy
  terms can be estimated by either the $k$\th power of the single
  holonomy term, or, $k-1$ factors can be bounded by
  $\norm{\partrans(\gamma)} \le 2$ such that the remaining factor
  fulfills the required $\alpha$\ndash H\"older estimate. Then, all
  details in Section~\ref{sec:cont-fiber-maps} can be repeated to
  obtain uniform or $\alpha$\ndash H\"older continuity of the higher
  derivatives of $T_\sx,\,T_\sy$ as maps on these formal tensor bundles.
  Note that we can break each expression into parts such that only one
  factor is varied for the continuity estimate and thus only once adds
  either $\alpha\,\rho$ or $\mu$ to the exponential growth rate.
  Thus, the spectral gap condition is still satisfied.

\item\label{enum:sch-high-appl-FCT}%
  We apply the fiber contraction theorem to~\ref{eq:fiber-map-higher} with base
  $\mathcal{S}_0 \times \mathcal{S}_1^{\mu_1} \times \cdots \times \mathcal{S}_{k-1}^{\mu_{k-1}}$
  and fiber $\mathcal{S}_k^{\mu_k}$. In case of $\alpha = 0$, we again seize
  some of the unused spectral space for the carefully chosen
  $\mu_j$'s, such that condition~\ref{enum:fibercontr-3} of
  Theorem~\ref{thm:fibercontr} holds. Let us assume by induction that
  $F^{(k-1)}$ already is a globally attractive fiber map. The
  conditions~\ref{eq:mu-conditions} imply that if we insert elements
  $\DF^j\Theta \in \mathcal{S}_j^{\mu_j}$ into $F^{(k)}$, then their exponents
  sum at most to $k\,\tilde{\rho}$, so the mapping onto the fiber
  $\mathcal{S}_k^{\mu_k}$ is continuous by application of
  Corollary~\ref{cor:unifcont-nemytskii}. For $\alpha$\ndash H\"older
  continuity we can simply choose $\mu_j = \alpha\,\rho$ for all
  $1 \le j \le k$. Thus, we find a globally attractive fixed point
  \begin{equation*}
    (\Theta^\infty,\,\DF\Theta^\infty,\ldots,\,\DF^k\Theta^\infty)
    \in \mathcal{S}_0 \times \mathcal{S}_1^{\mu_1} \times \cdot \times \mathcal{S}_k^{\mu_k}
    \qquad\text{with}\quad
    \DF^k\Theta^\infty \in \BUC^\alpha.
  \end{equation*}

\item\label{enum:sch-high-restr-deriv}%
  We constructed a manifold structure on $\BJX$ (and a trivial one on
  $\BJY$ as well) with an atlas of charts induced by normal coordinate
  charts of the underlying manifold $X$. We represent
  higher\footnote{%
    It would probably be more natural to consider the higher
    derivatives as maps into Banach manifolds with exponents
    $k\,\rho$, but these norms are equivalent anyways.%
  } derivatives in these induced normal coordinate charts
  $B^\rho(J;x^*(\T X))$. Thus, we have for example
  \begin{equation*}
    \D_x^k T_\sx^a(x,y) \in \CLin^k\big(B^\rho(J;x^*(\T X));B^\rho(J;Y)\big).
  \end{equation*}
  This precisely matches the representation of the formal higher
  derivatives on the tensor space $B^\rho(I;x^*(\T X))^{\otimes k}$ in
  point~\ref{enum:sch-high-formal-tangent}. Higher differentiability
  of the restricted maps $T_\sx^a$ and $T_\sy^{b,a}$ follows as in
  Section~\ref{sec:banach-mflds}.

\enlargethispage{0.3cm}
\item\label{enum:sch-high-induc-deriv}%
  Lemma~\ref{lem:ind-DTheta} can be generalized to prove by induction
  over $n$ that higher derivatives $\D^k \Theta^n$ exist; we define
  $\D^k\Theta^{n+1} \in \mathcal{S}_k^0$ by~\ref{eq:fiber-map-higher}.

\item\label{enum:sch-high-limit-deriv}%
  By induction we may assume that it was already proven that
  \begin{equation*}
    \Theta^n \to \Theta^\infty \in \BUC^{k-1}\big(X;B^{(k-1)\rho+\mu_{k-1}}(I;Y)\big)
    \quad\text{as $n \to \infty$}.
  \end{equation*}
  We apply Corollary~\ref{cor:difflimitfunc} to conclude that
  $\Theta^n \to \Theta^\infty$ as sequence of $C^k$ functions and that
  $\Theta^\infty \in \BUC^k\big(X;B^{k\rho+\mu_k}(I;Y)\big)$. The
  convergence is with respect to our uniform supremum norms, see
  Remark~\ref{rem:Theta-conv-topology}.
\end{enumerate}

Just as in Section~\ref{sec:conclude-deriv}, this finalizes the proof
of all statements in Theorem~\ref{thm:persistNHIMtriv}, but now for
$\fracdiff = k + \alpha$ with $k > 1$ and $\alpha \in \intvCC{0}{1}$.
It follows that $\tilde{h} \in \BUC^{k,\alpha}$, but the last part
that remains to be shown, though, is that $\norm{\tilde{h}}_{k-1}$ can
be made as small as desired.

From the fixed point equation~\ref{eq:der-fixedpoint} it follows that
\begin{equation*}
  \norm{\D^{k-1}\Th(x_0)}
  \le \frac{1}{1-\contr}
      \!\!\sum_{\substack{l,m \ge 0\\l+m \le k-1\\(l,m)\neq(0,0),(1,0)}}\!\!
        \norm[\big]{\D_y^l\D_{x_0}^m T(\Th(x_0),x_0) \cdot
                    P_{l,k-m}\big(\D^\bullet \Th(x_0)\big)}.
\end{equation*}
Each term with $l \ge 1$ contains at least one factor $\D^j \Th(x_0)$
with $j < k-1$; these can be assumed to be small by induction. The one
remaining term with $(l,m) = (0,k-1)$ can be expanded using
Proposition~\ref{prop:compfunc-deriv}. This yields, suppressing
arguments $\Th(x_0)$ and $x_0$,
\begin{equation*}
  \D_{x_0}^{k-1} T
  = \sum_{j=1}^{k-1} \D_x^j T_\sy \cdot P_{j,k-1}(\D_{x_0}^\bullet T_\sx).
\end{equation*}
Since all terms are uniformly bounded in appropriate norms, it
suffices to show that the $\D_x^j T_\sy$ can be made small. Recall
formula~\ref{eq:DxTy} and the fiber contraction estimate for
$\D_x T_\sy$ in Section~\ref{sec:unif-fiber-contr}, where we saw that
$\D_x T_\sy$ could be made small by choosing
$\norm{\Ynonlin},\norm{\D_x \Ynonlin} \le \Vsize$ small. The higher
derivatives $\D_x^j T_\sy$, too, contain a factor $\D_x^i \Ynonlin$
with $0 \le i \le j$ in each term, so by
Proposition~\ref{prop:Ck-small-pert} these can be made small. Hence,
$\D^{k-1}\Theta^\infty$ and consequently $\norm{\tilde{h}}_{k-1}$ can
be made uniformly small. Note that $\norm{\tilde{h}}_k$ cannot be made
small though, see Remark~\ref{rem:Ck-small-optimal}.



\chapter{Extension of results}\label{chap:extension-results}

In this chapter we discuss some ways to extend the main result of
Theorem~\ref{thm:persistNHIMgen} to slightly more general situations.
These extensions are known from the compact and Euclidean settings,
but a bit scattered over the literature. We try to collect a number of
these results here, while extending them to our noncompact setting.

\section{Non-autonomous systems}\label{sec:ext-non-autonomous}
\index{non-autonomous system}

We proved the main theorem for an autonomous system and perturbation.
The Perron method admits without difficulty a time-dependent
formulation; we refrained from including this, since it would only
have cluttered the already detailed proof, while time-dependence is
easily added as an afterthought, as already noted in
Section~\ref{sec:non-autonomous}.

Let us assume that $M$ is an $\fracdiff$\ndash NHIM for the
(time-independent) vector field $v$ on $(Q,g)$ and that all
assumptions of Theorem~\ref{thm:persistNHIMgen} are fulfilled. We can
allow time-dependent perturbations by the standard trick to extend the
phase space of the system by $\R \ni t$. Define
\begin{equation}\label{eq:Q-time-ext}
  \hat{Q} = \R \times Q
  \qquad\text{with metric}\quad
  \hat{g} = {\rm d} t^2 + g.
\end{equation}
Then $(\hat{Q},\hat{g})$ is again of bounded geometry. We trivially
extend the vector field $v$ to
\begin{equation}\label{eq:v-time-ext}
  \hat{v}(t,x) = \big(1,v(x)\big) \in \T_{(t,x)} \hat{Q}
\end{equation}
and set $\hat{M} = \R \times M \subset \hat{Q}$. Then the flow
$\hat{\Phi}$ of $\hat{v}$ has the same hyperbolicity properties as
$\Phi$ since the additional flow along $\dot{t} = 1$ is completely
neutral and decoupled from the original system. It follows that
$\hat{M}$ is again an $\fracdiff$\ndash NHIM for the dynamical system
$(\hat{Q},\hat{\Phi},\R)$. Note that we need a theory for noncompact
NHIMs to perform this extension by the time interval $\R$. Now we can
choose a perturbed vector $\tilde{v}$ that depends explicitly on time,
as long as $\tilde{v} \in \BUC^{k,\alpha}(\hat{Q})$ is close to
$\hat{v}$. This means that the perturbation must be small in
$C^{k,\alpha}$\ndash norm (including derivatives with respect to
time), uniformly for all time. As a result we find that the perturbed
manifold $\tilde{M}$ will depend on time, i.e.\ws it is not exactly of
the form $\tilde{M} = \R \times \mathcal{M}$ for some
$\mathcal{M} \subset Q$. We do find that $\tilde{M}$ is uniformly
close to $\hat{M} = \R \times M$, however, so $\tilde{M}$ is
approximately of this product form.

\begin{remark}
  A direct application of Theorem~\ref{thm:persistNHIMgen} requires
  the perturbed vector field to be $C^{k,\alpha}$ with respect to
  time, too, since $t \in \R$ is added to the phase space variables.
  Note that the result thus depends $C^{k,\alpha}$ smoothly on time as
  well. A closer inspection of the proof shows that this can in fact
  be replaced by the condition that
  $\hat{v}(t,\slot) \in \BUC^{k,\alpha}$, uniformly in $t \in \R$,
  just as in Remark~\ref{rem:smooth-flow-timedep}. In that case the
  resulting manifold $\tilde{M}$ cannot be expected to be
  differentiable with respect to time anymore, but it still satisfies
  all uniform $C^{k,\alpha}$ smoothness and boundedness properties
  with respect to $x \in Q$. In particular, $\tilde{M}$ is still
  uniformly close to $\hat{M}$, uniformly for all $t \in \R$.
\end{remark}

Instead of starting with an autonomous system $v$, we can also
take an initial non-autonomous system $\hat{v}$ and perturb
that. As long as $\hat{v}$ truly describes a non-autonomous system,
that is, it is defined on a space $\R \times Q$ and has component $1$
along $\R$, then normal hyperbolicity is easily tested. The
$\R$\ndash component of the flow is trivially neutral, while the other
$Q$\ndash component must be checked in a context where, for example,
also the invariant splitting~\ref{eq:NHIM-split} may depend on time,
but this introduces no fundamental changes.

\section{Smooth parameter dependence}\label{sec:ext-param-dep}
\index{normally hyperbolic invariant manifold!parameter dependence}

Another interesting question for applications is if the persistent
manifold depends smoothly on the perturbation parameter. This result
can be obtained in a similar way as time-dependence, now adding a
parameter $p \in P$ to the phase space with trivial dynamics
$\dot{p} = 0$. The noncompact theory is not essential here, but it
does allow for a simple proof.

Let again $(Q,g)$ and $v = v(p,x)$ describe the system, where
$p \in P$ denotes the parameter. For simplicity we assume that
$P = \R^n$ and that $p = 0$ corresponds to the unperturbed system for
which we have $M$ as $\fracdiff$\ndash NHIM. We consider again an
extended system $\hat{Q} = P \times Q$ and $\hat{M} = P \times M$. The
extended vector field we choose slightly differently: we use an
external scaling parameter $\alpha \ge 0$ to slowly `turn on' the
parameter dependence. Let
$\chi \in C^\infty(\R_{\ge 0};\intvCC{0}{1})$ be a radial cut-off
function such that $\chi(r) = 1$ for $r \le 1$ and $\chi(r) = 0$ for
$r \ge 2$, and define
\begin{equation}\label{eq:param-ext-vf}
  \hat{v}_\alpha(p,x) = \big(0,v\big(\chi(\norm{p})\,\alpha\,p,x\big)\big)
\end{equation}
as a vector field on $\hat{Q}$. Note that $\hat{M}$ is an
$\fracdiff$\ndash NHIM for $\hat{v}_0$ by trivial extension. One can
verify that $\norm{\hat{v}_\alpha - \hat{v}_0}_\fracdiff$
can be chosen small with $\alpha$. Uniformity with respect
to $p$ follows automatically from $\chi$ having compact support. As a
result of Theorem~\ref{thm:persistNHIMgen} we conclude that there
exists an $\alpha > 0$ such that $\hat{v}_\alpha$ has a $C^\fracdiff$
family of invariant manifolds
\begin{equation}\label{eq:param-NHIM-family}
  \tilde{M} = \coprod_{p' \in P} \tilde{M}_{p'},
\end{equation}
where $\tilde{M}_{p'}$ is the invariant manifold corresponding to the
vector field $v(p,\slot)$ with $p = \chi(\norm{p})\,\alpha\,p'$. This
parametrizes a full neighborhood $B(0;\alpha) \subset P$.

\section{Overflowing invariant manifolds}\label{sec:ext-overflow-invar}
\index{overflow invariance}

Overflowing invariance is a useful tool to study invariant manifolds
whose normal hyperbolicity properties break down beyond a certain
domain, see also Section~\ref{sec:overflow-invar}. We shall indicate
here how our main result can be extended to overflowing invariant
manifolds. We provide conditions for persistence that are slightly
weaker than those in the literature. These might prove useful for some
applications.

The following definition extends that
in~\cite{Fenichel1971:invarmflds} and is equivalent to Definition~2.1
in~\cite{Bates1999:persist-overflow}.
\begin{definition}[Overflowing invariant manifold]
  \label{def:overflow-invar}
  Let\/ $(Q,g)$ be a Riemannian manifold, $M \subset Q$ a $C^1$
  submanifold with boundary $\partial M \in C^1$, and $v \in C^1$ a
  vector field on $Q$ with flow\/ $\Phi$. Let $n$ denote the outward
  normal at\/ $\partial M$. Then $M$ is called overflowing invariant
  under $v$ if the following hold:
  \begin{enumerate}
  \item\label{enum:overflow-orbits}%
    backward orbits stay in $M$, i.e.\ws
    $\forall\, m \in M,\,t<0\colon \Phi^t(m) \in M$;
  \item\label{enum:overflow-outward}%
    the vector field $v$ points uniformly strictly outward at
    $\partial M$, i.e.\ws there exists some $\epsilon > 0$ such that\/
    $\forall\, m \in \partial M\colon g_m(v,n) \ge \epsilon.$
  \end{enumerate}
\end{definition}
Definition~\ref{def:NHIM} of normal hyperbolicity can be adapted to
this setting (only condition~\ref{enum:overflow-orbits} is
necessary): we assume that only stable normal directions are present
and we only require $M$ to be negatively invariant, while the
exponential rate conditions must hold along orbits as long as they
stay inside $M$.

\begin{remark}
  Note that the uniformity in condition~\ref{enum:overflow-outward}
  reduces to the standard `strictly outward' if
  $\bar{M} = M \cup \partial M$ is compact. This is the natural
  generalization for noncompact manifolds, since the condition is used
  to guarantee that under small perturbations and in a small tubular
  neighborhood the vector field is still pointing outward.
\end{remark}

The Perron method uses orbits as fundamental objects and constructs a
contraction operator on these. The essence of
Definition~\ref{def:overflow-invar} is to guarantee
condition~\ref{enum:overflow-orbits} that backward orbits stay inside
$\bar{M}$, even under a small perturbation of the vector field. This
provides an idea to slightly weaken the overflow invariance definition
into an a priori argument. If any orbits considered in the Perron
method proof stay inside $\bar{M}$, then all assumptions throughout
the proof are still valid and we obtain a persistent manifold
$\tilde{M}$. To make this idea explicit, we choose the trivial bundle
setting of Theorem~\ref{thm:persistNHIMtriv} and introduce the
following weakened definition.
\begin{definition}[A priori overflowing invariance]
  \label{def:overflow-invar-priori}
  \index{overflow invariance!a priori}
  Let\/ $(X,g)$ be a Riemannian manifold and let $M \subset X$ an open
  submanifold, i.e.\ws of the same dimension,
  with boundary $\partial M \in C^1$. Let\/ $Y$ be a Banach
  space, and $v \in C^1$ a vector field on $X \times Y$ with flow\/
  $\Phi$. Let $n$ denote the outward normal at\/ $\partial M$.
  Let $\tilde{v}$ be a perturbation of $v$. Then $M$ is called a
  priori overflowing invariant for the pair $(v,\tilde{v})$ if the
  following hold:
  \begin{enumerate}
  \item\label{enum:overflow-priori-orbits}%
    backward orbits of $v$ stay in $M$, i.e.\ws
    $\forall\, m \in M,\,t<0\colon \Phi^t(m) \in M$;
  \item\label{enum:overflow-priori-outward}%
    the vector field $\tilde{v}$ points (non-strictly) outward at a
    tubular neighborhood over $\partial M$, i.e.\ws there exists some
    $\Ysize > 0$ such that
    \begin{equation*}
      \forall\, (m,y) \in \partial M \times Y_{\le \Ysize}\colon
        g\big(\D\pi_\sx\cdot v(m,y),n(m)\big) \ge 0.
  \end{equation*}
  \end{enumerate}
\end{definition}

\begin{remark}\NoEndMark
  Note that Definition~\ref{def:overflow-invar}
  implies~\ref{def:overflow-invar-priori} when $\norm{\tilde{v} - v}_1$
  is small enough and $\tilde{v} \in \BUC^1$.
\end{remark}

\begin{remark}
  A useful generalization of Definition~\ref{def:overflow-invar-priori}
  to the setting of Theorem~\ref{thm:persistNHIMgen} is less trivial.
  There we do not have canonical vertical fibers over $\partial M$ in
  the tubular neighborhood, nor the associated projection of $v$ onto
  $\T X$ at $\partial M$. We cannot simply take a non-vertical fiber;
  the Perron method adapts the curves $x$ and $y$ separately, so it
  may happen that while $x(0) \in \partial M$ is kept fixed, $y(0)$ is
  updated to a new value such that $(x(0),y(0))$ lies outside of the
  tubular neighborhood over $\bar{M}$, and control is lost.
\end{remark}

Let us demonstrate the application of this more general definition with the
following simple example, see also Figure~\ref{fig:overflow-priori}.
\begin{example}[Persistence under a priori overflowing invariance]
  Let $X \times Y = \R \times \R$ and let the unperturbed vector field
  be given by
  \begin{equation*}
    v(x,y) = \big( -(x-1)^2,\, (x^2 - 4)\,y\big).
  \end{equation*}
  Note that $M = \intvOO{-1}{1} \subset X$ is strictly overflowing
  invariant at its left boundary $x = -1$ (we could choose other
  values as well), but non-strictly so at the right boundary $x = 1$,
  which is a degenerate stationary point. The vector field is normally
  attracting over the interval $\intvOO{-2}{2}$ and uniformly so over
  any closed subinterval. Note that there does not exist a subinterval
  of $X$ that is overflowing invariant according to
  Definition~\ref{def:overflow-invar}.

  Let us choose a family $v_\delta$ of perturbations of $v$ such that
  $\norm{v_\delta - v}_1 \le \delta$ and $v = v_\delta$ on a
  neighborhood of $(1,0) \in X \times Y$. Then $M$ satisfies
  Definition~\ref{def:overflow-invar-priori} for this family $v_\delta$
  and application of Theorem~\ref{thm:persist-overflow} below shows
  that for $\delta$ sufficiently small, there exists a unique
  negatively invariant manifold $\tilde{M} = \Graph(\tilde{h})$ for
  the flow of $v_\delta$ such that
  $\tilde{h}\colon \intvCC{-1}{1} \subset X \to
  \intvCC{-\Ysize}{\Ysize} \subset Y$. For any $r \ge 1$ there exists
  a $\delta$ such that $\tilde{h} \in C^r$ holds.
\end{example}
\begin{figure}[htb]
  \centering
  \input{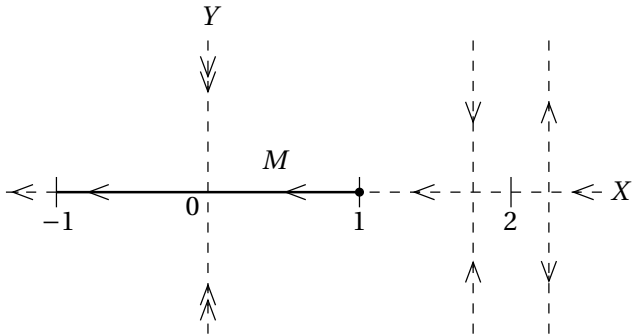}
  \caption{a priori overflowing invariance for the manifold $M$.}
  \label{fig:overflow-priori}
\end{figure}

\begin{theorem}[Persistence under overflowing invariance]
  \label{thm:persist-overflow}
  \index{overflow invariance!persistence}
  Let $k \ge 2$, $\alpha \in \intvCC{0}{1}$ and $\fracdiff =k+\alpha$.
  Let\/ $(X,g)$ be a smooth, complete, connected Riemannian manifold of
  bounded geometry and\/ $Y$ a Banach space. Let
  $v_\delta \in \BUC^{k,\alpha}$ be a family of vector fields defined
  on a uniformly sized neighborhood of the zero-section in
  $X \times Y$ such that $\norm{v_\delta - v_0}_1 \le \delta$. Let $M$
  satisfy Definition~\ref{def:overflow-invar-priori} for the pair
  $(v_0,v_\delta)$ for any $\delta \in \intvOC{0}{\delta_0}$ and let
  $M$ be $\fracdiff$\ndash normally attracting for the flow defined by
  $v_0$, that is, $M$ satisfies the overflowing invariant version of
  Definition~\ref{def:r-NHIM} with $\rank(E^+) = 0$.

  Then for each sufficiently small $\Ysize > 0$ there exist
  $\delta_1 > 0$ such that for any $\delta \in \intvOC{0}{\delta_1}$,
  there is a unique manifold with boundary
  $\tilde{M} = \Graph(\tilde{h})$, $\tilde{h}\colon M \to Y$,
  $\norm{\tilde{h}}_0 \le \Ysize$ such that $\tilde{M}$ is negatively
  invariant under the flow defined by $v_\delta$. Moreover,
  $\tilde{h} \in \BUC^{k,\alpha}$ and $\norm{\tilde{h}}_{k-1}$ can be
  made arbitrary small by choosing $\norm{v_\delta - v_0}_{k-1}$
  sufficiently small. The function $h$ extends continuously to
  $\partial M$.
\end{theorem}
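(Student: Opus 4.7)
The plan is to rerun the Perron-method proof of Theorem~\ref{thm:persistNHIMtriv} with the underlying spaces of curves restricted so that their $X$-components remain in $\bar{M}$ for all backward time. Perform the preparation of Section~\ref{sec:preparation}: with $v_0$ playing the role of $v$ and $v_\delta$ the role of $\tilde v$, produce the decomposition
\begin{equation*}
  \dot x = \tilde v_\sx(x,y),\qquad \dot y = A(x)\,y + \Ynonlin(x,y),
\end{equation*}
where $A$ is a $\BUC^{k,\alpha}$-smoothed approximation of $\D_y v_{0,\sy}\circ g$ and $\Ynonlin$ absorbs the linearization error plus the perturbation. All the constructions of Chapter~\ref{chap:boundgeom} and Section~\ref{sec:preparation} are local around $M$, so they only require bounded geometry on a uniform neighborhood of $\bar M$, which is inherited from $X$. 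Fix $\rho_\sy<\rho<\rho_\sx$ satisfying the spectral gap, choose the small parameters $\Ysize,\delta_1,\sigma_1,\nu$ accordingly, and introduce $\BY$ as before.

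Define the analogues of $T_\sx,T_\sy,T$ by~\ref{eq:Tx}--\ref{eq:Ty}, but take $\BX^M\subset \mathcal{B}^\rho(I;X)$ to consist of those $(\Xsize,T)$-approximate solutions of $v_{0,\sx}\circ g$ whose image lies entirely in $\bar M$, and allow only initial conditions $x_0\in \bar M$. The decisive new point is that $T_\sx$ is well-defined on this restricted domain, i.e.\ that the backward orbit of $\tilde v_\sx(\slot,y(\slot))$ starting at $x_0\in\bar M$ stays in $\bar M$ for all $t\le 0$. This is exactly the content of condition~\ref{enum:overflow-priori-outward} of Definition~\ref{def:overflow-invar-priori}: because $\norm{y(t)}\le\Ysize$, the horizontal projection $\D\pi_\sx\cdot v_\delta(m,y(t))$ satisfies $g(\slot,n(m))\ge 0$ at every $m\in\partial M$, so $\partial M$ is a non-exit set for the backward flow. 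Condition~\ref{enum:overflow-priori-orbits} gives the same statement for the unperturbed approximate solutions used to define $\BX^M$ itself. Once this trapping is secured, every estimate in Sections~\ref{sec:growth-perturbed} and~\ref{sec:NHIM-lipschitz} is pointwise in $x_0$ and uniform in the chosen bounded-geometry neighborhood of $\bar M$, so the whole Lipschitz existence argument produces a unique fixed point $y=\Theta^\infty(x_0)\in\BY$ for each $x_0\in\bar M$, and the graph map $\tilde h(x_0)=\Theta^\infty(x_0)(0)$ is globally Lipschitz on $\bar M$ with $\norm{\tilde h}_0\le\Ysize$.

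For smoothness, the formal-derivative scheme of Section~\ref{sec:NHIM-smoothness} transfers essentially verbatim: the candidate derivatives~\ref{eq:DT-set} at an interior point $x_0\in M$ again involve only backward orbits, which by the previous step remain in $\bar M$, so the contraction and continuity estimates of Sections~\ref{sec:unif-fiber-contr}--\ref{sec:cont-fiber-maps} (including the bounded-holonomy argument of Lemma~\ref{lem:bound-holonomy}) apply unchanged. Application of the fiber contraction Theorem and of the Banach-manifold differentiation of Section~\ref{sec:banach-mflds}, together with the induction of Section~\ref{sec:higher-derivs}, yields $\tilde h\in\BUC^{k,\alpha}$ on the open $M$; Proposition~\ref{prop:Ck-small-pert} furnishes the desired $\BUC^{k-1}$-smallness. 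Continuous extension of $\tilde h$ to $\partial M$ then follows automatically, since a uniformly continuous function from the open $M$ into the Banach space $Y$ extends uniquely and continuously to its metric completion $\bar M$.

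The hard part will be handling $x_0\in\partial M$ cleanly: at such points only one-sided variations in the parameter direction pointing into $M$ make sense, and the formal tangent bundle $\TF\BX^M|_{C^1}$ is defined with base curves that may touch $\partial M$. I would deal with this not by trying to differentiate at $\partial M$ directly, but by obtaining the full $\BUC^{k,\alpha}$-estimates on the interior $M$—where the formal-tangent-bundle machinery is entirely standard—and then invoking uniform continuity of $\tilde h$ and its derivatives up to order $k-1$ (and $\alpha$-H\"older continuity of $\D^k\tilde h$) to extend them to $\bar M$; this is the standard argument that a $\BUC^{k,\alpha}$ map on a uniform open domain extends to the closure. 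A secondary, minor point is to verify that the a priori overflowing condition is preserved when $v_\delta$ is replaced by the partially linearized $A(x)y+\Ynonlin(x,y)$ used inside the proof, but this is immediate because the horizontal component $\tilde v_\sx$ is untouched by the linearization.
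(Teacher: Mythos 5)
Your proposal follows the paper's own proof essentially verbatim: restrict $T_\sx$ to backward orbits trapped in $\bar M$ (secured by condition~\ref{enum:overflow-priori-outward} with $\Ysize$ small), observe that the partial linearization leaves the horizontal component $\vx$ untouched so the trapping persists, obtain the unique Lipschitz fixed point $\Theta^\infty$ on $\bar M$, and then run the smoothness scheme on the interior. The one point the paper handles that you don't explicitly address is Remark~\ref{rem:overflow-nonconvex} — potential mean-value-theorem intermediate points falling outside $\bar M$ — but since the hypotheses here have $v_\delta$ defined on a uniform neighborhood in all of $X\times Y$ this is moot, and the paper disposes of it in one line by noting that $T^{b,a}$ operates on finite intervals where one can shrink the relevant neighborhoods; your boundary-extension remarks cover essentially the same ground.
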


\begin{remark}\NoEndMark
  In this overflowing invariance setting, the condition that
  $\rank(E^+) = 0$ is really necessary and not an artifact of our proof.
  The same results hold for inflowing invariance with no stable normal
  directions present. Definition~\ref{def:overflow-invar-priori} can
  be extended to full normal hyperbolicity with both stable and
  unstable normal directions present. This requires full invariance of
  a tubular neighborhood of $M$ under both the forward and backward
  orbits.
\end{remark}

\begin{wrapfigure}[1]{r}{3.8cm}
  \centering
  \vspace{-2.1cm}
  \input{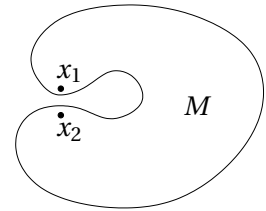}
  \vspace{-0.2cm}
  \caption{a nonconvex subset $M \subset X$.}
  \label{fig:nonconvex-M}
\end{wrapfigure}
\begin{minipage}{1.0\linewidth-0.5em}
\begin{remark}
  \label{rem:overflow-nonconvex}
  We can restrict to a smaller open subset $U$ of $X$ that contains
  $\bar{M}$, so we do not need $v_\delta \in \BUC^{k,\alpha}$ to hold
  on all of $X$. If this subset $U$ is not convex, though, we may run
  into difficulties when applying the mean value theorem, see
  Figure~\ref{fig:nonconvex-M}: an intermediate point
  $\xi \not\in \bar{M}$ on the line between $x_1,\,x_2$ may be
  selected, so we need to make sure that the uniform estimates still
  hold there. Thus the need for $U \supset \bar{M}$ to be convex, see
  also the remark in~\cite[p.~289]{Henry1981:geom-semilin-parab-PDEs}.
\end{remark}
\end{minipage}
\vspace{2ex}

\begin{proof}
  The proof of Theorem~\ref{thm:persistNHIMtriv} requires minimal
  changes. Note that regardless of the modifications and smoothing
  preparations performed in Section~\ref{sec:preparation}, the vector
  field $\vx$ is precisely the horizontal component of the perturbed
  vector field $v_\delta$. In Section~\ref{sec:NHIM-lipschitz} where
  we proved existence and uniqueness of $\tilde{M}$, we take $\Ysize$
  small enough that it satisfies
  condition~\ref{enum:overflow-priori-outward} of
  Definition~\ref{def:overflow-invar-priori}. This guarantees that
  $x = T_\sx(y,x_0)$ is a solution curve such that
  $x\big(\intvOC{-\infty}{0}\big) \subset \bar{M}$ for any $y \in \BY$
  and $x_0 \in \bar{M}$. Hence, the contraction mapping
  $T = T_\sy\circ(T_\sx\,,\,\text{pr}_1)$ is well-defined with
  intermediate space $\mathcal{B}_\beta(I;\bar{M})$ and we find a
  unique Lipschitz continuous fixed point map
  $\Theta^\infty\colon \bar{M} \to \BY$.

  No essential changes are needed with respect to the smoothness proof
  in Section~\ref{sec:NHIM-smoothness}. The formal
  derivatives~\ref{eq:DT-set} are well-defined along all curves $x$
  and $y$ that are considered, since the derivatives of $\vx,\,A,\,f$
  are defined on an open neighborhood of $\bar{M}$. In
  Section~\ref{sec:banach-mflds} we use the mean value theorem to
  prove that the restricted maps $T^{b,a}$ have true derivatives.
  Remark~\ref{rem:overflow-nonconvex} is not problematic here, since
  $T^{b,a}$ is defined on the finite interval $J = \intvCC{a}{0}$ and
  thus we can restrict to arbitrarily small open neighborhoods along
  the curves $x,\,y$ when restricted to $J$. Hence we find that
  $\Theta^\infty \in \BUC^{k,\alpha}$ on $M$.
\end{proof}

\section{Full normal hyperbolicity}\label{sec:ext-full-NHIM}
\enlargethispage*{0.3cm}

We made the assumption in our main theorems that the unstable bundle
$E^+$ was absent, that is, that $M$ was a normally attracting
invariant manifold. As already noted in Remark~\ref{rem:persistNHIM},%
~\ref{enum:persist-gen-NHIM}, it should be possible to generalize this
to the case of full normal hyperbolicity where both stable and
unstable normal directions are present. Let us indicate here how this
more general result can be obtained.

Assume that in Theorem~\ref{thm:persistNHIMgen} we have an invariant
splitting~\ref{eq:NHIM-split} with both stable and unstable bundles
present. The reduction principle in Section~\ref{sec:BG-reduction}
leads to a formulation of Theorem~\ref{thm:persistNHIMtriv} with a
trivial bundle
\begin{equation}\label{eq:triv-split-full}
  \pi\colon X \times \big( Y \times Z \big) \to X,
\end{equation}
where the Banach spaces $Y,\,Z$ are approximate representations of the
stable and unstable bundles $E^\pm$ of $M$. This means that $M$ is
again represented as the graph of an approximate zero section
$h_\sigma\colon X \to Y \times Z$; now, the subbundles $X \times Y$
and $X \times Z$ are approximately invariant under $v_\sigma$. The
deviation from invariance is controlled by $\sigma$, the parameter of
the smoothing approximation of $M$. We find linear operators
$A^\pm(x)$ on $Y$ and $Z$ respectively, that approximate the
linearizations of $v_\sy$ and $v_\sz$, and corresponding flows
$\Psi^\pm$ with approximate growth rates. We add a map\footnote{%
  Note that since $t \le \tau$, we have a reverse flow
  $\Psi^+(t,\tau)$ for the unstable directions, which indeed satisfies
  the growth estimates~\ref{eq:NHIM-rates}.%
}
\begin{equation}\label{eq:Tz}
  T_\sz(x,y,z)(t)
  = \int_t^\infty \Psi^+_x(t,\tau)\,
                  \Ynonlin^+\big(x(\tau),y(\tau),z(\tau)\big) \d\tau
\end{equation}
with $z \in B^\rho_\Ysize(\R_{\ge 0};Z)$ and adapt the other maps to
incorporate $z$ as an argument. We use Lemma~\ref{prop:bound-extend}
and extend all curves in $X,\,Y,\,Z$ to the full real line. This
should yield a contraction
\begin{equation}\label{eq:Txyz}
T = (T_\sy,T_\sz) \circ \big(T_\sx\,,\,\text{pr}_1\,,\,\text{pr}_2\big)
\quad\text{on}\quad
B^\rho_\Ysize(\R_{\le 0};Y) \times B^\rho_\Ysize(\R_{\ge 0};Z),
\end{equation}
again with $x_0 \in X$ as initial value parameter. We obtain a pair
$(\Theta^-,\,\Theta^+)$ of fixed point maps, and after evaluation we
find
\begin{equation}\label{eq:persist-graph-yz}
  (\tilde{h}^-,\tilde{h}^+)\colon X \to Y \times Z,
\end{equation}
which describes the persistent invariant manifold $\tilde{M}$.



\clearpage
\appendix
\pagestyle{appendix}

\renewcommand{\chaptername}{Appendix}

\chapter{Explicit estimates in the implicit function theorem}\label{chap:smoothflows}

In this appendix, we carefully examine the implicit function theorem.
We extend this standard theorem to classes of functions with
additional properties such as boundedness and uniform and H\"older
continuity. The crucial ingredient is the explicit formula~\ref{eq:impl-func-der} for the
derivative of the implicit function, which allows us to transfer
regularity conditions onto the implicit function.

As an application of the implicit function theorem in Banach spaces,
we will establish existence, uniqueness and smooth dependence on
parameters for the flow of a system of ordinary differential
equations. Essentially, these are standard results from differential
calculus, see e.g.\ws Zeidler~\cite[p.~150,165]{Zeidler1986:nonlfunc1}
or~\cite{Robbin1968:exist-thm-ODE,Irwin1972:smooth-comp-map}.
We consider a general setting of ODEs in Banach spaces and show
smooth dependence, both on the initial data, as well as on the vector
field itself. Moreover, our extension of the implicit function theorem
yields boundedness and uniform continuity results.

We start with some results on inversion of linear maps.
\begin{lemma}[Invertibility of linear maps]
  \label{lem:invert-lin}
  Let\/ $X$ be a Banach space and let $A \in \CLin(X)$ be a continuous
  linear operator with continuous inverse. Let\/ $B \in \CLin(X)$ be
  another linear operator such that\/
  $\norm{B} < \frac{1}{\norm{A^{-1}}}$. Then $A+B$ is also a
  continuous linear operator with continuous inverse, given by the
  absolutely convergent series
  \begin{equation}\label{eq:lin-inverse}
    {\big(A+B\big)}^{-1} = \sum_{n \ge 0} {\big(-A^{-1} B\big)}^n\,A^{-1}
                         = \sum_{n \ge 0} A^{-1}\,{\big(-B\,A^{-1}\big)}^n.
  \end{equation}
\end{lemma}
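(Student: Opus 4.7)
The plan is to reduce the invertibility of $A+B$ to the invertibility of a perturbation of the identity, and then apply the standard Neumann series argument in the Banach algebra $\CLin(X)$.

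First I would factor $A+B$ in two ways:
\begin{equation*}
  A+B \;=\; A\,(I + A^{-1}B) \;=\; (I + B\,A^{-1})\,A.
\end{equation*}
The assumption $\norm{B} < 1/\norm{A^{-1}}$ gives $\norm{A^{-1}B} \le \norm{A^{-1}}\,\norm{B} < 1$, and similarly $\norm{B\,A^{-1}} < 1$. So the key step reduces to showing that if $C \in \CLin(X)$ with $\norm{C} < 1$, then $I + C$ is invertible with inverse $\sum_{n\ge 0}(-C)^n$.

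For this key step I would appeal to completeness of $\CLin(X)$ (which is a Banach algebra because $X$ is a Banach space) together with the submultiplicativity $\norm{C^n} \le \norm{C}^n$. The series $\sum_{n\ge 0}\norm{C}^n$ is a convergent geometric series, so the Neumann series $\sum_{n\ge 0}(-C)^n$ converges absolutely, hence converges to some $S \in \CLin(X)$. A telescoping computation of the partial sums shows $(I+C)\sum_{n=0}^N(-C)^n = I - (-C)^{N+1}$, and since $\norm{(-C)^{N+1}} \to 0$, passing to the limit yields $(I+C)S = S(I+C) = I$. Thus $I+C$ has a two-sided continuous inverse.

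Applying this with $C = A^{-1}B$ gives $(I + A^{-1}B)^{-1} = \sum_{n\ge 0}(-A^{-1}B)^n$, and composing with $A^{-1}$ on the right yields the first formula in~\ref{eq:lin-inverse}. The second formula follows analogously from the factorization $A+B = (I+BA^{-1})A$, giving $(A+B)^{-1} = A^{-1}(I+BA^{-1})^{-1} = \sum_{n\ge 0}A^{-1}(-BA^{-1})^n$. There is no real obstacle here; the only thing to check carefully is absolute convergence in operator norm to ensure we may rearrange and multiply termwise, but this follows at once from the geometric bound.
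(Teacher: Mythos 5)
Your proof is correct and takes essentially the same approach as the paper: both rely on absolute convergence of the geometric/Neumann series in the Banach algebra $\CLin(X)$ and a telescoping partial-sum computation to verify the inverse. The only cosmetic difference is that you factor $A+B = A(I+A^{-1}B)$ and invoke a clean Neumann-series lemma for $I+C$, whereas the paper applies $A+B$ directly to the candidate series; the underlying computation is identical.
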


\begin{proof}
  First of all, note that there exists an $M \ge 1$ such that
  $\norm{A}, \norm{A^{-1}} \le M$.
  The base of the geometric series can be estimated in
  operator norm as $\norm{-A^{-1} B} < 1$, so the series is
  absolutely convergent and the limit is a well-defined continuous
  linear operator, whose operator norm can be estimated as
  \begin{equation*}
        \norm{{(A+B)}^{-1}}
    \le \norm{A^{-1}}\sum_{n \ge 0} \norm{-A^{-1} B}^n
    \le \frac{\norm{A^{-1}}}{1-\norm{A^{-1} B}} < \infty.
  \end{equation*}
  That the limit is again a well-defined linear operator follow from
  the fact that $\CLin(X)$ is a Banach space.

  Applying $A+B$ to the left-hand side of~\ref{eq:lin-inverse}, we see
  that the candidate is a right inverse:
  \begin{equation*}
    (A+B)\,\sum_{n \ge 0} A^{-1}\,{\big(-B A^{-1}\big)}^n
    =      \sum_{n \ge 0} {\big(-B A^{-1}\big)}^n
         - \sum_{n \ge 0} {\big(-B A^{-1}\big)}^{n+1} = 1.
  \end{equation*}
  Similarly the candidate can be shown to be a left inverse of $A+B$.
  Now we have that the candidate is continuous and a full inverse and
  furthermore, $A+B$ itself is clearly a continuous operator as the
  sum of two continuous operators, so the proof is completed.
\end{proof}

\begin{corollary}[Linear inversion is analytic]
  \label{cor:anal-inverse}
  Let $I\colon A \mapsto A^{-1}$ be the inversion map defined on
  continuous, linear mappings $A \in \CLin(X)$ with continuous
  inverse, where $X$ is a Banach space. The map $I$ is analytic
  with radius of convergence $\rho(A) \ge 1/\norm{A^{-1}}$. When $X$
  is finite-dimensional, $I$ is a fortiori a rational map.
\end{corollary}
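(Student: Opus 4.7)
The plan is to read off the analyticity directly from the Neumann-type series already produced in Lemma~\ref{lem:invert-lin}. Writing the series as
\begin{equation*}
  I(A+B) = (A+B)^{-1} = \sum_{n \ge 0} A^{-1}\,(-B\,A^{-1})^n,
\end{equation*}
one sees that the $n$\th summand is an $n$\ndash homogeneous polynomial in $B$: it is the evaluation on $(B,\ldots,B)$ of the continuous symmetric $n$\ndash linear map
\begin{equation*}
  P_n(B_1,\ldots,B_n) = \frac{1}{n!}\sum_{\pi \in S_n}
    A^{-1}\,(-B_{\pi(1)} A^{-1})\cdots(-B_{\pi(n)} A^{-1}).
\end{equation*}
This exhibits $I$ as a convergent power series expansion around $A$, which is the defining property of analyticity between Banach spaces.

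Next I would estimate the operator norm of each term as
\begin{equation*}
  \norm{A^{-1}(-B\,A^{-1})^n} \le \norm{A^{-1}}^{n+1}\,\norm{B}^n,
\end{equation*}
and apply the Cauchy--Hadamard formula (or simply compare with the geometric series in the proof of Lemma~\ref{lem:invert-lin}). This gives absolute convergence for $\norm{B} < 1/\norm{A^{-1}}$, hence the claimed lower bound $\rho(A) \ge 1/\norm{A^{-1}}$ on the radius of convergence at $A$.

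For the finite-dimensional case, I would invoke Cramer's rule: $A^{-1} = \det(A)^{-1}\,\mathrm{adj}(A)$, where both $\det(A)$ and the entries of the adjugate matrix are polynomials in the entries of $A$. Restricted to the open subset where $\det(A) \neq 0$, this expresses $I$ as a rational function of the matrix entries, strengthening the analyticity conclusion. None of these steps poses a real obstacle, as they are essentially bookkeeping on top of the series already established; the only point requiring a bit of care is recognising the series as a genuine analytic expansion (i.e.\ws identifying its terms with continuous symmetric multilinear maps), rather than merely a formal sum.
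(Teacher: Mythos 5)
Your proof is correct and follows essentially the same route as the paper: both read off analyticity and the radius-of-convergence bound directly from the Neumann series established in Lemma~\ref{lem:invert-lin}, and both handle the finite-dimensional case via Cramer's rule. You simply spell out the symmetric-multilinear-map interpretation of the terms, which the paper disposes of with a citation to Mujica.
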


\begin{proof}
  Extending well-known results on analytic functions to Banach spaces
  (see e.g.~\cite{Mujica1986:complanal-banach}), we read off
  from~\ref{eq:lin-inverse} that the inversion map $I$ can be given
  around $A$ by an absolutely convergent power series with
  $\rho(A) \ge 1/\norm{A^{-1}}$ and is thus analytic. When $X$ is
  finite-dimensional, $\det(A) \neq 0$ implies that $A^{-1}$ is a
  rational expression in the matrix coefficients of $A$ according to
  Cramer's rule.
\end{proof}

The inversion map $I$ is locally Lipschitz, like every $C^1$
mapping:
\begin{equation*}
      \norm{{(A+B)}^{-1} - A^{-1}}
  \le \sum_{n \ge 1} \norm{-A^{-1} B}^n\,\norm{A^{-1}}\\
  \le \frac{\norm{A^{-1}}^2}{1 - \norm{A^{-1} B}}\,\norm{B}.
\end{equation*}
However, when we restrict to a domain bounded away from non-invertible
operators $A$, that is, when $\norm{A^{-1}} \le M$, then the Lipschitz
constant is bounded for small $B$. This implies that when $A = A(x)$
depends on a parameter via a certain continuity modulus, then
$A(x)^{-1}$ will have the same continuity modulus up to the Lipschitz
constant, at least in small enough neighborhoods.

The standard implicit function theorem on Banach spaces can be stated
as
\index{implicit function theorem}
\begin{theorem}[Implicit function theorem]
  \label{thm:impl-func}
  Let\/ $X$ be a Banach space, $Y$ a normed linear space, and let
  $f \in C^{k \ge 1}(X \times Y;X)$. Let $(x_0,y_0) \in X \times Y$
  and assume that $f(x_0,y_0) = 0$ and that
  $\D_1 f(x_0,y_0)^{-1} \in \CLin(X)$ exists as a continuous, linear
  operator.

  Then there exist neighborhoods $U \subset X$ of $x_0$ and\/
  $V \subset Y$ of $y_0$, and a unique function $g\colon V \to U$ such
  that $f(g(y),y) = 0$. Furthermore, the map $g$ is $C^k$ and the
  derivative of $g$ is given by the formula
  \begin{equation}\label{eq:impl-func-der}
    \D g(y) = - \D_1 f(g(y),y)^{-1} \cdot \D_2 f(g(y),y).
  \end{equation}
\end{theorem}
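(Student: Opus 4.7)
The plan is to reduce the statement to a parameter-dependent fixed point problem, apply the Banach contraction principle to obtain $g$ with continuous dependence on $y$, then verify differentiability directly from the candidate formula~\ref{eq:impl-func-der} and bootstrap to $C^k$ by induction.

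First I would normalize the problem: by left-composing $f$ with the continuous linear isomorphism $\D_1 f(x_0,y_0)^{-1}$, which preserves $C^k$ regularity and the zero set, I may assume $\D_1 f(x_0, y_0) = \Id_X$. Define
\begin{equation*}
  T\colon X \times Y \to X, \qquad T(x,y) = x - f(x,y),
\end{equation*}
so that $f(x,y) = 0$ is equivalent to $T(x,y) = x$ and $\D_1 T(x_0, y_0) = 0$. By continuity of $\D_1 T$ and of $T$, I can choose closed balls $\overline{B(x_0;\rho)} \subset X$ and $\overline{B(y_0;\sigma)} \subset Y$ such that $\norm{\D_1 T(x,y)} \le \tfrac12$ and $\norm{T(x_0,y) - x_0} \le \rho/2$ on the product. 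Then $T(\slot,y)$ maps $\overline{B(x_0;\rho)}$ into itself and is a $\tfrac12$-contraction, uniformly in $y \in \overline{B(y_0;\sigma)}$.

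Second, the uniform contraction principle yields a unique fixed point $g(y) \in \overline{B(x_0;\rho)}$ for each $y$, and the standard estimate
\begin{equation*}
  \norm{g(y_1) - g(y_2)} \le 2\,\norm{T(g(y_1),y_1) - T(g(y_1),y_2)}
\end{equation*}
combined with boundedness of $\D_2 f$ on the chosen product neighborhood gives Lipschitz continuity of $g$, hence $g\colon V \to U$ for $U, V$ open balls inside the chosen closed ones. Uniqueness within $U$ is immediate from the contraction property. Invertibility of $\D_1 f$ persists in a neighborhood of $(x_0,y_0)$ by Lemma~\ref{lem:invert-lin} and continuity of $\D_1 f$, so after shrinking $V$ the right-hand side of~\ref{eq:impl-func-der} is well defined along the graph of $g$.

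Third, for differentiability I would fix $y \in V$, set $A = \D_1 f(g(y),y)$, $B = \D_2 f(g(y),y)$, and write
\begin{equation*}
  0 = f(g(y+k),y+k) - f(g(y),y) = A\,[g(y+k) - g(y)] + B\,k + o\big(\norm{g(y+k) - g(y)} + \norm{k}\big).
\end{equation*}
The Lipschitz estimate on $g$ absorbs $o(\norm{g(y+k)-g(y)})$ into $o(\norm{k})$, so applying $A^{-1}$ yields $g(y+k) - g(y) = -A^{-1} B\,k + o(\norm{k})$, which is exactly formula~\ref{eq:impl-func-der}. For higher smoothness I would argue by induction on $k$: the right-hand side of~\ref{eq:impl-func-der} is the composition of the $C^{k-1}$ maps $\D_1 f, \D_2 f$ evaluated along $(g,\Id)$ with the inversion operator, which is analytic by Corollary~\ref{cor:anal-inverse}. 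Hence if $g \in C^{j}$ with $j \le k-1$, then $\D g \in C^{j}$ and therefore $g \in C^{j+1}$; iterating up to $j = k-1$ gives $g \in C^k$.

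There is no serious obstacle in the proof itself; the content is standard Banach-space calculus. The point of isolating~\ref{eq:impl-func-der} so prominently is the subsequent bookkeeping: the formula lets one transfer uniform boundedness and uniform continuity moduli of $\D_1 f$, $\D_2 f$ and of $\D_1 f^{-1}$ onto $g$, which is the main use of this theorem in the rest of the thesis.
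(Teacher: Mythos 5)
The paper does not give a proof of this theorem at all; it simply cites Zeidler~\cite{Zeidler1986:nonlfunc1} and remarks that $Y$ need not be complete because the contraction is only applied on $X$. Your contraction-principle argument is correct, is the standard proof found in that reference, and is consistent with the paper's remark: you indeed invoke completeness only on the $X$ side, so nothing more needs to be said.
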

See~\cite[p.~150--155]{Zeidler1986:nonlfunc1} for a proof. Note that
we do not need to assume that $Y$ is a complete space, as the
contraction theorem is only applied on $X$. Recall that we use
notation where $\D$ denotes a total derivative, while $\D_i$ with
index $i \in \N$ denotes a partial derivative with respect to the
$i$\th argument.

Formula~\ref{eq:impl-func-der} for the derivative of the implicit
function $g$ will be crucial for the extension of the implicit
function theorem to many classes of regularity, extending $C^k$
smoothness. We use the Lipschitz estimate for the inversion map and
require that the regularity conditions are preserved under composition, addition,
multiplication and localization of functions. By
Proposition~\ref{prop:compfunc-deriv}, the derivatives of $g$ are
expressed in terms of $\D_1 f(g(y),y)^{-1}$ acting on a polynomial
expression of same or lower order derivatives of $f$ and strictly
lower order derivatives of $g$.

As an example, let us take $\BC^{k,\alpha}$ functions. Using
Lemma~\ref{lem:holder-product} and induction over $k$, this
function class is preserved under products. For composition, we check
H\"older continuity,
\begin{equation*}
      \norm{f(g(x_2)) - f(g(x_1))}
  \le C_f\,\big(C_g\,\norm{x_2 - x_1}^\alpha\big)^\alpha
  \le (C_f\,C_g^\alpha)\,\norm{x_2 - x_1}^\alpha
\end{equation*}
for $0 < \alpha \le 1$, when $\norm{x_2 - x_1} \le 1$. In case
$\norm{x_2 - x_1} > 1$ however, we can directly use the boundedness
of~$f$:
\begin{equation*}
      \norm{f(g(x_2)) - f(g(x_1))}
  \le \norm{f(g(x_2))} + \norm{f(g(x_1))}
  \le 2\,\norm{f}_0\,\norm{x_2 - x_1}^\alpha.
\end{equation*}
Thus, H\"older continuity is preserved with some new H\"older
constant, while boundedness is trivially preserved as well. We
conclude that if $f \in \BC^{k,\alpha}$, and $(\D_1 f)^{-1}$ is
globally bounded, then we can read off from
formula~\ref{eq:impl-func-der} that $g \in \BC^{k,\alpha}$.
The same results hold for the class of $\BUC^k$ functions, or any
other class of functions whose properties are preserved when inserted
into~\ref{eq:impl-func-der}. Together, interpreting $\alpha = 0$ as an
empty condition, these lead to
\begin{corollary}
  \label{cor:unif-impl-func}
  Let in the Implicit Function Theorem~\ref{thm:impl-func},
  $f \in \BUC^{k,\alpha}$ with $k \ge 1$ and $0 \le \alpha \le 1$.
  Assume moreover that\/ $\norm{\D_1 f(x,y)^{-1}} \le M$ is bounded on
  $U \times V$ for some constant $M < \infty$. Then
  $g \in \BUC^{k,\alpha}$, and the boundedness and continuity
  estimates depend in an explicit way on those of\/ $f$.
\end{corollary}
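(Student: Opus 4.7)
The plan is to build on the standard implicit function theorem~\ref{thm:impl-func} to first produce $g \in C^k$, and then use the explicit derivative formula~\ref{eq:impl-func-der} together with induction on $k$ to transfer the uniform boundedness and H\"older estimates from $f$ to $g$. In the base case $k=1$, the formula gives $\D g(y) = -\D_1 f(g(y),y)^{-1} \cdot \D_2 f(g(y),y)$; since $\norm{\D_1 f^{-1}} \le M$ and $\norm{\D_2 f}_0 \le \norm{f}_1$, we immediately obtain $\norm{\D g}_0 \le M\,\norm{f}_1$, so $g$ is globally Lipschitz with an explicit constant.

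The next step is to show $\D g$ is $\alpha$-H\"older (or uniformly continuous, for $\alpha = 0$). The key is that $y \mapsto (g(y),y)$ is Lipschitz, so $y \mapsto \D_i f(g(y),y)$ inherits $\alpha$-H\"older continuity from $\D_i f$, using the composition argument already sketched before the corollary and Remark~\ref{rem:loc-cont-modulus} for the large-distance regime. The local Lipschitz estimate on the inversion map derived from Lemma~\ref{lem:invert-lin}, combined with the global bound $\norm{\D_1 f^{-1}} \le M$, then shows that $y \mapsto \D_1 f(g(y),y)^{-1}$ is also $\alpha$-H\"older with constant explicitly bounded in terms of $M$ and the H\"older constant of $\D_1 f$. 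Lemma~\ref{lem:holder-product} closes the argument for the product, giving $g \in \BUC^{1,\alpha}$. For $\alpha = 0$ the same scheme runs through with continuity moduli in place of explicit H\"older coefficients, since composition with a Lipschitz map preserves a modulus up to a bounded rescaling and products of bounded uniformly continuous functions are uniformly continuous.

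For the inductive step I would assume $g \in \BUC^{k-1,\alpha}$ and apply Proposition~\ref{prop:compfunc-deriv} to~\ref{eq:impl-func-der}. This expresses $\D^k g(y)$ as a polynomial in three types of factors, all evaluated at $(g(y),y)$: partial derivatives $\D_1^i \D_2^j f$ of total order at most $k$; derivatives $\D^j g(y)$ with $j < k$; and the inverse $\D_1 f(g(y),y)^{-1}$. By the induction hypothesis and the hypothesis $f \in \BUC^{k,\alpha}$, each factor is bounded and either $\alpha$-H\"older continuous or uniformly continuous, and these classes are stable under the finitely many sums and products appearing. Hence $\D^k g \in \BUC^{0,\alpha}$, i.e.\ws $g \in \BUC^{k,\alpha}$, with estimates that are polynomial in $\norm{f}_{k,\alpha}$ and $M$.

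The only real technical obstacle is bookkeeping: no single step is deep, but to genuinely obtain \emph{explicit} constants one has to track how H\"older coefficients and continuity moduli compound through the multilinear structure of Proposition~\ref{prop:compfunc-deriv}, through the geometric series estimate of Lemma~\ref{lem:invert-lin}, and through repeated products. The global assumption $\norm{\D_1 f^{-1}} \le M$ is what keeps these constants from blowing up over the (potentially noncompact) neighborhoods $U \times V$, so that the estimates are genuinely uniform rather than merely local.
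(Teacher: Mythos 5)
Your proposal follows essentially the same route as the paper: the explicit derivative formula~\ref{eq:impl-func-der}, the Lipschitz estimate for operator inversion coming from Lemma~\ref{lem:invert-lin} under the global bound $\norm{\D_1 f^{-1}} \le M$, and the observation that the $\BUC^{k,\alpha}$ class is stable under products and compositions, all driven by an induction on $k$ via Proposition~\ref{prop:compfunc-deriv}. One point where you are actually more careful than the paper's sketch: you explicitly note that the inner map $y \mapsto (g(y),y)$ is Lipschitz (from the derived bound $\norm{\D g}_0 \le M\,\norm{f}_1$), so post-composing the $\alpha$-H\"older derivatives $\D_i f$ with it genuinely preserves $\alpha$-H\"olderity; the paper's displayed generic composition estimate, which treats $g$ as merely $\alpha$-H\"older, would only yield $\alpha^2$-H\"older and the stated inequality is in fact reversed for $0<\alpha<1$ and small increments. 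Using the Lipschitz structure of $g$ is precisely what closes that gap, and your proposal does this correctly.
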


\begin{remark}
  Formula~\ref{eq:impl-func-der} only provides control on the
  derivatives of the implicit function, but the size of $g$ itself can
  be controlled by choice of the neighborhood $U$. In our
  applications, this will match up with choosing coordinate charts
  around the origin in $\R^n$.
\end{remark}

Let us now consider an ordinary differential equation
\begin{equation}\label{eq:ODEsystem}
  \dot{x} = f(t,x), \qquad x(t_0) = x_0,
\end{equation}
where $x$ takes values in a Banach space $B$ and
$f \in \BUC^{k,\alpha}(\R \times B;B)$ with
$k \ge 1,\,0 \le \alpha \le 1$. We consider solutions
$x \in X = C^0(I;B)$ equipped with the supremum norm, which turns $X$
into a Banach space\footnote{%
  Note that any actual solution $x$ will be $C^1$ at least, but only
  $x \in C^0$ is required. This makes $X$ a complete space without the
  need to introduce norms more complicated than the supremum norm.%
}. We choose $I$ to be a closed interval
$I = \intvCC{a}{b} \subset \R$. The Picard integral operator
\begin{equation}\label{eq:picard}
  T\colon X \to X
   \colon x(t) \mapsto F(x)(t) = x_0 + \int_{t_0}^t f(\tau,x(\tau)) \;\d\tau
\end{equation}
has exactly the solution curves of~\ref{eq:ODEsystem} as fixed points.
It also implicitly depends on $f \in \BUC^{k,\alpha}(\R \times B;B)$ and
$(t_0,x_0) \in I \times B$. From now on we denote by $\D_x$ a partial
derivative with respect to the argument that is typically described by
the variable~$x$.

This $T$ is a contraction for $\abs{I} = b - a$ small enough:
\begin{align*}
  \norm{T(x_1) - T(x_2)}
  &=   \sup_{t \in I}\; \norm{\int_{t_0}^t f(\tau,x_1(\tau)) -
                                           f(\tau,x_2(\tau)) \;\d\tau}\\
  &\le \sup_{t \in I}\; \int_{t_0}^t \norm{\D_x f(\tau,\xi(\tau))}
                                     \norm{x_1(\tau)-x_2(\tau)} \d\tau\\
  &\le \sup_{t \in I}\; \abs{t-t_0} \norm{\D_x f} \norm{x_1-x_2}\\
  &\le \abs{I}\,\norm{\D_x f}\,\norm{x_1-x_2}.
\end{align*}
We restrict $T$ to a bounded subset of argument functions $f$,
\begin{equation*}
  \mathcal{F} \subset \BUC^{k,\alpha}(\R \times B;B), \qquad
  \sup_{f \in \mathcal{F}}\; \norm{f}_{k,\alpha} \le R.
\end{equation*}
Thus, choosing $\abs{I} \le \frac{1}{2 R}$ turns $T$ into a
$\contr = \frac{1}{2}$ contraction, which shows that there is a unique
$x \in X$ satisfying $T(x) = x$ and therefore~\ref{eq:ODEsystem}.

Next, we consider small perturbations of both $(t_0,x_0)$ and $f$. To
apply the implicit function theorem, we define $F(x) = x - T(x)$.
This function has a unique zero and $\D F(x)$ is invertible, as
\begin{align*}
\fst F(x+\delta x)(t) - F(x)(t)\\
  &= \delta x(t) - \int_{t_0}^t \D_x f\big(\tau,\xi(\tau)\big) \cdot \delta x(\tau) \;\d\tau\\
  &= \delta x(t) - \int_{t_0}^t \D_x f\big(\tau,x(\tau)\big) \cdot \delta x(\tau)
                          +O\big(\norm{\xi(\tau)-x(\tau)}\big)\norm{\delta x(\tau)} \;\d\tau\\
  &= \big(\D F(x)\cdot \delta x\big)(t) + o\big(\norm{\delta x}\big)
\label{eq:Dx-picard}\eqnumber
\end{align*}
The neglected terms are $o\big(\norm{\delta x}\big)$ since $\D_x f$ is
uniformly continuous on $I$, so $\D F(x)$ exists. From the
expression above, we can also easily read off continuity of $\D F(x)$
as a linear operator, by writing
$\D F(x) = \Id + A(x)$ and noticing that
\begin{equation*}
\norm{A(x)} \le \abs{I}\,\norm{\D_x f} < \mfrac{1}{2},
\end{equation*}
thus $\D F(x)$ is a bounded, invertible linear operator such that
$\norm{\D F(x)^{-1}} \le 2$.

By similar estimates, the derivatives of $F$ with respect to the
parameters $t_0, x_0$, and $f$ can be calculated as
\begin{equation}\label{eq:Dparam-picard}
  \begin{aligned}
    \D_{t_0} F(x) &= f(t_0,x(t_0)),\\
    \D_{x_0} F(x) &= -\Id,\\
    \big(\D_f F(x)\cdot \delta f\big)(t)
                  &= -\int_{t_0}^t \delta f(\tau,x(\tau)) \d\tau.
  \end{aligned}
\end{equation}
Note that these are all bounded linear operators; $\D_{t_0} F(x)$ is
because $\norm{f} \le R$. Hence, $F \in \BC^1$ as a function of
$x,t_0,x_0,f$, so by the implicit function theorem, the solution
$x(t;t_0,x_0,f)$ depends $\BC^1$ on $t_0,x_0,f$.

Next, we establish $\BUC^{k,\alpha}$ dependence on the initial
conditions $t_0,x_0$ and $\BC^k$ dependence on $f$ and $t_0,x_0$ together.
Uniform and H\"older dependence on $f$ are lost because the variations
$\delta f \in \BUC^{k,\alpha}$ are not uniformly equicontinuous. The
first derivatives can be differentiated another $k-1$ times with respect to each of
the variables, using similar estimates as in~\ref{eq:Dx-picard}. These
derivatives are continuous as $f$ is uniformly continuous on the
interval $I$. Uniform and H\"older continuity with respect to
$t_0,x_0$ can be read off directly from the
expressions~\ref{eq:Dx-picard},\ref{eq:Dparam-picard} or their higher
order derivatives, as $f \in \BUC^{k,\alpha}$. The implicit function
theorem only gives an explicit formula~\ref{eq:impl-func-der} for the
derivative. Here, this translates into the fact that no boundedness
follows for the $C^0$\ndash norm of the solution curve, only for the
norms on the derivatives.

We have thus shown that the conditions of
Corollary~\ref{cor:unif-impl-func} of the implicit function theorem
have been satisfied, so there exists a neighborhood of $(t_0,x_0,f)$
in $I \times B \times \mathcal{F}$ such that for each $(t_0',x_0',f')$
in that neighborhood there is a unique solution to~\ref{eq:ODEsystem}
and the solutions $x$ depend in a $\BUC^{k,\alpha}$ way on $t_0',x_0'$
and $\BC^k$ on all of $t_0',x_0',f'$. Note that this result is obtained
only on the interval $I$. We can however extend these results to any
bounded interval, by using the composition property of a flow; the
estimates may grow with interval size though. Hence, we have the
following result, see also~\cite[appendix B]{Duistermaat2000:liegroups}.
\index{smoothness!of a flow}
\begin{theorem}[Uniform dependence on parameters of ODE solutions]
  \label{thm:unif-smooth-flow}
  Let an ordinary differential equation~\ref{eq:ODEsystem} be given,
  where $f \in \mathcal{F} \subset \BUC^{k,\alpha}(\R \times B;B)$
  with $k \ge 1,\,0 \le \alpha \le 1$, $B$ a Banach space, and
  $\mathcal{F}$ a bounded subset. Let $I \subset \R$ be a bounded
  interval and $X = C^0(I;B)$ the Banach space of (solution) curves,
  endowed with the supremum norm.

  Then the flow $\Phi$ is a $\BC^k$ mapping
  \begin{equation*}
    \Phi\colon I \times B \times \mathcal{F} \to X
        \colon (t_0, x_0, f) \mapsto \big(t \mapsto x(t) \big).
  \end{equation*}
  The boundedness is understood to hold only for the derivatives.
  Moreover, $\Phi \in \BUC^{k,\alpha}$ holds as a mapping from
  $I \times B$ for fixed $f \in \mathcal{F}$.
\end{theorem}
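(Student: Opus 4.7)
The bulk of the analysis has already been carried out in the preceding paragraphs: on a short interval $I_0 = \intvCC{t_0}{t_0+h}$ with $h \le 1/(2R)$, the Picard operator $T$ in~\ref{eq:picard} is a $\tfrac{1}{2}$-contraction; the map $F(x) = x - T(x)$ has invertible derivative with uniformly bounded inverse ($\norm{\D F(x)^{-1}} \le 2$); and formulas~\ref{eq:Dx-picard} and~\ref{eq:Dparam-picard} exhibit $F$ as a $\BC^1$ function of $(x,t_0,x_0,f)$. The plan is therefore to (i)~apply Corollary~\ref{cor:unif-impl-func} of the implicit function theorem to conclude the result on $I_0$; (ii)~bootstrap to higher derivatives by differentiating the implicit equation; and (iii)~extend from $I_0$ to an arbitrary bounded interval $I$ by concatenation using the flow property.

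For step~(i), I would verify that $F$ and all of its partial derivatives up to order $k$ with respect to $(x,t_0,x_0,f)$ are uniformly continuous and bounded on the relevant domain. Differentiating~\ref{eq:Dx-picard} and~\ref{eq:Dparam-picard} yields expressions of the same general form: integrals over $I_0$ of derivatives of $f$ composed with $x(\tau)$, acting on the respective variations. Since $f \in \BUC^{k,\alpha}$, the omega-lemma / Nemytskii estimates (cf.~Appendix~\ref{chap:nemytskii}) give that post-composition preserves $\BUC^{k,\alpha}$-regularity in $(x,t_0,x_0)$, and $\BC^k$-regularity when $f$ is also allowed to vary (the loss being exactly the one flagged in the text, since variations $\delta f$ are not uniformly equicontinuous as elements of $\mathcal{F}$). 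With $\norm{\D F^{-1}} \le 2$ uniformly on the domain, Corollary~\ref{cor:unif-impl-func} then produces the implicit function $\Phi|_{I_0}$ of the asserted regularity class on $I_0$, with explicit estimates derived by induction from formula~\ref{eq:impl-func-der}.

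To pass from $I_0$ to a general bounded $I$, I would partition $I$ into finitely many consecutive subintervals $I_1,\ldots,I_N$, each of length at most $1/(2R)$, with $N \le 2R\,\abs{I} + 1$. The flow over $I$ starting from $(t_0,x_0)$ equals the ordered composition of the flows over $I_1,\ldots,I_N$, where the output at the right endpoint of $I_j$ is fed as the initial condition into $I_{j+1}$. Each individual piece is $\BC^k$ (resp.\ $\BUC^{k,\alpha}$ for fixed $f$) by step~(i), and these regularity classes are closed under composition with explicit control on the constants (as used repeatedly in the excerpt, e.g.\ in the discussion following Corollary~\ref{cor:unif-impl-func}). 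Hence the composite is again $\BC^k$ (resp.\ $\BUC^{k,\alpha}$); the final bounds depend on $\abs{I}$ and $R$ but not on the chosen partition.

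The main obstacle, and the only subtle point, is to ensure that the uniform estimates on derivatives of $\Phi$ do not deteriorate uncontrollably under $N$-fold composition. This is really a bookkeeping exercise: each composition multiplies the operator-norm bound on $\D\Phi$ by a factor controlled by $R$ and $h$, and analogous products of lower-order derivatives appear in the Fa\`a di Bruno expansion~\ref{eq:compfunc-deriv}. Since $N$ is finite and fixed by $I$ and $R$, all of these products are finite, yielding the uniform bounds on $I \times B \times \mathcal{F}$ claimed by the theorem. The final statement that $\Phi \in \BUC^{k,\alpha}$ for fixed $f$ then follows from the same composition argument applied to $\BUC^{k,\alpha}$ instead of $\BC^k$, using that H\"older coefficients behave multiplicatively under composition (Lemma~\ref{lem:holder-product} and the short computation immediately preceding Corollary~\ref{cor:unif-impl-func}).
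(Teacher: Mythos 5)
Your proposal is correct and takes essentially the same approach as the paper: establish the result on a short interval of length $\le 1/(2R)$ via the Picard operator and Corollary~\ref{cor:unif-impl-func}, then bootstrap higher derivatives through formula~\ref{eq:impl-func-der}, and finally extend to any bounded interval by composing flows over finitely many subintervals. You also correctly identify the reason uniform/H\"older continuity is lost in the $f$-dependence (variations $\delta f$ are not uniformly equicontinuous), which is exactly the observation the paper makes.
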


\begin{remark}\label{rem:smooth-flow-timedep}\NoEndMark
  Differentiable dependence on time can be dropped from this theorem.
  That is, let us instead assume that $f(t,x)$ and its derivatives
  $\D_x^i f(t,x),\,i \le k$ with respect to $x$ are bounded continuous
  with respect to $(t,x)$. Then the flow is a $\BC^k$ mapping
  \begin{equation*}
    \Phi\colon B \times \mathcal{F} \to X
        \colon (x_0, f) \mapsto \big(t \mapsto x(t)\big)
  \end{equation*}
  when $I \subset \R$ is a bounded interval. This result follows
  directly from the proof, since we only used differentiability with
  respect to $t$ for differentiable dependence of $\Phi$ on $t$.
\end{remark}

\begin{remark}\label{rem:smooth-flow-BG}
  Instead of a Banach space $B$, we can also choose the setting of a
  Riemannian manifold $(M,g)$. Solving for the flow of a differential
  equation is defined in terms of local charts, so by standard
  arguments the $C^k$ smoothness result extends to this setting.

  If we assume moreover in the context of Chapter~\ref{chap:boundgeom}
  that $(M,g)$ has bounded geometry and that $f \in \BUC^{k,\alpha}$,
  then we can obtain stronger results close to those of
  Theorem~\ref{thm:unif-smooth-flow}. In any single normal coordinate
  chart the results of Theorem~\ref{thm:unif-smooth-flow} hold. To
  extend the flow beyond one chart, we use the fact that coordinate
  chart transitions are uniformly $C^k$\ndash bounded maps. It follows
  that $\Phi \in \BUC^{k,\alpha}$ on any domain such that all image
  curves are covered by a uniformly bounded number of charts. This
  includes the domain $M \times I$ for any finite interval
  $I \subset R$, since $f$ itself is assumed bounded. The bounds and
  continuity moduli will depend on $\abs{I}$ though.

  Alternatively, uniform (H\"older) continuity estimates independent
  of charts can be obtained by using
  Proposition~\ref{prop:unif-partrans} to express continuity moduli in
  terms of parallel transport. See Lemma~\ref{lem:exp-growth-holder},
  which is proven via a variation of constants method.
\end{remark}


\chapter{The Nemytskii operator}\label{chap:nemytskii}
\index{Nemytskii operator}

The Nemytskii operator creates a mapping on curves from a simple
function between spaces. That is, in its simplest form, if we have a
function $f\colon \R^n \to \R^m$, then the associated Nemytskii
operator
\begin{equation*}
  F\colon C(\R;\R^n) \to C(\R;\R^m), \qquad F(x)(t) = f(x(t)),
\end{equation*}
maps curves $x$ in $\R^n$ to curves $y = F(x) = f \circ x$ in $\R^m$. See
also~\cite[p.~103--109]{Vanderbauwhede1989:centremflds-normal} for a
clear presentation.

We investigate continuity of the Nemytskii operator for certain
classes of curves. The following definition of the Nemytskii operator
in a somewhat more abstract context on bundles over $\R$ allows
e.g.\ws for the map $f$ to be time-dependent.
\begin{definition}[Nemytskii operator]
  \label{def:nemytskii}
  Let $I \subset \R$ and let $X, Y$ be normed vector
  bundles\footnote{%
    For our purposes, a sufficient definition of a normed vector
    bundle $\pi\colon X \to \R$ is that there exist local
    trivializations $\tau\colon \pi^{-1}(U) \to U \times F$ that are
    isometric with respect to the norms on $X$ and the normed linear
    space~$F$. Note that we canonically have such trivializations by
    parallel transport, see~\ref{eq:TBX-triv} and
    Proposition~\ref{prop:DA-cont}.%
  } over $I$. Furthermore, let $f\colon X \to Y$ be a bundle map, i.e.\ws a
  fiberwise mapping that covers the identity on $I$, but which is not
  necessarily linear in the fibers. We define the corresponding
  Nemytskii operator
  \begin{equation}\label{eq:nemytskii}
    F\colon \Gamma(X) \to \Gamma(Y)
     \colon x \mapsto f \circ x,
  \end{equation}
  mapping continuous sections of\/ $X$ to continuous sections of\/ $Y$.
\end{definition}

In the previous definition as well as in the following lemma, we need
not restrict to vector bundles; we shall also require the case that
$X$ is a trivial fiber bundle with a metric space as fiber (e.g.\ws
the bundle $\BX$ in the context of Chapter~\ref{chap:persist}). Recall
that the space of sections $\Gamma(X)$ can be endowed with an
exponential growth distance~\ref{eq:rho-dist} or
norm~\ref{eq:rho-norm}, respectively. This turns $\Gamma(X)$ into a
metric (or normed linear) space denoted by $\Gamma^\rho(X)$ with
exponent $\rho \in \R$. The distance $d_\rho(x_1,x_2)$ may be infinite
for some $x_1,\,x_2 \in \Gamma^\rho(X)$ if $X$ is a trivial metric fiber
bundle. This is not a problem, since it is only used to obtain (local)
continuity estimates for sections such that $d_\rho(x_1,x_2) <\infty$.

\begin{lemma}[Continuity of the Nemytskii operator]
  \label{lem:cont-nemytskii}
  Let $X,\,Y$ be normed vector bundles over $I = \R_{\ge 0}$, or
  alternatively let\/ $X$ be a trivial fiber bundle of a metric space.
  Let $f \in C^0(X;Y)$ be a continuous fiberwise mapping and let
  $F\colon \Gamma(X) \to \Gamma(Y)$ be defined as
  in~\ref{def:nemytskii}. Let $\rho_1, \rho_2 \in \R$ and assume that
  one of the following holds:
  \vspace{-0.1cm}
  \begin{enumerate}
  \item\label{enum:cont-nem-1} $\rho_2 > 0$ and $f$ is bounded into
    the normed vector bundle $Y$;
  \item\label{enum:cont-nem-2} $\rho_2 \ge \alpha\,\rho_1$ and $f$ is
    $\alpha$\ndash H\"older continuous with $0 < \alpha \le 1$,
    uniformly with respect to the fibers.
  \end{enumerate}

  Then $F$ is continuous as a map
  $\Gamma^{\rho_1}(X) \to \Gamma^{\rho_2}(Y)$ and
  under~\ref{enum:cont-nem-2}, $F$ is moreover $\alpha$\ndash H\"older
  continuous again.
\end{lemma}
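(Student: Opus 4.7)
The plan is to handle the two cases separately, starting with the easier Hölder case and then attacking the uniform continuity case by splitting the time axis into a compact part (where uniform continuity of $f$ is used) and a tail (where the growth factor $e^{-\rho_2 t}$ with $\rho_2 > 0$ suppresses the bounded image of $f$).

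For case~\ref{enum:cont-nem-2}, I would just compute directly. Fixing $t \in I$ and using the uniform $\alpha$--Hölder estimate on the fibers,
\begin{equation*}
  d\bigl(F(x_1)(t),F(x_2)(t)\bigr)\,e^{-\rho_2 t}
  \le C_{f,\alpha}\,d\bigl(x_1(t),x_2(t)\bigr)^\alpha\,e^{-\rho_2 t}
  = C_{f,\alpha}\bigl(d(x_1(t),x_2(t))\,e^{-\rho_1 t}\bigr)^\alpha e^{(\alpha\rho_1-\rho_2)t}.
\end{equation*}
Since $\rho_2 \ge \alpha\,\rho_1$ and $t \ge 0$, the last exponential is bounded by $1$, so taking the supremum in $t$ gives $d_{\rho_2}(F(x_1),F(x_2)) \le C_{f,\alpha}\,d_{\rho_1}(x_1,x_2)^\alpha$, which is both continuity and the $\alpha$--Hölder estimate.

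For case~\ref{enum:cont-nem-1} I would use a standard $T$--tail splitting argument. Given $\epsilon>0$, first pick $T>0$ with $2\,\norm{f}_0\,e^{-\rho_2 T}<\epsilon/2$; this uses $\rho_2>0$ and boundedness of $f$ to handle the tail $t>T$, since
\begin{equation*}
  \sup_{t>T}\,d\bigl(f(x_1(t)),f(x_2(t))\bigr)\,e^{-\rho_2 t}
  \le 2\,\norm{f}_0\,e^{-\rho_2 T}<\epsilon/2
\end{equation*}
for any pair $x_1,x_2$. On the compact interval $[0,T]$, curves $x_1,x_2$ with $d_{\rho_1}(x_1,x_2)<\delta$ satisfy $d(x_1(t),x_2(t))\le\delta\,e^{\rho_1 T}$ uniformly, and I would use the pointwise continuity of $f$ (promoted to uniform continuity on the relevant bounded region, which is legitimate since curves $x_i$ of finite $d_{\rho_1}$-distance from a fixed reference take values in a uniformly bounded set when restricted to $[0,T]$) to make $d(f(x_1(t)),f(x_2(t)))$ smaller than $\epsilon/2$ by choosing $\delta$ small enough.

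The only subtle point, and the one I would be most careful about, is the last step in case~\ref{enum:cont-nem-1}: $f$ is only assumed continuous, not uniformly continuous, so uniform continuity on $[0,T]$ must be extracted from the fact that for fixed $x_1$ the curves $x_2$ close to $x_1$ stay in a locally bounded region of the (possibly non-compact) fiber bundle. In a vector bundle setting one trivializes locally and uses that on compact bases the fiberwise uniform continuity of $f$ is automatic; in the metric fiber bundle setting one argues directly using the pointwise continuity of $f$ together with the fact that $\{x_1(t):t\in[0,T]\}$ has compact closure once $x_1$ is continuous on a compact interval. Aside from that, the proof is just the two estimates above combined by the triangle inequality.
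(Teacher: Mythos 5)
Your proof is correct and takes essentially the same approach as the paper: for case~(ii) the direct pointwise estimate (the paper also uses it, implicitly bounding $\sup_{t\ge 0}e^{(\alpha\rho_1-\rho_2)t}$ by $1$), and for case~(i) the tail/compact split with a compactness argument on $[0,T]$. One remark: your aside that in the vector bundle setting ``on compact bases the fiberwise uniform continuity of $f$ is automatic'' is not quite right when the fiber is infinite-dimensional (which the paper allows, since $Y$ can be an infinite-dimensional Banach space); the paper instead uses exactly the argument you give in the metric fiber bundle setting -- compactness of the image $x_1([0,T])$ plus a Lebesgue-number style covering to extract a uniform $\delta'$ from pointwise continuity of $f$. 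That argument works in both settings, so your proof as a whole goes through.
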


\begin{proof}
  We first prove the statement under assumption~\ref{enum:cont-nem-1}.
  Fix $x_1 \in \Gamma^{\rho_1}(X)$, let $\epsilon > 0$ be given, and
  let $x_2 \in \Gamma^{\rho_1}(X)$ be arbitrary. As $f$ is bounded and
  $\rho_2 > 0$, we can choose a $T > 0$ such that
  \begin{equation*}
    \forall\; t > T\colon
        \norm{f(x_1(t)) - f(x_2(t))}\,e^{-\rho_2\,t}
    \le 2\,\norm{f}\,e^{-\rho_2\,T}
    \le \epsilon.
  \end{equation*}
  This leaves only the compact interval $\intvCC{0}{T}$ for which we
  still have to show that
  $\norm{f(x_1(t)) - f(x_2(t))}\,e^{-\rho_2\,t} \le \epsilon$. Let us
  denote $g\colon I \to \R\colon t \mapsto e^{-\rho_2\,t}$, then the
  continuity estimate of $f \cdot g\colon X \to Y$ is uniform on the
  compact set $x_1(\intvCC{0}{T})$. Hence, there exists a
  $\delta' > 0$ such that for all $t \in \intvCC{0}{T}$ and
  $\xi_2 \in \pi^{-1}(t) \subset X$,
  \begin{equation*}
    d(x_1(t),\xi_2) \le \delta'
    \; \Longrightarrow \;
    e^{-\rho_2\,t}\,\norm{f(x_1(t)) - f(\xi_2)} \le \epsilon.
  \end{equation*}
  We have that
  $d(x_1(t),x_2(t)) \le e^{\abs{\rho_1\,T}}\,d_{\rho_1}(x_1,x_2)$, so
  choosing $\delta = e^{-\abs{\rho_1\,T}}\,\delta'$ yields the
  required estimate for $d_{\rho_1}(x_1,x_2) \le \delta$. This proves
  that $F$ is continuous at $x_1$.

  Secondly, assume~\ref{enum:cont-nem-2} and let $C_\alpha$ be
  the H\"older coefficient of $f$. Then we can estimate
  \begin{equation*}
    \begin{aligned}
      \norm{F(x_1) - F(x_2)}_{\rho_2}
      &=   \sup_{t \ge 0}\; e^{-\rho_2\,t}\,\norm{f(x_1(t)) - f(x_2(t))}\\
      &\le \sup_{t \ge 0}\; e^{-\rho_2\,t}\,C_\alpha\,
             \big(d_{\rho_1}(x_1,x_2)\,e^{\rho_1\,t}\big)^\alpha
       = C_\alpha\,d_{\rho_1}(x_1,x_2)^\alpha,
    \end{aligned}
  \end{equation*}
  which shows that $F$ is $\alpha$\ndash H\"older continuous again
  with coefficient $C_\alpha$.
\end{proof}

\begin{corollary}
  \label{cor:unifcont-nemytskii}
  Let the assumptions of Lemma~\ref{lem:cont-nemytskii} with
  condition~\ref{enum:cont-nem-1} be satisfied. If $f$ is fiberwise
  uniformly continuous with continuity modulus independent of the
  fiber, then also $F$ is uniformly continuous.
\end{corollary}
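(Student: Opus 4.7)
The plan is to adapt the proof of Lemma~\ref{lem:cont-nemytskii}\ref{enum:cont-nem-1} and observe that every step can be arranged to produce a $\delta > 0$ depending only on $\epsilon$, not on the base section $x_1$. Uniform continuity of $F$ means: for each $\epsilon > 0$ there should exist $\delta > 0$ such that $d_{\rho_1}(x_1,x_2) \le \delta$ implies $d_{\rho_2}(F(x_1),F(x_2)) \le \epsilon$, where the $\delta$ does not depend on $x_1$.

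First I would split the time axis $I = \R_{\ge 0}$ into a tail $t > T$ and a compact head $\intvCC{0}{T}$. On the tail, I exploit that $f$ is globally bounded and $\rho_2 > 0$: choose $T$ so large that $2\,\norm{f}_0\,e^{-\rho_2 T} \le \epsilon$. This bound depends only on $\norm{f}_0$, $\rho_2$ and $\epsilon$, and not on any section, so the tail estimate is automatically uniform in $x_1$.

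On the head $t \in \intvCC{0}{T}$, I invoke the standing hypothesis that $f$ admits a continuity modulus $\epsilon_f$ that is independent of the fiber: there is $\delta' > 0$ (depending only on $\epsilon$ through $\epsilon_f$) such that $d(\xi_1,\xi_2) \le \delta'$ implies $\norm{f(\xi_1)-f(\xi_2)} \le \epsilon$, for any pair of points lying in the same fiber, regardless of which fiber. Since $e^{-\rho_2 t} \le 1$ on $\intvCC{0}{T}$ when $\rho_2 \ge 0$, this directly yields $e^{-\rho_2 t}\,\norm{f(x_1(t))-f(x_2(t))} \le \epsilon$ as soon as $d(x_1(t),x_2(t)) \le \delta'$. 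Finally, the relation $d(x_1(t),x_2(t)) \le e^{\lvert \rho_1\rvert T}\,d_{\rho_1}(x_1,x_2)$ lets me set $\delta = e^{-\lvert \rho_1\rvert T}\,\delta'$, and this $\delta$ depends only on $\epsilon$ via $T(\epsilon)$ and $\delta'(\epsilon)$.

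The only subtlety, which is essentially the point of the corollary, is that in the lemma's proof $\delta'$ came from uniform continuity of $f\cdot g$ on the compact image $x_1(\intvCC{0}{T})$ and therefore in principle depended on $x_1$; replacing this by the fiber-uniform continuity modulus of $f$ removes that dependence. I do not expect any genuine obstacle here: once the hypothesis is strengthened to a uniform modulus, the rest is a bookkeeping exercise verifying that no estimate depends on the chosen base section.
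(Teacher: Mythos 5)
Your proof is correct and follows the same approach as the paper: the paper's proof of the corollary simply observes that the compact-set uniform continuity argument in the proof of Lemma~\ref{lem:cont-nemytskii} can be replaced by the fiber-independent continuity modulus of $f$, which is exactly the replacement you carry out in detail. Your small simplification of bounding $e^{-\rho_2 t}\le 1$ on $\intvCC{0}{T}$ (rather than working with uniform continuity of the product $f\cdot g$ as the lemma's proof does) is harmless and makes the bookkeeping slightly cleaner, but it does not alter the structure of the argument.
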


\begin{proof}
  This follows easily: in the proof above, the uniform continuity on
  the compact set $x_1(\intvCC{0}{T})$ can be replaced by the uniform
  continuity modulus of $f$ itself. This does not depend on $x_1, x_2$
  anymore, only on their distance, so it leads to a uniform continuity
  modulus of $F$.
\end{proof}

\begin{remark}\label{rem:nemytskii-time-rev}
  The previous results also hold under time inversion. That is, if we
  consider the interval $I = \R_{\le 0}$ and invert the inequalities
  for $\rho_1,\,\rho_2$ in conditions~\ref{enum:cont-nem-1}
  and~\ref{enum:cont-nem-2}, then Lemma~\ref{lem:cont-nemytskii} and
  Corollary~\ref{cor:unifcont-nemytskii} still hold true. We use this
  time inverted version in Chapter~\ref{chap:persist}.
\end{remark}


\chapter{Exponential growth estimates}\label{chap:exp-growth}

\index{exponential growth rate}
\index{higher order derivative}
In this appendix we investigate the growth rate of higher order
derivatives of a general flow on a Riemannian manifold. Basically, if
the growth of the tangent flow is proportional to
$\exp(\rho\,t)$, then the growth of the $\fracdiff$\th order
derivative is of order $\exp(\fracdiff\,\rho\,t)$. This even extends to
`fractional' derivatives, that is, the $C^{k,\alpha}$\ndash norm
(which includes $\alpha$\ndash H\"older continuity bounds) has this
growth behavior for $r = k+\alpha$. These results will be used to
obtain continuity and higher order smoothness of the
persisting NHIM\@. The particular exponential growth behavior
$\exp(\fracdiff\,\rho\,t)$ will precisely prescribe the spectral gap
condition: to construct a contraction on the $\fracdiff$\th
derivative, the normal contraction of order $\exp(\rho_\sy\,t)$ must
dominate the higher order $\exp(\fracdiff\,\rho_\sx\,t)$ along the
invariant manifold, hence $\rho_\sy < \fracdiff\,\rho_\sx$ is
required\footnote{%
  We formulate all statements in this section with respect to
  exponentially bounded flows in the (more natural) forward time
  direction. That is, we work with $t \in \R_{\ge t_0}$ and typical
  exponents $\rho > 0$. In our applications in
  Chapter~\ref{chap:persist} we use the time-reversed statements. See
  also Remark~\ref{rem:sign-rho}.%
}.

These results are based on estimating variation of constants integrals
and similar in spirit to Gronwall's lemma. We work on Riemannian
manifolds, however. This complicates matters with a lot of
technicalities, but the basic ideas are still the same. We do require
uniform bounds and bounded geometry of the manifold, see
Chapter~\ref{chap:boundgeom}. Let us first show
the idea for a flow on $\R^n$ and then introduce some concepts and
notation to finally treat the general case.

\begin{lemma}[Exponential growth estimates for a flow]
  \label{lem:exp-growth-basic}
  Let\/ $\Phi^{t,t_0} \in C^{k \ge 1}$ be the flow of a time-dependent
  vector field $v$ on $\R^m$. Let $v(t,\slot) \in \BC^k(\R^m)$ with
  all derivatives jointly continuous in $(t,x) \in \R \times \R^m$ and
  uniformly bounded by\/ $V < \infty$. Suppose that\/
  $\norm{\D\Phi^{t,t_0}(x)} \le C_1\,e^{\rho(t-t_0)}$ for all
  $x \in \R^m,\, t \ge t_0$ and fixed $C_1 > 0,\,\rho \neq 0$. Then
  for each $n,\,1 \le n \le k$ there exists a bound $C_n > 0$ such that
  \begin{equation}\label{eq:exp-growth-basic}
    \forall\, x \in \R^m,\, t \ge t_0 \colon
    \norm{\D^n \Phi^{t,t_0}(x)} \le \begin{cases}
      C_n\,e^{n\,\rho(t-t_0)} & \text{if } \rho > 0,\\
      C_n\,e^{   \rho(t-t_0)} & \text{if } \rho < 0.
    \end{cases}
  \end{equation}
\end{lemma}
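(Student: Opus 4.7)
The plan is to proceed by induction on $n$, using the ODE satisfied by $\D^n\Phi^{t,t_0}$ together with a variation of constants representation. Fix $x \in \R^m$ and write $\Phi(t) = \Phi^{t,t_0}(x)$. Differentiating the flow equation $\partial_t \Phi(t) = v(t,\Phi(t))$ once in the base point $x$ gives the linear variational equation $\partial_t \D\Phi = \D v(t,\Phi)\,\D\Phi$, with $\D\Phi(t_0) = \Id$; this is the content of the $n=1$ hypothesis. Differentiating a further $n-1$ times and applying Faà di Bruno's formula (in the form of Proposition~\ref{prop:compfunc-deriv}) produces
\begin{equation*}
  \partial_t \D^n\Phi(t) = \D v(t,\Phi(t))\cdot\D^n\Phi(t) + R_n(t),
  \qquad \D^n\Phi(t_0) = 0 \text{ for } n \ge 2,
\end{equation*}
where the remainder $R_n(t)$ is a finite sum of terms of the shape
\begin{equation*}
  \D^i v(t,\Phi(t))\cdot P_{i,n}\bigl(\D^\bullet \Phi(t)\bigr),
  \qquad 2 \le i \le n,
\end{equation*}
each multilinear expression $P_{i,n}$ being a product of derivatives $\D^{j_1}\Phi\cdots\D^{j_i}\Phi$ with $j_1+\cdots+j_i = n$ and each $j_k < n$.

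The first step is to apply variation of constants to the above linear inhomogeneous ODE. Since $\D\Phi^{t,\tau}(\Phi^{\tau,t_0}(x))$ is the resolvent of the homogeneous part and since $\D^n\Phi(t_0)=0$, we obtain
\begin{equation*}
  \D^n\Phi^{t,t_0}(x) = \int_{t_0}^t \D\Phi^{t,\tau}(\Phi^{\tau,t_0}(x))\cdot R_n(\tau) \d\tau.
\end{equation*}
The hypothesis gives $\|\D\Phi^{t,\tau}\| \le C_1\,e^{\rho(t-\tau)}$, while the assumption $v(t,\slot)\in\BC^k$ with uniform bound $V$ yields $\|\D^i v\| \le V$ for every $1 \le i \le n$. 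Thus the task reduces to estimating $\|R_n(\tau)\|$ using the inductive bounds on $\D^{j_k}\Phi$ for $j_k < n$.

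The inductive step splits into the two sign cases. When $\rho > 0$, the inductive bound gives $\|\D^{j_k}\Phi^{\tau,t_0}\| \le C_{j_k}\,e^{j_k\rho(\tau-t_0)}$; multiplying the $i$ factors and summing the exponents $j_1+\cdots+j_i = n$ shows that each summand of $R_n(\tau)$ is bounded by a constant times $e^{n\rho(\tau-t_0)}$. The integral then evaluates to
\begin{equation*}
  \int_{t_0}^t C_1\,e^{\rho(t-\tau)}\cdot \tilde C_n\,e^{n\rho(\tau-t_0)} \d\tau
  \le \frac{C_1\tilde C_n}{(n-1)\rho}\,e^{n\rho(t-t_0)},
\end{equation*}
giving the claim with $C_n = C_1\tilde C_n/[(n-1)\rho]$. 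When $\rho < 0$, the inductive bound degenerates to $\|\D^{j_k}\Phi^{\tau,t_0}\| \le C_{j_k}\,e^{\rho(\tau-t_0)}$ for every $j_k \ge 1$, so a product of $i \ge 2$ such factors is bounded by $C\,e^{i\rho(\tau-t_0)} \le C\,e^{2\rho(\tau-t_0)}$ (here we use $\rho<0$ and $\tau\ge t_0$). The resulting integral is then computed via the standard bound~\ref{eq:exp-int} for exponentials over an interval bounded in the direction of decay, giving
\begin{equation*}
  \int_{t_0}^t C_1\,e^{\rho(t-\tau)}\cdot \tilde C_n\,e^{2\rho(\tau-t_0)} \d\tau
  \le \frac{C_1\tilde C_n}{|\rho|}\bigl(e^{\rho(t-t_0)} - e^{2\rho(t-t_0)}\bigr)
  \le \frac{C_1\tilde C_n}{|\rho|}\,e^{\rho(t-t_0)},
\end{equation*}
which is the desired $C_n\,e^{\rho(t-t_0)}$ bound.

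The main bookkeeping obstacle is verifying that in the remainder $R_n$ every term is either of total exponential order exactly $n\rho$ (in the $\rho>0$ case) or involves at least one factor $\D^{j_k}\Phi$ (ensuring the exponent $\rho(\tau-t_0)$ shows up and dominates because $\rho<0$). This is handled cleanly by the degree count in Proposition~\ref{prop:compfunc-deriv}: each $P_{i,n}$ is homogeneous of weight $n$ in the derivatives of $\Phi$, and contains at least two factors when $n\ge 2$. The joint continuity of $\D^i v$ in $(t,x)$ is needed only to make the ODEs for $\D^n\Phi$ classical solutions on $\R^m$, so that Theorem~\ref{thm:unif-smooth-flow} applies and the variation of constants identity is justified.
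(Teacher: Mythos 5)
Your proposal is correct and follows essentially the same route as the paper: induction on $n$, the ODE for $\D^n\Phi$ obtained from Proposition~\ref{prop:compfunc-deriv}, variation of constants against the resolvent $\D\Phi^{t,\tau}$, then a case split on the sign of $\rho$. One small point worth noting: in the $\rho<0$ case you correctly bound the $l$-factor product $P_{l,n}$ by $e^{l\rho(\tau-t_0)}\le e^{2\rho(\tau-t_0)}$, whereas the paper writes $e^{n\rho(\tau-t_0)}$ for the integrand in both cases before estimating the resulting fraction; your version of the intermediate estimate is the more careful one, and both lead to the same final bound $C_n\,e^{\rho(t-t_0)}$.
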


\begin{proof}
Let $\D$ denote the partial derivative with respect to the spatial
variable $x \in \R^m$. We suppress the time dependence in the notation
of $v$ since we have the bound $\norm{\D^n v} \le V$ for all
$1 \le n \le k$, uniformly in space and time.

Since $\Phi^{t,t_0}$ is a flow, we have
\begin{equation}\label{eq:derflow-initial}
  \D  \Phi^{t_0,t_0}(x) = \Id \quad\text{and}\quad
  \D^n\Phi^{t_0,t_0}(x) = 0,  \quad 2 \le n \le k.
\end{equation}

For $1 \le n \le k$ we can write, suppressing arguments $t_0,\,x$,
\begin{equation}\label{eq:ODE-derflow}
    \der{}{t}\D^n\Phi^t
  = \D^n(v\circ\Phi^t)
  = \D v\circ\Phi^t \cdot \D^n\Phi^t +
    \sum_{l=2}^n \D^l v\circ\Phi^t \cdot P_{l,n}\big(\D^1\Phi^t,\ldots,\D^{n-1}\Phi^t\big),
\end{equation}
where the $P_{l,n}$ are homogeneous, weighted polynomials as in
Definition~\ref{def:homog-weight-poly} below.
In the first equality, the switching of partial derivatives is
well-defined, because the spatial derivative in the middle expression
is well-defined and the resulting function continuous. In the
right-hand expression we have already used
Proposition~\ref{prop:compfunc-deriv} and separated the homogeneous
term with $\D^n\Phi^t$ (when $l = 1$). The result is a linear
differential equation for $\D^n\Phi^t$ with the inhomogeneous terms in
the sum consisting of lower order derivatives $\D^i\Phi^t,\, i<n$, only.

For $n = 1$, statement~\ref{eq:exp-growth-basic} is already true by
assumption and in that case we also see that~\ref{eq:ODE-derflow} is a
homogeneous linear differential equation. Denote by $\Psi_x(t,t_0)$ the
solution operator for this system with initial point $x \in \R^m$, then
\begin{equation}\label{eq:derflow}
    \D\Phi^{t,t_0}(x)
  = \Psi_x(t,t_0)\big(\D\Phi^{t_0,t_0}(x)\big)
  = \Psi_x(t,t_0) \cdot \Id
  = \Psi_x(t,t_0).
\end{equation}
This solution operator acts by left-composition on linear maps, so we
read off that $\Psi_x(t,t_0) = \D\Phi^{t,t_0}(x)$ and find the estimate
$\norm{\Psi_x(t,t_0)} \le C_1\,e^{\rho(t-t_0)}$. Now we turn to the
induction step. For $n > 1$, we still have essentially the same
solution operator $\Psi_x(t,t_0)$ for the homogeneous part, only now
acting by composition on multilinear maps
$\D^n\Phi^{t,t_0}(x) \in \CLin^n(\R^m)$: the solution operator is not
influenced by considering multilinear maps, as $\D v$ and $\Psi$ act
by linear composition from the left, essentially on tangent vectors.
Therefore, the same growth estimate for $\Psi_x(t,t_0)$ still holds.

The inhomogeneous terms in~\ref{eq:ODE-derflow} depend only on the
$\D^i\Phi^t,\, i < n$ and by the induction hypothesis we can estimate
$\norm{\D^i\Phi^t} \le C_i\,e^{i\,\rho\,t}$. Using variation
of constants, the solution can now be written as
\begin{equation}\label{eq:derflow-sol-int}
  \D^n\Phi^t(x)
  = \int_{t_0}^t \Psi_x(t,\tau) \cdot
                 \sum_{l=2}^n \D^l v\circ\Phi^\tau \cdot
                 P_{l,n}\big(\D^1\Phi^\tau,\ldots,\D^{n-1}\Phi^\tau\big) \d\tau,
\end{equation}
where the homogeneous part of the solution is zero because
$\D^n\Phi^{t_0,t_0}(x) = 0$ for $n>1$. Given that the weighted degree
of $P_{l,n}$ is $n$, we can directly estimate
\begin{align*}
  \norm{\D^n\Phi^t(x)}
  &\le \int_{t_0}^t \norm{\Psi_x(t,\tau)}
                \sum_{l=2}^n \norm[\big]{\D^l v}
                \norm{P_{l,n}\big(\D^1\Phi^\tau,\ldots,\D^{n-1}\Phi^\tau\big)} \d\tau\\
  &\le \int_{t_0}^t C_1\,e^{   \rho(t-\tau)}\,V\,R\big(\{C_i\}_{i<n}\big)
                       \,e^{n\,\rho(\tau-t_0)} \d\tau\\
  &=   C_1\,V\,R\big(\{C_i\}_{i<n}\big)\,
       \frac{e^{n\,\rho(t-t_0)} - e^{\rho(t-t_0)}}{(n-1)\,\rho}.
  \eqnumber\label{eq:derflow-est}
\end{align*}
The bound $R$ depends on finite sums and products of finite terms, so
is finite again. When $\rho > 0$, the denominator is positive and the
numerator can be estimated by $e^{n\,\rho(t-t_0)}$; when $\rho < 0$,
the numerator can be estimated by $e^{\rho(t-t_0)}$, adding a minus
sign to both parts of the fraction. Thus, in both cases
\ref{eq:exp-growth-basic}~holds. This completes the induction step.
\end{proof}

Before generalizing this lemma to Riemannian manifolds, we first
refine some previous notation. Instead of $\R^m$, we
more generally consider linear spaces $V, W$ and spaces $\CLin^k(V;W)$
of (multi)linear maps for the (higher order) derivatives of maps
$f\colon V \to W$.
\begin{definition}[Homogeneous weighted polynomial]
  \label{def:homog-weight-poly}
  \index{$P_{a,b}$}
  Let\/ $P_{a,b}(y_1,\ldots,y_n)$ be a polynomial in the variables
  $y_1$ to $y_n$. We call $P$ a homogeneous weighted polynomial of
  degree $(a,b)$ if it is a homogeneous polynomial of degree $a$ and
  moreover, each term $y_1^{p_1}\ldots y_n^{p_n}$ has weighted degree
  \begin{equation}\label{eq:weighted-degree}
    \sum_{i=1}^n i \cdot p_i = b.
  \end{equation}
\end{definition}
As a consequence, such a polynomial cannot have factors $y_n$ for
$n > b$ and the factor $y_b$ can only occur as a term on itself when
$a = 1$.


This definition can now be used to denote the higher derivatives of a
composition of two functions $f,g$ on vector spaces.
\index{higher order derivative!chain rule}
\begin{proposition}[Higher order derivatives of compositions of functions]
  \label{prop:compfunc-deriv}
  Let the mapping $x \mapsto f(g(x),x)$ be given with
  $f\colon V \times U \to W$ and $g\colon U \to V$ two sufficiently
  differentiable functions between vector spaces $U,V,W$. Then the
  $k$\th order derivative of this mapping with respect to $x$ is of
  the form
  \begin{equation}\label{eq:compfunc-deriv}
    \Big(\der{}{x}\Big)^k f(g(x),x) =
    \!\!\!\sum_{\substack{l,m \ge 0\\l + m \le k\\(l,m)\neq(0,0)}}\!\!
    \D_1^l\D_2^m f(g(x),x) \cdot P_{l,k-m}\big(\D^1 g(x), \ldots ,\D^{k-m} g(x)\big),
  \end{equation}
  where $P_{l,k-m}$ is a homogeneous weighted polynomial of degree $(l,k \tm m)$ with $l$
  higher order derivatives $\D^i g(x)$ in each term, and weighted
  degree $k \tm m$: the total number of derivatives that either produced
  an additional\/ $\D g(x)$ term or differentiated an existing one.
\end{proposition}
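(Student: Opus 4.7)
The plan is to prove Proposition~\ref{prop:compfunc-deriv} by induction on $k$, where the main work is tracking how the degrees $(l, k-m)$ of the weighted polynomials $P_{l,k-m}$ evolve under one further differentiation.

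For the base case $k=1$, the standard chain rule gives
\[
  \der{}{x} f(g(x),x) = \D_1 f(g(x),x) \cdot \D g(x) + \D_2 f(g(x),x),
\]
which matches~\ref{eq:compfunc-deriv}: the first term corresponds to $(l,m) = (1,0)$ with $P_{1,1}(\D g) = \D g$ (a homogeneous weighted polynomial of degree $(1,1)$), and the second term to $(l,m) = (0,1)$ with $P_{0,0}$ interpreted as the constant $1$ (degree $(0,0)$). In both cases $l+m \le 1$ and $(l,m) \neq (0,0)$.

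For the induction step, suppose~\ref{eq:compfunc-deriv} holds for some $k \ge 1$. I differentiate each summand of the form $\D_1^l \D_2^m f(g(x),x) \cdot P_{l,k-m}(\D^1 g(x),\ldots,\D^{k-m} g(x))$ once more with respect to $x$, using the Leibniz rule and the chain rule. Two kinds of contributions arise. First, the derivative hits the $f$-factor; applying the chain rule once in its first argument and once in its second produces
\[
  \big(\D_1^{l+1}\D_2^m f \cdot \D g + \D_1^l \D_2^{m+1} f\big) \cdot P_{l,k-m}.
\]
The first of these belongs to the index $(l+1, m)$, with polynomial factor $P_{l,k-m} \cdot \D g$, which has total degree $l+1$ and weighted degree $(k-m) + 1 = (k+1) - m$, so it has the required form $P_{l+1,(k+1)-m}$. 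The second belongs to $(l, m+1)$, with polynomial factor $P_{l,k-m} = P_{l,(k+1)-(m+1)}$. Second, the derivative hits $P_{l,k-m}$; by linearity and the Leibniz rule applied termwise to each monomial $\prod_i (\D^i g)^{p_i}$, it replaces one factor $\D^i g$ by $\D^{i+1} g$. This preserves the total degree $l$ but increases the weighted degree by exactly one, giving a new homogeneous weighted polynomial of bidegree $(l, (k+1)-m)$, which contributes to the $(l,m)$-summand again.

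The remaining task is purely bookkeeping: verify that the index set $\{(l,m) : l+m \le k+1,\ (l,m) \neq (0,0)\}$ is exactly the union of the three shifted index sets produced above (note that $(l,m) = (k,0)$ shifts to $(k+1,0)$ and $(l,m)=(0,k)$ to $(0,k+1)$, so the new extreme indices are covered), and that all contributions collect into sums of the stated form. I do not expect a real obstacle here; the only subtle point is confirming that the polynomial $P_{l, k-m} \cdot \D g$ is still homogeneous weighted of the correct bidegree, and that $P_{l,k-m}$ cannot involve factors $\D^i g$ with $i > k-m$ so that replacing $\D^i g$ by $\D^{i+1} g$ never produces a derivative of order exceeding $(k+1)-m$. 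Both follow directly from Definition~\ref{def:homog-weight-poly}. Collecting all terms then yields~\ref{eq:compfunc-deriv} with $k$ replaced by $k+1$, completing the induction.
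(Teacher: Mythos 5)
Your proof is correct and follows essentially the same route as the paper: induction on $k$, with the three contributions per term (derivative hitting the first argument of $f$, the second argument of $f$, and the polynomial factor $P_{l,k-m}$) tracked by their effect on the bidegree $(l,k-m)$. The paper presents this as a single displayed chain of equalities whereas you reason in prose, but the underlying argument and bookkeeping are identical.
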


\begin{remark}\NoEndMark
  We will shorten the notation
  $P_{l,k}\big(\D^1 g(x), \ldots ,\D^k g(x)\big) = P_{l,k}\big(\D^\bullet g(x)\big)$.
\end{remark}

\begin{remark}
  Note that $\D_1^l\D_2^m f(g(x),x)$ is actually an element of the
  tensor product space
  $W \otimes \big(V^*\big)^{\otimes l} \otimes \big(U^*\big)^{\otimes m}$
  and $P_{l,k-m}$ an element of the $(l,k \tm m)$\ndash linear maps
  $V^{\otimes l} \otimes \big(U^*\big)^{\otimes k-m}$, or $(l,k \tm m)$
  tensors, so the composition is indeed a mapping in
  $W \otimes \big(U^*\big)^{\otimes k} = \CLin^k(U;W)$, as expected.
\end{remark}

\begin{proof}
  This is easily proven by induction. For $k = 1$ we have
  \begin{equation*}
    \der{}{x} f(g(x),x) = \D_1 f(g(x),x) \cdot \D g(x) + \D_2 f(g(x),x),
  \end{equation*}
  which satisfies~\ref{eq:compfunc-deriv}. For the induction step we
  have
  \begin{align*}
    \Big(\der{}{x}\Big)^{k+1} f(g(x),x)
    &= \der{}{x} \!\!\!\sum_{\substack{l,m \ge 0\\l + m \le k\\(l,m)\neq(0,0)}}\!\!
       \D_1^l\D_2^m f(g(x),x) \cdot P_{l,k-m}\big(\D^\bullet g(x)\big)
       \displaybreak[1]\\
    &= \!\!\!\sum_{\substack{l,m \ge 0\\l + m \le k\\(l,m)\neq(0,0)}}\!\!
       \begin{aligned}[t]
       \Big[&\;   \D_1^{l+1}\D_2^m f(g(x),x) \cdot \D^1 g(x) \cdot
                  P_{l,k-m}\big(\D^\bullet g(x)\big)\\
            &+    \D_1^l\D_2^{m+1} f(g(x),x) \cdot
                  P_{l,k-m}\big(\D^\bullet g(x)\big)\\
            &+    \D_1^l\D_2^m f(g(x),x) \cdot
                  \der{}{x} P_{l,k-m}\big(\D^\bullet g(x)\big)\Big]
       \end{aligned}\displaybreak[1]\\
    &= \!\!\!\sum_{\substack{l,m \ge 0\\l + m \le k\\(l,m)\neq(0,0)}}\!\!
       \begin{aligned}[t]
       \Big[&\;   \D_1^{l+1}\D_2^m f(g(x),x) \cdot
                  P_{l+1,k+1-m}\big(\D^\bullet g(x)\big)\\
            &+    \D_1^l\D_2^{m+1} f(g(x),x) \cdot
                  P_{l,k+m}\big(\D^\bullet g(x)\big)\\
            &+    \D_1^l\D_2^m f(g(x),x) \cdot
                  P_{l,k+1-m}\big(\D^\bullet g(x)\big)\Big]
       \end{aligned}\displaybreak[1]\\
    &= \!\!\!\sum_{\substack{l,m \ge 0\\l + m \le k+1\\(l,m)\neq(0,0)}}\!\!
       \D_1^l\D_2^m f(g(x),x) \cdot P_{l,k+1-m}\big(\D^\bullet g(x)\big).\\
  \end{align*}
  This is again of the form~\ref{eq:compfunc-deriv}:
  $k-m = (k+1) - (m+1)$, so all terms can be absorbed in the new sum
  for $k+1$.
\end{proof}

Let us make a few remarks on the form of~\ref{eq:compfunc-deriv}.
The $P_{0,m}$ for $m < k$ are zero, because then we have too few
derivatives with respect to $x$; we have $P_{0,k} = 1$ though. After
Definition~\ref{def:homog-weight-poly} it was already noted that in
a polynomial of weighted degree $k$, the factor $y_k$ can only occur
as a term on itself, up to a constant factor. More specifically in
this case, $\D^k g(x)$ occurs exactly once, in the term
\begin{equation*}
  \D_1 f(g(x),x) \cdot \D^k g(x).
\end{equation*}
This can easily be seen by direct calculation or induction. Finally,
when the composition mapping is of the form $x \mapsto f(g(x))$, then
we only have terms with $m = 0$ and all polynomials
in~\ref{eq:compfunc-deriv} have weighted degree $k$ in that case.

The next step is to generalize Lemma~\ref{lem:exp-growth-basic} to a
Riemannian manifold $(M,g)$. Here we first need to define what we mean
by higher derivatives of the flow. The tangent flow $\D \Phi^t$ is
well-defined as a mapping on $\T M$, but higher derivatives live on
higher order tangent bundles $\T^k M$. These abstract bundles make doing
explicit estimates as in the proof of Lemma~\ref{lem:exp-growth-basic}
difficult. Instead, we reuse the idea of Definition~\ref{def:unif-bounded-map}
and introduce a different representation of higher
derivatives in terms of normal coordinate charts.
\begin{definition}[Higher derivative on Riemannian manifolds]
  \label{def:deriv-normcoord}
  \index{higher order derivative!on a Riemannian manifold}
  Let $M,\,N$ be Riemannian manifolds and $f\colon M \to N$ a smooth
  map. With the notation $f_x = \exp_{f(x)}^{-1} \circ f \circ \exp_x$
  of $f$ represented in normal coordinate charts, we define for
  $k \ge 1$ and $x \in M$ the higher order derivative
  \begin{equation}\label{eq:deriv-normcoord}
    \D^k f(x) = \D^k f_x(0) = \D^k\big[\exp_{f(x)}^{-1} \circ f \circ \exp_x\big](0)
  \end{equation}
  as an element of\/ $\CLin^k(\T_x M; \T_{f(x)} N)$.
\end{definition}

\begin{remark}\label{rem:jet-bundle-rep}
  Definition~\ref{def:deriv-normcoord} can be viewed as creating a
  more explicit representation of the jet bundle of the trivial fiber
  bundle $\pi\colon M \times N \to M$. A map $f\colon M \to N$ is a
  section of this trivial bundle and the $k$\ndash jet of $f$ at a
  point $x$ is fixed in terms of the derivatives
  in~\ref{eq:deriv-normcoord} up to order $k$, in the normal
  coordinate chart centered at~$x$. We shall see below that this
  representation is still a (global) bundle, while the explicit choice
  of normal coordinate charts introduces a convenient norm to measure
  the jets.
\end{remark}

Let us make a few remarks on this choice of representation of higher
derivatives. First of all, for $k = 1$ this definition coincides with
the ordinary tangent map, as $\D\exp_x(0) = \Id_{\T_x M}$ by the
natural identification $\T_0(\T_x M) \cong \T_x M$. Furthermore, this
representation of derivatives admits operator norms, and all this
behaves nicely under composition of maps by virtue of the property
$(f \circ g)_x = f_{g(x)} \circ g_x$ for local coordinate charts:
\begin{align*}
     \norm[\big]{\D^2 (f \circ g)(x)}
  &= \norm[\big]{\D^2\big[f_{g(x)} \circ g_x\big](0)}\\
  &= \norm[\big]{\D^2 f_{g(x)}(0) \big(\D g_x(0),\D g_x(0)\big)
               + \D   f_{g(x)}(0) \big(\D^2 g_x(0)\big)}\\
  &= \norm[\big]{\D^2 f(g(x)) \cdot \D g(x)^{\otimes 2} + \D f(g(x)) \cdot \D^2 g(x)}\\
  &\le \norm{\D^2 f(g(x))} \cdot \norm{\D g(x)}^2
      +\norm{\D   f(g(x))} \cdot \norm{\D^2 g(x)},
\end{align*}
that is, these operator norms as defined via normal coordinate charts
are truly norms and satisfy the usual product rules for compositions
of (multi)linear maps.

The operator norms are induced by the norms on the tangent spaces of
$\T M$, which in turn are induced by the metric. These norms depend
smoothly on the base point, so they glue together to a smooth function
$\norm{\slot}\colon \T M \to \R_{\ge 0}$ that we will call a
`\idx{bundle norm}' on the tangent bundle\footnote{%
  Note that this is stronger than a Finsler manifold as the
  Finsler structure $F\colon \T M \to \R_{\ge 0}$ is allowed to be
  asymmetric, that is, on each tangent space, $F$\/ need only scale
  linearly for positive scalars. I did not investigate whether it is
  possible to generalize this theory to Finsler manifolds.%
} or sometimes refer to as just a norm on $\T M$. Higher derivatives
can be viewed as partial sections of the vector bundle
\begin{equation}\label{eq:multilin-bundle}
  \CLin^k(\T M;\T N) = \T N \boxtimes (\T M^*)^{\otimes k}.
\end{equation}
That is, we define $\CLin^k(\T M;\T N)$ as a bundle over $M \times N$
with fiber $\CLin^k(\T_x M;\T_y N)$ over the point
$(x,y) \in M \times N$. This is indicated by the operator $\boxtimes$,
which differs from the usual tensor product $\otimes$ in the sense
that the new bundle is constructed on the product of the base spaces
instead of one common base. Now the $k$\th order derivative (as in
Definition~\ref{def:deriv-normcoord}) of a map $f\colon M \to N$ is a
section of the bundle~\ref{eq:multilin-bundle} restricted to the base
submanifold $\Graph(f) \subset M \times N$ and the derivative
$\D^k f(x)$ is the point in the section over $(x,f(x))$. More
generally, we can define vector bundles of $(l,k)$\ndash linear maps
\index{$L^{l,k}$@$\CLin^{l,k}$}
\begin{equation}\label{eq:multilin-bundle-lk}
  \CLin^{l,k}(\T M;\T N) = (\T N)^{\otimes l} \boxtimes (\T M^*)^{\otimes k}
\end{equation}
and the disjoint union of all these bundles. The bundle norms on
$\T M$ and $\T N$ together naturally induce bundle operator norms on
these. From here on, we set $M = N$ and assume that $f = \Phi^t$ is a
flow.

To finally generalize Lemma~\ref{lem:exp-growth-basic} to Riemannian
manifolds, there is still one issue to tackle. When taking the
time-derivative as in~\ref{eq:ODE-derflow}, the target base point
$\Phi^t(x)$ changes. This suggests that a covariant derivative is
required. The $\CLin^{l,k}(\T M;\T M)$ are smooth manifolds in a
natural way, however, so both the tangent vector
$\der{}{t}\D^k\Phi^t(x)$ and the differential of $\norm{\slot}$ are
well defined in this interpretation and independent of a connection,
and certainly their product $\der{}{t}\norm{\D^k\Phi^t(x)}$ is.
The tangent exponential maps $\D \exp$ at $x$ and
$\Phi^t(x)$ together induce a local coordinate chart on
$\CLin^{l,k}(\T M;\T M)$ in a neighborhood of
$\CLin^{l,k}(\T_x M; \T_{\Phi^t(x)} M)$. We will use these local
coordinates for explicit calculations.

The dependence on the base point of the norms and normal coordinate
charts in~\ref{eq:deriv-normcoord} introduces additional terms
when formulating equations~\ref{eq:ODE-derflow}
and~\ref{eq:derflow-sol-int} on a Riemannian manifold. Under the
assumption that $(M,g)$ is of bounded geometry, however, all these
additional terms will be globally bounded. Hence, these will only
contribute to the overall constants $C_n$ in
Lemma~\ref{lem:exp-growth-basic}, but not influence the basic result.

\begin{lemma}[Exponential growth estimates on a Riemannian manifold]
  \label{lem:exp-growth-riem}
  Let\/ $\Phi^{t,t_0} \in C^{k \ge 1}$ be the flow of a
  time-dependent vector field $v$ on a Riemannian manifold
  $(M,g)$ of $(k \tp 3)$\ndash bounded geometry. Let
  $v(t,\slot) \in \vf^k_b(M)$ with all derivatives jointly continuous
  in $(t,x) \in \R \times M$ and uniformly bounded by\/ $V < \infty$
  with respect to Definition~\ref{def:deriv-normcoord}. Suppose that\/
  $\norm{\D\Phi^{t,t_0}(x)} \le C_1\,e^{\rho(t-t_0)}$ for all
  $x \in M,\, t \ge t_0$ and fixed $C_1 > 0,\,\rho \neq 0$. Then for
  each $n,\,1 \le n \le k$ there exists a bound $C_n > 0$ such that
  \begin{equation}\label{eq:exp-growth-riem}
    \forall\, x \in M,\, t \ge t_0 \colon
    \norm{\D^n \Phi^{t,t_0}(x)} \le \begin{cases}
      C_n\,e^{n\,\rho(t-t_0)} & \text{if\/ } \rho > 0,\\
      C_n\,e^{   \rho(t-t_0)} & \text{if\/ } \rho < 0.
    \end{cases}
  \end{equation}
\end{lemma}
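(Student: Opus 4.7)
The plan is to reduce the Riemannian statement to the Euclidean case treated in Lemma~\ref{lem:exp-growth-basic} by working entirely in the pair of normal coordinate charts at the base point $x$ and the moving point $\Phi^t(x)$, using the bounded geometry of $(M,g)$ to absorb all extra geometric terms into the constants $C_n$. Concretely, for fixed $x$ and $t_0$, I would study the coordinate representation
\begin{equation*}
  \tilde{\Phi}^t = \exp_{\Phi^{t,t_0}(x)}^{-1} \circ \Phi^{t,t_0} \circ \exp_x
  \colon B(0;\delta) \subset \T_x M \to \T_{\Phi^{t,t_0}(x)} M
\end{equation*}
so that by Definition~\ref{def:deriv-normcoord} one has $\D^n \Phi^{t,t_0}(x) = \D^n \tilde{\Phi}^t(0)$. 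The whole family $\tilde{\Phi}^t$ lives in the bounded-geometry bundle $\CLin(\T_x M;\T_{\Phi^t(x)} M)$, and by Lemma~\ref{lem:bound-coordtrans} and Theorem~\ref{thm:bound-metric} the transitions between normal coordinate charts along the orbit are uniformly $C^{k+2}$ bounded with bounded Christoffel symbols.

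Next, I would derive the analogue of equation~\ref{eq:ODE-derflow}. Differentiating $\tilde{\Phi}^t$ in $t$ produces the vector field $v$ pulled back through both exponentials, plus correction terms generated by the $t$-dependence of $\exp_{\Phi^t(x)}^{-1}$. A direct computation using~\ref{eq:geodflow-local} and Lemma~\ref{lem:split-coordtrans} shows that these corrections are polynomial expressions in $\D v$, the Christoffel symbols $\Christoffel$ at $\Phi^t(x)$, and the lower-order derivatives $\D^i \tilde{\Phi}^t$ with $i<n$; crucially, by $k{+}3$-bounded geometry all coefficients are globally bounded by a constant depending only on $V$ and the geometry of $M$. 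Schematically,
\begin{equation*}
  \der{}{t}\D^n \tilde{\Phi}^t(0)
  = \D v(\Phi^t(x))\cdot \D^n\tilde{\Phi}^t(0)
  + \sum_{l=2}^n \D^l v \cdot P_{l,n}\bigl(\D^\bullet \tilde{\Phi}^t(0)\bigr)
  + R_n^t,
\end{equation*}
where $P_{l,n}$ are the homogeneous weighted polynomials of Proposition~\ref{prop:compfunc-deriv} and $R_n^t$ collects the geometric correction terms, themselves polynomial of weighted degree $n$ in $\D^i \tilde{\Phi}^t$ with uniformly bounded coefficients.

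Then I would proceed by induction on $n$ exactly as in the proof of Lemma~\ref{lem:exp-growth-basic}. For $n=1$ the hypothesis supplies the bound. For $n>1$, the solution operator $\Psi_x(t,\tau)$ of the homogeneous linear part acts by left composition on multilinear maps, and since the homogeneous generator is again $\D v(\Phi^t(x))$, it inherits the growth estimate $\norm{\Psi_x(t,\tau)} \le C_1\, e^{\rho(t-\tau)}$ (only the left factor acts, so the estimate is unchanged when passing from tangent vectors to multilinear forms). Writing $\D^n\tilde{\Phi}^t(0)$ via variation of constants and inserting the inductive bounds $\norm{\D^i\tilde{\Phi}^\tau(0)}\le C_i\,e^{i\rho(\tau-t_0)}$ for $i<n$ yields an integral exactly of the form~\ref{eq:derflow-est}, so the identical case split on the sign of $\rho$ gives~\ref{eq:exp-growth-riem}.

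The main obstacle is bookkeeping the correction term $R_n^t$: one must verify that the $t$-dependence of the normal chart at $\Phi^t(x)$ contributes only terms whose coefficients are globally bounded and whose weighted degree in $\D^\bullet \tilde{\Phi}^t$ does not exceed $n$, so that they can be absorbed into the inhomogeneous part without altering the exponential growth. This is where $(k{+}3)$-bounded geometry is used in full force, through the uniform bounds on $\Christoffel$ and its derivatives in Theorem~\ref{thm:bound-metric} and the uniform Lipschitz estimate~\ref{eq:normal-coord-lipschitz}; once those bounds are in place, the integral estimate~\ref{eq:derflow-est} goes through verbatim and only the numerical values of the $C_n$ differ from the Euclidean case.
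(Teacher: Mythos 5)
Your proposal takes essentially the same route as the paper: represent $\Phi^t$ in normal coordinates at $x$ and at $\Phi^t(x)$, differentiate in $t$ to obtain a linear ODE whose homogeneous generator is $\D v$ (so that the solution operator is again $\D\Phi^t$ with the assumed growth), push the chart-dependent geometric corrections into the inhomogeneous terms, bound them uniformly via bounded geometry, and run the variation-of-constants induction from Lemma~\ref{lem:exp-growth-basic}. The paper is somewhat more careful at exactly the point you flag as "the main obstacle": it factors the composition through an intermediate fixed chart $\exp_y$ at $y=\Phi^t(x)$ and applies Proposition~\ref{prop:compfunc-deriv} to separate the moving-target contribution from the $\D^n\Phi$ contribution before differentiating (with a footnote observing that the moving-chart term in the homogeneous part actually vanishes in local coordinates), and it also notes that the bundle norm has zero derivative at the normal-coordinate origin so the norm estimate does not pick up extra terms -- two bookkeeping facts your sketch asserts implicitly but does not spell out.
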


\begin{proof}
  The proof is basically the same as the proof of
  Lemma~\ref{lem:exp-growth-basic}, with additional technicalities due
  to $M$ being a manifold. We will focus on these.

  Equation~\ref{eq:ODE-derflow} can be formulated in terms of the
  tangent normal coordinate chart
  \begin{equation*}
    \D\exp_y^{-1}\colon \T B(y;\delta) \subset \T M \to
                        \T (\T_y M) \cong (\T_y M)^2
  \end{equation*}
  with $y = \Phi^t(x)$ fixed. Note that we are finally interested in
  the growth behavior of $t \mapsto \norm{\D^n\Phi^t(x)}$; this is
  defined in a coordinate-free way, so it is not influenced by our
  choice of intermediate coordinates. In these normal coordinates,
  both the metric and its derivatives are bounded due to
  Theorem~\ref{thm:bound-metric}, and the vector field is $C^k$
  bounded by assumption. We have
  \begin{align*}
    \fst \der{}{t}\D\exp_y^{-1}\circ\D^n\Phi^t(x)\\
    &= \der{}{t}\D\exp_y^{-1}\circ\D^n\big[\exp_{\Phi^t(x)}^{-1}\circ\Phi^t\circ\exp_x\big](0)\\
    &= \der{}{t}\D\exp_y^{-1}\circ \sum_{l=1}^n
         \D^l\big[\exp_{\Phi^t(x)}^{-1}\circ\exp_y\big](0)\cdot
         P_{l,n}\big(\D^\bullet \big[\exp_y^{-1}\circ\Phi^t\circ\exp_x\big](0)\big).
\intertext{%
  This splits the dependence on $t$ in the target base point
  $\Phi^t(x)$ from that in the derivatives $\D^n\Phi^t$ itself. Note
  that the sum must be interpreted as a sum of terms in the single
  fiber $\CLin^n(\T_x M;\T_y M)$ over the base point $(x,y)$.
  By using the coordinate map $\D\exp_y^{-1}$, we transferred
  the problem to fixed linear spaces, which allows us to make sense of
  the differentiation with respect to $t$. In other words,
  $\D\exp_y^{-1}$ induces locally trivializing coordinates for
  $\CLin^n(\T_x M;\T M)$ in a neighborhood of $y$ with $x$ fixed. As $\D\exp_y^{-1}$
  is linear on the fibers, we can distribute it over the sum to
  further obtain
}
    &= \sum_{l=1}^n \der{}{t}\Big[\D\exp_y^{-1}\circ
         \D^l\big[\exp_{\Phi^t(x)}^{-1}\circ\exp_y\big](0)\cdot
         P_{l,n}\big(\D^\bullet \big[\exp_y^{-1}\circ\Phi^t\circ\exp_x\big](0)\big)\Big]\\
    &= \der{}{t}\Big[\D\exp_y^{-1}\circ\D\big[\exp_{\Phi^t(x)}^{-1}\circ\exp_y\big](0)\cdot
                     \D^n\big[\exp_y^{-1}\circ\Phi^t\circ\exp_x\big](0)\Big]
      +\sum_{l=2}^n \der{}{t}\big[\ldots\big].
    \label{eq:ODE-derflow-riem}\eqnumber
  \end{align*}
  In the last line, the homogeneous part is separated from the
  non-homogeneous terms as in~\ref{eq:ODE-derflow}.

  Working out the details of the homogeneous part, we obtain\footnote{%
    The time derivative of
    $\D\exp_y^{-1}\circ\D\big[\exp_{\Phi^t(x)}^{-1}\circ\exp_y\big](0)$
    actually turns out to be zero in local coordinates. This follows
    from an analysis of the exponential map as the time-one geodesic
    flow in normal coordinates around~$y$. This result is not
    relevant for us, so we leave out this tedious calculation.%
  }
  \begin{align*}
    \fst  \der{}{t}\Big[\D\exp_y^{-1}\circ\D\big[\exp_{\Phi^t(x)}^{-1}\circ\exp_y\big](0)\cdot
                        \D^n\big[\exp_y^{-1}\circ\Phi^t\circ\exp_x\big](0)\Big]\\
    &= \der{}{t}\Big[\D\exp_y^{-1}\circ\D\big[\exp_{\Phi^t(x)}^{-1}\circ\exp_y\big](0)\Big]\cdot
       \D^n \Phi^t(x)\\
    \con +\D\big[\D\exp_y^{-1}\big]\cdot\D^n\der{}{t}\big[\exp_y^{-1}\circ\Phi^t\circ\exp_x\big](0)\\
    &= \der{}{t}\Big[\D\exp_y^{-1}\circ\D\big[\exp_{\Phi^t(x)}^{-1}\circ\exp_y\big](0)\Big]\cdot
       \D^n \Phi^t(x)\\
    \con +\D\big[\D\exp_y^{-1}\big]\cdot\sum_{l=1}^n
          \D^l\big[\D\exp_y^{-1}\circ v\circ\exp_y\big](0)\cdot P_{l,n}(\D^\bullet\Phi^t(x)).
  \end{align*}
  Note that again all terms $l \ge 2$ in the sum are inhomogeneous
  terms that we will add to those already present
  in~\ref{eq:ODE-derflow-riem}. The homogeneous term is some linear
  vector field acting (from the left) on $\D^n\Phi^t(x)$ and it is
  precisely the vector field generating $\D\Phi^t(x)$, which is
  the original case $n = 1$. Hence, we can again define the
  operator $\Psi_x^{t,t_0}$ as post-composition with $\D\Phi^t(x)$ and
  write the flow of $\D^n\Phi^t(x)$ using a variation of constants
  integral with all the non-homogeneous terms. These terms again
  contain only lower order derivative flows $\D^l\Phi^t(x),\; l < n$.

  We can now take the operator norm of this expression. In principle
  we should be careful that this bundle norm depends on the changing
  target point $\Phi^t(x)$. The normal coordinates were chosen around
  $y = \Phi^t(x)$, however, and in these coordinates the derivative of
  the metric at the origin (corresponding to $y$) is zero, hence the
  norm has zero derivative. We can thus
  simply apply the operator norm to the variation of constants
  integral and obtain estimates as in~\ref{eq:derflow-est}. The
  additional factors introduced by differentiation of normal
  coordinate transition maps are bounded by
  Lemma~\ref{lem:bound-coordtrans} under the assumption that $(M,g)$
  is of $(k \tp 3)$\ndash bounded geometry. The inhomogeneous terms
  still contain at least one factor $\D^l\Phi^t(x)$, so the result in
  case $\rho < 0$ holds as well.
\end{proof}

These exponential growth results can be extended further to uniform and H\"older
continuity in the highest derivatives. The H\"older continuity then is
with respect to the growth rate $(k+\alpha)\rho$, where $k$ is the
order of the derivative and $0 < \alpha \le 1$ the H\"older constant.
Thus, $\alpha$\ndash H\"older continuity can be viewed as a fractional
derivative; Lipschitz continuity (when $\alpha = 1$) can indeed be
viewed as almost differentiability to one higher order. The case
$\alpha = 0$ we shall identify with uniform continuity. Here we have
no explicit modulus of continuity, which requires an arbitrarily small
additional $\mu > 0$ in the exponent $k\,\rho + \mu$ to compensate.

\begin{remark}[On using a global continuity modulus]
  \label{rem:glob-cont-mod}
  \index{continuity modulus!in bounded geometry}
  In the next lemma, as well as in
  Corollary~\ref{cor:exp-growth-holder-param} below, we shall make
  abuse of notation in writing expressions such as
  $\norm{s(x_2) - s(x_1)}$, where $s$ is a section of a vector bundle,
  cf.~\ref{eq:exp-growth-unifcont}, that is, we compare objects that
  live in different fibers of a vector bundle\footnote{%
    Note that the higher derivatives $\D^k \Phi^t(x)$ of a flow are
    actually interpreted as elements of a bundle of
    type~\ref{eq:multilin-bundle}. These bundles are still naturally
    induced by the tangent bundles of underlying manifolds, so all
    bounded geometry techniques, such as uniformity of normal
    coordinate charts, unique local trivializations by parallel
    transport, are induced on these bundles as well.%
  }. This notation should be interpreted according to
  Remark~\ref{rem:loc-cont-modulus}. That is, if $x_1,\,x_2$ are
  $M$\ndash close in the spirit of Definition~\ref{def:M-small}, then
  this is well-defined in terms of local charts, and for continuity
  estimates this is equivalent to an estimate by identification of the
  vector bundle over $x_1,\,x_2$ via parallel transport, cf.\ws
  Proposition~\ref{prop:unif-partrans}. If $x_1$ and $x_2$ are not
  close, then we can use any choice of isometric identification of the
  vector bundle over these points, such as the construction of
  parallel transport along solutions curves in
  Section~\ref{sec:formal-tangent}. In this case the notation can
  effectively be interpreted as an estimation by the sum of the norms
  of the separate terms with the triangle inequality. When applying
  this lemma, we shall always have such an isometric identification at
  hand, hence these arguments can be made rigorous, and the notation
  provides a sensible heuristic then.
\end{remark}

\begin{lemma}[Exponential growth estimates with H\"older continuity]
  \label{lem:exp-growth-holder}
  Let\/ $\Phi^{t,t_0} \in C^{k \ge 1}$ be the flow of a time-dependent
  vector field $v$ on a Riemannian manifold $(M,g)$ of $(k \tp 3)$\ndash
  bounded geometry. Let\/ $\D$ denote the partial derivative with
  respect to the spatial variable $x \in M$ as in
  Definition~\ref{def:deriv-normcoord} and let
  $v(t,\slot) \in \vf^{k,\alpha}_{b,u}(M),\,0 < \alpha \le 1$ with all
  derivatives jointly continuous in $(t,x) \in \R \times M$.  Suppose
  that\/ $\norm{\D\Phi^{t,t_0}(x)} \le C_1\,e^{\rho(t-t_0)}$ for all
  $x \in M,\, t \ge t_0$ and fixed $C_1 > 0,\,\rho > 0$.

  Then in addition to the results of Lemma~\ref{lem:exp-growth-riem},
  there exists a bound $C_{k,\alpha} > 0$ such that
  \begin{equation}\label{eq:exp-growth-holder}
    \forall\; t \ge t_0\colon
    \norm{\D^k \Phi^{t,t_0}}_\alpha \le C_{k,\alpha}\,e^{(k+\alpha)\,\rho(t-t_0)}.
  \end{equation}
  If instead $v(t,\slot) \in \vf^k_{b,u}(M)$, i.e.\ws the special case
  $\alpha = 0$, then for each $\mu > 0$ there exists a continuity
  modulus $\epsilon_{k,\mu}$ such that
  \begin{equation}\label{eq:exp-growth-unifcont}
    \forall\; t \ge t_0\colon
    \norm{\D^k \Phi^{t,t_0}(x_2)-\D^k \Phi^{t,t_0}(x_1)}
    \le \epsilon_{k,\mu}(d(x_1,x_2))\,e^{(k\,\rho+\mu)(t-t_0)},
  \end{equation}
  that is, $x \mapsto \big(t \mapsto \D^k \Phi^{t,t_0}(x)\big)$ is
  uniformly continuous in $x$, in $\norm{\slot}_{k\,\rho + \mu}$\ndash
  norm.
\end{lemma}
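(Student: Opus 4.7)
The plan is to mirror the induction proof of Lemma \ref{lem:exp-growth-riem}, but now propagate the spatial H\"older (respectively uniform) continuity modulus through the same variation of constants estimates. The central observation is that the linear flow $\Psi_x(t,\tau)$ acting by left-composition on $\D^k\Phi^t(x)$ is exactly the same solution operator as in the previous lemma; all that changes is that every inhomogeneous term in equation~\ref{eq:ODE-derflow-riem} carries an additional H\"older factor $d(x_1,x_2)^\alpha$ when we form the difference at two base points. As in Remark~\ref{rem:glob-cont-mod}, the comparison of quantities living in different fibers is interpreted via normal coordinates when $d(x_1,x_2)$ is $M$-small, and via parallel transport (with the holonomy correction of Lemma~\ref{lem:bound-holonomy}) otherwise; the estimates reduce to the local case.

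For the base case $k=1$, the tangent flow satisfies $\der{}{t}\D\Phi^t(x) = \D v(\Phi^t(x))\cdot \D\Phi^t(x)$. Subtracting the equations at $x_1$ and $x_2$ and applying variation of constants with the solution operator $\Psi$ yields
\begin{equation*}
   \D\Phi^t(x_2) - \D\Phi^t(x_1)
   = \int_{t_0}^t \Psi(t,\tau)\,[\D v(\Phi^\tau(x_2)) - \D v(\Phi^\tau(x_1))]\,\D\Phi^\tau(x_1)\d\tau,
\end{equation*}
to be interpreted in a common local trivialization. Using $\D v \in \BUC^\alpha$ with the already established Lipschitz bound $d(\Phi^\tau(x_1),\Phi^\tau(x_2)) \le C_1\,e^{\rho(\tau-t_0)}\,d(x_1,x_2)$, the integrand is bounded by $C\,e^{\rho(t-\tau)}\,d(x_1,x_2)^\alpha\,e^{\alpha\rho(\tau-t_0)}\,e^{\rho(\tau-t_0)}$, and the combined exponent after integration gives $(1+\alpha)\,\rho(t-t_0)$, as desired.

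For the induction step, Proposition~\ref{prop:compfunc-deriv} and the computation in~\ref{eq:ODE-derflow-riem} express $\der{}{t}\D^k\Phi^t(x)$ as a homogeneous part $\D v(\Phi^t(x))\cdot \D^k\Phi^t(x)$ plus an inhomogeneous remainder involving only $\D^l v(\Phi^t(x))$ and $\D^j\Phi^t(x)$ with $j<k$. Taking the difference at $x_1,x_2$, distributing the difference term-by-term over these polynomial factors and using Lemma~\ref{lem:holder-product} together with the induction hypothesis, every summand in the remainder acquires either a factor $d(x_1,x_2)^\alpha\,e^{\alpha\rho(\tau-t_0)}$ (when the difference hits $\D^l v$ via its H\"older continuity composed with the flow) or the analogous factor from the induction hypothesis applied to $\D^j\Phi^t$. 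In every case the total exponent in the integrand is $\rho(t-\tau) + (k+\alpha)\,\rho(\tau-t_0)$, and variation of constants integrates this to $C_{k,\alpha}\,e^{(k+\alpha)\rho(t-t_0)}\,d(x_1,x_2)^\alpha$, with the holonomy corrections produced by the normal coordinate/parallel transport comparison absorbed into the constant by Lemma~\ref{lem:bound-holonomy} and the bounded geometry hypothesis.

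The main obstacle is the case $\alpha=0$, where we lose the explicit modulus $d^\alpha$ and must introduce the compensating exponent $\mu>0$. The key trick is to define, from the uniform continuity modulus $\epsilon$ of $\D^l v$ (and of the lower-order derivative differences), the modified modulus
\begin{equation*}
  \tilde\epsilon_\mu(\delta) \;=\; \sup_{s\ge 0}\; \epsilon\bigl(\delta\,e^{\rho s}\bigr)\,e^{-\mu s};
\end{equation*}
for $\mu>0$ this is finite (because $\epsilon$ is bounded by $2\norm{\D v}$) and still tends to $0$ as $\delta\to0$ (splitting $s\le S$ versus $s>S$ with $S$ chosen so that $e^{-\mu S}\cdot 2\norm{\D v}$ is small, and using $\epsilon(\delta e^{\rho S})\to 0$ for the short-time piece). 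Then the estimate $\epsilon(d(x_1,x_2)\,e^{\rho(\tau-t_0)}) \le \tilde\epsilon_\mu(d(x_1,x_2))\,e^{\mu(\tau-t_0)}$ plays the role that $d^\alpha\,e^{\alpha\rho(\tau-t_0)}$ played before; integrating the resulting exponent $\rho(t-\tau)+(k\rho+\mu)(\tau-t_0)$ yields~\ref{eq:exp-growth-unifcont}. The induction on $k$ then proceeds as in the H\"older case, each step potentially shrinking the useful $\mu$ by a fixed amount, which is harmless since $\mu$ was arbitrary.
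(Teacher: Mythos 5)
Your proposal takes essentially the same route as the paper: the same variation of constants formula based on the linear solution operator $\Psi$ acting by left-composition, the same induction over $k$ with the inhomogeneous polynomial terms handled via Proposition~\ref{prop:compfunc-deriv}, and the same observation that the $k=1$ step already contains all the analytic content. The one noteworthy divergence is your handling of the $\alpha=0$ case: the paper dispatches it with a terse reference to Corollary~\ref{cor:unifcont-nemytskii}, whereas you make the trade-off explicit by introducing the modified modulus $\tilde\epsilon_\mu(\delta) = \sup_{s\ge 0} \epsilon(\delta e^{\rho s})\,e^{-\mu s}$ and verifying directly that it is finite and vanishes as $\delta\to 0$. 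This is a cleaner and more self-contained presentation of the same mechanism, and it makes transparent why an arbitrary $\mu>0$ is both necessary and sufficient. One small imprecision: for the cross-fiber comparison of derivatives the paper invokes Proposition~\ref{prop:unif-partrans} (equivalence of the normal-coordinate and parallel-transport continuity moduli), not Lemma~\ref{lem:bound-holonomy}; the latter is the curvature-holonomy bound used in Section~\ref{sec:cont-fiber-maps}, which plays no role in this particular proof.
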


\begin{remark}
  We restricted this lemma to the case $\rho > 0$ only. A result
  similar to that in~\ref{lem:exp-growth-riem} for $\rho < 0$ could be
  obtained for completeness sake, but it clutters the already detailed
  proof, while we do not need the result.
\end{remark}

\begin{proof}
  The idea of the proof is essentially the same as that of
  Lemma~\ref{lem:exp-growth-riem}. The additional difficulty is that
  (H\"older) continuity requires finite, non-differential estimates
  when comparing any two flows starting from different initial points
  $x_1,\,x_2 \in M$.

  Let $d(x_1,x_2) < \delta_M$ where $\delta_M$ is $M$\ndash small as
  in Definition~\ref{def:M-small}. We drop $t_0$ from the
  notation and define $\xi_i(t) = \Phi^t(x_i),\, i=1,2$ as the
  solution curves with $x_i$ as initial conditions. We want to study
  the growth behavior of
  \begin{equation}\label{eq:ODE-varflow}
    t \mapsto \D^k\Phi^t(x_2) - \D^k\Phi^t(x_1).
  \end{equation}
  Note that this difference is defined with respect to coordinate
  charts at source and target that contain $x_1,\,x_2$ and
  $\xi_1(t),\,\xi_2(t)$, respectively, but not in general.

  We denote by $\gamma_t$ the unique shortest geodesic that connects $\xi_1(t)$
  to $\xi_2(t)$ when $d(\xi_1(t),\xi_2(t)) < \delta_M$. Next, we set
  \begin{equation}\label{eq:ODE-varflow-partrans}
    \Upsilon^t = \D^k\Phi^t(x_2) \cdot \partrans(\gamma_0)^{\otimes k}
                -\partrans(\gamma_t) \cdot \D^k\Phi^t(x_1)
    \in \CLin^k\big(\T_{x_1} M; \T_{\xi_2(t)} M\big)
  \end{equation}
  to be the difference of the respective $k$\th order derivative
  flows, parallel transported to matching spaces at their source and
  target. It is easily verified that $\Upsilon^t$ satisfies initial
  conditions $\Upsilon^{t_0,t_0} = 0$ for any $k \ge 1$.

  Due to Proposition~\ref{prop:unif-partrans}, the formulation
  in~\ref{eq:ODE-varflow-partrans} with parallel transport to measure
  variation of the flows is equivalent to
  measuring~\ref{eq:ODE-varflow} in normal coordinate charts. Hence,
  if we study $\norm{\Upsilon^t}$ in charts, we may drop\footnote{%
    We could include the parallel transport terms, repeat similar
    arguments as in the proof of Lemma~\ref{lem:exp-growth-riem} and
    express everything in (induced) normal coordinate charts, but this
    would clutter the proof here even more. These terms would all be
    bounded and Lipschitz continuous by bounded geometry, hence not
    essentially alter the result.%
  } the parallel transport terms at the cost of an (unimportant)
  global factor in the estimates. We assume that
  $d(\xi_1(t),\xi_2(t)) < \delta_M$ and study $\Upsilon^t$ in a normal
  coordinate chart covering both points. Taking the difference
  of~\ref{eq:ODE-derflow} with $x_1,\,x_2$ inserted, we see that
  $\Upsilon^t$ satisfies the differential equation
  \begin{equation*}
    \begin{aligned}
      \der{}{t}\Upsilon^t
   &= \D v \circ \Phi^t(x_2) \cdot \Upsilon^t
     +\big[\D v \circ \Phi^t(x_2) - \D v \circ \Phi^t(x_1)\big] \cdot \D^k \Phi^t(x_1)\\
\con +\sum_{l=2}^k \D^l v\circ\Phi^t(x_2) \cdot P_{l,k}\big(\D^\bullet\Phi^t(x_2)\big)
     -(x_2 \rightsquigarrow x_1).
    \end{aligned}
  \end{equation*}
  This equation provides a variation of constants integral for
  $\Upsilon^t$ based on the flow $\Psi_{x_2}(t,t_0)$:
  \begin{equation}\label{eq:holder-varconst-int}
    \begin{aligned}
      \Upsilon^t &= \int_{t_0}^t
        \Psi_{x_2}(t,\tau) \cdot \big[\D v\circ\Phi^\tau(x_2) - \D v\circ\Phi^\tau(x_1)\big]
                           \cdot \D^k \Phi^\tau(x_1)\\
      &\hspace{0.8cm}
       +\Psi_{x_2}(t,\tau) \cdot
        \bigg[\sum_{l=2}^k \D^l v\circ\Phi^\tau(x_2) \cdot
                           P_{l,k}\big(\D^\bullet\Phi^\tau(x_2)\big)
             -(x_2 \rightsquigarrow x_1)
        \bigg] \d\tau.
    \end{aligned}
  \end{equation}
  We proceed by induction over $k$. For $k = 1$ we only have the first
  term of the integrand. Using that $\D v$ is uniformly $\alpha$\ndash H\"older,
  we have
  \begin{align*}
    \norm{\Upsilon^t}
    &\le \int_{t_0}^t \norm{\Psi_{x_2}(t,\tau)}\,
                      \norm{\D v\circ\Phi^\tau(x_2) - \D v\circ\Phi^\tau(x_1)}\,
                      \norm{\D\Phi^\tau(x_1)} \d\tau\\
    &\le \int_{t_0}^t C_1\,e^{\rho(t-\tau)}\,
                      \norm{\D v}_\alpha\,\norm{\Phi^\tau(x_2) - \Phi^\tau(x_1)}^\alpha\,
                      C_1\,e^{\rho(\tau-t_0)} \d\tau\\
    &\le C_1^2\,\norm{\D v}_\alpha\,e^{\rho(t-\tau)}\,
         \int_{t_0}^t {\big(C_1\,e^{\rho(\tau-t_0)}\,\norm{x_2 - x_1}\big)}^\alpha \d\tau\\
    &\le C_1^{2+\alpha}\,\norm{\D v}_\alpha\,e^{\rho(t-\tau)}\,\norm{x_2 - x_1}^\alpha\,
         \frac{e^{\alpha\,\rho(t-t_0)}}{\alpha\,\rho}.
  \end{align*}
  Next, in the induction step for $k > 1$, we get the additional terms
  from~\ref{eq:holder-varconst-int} in the integrand. These are (up to
  constants) a product of the flow $\Psi$, $\D^l v\circ\Phi^\tau(x)$
  and $\D^i\Phi(x)$'s with weighted degree $k$. The terms
  $\D^l v\circ\Phi^\tau(x)$ are uniformly $\alpha$\ndash H\"older
  continuous in $x$ analogous to the case $k = 1$ above. Each of the
  $\D^i\Phi(x)$'s satisfies the H\"older estimate of this lemma by the
  induction hypothesis and the growth estimates of
  Lemma~\ref{lem:exp-growth-basic}. Hence, for each term in the
  integrand, we obtain H\"older continuity with respect to $x$ with
  growth behavior at most
  $e^{\rho(t-\tau)}\,e^{(k+\alpha)\rho(\tau-t_0)}$. Integration then
  yields the stated result.

  Finally, the uniformly continuous case is an extension along the
  same lines as Corollary~\ref{cor:unifcont-nemytskii}. The map
  $x \mapsto \D^l v\circ\Phi^\tau(x)$ is uniformly continuous when
  measured in $\norm{\slot}_\mu$\ndash norm and by induction the
  alternative result~\ref{eq:exp-growth-unifcont} follows.
\end{proof}

Finally, we extend the H\"older continuous growth estimates to a
parameter dependent version. This is formulated to exactly fit the
context of derivatives of $T_\sx$ with respect to $y \in \BY$, such as
in~\ref{eq:DyTx}. Note that Remark~\ref{rem:glob-cont-mod} applies
again.
\begin{corollary}[Exponential growth with H\"older continuity and a parameter]
  \label{cor:exp-growth-holder-param}
  Assume the setting of Lemma~\ref{lem:exp-growth-holder}. Let the
  vector field $v$ furthermore depend on a third variable $y \in Y$
  such that\/ $\D_x^l v(t,x,\slot) \in \BUC^\alpha$ for all
  $0 \le l \le k$, uniformly in $t,x$ and that all original bounds are
  uniform in $y$ as well. Let $\eta \in B^\rho(\R;Y)$ denote a curve
  in $Y$ and\/ $\Phi^{t,t_0}_\eta$ the flow of $v(t,\slot,\eta(t))$.
  Assume that $\eta \mapsto \big(t \mapsto \Phi^{t,t_0}_\eta(x) \big)$
  is uniformly Lipschitz with respect to the distance function
  $d_\rho$ on curves $C(\R_{\ge t_0};X)$.

  Then the map $\eta \mapsto \D^k\Phi_\eta$ is H\"older continuous in
  the sense that there exists a bound $C_{k,\alpha,\sy} > 0$ such that
  \begin{equation}\label{eq:exp-growth-holder-param}
    \forall\; t \ge t_0,\, x \in X\colon
    \norm{\eta \mapsto \D^k \Phi^{t,t_0}_\eta(x)}_\alpha
    \le C_{\sy,\alpha}\,e^{(k+\alpha)\,\rho(t-t_0)}.
  \end{equation}
  In case of uniform continuity (i.e.\ws $\alpha = 0$), then for each
  $\mu > 0$ there exists a continuity modulus $\epsilon_{k,\mu,\sy}$
  such that
  \begin{equation}\label{eq:exp-growth-unifcont-param}
    \forall\; t \ge t_0,\, x \in X\colon
    \norm{\D^k \Phi^{t,t_0}_{\eta_2}(x)-\D^k \Phi^{t,t_0}_{\eta_1}(x)}
    \le \epsilon_{k,\mu,\sy}(d(\eta_1,\eta_2))\,e^{(k\,\rho+\mu)(t-t_0)}.
  \end{equation}
  In both cases we interpret the continuity moduli as globally defined
  using Remark~\ref{rem:loc-cont-modulus}.
\end{corollary}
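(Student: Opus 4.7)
The plan is to mirror the proof of Lemma~\ref{lem:exp-growth-holder} almost verbatim, with the parameter curve $\eta$ taking over the role that the perturbed initial condition $x_2$ played there, and with the hypothesis on Lipschitz dependence of $\Phi_\eta$ on $\eta$ serving as the bridge that converts variations in $\eta$ into variations of base points along the flow. Specifically, fix $x\in X$ and set
\begin{equation*}
  \Upsilon^t \;=\; \D^k\Phi_{\eta_2}^{t,t_0}(x)\cdot\partrans(\mathrm{id})^{\otimes k}
               \;-\; \partrans(\gamma_t)\cdot\D^k\Phi_{\eta_1}^{t,t_0}(x),
\end{equation*}
where $\gamma_t$ is the unique shortest geodesic between $\Phi^t_{\eta_1}(x)$ and $\Phi^t_{\eta_2}(x)$ when these points are close, and for points further apart we invoke Remark~\ref{rem:loc-cont-modulus} to interpret the difference globally. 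Since both flows share the same initial data $x$, we have $\Upsilon^{t_0}=0$, so the whole object is driven by the difference in the time-dependent generators.

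Next I would derive the differential equation for $\Upsilon^t$ by subtracting~\ref{eq:ODE-derflow} written for $\eta_1$ and $\eta_2$. The homogeneous part is $\D_x v(t,\Phi^t_{\eta_2}(x),\eta_2(t))\cdot\Upsilon^t$, whose solution operator is again (post-composition with) $\D\Phi^t_{\eta_2}(x)$ and satisfies the $C_1\,e^{\rho(t-t_0)}$ bound uniformly in $\eta$ by hypothesis. The inhomogeneous part consists of two sorts of terms: (a) the ``principal'' term
\begin{equation*}
  \bigl[\D_x v(t,\Phi^t_{\eta_2}(x),\eta_2(t))
        - \D_x v(t,\Phi^t_{\eta_1}(x),\eta_1(t))\bigr]\cdot\D^k\Phi^t_{\eta_1}(x),
\end{equation*}
which is absent in the $k=1$ step only at the ``$\D^k$ level'' but contributes at the induction step; and (b) for $l\ge 2$, the polynomial terms $\D^l v(\ldots)\cdot P_{l,k}(\D^\bullet \Phi^t_{\eta_i}(x))$ differenced between the two curves.

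The crucial estimate is that by the Lipschitz assumption on $\eta\mapsto\Phi_\eta$ with respect to $d_\rho$, we get $d\bigl(\Phi^t_{\eta_2}(x),\Phi^t_{\eta_1}(x)\bigr)\le L\,d_\rho(\eta_1,\eta_2)\,e^{\rho\,t}$. Combined with the assumption $\D^l_x v(t,\slot,\slot)\in\BUC^\alpha$, this yields
\begin{equation*}
  \norm*{\D^l_x v(t,\Phi^t_{\eta_2}(x),\eta_2(t)) - \D^l_x v(t,\Phi^t_{\eta_1}(x),\eta_1(t))}
  \;\le\; C\,d_\rho(\eta_1,\eta_2)^\alpha\,e^{\alpha\rho\,t},
\end{equation*}
while the polynomial $P_{l,k}(\D^\bullet\Phi^t_{\eta_i}(x))$ factors are controlled by Lemma~\ref{lem:exp-growth-riem} and, at the induction step, by the already established H\"older estimate for derivatives of order less than $k$ (with the induction hypothesis playing exactly the role that Lemma~\ref{lem:exp-growth-holder} played for the $x$-variable in the preceding proof). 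Plugging everything into the variation-of-constants integral produces the bound
\begin{equation*}
  \norm{\Upsilon^t}\;\le\; \int_{t_0}^t C_1\,e^{\rho(t-\tau)}
       \cdot C\,d_\rho(\eta_1,\eta_2)^\alpha\,e^{\alpha\rho\,\tau}
       \cdot C_k\,e^{k\rho(\tau-t_0)} \d\tau
   \;\le\; C_{k,\alpha,\sy}\,d_\rho(\eta_1,\eta_2)^\alpha\,e^{(k+\alpha)\rho(t-t_0)},
\end{equation*}
which is the desired estimate~\ref{eq:exp-growth-holder-param}. The uniform continuity statement~\ref{eq:exp-growth-unifcont-param} follows from the same calculation, replacing the H\"older modulus $\norm{\D^l v}_\alpha\,\delta^\alpha$ by an arbitrary continuity modulus $\epsilon(\delta)$ and sacrificing an arbitrary small $\mu>0$ in the integral exponent to absorb the extra Nemytskii-type continuity factor, exactly as in the end of Lemma~\ref{lem:exp-growth-holder}.

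The main obstacle I expect is not conceptual but organizational: the inhomogeneous terms of type (b) involve expressions of weighted degree $k$ in the $\D^i\Phi_{\eta_j}$, and one must be careful that when differencing, at least one factor supplies the H\"older exponent $\alpha$ (from either the H\"older continuity of some $\D^l v$, the induction hypothesis on a $\D^i \Phi_\eta$, or the direct $\eta$-dependence) while the remaining factors merely supply the right growth exponents whose sum is $k\,\rho$. As noted in the footnote to Lemma~\ref{lem:exp-growth-holder}, one may simultaneously have to track the parallel transport terms in the coordinate expressions, but their derivatives are globally bounded by bounded geometry (Lemma~\ref{lem:bound-coordtrans}), so they contribute only harmless constants to $C_{k,\alpha,\sy}$.
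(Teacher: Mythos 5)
Your proposal is correct and follows essentially the same route as the paper's proof: define the parallel-transported variation $\Upsilon^t$ (with trivial $\gamma_0$ since both flows share the initial point $x$), set up the variation-of-constants integral for $\Upsilon^t$, and then, by induction over the derivative order, combine the Lipschitz estimate $d(\Phi^t_{\eta_2}(x),\Phi^t_{\eta_1}(x))\le L\,d_\rho(\eta_1,\eta_2)\,e^{\rho t}$ with the uniform $\alpha$-H\"older continuity of the $\D^l_x v$ in $(x,y)$, distributing the variation over the factors of each integrand term so that exactly one factor carries the H\"older (or, for $\alpha=0$, the arbitrary-$\mu$ Nemytskii) penalty. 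Your remark that the parallel-transport terms contribute only bounded constants via bounded geometry also matches the paper's treatment.
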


\begin{proof}
  The proof closely follows that of Lemma~\ref{lem:exp-growth-holder};
  let us indicate the differences.

  We define the variation
  \begin{equation}\label{eq:ODE-varflow-param}
    \Upsilon^t = \D^k\Phi^t_{\eta_2}(x) \cdot \partrans(\gamma_0)^{\otimes k}
                -\partrans(\gamma_t) \cdot \D^k\Phi^t_{\eta_1}(x)
  \end{equation}
  and study it by a variation of constants integral in local charts,
  similar to~\ref{eq:holder-varconst-int}. In this case we obtain
  \begin{align*}
      \Upsilon^t &= \int_{t_0}^t
        \Psi_{\eta_2}(t,\tau) \cdot \Bigg(
        \Big[\D v\big(\tau,\Phi^\tau_{\eta_2}(x),\eta_2(\tau)\big)
           - \D v\big(\tau,\Phi^\tau_{\eta_1}(x),\eta_1(\tau)\big)\Big] \cdot
        \D^k \Phi^\tau_{\eta_2}(x)\\
      &\hspace{1.8cm}
      +\bigg[\sum_{l=2}^k \D^l v\big(\tau,\Phi^\tau_{\eta_2}(x),\eta_2(\tau)\big) \cdot
                          P_{l,k}\big(\D^\bullet\Phi^\tau_{\eta_2}(x)\big)
             -(2 \rightsquigarrow 1)
       \bigg]\Bigg) \d\tau.
    \eqnumber\label{eq:holder-param-varconst-int}
  \end{align*}
  As in Lemma~\ref{lem:exp-growth-holder}, the factors
  $\Psi_{\eta_2}(t,\tau)$ and $\D^l \Phi^\tau_{\eta_2}(x)$ satisfy
  appropriate exponential growth conditions. By induction over $l < k$
  the maps $\eta \mapsto \D^l \Phi^t_\eta(x)$ are $\alpha$\ndash
  H\"older continuous, while all $\D^l v$ are uniformly $\alpha$\ndash
  H\"older in $x,\,y$, and $\eta \mapsto \D^l \Phi^t_\eta(x)$ is
  uniformly Lipschitz by assumption, so
  $\eta \mapsto \D^l v\big(t,\Phi^t_\eta(x),\eta(t)\big)$ is also
  $\alpha$\ndash H\"older when measured in
  $\norm{\slot}_{\alpha\,\rho}$\ndash norm (or in
  $\norm{\slot}_\mu$\ndash norm in case of uniform continuity, see
  Appendix~\ref{chap:nemytskii}).

  In each term of the integrand, we can estimate the variation with
  respect to $\eta$ as a sum of the variations with respect to each
  factor (a product rule). The factor that is being varied adds
  $e^{\alpha\,\rho(\tau-t_0)}$ (or $e^{\mu(\tau-t_0)}$ in case $\alpha=0$) to the overall
  growth estimate. The proof is completed by inserting all these
  estimates into~\ref{eq:holder-param-varconst-int} and again using the
  fact that we have a finite number of globally bounded terms.
\end{proof}


\chapter{The fiber contraction theorem}\label{chap:fibercontr}
\index{fiber contraction theorem}

In this appendix, we give a proof of the fiber contraction theorem.
This result is originally due to Hirsch and
Pugh~\cite{Hirsch1970:stabmflds-hyperb}; the proof presented here is
taken from Vanderbauwhede~\cite[p.~105]{Vanderbauwhede1989:centremflds-normal}.
The fiber contraction theorem is a convenient general tool to obtain
convergence of functions in $C^k$\ndash norm when a direct contraction
in $C^k$\ndash norm is not available. Instead, one inductively
constructs contractions for the $k$\th derivative with all lower order
derivatives assumed fixed. If this contraction depends continuously on
the lower order derivatives, then the fiber contraction theorem can be
applied to conclude that the sequence of the function together with
its derivatives converges to a fixed point. With the additional
theorem on the differentiability of limit functions, it can then be
concluded that the sequence converges in $C^k$\ndash norm.

\enlargethispage*{0.1cm}
\begin{theorem}[Fiber contraction theorem]
  \label{thm:fibercontr}
  Let\/ $X$ be a topological space, $(Y,d)$ a complete metric space and
  let $F: X \times Y \to X \times Y$ be a fiber mapping, that is,
  $F(x,y) = \big(F_1(x), F_2(x,y)\big)$, with the following properties:
  \begin{enumerate}
  \item\label{enum:fibercontr-1}%
    $F_1$ has a unique, globally attracting fixed point
    $x^\star \in X$, that is,
    \begin{equation*}
      \forall\, x \in X\colon \lim_{n \to \infty} F_1^n(x) = x^\star;
    \end{equation*}
  \item\label{enum:fibercontr-2}%
    there is a neighborhood $U \subset X$ of $x^\star$, such that
    $F_2: U \times Y \to Y$ is a uniform contraction on $Y$ with
    contraction factor $\contr < 1$; let $y^\star \in Y$ denote the
    unique fixed point of $F_2(x^\star,\slot): Y \to Y$, as given by
    the Banach fixed point theorem;
  \item\label{enum:fibercontr-3}%
    the mapping $F_2(\slot,y^\star)\colon X \to Y$ is continuous.
  \end{enumerate}

  If only properties~\ref{enum:fibercontr-1}
  and~\ref{enum:fibercontr-2} are assumed, then $(x^\star,y^\star)$ is
  the unique fixed point of\/ $F$. If moreover~\ref{enum:fibercontr-3}
  holds, then this fixed point is globally attractive.
\end{theorem}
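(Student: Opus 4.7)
The plan is to establish uniqueness first, then global attractiveness, by iterating $F$ and analyzing the two components separately.

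For uniqueness under conditions (i) and (ii) alone: any fixed point $(x,y)$ of $F$ satisfies $F_1(x) = x$, which forces $x = x^\star$ by (i); then $F_2(x^\star, y) = y$, which forces $y = y^\star$ by the Banach fixed point statement embedded in (ii) applied to the contraction $F_2(x^\star, \slot)$ on $Y$. Conversely $(x^\star, y^\star)$ is a fixed point, since $F_1(x^\star) = x^\star$ and $F_2(x^\star, y^\star) = y^\star$ by definition.

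For global attractiveness under (i)--(iii): fix an arbitrary $(x_0, y_0)$ and set $(x_n, y_n) = F^n(x_0, y_0)$, so $x_{n+1} = F_1(x_n)$ and $y_{n+1} = F_2(x_n, y_n)$. By (i), $x_n \to x^\star$, hence there exists $N_0$ with $x_n \in U$ for all $n \ge N_0$. For such $n$, the triangle inequality together with the contraction estimate from (ii) gives
\begin{equation*}
  d(y_{n+1}, y^\star)
  \le d(F_2(x_n,y_n), F_2(x_n,y^\star)) + d(F_2(x_n,y^\star), F_2(x^\star,y^\star))
  \le \contr\, d(y_n,y^\star) + \epsilon_n,
\end{equation*}
where $\epsilon_n = d(F_2(x_n,y^\star), y^\star)$. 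By (iii), $F_2(\slot,y^\star)$ is continuous at $x^\star$, so $\epsilon_n \to 0$.

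The remaining step is a standard error-bound lemma for the linear recurrence $a_{n+1} \le \contr\,a_n + \epsilon_n$ with $\contr \in (0,1)$ and $\epsilon_n \to 0$: given $\epsilon > 0$, choose $N \ge N_0$ with $\epsilon_n \le (1-\contr)\epsilon/2$ for $n \ge N$, iterate to obtain $d(y_{N+m}, y^\star) \le \contr^m d(y_N,y^\star) + \epsilon/2$, and take $m$ large. Hence $y_n \to y^\star$, completing the proof.

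The only subtlety---not really an obstacle---is that the contraction hypothesis in (ii) is local, holding only on $U \times Y$ rather than on all of $X \times Y$; this is harmless because (i) guarantees that $x_n$ eventually enters $U$ and stays there, and the finite initial segment $n < N_0$ contributes only a finite value of $d(y_{N_0}, y^\star)$ that is subsequently damped by the factor $\contr^m$. Condition (iii) is precisely the ingredient that turns the perturbed contraction estimate into convergence of $y_n$ to $y^\star$.
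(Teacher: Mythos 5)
Your proof is correct and follows essentially the same route as the paper: the uniqueness argument is identical, and the attractiveness argument reduces to the same perturbed contraction recurrence $d(y_{n+1},y^\star)\le q\,d(y_n,y^\star)+\epsilon_n$ with $\epsilon_n\to 0$ via condition (iii). The only difference is cosmetic: you close the argument with the standard $\epsilon$-bookkeeping (pick $N$ so the tail of $\epsilon_n$ is small, then damp the initial error by $q^m$), whereas the paper introduces auxiliary sequences $\bar\alpha_k$ and $\delta_{n,k}$ to encode the same estimate.
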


\begin{proof}
  The point $(x^\star,y^\star)$ is clearly the unique fixed point of
  $F$, where property~\ref{enum:fibercontr-1} implies
  uniqueness of $x^\star$ as fixed point of $F_1$
  and~\ref{enum:fibercontr-2} uniqueness of $y^\star$ under
  $F_2(x^\star,\slot)$.

  The point $x^\star$ is by assumption attractive under $F_1$, thus
  for the final conclusion of global attractivity, it remains to show
  that $y \to y^\star$ under $F$.

  Let $(x,y) \in X \times Y$ be arbitrary and consider the sequence
  $(x_n,y_n) = F^n(x,y)$ for $n \ge 0$. Since $x_n \to x^* \in U$,
  there exists an $N \in \N$ such that $x_n \in U$ for all $n \ge N$.
  By shifting the sequence $(x_n,y_n)$, we can assume without loss of
  generality that $x_n \in U$ for all $n \ge 0$ and use
  property~\ref{enum:fibercontr-2} to estimate
  \begin{align*}
         d(y_{n+1},y^\star)
    &=   d(F_2(x_n,y_n)    ,F_2(x^\star,y^\star))\\
    &\le d(F_2(x_n,y_n)    ,F_2(x_n    ,y^\star))
       + d(F_2(x_n,y^\star),F_2(x^\star,y^\star))\\
    &\le \contr\,d(y_n,y^\star) + \alpha_n.
    \label{eq:fibcontr-y}\eqnumber
  \end{align*}
  On the other hand,
  $\alpha_n = d(F_2(x_n,y^\star),F_2(x^\star,y^\star)) \to 0$ as
  $n \to \infty$ from properties~\ref{enum:fibercontr-1}
  and~\ref{enum:fibercontr-3}. Let
  $\bar{\alpha}_k = \sup_{n \ge k} \alpha_n$, then we also have
  $\bar{\alpha}_k \to 0$.

  For each $k \in \N$, let $\delta_{k,k} = d(y_k,y^\star)$ and
  recursively define
  $\delta_{n+1,k} = \contr\,\delta_{n,k} + \bar{\alpha}_k$.
  From~\ref{eq:fibcontr-y} we see that
  $d(y_n,y^\star) \le \delta_{n,k}$ when $n \ge k$. Now the map
  $f\colon \delta \mapsto \contr\,\delta + \bar{\alpha}$ is a
  contraction for any $\bar{\alpha} \in \R$, so it has a unique,
  attractive fixed point $\delta^\star(\bar{\alpha})$ and solving the
  equation $f(\delta^\star) = \delta^\star$ yields
  \begin{equation*}
    \delta^\star = \frac{\bar\alpha}{1-\contr}.
  \end{equation*}
  Let $\epsilon > 0$ be given and choose $k$ large enough that
  $\bar{\alpha}_k < \mfrac{1}{2}(1-\contr)\epsilon$. As
  $\lim_{n \to \infty} \delta_{n,k} = \delta_k^\star$ we see that
  there exists some $N$ such that
  \begin{equation*}
    \forall\; n \ge N\colon\quad
    \delta_{n,k}
    < 2 \delta_k^\star = \frac{2\,\bar{\alpha}_k}{1-\contr}
    < \epsilon.
  \end{equation*}
  From this we conclude that $d(y_n,y^\star) < \epsilon$ for all $n \ge N$.
\end{proof}

The following theorem is quite standard. We shall extend it to smooth
manifolds and higher derivatives, though.
\begin{theorem}[Differentiability of limit functions]
  \label{thm:difflimitfunc}
  Let\/ $Y$ be a Banach space and let $C^k(\R^n;Y)$ denote the space
  of\/ $C^k$ functions $\R^n \to Y$ equipped with the weak Whitney
  topology. Let ${\{f_n\}}_{n \ge 0}$ be a sequence in $C^1(\R^n;Y)$
  that converges to $f \in C^0(\R^n;Y)$ with respect to the $C^0$
  topology, and assume that there is a function
  $g \in C^0(\R^n;\CLin(\R^n;Y))$ such that\/ $\D f_n \to g$.

  Then $\D f = g$, or in other words, $f_n \to f$ in $C^1(\R^n;Y)$
  with respect to the weak Whitney topology.
\end{theorem}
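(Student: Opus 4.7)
The plan is to pass from derivatives back to function values via the fundamental theorem of calculus, then use the fact that the weak Whitney topology on $C^0(\R^n;\CLin(\R^n;Y))$ coincides with the compact-open topology (uniform convergence on compact sets) to swap limit and integral.

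First I would fix an arbitrary point $x_0 \in \R^n$ and a direction $v \in \R^n$. Since each $f_n \in C^1(\R^n;Y)$ and $Y$ is a Banach space, the fundamental theorem of calculus applied to the $C^1$ curve $s \mapsto f_n(x_0 + s v)$ yields, for every $t \in \R$,
\[
  f_n(x_0+tv) - f_n(x_0) \;=\; \int_0^t \D f_n(x_0+sv)\cdot v \,\d s.
\]
Next I would let $n \to \infty$. The left-hand side converges to $f(x_0+tv) - f(x_0)$ because $f_n \to f$ uniformly on compact sets (in particular at the two evaluation points). For the right-hand side, the compact segment $\{x_0 + sv : s \in [0,t]\}$ is compact in $\R^n$, and convergence $\D f_n \to g$ in the weak Whitney topology means uniform convergence on this compact set; this uniform convergence of the (continuous) integrands on $[0,t]$ licenses interchanging the limit with the integral, giving
\[
  f(x_0+tv) - f(x_0) \;=\; \int_0^t g(x_0+sv)\cdot v \,\d s.
\]

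Finally, since $g \in C^0$, the integrand $s \mapsto g(x_0+sv)\cdot v$ is continuous in $s$, so the right-hand side is a $C^1$ function of $t$ with derivative $g(x_0+tv)\cdot v$. Differentiating at $t=0$ shows that the directional derivative of $f$ at $x_0$ in direction $v$ exists and equals $g(x_0)\cdot v$. Because $g(x_0) \in \CLin(\R^n;Y)$ is a bounded linear operator and the map $x_0 \mapsto g(x_0)$ is continuous, standard calculus in Banach spaces upgrades the existence of continuous directional derivatives that depend linearly on $v$ to full (Fr\'echet) differentiability with $\D f(x_0) = g(x_0)$. Since $g$ is continuous, $f \in C^1(\R^n;Y)$, and we already have $f_n \to f$ uniformly on compacta and $\D f_n \to \D f = g$ uniformly on compacta, so $f_n \to f$ in the weak Whitney topology on $C^1(\R^n;Y)$.

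The only genuine obstacle is the exchange of limit and integral, and this is precisely what the weak Whitney (compact-open) topology is tailored to handle; no pointwise-versus-uniform subtleties arise because the integration domain $[0,t]$ is compact. The extension mentioned in Remark~\ref{rem:difflimit-mflds} to smooth manifolds follows by applying the argument in local charts, and the iteration to higher derivatives needed for Corollary~\ref{cor:difflimitfunc} is a straightforward induction once the base case is in place.
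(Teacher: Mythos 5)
Your proposal follows essentially the same route as the paper's proof: fundamental theorem of calculus applied to the curves $s\mapsto f_n(x_0+sv)$, interchange of limit and integral using uniform convergence on the compact segment, then differentiation of the resulting integral identity for $f$. The only divergence is at the final step. The paper does not invoke an external theorem on "continuous directional derivatives imply Fréchet differentiability" but instead observes that the integral identity you both derive,
\[
  f(x_0+h) - f(x_0) - g(x_0)\cdot h \;=\; \int_0^1 \bigl[g(x_0+\tau h) - g(x_0)\bigr]\cdot h \,\d\tau,
\]
immediately gives the remainder bound
\[
  \norm{f(x_0+h)-f(x_0)-g(x_0)\cdot h}
  \;\le\; \sup_{\tau\in[0,1]}\norm{g(x_0+\tau h)-g(x_0)}\,\norm{h},
\]
which is $o(\norm{h})$ by continuity of $g$. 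This is slightly more economical and self-contained, since it skips the detour through directional derivatives and uses the formula you already have in hand; it also avoids any quibble about precisely which version of the "continuous partials $\Rightarrow C^1$" theorem applies in a Banach codomain. Your appeal to the standard theorem is nonetheless correct, so the proof as given is sound; you could tighten it by replacing that appeal with the direct estimate above.
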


\begin{proof}
  By the fundamental theorem of calculus we have
  \begin{equation*}
    f_n(x+t\,h) = f_n(x) + \int_0^t \der{}{\tau} f_n(x+\tau\,h) \d\tau
                = f_n(x) + \int_0^t \D f_n(x+\tau\,h)\cdot h \d\tau.
  \end{equation*}
  Uniform convergence of $\D f_n \to g$ on the compact set
  $\set{x + \tau h}{\tau \in \intvCC{0}{t}}$ allows us to take the
  limit $n \to \infty$ inside the integral to obtain
  \begin{equation*}
    f(x+t\,h) = f(x) + \int_0^t g(x+\tau\,h)\cdot h \d\tau,
  \end{equation*}
  and by differentiation with respect to $t$ we conclude that
  $g(x)\cdot h$ is the directional derivative of $f$ at $x$ along~$h$.

  Note that $g(x)\colon \R^n \to Y$ is a bounded linear operator by
  assumption, so let us verify that it is the total derivative,
  $\D f(x) = g(x)$, that is,
  \begin{equation*}
    \lim_{h \to 0} \frac{\norm{f(x+h) - f(x) - g(x) \cdot h}}{\norm{h}} = 0.
  \end{equation*}
  Using the mean value theorem, we have
  \begin{equation*}
    \norm{f(x+h) - f(x) - g(x) \cdot h}
    \le \sup_{\xi \in \intvCC{0}{1}} \norm[\big]{g(x+\xi\,h) - g(x)}\norm{h}
  \end{equation*}
  and $g$ is continuous, so indeed differentiability holds
  and $\D f(x) = g(x)$.
\end{proof}

\begin{remark}\label{rem:difflimit-mflds}
  The statement that $f$ is differentiable at $x$ is local, so this
  result immediately translates to maps $C^1(X;Y)$ with $X$ a smooth
  manifold by considering a local coordinate chart around $x \in X$.

  This theorem could probably be generalized even further such that
  $X,\,Y$ are allowed to be Banach manifolds. The fact that $g$ is
  continuous linear by assumption mitigates possible convergence
  problems when having to consider infinitely many independent partial
  derivatives. We should be careful though, since the weak Whitney (or
  compact-open) topology is not clearly defined anymore when $X$ is
  infinite-dimensional.
\end{remark}

\begin{corollary}
  \label{cor:difflimitfunc}
  Assume the setting of\/ Theorem~\ref{thm:difflimitfunc}. Let
  ${\{f_n\}}_{n \ge 0}$ be a sequence in $C^{k \ge 2}(\R^n;Y)$ that
  converges to $f$ in $C^{k-1}(\R^n;Y)$ and let\/
  $\D^k f_n \to g$ converge in $C^0(\R^n;\CLin^k(\R^n;Y))$. Then
  $f_n \to f$ converges in $C^k(\R^n;Y)$.
\end{corollary}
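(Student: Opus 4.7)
The plan is a direct reduction to Theorem~\ref{thm:difflimitfunc} applied one derivative up. Since $\CLin^{k-1}(\R^n;Y)$ is again a Banach space, I will view the sequence of $(k-1)$\ndash th derivatives $\D^{k-1} f_n$ as $C^1$ maps
\begin{equation*}
  \D^{k-1} f_n \colon \R^n \to \CLin^{k-1}(\R^n;Y),
\end{equation*}
whose (total) derivatives are precisely the $\D^k f_n$, after the canonical identification
$\CLin\big(\R^n;\CLin^{k-1}(\R^n;Y)\big) \cong \CLin^k(\R^n;Y)$.

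The assumption that $f_n \to f$ in $C^{k-1}(\R^n;Y)$ (with the weak Whitney topology) gives, in particular, that $\D^{k-1} f_n \to \D^{k-1} f$ in $C^0\big(\R^n;\CLin^{k-1}(\R^n;Y)\big)$. The assumption that $\D^k f_n \to g$ in $C^0\big(\R^n;\CLin^k(\R^n;Y)\big)$ then says exactly that the derivatives $\D(\D^{k-1} f_n)$ converge in $C^0$ to $g$. Applying Theorem~\ref{thm:difflimitfunc} with the Banach space $Y$ replaced by $\CLin^{k-1}(\R^n;Y)$ and the sequence $\D^{k-1} f_n$ in place of $f_n$, I conclude that $\D^{k-1} f$ is differentiable and $\D(\D^{k-1} f) = g$, i.e.\ws $\D^k f = g$.

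This immediately implies $f \in C^k(\R^n;Y)$ and that $\D^j f_n \to \D^j f$ in $C^0$ for every $0 \le j \le k$ (for $j < k$ from the $C^{k-1}$ convergence, and for $j = k$ from the identification $\D^k f = g$ together with the hypothesis $\D^k f_n \to g$). Thus $f_n \to f$ in $C^k(\R^n;Y)$ with respect to the weak Whitney topology. There is essentially no obstacle here beyond bookkeeping: the only thing to be careful about is the canonical identification $\CLin(\R^n;\CLin^{k-1}(\R^n;Y)) \cong \CLin^k(\R^n;Y)$, which is an isometry of Banach spaces and hence preserves both $C^0$ convergence and differentiability, so the hypothesis of Theorem~\ref{thm:difflimitfunc} is genuinely satisfied in the larger Banach target.
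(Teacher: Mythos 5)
Your proof is correct and matches the paper's own argument exactly: both apply Theorem~\ref{thm:difflimitfunc} to the sequence $\D^{k-1} f_n$ with target Banach space $\CLin^{k-1}(\R^n;Y)$, using the canonical identification $\CLin(\R^n;\CLin^{k-1}(\R^n;Y)) \cong \CLin^k(\R^n;Y)$. The paper states this in a single line; you have simply written out the bookkeeping.
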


This is a trivial extension of Theorem~\ref{thm:difflimitfunc} when
using the natural identification
$\CLin(\R^n;\CLin^{k-1}(\R^n;Y)) \cong \CLin^k(\R^n;Y)$.


\chapter{Nonlinear variation of flows}\label{chap:nonlin-varconst}

In this appendix we collect two results on variation of nonlinear
flows. The first is a generalization of Lagrange's variation of
constants formula and the second is an application of it to calculate
the derivative of a flow with respect to parameters. Both results are
formulated for fully nonlinear flows.

The classical variation of constants integral due to Lagrange is well
known. Although Lagrange applied this method to the nonlinear problem
of orbital mechanics, a less known result of
Alekseev~\cite{Alekseev1961:pert-ODE} (see
also~\cite[p.~78]{Lakshmikantham1969:diffintineq}) generalizes the
variation of constants integral to the full nonlinear case.

\begin{theorem}[Nonlinear variation of constants]
  \label{thm:nonlin-varconst}
  Let\/ $X$ be a smooth manifold and let\/ $\Phi^{t,t_0}(x)$ be
  the flow generated by the time-dependent vector field $v(t,x)$,
  locally Lipschitz in $x$. Let $r(t,x)$ be an arbitrary (not
  necessarily small) perturbation, locally Lipschitz in $x$ as well.
  Then $\Phi_r^{t,t_0}(x)$ is the flow generated by $v + r$ if and
  only if it satisfies the nonlinear variation of constants formula
  \begin{equation}\label{eq:nonlin-varconst}
    \Phi_r^{t,t_0}(x) =
    \Phi  ^{t,t_0}(x) + \int_{t_0}^t \D\Phi(t,\tau,\Phi_r^{\tau,t_0}(x))\,
                                            r(\tau,\Phi_r^{\tau,t_0}(x))  \d\tau.
  \end{equation}
\end{theorem}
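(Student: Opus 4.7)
The key idea is to interpolate between the two sides of~\ref{eq:nonlin-varconst} by propagating the perturbed flow backwards along the unperturbed flow. Concretely, the plan is to define, for fixed $t, t_0, x$ and $\tau \in \intvCC{t_0}{t}$, the auxiliary curve
\begin{equation*}
  \gamma(\tau) \;=\; \Phi^{t,\tau}\big(\Phi_r^{\tau,t_0}(x)\big),
\end{equation*}
and to show that $\gamma(t) - \gamma(t_0)$ equals the integral appearing on the right-hand side. The two endpoints are essentially free: the group property of $\Phi$ gives $\gamma(t_0) = \Phi^{t,t_0}(x)$ (since $\Phi_r^{t_0,t_0}(x) = x$), while $\gamma(t) = \Phi_r^{t,t_0}(x)$ (since $\Phi^{t,t}$ is the identity). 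So the content of the formula is entirely in the derivative of $\gamma$.

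The main calculation is then a chain-rule computation of $\gamma'(\tau)$, in which two terms appear and one of them is designed to cancel. Differentiating $\Phi_r^{\tau,t_0}(x)$ in $\tau$ yields $(v+r)(\tau,\Phi_r^{\tau,t_0}(x))$, so the spatial chain-rule contribution is $\D\Phi^{t,\tau}(\Phi_r^{\tau,t_0}(x))\cdot(v+r)(\tau,\Phi_r^{\tau,t_0}(x))$. The $\tau$\ndash derivative of $\Phi^{t,\tau}(y)$ at fixed $y$ is, by differentiating the identity $\Phi^{t,s}\circ\Phi^{s,\tau}(y)=\Phi^{t,\tau}(y)$ in $\tau$ at $\tau=s$, given by $-\D\Phi^{t,\tau}(y)\cdot v(\tau,y)$. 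Substituting $y=\Phi_r^{\tau,t_0}(x)$ and summing the two contributions makes the $v$\ndash terms cancel, leaving exactly
\begin{equation*}
  \gamma'(\tau) = \D\Phi^{t,\tau}\big(\Phi_r^{\tau,t_0}(x)\big)\,r\big(\tau,\Phi_r^{\tau,t_0}(x)\big).
\end{equation*}
Integrating from $t_0$ to $t$ and using the two boundary values gives~\ref{eq:nonlin-varconst}, establishing the ``only if'' direction. For the converse, I would differentiate both sides of~\ref{eq:nonlin-varconst} with respect to $t$ using Leibniz's rule, use $\partial_t \Phi^{t,\tau}(y) = v(t,\Phi^{t,\tau}(y))$ inside the integrand, and combine the boundary term at $\tau=t$ (which produces $r(t,\Phi_r^{t,t_0}(x))$) with $v$ coming out of the $t$\ndash derivative of the unperturbed pieces; uniqueness for the resulting Cauchy problem $\dot{y}=(v+r)(t,y)$ with $y(t_0)=x$ then identifies the integral equation's solution with the flow of $v+r$.

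The main obstacle I expect is the regularity bookkeeping: the theorem is stated for $v,r$ merely locally Lipschitz, but the formula involves $\D\Phi$, which already presupposes that $\Phi$ is $C^1$ in its spatial argument. Under only local Lipschitz assumptions, $\D\Phi$ need not exist pointwise, so one must either (i) tacitly strengthen the hypothesis to $v\in C^1$ in $x$ (which is the setting in which the statement is genuinely used, cf.\ Appendix~\ref{chap:smoothflows} and Theorem~\ref{thm:unif-smooth-flow}), or (ii) interpret~\ref{eq:nonlin-varconst} in a suitable almost-everywhere / Rademacher sense. In the smooth case, the chain-rule manipulation above is fully rigorous on the maximal interval of definition, and all the integrands are continuous along the orbit, so the computation goes through without further analytic subtlety.
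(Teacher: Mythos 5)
Your argument is exactly the paper's: both define the interpolating curve $\gamma(\tau)=\Phi^{t,\tau}\circ\Phi_r^{\tau,t_0}(x)$, differentiate it by the chain rule, observe the cancellation of the $v$-terms so that $\gamma'(\tau)=\D\Phi^{t,\tau}(\Phi_r^{\tau,t_0}(x))\,r(\tau,\Phi_r^{\tau,t_0}(x))$, and integrate; the converse is handled by uniqueness of solutions. Your side-remark on the regularity mismatch (the statement assumes only local Lipschitz, yet $\D\Phi$ is invoked) is a fair observation — the paper indeed uses this result only in the $C^1$ setting of Theorem~\ref{thm:unif-smooth-flow} — but the underlying proof is the same.
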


\begin{proof}
Using uniqueness of solutions, it is sufficient to show
that~\ref{eq:nonlin-varconst} satisfies the differential equation and
initial conditions $\Phi_r^{t,t}(x) = x$. The latter follows
automatically from $\Phi^{t,t}(x) = x$. For the first part, we
differentiate
\begin{align*}
  \der{}{\tau} \Big[\Phi^{t,\tau}\circ\Phi_r^{\tau,t_0}(x)\Big]
  &=  \pder{}{\tau}\,\Phi^{t,\tau}(y)\Big|_{y = \Phi_r^{\tau,t_0}(x)}
    + \D\Phi^{t,\tau}(\Phi_r^{\tau,t_0}(x))\cdot\der{}{\tau}\Phi_r^{\tau,t_0}(x)\\
  &= -\D\Phi^{t,\tau}(\Phi_r^{\tau,t_0}(x))\cdot v   (\tau,\Phi_r^{\tau,t_0}(x))\\
  \con
    + \D\Phi^{t,\tau}(\Phi_r^{\tau,t_0}(x))\cdot(v+r)(\tau,\Phi_r^{\tau,t_0}(x))\\
  &=  \D\Phi^{t,\tau}(\Phi_r^{\tau,t_0}(x))\cdot r   (\tau,\Phi_r^{\tau,t_0}(x)).
\end{align*}
This expression yields~\ref{eq:nonlin-varconst} when integrated from
$t_0$ to $t$.
\end{proof}

Notice that~\ref{eq:nonlin-varconst} looks ill-defined on a manifold,
but should be read as integration from the point $x$ along the vector
field defined by the integrand, which is indeed, for each
$\tau \in \intvCC{t_0}{t}$, exactly defined to be the tangent vector to
the curve $\tau \mapsto \Phi^{t,\tau}\circ\Phi_r^{\tau,t_0}(x)$,
making the equation self-consistent. If $(X,g)$ is a Riemannian
manifold, then this formula yields the distance estimate
\begin{equation}\label{eq:nonlin-varconst-est}
  d\big(\Phi_r^{t,t_0}(x),\Phi^{t,t_0}(x)\big) \le
  \int_{t_0}^t \norm*{\D\Phi(t,\tau,\Phi_r^{\tau,t_0}(x))\,
                             r(\tau,\Phi_r^{\tau,t_0}(x))} \d\tau.
\end{equation}

As a differential variant of the previous result, we state the
following.
\begin{theorem}[Differentiation of a flow]
  \label{thm:diff-flow}
  Let\/ $\Phi_s^{t,t_0}(x_0)$ be a flow on a manifold $X$, defined by a
  vector field $v_s(t,x)$ that also depends on time and an external
  parameter $s \in \R$. Let $(s,x) \mapsto v_s(t,x) \in \BC^1$ with
  derivative jointly continuous in $(s,t,x)$. Then the derivative of
  the flow with respect to $s$ is given by
  \begin{equation}\label{eq:diff-flow}
    \der{}{s} \Phi_s^{t,t_0}(x_0) =
    \int_{t_0}^t \D\Phi_s^{t,\tau}(x(\tau))\,
                 \der{}{s} v_s(\tau,x(\tau)) \d\tau,
  \end{equation}
  for any fixed $t,t_0$, and where $x(\tau) = \Phi_s^{\tau,t_0}(x_0)$.
\end{theorem}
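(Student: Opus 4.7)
The plan is to deduce this differential variant from the integral variation-of-constants identity in Theorem~\ref{thm:nonlin-varconst}, by treating the infinitesimal shift $v_s \rightsquigarrow v_{s+h}$ as a perturbation. Fix $s, t_0, x_0$ and an endpoint $t$; I will treat the case $t \ge t_0$, the other being symmetric. Set
\begin{equation*}
  r_h(\tau,x) = v_{s+h}(\tau,x) - v_s(\tau,x),
\end{equation*}
so that $v_s + r_h = v_{s+h}$ generates the flow $\Phi_{s+h}$. Theorem~\ref{thm:nonlin-varconst} then yields the exact identity
\begin{equation*}
  \Phi_{s+h}^{t,t_0}(x_0) - \Phi_s^{t,t_0}(x_0)
  = \int_{t_0}^t \D\Phi_s\big(t,\tau,y_h(\tau)\big)\, r_h\big(\tau,y_h(\tau)\big)\, d\tau,
\end{equation*}
with $y_h(\tau) = \Phi_{s+h}^{\tau,t_0}(x_0)$. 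Formula~\ref{eq:diff-flow} will fall out by dividing by $h$ and passing to the limit inside the integral.

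First I would verify the two prerequisites of this computation. By continuous dependence on parameters (e.g.\ Theorem~\ref{thm:unif-smooth-flow}, applicable since $v_s \in \BC^1$ with derivative jointly continuous), $\Phi_{s+h}^{\tau,t_0}(x_0)$ is defined for all $\tau \in J = \intvCC{t_0}{t}$ when $\abs{h}$ is small, and $y_h(\tau) \to x(\tau)$ uniformly in $\tau \in J$ as $h \to 0$. Moreover, since $\D_x v_s$ is jointly continuous, the tangent flow $(\tau,y) \mapsto \D\Phi_s^{t,\tau}(y)$ is continuous, hence uniformly bounded and uniformly continuous on the compact tube traced out by $y_h$ over $J$.

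Next I would dispatch the two integrand factors. The mean value theorem gives
\begin{equation*}
  \frac{r_h(\tau,y)}{h} = \int_0^1 \der{}{s} v_{s+\theta h}(\tau,y)\, d\theta,
\end{equation*}
so joint continuity of $\der{}{s} v_s$ on the relevant compact set in $(s,\tau,y)$ produces
\begin{equation*}
  \frac{r_h(\tau,y_h(\tau))}{h} \longrightarrow \der{}{s} v_s(\tau,x(\tau))
  \qquad\text{uniformly in } \tau \in J,
\end{equation*}
combining the uniform convergence $y_h \to x$ with uniform continuity of $\der{}{s}v_s$. Likewise $\D\Phi_s(t,\tau,y_h(\tau)) \to \D\Phi_s(t,\tau,x(\tau))$ uniformly in $\tau$. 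Both factors being uniformly bounded on $J$, dominated convergence (or direct uniform convergence of the integrand) lets me pass the limit through the integral, producing exactly~\ref{eq:diff-flow}.

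The main obstacle is really just bookkeeping: the genuinely delicate point is ensuring \emph{uniform} in $\tau \in J$ convergence of the difference quotient evaluated along the varying curve $y_h(\tau)$, rather than merely pointwise. This is what forces the $\BC^1$ joint-continuity hypothesis to enter essentially, via uniform continuity of $\der{}{s} v_s$ on compacts. Everything else, including the existence and continuity of $\D\Phi_s^{t,\tau}$ used in the fundamental solution, is by now standard and requires no additional input beyond what is already assumed.
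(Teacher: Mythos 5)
Your proof is correct, but it takes a genuinely different route from the paper's. The paper does not actually prove Theorem~\ref{thm:diff-flow}: it outsources the argument to Duistermaat's lecture notes (Thm.~B.3 there), remarking only that Theorem~\ref{thm:unif-smooth-flow} and Remark~\ref{rem:smooth-flow-timedep} justify dropping differentiable time-dependence. You instead derive the formula from Alekseev's nonlinear variation of constants (Theorem~\ref{thm:nonlin-varconst}), treating $r_h = v_{s+h}-v_s$ as the perturbation, dividing by $h$, and passing to the limit under the integral. This has the appeal of being self-contained within the appendix: it reuses the one result that lives right next to the statement being proved, rather than invoking an external reference whose hypotheses and setting must be matched up by hand. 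What you lose is essentially nothing; the bookkeeping you flag (uniform convergence of $y_h\to x$ on $[t_0,t]$, uniform continuity of $\partial_s v_s$ and of $\D\Phi_s^{t,\tau}(\cdot)$ on the compact tube, then dominated convergence) is exactly the content one would need for either route, and your hypotheses cover it. One minor point worth a sentence in a writeup: the difference quotient $(\Phi_{s+h}-\Phi_s)/h$ on a bare manifold must be read, as in the remark following Theorem~\ref{thm:nonlin-varconst}, in a local chart around the (constant) limit curve $\tau\mapsto\Phi_s^{t,\tau}(x(\tau))=\Phi_s^{t,t_0}(x_0)$; since $\Phi_s^{t,\tau}(y_h(\tau))$ converges uniformly to this fixed point as $h\to 0$, a single chart suffices for small $h$, so no real difficulty arises, but it deserves to be said.
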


See~\cite[Thm~B.3]{Duistermaat2000:liegroups} for a proof of the
formula for differentiation of a flow with respect to a parameter.
This is a slightly modified case where the vector field is
time-dependent. Theorem~\ref{thm:unif-smooth-flow} and
Remark~\ref{rem:smooth-flow-timedep} show that the result can be
generalized to the non-autonomous case and differentiable
time-dependence of $v$ is not required.


\chapter{Riemannian geometry}\label{chap:riemgeom}

In this appendix we recall standard facts from Riemannian geometry and
establish some notational conventions. This appendix is targeted at
the reader who has basic knowledge of Riemannian manifolds, but wants
to have a quick refresh. For more detailed expositions see for
example~\cite{Jost2008:riem-geom-anal,Gallot2004:riemgeom},
or~\cite{Lang1995:diff-riem-mflds} for a more abstract presentation in
the context of Banach manifolds. We shall not try to be exhaustive or
as general as possible in this overview.

A \idx{Riemannian manifold} $(M,g)$ is a pair of a smooth (or at least
$C^1$, respectively $C^2$ for defining curvature) manifold together
with a metric $g$: a family of positive-definite bilinear forms $g_x$
on each tangent space $\T_x M$. The metric is a generalization of the
Euclidean inner product on $\R^n$ and $g_x$ depends in a smooth way on
the point $x \in M$ in the manifold. The metric can be used to measure
angles and lengths of tangent vectors, so we can define the length of
a piecewise $C^1$ curve $\gamma\colon \intvCC{a}{b} \to M$ as
\begin{equation*}
  l(\gamma) = \int_a^b \sqrt{g_{\gamma(t)}(\gamma'(t),\gamma'(t))} \d t.
\end{equation*}
This length functional induces the distance function
\begin{equation}\label{eq:riem-dist}
  d(x,y) = \inf_\gamma\; l(\gamma)
\end{equation}
on $M$, where the infimum is taken over all piecewise $C^1$ curves
$\gamma$ connecting the points $x$ and~$y$. This turns $M$ into a
metric space.

Simple examples of Riemannian manifolds are $\R^n$ with the standard
Euclidean inner product and the sphere $S^{n-1} \subset \R^n$ with the
induced metric on its tangent bundle. Due to the Nash embedding
theorem, any $C^{k \ge 3}$ Riemannian manifold can actually be
realized as a submanifold of $\R^n$ equipped with the induced metric.

Each Riemannian manifold $(M,g)$ has an associated linear connection,
or, covariant derivative $\nabla$ on the tangent bundle $\T M$. This
so-called Levi-Civita connection is uniquely defined by the requirements
that it is torsion-free and compatible with the metric, i.e.\ws
\begin{equation*}
  \nabla_X Y - \nabla_Y X = [X,Y]
  \qquad\text{and}\qquad
  X\,g(Y,Z) = g(\nabla_X Y,Z) + g(Y,\nabla_X Z)
\end{equation*}
for all smooth vector fields $X,Y,Z$ on $M$. The connection is given
in local coordinates $x^i$ by the \idx{Christoffel symbols} $\Christoffel^i_{jk}$,
\index{$\Gamma$!Christoffel symbols}
\begin{equation*}
  \nabla_{\partial_j} \partial_k = \Christoffel^i_{jk}\,\partial_i,
\end{equation*}
where we used the Einstein summation convention for the repeated
index~$i$. The connection can be extended to the tensor bundle of $M$
so that it satisfies the Leibniz rule.

A connection, more generally on a vector bundle $\pi\colon E \to M$,
can also be viewed as a choice of a horizontal subbundle in $\T E$.
There is a naturally defined vertical subbundle
$\Ver(E) \subset \T E$ where $\Ver(E)_\xi = \T_\xi E_x$
for $\xi \in E_x = \pi^{-1}(x)$. A horizontal bundle $\Hor(E)$
is any subbundle complementary to the vertical bundle, so
\index{$Hor$@$\Hor,\,\Ver$}
\begin{equation*}
  \T E = \Hor(E) \oplus \Ver(E).
\end{equation*}
This definition of a connection is related to the definition via the
covariant derivative. The horizontal bundle precisely corresponds to
the tangent plane to a section $s$ of $E$ that is flat at a given
point $x \in M$:
\begin{equation*}
  \Hor(E)_{s(x)} = \text{Im}\big(\D s(x)\big)
  \qquad\Longleftrightarrow\qquad
  (\nabla_{\scriptscriptstyle \bullet}\,s)(x) = 0.
\end{equation*}

The Levi-Civita connection induces two important concepts: the
geodesic flow and parallel transport. Intuitively, the geodesic flow
says how to follow a straight line from an initial point along a given
direction, while parallel transport defines how to keep a tangent
vector fixed while carrying it along a path\footnote{%
  If the path is a geodesic, then parallel transport carries the
  initial velocity vector to the velocity vector along the entire
  path.}. %
Both maps are defined in local coordinates as solutions of (subtly
different) differential equations involving the Christoffel symbols.

The \idx{geodesic flow} $\geodflow^t$ is a flow on the tangent bundle $\T M$
and defined in local coordinates $x^i$ by
\begin{equation}\label{eq:geodflow-local-RG}
  \begin{aligned}
    \dot{x}^i &= v^i,\\
    \dot{v}^i &= -\Christoffel^i_{jk}(x)\,v^j\,v^k.
  \end{aligned}
\end{equation}
Here, the $v^i$ denote the induced additional coordinates on the tangent
bundle. This geodesic flow need not be complete, that is, defined for
all times. However, by the Hopf--Rinow theorem, the geodesic flow is
complete if and only if $M$ is complete as a metric space with respect
to~\ref{eq:riem-dist}. In the following we shall assume that $M$ is
complete to simplify the exposition.

If we restrict the geodesic flow map to the tangent space $\T_p M$ at a
fixed point $p \in M$ and to time $t = 1$, and finally project onto
$M$, then we obtain the exponential map
\index{$exp$@$\exp$}
\begin{equation*}
  \exp_p = \pi\circ\geodflow^1|_{\T_p M} \colon \T_p M \to M.
\end{equation*}
We have $\D \exp_p(0_p) = \Id_{\T_p M}$, so by the inverse function
theorem, $\exp_p$ is a local diffeomorphism at $0_p$. The local
inverse $\phi_x = \exp_x^{-1}$ of the exponential map can be viewed as
a coordinate chart since $\T_p M \cong \R^n$ isometrically. An
explicit identification would require a choice of orthonormal basis in
$\T_p M$, which we shall refrain from.

Such coordinates are called \idx{normal coordinates},
and locally around the point $p$ these coordinates make $M$ resemble
$\R^n$ as close as possible, in the sense that the metric at $p$ in
these coordinates is equal to the Euclidean metric and the Christoffel
symbols are zero. The exponential map is only a local diffeomorphism,
and generally there is a maximum radius $r > 0$ such that
$\exp_p \colon B(0;r) \subset \T_p M \to M$ is a diffeomorphism onto its image. This is
called the injectivity radius $\rinj{p}$ of $M$ at the point $p$. The
global \idx{injectivity radius} of $M$ is then defined as
\index{$r_\textrm{inj}$}
\begin{equation*}
  \rinj{M} = \inf_{p \in M} \rinj{p}.
\end{equation*}
If $M$ is noncompact then this global injectivity radius need not be
positive. The shortest path from $p \in M$ to any point $x$ within
distance $\rinj{p}$ is uniquely realized by one geodesic curve. In
normal coordinates these curves are rays emanating from the origin.
That is, let $v = \exp_p^{-1}(x)$ and $\gamma(t) = \exp_p(t\,v)$ with
$t \in \intvCC{0}{1}$, then $d(p,x) = l(\gamma) = \norm{v}$.

Let $\gamma\colon \intvCC{a}{b} \to M$ be a $C^1$ curve, then parallel
transport is a linear isometry (i.e. it preserves the metric~$g$)
\index{$\Pi$}
\begin{equation}\label{eq:partrans-RG}
  \partrans(\gamma)\colon \T_{\gamma(a)} M \to \T_{\gamma(b)} M
\end{equation}
between the tangent spaces at the endpoints. We use the notation
$\partrans(\gamma|_a^t)$ for parallel transport along a part of the
curve. Parallel transport is defined in local coordinates $x^i$
by the differential equation
\index{parallel transport}
\index{parallel transport!differential equation}
\begin{equation}\label{eq:partrans-local-RG}
  \der{}{t} \partrans(\gamma|_a^t)^i
  = -\Christoffel^i_{jk}(\gamma(t))\;\gamma'(t)^j\,\partrans(\gamma|_a^t)^k
  \qquad\text{with}\qquad
  \partrans(\gamma|_a^a) = \Id.
\end{equation}
In~\ref{eq:partrans-local-RG} the $x^i$ are local coordinates around
the point $\gamma(t)$ with additional induced coordinates $\partial_i$
on the tangent bundle. The representation $\partrans(\gamma|_a^t)^i$ is
defined by $\partrans(\gamma|_a^t) = \partrans(\gamma|_a^t)^i\,\partial_i$.
Put more abstractly, parallel transport defines a horizontal extension of a
vector $v \in \T_{\gamma(a)} M$ to a section of the pullback bundle
$\gamma^*(\T M)$, that is, a vector field $v(t)$ defined along
$\gamma(t)$, which has covariant derivative zero.

On a Riemannian manifold there is the concept of curvature. A manifold
is flat, i.e.\ws it has zero curvature, if it is (locally) isometric
to $\R^n$. The Riemann \idx{curvature} $R$ measures non-flatness on an
infinitesimal level. It is given by
\begin{equation*}
  R(X,Y)\,Z = \nabla_X \nabla_Y Z - \nabla_Y \nabla_X Z - \nabla_{[X,Y]} Z,
\end{equation*}
which measures how much the direction of a vector $Z$ changes when
parallel transporting it around an infinitesimal loop spanned by the
directions~$X,Y$. There is a relation between the curvature and
parallel transport that is important to us. If we consider holonomy,
that is, parallel transport along a closed loop $\gamma$, then the
deficit $\partrans(\gamma) - \Id$ is (heuristically put) equal to the
curvature form $R$ integrated over any surface enclosed by $\gamma$.
This relation can be seen as an
application of Stokes' theorem and the differential statement is that
the curvature $R$ is the generator of the infinitesimal holonomy
group~\cite{Ambrose1953:holonomy,Reckziegel2006:curv-gen-hol}.


\backmatter

\cleardoublepage
\phantomsection
\addcontentsline{toc}{chapter}{\protect\numberline{}\bibname}
\begingroup
\sloppy
\bibliographystyle{amsalpha}
\bibliography{thesis}
\endgroup

\cleardoublepage
\phantomsection
\addcontentsline{toc}{chapter}{\protect\numberline{}\indexname}
\setindexprenote{}
\printindex

\end{document}
